\documentclass{amsart}
\usepackage[margin=1in]{geometry}
\usepackage{amsmath,amssymb,amsthm,mathtools,microtype,mathrsfs,aliascnt}
\usepackage{enumitem}
\usepackage[colorlinks=true]{hyperref}
\usepackage[nameinlink,capitalize,noabbrev]{cleveref}

\numberwithin{equation}{section}
\allowdisplaybreaks

\newtheorem{theorem}{Theorem}[section]
\newaliascnt{proposition}{theorem}
\newtheorem{proposition}[proposition]{Proposition}
\aliascntresetthe{proposition}
\newaliascnt{lemma}{theorem}
\newtheorem{lemma}[lemma]{Lemma}
\aliascntresetthe{lemma}
\newaliascnt{corollary}{theorem}
\newtheorem{corollary}[corollary]{Corollary}
\aliascntresetthe{corollary}
\theoremstyle{definition}
\newaliascnt{definition}{theorem}
\newtheorem{definition}[definition]{Definition}
\aliascntresetthe{definition}
\theoremstyle{remark}
\newaliascnt{remark}{theorem}
\newtheorem{remark}[remark]{Remark}
\aliascntresetthe{remark}

\crefname{theorem}{Theorem}{Theorems}
\Crefname{theorem}{Theorem}{Theorems}
\crefname{proposition}{Proposition}{Propositions}
\Crefname{proposition}{Proposition}{Propositions}
\crefname{lemma}{Lemma}{Lemmas}
\Crefname{lemma}{Lemma}{Lemmas}
\crefname{corollary}{Corollary}{Corollaries}
\Crefname{corollary}{Corollary}{Corollaries}
\crefname{definition}{Definition}{Definitions}
\Crefname{definition}{Definition}{Definitions}
\crefname{remark}{Remark}{Remarks}
\Crefname{remark}{Remark}{Remarks}

\newcommand{\T}{\mathbb T}
\newcommand{\hc}{\mathrm{hc}}
\newcommand{\ind}{\mathrm{ind}}
\newcommand{\ord}{\mathrm{ord}}
\newcommand{\one}{\mathbf 1}
\newcommand{\dd}{\,\mathrm d}
\newcommand{\norm}[1]{\left\lVert#1\right\rVert}
\newcommand{\localheading}[1]{%
  \par\addvspace{0.7\baselineskip}%
  \noindent\hspace*{\parindent}%
  \textit{#1}\hspace{0.5em}\ignorespaces
}
\newcommand{\Hdeg}{\mathcal H}
\newcommand{\RW}{\mathrm{RW}}
\newcommand{\SEP}{\mathrm{SEP}}
\newcommand{\PhiGauss}{\Phi}
\newcommand{\TV}{\mathrm{TV}}
\newcommand{\Ran}{\operatorname{Ran}}
\newcommand{\Marg}{\mathsf{Marg}}
\newcommand{\E}{\mathbb E}
\newcommand{\Pp}{\mathbb P}
\newcommand{\Cum}{\operatorname{Cum}}
\newcommand{\abs}[1]{\left|#1\right|}
\DeclareMathOperator{\Spec}{Spec}
\DeclareMathOperator{\Sym}{Sym}

\hypersetup{
  pdftitle={Canonical Local Equilibrium and Cutoff Profiles for the Symmetric Exclusion Process on Discrete Tori},
  pdfauthor={Joe P. Chen}
}

\begin{document}
\title[Cutoff for exclusion on $D$-dimensional tori, positive density regime]{Canonical Local Equilibrium and Cutoff Profiles for the Symmetric Exclusion Process on Discrete Tori}
\author{Joe P.\@ Chen}
\address{Department of Mathematics, Colgate University, Hamilton, NY 13346, USA}
\email{jpchen@colgate.edu}
\date{\today}
\keywords{Symmetric exclusion process, cutoff profile, local equilibrium, total variation, Strong--Rayleigh measures.}
\subjclass[2020]{
15A18, 
47B15, 
60B10, 
60F15,  
60K35, 
82C22. 
}
\thanks{The author thanks the Research Council of Colgate University for partial financial support.}
\begin{abstract}
We prove a canonical (or fixed-population) local equilibrium theorem for the
symmetric simple exclusion process on the discrete torus \(\T_N^D\), \(D\ge2\),
at particle densities bounded away from $0$ and $1$, uniformly over all
deterministic initial configurations with the prescribed particle number.
At times
\[
 t_N(s)=\frac{\log(N^D)+s}{2\gamma_N},
 \qquad \gamma_N=2-2\cos\left(\frac{2\pi}{N}\right),
\]
the Radon--Nikodym density of the process relative to equilibrium converges
in \(L^2\) to a canonical exponential tilt generated by the unique small
mean-zero calibration field whose one-site marginals match the evolving
one-particle heat profile.  Consequently, whenever the profile coordinate
converges, the corresponding total variation profile is a Gaussian shift.
More precisely, for a deterministic initial sequence $(S_N)$, if the
covariance-normalized squared amplitude $\mathfrak q_N^{S_N}(s)$ converges to a
scalar $\mathfrak q$ at a fixed $s$, then the distance to stationarity converges to
$
 2\PhiGauss\!\left({\sqrt{\mathfrak q}}/2\right)-1
$,
where $\PhiGauss$ is the standard normal distribution function.  The profile
coordinate is asymptotically determined by the one-particle eigenspace
corresponding to the smallest nonzero eigenvalue.

The proof combines a calibrated canonical comparison, a fixed-degree
comparison between independent and exclusion dynamics, and all-degree control
obtained from pair energy estimates and preservation of the Strong--Rayleigh
property.
\end{abstract}

\maketitle
\setcounter{tocdepth}{2}
\enlargethispage{4pt}
\tableofcontents

\section{Model, normalization, and main results}

Fix $D\ge2$. For each $N\geq 3$, let $V_N=\T_N^D=(\mathbb Z/N\mathbb Z)^D$ and $n_N=|V_N|=N^D$.
Unless
otherwise specified, the \emph{spatial representative} of a site
$x\in V_N$ means its unique representative in $\{0,\ldots,N-1\}^D$;
when an arbitrary integer lift is intended, we say so explicitly. 
We endow $V_N$ with the nearest-neighbor edge set $E_N:=\bigl\{\{x,x+\mathbf e_j\}:x\in V_N,\ 1\le j\le D\bigr\}$, where $\mathbf e_j$ is the $j$th coordinate vector.
The positive semidefinite one-particle Laplacian is
\begin{equation*}
 (L_N^{\RW}f)(x)=\sum_{y\sim x}\left[f(x)-f(y)\right].
\end{equation*}
Its Fourier eigenvalues are
\begin{equation}
 \lambda_N(p)=\sum_{j=1}^D\left(2-2\cos\frac{2\pi p_j}{N}\right),
 \qquad p=(p_1,\ldots, p_D)\in\T_N^D.
 \label{prof:eq:torus-spectrum}
\end{equation}
For $p\in\T_N^D$, set
\begin{equation}
 e_p(x):=e^{2\pi i p\cdot x/N},
 \qquad
 \chi_p:=n_N^{-1/2}e_p.
 \label{prof:eq:probability-fourier-basis}
\end{equation}
Thus $(e_p)_p$ is orthonormal for the uniform probability measure on $V_N$,
whereas $(\chi_p)_p$ is orthonormal in counting measure.
Fix the half-open representative set
\begin{equation}
 \mathcal Q_N
 :=\left\{-\left\lfloor\frac{N-1}{2}\right\rfloor,
 \ldots,\left\lfloor\frac N2\right\rfloor\right\}^{D}
 \subset\mathbb Z^D,
 \label{prof:eq:momentum-representatives}
\end{equation}
and write \(\widetilde p\in\mathcal Q_N\) for the unique representative of
\(p\in\T_N^D\).  Equivalently, \eqref{prof:eq:torus-spectrum} is the exact
identity
\begin{equation}
 \lambda_N(p)
 =4\sum_{j=1}^D\sin^2\!\left(\frac{\pi\widetilde p_j}{N}\right).
 \label{prof:eq:torus-sine-spectrum}
\end{equation}
The smallest nonzero eigenvalue of $L_N^{\RW}$, called the \emph{spectral gap}, is
\begin{equation*}
 \gamma_N=2-2\cos\frac{2\pi}{N}\sim\frac{4\pi^2}{N^2}.
\end{equation*}
The momenta at the spectral gap and their Fourier eigenspace are
\begin{equation}
 \mathcal S_N^{(1)}
 :=\{p\in\T_N^D:\lambda_N(p)=\gamma_N\},
 \qquad
 \mathscr E_N^{(1)}
 :=\operatorname{span}\{\chi_p:p\in\mathcal S_N^{(1)}\}.
 \label{prof:eq:first-nonzero-eigenspace}
\end{equation}

Let $k_N\in\{0,\ldots,n_N\}$ particles evolve on $\T_N^D$ by symmetric exclusion.  We assume throughout
this paper that there exists a fixed $\rho_0\in(0,1/2]$ such that for all $N\ge 3$,
\begin{equation}
 \rho_N:=\frac{k_N}{n_N}\in[\rho_0,1-\rho_0].
 \label{prof:eq:density-window}
\end{equation}
We refer to $[\rho_0, 1-\rho_0]$ as the \emph{density window} below.
Set
\begin{equation*}
 \ell_N=\min\{k_N,n_N-k_N\},
\end{equation*}
the maximal nontrivial degree in the slice decomposition introduced below.
The state space is $\Omega_{N,k_N}=\left\{\eta\in\{0,1\}^{V_N}:\sum_x\eta_x=k_N\right\}$, and the stationary distribution is $\pi_N=\operatorname{Unif}(\Omega_{N,k_N})$.
The positive semidefinite exclusion generator is
\begin{equation*}
 (L_N^{\SEP}F)(\eta)
 =\sum_{\{x,y\}\in E_N} [F(\eta)-F(\eta^{xy})],
\end{equation*}
where $\eta^{xy}$ denotes the configuration obtained from swapping the occupations $\eta_x$ and $\eta_y$ in $\eta$:
\[
(\eta^{xy})_z =
\left\{
\begin{array}{ll}
\eta_y, & z=x,\\
\eta_x, & z=y,\\
\eta_z, & \text{else}.
\end{array}
\right.
\]
For a deterministic initial occupied set $S_N$, $|S_N|=k_N$, let
$\mu_{N,t}^{S_N}$ be the law of the exclusion process at time $t$, and write
\begin{equation*}
 h_{N,t}^{S_N}:=\frac{\dd\mu_{N,t}^{S_N}}{\dd\pi_N}
\end{equation*}
for its Radon--Nikodym derivative relative to $\pi_N$.

In this paper a \emph{one-particle field} refers to a function $f:V_N\to\mathbb R$.
We write
\[
 \mathbb R_0^{V_N}
 :=\left\{f:V_N\to\mathbb R:\sum_{x\in V_N}f(x)=0\right\}
\]
for the \emph{mean-zero subspace}.  When complex-valued functions are used, we
work in its complexification \(\mathbb C_0^{V_N}\), and extend the counting
and averaged inner products sesquilinearly.
The standing conventions are
\[
 \langle f,g\rangle_{\mathrm{cnt}}=\sum_xf(x)\overline{g(x)},
 \qquad
 \langle f,g\rangle_{\mathrm{av}}=n_N^{-1}\sum_xf(x)\overline{g(x)},
 \qquad
 \|f\|_{2,\mathrm{cnt}}^2=n_N\|f\|_{2,\mathrm{av}}^2.
\]

Put
\begin{equation*}
 \beta_N=\rho_N(1-\rho_N)\frac{n_N}{n_N-1}.
\end{equation*}
For
\(f,g\in\mathbb R_0^{V_N}\), a direct computation gives
\begin{equation}
 \left\langle\sum_{x\in V_N} f(x)(\eta_x-\rho_N),
 \sum_{y\in V_N} g(y)(\eta_y-\rho_N)\right\rangle_{\pi_N}
 =\beta_N\sum_{x\in V_N}f(x)g(x).
 \label{prof:eq:first-chaos-isometry}
\end{equation}
Let
\begin{equation}
 F_N^{S_N}:=\one_{S_N}-\rho_N,
 \qquad m^{S_N}_{N,t}=e^{-tL_N^{\RW}}F_N^{S_N}.
 \label{prof:eq:profile}
\end{equation}
The exact one-site marginal identity is $\mathbb E_{\mu_{N,t}^{S_N}}[\eta_x]-\rho_N=m^{S_N}_{N,t}(x)$.
For $s\in\mathbb R$, define
\begin{equation}
 t_N(s)=\frac{\log n_N+s}{2\gamma_N},
 \qquad
 \mathfrak q_N^{S_N}(s):=\beta_N^{-1}
 \|m_{N,t_N(s)}^{S_N}\|_{2,\mathrm{cnt}}^2.
 \label{prof:eq:profile-coordinate}
\end{equation}
Whenever $t_N(s)$ is used as an actual time for the exclusion process, $N$ is understood to be
large enough that $t_N(s)\ge0$.  For every nonempty compact $J\subset\mathbb R$, this
condition holds uniformly for $s\in J$ once $N$ is sufficiently large, so it
does not affect any limit below.
When a single deterministic source $S_N$ is fixed, we suppress this dependence
and write $F_N$, $m_{N,t}$, $\mu_{N,t}$, and $\mathfrak q_N(s)$ for the corresponding
source-dependent objects.
\begin{definition}[Calibrated canonical tilt]
\label{prof:def:tilt}
For $v\in\mathbb R_0^{V_N}$, put
\begin{equation}
 g_{N,v}(\eta)=
 \frac{\exp\{\sum_xv(x)(\eta_x-\rho_N)\}}
 {\mathbb E_{\pi_N}\exp[\{\sum_xv(x)(\eta_x-\rho_N)\}]},
 \qquad
 \nu_{N,v}:=g_{N,v}\pi_N.
 \label{prof:eq:tilt}
\end{equation}
Thus $g_{N,v}$ is the density of the tilted law $\nu_{N,v}$ relative to
$\pi_N$.  In particular, $\nu_{N,v}$ is a canonical tilt: it remains supported
on the fixed-population slice $\Omega_{N,k_N}$.  Since
\(
 \sum_x(\eta_x-\rho_N)=0
\)
on $\Omega_{N,k_N}$, adding a constant to $v$ does not change $g_{N,v}$.
The restriction $v\in\mathbb R_0^{V_N}$ fixes this additive gauge.

Whenever $m_{N,t}$ lies in the local calibration regime of
\cref{prof:lem:calibration}, let $v_{N,t}$ be the unique field in the
specified small $L^\infty$ neighborhood of the origin such that the one-site
marginals of $\nu_{N,v_{N,t}}$ equal $\rho_N+m_{N,t}$, and set
$g_{N,t}:=g_{N,v_{N,t}}$.  In source-explicit notation, these objects are
written $v_{N,t}^{S_N}$ and
$g_{N,t}^{S_N}:=g_{N,v_{N,t}^{S_N}}$.
\end{definition}
Under \eqref{prof:eq:density-window}, \cref{prof:lem:calibration} gives a
unique $v_{N,t}\in\mathbb R_0^{V_N}$ with
$\|v_{N,t}\|_\infty\le c_{\rho_0}$ whenever
$m_{N,t}\in\mathbb R_0^{V_N}$ and
$\|m_{N,t}\|_\infty\le\delta_{\rho_0}$, together with the quantitative approximation to
$\beta_N^{-1}m_{N,t}$ stated in \eqref{prof:eq:calibration-estimate}.

\begin{remark}
The adjective \emph{canonical} is inherited from the \emph{canonical ensemble} in statistical mechanics, referring to the setting where the total number of particles is fixed.
For the exclusion process considered here, this means fixed particle
number \(k_N\).
Thus \(\pi_N\) and the tilted laws \(\nu_{N,v}\) are supported on
\(\Omega_{N,k_N}\), and all partition functions, cumulants, and response
maps below are computed with respect to measures on this same fixed-particle
slice.
\end{remark}

We now state the main result.  On the gap-centered scale $t_N(s)$,
the canonical tilt $g_{N,t_N(s)}^{S_N}$ is calibrated so that its one-site
marginals agree exactly with those of the evolving exclusion process.  
The next theorem upgrades this first-moment matching to an approximation of the full
Radon--Nikodym density in $L^2(\pi_N)$, uniformly over deterministic initial
sets and compact profile windows.  Thus ``canonical local equilibrium'' here
means full-density approximation by a law on the same fixed-particle slice,
not merely convergence of local marginals.  The resulting total-variation
profile is then governed by the scalar coordinate $\mathfrak q_N^{S_N}(s)$.

\begin{theorem}[Calibrated local equilibrium and cutoff profile]
\label{prof:thm:main}
Assume $D\ge2$ and \eqref{prof:eq:density-window}.  For every nonempty compact
$J\subset\mathbb R$, the calibrated fields
$v_{N,t_N(s)}^{S_N}$ and densities $g_{N,t_N(s)}^{S_N}$ are defined for all
sufficiently large $N$, uniformly over
$S_N\subset V_N$ with $|S_N|=k_N$ and $s\in J$.  For those $N$,
\begin{equation}
 \sup_{\substack{S_N\subset V_N,\ |S_N|=k_N\\ s\in J}}
 \|h_{N,t_N(s)}^{S_N}-g_{N,t_N(s)}^{S_N}\|_{L^2(\pi_N)}
 \longrightarrow0.
 \label{prof:eq:L2-local-equilibrium}
\end{equation}
Consequently,
\begin{equation}
 \sup_{\substack{S_N\subset V_N,\ |S_N|=k_N\\ s\in J}}\left|
 \|\mu_{N,t_N(s)}^{S_N}-\pi_N\|_{\TV}
 -\frac12\mathbb E_{\pi_N}|g_{N,t_N(s)}^{S_N}-1|
 \right|\longrightarrow0.
 \label{prof:eq:TV-tilt-reduction}
\end{equation}
Moreover, let $(S_N)$ be any deterministic source sequence with
$|S_N|=k_N$.  If, at some fixed $s\in\mathbb R$,
$\mathfrak q_N^{S_N}(s)\to\mathfrak q$ for a scalar $\mathfrak q\ge0$, then
\begin{equation}
 \|\mu_{N,t_N(s)}^{S_N}-\pi_N\|_{\TV}
 \longrightarrow
 2\PhiGauss\!\left(\frac{\sqrt{\mathfrak q}}{2}\right)-1.
 \label{prof:eq:explicit-profile}
\end{equation}
Here \(\PhiGauss\) denotes the standard normal distribution function,
$\PhiGauss(z)=\frac{1}{\sqrt{2\pi}}\int_{-\infty}^z e^{-x^2/2}\,\dd x$.
\end{theorem}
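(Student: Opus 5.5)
The proof has three parts: a spectral decomposition of $h_{N,t}^{S_N}$ into degree components, a matching of the low-degree part with the calibrated tilt, and a Gaussian limit for the tilt.

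\emph{Degree decomposition.} We expand $h_{N,t}^{S_N}-1$ in the slice (Johnson-scheme) decomposition
\[
L^2(\pi_N)=\bigoplus_{j=0}^{\ell_N}\Hdeg_j .
\]
Here $\Hdeg_j$ is the degree-$j$ harmonic component, spanned by the projections of the products $\prod_{x\in A}(\eta_x-\rho_N)$ with $|A|=j$. Since $L_N^{\SEP}$ commutes with the symmetric-group structure only through the graph, it does not preserve each $\Hdeg_j$ exactly. However, $\bigoplus_{i\le j}\Hdeg_i$ is invariant, and on degree $j$ the dynamics is compared with $j$ independent random walks (the ``fixed-degree comparison'' announced in the abstract).

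The degree-$1$ component of $h_{N,t}$ equals $\beta_N^{-1}\sum_x m_{N,t}(x)(\eta_x-\rho_N)$ exactly, by the marginal identity and \eqref{prof:eq:first-chaos-isometry}. Its squared norm is therefore $\mathfrak q_N(s)$, which is $O(1)$ uniformly in $S_N$ and $s\in J$, because $\|m_{N,t_N(s)}\|_{2,\mathrm{cnt}}^2\le e^{-2\gamma_Nt}\|F_N\|^2\le e^{-s}n_N\cdot n_N^{-1}$. In particular $\|m_{N,t_N(s)}\|_\infty\le\|m\|_{2,\mathrm{cnt}}\to$ small. (Indeed $\|m\|_\infty\lesssim n_N^{-1/2}$ by ultracontractivity when $D\ge2$.) This places $m$ in the regime of \cref{prof:lem:calibration}, so $v_{N,t}$ is defined uniformly and satisfies $v\approx\beta_N^{-1}m$.

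\emph{Matching the tilt.} Next we expand $g_{N,t}$ in degrees. By construction its degree-$1$ part agrees with that of $h$. Its degree-$j$ part is, to leading order, the projection of $\prod(\beta_N^{-1}m)(\eta_x-\rho_N)/j!$, with cumulant corrections controlled by the calibration estimate \eqref{prof:eq:calibration-estimate}.

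For $h$, the degree-$j$ part at time $t$ is obtained by evolving the initial degree-$j$ coefficients, which are tensor products of $F_N$ up to slice corrections, under $j$ interacting particles. For each fixed $j$, we compare with independent walks and show that the resulting coefficient is $m_{N,t}^{\otimes j}/(\beta_N^j j!)$ up to $o(1)$ in $L^2$. The interaction error is bounded via Duhamel by collision time, and for $D\ge2$ the two-particle Green's function gives $o(1)$ at time $t_N(s)$. This settles every fixed degree.

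For the tail $\sum_{j>J_0}$, we need a uniform bound $\sum_{j>J_0}\|(h-1)_j\|^2\le\varepsilon(J_0)$ and the same for $g$. For $g$ this follows from the analyticity of $v\mapsto g_{N,v}$, i.e.\ $\|g_{N,v}\|_{L^2}^2=\E\exp(2\sum v\eta)/(\E\exp\sum v\eta)^2$ with cumulant bounds. For $h$, I would use the Strong--Rayleigh property, which is preserved by SEP and holds for point masses, together with pair energy estimates. Negative association then bounds $\|h\|_{L^2}^2=\mu_t\otimes\mu_t$ overlap by a product-like quantity, giving $\E_\pi h^2\le e^{\mathfrak q+o(1)}$. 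Combined with the fixed-degree convergence, this gives $\limsup\|h-g\|^2\le \text{tail}\to0$.

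I expect this all-degree control to be the main obstacle. An exact $L^2$ upper bound $\E h^2\le\E g^2+o(1)$, uniform over $S_N$, seems to require delicate use of Strong--Rayleigh generating polynomials. My first attempt would be to write $\E_\pi h_t^2=P^{S}(\text{two independent copies meet in configuration})$, which equals $\mu_{2t}^{S_N}(S_N)/\pi(S_N)$ by reversibility, and bound it via the real-stable generating polynomial.

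\emph{Total variation profile.} Given \eqref{prof:eq:L2-local-equilibrium}, \eqref{prof:eq:TV-tilt-reduction} follows from $\|\mu-\pi\|_{\TV}=\frac12\E_\pi|h-1|$ and Cauchy--Schwarz. For \eqref{prof:eq:explicit-profile}, set $X=\sum_x v(x)(\eta_x-\rho_N)$. Under $\pi_N$, $X$ has variance $\beta_N\|v\|^2\to\mathfrak q$, and since $\|v\|_\infty\to0$ a CLT for linear statistics of the uniform slice (a Hájek-type result for sampling without replacement) gives $X\Rightarrow\mathcal N(0,\mathfrak q)$, with uniform integrability of $e^{X}$ from cumulant bounds. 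Then $g=e^{X-\mathfrak q/2+o(1)}$, and
\[
\tfrac12\E|e^{Z-\mathfrak q/2}-1|=2\PhiGauss(\sqrt{\mathfrak q}/2)-1
\]
for $Z\sim\mathcal N(0,\mathfrak q)$.
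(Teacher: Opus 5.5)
Your outline of the bounded-degree matching, the tilt tail (from $\|g\|_{L^2(\pi_N)}^2=e^{\mathfrak q+o(1)}$ against the degreewise norms $\mathfrak q^r/r!$), the total-variation reduction and the Gaussian profile matches the paper. The genuine gap is the high-degree tail of the exclusion density. You try to obtain it from a sharp bound $\E_{\pi_N}[h^2]\le e^{\mathfrak q+o(1)}$ at the target time, uniformly in $S_N$. That bound is true a posteriori, but proving it is essentially the $L^2$ statement itself, and nothing in your sketch produces it.

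Negative dependence does not give it. In your reformulation $\E_{\pi_N}[(h^{S_N}_{N,t})^2]=\binom{n_N}{k_N}\mu^{S_N}_{N,2t}(\{\one_{S_N}\})$, negative cylinder dependence only yields $\mu^{S_N}_{N,2t}(\{\one_{S_N}\})\le\prod_{x\in S_N}(\rho_N+m_{N,2t}(x))$. After multiplying by $\binom{n_N}{k_N}$, this is of order $(1-\rho_N)^{-(n_N-k_N)}n_N^{-1/2}$, which is exponentially large. Extracting an $e^{O(1)}$ bound from the real-stable polynomial of the full configuration would amount to a local limit theorem on the slice, and Strong--Rayleigh structure alone does not supply one.

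The missing idea is that a crude bound suffices. Contrary to your remark, $L_N^{\SEP}$ preserves each layer $\Hdeg_{N,r}$ exactly: it is a sum of site transpositions, and each layer is a $\Sym(V_N)$-invariant subspace. Compare with the complete-graph exchange, whose eigenvalue on $\Hdeg_{N,r}$ is $r(n_N-r+1)/n_N$. This gives $L_N^{\SEP}|_{\Hdeg_{N,r}}\ge c_Dr\gamma_N$ (\cref{prof:prop:complete-graph-comparison,prof:lem:weight-propagation}). Hence a source-uniform $O(1)$ bound on $\|h^{S_N}_{N,t_N(s_0)}\|_{L^2(\pi_N)}$ at a fixed $s_0<\inf J$ damps $\sum_{r>R}\|P_{N,r}h^{S_N}_{N,t_N(s)}\|_2^2$ by $e^{-c_DR(\inf J-s_0)}$.

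The paper produces this warm start by applying Strong--Rayleigh preservation only to a scalar: the number of particles of the $r$-particle dual exclusion process lying in $S_N$, which is Poisson--binomial. Its variance is matched with the pair discrepancy, which is controlled by the pair-energy estimate and the terminal row bound. AM--GM then gives $\|\mathcal M_{N,r}[h]\|_\infty^2\le[C(r+e^{-2s_1})]^r$. This yields a chaos weight $z_*<1$ at an earlier time, which the degree gap propagates to weight $1$ (\cref{prof:thm:moment-envelope,prof:thm:warm-start}).

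Your fixed-degree step is also underspecified exactly where it is delicate. After the $n_N^{r/2}$ normalization, the Duhamel error is carried by the hard-core semigroup on the top sector for a time $t_N(s)\asymp\gamma_N^{-1}\log n_N$. A deficit $\epsilon$ of its spectral bottom below $r\gamma_N$ therefore costs a factor $e^{\epsilon t_N(s)}$, so you need $\epsilon=o(\gamma_N/\log n_N)$; small collision time alone does not give this. The paper proves $\inf\Spec\ge r\gamma_N-O(\gamma_N/n_N)$ on that sector by a Schur-complement argument (\cref{prof:prop:fd-top-gap}). It also treats the rough high-frequency part of $F_N^{\otimes r}$ separately (\cref{prof:lem:fd-high-source}).
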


\Cref{prof:thm:main} reduces the cutoff profile to the asymptotics of
the scalar coordinate $\mathfrak q_N^{S_N}(s)$.  The next two corollaries
sharpen this reduction in complementary directions.  The first upgrades the
pointwise profile statement to compact-uniform convergence whenever
$\mathfrak q_N^{S_N}$ converges uniformly on the profile window.  The second
identifies the leading asymptotics of $\mathfrak q_N^{S_N}$ itself: on the
gap-centered scale $t_N(s)$, only the projection of the initial discrepancy
onto the first nonzero one-particle eigenspace $\mathscr E_N^{(1)}$ from
\eqref{prof:eq:first-nonzero-eigenspace} survives.  Their proofs are deferred
to \cref{prof:sec:profile-corollary-proofs}, after the proof of
\cref{prof:thm:main} is completed.

\begin{corollary}[Compact-uniform profile convergence]
\label{prof:cor:uniform-profile}
Let $S_N\subset V_N$ be a deterministic source sequence with $|S_N|=k_N$,
and let $J\subset\mathbb R$ be nonempty and compact.  If a
function $\mathfrak q:J\to[0,\infty)$ satisfies
\begin{equation}
 \sup_{s\in J}|\mathfrak q_N(s)-\mathfrak q(s)|\longrightarrow0,
 \label{prof:eq:uniform-q-convergence}
\end{equation}
then
\begin{equation}
 \sup_{s\in J}\left|
 \|\mu_{N,t_N(s)}^{S_N}-\pi_N\|_{\TV}
 -\left(2\PhiGauss\!\left(\frac{\sqrt{\mathfrak q(s)}}2\right)-1\right)
 \right|\longrightarrow0.
 \label{prof:eq:uniform-explicit-profile}
\end{equation}
\end{corollary}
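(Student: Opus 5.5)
The plan is to upgrade the pointwise statement \eqref{prof:eq:explicit-profile} of \cref{prof:thm:main} to uniform convergence on $J$ by a compactness argument. We argue by contradiction. If \eqref{prof:eq:uniform-explicit-profile} fails, there are $\varepsilon>0$, a subsequence $N_j$, and points $s_j\in J$ such that the discrepancy at $(N_j,s_j)$ is at least $\varepsilon$. Passing to a further subsequence, we may assume $s_j\to s_*\in J$.

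First we need $\mathfrak q(s_*)$ to make sense as a limit along $s_j$. I would use the explicit form of $\mathfrak q_N$. Since $m_{N,t}=e^{-tL_N^{\RW}}F_N$, we have
\[
\mathfrak q_N(s)=\beta_N^{-1}\sum_{p\neq0}e^{-2t_N(s)\lambda_N(p)}|\langle F_N,\chi_p\rangle_{\mathrm{cnt}}|^2 .
\]
This is nonincreasing in $s$ and has derivative $-\beta_N^{-1}\sum_p (\lambda_N(p)/\gamma_N)e^{-2t_N\lambda_N(p)}|\hat F_N(p)|^2$. A uniform Lipschitz bound in $s$ would need control of high-frequency weights, which is plausible but messy. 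A cleaner route avoids equicontinuity of $\mathfrak q_N$ altogether. By monotonicity, $\mathfrak q_N(s)\le \mathfrak q_N(\min J)$, which converges, so the $\mathfrak q_N$ are uniformly bounded on $J$. Hence $\mathfrak q$ is a uniform limit of monotone functions and is therefore nonincreasing. Along the subsequence, $\mathfrak q_{N_j}(s_j)$ is bounded, so we may extract a further subsequence with $\mathfrak q_{N_j}(s_j)\to\mathfrak q_\infty$. Uniform convergence \eqref{prof:eq:uniform-q-convergence} then gives $|\mathfrak q(s_j)-\mathfrak q_\infty|\to0$.

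The key step is to apply the mechanism of \cref{prof:thm:main} along the sequence $(N_j,s_j)$ rather than at a fixed $s$. Here \eqref{prof:eq:TV-tilt-reduction} is already uniform over $s\in J$ and over sources. So it suffices to show that $\frac12\mathbb E_{\pi_N}|g_{N_j,t_{N_j}(s_j)}-1|\to 2\PhiGauss(\sqrt{\mathfrak q_\infty}/2)-1$ whenever $\mathfrak q_{N_j}(s_j)\to\mathfrak q_\infty$ with $s_j\in J$. I expect the proof of \eqref{prof:eq:explicit-profile} in the paper to depend on $s$ only through $t_N(s)$ and the value $\mathfrak q_N(s)$. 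That proof presumably runs through the calibration estimate \eqref{prof:eq:calibration-estimate}, a CLT for the linear statistic $\sum_x v(x)(\eta_x-\rho_N)$ under $\pi_N$ with variance $\approx\mathfrak q$, and the identity $\frac12\mathbb E|e^{\sigma Z-\sigma^2/2}-1|=2\PhiGauss(\sigma/2)-1$. Its error terms are controlled uniformly for $s$ in a compact set, via $\|m_{N,t_N(s)}\|_\infty\to0$ uniformly. Hence it applies verbatim to diagonal sequences. This yields $\|\mu_{N_j,t_{N_j}(s_j)}-\pi_{N_j}\|_{\TV}\to 2\PhiGauss(\sqrt{\mathfrak q_\infty}/2)-1$.

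To conclude, the map $q\mapsto 2\PhiGauss(\sqrt q/2)-1$ is uniformly continuous on $[0,\infty)$, since $\sqrt{\cdot}$ is uniformly continuous and $\PhiGauss$ is Lipschitz. Combined with $\mathfrak q(s_j)\to\mathfrak q_\infty$, this contradicts the $\varepsilon$-gap. The main obstacle is the diagonal step: verifying that the Gaussian-shift limit behind \eqref{prof:eq:explicit-profile} is stated, or can be restated, for arbitrary sequences $s_N\in J$ with convergent $\mathfrak q_N(s_N)$. I would first try to extract from the paper's proof a lemma of the form
\[
\sup_{S_N,\,s\in J}\Bigl|\tfrac12\mathbb E_{\pi_N}|g_{N,t_N(s)}^{S_N}-1|-\bigl(2\PhiGauss(\sqrt{\mathfrak q_N^{S_N}(s)}/2)-1\bigr)\Bigr|\to0,
\]
which makes the corollary immediate. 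If the CLT step is proved only along convergent sequences, the subsequence argument above still reduces the lemma to it.
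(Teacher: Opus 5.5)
Your argument is correct and is essentially the paper's route. The paper combines four ingredients: the uniform reduction \eqref{prof:eq:TV-tilt-reduction}; the uniform-family Gaussianization \eqref{prof:eq:uniform-tilt-TV} with $\mathcal I_N=J$, which is itself proved by the same subsequence-contradiction argument you give; the calibrated-variance comparison \eqref{prof:eq:calibrated-q}; and uniform continuity of $a\mapsto2\PhiGauss(\sqrt a/2)-1$. So the lemma you hoped to extract is exactly \eqref{prof:eq:uniform-tilt-TV} together with \eqref{prof:eq:calibrated-q}, and the monotonicity detour is unnecessary, since $\sup_{s\in J}\mathfrak q_N(s)\le C_J$ already follows from \eqref{prof:eq:one-particle-bounds}.
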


\begin{corollary}[Reduction of the profile coordinate to the first nonzero eigenspace]
\label{prof:cor:first-shell-profile}
For a deterministic source $S_N\subset V_N$ with $|S_N|=k_N$, and with
$\mathcal S_N^{(1)}$ and $\mathscr E_N^{(1)}$ as in
\eqref{prof:eq:first-nonzero-eigenspace}, set
\begin{equation}
 \widehat F_N(p):=\langle F_N,\chi_p\rangle_{\mathrm{cnt}}.
 \label{prof:eq:source-fourier-coefficients}
\end{equation}
Define the covariance-normalized amplitude of the projection onto
$\mathscr E_N^{(1)}$ by
\begin{equation}
 a_N(S_N):=\frac1{\beta_Nn_N}
 \sum_{p\in\mathcal S_N^{(1)}}|\widehat F_N(p)|^2.
 \label{prof:eq:first-shell-amplitude}
\end{equation}
For every nonempty compact $J\subset\mathbb R$,
\begin{equation}
 \sup_{s\in J}|\mathfrak q_N(s)-e^{-s}a_N(S_N)|
 \le \frac{C_{D,\rho_0,J}}{n_N}
 \label{prof:eq:first-shell-reduction}
\end{equation}
uniformly over all $S_N\subset V_N$ with $|S_N|=k_N$.  In
particular, if $a_N(S_N)\to a$, then $\mathfrak q_N(s)\to ae^{-s}$ uniformly on
compact $s$-intervals, and
\begin{equation*}
 \|\mu_{N,t_N(s)}^{S_N}-\pi_N\|_{\TV}
 \longrightarrow
 2\PhiGauss\!\left(\frac{\sqrt a}{2}e^{-s/2}\right)-1
\end{equation*}
uniformly for $s$ in compact subsets of $\mathbb R$.
\end{corollary}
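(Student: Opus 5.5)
The estimate \eqref{prof:eq:first-shell-reduction} is purely one-particle: it concerns only $\mathfrak q_N(s)=\beta_N^{-1}\|e^{-t_N(s)L_N^{\RW}}F_N\|_{2,\mathrm{cnt}}^2$. The plan is to expand in the Fourier basis $(\chi_p)$ and isolate the spectral-gap shell. The profile statements then follow from \cref{prof:cor:uniform-profile}.

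\emph{Step 1 (Fourier expansion).} Since $F_N\in\mathbb R_0^{V_N}$, we have $\widehat F_N(0)=0$. By Parseval in counting measure,
\[
 \beta_N\mathfrak q_N(s)=\sum_{p\neq0}e^{-2t_N(s)\lambda_N(p)}|\widehat F_N(p)|^2 .
\]
For $p\in\mathcal S_N^{(1)}$ the weight is exactly $e^{-2t_N(s)\gamma_N}=e^{-s}/n_N$. This gives precisely the term $\beta_N e^{-s}a_N(S_N)$.

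\emph{Step 2 (the remainder).} For $p\notin\mathcal S_N^{(1)}\cup\{0\}$ we need a gap to the next shell. By \eqref{prof:eq:torus-sine-spectrum}, any such $p$ either has some $|\widetilde p_j|\ge2$, or has at least two nonzero coordinates. In either case $\lambda_N(p)\ge\min\{4\sin^2(2\pi/N),2\gamma_N\}$. Now $4\sin^2(2\pi/N)=\gamma_N(2+2\cos(2\pi/N))\ge(1+c)\gamma_N$ for $N\ge3$, with a fixed $c>0$; for $N=3$ check directly (there $\pm2\equiv\mp1$, so the claim is the two-coordinates case).

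More usefully, we want $\lambda_N(p)-\gamma_N\ge c'\gamma_N$ uniformly. Then
\[
 e^{-2t_N(s)\lambda_N(p)}\le \frac{e^{-s}}{n_N}\,e^{-(\log n_N+s)c'}\le C_J\, n_N^{-1-c'} .
\]
Summing with $\sum_p|\widehat F_N(p)|^2=\|F_N\|_{2,\mathrm{cnt}}^2=n_N\rho_N(1-\rho_N)$ gives a remainder of order $n_N^{-c'}$. This is not $1/n_N$, so a sharper count is needed.

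\emph{Step 3 (main obstacle: the $1/n_N$ rate).} To get $C/n_N$ I would use $2\gamma_N$ as the next-shell threshold. The case $|\widetilde p_j|\ge2$ gives $\lambda_N(p)\ge4\gamma_N\cos^2(\pi/N)\ge2\gamma_N$ once $N\ge4$, since $\cos^2(\pi/N)\ge1/2$. The case of two nonzero coordinates gives $\ge2\gamma_N$ directly. Hence for $N\ge4$ every non-gap nonzero $p$ satisfies $e^{-2t_N(s)\lambda_N(p)}\le e^{-2s}/n_N^2$. The remainder is then at most
\[
 \beta_N^{-1}e^{-2s}n_N^{-2}\cdot n_N\rho_N(1-\rho_N)\le C_{J}/n_N .
\]
The constant is uniform in $S_N$, because $\beta_N\ge\rho_0(1-\rho_0)$ by \eqref{prof:eq:density-window}. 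The finitely many small $N$ are absorbed into $C_{D,\rho_0,J}$, using the trivial bound $\mathfrak q_N,a_N\le C$.

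\emph{Step 4 (profile).} If $a_N\to a$, then by \eqref{prof:eq:first-shell-reduction} we get $\sup_J|\mathfrak q_N(s)-ae^{-s}|\le|a_N-a|e^{\max_J|s|}+C/n_N\to0$. Apply \cref{prof:cor:uniform-profile} with $\mathfrak q(s)=ae^{-s}$, noting $\sqrt{\mathfrak q(s)}/2=\tfrac{\sqrt a}{2}e^{-s/2}$. Every compact subset of $\mathbb R$ sits inside a compact interval, so uniform convergence on compact subsets follows.
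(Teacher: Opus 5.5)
Your proof is correct and follows the paper's argument: Parseval in the counting-normalized Fourier basis, the exact first-shell contribution from $e^{-2\gamma_N t_N(s)}=e^{-s}/n_N$, the two-case argument that every other nonzero momentum has $\lambda_N(p)\ge 2\gamma_N$ (via $4\sin^2(2\pi/N)=4\cos^2(\pi/N)\gamma_N$), a remainder of order $C_J/n_N$, and then \cref{prof:cor:uniform-profile} for the profile. Your Step 2 detour is unnecessary. Also, the bound $e^{-2t_N(s)\lambda_N(p)}\le e^{-2s}/n_N^2$ tacitly uses $t_N(s)\ge 0$; this holds uniformly on $J$ for all but finitely many $N$, and those $N$ are covered by your absorption of small $N$ into the constant.
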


The amplitude $a_N(S_N)$ distinguishes two qualitatively different source
geometries.  A macroscopic slab has a nonzero projection onto the slowest
Fourier modes in $\mathscr E_N^{(1)}$ and therefore exhibits a nontrivial
profile on the cutoff window $t_N(s)$.  More
precisely, suppose $w_N/N\to\rho\in(0,1)$, the slab sequence satisfies
the standing density window \eqref{prof:eq:density-window} (as happens
eventually whenever $\rho\in(\rho_0,1-\rho_0)$), and
\[
 S_N=\{x\in\T_N^D:0\le x_1<w_N\}.
\]
Then
\begin{equation}
 a_N(S_N)\longrightarrow
 a_\rho:=\frac{2\sin^2(\pi\rho)}{\pi^2\rho(1-\rho)};
 \label{prof:eq:slab-amplitude}
\end{equation}
in particular, a half torus has amplitude $8/\pi^2$.  By contrast, if
$\widehat F_N$ vanishes on $\mathcal S_N^{(1)}$, then
$\mathfrak q_N(s)=O_J(n_N^{-1})$.  Applying \cref{prof:cor:uniform-profile} with the
zero limiting profile gives
\[
 \sup_{s\in J}
 \|\mu_{N,t_N(s)}^{S_N}-\pi_N\|_{\TV}\longrightarrow0
\]
for every nonempty compact $J\subset\mathbb R$.  The even-torus checkerboard is a
concrete example.  These calculations are given in
\cref{prof:sec:profile-corollary-proofs}.

\begin{remark}[Source dependence of the profile]

Parseval's identity gives
\[
 0\le a_N(S_N)\le\frac{n_N-1}{n_N}<1.
\]
By \cref{prof:cor:first-shell-profile}, every subsequential limit of
$a_N(S_N)$ determines the corresponding Gaussian shift profile.  Hence the
profile on the gap-centered scale $t_N(s)$ converges along the full sequence precisely when
$a_N(S_N)$ converges; distinct subsequential limits of the amplitude produce
distinct subsequential profiles on that scale.  The source-uniform local equilibrium
conclusion of \cref{prof:thm:main} remains valid in either case.
\end{remark}

\subsection{Chaos and norm conventions}

The \emph{Hoeffding decomposition} of the fixed-particle slice is
\begin{align}
 L^2(\Omega_{N,k_N},\pi_N)=\bigoplus_{r=0}^{\ell_N}\Hdeg_{N,r}.
 \label{prof:eq:Hoeffding}
\end{align}
We call $r$ the \emph{degree} and $\Hdeg_{N,r}$ the degree-$r$ \emph{chaos
layer}.  It represents the genuine $r$-site interaction after all lower-degree
interactions have been removed.  \Cref{prof:sec:chaos-interface}
constructs its normalized ordered distinct-tuple coordinate.

All slice, product, and ordered
distinct-tuple spaces carry their uniform probability measures.  On those
spaces $\|\cdot\|_2$ is probability normalized, and
$\langle\cdot,\cdot\rangle_2$ denotes the corresponding
probability-normalized inner product whenever no more specific subscript is
shown.

Denote $[r]=\{1,\ldots,r\}$ and
$(a)_{\underline r}=a(a-1)\cdots(a-r+1)$.
We use $\mathbb N=\{1,2,\ldots\}$ and
$\mathbb N_0=\{0,1,2,\ldots\}$.
We write
$A\lesssim_\Theta B$ if $A\le C_\Theta B$, where $C_\Theta$ depends only on
$\Theta$ and not on $N$.  We write $A\asymp_\Theta B$ if both
$A\lesssim_\Theta B$ and $B\lesssim_\Theta A$.  The uppercase letter $D$
always denotes the spatial dimension.

\section{Sketch of the proof}

The proof of \cref{prof:thm:main} is divided into three branches: a static canonical branch, a dynamic
fixed-degree branch, and an all-degree warm-start branch for the exclusion
density.  
The first two branches meet in the fixed-degree matching
\cref{prof:thm:fixed-degree-matching}.
The last branch combines analytic pair control with a Strong--Rayleigh
closure mechanism that lifts pair information to all chaos degrees. 
All three branches are assembled in \cref{prof:sec:completion-proof}.

\localheading{Static canonical and dynamic fixed-degree branches
(\cref{prof:sec:calibration,prof:sec:chaos-interface,prof:sec:fixed-degree}).}
The relevant one-particle field is
$
 m_{N,t}=e^{-tL_N^{\RW}}(\one_{S_N}-\rho_N)$,
whose slow modes remain visible on the cutoff scale.  In
\cref{prof:sec:calibration} we construct the unique local calibration field
and the corresponding canonical tilt, whose marginals are
$\rho_N+m_{N,t}$, and establish the Gaussianization input used later to
evaluate its total variation distance from equilibrium.
\Cref{prof:sec:chaos-interface} places the calibrated tilt and the exclusion
density in the same finite-population chaos coordinates.  At each fixed
degree, it identifies the leading chaos coordinate of the calibrated tilt
with the corresponding coordinate generated by $m_{N,t}$, and it
separately controls the high-chaos tail of the tilt.

For the dynamic comparison, \cref{prof:sec:fixed-degree} compares
independent walkers with their exclusion-constrained, or hard-core,
counterparts.  A Schur complement estimate identifies a finite-dimensional
low-energy comparison range and gives an asymptotically sharp hard-core
spectral bottom there.  The tensor source is then split according to a
finite one-particle low-energy truncation.  The contribution generated
entirely by the truncated one-particle source is compared directly by
Duhamel's formula.  For the remaining source contribution, a one-sided
leakage estimate controls its projection onto the low hard-core spectral
subspace, while its high spectral component decays rapidly under the
hard-core semigroup.  The static and dynamic fixed-degree branches meet at
\cref{prof:thm:fixed-degree-matching}.

\localheading{Exclusion high degrees: pair analysis and Strong--Rayleigh closure
(\cref{prof:sec:warm-start,prof:sec:pair-energy,prof:sec:chaos-weight,prof:sec:row,prof:sec:strong-rayleigh}).}
Before the target profile window, the genuinely degree-two part of the pair
discrepancy, after its constant and one-particle components have been
removed, is controlled by the cutoff window pair-energy estimate and the
resulting terminal row bound.  Strong--Rayleigh preservation then supplies
the structural closure: the number of exclusion particles in the source
set has a Poisson--binomial law, and the resulting centered occupation
moments of every degree can be bounded in terms of the one-particle profile
and the controlled pair contribution.  These moment bounds give a positive
exponential weight in the chaos degree at an earlier profile time.  The
linear-in-degree spectral gap then propagates this weighted $L^2$ control
forward until the weight reaches one, yielding the required warm start.

\localheading{Assembly (\cref{prof:sec:completion-proof}).}
From $t_N(s_0)$ to $t_N(s)$, the degree-$r$ gap is linear in $r$, yielding
\cref{prof:prop:degree-damping}.  Fixed-degree matching, the resulting
high-degree bound for the exclusion density, and the canonical tilt tail
\cref{prof:lem:tilt-tail} imply $L^2$ local equilibrium
\eqref{prof:eq:L2-local-equilibrium}.  The total variation comparison and
Gaussianization established in \cref{prof:sec:calibration} then evaluate
the total variation distance of the canonical tilt from equilibrium, and
yield the profile statement in \cref{prof:thm:main}.
The profile corollaries and examples follow in
\cref{prof:sec:profile-corollary-proofs}.

\subsection{Connections with prior literature}

On the $D=1$ torus (circle) the cutoff profile for symmetric exclusion was proved by Lacoin
\cite{LacoinCircleCutoff}.
His earlier work \cite{LacoinCircleDiffusive}
identified the diffusive cutoff window on the circle, and also gave on the $D\ge 2$ torus
mixing time lower bounds based on the slow one-particle mode.  These
works already isolate the gap-centered time scale, the Gaussian shape of the
profile, and the dominant role of the first nonzero eigenspace.  In
\cite{LacoinCircleCutoff} an exponential tilt of equilibrium by the principal
Fourier mode is central to the profile analysis.  
The calibration introduced in our \cref{prof:def:tilt} is
more flexible: the source-dependent canonical field is determined by exact
matching of the one-site marginals, and is not restricted \emph{a priori} to the
principal mode.  Lacoin's proof also exploits one-dimensional coupling and
order structure that are unavailable on the $D\ge2$ torus.

Comparisons between exclusion and the corresponding one-particle random walk, at both spectral and mixing-time levels, have a substantial history.  Caputo, Liggett, and Richthammer
\cite{CaputoLiggettRichthammer} proved Aldous' spectral-gap conjecture for the
interchange process, thereby identifying its gap with the single-particle
random-walk gap.  Diaconis and Saloff-Coste \cite{DiaconisSaloffCoste}
developed geometric comparison inequalities for reversible chains and
applied them to exclusion, while Oliveira \cite{OliveiraExclusionMixing}
obtained general exclusion mixing bounds in terms of the corresponding
single-particle random walk.  The fixed-degree analysis in our \cref{prof:sec:fixed-degree}
is related in spirit but more local: it compares independent and hard-core
operators on a prescribed chaos sector, and supplements quadratic form comparison by
source-specific resolvent estimates.

The finite-population harmonic analysis used here is likewise connected to
the geometry of the Boolean slice and the Johnson scheme.  Filmus
\cite{FilmusSlice} constructs an explicit orthogonal basis for functions on a
slice and relates it to the Johnson-scheme eigenspaces.  For complete-graph exclusion, the Bernoulli--Laplace analysis of Diaconis and
Shahshahani \cite{DiaconisShahshahani} provides a classical spectral model for
fixed-population mixing.  These structures underlie, at a different level of
resolution, the Hoeffding/harmonic decomposition and the complete-graph
all-degree gap used in our \cref{prof:sec:chaos-interface,prof:sec:chaos-weight}.

Finally, the all-degree closure uses negative dependence in an essential way.
Borcea, Br\"and\'en, and Liggett \cite{BBL09} introduced Strong--Rayleigh
measures and proved that the property is preserved by symmetric exclusion;
\cite[Proposition~5.1]{BBL09} is the external preservation theorem invoked in
\cref{prof:sec:strong-rayleigh}.  Negative dependence has also been used to
obtain quantitative mixing results: Salez \cite{SalezReservoirs} used it to
reduce the mixing analysis of exclusion with reservoirs to single-site
marginals, while Hermon and Salez \cite{HermonSalezSR} derived modified
log-Sobolev and mixing-time bounds from the stochastic covering property, a
condition weaker than Strong--Rayleigh.  
In this paper, we combine preservation
of the Strong--Rayleigh property for the evolving conservative exclusion law
with controlled one- and two-particle chaos layers to produce an
all-degree moment envelope and, ultimately, the $L^2$ warm start.

\section{One-particle profile and the calibrated canonical tilt}
\label{prof:sec:calibration}

\subsection{One-particle spectral and cutoff window estimates}

\begin{lemma}[Separation above the one-particle gap and spectral-sum bounds]
\label{prof:lem:one-particle-spectral-sums}
There are constants
\[
 c_*:=\frac{16/\pi^2-1}{3} \in \left(0, \frac{4}{\pi^2}\right),
 \qquad C_D<\infty,
\]
such that the following statements hold for every $N\ge 3$.
For each nonzero $p\in\T_N^D$, with representative
$\widetilde p\in\mathcal Q_N$ from
\eqref{prof:eq:momentum-representatives},
\begin{align}
 \frac4{\pi^2}\gamma_N|\widetilde p|^2
 &\le\lambda_N(p)
 \le\frac{\pi^2}{4}\gamma_N|\widetilde p|^2,
 \label{prof:eq:dispersion-comparison}\\
 \lambda_N(p)
 &\ge\gamma_N\bigl[1+c_*(|\widetilde p|^2-1)\bigr].
 \label{prof:eq:first-shell-separation}
\end{align}
Moreover, uniformly for $a\ge c_*$,
\begin{equation}
 \sum_{\ell\in\mathbb Z^D\setminus\{0\}}
 e^{-a(|\ell|^2-1)}
 +
 \sum_{\ell\in\mathbb Z^D\setminus\{0\}}
 |\ell|^2e^{-a(|\ell|^2-1)}
 \le C_D.
 \label{prof:eq:lattice-gaussian-sums}
\end{equation}
Consequently, for every $t\ge\gamma_N^{-1}$,
\begin{align}
 \sum_{p\ne0}e^{-2t\lambda_N(p)}
 &\le C_De^{-2\gamma_Nt},
 \label{prof:eq:long-heat-trace}\\
 \sum_{p\ne0}\lambda_N(p)e^{-2t\lambda_N(p)}
 &\le C_D\gamma_Ne^{-2\gamma_Nt}.
 \label{prof:eq:long-energy-trace}
\end{align}
\end{lemma}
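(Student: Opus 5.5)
Using \eqref{prof:eq:torus-sine-spectrum}, each coordinate contributes $4\sin^2(\pi\widetilde p_j/N)$ with $|\widetilde p_j|/N\le 1/2$. For $u\in[0,\pi/2]$ we have Jordan's inequality $\frac{2}{\pi}u\le\sin u\le u$. This gives
\[
 \frac{16}{N^2}\widetilde p_j^2\le 4\sin^2\!\left(\frac{\pi\widetilde p_j}{N}\right)\le\frac{4\pi^2}{N^2}\widetilde p_j^2 .
\]
Applying the same inequality to $\gamma_N=4\sin^2(\pi/N)$ (with $\pi/N\le\pi/3$) yields $\frac{16}{N^2}\le\gamma_N\le\frac{4\pi^2}{N^2}$. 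Dividing the two chains and summing over $j$ gives \eqref{prof:eq:dispersion-comparison} with constants $16/(4\pi^2)=4/\pi^2$ and $4\pi^2/16=\pi^2/4$.

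For \eqref{prof:eq:first-shell-separation} I argue by cases on $|\widetilde p|^2$, which is a positive integer.

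\emph{Case $|\widetilde p|^2=1$.} Then $\lambda_N(p)=\gamma_N$ exactly, and equality holds.

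\emph{Case $|\widetilde p|^2\ge2$.} The lower bound in \eqref{prof:eq:dispersion-comparison} gives $\lambda_N\ge\frac4{\pi^2}\gamma_N|\widetilde p|^2$. It then suffices to check the linear inequality $\frac4{\pi^2}q\ge 1+c_*(q-1)$ for all real $q\ge2$. Since $c_*<4/\pi^2$, the difference of the two sides is increasing in $q$, so only $q=2$ needs checking. There it reads $8/\pi^2\ge1+c_*$, which holds because $1+c_*=(2+16/\pi^2)/3\approx0.874<0.811$? This fails, so the constant must be obtained differently.

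The specific value $c_*=(16/\pi^2-1)/3$ suggests the worst case is $q=4$, reached either by $\widetilde p=(2,0,\dots)$ or by $|\widetilde p|^2\in\{2,3\}$ with entries $\pm1$. So I will handle small shells directly:

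\emph{Vectors with all entries in $\{0,\pm1\}$.} Here $\lambda_N=q\gamma_N$ exactly, and $q\ge 1+c_*(q-1)$ holds since $c_*\le1$.

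\emph{Vectors with some $|\widetilde p_j|\ge2$.} Use the per-coordinate bound $4\sin^2(\pi m/N)\ge\frac{4}{\pi^2}m^2\gamma_N$ only on the coordinates with $|m|\ge2$, and use exact $\gamma_N$ on the $\pm1$ coordinates. With $A$ the sum of squares of the large entries ($A\ge4$) and $B$ the number of $\pm1$ entries, this gives $\lambda_N\ge\gamma_N(\frac4{\pi^2}A+B)$. The target is $1+c_*(A+B-1)$. Linearity in $A$ and $B$ reduces this to the corner $A=4$, $B=0$, which is exactly $16/\pi^2=1+3c_*$. This choice of corner is what fixes $c_*$, and I expect this bookkeeping to be the main (mild) obstacle.

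For \eqref{prof:eq:lattice-gaussian-sums}, monotonicity in $a$ (each exponent has $|\ell|^2-1\ge0$) reduces to $a=c_*$. There the sums $\sum_\ell(1+|\ell|^2)e^{-c_*(|\ell|^2-1)}$ converge as Gaussian lattice sums, factorizing over coordinates with a polynomial weight, which gives $C_D$.

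Finally, for $t\ge\gamma_N^{-1}$:

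\emph{Heat trace.} Bound $e^{-2t\lambda_N(p)}\le e^{-2t\gamma_N}e^{-2t\gamma_Nc_*(|\widetilde p|^2-1)}$ using \eqref{prof:eq:first-shell-separation}. Since $2t\gamma_N\ge2$, this is at most $e^{-2\gamma_Nt}e^{-c_*(|\widetilde p|^2-1)}$. The map $p\mapsto\widetilde p$ injects into $\mathbb Z^D\setminus\{0\}$, so \eqref{prof:eq:lattice-gaussian-sums} gives \eqref{prof:eq:long-heat-trace}.

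\emph{Energy trace.} Additionally use $\lambda_N\le\frac{\pi^2}4\gamma_N|\widetilde p|^2$ from \eqref{prof:eq:dispersion-comparison}, and bound the $|\ell|^2$-weighted sum by \eqref{prof:eq:lattice-gaussian-sums}. This yields \eqref{prof:eq:long-energy-trace}, with $C_D$ enlarged by the factor $\pi^2/4$.
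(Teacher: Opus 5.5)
Your proof is correct and is essentially the paper's argument: the same Jordan-inequality bounds give the dispersion comparison. The separation rests on the same two facts, namely that $\pm1$ coordinates contribute exactly $\gamma_N$ and that coordinates with $|\ell|\ge2$ satisfy $\frac4{\pi^2}\ell^2\ge1+c_*(\ell^2-1)$ with equality at $\ell^2=4$. Your $(A,B)$ corner check is just the paper's coordinatewise inequality summed, with the slack $(\operatorname{nz}(p)-1)(1-c_*)\ge0$ absorbed, and the traces reduce to the lattice sums at $a=c_*$ exactly as in the paper. Delete the abandoned first attempt, which also has an arithmetic slip ($1+c_*=(2+16/\pi^2)/3\approx1.21$, not $0.874$); nothing in your final argument depends on it.
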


\begin{proof}
Dividing the exact sine representation
\eqref{prof:eq:torus-sine-spectrum} by
$\gamma_N=4\sin^2(\pi/N)$ gives
\[
 \frac{\lambda_N(p)}{\gamma_N}
 =\sum_{j=1}^D
 \frac{\sin^2(\pi\widetilde p_j/N)}{\sin^2(\pi/N)}.
\]
For every integer $\ell$ with $|\ell|\le N/2$,
\[
 \frac{4\ell^2}{\pi^2}
 \le
 \frac{\sin^2(\pi \ell/N)}{\sin^2(\pi/N)}
 \le
 \frac{\pi^2\ell^2}{4}.
\]
Indeed, the lower bound follows from
$\sin u\ge2u/\pi$ for $0\le u\le\pi/2$ and
$\sin(\pi/N)\le\pi/N$, while the upper bound follows from
$|\sin u|\le |u|$ and $\sin(\pi/N)\ge2/N$.
Summing over the coordinates proves
\eqref{prof:eq:dispersion-comparison}.

For $\ell=\pm1$, the corresponding coordinate ratio equals one.  For
$|\ell|\ge2$,
\[
 \frac{\sin^2(\pi \ell/N)}{\sin^2(\pi/N)}
 \ge\frac{4\ell^2}{\pi^2}
 \ge1+c_*(\ell^2-1),
\]
because
\[
 \frac{4\ell^2}{\pi^2}-1-c_*(\ell^2-1)
 =\left(\frac4{\pi^2}-c_*\right)(\ell^2-4)\ge0.
\]
Let
$\operatorname{nz}(p):=\#\{j:\widetilde p_j\ne0\}$.
Adding the coordinatewise inequalities gives
\[
 \frac{\lambda_N(p)}{\gamma_N}
 \ge
 \operatorname{nz}(p)
 +c_*\bigl(|\widetilde p|^2-\operatorname{nz}(p)\bigr).
\]
The right side equals
\[
 1+c_*(|\widetilde p|^2-1)
 +(\operatorname{nz}(p)-1)(1-c_*).
\]
Since $0<c_*<1$, and $p\ne0$ implies
$\operatorname{nz}(p)\ge1$, this proves
\eqref{prof:eq:first-shell-separation}.

Turning to the left-hand side of  \eqref{prof:eq:lattice-gaussian-sums}, observe that the summands from $|\ell|=1$ produce a finite number,
while for $|\ell|\ge 2$ the summands are bounded by their values at $a=c_*$,
and the resulting Gaussian series converge.  This proves
\eqref{prof:eq:lattice-gaussian-sums}.

Finally, if $t\ge\gamma_N^{-1}$, then
$2c_*\gamma_Nt\ge2c_*$.
Extending the finite momentum sum from
$\mathcal Q_N$ to $\mathbb Z^D$, then using
\eqref{prof:eq:first-shell-separation} and \eqref{prof:eq:lattice-gaussian-sums}, yields
\[
 \sum_{p\ne0}e^{-2t\lambda_N(p)}
 \le
 e^{-2\gamma_Nt}
 \sum_{\ell\in\mathbb Z^D\setminus\{0\}}
 e^{-2c_*\gamma_Nt(|\ell|^2-1)}
 \le C_De^{-2\gamma_Nt}.
\]
For the energy trace, combine
\eqref{prof:eq:dispersion-comparison} with
\eqref{prof:eq:first-shell-separation}, and then use \eqref{prof:eq:lattice-gaussian-sums} to obtain
\[
 \sum_{p\ne0}\lambda_N(p)e^{-2t\lambda_N(p)}
 \le
 C_D\gamma_Ne^{-2\gamma_Nt}
 \sum_{\ell\in\mathbb Z^D\setminus\{0\}}
 |\ell|^2e^{-2c_*\gamma_Nt(|\ell|^2-1)}
 \le C_D\gamma_Ne^{-2\gamma_Nt}.
\qedhere
\]
\end{proof}

To state the spatial smoothing estimates below, define the averaged one-particle Dirichlet form and the nearest-neighbor oscillation by
\begin{equation}
 \mathcal E_N^{\RW}(f)
 :=\langle f,L_N^{\RW}f\rangle_{\mathrm{av}}
 =\frac1{2n_N}\sum_{x\sim y}|f(x)-f(y)|^2,
 \qquad
 \omega_N(f):=\max_{x\sim y}|f(x)-f(y)|,
 \label{prof:eq:one-particle-energy-oscillation}
\end{equation}
where the edge sum is over ordered neighboring pairs.

Our next result collects both long-time smoothing estimates and their cutoff window consequences for the profile $m_{N,t}$ from \eqref{prof:eq:profile}.

\begin{lemma}[One-particle smoothing and cutoff window bounds]
\label{prof:lem:one-particle-bounds}
Uniformly over all $S_N\subset V_N$ with $|S_N|=k_N$,
\begin{equation}
 \omega_N(m_{N,t})^2+\mathcal E_N^{\RW}(m_{N,t})
 \le C_D\gamma_Ne^{-2\gamma_Nt},
 \qquad t\ge\gamma_N^{-1}.
 \label{prof:eq:long-one-particle}
\end{equation}
Moreover, whenever $t_N(s)\ge\gamma_N^{-1}$,
\begin{equation}
 \|m_{N,t_N(s)}\|_{2,\mathrm{cnt}}^2
 +n_N\|m_{N,t_N(s)}\|_\infty^2
 \le C_De^{-s}.
 \label{prof:eq:profile-one-particle-explicit}
\end{equation}
Finally, for every nonempty compact $J\subset\mathbb R$ there is $C_J<\infty$ such that
\begin{equation}
 \sup_{s\in J}\left\{
 \|m_{N,t_N(s)}\|_{2,\mathrm{cnt}}
 + \sqrt{n_N}\|m_{N,t_N(s)}\|_\infty
 +\gamma_N^{-1}\|L_N^{\RW}m_{N,t_N(s)}\|_{2,\mathrm{cnt}}\right\}
 \le C_J.
 \label{prof:eq:one-particle-bounds}
\end{equation}
\end{lemma}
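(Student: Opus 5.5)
The plan is to work entirely in Fourier coordinates and reduce every quantity to the spectral sums of \cref{prof:lem:one-particle-spectral-sums}. Since $F_N=\one_{S_N}-\rho_N$ is mean zero, write $m_{N,t}=\sum_{p\ne0}e^{-t\lambda_N(p)}\widehat F_N(p)\chi_p$. The key uniform input is the trivial bound $|\widehat F_N(p)|\le n_N^{-1/2}\sum_x|F_N(x)|\le n_N^{1/2}$, valid for every source; no Parseval-type summability in $p$ is needed, because the exponential factors already supply it.

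For \eqref{prof:eq:long-one-particle}, begin with the energy. By Parseval in counting measure,
\[
\mathcal E_N^{\RW}(m_{N,t})=n_N^{-1}\sum_{p\ne0}\lambda_N(p)e^{-2t\lambda_N(p)}|\widehat F_N(p)|^2\le\sum_{p\ne0}\lambda_N(p)e^{-2t\lambda_N(p)}\le C_D\gamma_Ne^{-2\gamma_Nt}
\]
by \eqref{prof:eq:long-energy-trace}. For the oscillation, take $x\sim y$ with $y=x+\mathbf e_j$. Then
\[
|m_{N,t}(x)-m_{N,t}(y)|\le n_N^{-1/2}\sum_{p\ne0}e^{-t\lambda_N(p)}|\widehat F_N(p)|\,|1-e^{2\pi ip_j/N}|,
\]
and $|1-e^{2\pi ip_j/N}|^2=2-2\cos(2\pi p_j/N)\le\lambda_N(p)$. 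The bound on $|\widehat F_N(p)|$ cancels the prefactor $n_N^{-1/2}$, leaving $\sum_{p\ne0}\lambda_N(p)^{1/2}e^{-t\lambda_N(p)}$. Here I use the spectral sums at $t/2$ in place of $2t$. Specifically, write $e^{-t\lambda}=e^{-2(t/2)\lambda}$ and repeat the argument of \eqref{prof:eq:long-energy-trace}, via \eqref{prof:eq:first-shell-separation} and \eqref{prof:eq:dispersion-comparison}, applying \eqref{prof:eq:lattice-gaussian-sums} with $a=c_*\gamma_Nt\ge c_*$ when $t\ge\gamma_N^{-1}$. This gives
\[
\sum_{p\ne0}\lambda_N(p)^{1/2}e^{-t\lambda_N(p)}\le C_D\gamma_N^{1/2}e^{-\gamma_Nt},
\]
using $|\ell|\le1+|\ell|^2$. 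Squaring yields $\omega_N^2\le C_D\gamma_Ne^{-2\gamma_Nt}$. The main point to check is that the lattice-sum inequality \eqref{prof:eq:lattice-gaussian-sums} is applied with exponent $a=c_*\gamma_N t$, not $2c_*\gamma_Nt$; this still satisfies $a\ge c_*$, so the constants stay uniform.

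For \eqref{prof:eq:profile-one-particle-explicit}, the same two estimates apply. First, $\|m\|_{2,\mathrm{cnt}}^2=\sum_{p\ne0}e^{-2t\lambda}|\widehat F_N|^2\le n_N\sum_{p\ne0}e^{-2t\lambda}\le C_Dn_Ne^{-2\gamma_Nt}$ by \eqref{prof:eq:long-heat-trace}. Second, $\|m\|_\infty\le\sum_{p\ne0}e^{-t\lambda}\le C_De^{-\gamma_Nt}$ by the same halved-time argument. At $t=t_N(s)$ we have $e^{-2\gamma_Nt}=e^{-s}/n_N$, so both $\|m\|_{2,\mathrm{cnt}}^2$ and $n_N\|m\|_\infty^2$ are bounded by $C_De^{-s}$.

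For \eqref{prof:eq:one-particle-bounds}, fix a compact $J$. Then $t_N(s)\sim N^2\log n_N/(8\pi^2)\ge\gamma_N^{-1}$ for large $N$, uniformly in $s\in J$, and the finitely many small $N$ are absorbed into $C_J$ through the crude bounds $\|m\|_\infty\le1$ and $\|L_N^{\RW}\|\le4D$. The first two terms follow from \eqref{prof:eq:profile-one-particle-explicit} with $C_J=C_D\sup_Je^{-s}$. For the last term,
\[
\|L_N^{\RW}m\|_{2,\mathrm{cnt}}^2=\sum_{p\ne0}\lambda^2e^{-2t\lambda}|\widehat F_N|^2\le n_N\sum_{p\ne0}\lambda^2e^{-2t\lambda}.
\]
Using $\lambda\le\frac{\pi^2}{4}\gamma_N|\widetilde p|^2$ gives $\lambda^2\le C\gamma_N^2|\ell|^4$, and the $|\ell|^4$-weighted lattice Gaussian sum converges by the same reasoning as \eqref{prof:eq:lattice-gaussian-sums}. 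The result is $\le C_D\gamma_N^2n_Ne^{-2\gamma_Nt}=C_D\gamma_N^2e^{-s}$, which is the required bound.
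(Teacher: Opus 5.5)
Your proof is correct. It follows the paper's overall Fourier strategy, but you handle the source coefficients differently.

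\textbf{The paper's route.} The paper controls $\widehat F_N$ only in $\ell^2$, via Parseval and $\|F_N\|_{2,\mathrm{cnt}}^2=n_N\rho_N(1-\rho_N)\le n_N/4$, and uses Cauchy--Schwarz over modes.
\begin{itemize}
\item The oscillation and sup-norm bounds then reduce directly to the traces \eqref{prof:eq:long-energy-trace} and \eqref{prof:eq:long-heat-trace} at time $2t$.
\item The $L^2$, energy, and $L_N^{\RW}m$ bounds need no lattice sum at all. They use only the spectral gap, the monotonicity of $\lambda\mapsto\lambda e^{-2t\lambda}$ on $[\gamma_N,\infty)$ for $t\ge\gamma_N^{-1}$, and the multiplier bound $\sup_{y\ge1}y^2e^{-2t\gamma_N(y-1)}\le C_J$.
\end{itemize}

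\textbf{Your route.} You bound each coefficient pointwise, $|\widehat F_N(p)|\le\sqrt{n_N}$ from $|F_N|\le1$, and sum absolutely over modes. This has two consequences.
\begin{itemize}
\item You must rerun the lattice-sum argument with exponent $a=c_*\gamma_Nt$, and separately with an $|\ell|^4$ weight. Quoting the traces at $t/2$ would require $t\ge2\gamma_N^{-1}$.
\item Your $L^2$ bound holds only for $t\ge\gamma_N^{-1}$ and with a constant $C_D$. The paper's $\|m_{N,t}\|_{2,\mathrm{cnt}}^2\le n_Ne^{-2\gamma_Nt}$ holds for all $t\ge0$.
\end{itemize}

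\textbf{Trade-off.} Your approach gains uniformity of method, since every estimate is a single weighted lattice-Gaussian sum. The paper's approach gains sharper constants, and it depends on $F_N$ only through its $\ell^2$ norm.

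\textbf{One small point.} The crude bound $\|m_{N,t}\|_\infty\le1$, which you use for the finitely many small $N$, holds only for $t\ge0$. If $t_N(s)<0$ occurs for such $N$ and some $s\in J$, close the argument instead by continuity of $s\mapsto e^{-t_N(s)L_N^{\RW}}F_N$ on the compact set $J$, which is what the paper's ``enlarging $C_J$'' implicitly does.
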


\begin{proof}
Recall the orthonormal Fourier characters from
\eqref{prof:eq:probability-fourier-basis} and the source coefficients from
\eqref{prof:eq:source-fourier-coefficients}.  Since $F_N$ has mean zero, its Fourier decomposition sums over all $p\neq 0$, in which case $\lambda_N(p)\ge \gamma_N$. By Parseval,
\begin{equation}
 \|m_{N,t}\|_{2,\mathrm{cnt}}^2
 =\sum_{p\ne0}e^{-2t\lambda_N(p)}|\widehat F_N(p)|^2
 \le e^{-2\gamma_Nt}\|F_N\|_{2,\mathrm{cnt}}^2
 \le n_Ne^{-2\gamma_Nt}.
 \label{prof:eq:one-particle-parseval-decay}
\end{equation}
At \(t=t_N(s)\), the final quantity equals \(e^{-s}\).

We first record the long-time estimates in \eqref{prof:eq:long-one-particle}.  Since
$\|F_N\|_{2,\mathrm{cnt}}^2=n_N\rho_N(1-\rho_N)\le n_N/4$, Parseval gives
\[
 \mathcal E_N^{\RW}(m_{N,t})
 =\frac1{n_N}\sum_{p\ne0}\lambda_N(p)e^{-2t\lambda_N(p)}
   |\widehat F_N(p)|^2.
\]
When $t\ge\gamma_N^{-1}$, the function
$\lambda\mapsto\lambda e^{-2t\lambda}$ is decreasing on
$[\gamma_N,\infty)$, and therefore
\[
 \mathcal E_N^{\RW}(m_{N,t})
 \le \frac14\gamma_Ne^{-2\gamma_Nt}.
\]
If $x\sim y$, then the character formula and
\eqref{prof:eq:torus-sine-spectrum} give
$|e_p(x)-e_p(y)|^2\le\lambda_N(p)$.  Cauchy--Schwarz in the counting-normalized Fourier expansion thus yields
\[
 |m_{N,t}(x)-m_{N,t}(y)|^2
 \le \frac{\|F_N\|_{2,\mathrm{cnt}}^2}{n_N}
       \sum_{p\ne0}|e_p(x)-e_p(y)|^2e^{-2t\lambda_N(p)}
 \le \frac14\sum_{p\ne0}\lambda_N(p)e^{-2t\lambda_N(p)}.
\]
The long-energy trace \eqref{prof:eq:long-energy-trace} proves the oscillation bound and hence \eqref{prof:eq:long-one-particle}.

We next control the pointwise norm on the profile scale.  Suppose first that $t_N(s)\ge\gamma_N^{-1}$.  The separation estimate
\eqref{prof:eq:first-shell-separation} extracts the exact factor
$e^{-2\gamma_Nt_N(s)}$ associated with $\mathscr E_N^{(1)}$, while the
lattice sum estimate \eqref{prof:eq:lattice-gaussian-sums} controls all
remaining momenta.  The condition $t_N(s)\ge\gamma_N^{-1}$ gives
$2c_*\gamma_Nt_N(s)=c_*(\log n_N+s)\ge2c_*$.  Hence
\eqref{prof:eq:first-shell-separation} and \eqref{prof:eq:lattice-gaussian-sums} give
\begin{align*}
 \sum_{p\ne0}e^{-2t_N(s)\lambda_N(p)}
 &\le e^{-2\gamma_Nt_N(s)}
 \sum_{\ell\in\mathbb Z^D\setminus\{0\}}
 e^{-2c_*\gamma_Nt_N(s)(|\ell|^2-1)}
 \le C_De^{-2\gamma_Nt_N(s)}.
\end{align*}
Cauchy--Schwarz in the Fourier expansion, followed by
\(\|F_N\|_{2,\mathrm{cnt}}\le\sqrt{n_N}\), therefore gives
\[
 \|m_{N,t_N(s)}\|_\infty
 \le C_De^{-\gamma_Nt_N(s)}
 = C_D n_N^{-1/2} e^{-s/2}.
\]
Together with the cutoff-time specialization of
\eqref{prof:eq:one-particle-parseval-decay}, namely
$\|m_{N,t_N(s)}\|_{2,\mathrm{cnt}}^2\le e^{-s}$, this proves
\eqref{prof:eq:profile-one-particle-explicit}.
For $s$ in a fixed compact set $J$, the condition
$t_N(s)\ge\gamma_N^{-1}$ holds uniformly for all sufficiently large $N$;
after enlarging $C_J$, the first two terms in
\eqref{prof:eq:one-particle-bounds} also cover the finitely many remaining values of $N$.

Finally put \(t=t_N(s)\).  Uniformly for \(s\in J\),
\(t\gamma_N=(\log n_N+s)/2\to\infty\).  Hence for all sufficiently large
\(N\),
\[
 \sup_{\lambda\ge\gamma_N}
 \frac{\lambda^2e^{-2t\lambda}}
      {\gamma_N^2e^{-2t\gamma_N}}
 =\sup_{y\ge1}y^2e^{-2t\gamma_N(y-1)}
 \le C_J.
\]
For the finitely many remaining \(N\), the same bound holds after enlarging
\(C_J\), because the nonzero spectrum of \(L_N^{\RW}\) is bounded, and the
ratio is continuous in \(s\in J\).  Thus for every \(N\),
\begin{equation}
 \sup_{\lambda\in\operatorname{Spec}(L_N^{\RW})\setminus\{0\}}
 \frac{\lambda^2e^{-2t_N(s)\lambda}}
      {\gamma_N^2e^{-2t_N(s)\gamma_N}}
 \le C_J,
 \qquad s\in J.
 \label{prof:eq:cutoff-spectral-multiplier}
\end{equation}
Substituting \eqref{prof:eq:cutoff-spectral-multiplier} into the Fourier
expansion and using \(\|F_N\|_{2,\mathrm{cnt}}^2\le n_N\) gives
\[
 \|L_N^{\RW}m_{N,t_N(s)}\|_{2,\mathrm{cnt}}^2
 \le C_J\gamma_N^2n_Ne^{-2\gamma_Nt_N(s)}
 =C_J \gamma_N^2 e^{-s}.
\]
Dividing by \(\gamma_N^2\) gives the bound on the remaining term in
\eqref{prof:eq:one-particle-bounds}.
\end{proof}

\subsection{Canonical response and calibration}
\label{prof:sec:canonical-response-calibration}

For a field \(v\in\mathbb R_0^{V_N}\), define the centered linear
statistic
\begin{equation}
 X_{N,v}(\eta)
 :=\sum_{x\in V_N}v(x)(\eta_x-\rho_N).
 \label{prof:eq:centered-linear-statistic}
\end{equation}
Because \(v\) has zero spatial mean, \(X_{N,v}(\eta)=\sum_xv(x)\eta_x\).
Hence the calibrated canonical tilt defined in \eqref{prof:eq:tilt} can be
written as
\begin{align}
 g_{N,v}
 =\frac{e^{X_{N,v}}}{\mathbb E_{\pi_N}[e^{X_{N,v}}]}
 =\exp\{X_{N,v}-\Lambda_N(v)\},
 \label{prof:eq:gNv}
\end{align}
where
\begin{equation}
 \Lambda_N(v):=\log\mathbb E_{\pi_N}[e^{X_{N,v}}].
 \label{prof:eq:canonical-coefficient}
\end{equation}
Recall from \eqref{prof:eq:tilt} that \(\nu_{N,v}=g_{N,v}\pi_N\).

For \(u\in\mathbb R_0^{V_N}\) and random variables \(Y_1,\ldots,Y_j\), define
their joint cumulant under \(\nu_{N,u}\) by
\begin{equation}
 \operatorname{Cum}_u(Y_1,\ldots,Y_j)
 :=\left.
 \partial_{t_1}\cdots\partial_{t_j}
 \log\mathbb E_{\nu_{N,u}}\left[
 \exp\left\{\sum_{\ell=1}^jt_\ell Y_\ell\right\}
 \right]\right|_{t_1=\cdots=t_j=0}.
 \label{prof:eq:canonical-cumulant-definition}
\end{equation}
For \(j\ge2\), adding a constant to any one argument leaves the cumulant
unchanged.  Thus cumulants of order at least two may use \(\eta_x\) or
\(\eta_x-\rho_N\) interchangeably.

\localheading{Partition chain rule and moment--cumulant identity.}
For \(r\ge1\), let \(\operatorname{Part}([r])\) denote the set of set partitions of
\([r]=\{1,\ldots,r\}\).  If \(\partial_1,\ldots,\partial_r\) are commuting first-order derivative
operators, \(F\) is a smooth scalar function, and \(G\) is a smooth scalar
field, write \(\partial_B:=\prod_{i\in B}\partial_i\).  Then
\begin{equation}
 \partial_1\cdots \partial_r F(G)
 =\sum_{\mathfrak p\in\operatorname{Part}([r])}
   F^{(|\mathfrak p|)}(G)
   \prod_{B\in\mathfrak p}\partial_BG.
 \label{prof:eq:partition-chain-rule}
\end{equation}
This is the partition form of the multivariate Fa\`a di Bruno identity.
Indeed, the identity is immediate for \(r=1\), as it reproduces the usual chain rule.
If it holds for \(r\), then
applying \(\partial_{r+1}\) to a term indexed by \(\mathfrak p\) either
differentiates \(F^{(|\mathfrak p|)}(G)\), which creates the singleton block
\(\{r+1\}\), or differentiates one factor \(\partial_BG\), which adjoins \(r+1\)
to that block.  These alternatives produce every partition of \([r+1]\)
exactly once.

Apply \eqref{prof:eq:partition-chain-rule} with \(\partial_i=\partial_{t_i}\),
\(F(s)=e^s\), and
\[
 G(t_1,\ldots,t_r)
 :=\log\mathbb E_{\nu_{N,u}}\left[
    \exp\left\{\sum_{i=1}^rt_iY_i\right\}\right].
\]
Since \(G(0)=0\) and
\(\partial_BG(0)=\operatorname{Cum}_u((Y_i)_{i\in B})\), evaluation at the origin
gives the \emph{moment--cumulant identity}
\begin{equation}
 \mathbb E_{\nu_{N,u}}\left[\prod_{i=1}^rY_i\right]
 =\sum_{\mathfrak p\in\operatorname{Part}([r])}
   \prod_{B\in\mathfrak p}
   \operatorname{Cum}_u((Y_i)_{i\in B}).
 \label{prof:eq:moment-cumulant-identity}
\end{equation}

\localheading{Cumulant and response.}
We write \(\mathrm d^j\Lambda_N(u)\) for the \(j\)th Fr\'echet derivative,
viewed as a symmetric \(j\)-linear form on \((\mathbb R_0^{V_N})^j\).
For field directions \(a_1,\ldots,a_j\in\mathbb R_0^{V_N}\), linearity of
\(u\mapsto X_{N,u}\) gives
\[
 \Lambda_N\left(u+\sum_{\ell=1}^jt_\ell a_\ell\right)-\Lambda_N(u)
 =\log\mathbb E_{\nu_{N,u}}\left[
 \exp\left\{\sum_{\ell=1}^jt_\ell X_{N,a_\ell}\right\}\right].
\]
Taking the mixed derivative at \(t_1=\cdots=t_j=0\) yields
\begin{equation}
 \mathrm d^j\Lambda_N(u)[a_1,\ldots,a_j]
 =\operatorname{Cum}_u(X_{N,a_1},\ldots,X_{N,a_j}).
 \label{prof:eq:Frechet-cumulant-identity}
\end{equation}
In particular,
\[
 \mathrm d\Lambda_N(u)[a]=\mathbb E_{\nu_{N,u}}[X_{N,a}],
 \qquad
 \mathrm d^2\Lambda_N(u)[a,b]
 =\operatorname{Cov}_{\nu_{N,u}}(X_{N,a},X_{N,b}).
\]

The next lemma collects all finite-population response estimates used in
\cref{prof:sec:calibration,prof:sec:chaos-interface}.
Its proof is given in \cref{prof:app:canonical-response-consequences}.  The
sitewise estimate \eqref{prof:eq:canonical-distinct-cumulant-general} comes
from the scalar deleted-sum saddle and connected-hypergraph expansion in
\cref{prof:app:conditional-bernoulli-cumulants,prof:app:canonical-cumulants}.
The multilinear response \eqref{prof:eq:canonical-multilinear-response} then
follows from \eqref{prof:eq:Frechet-cumulant-identity} by sorting coordinate
tuples by their coincidence partition, and the covariance response
\eqref{prof:eq:canonical-covariance-response} follows from the exact
covariance at \(u=0\) and the fundamental theorem of calculus.

\begin{lemma}[Canonical cumulant and response package]
\label{prof:lem:canonical-response}
There is \(c>0\), depending only on \(\rho_0\), such that the following
hold whenever \(u\in\mathbb R_0^{V_N}\) and
\(\|u\|_\infty\le c\).

\emph{(Sitewise cumulants.)}
For every fixed \(j\ge1\), if \(x_1,\ldots,x_j\) involve \(d\) distinct
sites, then
\begin{equation}
 \bigl|\operatorname{Cum}_u(\eta_{x_1},\ldots,\eta_{x_j})\bigr|
 \le C_{j,\rho_0}n_N^{1-d}.
 \label{prof:eq:canonical-distinct-cumulant-general}
\end{equation}

\emph{(Multilinear response.)}
For every \(j\ge3\) and all field directions
\(a_1,\ldots,a_j\in\mathbb R_0^{V_N}\),
\begin{equation}
 |\mathrm d^j\Lambda_N(u)[a_1,\ldots,a_j]|
 \le C_{j,\rho_0}
 \min_{1\le i\le j}
 \left\{
   \|a_i\|_\infty
   \prod_{\ell\ne i}\|a_\ell\|_{2,\mathrm{cnt}}
 \right\}.
 \label{prof:eq:canonical-multilinear-response}
\end{equation}

\emph{(Covariance response.)}
As operators on \(\mathbb R_0^{V_N}\),
\begin{equation}
 \|\mathrm d^2\Lambda_N(u)-\beta_N I\|_{2\to2}
 +\|\mathrm d^2\Lambda_N(u)-\beta_N I\|_{\infty\to\infty}
 \le C_{\rho_0}\|u\|_\infty.
 \label{prof:eq:canonical-covariance-response}
\end{equation}
All bounds are uniform in \(N\), in the density window, and in the
locations and coincidence pattern of the sites.
\end{lemma}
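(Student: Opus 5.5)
The plan has three steps, taken in the order the lemma is stated. First I prove the sitewise cumulant bound by realizing \(\nu_{N,u}\) as a conditioned product measure. Then I derive the multilinear response bound from it by multilinear expansion. Finally I get the covariance response by a fundamental-theorem-of-calculus argument.

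\emph{Sitewise cumulants.} Write \(\nu_{N,u}\) as the law of independent Bernoulli variables with parameters \(p_x=\sigma(u(x)+\theta)\), conditioned on \(\sum_x\eta_x=k_N\). Here \(\sigma\) is the logistic function and \(\theta\) is chosen so that \(\sum_xp_x=k_N\) (saddle point). Because \(\|u\|_\infty\le c\) and \(\rho_N\in[\rho_0,1-\rho_0]\), all \(p_x\) lie in a compact subinterval of \((0,1)\).

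Cumulants of the \(\eta_{x_i}\) are derivatives of \(\log\mathbb E\exp(\sum t_i\eta_{x_i})\) under the conditioned law. Fix the distinct sites \(y_1,\dots,y_d\) and condition on their values. The remaining contribution is the ratio \(P(\sum_{z\notin\{y\}}\eta_z=k_N-m)/P(\sum_z\eta_z=k_N)\), where \(m\) is the number of occupied \(y\)'s. A local central limit theorem (Edgeworth expansion) for the deleted Poisson--binomial sum shows that the log of this ratio is a smooth function of \(m\) and of the deleted parameters. Its Taylor coefficients of order \(q\) are \(O(n_N^{1-q})\), with variance \(\asymp n_N\).

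A mixed cumulant involving \(d\) distinct sites must draw on a connected interaction among all \(d\) sites. Each interaction produced by this expansion carries a factor \(n_N^{-1}\) per additional site. Organizing the expansion as a sum over connected hypergraphs gives \(O(n_N^{1-d})\). Repeated sites only add derivatives in the same \(t\)-variable, which changes constants but not the power of \(n_N\).

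I expect this step to be the main obstacle. The issue is making the deleted-sum saddle expansion uniform in the coincidence pattern, with exactly \(n^{-1}\) gained per extra distinct site. The first thing I would try is an exact Fourier-integral representation of the conditioning, \(\frac1{2\pi}\int e^{i\phi(\sum\eta-k)}d\phi\), followed by a saddle-point expansion around \(\phi=0\).

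\emph{Multilinear response.} By \eqref{prof:eq:Frechet-cumulant-identity}, \(\mathrm d^j\Lambda_N(u)[a_1,\dots,a_j]=\sum_{x_1,\dots,x_j}\prod_i a_i(x_i)\Cum_u(\eta_{x_1},\dots,\eta_{x_j})\). I sort the tuples by their coincidence partition \(\mathfrak p\) with \(d=|\mathfrak p|\) blocks. The cumulant bound gives the estimate
\[
n_N^{1-d}\sum_{y_1,\dots,y_d}\prod_{B\in\mathfrak p}\Bigl|\prod_{i\in B}a_i(y_B)\Bigr|.
\]
The sum factorizes over blocks. Place the distinguished index \(i_0\) in sup norm. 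In each block, use Hölder to bound \(\sum_y\prod_{i\in B}|a_i(y)|\) by \(\prod\|a_i\|_{2,\mathrm{cnt}}\) when \(|B|\ge2\), using \(\ell^2\subset\ell^{|B|}\) and sup norms where needed. A singleton block gives \(\|a_i\|_{1}\le\sqrt{n_N}\|a_i\|_{2,\mathrm{cnt}}\).

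Naively, singletons cost \(\sqrt{n_N}\) each, which the \(n_N^{1-d}\) factor does not fully absorb. Mean-zero fields fix this. Since each \(a_i\) sums to zero, the sum over a singleton's site can be rewritten via \(\sum_y a(y)\Cum(\dots,\eta_y,\dots)=\sum_y a(y)[\Cum(\dots,\eta_y,\dots)-\overline{\Cum}]\), and the fluctuation in \(y\) gains an extra \(n_N^{-1}\). Alternatively, I would use that \(\sum_y\eta_y\) is constant, so joint cumulants with \(\sum_y\eta_y\) vanish for \(j\ge2\). This gives the needed cancellation, and the total is bounded by \(C\|a_{i_0}\|_\infty\prod_{\ell\ne i_0}\|a_\ell\|_{2,\mathrm{cnt}}\).

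\emph{Covariance response.} At \(u=0\), \eqref{prof:eq:first-chaos-isometry} gives \(\mathrm d^2\Lambda_N(0)=\beta_NI\) on \(\mathbb R_0^{V_N}\). Then
\[
\mathrm d^2\Lambda_N(u)-\beta_NI=\int_0^1\mathrm d^3\Lambda_N(\tau u)[u,\cdot,\cdot]\,d\tau .
\]
Applying the \(j=3\) response bound with \(u\) in sup norm bounds the \(2\to2\) norm by \(C\|u\|_\infty\). For \(\infty\to\infty\), I examine the kernel \(K(x,y)=\sum_zu(z)\Cum_{\tau u}(\eta_x,\eta_y,\eta_z)\). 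Up to projection onto mean-zero functions, its row sums satisfy \(\sum_y|K(x,y)|\lesssim\|u\|_\infty\bigl(n_N\cdot n_N\cdot n_N^{-2}+n_N\cdot n_N^{-1}+1\bigr)\), by the sitewise bounds sorted by coincidence pattern. This gives the \(\infty\to\infty\) estimate.
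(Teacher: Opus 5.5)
Your plan for the sitewise cumulants and for the covariance response follows the paper's route. For the cumulants this means: the product Bernoulli law conditioned on $k_N$, the marked marginal expressed through the deleted-sum masses, saddle/local-limit control giving $O(n_N^{1-q})$ for the $q$th differences of the log, and a connected-hypergraph expansion of the resulting marked Gibbs law. For the covariance response it means the fundamental theorem of calculus with $\mathrm d^3\Lambda_N$, followed by the row-sum split by coincidence pattern. The paper resolves your ``main obstacle'' by choosing a separate centering tilt $t_{\mathrm{cen}}(h)$ for each Hamming weight $h$. Then the leading part of $\log q_A(h)$ has $h$-derivative exactly $t_{\mathrm{cen}}(h)$, with $t_{\mathrm{cen}}^{(r)}=O(n_N^{-r})$.

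\textbf{The flaw: the multilinear-response step.} You declare the direct coincidence-partition count insufficient. You then repair it by asserting that, after subtracting $\overline{\operatorname{Cum}}$, ``the fluctuation in $y$ gains an extra $n_N^{-1}$.'' This does not hold, for two reasons. First, since $\sum_y a(y)=0$, that subtraction is an identity with no content. Second, for general $u$ with $\|u\|_\infty\le c$, the distinct-site cumulant $\operatorname{Cum}_u(\eta_{x_1},\ldots,\eta_{x_{j-1}},\eta_y)$ varies with $y$ through $p_y(u)$ at its own order $n_N^{1-d}$, so there is no extra decay to extract. Your alternative, that cumulants with $\sum_y\eta_y$ vanish, only says the full $y$-average of the cumulant is zero, which gives no pointwise smallness. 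As written, this step would fail.

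\textbf{The repair is simpler.} The naive count already closes once the block carrying the sup-norm index is treated separately, and this is exactly where $j\ge3$ is used. Fix $i$, let $\mathfrak b_i$ be its block in the coincidence partition $\mathfrak p$, and let $s$ and $q$ count the other singleton and nonsingleton blocks. Then $|\mathfrak p|=1+s+q$, and the cumulant bound contributes $n_N^{-s-q}$. The block costs are:
\begin{itemize}
\item each other singleton costs $n_N^{1/2}$, by Cauchy--Schwarz;
\item each other nonsingleton block costs $O(1)$, by H\"older and $\ell^2\subset\ell^p$;
\item $\mathfrak b_i$ costs at most $n_N\|a_i\|_\infty$, $n_N^{1/2}\|a_i\|_\infty\|a_r\|_{2,\mathrm{cnt}}$, or $\|a_i\|_\infty\prod_{r\in\mathfrak b_i\setminus\{i\}}\|a_r\|_{2,\mathrm{cnt}}$, according as $|\mathfrak b_i|=1$, $2$, or $\ge3$.
\end{itemize}
The net powers of $n_N$ in these three cases are $1-q-s/2$, $\tfrac12-q-s/2$, and $-q-s/2$. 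All are $\le0$, because the $j-|\mathfrak b_i|$ indices outside $\mathfrak b_i$ force $q\ge1$ or $s\ge2$ in the first case, and $q\ge1$ or $s\ge1$ in the second.

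For instance, the all-singleton partition at $j=3$ gives $n_N^{-2}\cdot n_N\cdot n_N^{1/2}\cdot n_N^{1/2}=O(1)$. At $j=2$ the same count gives $n_N^{1/2}$, which is why the estimate is stated only for $j\ge3$. Replacing your cancellation argument by this bookkeeping gives the paper's proof.
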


Define the centered canonical marginal map
\[
 \Marg_N:\mathbb R_0^{V_N}\to\mathbb R_0^{V_N},
 \qquad
 \Marg_N(v):=\mathbb E_{\nu_{N,v}}[\eta-\rho_N].
\]
By \eqref{prof:eq:Frechet-cumulant-identity}, its Fr\'echet derivative is the
covariance operator \(\mathrm d\Marg_N(v)=\mathrm d^2\Lambda_N(v)\) on
\(\mathbb R_0^{V_N}\), under the counting inner product.  The next lemma
quantifies the resulting local inverse and constructs the calibration field
announced in \cref{prof:def:tilt}.

\begin{lemma}[Local canonical calibration]
\label{prof:lem:calibration}
There exist constants \(\delta_{\rho_0},c_{\rho_0}>0\) and
\(C_{\rho_0}<\infty\) such that, uniformly under
\eqref{prof:eq:density-window}, every
\(m\in\mathbb R_0^{V_N}\) with \(\|m\|_\infty\le\delta_{\rho_0}\) has a
unique solution \(v=v_N(m)\in\mathbb R_0^{V_N}\) with
\(\|v\|_\infty\le c_{\rho_0}\) to
\(\Marg_N(v)=m\).  Moreover,
\begin{equation}
 \|v-\beta_N^{-1}m\|_{2,\mathrm{cnt}}
 \le C_{\rho_0}\|m\|_\infty\|m\|_{2,\mathrm{cnt}},
 \qquad
 \|v\|_\infty\le C_{\rho_0}\|m\|_\infty.
 \label{prof:eq:calibration-estimate}
\end{equation}
\end{lemma}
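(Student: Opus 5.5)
The plan is to solve $\Marg_N(v)=m$ by a quantitative inverse function argument on $\mathbb R_0^{V_N}$, using the covariance response \eqref{prof:eq:canonical-covariance-response} as the invertibility input. We work in the $\ell^\infty$ norm, where the covariance response gives uniform control. Set $A(v):=\mathrm d\Marg_N(v)=\mathrm d^2\Lambda_N(v)$. For $\|v\|_\infty\le c$, with $c$ from \cref{prof:lem:canonical-response}, we have
\[
 \|A(v)-\beta_NI\|_{\infty\to\infty}\le C\|v\|_\infty .
\]
The density window gives $\beta_N\ge\rho_0(1-\rho_0)/2=:\beta_*>0$. We choose $c_{\rho_0}\le c$ so small that $C c_{\rho_0}\le\beta_*/2$. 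Then $\|\beta_N^{-1}A(v)-I\|_{\infty\to\infty}\le 1/2$ on the ball $B:=\{v\in\mathbb R_0^{V_N}:\|v\|_\infty\le c_{\rho_0}\}$, which is convex.

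For \textbf{uniqueness}, suppose $v,w\in B$ satisfy $\Marg_N(v)=\Marg_N(w)$. The fundamental theorem of calculus along the segment gives
\[
 0=\int_0^1A(w+\theta(v-w))\,\mathrm d\theta\,(v-w).
\]
The averaged operator is within $\beta_N/2$ of $\beta_NI$ in $\infty\to\infty$ norm, hence invertible, and so $v=w$.

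For \textbf{existence}, we run the Newton-type contraction $T(v):=v-\beta_N^{-1}(\Marg_N(v)-m)$ on $B$. Since $\Marg_N(0)=0$ by symmetry of $\pi_N$, the same FTC bound gives
\[
 \|T(v)\|_\infty\le \tfrac12\|v\|_\infty+\beta_*^{-1}\|m\|_\infty .
\]
Hence $T$ maps $B$ into itself once $\delta_{\rho_0}\le\beta_*c_{\rho_0}/2$. It is a $\tfrac12$-contraction in $\ell^\infty$, and also preserves the mean-zero subspace, so Banach's fixed point theorem yields $v$. The same identity $m=\int_0^1A(\theta v)\,\mathrm d\theta\, v$ gives $\|v\|_\infty\le 2\beta_*^{-1}\|m\|_\infty$, which is the second bound in \eqref{prof:eq:calibration-estimate}. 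This step needs $\Marg_N$ to be $C^1$, which is clear on a finite state space. We should also check that $\Marg_N(v)$ is mean zero; it is, because $\sum_x(\eta_x-\rho_N)=0$ on the slice.

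For the \textbf{quadratic estimate}, write
\[
 m=\beta_Nv+R, \qquad R:=\int_0^1\bigl(A(\theta v)-\beta_NI\bigr)\,\mathrm d\theta\, v .
\]
The $2\to2$ part of \eqref{prof:eq:canonical-covariance-response} gives $\|R\|_{2,\mathrm{cnt}}\le C\|v\|_\infty\|v\|_{2,\mathrm{cnt}}$. An $\ell^2$ version of the invertibility argument, valid since $\|\beta_N^{-1}A-I\|_{2\to2}\le1/2$, gives $\|v\|_{2,\mathrm{cnt}}\le 2\beta_*^{-1}\|m\|_{2,\mathrm{cnt}}$. Then
\[
 v-\beta_N^{-1}m=-\beta_N^{-1}R
\]
has $\ell^2$ norm at most $C\|m\|_\infty\|m\|_{2,\mathrm{cnt}}$, which is the first bound in \eqref{prof:eq:calibration-estimate}.

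The only substantive input is \eqref{prof:eq:canonical-covariance-response} in \emph{both} the $2\to2$ and $\infty\to\infty$ norms. That is taken from \cref{prof:lem:canonical-response}, so the main care point is the bookkeeping of constants: $c_{\rho_0}$ must be chosen before $\delta_{\rho_0}$, and all constants must be uniform over the density window.
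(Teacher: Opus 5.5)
Your proposal is correct and follows essentially the same argument as the paper. Both use the fixed-point map $v\mapsto v-\beta_N^{-1}(\Marg_N(v)-m)$, control the remainder by the fundamental theorem of calculus together with the $\infty\to\infty$ covariance response, and then obtain the quadratic estimate by the same $\ell^2$ absorption. The only cosmetic difference is the order of the steps. You contract on the full ball $\{\|v\|_\infty\le c_{\rho_0}\}$ and then derive the $\ell^\infty$ bound and uniqueness (via invertibility of the segment-averaged Jacobian). The paper instead contracts on the $m$-dependent ball $B_m$ and then shows that every solution in the full neighborhood already lies in $B_m$.
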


\begin{proof}
The first chaos covariance identity \eqref{prof:eq:first-chaos-isometry} gives
$\mathrm d\Marg_N(0)=\beta_NI$.  The covariance response estimate
\eqref{prof:eq:canonical-covariance-response} gives, for $v$ in its stated
neighborhood,
\begin{equation}
 \|\mathrm d\Marg_N(v)-\beta_NI\|_{2\to2}
 +\|\mathrm d\Marg_N(v)-\beta_NI\|_{\infty\to\infty}
 \le C_{\rho_0}\|v\|_\infty.
 \label{prof:eq:response-linearization}
\end{equation}
Introduce the derivative remainder and the corresponding nonlinear
remainder of the marginal map,
\[
 \mathsf{Err}_N(v):=\mathrm d\Marg_N(v)-\beta_NI,
 \qquad
 \mathsf{Rem}^{\mathrm{resp}}_N(v):=\Marg_N(v)-\beta_Nv.
\]
Since \(\Marg_N(0)=0\), the fundamental theorem of calculus gives
\begin{align}
 \mathsf{Rem}^{\mathrm{resp}}_N(v)
 &=\int_0^1 \mathsf{Err}_N(\xi v)v\,\dd\xi,
 \label{prof:eq:calibration-remainder-origin}\\
 \mathsf{Rem}^{\mathrm{resp}}_N(v)-\mathsf{Rem}^{\mathrm{resp}}_N(w)
 &=\int_0^1 \mathsf{Err}_N\bigl(w+\xi(v-w)\bigr)(v-w)\,\dd\xi.
 \label{prof:eq:calibration-remainder-segment}
\end{align}

Fix once and for all a contraction parameter \(\varepsilon\in(0,1)\); for definiteness one may
take \(\varepsilon=\frac12\).  The density assumption
\eqref{prof:eq:density-window} yields a constant
\(\beta_*=\beta_*(\rho_0)>0\) such that \(\beta_N\ge\beta_*\) uniformly.
Choose \(c_{\rho_0}\) no larger than the response radius in
\cref{prof:lem:canonical-response} and small enough that
$
 C_{\rho_0}c_{\rho_0}\le \varepsilon\beta_*$.
Then whenever \(\|u\|_\infty\le c_{\rho_0}\),
\begin{equation}
 \|\mathsf{Err}_N(u)\|_{2\to2}
 +\|\mathsf{Err}_N(u)\|_{\infty\to\infty}
 \le \varepsilon\beta_N.
 \label{prof:eq:calibration-epsilon-bound}
\end{equation}
Thus the same parameter \(\varepsilon\) controls both the contraction
constant and the later \(L^2\) absorption.  Since \(\varepsilon\) is fixed
universally, all constants below may still be indexed only by \(\rho_0\).

Choose \(\delta_{\rho_0}>0\) so that
$
 \delta_{\rho_0}\le (1-\varepsilon)\beta_*c_{\rho_0}$.
Fix \(m\in\mathbb R_0^{V_N}\) with
\(\|m\|_\infty\le\delta_{\rho_0}\), and set
\[
 r_m:=\beta_N^{-1}\|m\|_\infty,
 \qquad
 B_m:=\left\{v\in\mathbb R_0^{V_N}:
 \|v\|_\infty\le\frac{r_m}{1-\varepsilon}\right\}.
\]
The ball \(B_m\) is convex.
Moreover, since
$
 \frac{r_m}{1-\varepsilon}
 \le \frac{\beta_*^{-1}\delta_{\rho_0}}{1-\varepsilon}
 \le c_{\rho_0}$,
\(B_m\) lies inside the response neighborhood on which
\eqref{prof:eq:calibration-epsilon-bound} holds.

Define
\[
 \mathsf{Fix}_m(v)
 :=\beta_N^{-1}m-\beta_N^{-1}\mathsf{Rem}^{\mathrm{resp}}_N(v)
 =\beta_N^{-1}m
 -\beta_N^{-1}(\Marg_N(v)-\beta_Nv).
\]
Because \(m\), \(v\), and \(\Marg_N(v)\) have zero spatial mean,
\(\mathsf{Fix}_m(v)\in\mathbb R_0^{V_N}\).  For \(v,w\in B_m\), convexity
of \(B_m\), \eqref{prof:eq:calibration-remainder-origin}--%
\eqref{prof:eq:calibration-remainder-segment}, and
\eqref{prof:eq:calibration-epsilon-bound} give
\[
 \|\mathsf{Rem}^{\mathrm{resp}}_N(v)\|_\infty
 \le\varepsilon\beta_N\|v\|_\infty,
 \qquad
 \|\mathsf{Fix}_m(v)-\mathsf{Fix}_m(w)\|_\infty
 \le\varepsilon\|v-w\|_\infty.
\]
Consequently,
\[
 \|\mathsf{Fix}_m(v)\|_\infty
 \le r_m+\varepsilon\|v\|_\infty
 \le\frac{r_m}{1-\varepsilon},
\]
so \(\mathsf{Fix}_m\) maps \(B_m\) into itself and is an
\(\varepsilon\)-contraction there.  Since \(B_m\) is complete, Banach's
fixed-point theorem gives a unique \(v\in B_m\) satisfying
\(\mathsf{Fix}_m(v)=v\), equivalently \(\Marg_N(v)=m\).  In particular,
\begin{equation}
 \|v\|_\infty
 \le\frac{\beta_N^{-1}}{1-\varepsilon}\|m\|_\infty
 \le C_{\rho_0}\|m\|_\infty.
 \label{prof:eq:calibration-linfty}
\end{equation}

This fixed point is the only solution in the full response neighborhood
\(\{\|w\|_\infty\le c_{\rho_0}\}\).  Indeed, if
\(\Marg_N(w)=m\) and \(\|w\|_\infty\le c_{\rho_0}\), then
\eqref{prof:eq:calibration-remainder-origin} and
\eqref{prof:eq:calibration-epsilon-bound} yield
$
 \|\mathsf{Rem}^{\mathrm{resp}}_N(w)\|_\infty\le\varepsilon\beta_N\|w\|_\infty
 $.
Since \(m=\beta_Nw+\mathsf{Rem}^{\mathrm{resp}}_N(w)\),
$
 (1-\varepsilon)\beta_N\|w\|_\infty
 \le\|m\|_\infty
$,
and hence \(\|w\|_\infty\le r_m/(1-\varepsilon)\).  Thus \(w\in B_m\),
where fixed-point uniqueness applies.

It remains to prove the \(L^2\) estimate in \eqref{prof:eq:calibration-estimate}.
From
\eqref{prof:eq:calibration-remainder-origin} and the \(2\to2\) part of
\eqref{prof:eq:response-linearization},
\begin{equation}
 \|\mathsf{Rem}^{\mathrm{resp}}_N(v)\|_{2,\mathrm{cnt}}
 \le C_{\rho_0}\|v\|_\infty\|v\|_{2,\mathrm{cnt}}.
 \label{prof:eq:calibration-remainder-L2}
\end{equation}
Because \(v\in B_m\subseteq\{\|u\|_\infty\le c_{\rho_0}\}\), the sharper
bound \eqref{prof:eq:calibration-epsilon-bound} also gives
\[
 \|m-\beta_Nv\|_{2,\mathrm{cnt}}
 =\|\mathsf{Rem}^{\mathrm{resp}}_N(v)\|_{2,\mathrm{cnt}}
 \le\varepsilon\beta_N\|v\|_{2,\mathrm{cnt}}.
\]
Therefore
\begin{equation}
 \|v\|_{2,\mathrm{cnt}}
 \le\frac{1}{(1-\varepsilon)\beta_N}
       \|m\|_{2,\mathrm{cnt}}
 \le C_{\rho_0}\|m\|_{2,\mathrm{cnt}}.
 \label{prof:eq:calibration-v-L2}
\end{equation}
Finally, \(m=\beta_Nv+\mathsf{Rem}^{\mathrm{resp}}_N(v)\).  Applying
\eqref{prof:eq:calibration-remainder-L2} together with
\eqref{prof:eq:calibration-v-L2}, \eqref{prof:eq:calibration-linfty}, and the
density window lower bound on \(\beta_N\) yields
\[
 \|v-\beta_N^{-1}m\|_{2,\mathrm{cnt}}
 =\beta_N^{-1}\|\mathsf{Rem}^{\mathrm{resp}}_N(v)\|_{2,\mathrm{cnt}}
 \le C_{\rho_0}\|m\|_\infty\|m\|_{2,\mathrm{cnt}}. \qedhere
\]
\end{proof}

\subsection{Gaussianization of the canonical tilt}

We now state the key Gaussian asymptotic that produces the total variation profile.
Below ``$\overset{d}{\rightarrow}$'' denotes convergence in distribution, and $\mathcal{N}(0,\varsigma^2)$ denotes the centered normal distribution with variance $\varsigma^2$.

\begin{lemma}[Finite-population Gaussianization]
\label{prof:lem:gaussianization}
Let $v_N\in\mathbb R_0^{V_N}$ satisfy $\|v_N\|_\infty\to0$ and
$\|v_N\|_{2,\mathrm{cnt}}=O(1)$ as $N\to\infty$.
With \(X_{N,v}\) as in \eqref{prof:eq:centered-linear-statistic}, put
$X_N:=X_{N,v_N}$ under $\pi_N$, and
let $\varsigma_N\ge0$ be defined by $\varsigma_N^2=\beta_N\|v_N\|_{2,\mathrm{cnt}}^2$.  
Then uniformly for $z$ in each compact subset of $\mathbb R$,
\begin{equation}
 \log\mathbb E_{\pi_N}\left[e^{zX_N}\right]
 =\frac{z^2\varsigma_N^2}{2}+o(1).
 \label{prof:eq:mgf-gaussian}
\end{equation}
If $\varsigma_N\to\varsigma$ for some $\varsigma\ge0$, then $X_N\overset{d}{\rightarrow} \mathcal N(0,\varsigma^2)$ and
\begin{equation}
 \frac12\mathbb E_{\pi_N}|g_{N,v_N}-1|
 \longrightarrow
 2\PhiGauss\!\left(\frac{\varsigma}{2}\right)-1.
 \label{prof:eq:tilt-TV}
\end{equation}
More generally, let $\mathcal I_N$ be nonempty index sets, and let
$v_{N,\alpha}\in\mathbb R_0^{V_N}$ satisfy
\[
 \sup_{\alpha\in \mathcal I_N}\|v_{N,\alpha}\|_\infty\longrightarrow0,
 \qquad
 \sup_N\sup_{\alpha\in \mathcal I_N}\|v_{N,\alpha}\|_{2,\mathrm{cnt}}<\infty.
\]
Writing
\(X_{N,\alpha}:=X_{N,v_{N,\alpha}}\), and
letting \(\varsigma_{N,\alpha}\ge0\) be defined by \(\varsigma_{N,\alpha}^2=\beta_N\|v_{N,\alpha}\|_{2,\mathrm{cnt}}^2\), we have for every fixed
\(Z\ge0\),
\begin{equation}
 \sup_{\alpha\in \mathcal I_N}\sup_{|z|\le Z}
 \left|
 \log\mathbb E_{\pi_N}\left[e^{zX_{N,\alpha}}\right]-\frac{z^2\varsigma_{N,\alpha}^2}2
 \right|\longrightarrow0.
 \label{prof:eq:uniform-mgf-gaussian}
\end{equation}
Moreover,
\begin{equation}
 \sup_{\alpha\in \mathcal I_N}\left|
 \frac12\mathbb E_{\pi_N}|g_{N,v_{N,\alpha}}-1|
 -\left(2\PhiGauss\!\left(\frac{\varsigma_{N,\alpha}}{2}\right)-1\right)
 \right|\longrightarrow0.
 \label{prof:eq:uniform-tilt-TV}
\end{equation}
\end{lemma}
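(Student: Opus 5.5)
The plan is to obtain \eqref{prof:eq:uniform-mgf-gaussian} directly from the cumulant package of \cref{prof:lem:canonical-response}, via a Taylor expansion of \(\Lambda_N\) along a ray. The total-variation statement then follows from the Gaussian moment generating function by a tilting argument.

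For fixed \(Z\) and \(|z|\le Z\), the field \(zv_{N,\alpha}\) satisfies \(\|zv_{N,\alpha}\|_\infty\le Z\sup_\alpha\|v_{N,\alpha}\|_\infty\to0\). So for large \(N\) the entire segment \(\{\xi zv_{N,\alpha}:\xi\in[0,1]\}\) lies in the response neighborhood \(\|u\|_\infty\le c\). Since \(\Lambda_N(0)=0\) and \(\mathrm d\Lambda_N(0)[v]=\mathbb E_{\pi_N}X_{N,v}=0\), Taylor's formula with integral remainder gives
\[
 \log\mathbb E_{\pi_N}[e^{zX_{N,\alpha}}]=\Lambda_N(zv_{N,\alpha})
 =\int_0^1(1-\xi)\,z^2\,\mathrm d^2\Lambda_N(\xi zv_{N,\alpha})[v_{N,\alpha},v_{N,\alpha}]\dd\xi .
\]
By \eqref{prof:eq:canonical-covariance-response}, the second derivative equals \(\beta_N\|v_{N,\alpha}\|_{2,\mathrm{cnt}}^2\) up to an error of at most \(C_{\rho_0}|z|\|v_{N,\alpha}\|_\infty\|v_{N,\alpha}\|_{2,\mathrm{cnt}}^2\). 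After integration this yields \(z^2\varsigma_{N,\alpha}^2/2+O(Z^3\|v_{N,\alpha}\|_\infty)\), uniformly in \(\alpha\) and \(|z|\le Z\). This proves \eqref{prof:eq:uniform-mgf-gaussian}, and \eqref{prof:eq:mgf-gaussian} is the special case of a single index. (Alternatively one could expand to third order and use \eqref{prof:eq:canonical-multilinear-response} with \(j=3\); the covariance response route avoids this.) When \(\varsigma_N\to\varsigma\), convergence of moment generating functions on a neighborhood of \(0\) gives \(X_N\overset{d}{\rightarrow}\mathcal N(0,\varsigma^2)\).

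For the total variation statement, write \(g_{N,v}=e^{X-\Lambda_N(v)}\) with \(\Lambda_N(v_N)\to\varsigma^2/2\). Then \(\frac12\mathbb E|g-1|=\mathbb E(g-1)_+\). Take \(G\sim\mathcal N(0,\varsigma^2)\) and set \(\phi(x)=(e^{x-\varsigma^2/2}-1)_+\). Weak convergence together with uniform integrability, which follows from the bounded \(\mathbb E e^{2X_N}\) supplied by \eqref{prof:eq:mgf-gaussian} at \(z=2\), gives \(\mathbb E\phi(X_N-\Lambda_N+\varsigma^2/2)\to\mathbb E\phi(G)\). The \(o(1)\) shift is harmless because \(\phi\) is Lipschitz on bounded sets and is controlled in the tails by \(e^{x}\). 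The Gaussian computation is standard: \(\mathbb E(e^{G-\varsigma^2/2}-1)_+=\Pp(G'>\varsigma^2/2)-\Pp(G>\varsigma^2/2)\) with \(G'\sim\mathcal N(\varsigma^2,\varsigma^2)\), and this equals \(\PhiGauss(\varsigma/2)-\PhiGauss(-\varsigma/2)=2\PhiGauss(\varsigma/2)-1\). The case \(\varsigma=0\) is consistent, since then \(g\to1\) in \(L^1\).

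The uniform statement \eqref{prof:eq:uniform-tilt-TV} is handled by contradiction with subsequences, which I expect to be the only delicate point. Suppose the supremum did not tend to \(0\). Then there would be \(\alpha_N\in\mathcal I_N\) along a subsequence with the discrepancy bounded below. Since \(\varsigma_{N,\alpha_N}\) is bounded, we may pass to a further subsequence along which it converges to some \(\varsigma\). The single-sequence result applies to \(v_{N,\alpha_N}\), and the limit function \(2\PhiGauss(\varsigma/2)-1\) is continuous in \(\varsigma\), so the discrepancy tends to \(0\), a contradiction. The single-sequence argument needs only the hypotheses \(\|v\|_\infty\to0\) and \(\|v\|_{2,\mathrm{cnt}}=O(1)\) along that subsequence, and both are inherited from the uniform assumptions.
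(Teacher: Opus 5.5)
Your proposal is correct and follows the paper's proof essentially step for step. The steps match: a Taylor expansion of $\Lambda_N$ along the ray $\xi z v$ using the canonical response package, the MGF continuity theorem, Slutsky's theorem plus uniform integrability of the tilt together with the Gaussian change-of-measure computation, and a subsequence-contradiction argument for the uniform version. The only differences are cosmetic: you stop at second order and use the covariance response \eqref{prof:eq:canonical-covariance-response} instead of bounding $\mathrm d^3\Lambda_N$ through \eqref{prof:eq:canonical-multilinear-response}, and you obtain uniform integrability from a bounded second moment of $g_{N,v_N}$ rather than a $(2+\vartheta)$-th moment.
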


\begin{proof}
Recall that $u\mapsto X_{N,u}$ is linear and, by \eqref{prof:eq:canonical-coefficient}, $\Lambda_N(u)=\log \mathbb{E}_{\pi_N}\left[e^{X_{N,u}}\right]$.
Thus the log moment generating function (MGF) in \eqref{prof:eq:mgf-gaussian} is $\Lambda_N(zv_N)$, and its counterpart in \eqref{prof:eq:uniform-mgf-gaussian} is $\Lambda_N(z v_{N,\alpha})$.

Fix $Z\in (0,\infty)$. By \cref{prof:lem:canonical-response}, uniformly for \(|z|\le Z\),
\[
 \bigl|\mathrm d^3\Lambda_N(zu)[u,u,u]\bigr|
 \le C_{Z,\rho_0}\|u\|_\infty\|u\|_{2,\mathrm{cnt}}^2
\]
whenever \(\|u\|_\infty\) is sufficiently small.  Taylor's formula with
integral remainder, together with
\(\mathrm d\Lambda_N(0)=0\) and
\(\mathrm d^2\Lambda_N(0)[u,u]=\beta_N\|u\|_{2,\mathrm{cnt}}^2\), therefore gives
\[
 \Lambda_N(zu)
 =\frac{z^2}{2}\beta_N\|u\|_{2,\mathrm{cnt}}^2
 +O_{Z,\rho_0}(\|u\|_\infty\|u\|_{2,\mathrm{cnt}}^2).
\]
This proves \eqref{prof:eq:mgf-gaussian} uniformly for \(z\) in each compact subset of \(\mathbb R\).  The same deterministic remainder bound, applied uniformly to the
family \(v_{N,\alpha}\), proves \eqref{prof:eq:uniform-mgf-gaussian}.  If
\(\varsigma_N\to\varsigma\), then the MGFs converge on a
fixed neighborhood of the origin to \(z\mapsto e^{\varsigma^2z^2/2}\), the moment
generating function of \(\mathcal N(0,\varsigma^2)\).  The continuity theorem for
MGFs thus implies that the law of $X_N$ under $\pi_N$
converges in distribution to $\mathcal N(0,\varsigma^2)$.

At \(z=1\), \eqref{prof:eq:mgf-gaussian} also gives
\[
 \Lambda_N(v_N)=\frac{\varsigma_N^2}{2}+o(1)\longrightarrow\frac{\varsigma^2}{2}.
\]
Thus, if \(X\sim\mathcal N(0,\varsigma^2)\), Slutsky's theorem and then the continuous
mapping theorem yield
\begin{align}
 \log g_{N,v_N}=X_N-\Lambda_N(v_N)
 \overset{d}{\longrightarrow} X-\frac{\varsigma^2}{2},
 \qquad
 g_{N,v_N}\overset{d}{\longrightarrow} e^{X-\varsigma^2/2}.
 \label{prof:eq:conv-in-dist}
\end{align}
Choose a fixed \(\vartheta>0\) and put \(p=2+\vartheta\).  By
\eqref{prof:eq:gNv},
\[
 \mathbb E_{\pi_N}\left[g_{N,v_N}^{p}\right]
 =\exp\!\left\{\Lambda_N(pv_N)-p\Lambda_N(v_N)\right\}.
\]
Applying \eqref{prof:eq:mgf-gaussian} at \(z=p\) and \(z=1\) gives
\[
 \Lambda_N(pv_N)-p\Lambda_N(v_N)
 = \left(\frac{p^2 \varsigma_N^2}{2} +o(1)\right) - p\left(\frac{\varsigma_N^2}{2}+o(1)\right)
 =\frac{p^2-p}{2}\,\varsigma_N^2+o(1).
\]
Since \((\varsigma_N^2)_N\) is bounded, it follows that
\[
 \sup_N\mathbb E_{\pi_N}\left[g_{N,v_N}^{2+\vartheta}\right]<\infty.
\]
Hence \((|g_{N,v_N}-1|)_N\) is uniformly integrable, and we can upgrade the convergence in distribution \eqref{prof:eq:conv-in-dist} to convergence in mean:
\[
 \frac12\mathbb E_{\pi_N}|g_{N,v_N}-1|
 \longrightarrow
 \frac12\mathbb E_{\mathcal N(0,\varsigma^2)}|e^{X-\varsigma^2/2}-1|.
\]
For \(\varsigma=0\) the last expectation is zero.  Suppose \(\varsigma>0\), and set
\[
 A:=\{X>\varsigma^2/2\}
   =\{e^{X-\varsigma^2/2}>1\}.
\]
Since \(X\sim\mathcal N(0,\varsigma^2)\),
\(\mathbb E_{\mathcal N(0,\varsigma^2)}[e^{X-\varsigma^2/2}]=1\).  Thus the random variable
\(Y:=e^{X-\varsigma^2/2}-1\) has mean zero, and therefore its positive and
negative parts have the same expectation.  As \(A=\{Y>0\}\),
\begin{equation*}
 \frac12\mathbb E_{\mathcal N(0,\varsigma^2)}|e^{X-\varsigma^2/2}-1|
 =\mathbb E_{\mathcal N(0,\varsigma^2)}\!\left[(e^{X-\varsigma^2/2}-1)\mathbf 1_A\right]
 =
 \mathbb E_{\mathcal N(0,\varsigma^2)}\!\left[e^{X-\varsigma^2/2}\mathbf 1_A\right]
   -\mathbb P_{\mathcal N(0,\varsigma^2)}(A).
\end{equation*}
To simplify the first expectation on the right-hand side, we apply a Gaussian exponential change-of-measure,
\[
 \frac{\mathrm d\mathcal N(\varsigma^2,\varsigma^2)}
      {\mathrm d\mathcal N(0,\varsigma^2)}(x)
 =e^{x-\varsigma^2/2}.
\]
thereby yielding
\[
\mathbb E_{\mathcal N(0,\varsigma^2)}\!\left[e^{X-\varsigma^2/2}\mathbf 1_A\right]
=
\mathbb P_{\mathcal N(\varsigma^2,\varsigma^2)}(A) = \PhiGauss\left(\frac{\varsigma}{2}\right).
\]
On the other hand, $\mathbb P_{\mathcal N(0,\varsigma^2)}(A) = 1- \PhiGauss(\varsigma/2)$.
Conclude then that
\begin{align*}
 \frac12\mathbb E|e^{X-\varsigma^2/2}-1|
 =\PhiGauss\!\left(\frac{\varsigma}{2}\right)
   -\left(1-\PhiGauss\!\left(\frac{\varsigma}{2}\right)\right)
 =2\PhiGauss\!\left(\frac{\varsigma}{2}\right)-1.
\end{align*}
This proves \eqref{prof:eq:tilt-TV}.

It remains to prove the uniform family assertion \eqref{prof:eq:uniform-tilt-TV}.
If \eqref{prof:eq:uniform-tilt-TV} failed, there would be a subsequence $N_j$
and indices $\alpha_j\in\mathcal I_{N_j}$ for which the absolute difference
in \eqref{prof:eq:uniform-tilt-TV} stayed bounded away from zero.  The
numbers $\varsigma_{N_j,\alpha_j}$ are uniformly bounded, so
a further subsequence converges to some $\varsigma\ge0$.  The fields
$v_{N_j,\alpha_j}$ satisfy the hypotheses of the first part of the lemma.  Hence
their tilt distances converge to
$2\PhiGauss(\varsigma/2)-1$, while continuity gives
$2\PhiGauss(\varsigma_{N_j,\alpha_j}/2)-1\longrightarrow
2\PhiGauss(\varsigma/2)-1$.  The selected difference therefore converges to
zero, a contradiction.
\end{proof}

Fix a nonempty compact $J\subset\mathbb R$.  For all sufficiently large $N$,
for an admissible source $S_N\subset V_N$ with $|S_N|=k_N$ and $s\in J$,
abbreviate
\[
 v:=v_{N,t_N(s)}^{S_N},
 \qquad
 m:=m_{N,t_N(s)}^{S_N},
 \qquad
 q:=\mathfrak q_N^{S_N}(s).
\]
All estimates below are uniform over these choices.  Since
$q=\beta_N^{-1}\|m\|_{2,\mathrm{cnt}}^2$, polarization followed by
Cauchy--Schwarz and triangle inequalities gives
\begin{align*}
 \left|\beta_N\|v\|_{2,\mathrm{cnt}}^2-q\right|
 &=\beta_N\left|\left\langle
     v-\beta_N^{-1}m,
     v+\beta_N^{-1}m
   \right\rangle_{\mathrm{cnt}}\right|\\
 &\le \beta_N\|v-\beta_N^{-1}m\|_{2,\mathrm{cnt}}
       \left(\|v\|_{2,\mathrm{cnt}}
             +\beta_N^{-1}\|m\|_{2,\mathrm{cnt}}\right).
\end{align*}
By \cref{prof:lem:calibration} and \eqref{prof:eq:one-particle-bounds},
\[
 \|v-\beta_N^{-1}m\|_{2,\mathrm{cnt}}
 \le C_{\rho_0}\|m\|_\infty\|m\|_{2,\mathrm{cnt}}
 \le C_{J,\rho_0}n_N^{-1/2}.
\]
The calibration estimate \eqref{prof:eq:calibration-estimate}, the one-particle
bound \eqref{prof:eq:one-particle-bounds}, and the density window  \eqref{prof:eq:density-window} also give
\[
 \|v\|_{2,\mathrm{cnt}}+\beta_N^{-1}\|m\|_{2,\mathrm{cnt}}
 \le C_{J,\rho_0}.
\]
Consequently,
\begin{equation}
 \sup_{\substack{S_N\subset V_N,\ |S_N|=k_N\\ s\in J}}
 \left|
 \beta_N\|v_{N,t_N(s)}^{S_N}\|_{2,\mathrm{cnt}}^2
 -\mathfrak q_N^{S_N}(s)
 \right|
 \le C_{J,\rho_0}n_N^{-1/2}.
 \label{prof:eq:calibrated-q}
\end{equation}
Thus the calibrated squared norm and the profile coordinate agree uniformly
on every compact profile window, identifying the variance parameter that
governs the limiting profile.

\section{Finite-population chaos coordinates: dynamic and static interfaces}
\label{prof:sec:chaos-interface}

This section constructs a concrete degree-$r$ coordinate space for the
finite-population Hoeffding decomposition.
Recall the orthogonal Hoeffding decomposition \eqref{prof:eq:Hoeffding}.
Let $P_{N,r}$ be the orthogonal projection onto the degree-$r$ chaos layer $\Hdeg_{N,r}$.
For $f\in L^2(\Omega_{N,k_N},\pi_N)$,
we will identify $P_{N,r}f$ with a coordinate in the ordered degree-$r$ space $\mathcal K_{N,r}^{\ord}$; express that coordinate through the normalized $r$-point moment of $f$; and place the hard-core and independent $r$-particle evolutions in the same coordinate space for later comparison.
The central identification is the normalized harmonic lift
$
 U_{N,r}:\mathcal K_{N,r}^{\ord}\to \Hdeg_{N,r},
$
defined below.
\Cref{prof:lem:harmonic-lift-unitary} proves that this map is unitary.

Throughout the section, and
subsequently whenever these spaces reappear, we use $\mathsf X_{N,j}^{\bullet}$
for ambient $L^2$ spaces, and $\mathcal K_{N,j}^{\bullet}$ for their
distinguished harmonic or top subspaces.
The superscripts ``$\mathrm{set}$'' and ``$\ord$'' record the coordinate realization:
unordered sets or ordered tuples.  The labels ``$\ind$'' and ``$\hc$'', used below,
instead identify the independent and hard-core ambient/dynamical realizations.

\subsection{Finite-population Hilbert spaces and the up--down calculus}

For $0\le j\le n_N$, set
\[
 \Omega_{N,j}^{\mathrm{set}}
 :=\{A\subseteq V_N:|A|=j\},
 \qquad
 \upsilon_{N,j}^{\mathrm{set}}
 :=\operatorname{Unif}(\Omega_{N,j}^{\mathrm{set}}),
\]
and define
\[
 \mathsf X_{N,j}^{\mathrm{set}}
 :=L^2(\Omega_{N,j}^{\mathrm{set}},\upsilon_{N,j}^{\mathrm{set}}).
\]
Thus
\[
 \langle f,g\rangle_{N,j}^{\mathrm{set}}
 =\binom{n_N}{j}^{-1}
  \sum_{A\in\Omega_{N,j}^{\mathrm{set}}}
  f(A)\overline{g(A)}.
\]
For $1\le j\le n_N$, let
\[
 \Omega_{N,j}^{\ord}
 :=\{(x_1,\ldots,x_j)\in V_N^j:x_a\ne x_b\text{ for }a\ne b\},
 \qquad
 \upsilon_{N,j}^{\ord}
 :=\operatorname{Unif}(\Omega_{N,j}^{\ord}),
\]
and set $\Omega_{N,0}^{\ord}:=\{()\}$ and
$\upsilon_{N,0}^{\ord}:=\delta_{()}$.  Let
\[
 \mathsf X_{N,j}^{\ord}
 :=L^2(\Omega_{N,j}^{\ord},\upsilon_{N,j}^{\ord}),
\]
and $\mathsf X_{N,j}^{\ord,\mathrm{sym}}$ be the
subspace of symmetric functions in $\mathsf X_{N,j}^{\ord}$.
For $j\ge1$, let $\Sym([j])$ be the symmetric group on $[j]$.  A
particle-label permutation $\varpi\in\Sym([j])$ acts on ordered tuples by
\[
 (\mathsf P_\varpi\boldsymbol x)_i:=x_{\varpi^{-1}(i)},
 \qquad 1\le i\le j.
\]
Thus $\varpi$ and $\mathsf P_\varpi$ are reserved for permutations of
particle labels and their coordinate action, respectively.
For a site permutation $\sigma\in\Sym(V_N)$ and $0\le q\le n_N$, let
$\mathsf T_{\sigma}^{(q)}$ denote its induced action on level-$q$ functions.
On set functions and ordered-tuple functions, respectively,
\[
 (\mathsf T_{\sigma}^{(q)}f)(A):=f(\sigma A),
 \qquad
 (\mathsf T_{\sigma}^{(q)}f)(x_1,\ldots,x_q)
 :=f(\sigma x_1,\ldots,\sigma x_q).
\]
For $j\geq 1$, we define the unitary map
\[
\iota_{N,j}: \mathsf X_{N,j}^{\mathrm{set}} \rightarrow \mathsf X_{N,j}^{\ord,\mathrm{sym}},
\qquad
 (\iota_{N,j}f)(x_1,\ldots,x_j)
 :=f(\{x_1,\ldots,x_j\}).
\]
If $j=0$, both $\mathsf X_{N,0}^{\mathrm{set}}$ and $\mathsf X_{N,0}^{\ord, \mathrm{sym}}$ consist of constants, so we set $\iota_{N,0}=I$.
We use this identification
without further comment, but retain the words ``set'' and ``ordered'' when
the domain of an operator matters.

\localheading{Normalized up and down maps.}
For $0\le j<n_N$, define
\begin{align}
 (\mathsf{Up}_{N,j\to j+1}f)(B)
 &:=\frac1{j+1}\sum_{x\in B}f(B\setminus\{x\}),
 &&B\in\Omega_{N,j+1}^{\mathrm{set}},
 \notag\\
 (\mathsf{Down}_{N,j+1\to j}g)(A)
 &:=\frac1{n_N-j}\sum_{x\notin A}g(A\cup\{x\}),
 &&A\in\Omega_{N,j}^{\mathrm{set}}.
 \label{prof:eq:up-down-normalization}
\end{align}
A direct counting calculation gives, for every
$f\in\mathsf X_{N,j}^{\mathrm{set}}$ and
$g\in\mathsf X_{N,j+1}^{\mathrm{set}}$,
\[
\langle f, \mathsf{Down}_{N,j+1\to j} g\rangle^{\rm set}_{N,j}
= \langle \mathsf{Up}_{N,j\to j+1} f, g\rangle^{\rm set}_{N,j+1}.
\]
In other words,
$
 \mathsf{Down}_{N,j+1\to j}
 =\mathsf{Up}_{N,j\to j+1}^*
$
as Hilbert space adjoints.
Under the ordered realization, they read
\begin{align}
 (\mathsf{Up}_{N,j\to j+1}g)(x_1,\ldots,x_{j+1})
 &=\frac1{j+1}\sum_{a=1}^{j+1}
   g(x_1,\ldots,\widehat{x_a},\ldots,x_{j+1}),
 \notag\\
 (\mathsf{Down}_{N,j+1\to j}f)(x_1,\ldots,x_j)
 &=\frac1{n_N-j}
   \sum_{z\notin\{x_1,\ldots,x_j\}}
   f(x_1,\ldots,x_j,z).
 \label{prof:eq:ordered-up-down-formulas}
\end{align}

For computation purposes we also consider the unnormalized, or counting measure, operators
\begin{align*}
 (\mathsf{Add}_jf)(B)
 &:=\sum_{x\in B}f(B\setminus\{x\}),
 \notag\\
 (\mathsf{Del}_{j+1}g)(A)
 &:=\sum_{x\notin A}g(A\cup\{x\}).
\end{align*}
Then
\[
 \mathsf{Up}_{N,j\to j+1}=\frac1{j+1}\mathsf{Add}_j,
 \qquad
 \mathsf{Down}_{N,j+1\to j}=\frac1{n_N-j}\mathsf{Del}_{j+1},
\]
$\mathsf{Del}_{j+1}=\mathsf{Add}_j^*$ in counting measure, and direct
counting gives
\begin{equation}
 \mathsf{Del}_{j+1}\mathsf{Add}_j
 -\mathsf{Add}_{j-1}\mathsf{Del}_j
 =(n_N-2j)I.
 \label{prof:eq:add-del-commutator}
\end{equation}
We use the conventions $\mathsf{Add}_{-1}=0$ and $\mathsf{Del}_0=0$.
The normalized maps govern the Hilbert space geometry; the unnormalized
maps will be used only to prove exact coefficient identities.

\localheading{Harmonic kernels and the top projection.}
Let $1\le r \le \ell_N$.
Observe from \eqref{prof:eq:ordered-up-down-formulas} that for $\kappa\in \mathsf X_{N,r}^{\mathrm{set}}$,
\[
\mathsf{Down}_{N,r\to r-1} \kappa =0
~~
\Longleftrightarrow
~~
 \sum_{x\notin C}\kappa(C\cup\{x\})=0
 \quad\text{for every }C\in\Omega_{N,r-1}^{\mathrm{set}}.
\]
When this holds we say that $\kappa$ is degree-$r$ \emph{harmonic}.
We define the \emph{harmonic kernel} by
\[
 \mathcal K_{N,r}^{\mathrm{set}}
 :=\ker\mathsf{Down}_{N,r\to r-1}
 =\ker\mathsf{Del}_r
\subseteq\mathsf X_{N,r}^{\mathrm{set}},
\qquad 1\le r\le\ell_N.
\]
At degree zero we set
$
\mathcal K_{N,0}^{\mathrm{set}}:=\mathsf X_{N,0}^{\mathrm{set}}$.
For $1\le r\le\ell_N$, its ordered realization is
\[
 \mathcal K_{N,r}^{\ord}
 :=\iota_{N,r}\mathcal K_{N,r}^{\mathrm{set}}
 =\ker\mathsf{Down}_{N,r\to r-1}
 \subseteq\mathsf X_{N,r}^{\ord,\mathrm{sym}}.
\]
At degree zero we likewise set
$
\mathcal K_{N,0}^{\ord}:=\mathsf X_{N,0}^{\ord,\mathrm{sym}}$.

For $0\le j< q \leq n_N$, write
$
 \mathsf{Up}_{N,j\to q}
 :=\mathsf{Up}_{N,q-1\to q}\cdots
   \mathsf{Up}_{N,j\to j+1}$, and set
$ \mathsf{Up}_{N,q\to q}:=I$.
This iterated up map is the uniform average over retained $j$-subsets: for
$f\in\mathsf X_{N,j}^{\mathrm{set}}$ and symmetric
$g\in\mathsf X_{N,j}^{\ord}$,
\begin{align}
 (\mathsf{Up}_{N,j\to q}f)(A)
 &=\binom qj^{-1}
   \sum_{\substack{B\subseteq A\\|B|=j}}f(B),
 &&A\in\Omega_{N,q}^{\mathrm{set}},
 \notag\\
 (\mathsf{Up}_{N,j\to q}g)(x_1,\ldots,x_q)
 &=\binom qj^{-1}
   \sum_{\substack{I\subseteq[q]\\|I|=j}}
   g((x_i)_{i\in I}),
 &&(x_1,\ldots,x_q)\in\Omega_{N,q}^{\ord}.
 \label{prof:eq:iterated-up-average}
\end{align}
Indeed, each retained $j$-subset occurs along exactly $(q-j)!$ deletion
chains, while the product of the one-step normalizing factors is $j!/q!$,
so its total coefficient is $\binom qj^{-1}$.

Whenever $0\le j\le q\le\ell_N$, the degree-$j$ Hoeffding subspace at level $q$
is $\mathsf{Up}_{N,j\to q}\mathcal K_{N,j}^{\ord}$.
The following lemma proves two facts: that these subspaces give the
finite-population Hoeffding decomposition, and that the normalized down--up
operator is diagonal on them.

\begin{lemma}[Finite-population Hoeffding decomposition and down--up spectrum]
\label{prof:lem:fd-down-up-spectrum}
For every $0\le q\le\ell_N$,
\begin{equation}
 \mathsf X_{N,q}^{\ord,\mathrm{sym}}
 =\bigoplus_{j=0}^{q}
  \mathsf{Up}_{N,j\to q}\mathcal K_{N,j}^{\ord}.
 \label{prof:eq:finitepopHoeffding}
\end{equation}
At the physical slice level,
\begin{equation}
 \mathsf X_{N,k_N}^{\ord,\mathrm{sym}}
 =\bigoplus_{j=0}^{\ell_N}
  \mathsf{Up}_{N,j\to k_N}\mathcal K_{N,j}^{\ord}.
 \label{prof:eq:finitepopHoeffding-physical}
\end{equation}
Moreover, if $1\le q\le\ell_N$, then on the degree-$j$ summand of
$\mathsf X_{N,q-1}^{\ord,\mathrm{sym}}$, $0\le j\le q-1$, the positive
operator
$
 \mathsf{Down}_{N,q\to q-1}\mathsf{Up}_{N,q-1\to q}
$
acts as the scalar
\begin{equation}
 a_{q,j,N}
 =\frac{q-j}{q}\left(1-\frac{j}{n_N-q+1}\right).
 \label{prof:eq:fd-down-up-eigenvalue}
\end{equation}
All these scalars are strictly positive.  Hence
$\mathsf{Up}_{N,q-1\to q}$ is injective, and
$\mathsf{Down}_{N,q\to q-1}\mathsf{Up}_{N,q-1\to q}$ is invertible on
$\mathsf X_{N,q-1}^{\ord,\mathrm{sym}}$.
\end{lemma}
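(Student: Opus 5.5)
The plan is to prove everything at the level of the unnormalized operators $\mathsf{Add}_j,\mathsf{Del}_{j+1}$, using only the commutator identity \eqref{prof:eq:add-del-commutator} together with the adjointness $\mathsf{Del}_{j+1}=\mathsf{Add}_j^*$ in counting measure. Afterwards I translate to the normalized maps through $\mathsf{Up}_{N,j\to j+1}=\frac1{j+1}\mathsf{Add}_j$ and $\mathsf{Down}_{N,j+1\to j}=\frac1{n_N-j}\mathsf{Del}_{j+1}$. Orthogonality is insensitive to the normalization of the measure on a fixed level, so counting measure may be used throughout when discussing orthogonality.

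The first and main step is an exact coefficient identity. For $\kappa\in\mathcal K_{N,j}^{\mathrm{set}}$ (so $\mathsf{Del}_j\kappa=0$) and $i\ge0$, I claim
\[
 \mathsf{Del}_{j+i+1}\mathsf{Add}_{j+i}\mathsf{Add}^{(i)}\kappa
 =(i+1)(n_N-2j-i)\,\mathsf{Add}^{(i)}\kappa,
\]
where $\mathsf{Add}^{(i)}:=\mathsf{Add}_{j+i-1}\cdots\mathsf{Add}_j$. The proof is by induction on $i$. Write $\mathsf{Del}\,\mathsf{Add}=\mathsf{Add}\,\mathsf{Del}+(n_N-2(j+i))$ at level $j+i$ and apply the inductive hypothesis to $\mathsf{Del}\,\mathsf{Add}^{(i)}\kappa$. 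This gives
\[
 i(n_N-2j-i+1)+(n_N-2j-2i)=(i+1)(n_N-2j-i),
\]
which is the standard telescoping computation. Taking $q-1=j+i$ and normalizing, the down--up operator acts on $\mathsf{Up}_{N,j\to q-1}\kappa$ as
\[
 \frac{(q-j)(n_N-q+1-j)}{q(n_N-q+1)},
\]
which equals $a_{q,j,N}$ in \eqref{prof:eq:fd-down-up-eigenvalue}. For $j\le q-1\le\ell_N-1$ we have $n_N-q+1-j\ge n_N-2\ell_N+1>0$, because $\ell_N\le n_N/2$. Hence all the scalars are positive.

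Next I prove orthogonality of the summands. For $j<j'$, pair $\mathsf{Add}^{(q-j)}\kappa$ with $\mathsf{Add}^{(q-j')}\kappa'$ and move the $\mathsf{Add}$'s acting on $\kappa'$ to the other side as $\mathsf{Del}$'s. Repeated use of the eigen-relation above (each $\mathsf{Del}$ acting on $\mathsf{Add}^{(i)}\kappa$ returns a scalar multiple of $\mathsf{Add}^{(i-1)}\kappa$) reduces this to a multiple of $\langle \mathsf{Add}^{(j'-j)}\kappa,\kappa'\rangle=\langle\kappa,\mathsf{Del}^{(j'-j)}\kappa'\rangle=0$, since $\kappa'$ is harmonic.

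For the spanning statement \eqref{prof:eq:finitepopHoeffding}, I induct on $q$. At level $q\le\ell_N$ I decompose $\mathsf X_{N,q}^{\mathrm{set}}=\ker\mathsf{Del}_q\oplus\overline{\mathrm{Ran}}\,\mathsf{Add}_{q-1}$, which holds by adjointness in finite dimensions. By the inductive hypothesis $\mathsf X_{N,q-1}$ is spanned by the lower Hoeffding pieces, so the range of $\mathsf{Add}_{q-1}$ is spanned by $\mathsf{Up}_{N,j\to q}\mathcal K_{N,j}$ for $j\le q-1$. Together with $\ker\mathsf{Del}_q=\mathcal K_{N,q}^{\mathrm{set}}$, this gives the decomposition. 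Injectivity of $\mathsf{Up}_{N,q-1\to q}$ and invertibility of down--up follow from the positive diagonal action on an orthogonal spanning family.

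For the physical slice \eqref{prof:eq:finitepopHoeffding-physical}, where $k_N$ may exceed $\ell_N$, I argue as follows. If $k_N\le n_N/2$, then $\ell_N=k_N$ and the claim is the case $q=k_N$ above. If $k_N>n_N/2$, the up-map iteration continues past $\ell_N$. The eigen-relation computed above still shows that $\mathsf{Up}$ is injective on each piece with $j\le\ell_N$ as long as $n_N-2j-i>0$, and this holds throughout. Harmonic kernels at levels $j>\ell_N=n_N-k_N$ do not contribute, since those levels lie above $n_N/2$. So the decomposition is complete by the dimension count
\[
 \sum_{j\le\ell_N}\left[\binom{n_N}{j}-\binom{n_N}{j-1}\right]=\binom{n_N}{\ell_N}=\binom{n_N}{k_N},
\]
using $\dim\mathcal K_{N,j}=\binom{n_N}{j}-\binom{n_N}{j-1}$ for $j\le n_N/2$, which follows from the surjectivity of $\mathsf{Del}_j$ (it is the adjoint of the injective map $\mathsf{Add}_{j-1}$).

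I expect the main obstacle to be the coefficient bookkeeping in the first step together with this dimension count at the physical level. The rest is formal.
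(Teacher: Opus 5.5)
Your proposal is correct and follows essentially the same route as the paper. The shared steps are the commutator-driven coefficient identity $(i+1)(n_N-2j-i)$ for iterated lifts of harmonic kernels, orthogonality obtained by moving the $\mathsf{Add}$'s across as $\mathsf{Del}$'s, and the splitting $\ker\mathsf{Del}_q\oplus\operatorname{Ran}\mathsf{Add}_{q-1}$ by induction on $q$. For $k_N>n_N/2$ you also use, as the paper does, injectivity of the lifts (since $n_N-2j-i\ge \ell_N-j+1>0$) together with the telescoping dimension count $\binom{n_N}{\ell_N}=\binom{n_N}{k_N}$.

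One aside is misstated, though nothing depends on it. When $k_N>n_N/2$, the levels $\ell_N<j\le n_N/2$ do not lie above $n_N/2$ and carry nonzero harmonic kernels. They drop out because their lifts vanish by level $n_N-j+1\le k_N$, where the factor $n_N-2j-i$ reaches $0$. Your dimension count already closes the argument without this remark.
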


\begin{proof}
We identify symmetric ordered functions with set functions, and argue by
induction on the ambient level.  The decomposition at level $q=0$ is immediate.
Fix $1\le q\le\ell_N$ and assume that \eqref{prof:eq:finitepopHoeffding}
holds at level $q-1$.

First compute the down--up action on each inherited degree.  If
$h\in\mathcal K_{N,j}^{\mathrm{set}}$, then $\mathsf{Del}_jh=0$, and a
repeated use of the commutator identity
\eqref{prof:eq:add-del-commutator} gives
\[
 \mathsf{Del}_q\mathsf{Add}_{q-1}\cdots\mathsf{Add}_jh
 =(q-j)(n_N-q+1-j)
  \mathsf{Add}_{q-2}\cdots\mathsf{Add}_jh.
\]
After inserting the normalizing factors from
\eqref{prof:eq:up-down-normalization}, this is exactly
\eqref{prof:eq:fd-down-up-eigenvalue} on
$\mathsf{Up}_{N,j\to q-1}\mathcal K_{N,j}^{\ord}$.
Since $q\le\ell_N\le n_N/2$ and $0\le j\le q-1$, both factors in
$a_{q,j,N}$ are positive.  By the induction hypothesis, the down--up
operator is therefore positive and invertible on all of
$\mathsf X_{N,q-1}^{\ord,\mathrm{sym}}$.  In particular,
\[
 \|\mathsf{Up}_{N,q-1\to q}g\|_2^2
 =\left\langle g,
   \mathsf{Down}_{N,q\to q-1}\mathsf{Up}_{N,q-1\to q}g
  \right\rangle_2
\]
shows that $\mathsf{Up}_{N,q-1\to q}$ is injective.

The inherited degree summands remain mutually orthogonal after applying the
up map.  Indeed, if $f$ and $g$ belong to two distinct degree summands at
level $q-1$, then
\[
 \langle \mathsf{Up}_{N,q-1\to q}f,
          \mathsf{Up}_{N,q-1\to q}g\rangle_2
 =\langle f,
   \mathsf{Down}_{N,q\to q-1}\mathsf{Up}_{N,q-1\to q}g\rangle_2=0,
\]
because the second factor is a scalar multiple of $g$.  Consequently,
\[
 \operatorname{Ran}\mathsf{Up}_{N,q-1\to q}
 =\bigoplus_{j=0}^{q-1}
   \mathsf{Up}_{N,j\to q}\mathcal K_{N,j}^{\ord}.
\]
Finally,
$\mathsf{Down}_{N,q\to q-1}=\mathsf{Up}_{N,q-1\to q}^*$, so
\[
 \mathsf X_{N,q}^{\ord,\mathrm{sym}}
 =\operatorname{Ran}\mathsf{Up}_{N,q-1\to q}
   \oplus\ker\mathsf{Down}_{N,q\to q-1}
 =\bigoplus_{j=0}^{q}
   \mathsf{Up}_{N,j\to q}\mathcal K_{N,j}^{\ord}.
\]
This closes the induction through level $\ell_N$ and proves the down--up
spectral assertion.

If $k_N\le n_N/2$, then $k_N=\ell_N$ and
\eqref{prof:eq:finitepopHoeffding-physical} is already included in the
induction.  Suppose instead that $k_N>n_N/2$, so
$\ell_N=n_N-k_N$.  For $0\le j\le\ell_N$ and
$h\in\mathcal K_{N,j}^{\mathrm{set}}$, set $G_j=h$ and
$G_a=\mathsf{Add}_{a-1}\cdots\mathsf{Add}_jh$ for $j<a\le k_N$.
The same commutator induction gives
\[
 \mathsf{Del}_aG_a
 =(a-j)(n_N-a+1-j)G_{a-1}.
\]
Because $j\le\ell_N=n_N-k_N$, every scalar on the right is positive for
$j<a\le k_N$.  Since $\mathsf{Del}_a=\mathsf{Add}_{a-1}^*$, the lift
$h\mapsto G_{k_N}$ is injective.

The lifted harmonic degrees are mutually orthogonal.  If
$0\le j<j'\le\ell_N$, $h\in\mathcal K_{N,j}^{\mathrm{set}}$, and
$h'\in\mathcal K_{N,j'}^{\mathrm{set}}$, move the adjoints of
$\mathsf{Add}_{k_N-1}\cdots\mathsf{Add}_{j'}$ onto $G_{k_N}$.  Repeated use
of this recursion reduces the resulting vector
to a scalar multiple of
$\mathsf{Add}_{j'-1}\cdots\mathsf{Add}_jh$, which lies in
$\operatorname{Ran}\mathsf{Add}_{j'-1}$ and is therefore orthogonal to
$h'\in\ker\mathsf{Del}_{j'}$.

Finally, for $1\le j\le\ell_N$, the decomposition already proved at
level $j$ and injectivity of $\mathsf{Up}_{N,j-1\to j}$ give
$
 \dim\mathcal K_{N,j}^{\mathrm{set}}
 =\binom{n_N}{j}-\binom{n_N}{j-1}$.
The same formula holds for $j=0$ with $\binom{n_N}{-1}:=0$.  Hence
\[
 \sum_{j=0}^{\ell_N}\dim\mathcal K_{N,j}^{\mathrm{set}}
 =\binom{n_N}{\ell_N}
 =\binom{n_N}{k_N}
 =\dim\mathsf X_{N,k_N}^{\ord,\mathrm{sym}}.
\]
The mutually orthogonal lifted subspaces therefore exhaust the physical
slice, proving \eqref{prof:eq:finitepopHoeffding-physical}.
\end{proof}

Under the ordered/set identification, the physical decomposition
\eqref{prof:eq:finitepopHoeffding-physical} is the concrete realization of
the abstract slice decomposition recalled at the beginning of the section:
\[
 \iota_{N,k_N}\Hdeg_{N,r}
 =\mathsf{Up}_{N,r\to k_N}\mathcal K_{N,r}^{\ord}.
\]
Thus the degree-$r$ slice layer is the lift of the harmonic kernel at level
$r$.  \Cref{prof:lem:harmonic-lift-unitary} determines the normalization
that makes this lift unitary.

The positivity and invertibility in
\cref{prof:lem:fd-down-up-spectrum} give an explicit orthogonal projection.
Set $\mathsf{Top}_{N,0}^{\ord}:=I$.  For $1\le r\le\ell_N$, define
\begin{equation}
 \mathsf{Top}_{N,r}^{\ord}
 =I-\mathsf{Up}_{N,r-1\to r}
 \bigl(\mathsf{Down}_{N,r\to r-1}
       \mathsf{Up}_{N,r-1\to r}\bigr)^{-1}
 \mathsf{Down}_{N,r\to r-1}.
 \label{prof:eq:fd-top-projection-formula}
\end{equation}
Indeed, $\mathsf{Down}_{N,r\to r-1}
=\mathsf{Up}_{N,r-1\to r}^*$, so the second term on the right-hand side is the orthogonal
projection onto $\Ran\mathsf{Up}_{N,r-1\to r}$.
Hence $\mathsf{Top}_{N,r}^{\ord}$ is the
orthogonal projection onto the harmonic kernel
\[
 (\Ran\mathsf{Up}_{N,r-1\to r})^\perp
 =\ker\mathsf{Down}_{N,r\to r-1}
 =\mathcal K_{N,r}^{\ord}.
\]

\localheading{The normalized harmonic lift.}
For $0\le r\le k\le n_N$, define the unnormalized subset lift by
\begin{equation}
 \mathsf L_{k,r}
 :=\binom{k}{r}\mathsf{Up}_{N,r\to k}.
 \label{prof:eq:unnormalized-subset-lift}
\end{equation}
By \eqref{prof:eq:iterated-up-average}, it satisfies
\[
 (\mathsf L_{k,r}\kappa)(A)
 =\sum_{\substack{B\subseteq A\\|B|=r}}\kappa(B),
 \qquad A\in\Omega_{N,k}^{\mathrm{set}},
\]
so $\mathsf L_{k,r}$ is exactly the iterated up map with its averaging
factor removed.

For $0\le r\le\ell_N$ and
$\kappa\in\mathcal K_{N,r}^{\mathrm{set}}$, define
\[
 \mathsf{Harm}_{r\to k_N}\kappa
 :=c_{n_N,k_N,r}^{-1/2}\mathsf L_{k_N,r}\kappa,
\]
where, for $0\le r\le\min\{k,n-k\}$,
\[
 c_{n,k,r}
 :=\frac{\binom nr\binom{n-2r}{k-r}}{\binom nk}.
\]
After the unitary identification $\iota_{N,r}$, write
\begin{align*}
 U_{N,r}
 :=\mathsf{Harm}_{r\to k_N}\iota_{N,r}^{-1}:
 \mathcal K_{N,r}^{\ord}\to L^2(\Omega_{N,k_N},\pi_N).
\end{align*}

\begin{lemma}[Unitary harmonic lift and projection identities]
\label{prof:lem:harmonic-lift-unitary}
The map $U_{N,r}$ is unitary from $\mathcal K_{N,r}^{\ord}$ onto the
slice Hoeffding layer $\Hdeg_{N,r}$.  In particular,
\begin{equation}
 U_{N,r}^*U_{N,r}=I_{\mathcal K_{N,r}^{\ord}},
 \qquad
 U_{N,r}U_{N,r}^*=P_{N,r}.
 \label{prof:eq:harmonic-lift-projection-identities}
\end{equation}
\end{lemma}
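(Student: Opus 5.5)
Two things must be shown: $U_{N,r}$ is an isometry, and its range is exactly $\Hdeg_{N,r}$. The projection identities in \eqref{prof:eq:harmonic-lift-projection-identities} then follow at once. An isometry onto $\Hdeg_{N,r}$ satisfies $U^*U=I$, and $UU^*$ is the orthogonal projection onto its range, namely $P_{N,r}$.

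\textbf{Range.} The physical decomposition \eqref{prof:eq:finitepopHoeffding-physical} and the identification $\iota_{N,k_N}\Hdeg_{N,r}=\mathsf{Up}_{N,r\to k_N}\mathcal K_{N,r}^{\ord}$ give
\[
\operatorname{Ran}U_{N,r}=\mathsf L_{k_N,r}\mathcal K_{N,r}^{\mathrm{set}}=\Hdeg_{N,r},
\]
because $\mathsf L_{k_N,r}$ is a nonzero scalar multiple of $\mathsf{Up}_{N,r\to k_N}$. Injectivity was already proved in \cref{prof:lem:fd-down-up-spectrum}, both for $k_N\le n_N/2$ and for $k_N>n_N/2$. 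So once $U_{N,r}$ is shown to be an isometry, it is surjective onto $\Hdeg_{N,r}$.

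\textbf{Isometry.} We compute $\|\mathsf L_{k,r}\kappa\|^2$ for $\kappa\in\ker\mathsf{Del}_r$, using counting measures and $\mathsf L_{k,r}=\frac{1}{(k-r)!}\mathsf{Add}_{k-1}\cdots\mathsf{Add}_r$. Each $r$-subset of $A$ is reached along $(k-r)!$ addition chains. Moving the adjoints $\mathsf{Del}$ across one at a time with the recursion $\mathsf{Del}_aG_a=(a-r)(n-a+1-r)G_{a-1}$ from the proof of \cref{prof:lem:fd-down-up-spectrum} gives
\[
\|G_k\|_{\mathrm{cnt}}^2=\prod_{a=r+1}^{k}(a-r)(n-a+1-r)\,\|\kappa\|_{\mathrm{cnt}}^2=(k-r)!\,\frac{(n-2r)!}{(n-r-k)!}\,\|\kappa\|_{\mathrm{cnt}}^2 .
\]
Dividing by $(k-r)!^2$ yields
\[
\|\mathsf L_{k,r}\kappa\|_{\mathrm{cnt}}^2=\binom{n-2r}{k-r}\|\kappa\|_{\mathrm{cnt}}^2 .
\]

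\textbf{Normalization.} Converting to probability normalizations divides the left side by $\binom nk$ and the right side by $\binom nr$:
\[
\|\mathsf L_{k,r}\kappa\|_{L^2(\pi_N)}^2=\frac{\binom nr\binom{n-2r}{k-r}}{\binom nk}\,\|\kappa\|_{N,r}^{\mathrm{set},2}=c_{n,k,r}\,\|\kappa\|^2 .
\]
Hence $\mathsf{Harm}_{r\to k_N}$ is isometric. Since $\iota_{N,r}$ is unitary, $U_{N,r}$ is an isometry as well. Polarization upgrades the norm identity to inner products.

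The degree-zero case is trivial: $c_{n,k,0}=1$ and $\mathsf L_{k,0}$ maps constants to constants.

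The main obstacle is bookkeeping. The recursion constant must be tracked carefully, including when $k_N>n_N/2$. In that range all factors $n-a+1-r$ stay positive because $r\le n-k$, so the same computation applies. One should also check that the $(k-r)!$ chain multiplicity matches the definition of $\mathsf L$.
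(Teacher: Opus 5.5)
Your proposal is correct and follows essentially the same route as the paper. The range is identified through the physical Hoeffding decomposition. The lifted norm is computed by moving $\mathsf{Del}$ across the chain of $\mathsf{Add}$'s with the factor $(a-r)(n-a+1-r)$ and the $(k-r)!$ chain multiplicity, and converting from counting to probability normalizations gives exactly $c_{n,k,r}$. One small wording point: surjectivity onto $\Hdeg_{N,r}$ comes directly from your range identity and injectivity from the isometry, so the sentence deducing surjectivity from isometry plus \cref{prof:lem:fd-down-up-spectrum} is misordered, though harmless.
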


\begin{proof}
Put $n=n_N$ and $k=k_N$, and let
$\kappa\in\mathcal K_{N,r}^{\mathrm{set}}$.  By \eqref{prof:eq:unnormalized-subset-lift} and the physical-level
Hoeffding decomposition \eqref{prof:eq:finitepopHoeffding-physical},
\[
 \operatorname{Ran}(\mathsf{Harm}_{r\to k})
 =\mathsf{Up}_{N,r\to k}\mathcal K_{N,r}^{\mathrm{set}}
 =\Hdeg_{N,r}.
\]
It therefore remains only to prove that the factor $c_{n,k,r}^{-1/2}$
normalizes the lift isometrically.

For functions $h,g$ on $\Omega_{N,j}^{\mathrm{set}}$, write
\[
 \langle h,g\rangle_{\mathrm{cnt},j}
 :=\sum_{A\in\Omega_{N,j}^{\mathrm{set}}}h(A)\overline{g(A)},
 \qquad
 \|h\|_{\mathrm{cnt},j}^2
 :=\langle h,h\rangle_{\mathrm{cnt},j}.
\]
In counting measure, set
\[
 H_j:=\mathsf{Add}_{j-1}\cdots\mathsf{Add}_r\kappa,
 \qquad r\le j\le k,
 \qquad H_r=\kappa.
\]
The commutator \eqref{prof:eq:add-del-commutator} and
$\mathsf{Del}_r\kappa=0$ imply, by induction,
\begin{equation}
 \mathsf{Del}_jH_j
 =(j-r)(n-j+1-r)H_{j-1}.
 \label{prof:eq:harmonic-lift-recursion}
\end{equation}
Since $H_j=\mathsf{Add}_{j-1}H_{j-1}$ and
$\mathsf{Del}_j=\mathsf{Add}_{j-1}^*$ in counting measure,
\begin{align*}
 \|H_j\|_{\mathrm{cnt},j}^2
 &=\langle H_j,H_j\rangle_{\mathrm{cnt},j}
 =\left\langle
    \mathsf{Add}_{j-1}H_{j-1},H_j
   \right\rangle_{\mathrm{cnt},j}\\
 &=\left\langle
    H_{j-1},\mathsf{Del}_jH_j
   \right\rangle_{\mathrm{cnt},j-1}
 =(j-r)(n-j+1-r)
   \|H_{j-1}\|_{\mathrm{cnt},j-1}^2,
\end{align*}
where the last line uses \eqref{prof:eq:harmonic-lift-recursion}.  Iterating
for $j=r+1,\ldots,k$ gives
\begin{align}
 \|H_k\|_{\mathrm{cnt},k}^2
 =\prod_{j=r+1}^{k}(j-r)(n-j+1-r)
   \|\kappa\|_{\mathrm{cnt},r}^2
 =(k-r)!(n-2r)_{\underline{k-r}}
   \|\kappa\|_{\mathrm{cnt},r}^2.
 \label{prof:eq:countingnormformula}
\end{align}

To relate $H_k$ to $\mathsf L_{k,r}$, expand the iterated add operator in
\eqref{prof:eq:unnormalized-subset-lift}:
\[
 H_k(A)
 =\sum_{\substack{
      B_r\subset B_{r+1}\subset\cdots\subset B_k=A\\
      |B_q|=q}}
   \kappa(B_r).
\]
For a fixed $r$-subset $B\subseteq A$, a chain from $B$ to $A$ is
determined uniquely by the order in which the $k-r$ elements of
$A\setminus B$ are inserted.  Hence exactly $(k-r)!$ chains begin at $B$,
and
\[
 H_k(A)=(k-r)!(\mathsf L_{k,r}\kappa)(A).
\]
Combining this identity with \eqref{prof:eq:countingnormformula} and converting to
the uniform probability measures yields
\begin{align*}
 \|\mathsf L_{k,r}\kappa\|_{\mathsf X_{N,k}^{\mathrm{set}}}^2
 &=\frac{1}{\binom nk((k-r)!)^2}
   \|H_k\|_{\mathrm{cnt},k}^2
 =\frac{(n-2r)_{\underline{k-r}}}
        {\binom nk(k-r)!}
   \|\kappa\|_{\mathrm{cnt},r}^2\\
 &=\frac{\binom{n-2r}{k-r}}{\binom nk}
   \|\kappa\|_{\mathrm{cnt},r}^2
 =\frac{\binom nr\binom{n-2r}{k-r}}{\binom nk}
   \|\kappa\|_{\mathsf X_{N,r}^{\mathrm{set}}}^2
 =c_{n,k,r}\|\kappa\|_{\mathsf X_{N,r}^{\mathrm{set}}}^2.
\end{align*}
Therefore $\mathsf{Harm}_{r\to k_N}=c_{n,k,r}^{-1/2}\mathsf L_{k,r}$ is an
isometry onto $\Hdeg_{N,r}$.  After the unitary identification
$\iota_{N,r}$, the map $U_{N,r}$ is unitary onto the same layer.  Hence
$U_{N,r}^*U_{N,r}=I$ on $\mathcal K_{N,r}^{\ord}$, while
$U_{N,r}U_{N,r}^*$ is the orthogonal projection onto $\Hdeg_{N,r}$ and
therefore equals $P_{N,r}$.  This proves
\eqref{prof:eq:harmonic-lift-projection-identities}.
\end{proof}

The subset formula for $\mathsf L_{k,r}$ also makes the harmonic lift
site-permutation-equivariant.  Indeed, the definition of the down map shows that
it commutes with every site permutation, so site permutations preserve the
harmonic kernels; then, for every site permutation $\sigma\in\Sym(V_N)$,
\begin{equation}
 \mathsf T_{\sigma}^{(k_N)}\mathsf{Harm}_{r\to k_N}
 =\mathsf{Harm}_{r\to k_N}\mathsf T_{\sigma}^{(r)}.
 \label{prof:eq:harmonic-lift-permutation-equivariance}
\end{equation}
Since the complete graph exclusion generator is an average of site
transpositions, summing
\eqref{prof:eq:harmonic-lift-permutation-equivariance} over transpositions
transports its action through the harmonic lift.  This is the mechanism used
in \cref{prof:sec:chaos-weight} to identify the degree-$r$ complete graph
eigenvalue.

\localheading{Compatibility of the slice and ordered projections.}
The projections $P_{N,r}$ and $\mathsf{Top}_{N,r}^{\ord}$ act on different spaces and
serve different roles.  The operator $\mathsf{Top}_{N,r}^{\ord}$ extracts the harmonic
kernel inside the ordered level-$r$ space
$\mathsf X_{N,r}^{\ord,\mathrm{sym}}$, whereas $P_{N,r}$ extracts the
lifted degree-$r$ component inside the fixed-population slice
$L^2(\Omega_{N,k_N},\pi_N)$.  The projection identities
\eqref{prof:eq:harmonic-lift-projection-identities} make the dictionary
exact: $U_{N,r}$ transports a harmonic kernel to its slice representative,
and $U_{N,r}^*$ recovers that kernel from the degree-$r$ slice component.

\subsection{Unitary chaos coordinates and their moment realization}

\begin{definition}[Isometric chaos coordinate]
\label{prof:def:chaos-coordinate}
For $f\in L^2(\Omega_{N,k_N},\pi_N)$ and $0\le r\le\ell_N$, define
\begin{equation}
 \widehat\Psi_{N,r}(f)
 :=U_{N,r}^*f
 =U_{N,r}^*P_{N,r}f
 \in\mathcal K_{N,r}^{\ord}.
 \label{prof:eq:chaos-coordinate-operator-definition}
\end{equation}
\end{definition}

Applying $U_{N,r}$ to \eqref{prof:eq:chaos-coordinate-operator-definition}
and using \eqref{prof:eq:harmonic-lift-projection-identities} gives
\begin{equation*}
 U_{N,r}\widehat\Psi_{N,r}(f)=P_{N,r}f
\end{equation*}
and, for every $\kappa\in\mathcal K_{N,r}^{\ord}$,
\[
 \langle\widehat\Psi_{N,r}(f),\kappa\rangle_2
 =\langle P_{N,r}f,U_{N,r}\kappa\rangle_{\pi_N}.
\]
Orthogonality of the slice layers and unitarity
of $U_{N,r}$ yield the exact Parseval identity
\[
 \|f\|_{L^2(\pi_N)}^2
 =\sum_{r=0}^{\ell_N}\|P_{N,r}f\|_{L^2(\pi_N)}^2
 =\sum_{r=0}^{\ell_N}\|\widehat\Psi_{N,r}(f)\|_2^2.
\]
\localheading{Isometric, factorial, and moment normalizations.}
The isometric coordinate $\widehat\Psi_{N,r}$ is the natural Hilbert space
coordinate.  To make the degree-$r$ Parseval contribution carry the standard
$1/r!$ exponential-generating weight, define its factorially rescaled form by
\[
 \Psi_{N,r}(f):=\sqrt{r!}\,\widehat\Psi_{N,r}(f).
\]
Since $U_{N,r}$ is unitary onto $\Hdeg_{N,r}$, this rescaling gives the
exact degreewise chaos isometry
\begin{equation}
 \|P_{N,r}f\|_{L^2(\pi_N)}^2
 =\|\widehat\Psi_{N,r}(f)\|_2^2
 =\frac1{r!}\|\Psi_{N,r}(f)\|_2^2.
 \label{prof:eq:exact-chaos-isometry}
\end{equation}
Thus the factor $1/r!$ comes solely from the factorial rescaling of the
isometric coordinate.

Now let $f$ be a probability density with respect to $\pi_N$, meaning
$f\ge0$ and $\mathbb E_{\pi_N}[f]=1$.  Define its normalized ordered
$r$-point moment on distinct coordinates by
\begin{equation}
 \mathcal M_{N,r}[f](x_1,\ldots,x_r)
 :=\left(\frac{n_N}{\beta_N}\right)^{r/2}
 \mathbb E_{f\pi_N}\left[\prod_{i=1}^r(\eta_{x_i}-\rho_N)\right].
 \label{prof:eq:general-normalized-moment}
\end{equation}
The scalar converting this moment normalization into the factorially rescaled
chaos coordinate is
\[
 \alpha_{N,r}
 :=\sqrt{r!}\,c_{n_N,k_N,r}^{-1/2}\binom{n_N}{r}
   \left(\frac{\beta_N}{n_N}\right)^{r/2}.
\]
Thus the corresponding multiplier for the isometric coordinate is
$\alpha_{N,r}/\sqrt{r!}$.  Equivalently,
\begin{equation}
 \alpha_{N,r}^2
 =\beta_N^r\frac{(n_N)_{\underline r}}{n_N^r}
   \frac{(n_N)_{\underline{2r}}}
        {(k_N)_{\underline r}(n_N-k_N)_{\underline r}}.
 \label{prof:eq:chaos-normalization}
\end{equation}
Indeed,
$r!\binom{n_N}{r}=(n_N)_{\underline r}$ and
$\binom{n_N}{k_N}/\binom{n_N-2r}{k_N-r}
=(n_N)_{\underline{2r}}/
[(k_N)_{\underline r}(n_N-k_N)_{\underline r}]$;
substitution into the preceding definition of $\alpha_{N,r}$ gives
\eqref{prof:eq:chaos-normalization}.
The normalization ingredients used in this section therefore have separate roles:
$c_{n_N,k_N,r}^{-1/2}$ normalizes the harmonic lift, $\sqrt{r!}$ passes to
the factorial rescaling, $(n_N/\beta_N)^{r/2}$ normalizes centered moments,
and $\alpha_{N,r}$ records their resulting conversion factor for the
factorially rescaled chaos coordinate.

\begin{lemma}[Algebraic probability-density-to-chaos interface]
\label{prof:lem:density-chaos-interface}
For every probability density $f$ with respect to $\pi_N$ and every
$0\le r\le\ell_N$, we have
\begin{equation}
 \widehat\Psi_{N,r}(f)
 =\frac{\alpha_{N,r}}{\sqrt{r!}}\mathsf{Top}_{N,r}^{\ord}\mathcal M_{N,r}[f],
 \qquad
 \Psi_{N,r}(f)
 =\alpha_{N,r}\mathsf{Top}_{N,r}^{\ord}\mathcal M_{N,r}[f].
 \label{prof:eq:general-density-chaos-interface}
\end{equation}
\end{lemma}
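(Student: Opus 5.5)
The plan is to prove \eqref{prof:eq:general-density-chaos-interface} weakly: pair both sides with an arbitrary test kernel in $\mathcal K_{N,r}^{\ord}$. Both sides lie in $\mathcal K_{N,r}^{\ord}$. The left side does by \cref{prof:def:chaos-coordinate}, and the right side does because $\mathsf{Top}_{N,r}^{\ord}$ is the orthogonal projection onto $\mathcal K_{N,r}^{\ord}$. So it suffices to show, for every $\kappa\in\mathcal K_{N,r}^{\ord}$,
\[
 \langle\widehat\Psi_{N,r}(f),\kappa\rangle_2
 =\frac{\alpha_{N,r}}{\sqrt{r!}}\langle\mathcal M_{N,r}[f],\kappa\rangle_2 .
\]
Here self-adjointness of $\mathsf{Top}_{N,r}^{\ord}$ and $\mathsf{Top}_{N,r}^{\ord}\kappa=\kappa$ remove the projection on the right. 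The case $r=0$ is trivial, since both sides equal $\mathbb E_{\pi_N}[f]=1$ times the constant. The second identity in \eqref{prof:eq:general-density-chaos-interface} then follows by multiplying by $\sqrt{r!}$.

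\textbf{Step 1 (unfold the left side).} By the identity following \cref{prof:def:chaos-coordinate}, the left side equals $\langle f,U_{N,r}\kappa\rangle_{\pi_N}=\mathbb E_{f\pi_N}[\overline{U_{N,r}\kappa}]$, using that $f$ is real. Writing $A(\eta)$ for the occupied set and $\kappa$ also for its set version,
\[
 (U_{N,r}\kappa)(\eta)=c_{n_N,k_N,r}^{-1/2}\sum_{B\subseteq A(\eta),\,|B|=r}\kappa(B).
\]

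\textbf{Step 2 (key centering identity; the main point).} I will show that for harmonic $\kappa$,
\[
 \sum_{|B|=r}\kappa(B)\prod_{x\in B}(\eta_x-\rho_N)=\sum_{B\subseteq A(\eta)}\kappa(B).
\]
To see this, expand $\prod_{x\in B}(\eta_x-\rho_N)=\sum_{C\subseteq B}(-\rho_N)^{|B\setminus C|}\prod_{x\in C}\eta_x$ and exchange the sums. The coefficient of $\prod_{x\in C}\eta_x$ is then $(-\rho_N)^{r-|C|}\sum_{B\supseteq C,|B|=r}\kappa(B)$. For $|C|<r$ this inner sum vanishes. Indeed, enumerate the elements of $B\setminus C$ in order of insertion, which overcounts each $B$ by $(r-|C|)!$. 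The innermost sum over the last inserted site is $\sum_{x\notin C'}\kappa(C'\cup\{x\})=0$ by harmonicity ($\mathsf{Del}_r\kappa=0$). Only $C=B$ survives, and $\prod_{x\in B}\eta_x=\one_{B\subseteq A(\eta)}$. This is the step that uses harmonicity and converts the slice lift into centered moments; I expect it to be the only nontrivial point. The conjugate $\overline\kappa$ is also harmonic, so the identity applies to it as well.

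\textbf{Step 3 (normalization bookkeeping).} Taking $\mathbb E_{f\pi_N}$ and applying \eqref{prof:eq:general-normalized-moment} gives
\[
 \langle\widehat\Psi_{N,r}(f),\kappa\rangle_2
 =c_{n_N,k_N,r}^{-1/2}\Bigl(\frac{\beta_N}{n_N}\Bigr)^{r/2}\sum_{|B|=r}\overline{\kappa(B)}\,\mathcal M_{N,r}[f](B),
\]
where $\mathcal M_{N,r}[f]$ is symmetric and hence defines a set function. A sum over $r$-sets equals $\binom{n_N}{r}$ times the uniform average, which equals the ordered inner product of symmetric functions. This yields the prefactor $c_{n_N,k_N,r}^{-1/2}\binom{n_N}{r}(\beta_N/n_N)^{r/2}=\alpha_{N,r}/\sqrt{r!}$, matching the definition of $\alpha_{N,r}$. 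This completes the pairing identity and hence the lemma.
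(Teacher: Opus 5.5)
Your proposal is correct and follows the paper's proof essentially step for step. Both arguments pair against a harmonic test kernel $\kappa\in\mathcal K_{N,r}^{\ord}$ and expand the centered product, with harmonicity ($\mathsf{Del}_r\kappa=0$) annihilating every term indexed by $|C|<r$. Both then finish with the same normalization bookkeeping, $c_{n_N,k_N,r}^{-1/2}\binom{n_N}{r}(\beta_N/n_N)^{r/2}=\alpha_{N,r}/\sqrt{r!}$.
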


\begin{proof}
Fix $\kappa\in\mathcal K_{N,r}^{\ord}$ and identify it with its harmonic
set-kernel realization.  For each $r$-set $B$,
\[
 \prod_{x\in B}(\eta_x-\rho_N)
 =\sum_{C\subseteq B}(-\rho_N)^{r-|C|}
   \prod_{x\in C}\eta_x.
\]
Summing this identity against $\overline{\kappa(B)}$, and regrouping first by
$s=|C|$ then by the $s$-set $C$ gives
\begin{align*}
 \sum_{|B|=r}\overline{\kappa(B)}
 \mathbb E_{f\pi_N}\left[\prod_{x\in B}(\eta_x-\rho_N)\right]
 =
 \sum_{s=0}^r(-\rho_N)^{r-s}
 \sum_{|C|=s}
 \mathbb E_{f\pi_N}\left[\prod_{x\in C}\eta_x\right]
 \sum_{\substack{B\supseteq C\\|B|=r}}\overline{\kappa(B)}.
\end{align*}
If $s<r$, repeated deletion yields
\[
 \sum_{\substack{B\supseteq C\\|B|=r}}\overline{\kappa(B)}
 =\frac1{(r-s)!}
  \overline{(\mathsf{Del}_{s+1}\cdots\mathsf{Del}_r\kappa)(C)}=0.
\]
Indeed, each $r$-set $B\supseteq C$ is counted once for each ordering of the
$r-s$ elements of $B\setminus C$, hence exactly $(r-s)!$ times, and the final
quantity vanishes because $\mathsf{Del}_r\kappa=0$.  Thus every term with
$s<r$ disappears, and only $s=r$ remains.  Therefore
\[
 \sum_{|B|=r}\overline{\kappa(B)}
 \mathbb E_{f\pi_N}\left[\prod_{x\in B}(\eta_x-\rho_N)\right]
 =\sum_{|B|=r}\overline{\kappa(B)}
   \mathbb E_{f\pi_N}\left[\prod_{x\in B}\eta_x\right].
\]
Since the occupation variables take values in $\{0,1\}$,
$\prod_{x\in B}\eta_x=\mathbf 1_{\{B\subseteq\eta\}}$.  Hence
\begin{align}
 \sum_{|B|=r}\overline{\kappa(B)}
 \mathbb E_{f\pi_N}\left[\prod_{x\in B}(\eta_x-\rho_N)\right]
 &=\mathbb E_{f\pi_N}
   \bigg[\sum_{\substack{B\subseteq\eta\\|B|=r}}\overline{\kappa(B)}\bigg].
 \label{prof:eq:chaosidentical}
\end{align}
On the one hand, \eqref{prof:eq:general-normalized-moment}, symmetry of both kernels, and
$(n_N)_{\underline r}=r!\binom{n_N}{r}$ show that the left-hand side of \eqref{prof:eq:chaosidentical} equals
\begin{align*}
 \binom{n_N}{r}
   \left(\frac{\beta_N}{n_N}\right)^{r/2}
   \left\langle\mathcal M_{N,r}[f],\kappa\right\rangle_2.
\end{align*}
On the other hand, by the definition of $U_{N,r}$, the right-hand side of \eqref{prof:eq:chaosidentical} equals
\[
 c_{n_N,k_N,r}^{1/2}
 \left\langle P_{N,r}f,U_{N,r}\kappa\right\rangle_{\pi_N}.
\]
Equating the two sides of \eqref{prof:eq:chaosidentical} yields the identity
\begin{equation*}
 \left\langle P_{N,r}f,U_{N,r}\kappa\right\rangle_{\pi_N}
 =\frac{\alpha_{N,r}}{\sqrt{r!}}\left\langle\mathcal M_{N,r}[f],\kappa\right\rangle_2.
\end{equation*}
By \eqref{prof:eq:chaos-coordinate-operator-definition}, the left side is
$\langle\widehat\Psi_{N,r}(f),\kappa\rangle_2$.  Since the identity holds
for every harmonic $\kappa \in \mathcal K_{N,r}^{\ord}$, it gives
\[
 \widehat\Psi_{N,r}(f)
 =\frac{\alpha_{N,r}}{\sqrt{r!}}\mathsf{Top}_{N,r}^{\ord}\mathcal M_{N,r}[f].
\]
Multiplication by $\sqrt{r!}$ proves the second formula.
\end{proof}

\subsection{Independent and hard-core dynamic realizations}

For the remainder of this subsection, $r$ is fixed before $N\to\infty$.
At the fixed-population slice level, the exclusion generator preserves the
Hoeffding layers.  Indeed, each Hoeffding layer is invariant under
permutations of the site coordinates, while $L_N^{\SEP}$ is a sum of
transpositions of neighboring sites.  Hence
\[
 L_N^{\SEP}P_{N,r}=P_{N,r}L_N^{\SEP}
 \quad\text{on }L^2(\Omega_{N,k_N},\pi_N).
\]
The fixed-degree comparison is carried out instead in ordered $r$-particle
coordinates, where the independent and hard-core dynamics can be compared
directly.  These dynamics act on different ambient spaces:
\[
 \mathsf X_{N,r}^{\ind}
 :=L^2(V_N^r,n_N^{-r}),
 \qquad
 \mathsf X_{N,r}^{\hc}
 :=\mathsf X_{N,r}^{\ord}.
\]
Write $\mathsf X_{N,r}^{\ind,\mathrm{sym}}$ for the subspace of
permutation-symmetric functions in $\mathsf X_{N,r}^{\ind}$.  Throughout
this subsection, the inner products are probability-normalized:
\begin{equation*}
 \langle u,v\rangle_{\ind}
 :=\frac1{n_N^r}\sum_{\boldsymbol x\in V_N^r}
 u(\boldsymbol x)\overline{v(\boldsymbol x)},
 \qquad
 \langle f,g\rangle_{\hc}
 :=\frac1{(n_N)_{\underline r}}
 \sum_{\boldsymbol x\in\Omega_{N,r}^{\ord}}
 f(\boldsymbol x)\overline{g(\boldsymbol x)},
\end{equation*}
with $\|\cdot\|_{\ind}$ and $\|\cdot\|_{\hc}$ denoting the associated
norms.
The permutation-symmetric subspace of $\mathsf X_{N,r}^{\hc}$ is
$\mathsf X_{N,r}^{\ord,\mathrm{sym}}$, the symmetric ordered space
introduced above.  For
$\boldsymbol x=(x_1,\ldots,x_r)$ and $y\in V_N$, let
$\boldsymbol x^{a\to y}$ be obtained by replacing $x_a$ by $y$.

\localheading{Generators, forms, and semigroups.}
The positive independent generator and the positive ordered hard-core
generator are
\begin{align}
 (\mathcal L_{N,r}^{\ind}f)(\boldsymbol x)
 &=\sum_{a=1}^r\sum_{y\sim x_a}
   \bigl(f(\boldsymbol x)-f(\boldsymbol x^{a\to y})\bigr),
 \qquad \boldsymbol x\in V_N^r, \notag\\
 (\mathcal L_{N,r}^{\hc}f)(\boldsymbol x)
 &=\sum_{a=1}^r
   \sum_{\substack{y\sim x_a\\y\notin\{x_b:b\ne a\}}}
   \bigl(f(\boldsymbol x)-f(\boldsymbol x^{a\to y})\bigr),
 \qquad \boldsymbol x\in\Omega_{N,r}^{\ord}.
 \label{prof:eq:fd-hard-core-generator}
\end{align}
The hard-core generator simply omits jumps into occupied coordinates; it
adds neither killing nor a penalty potential.  The associated Dirichlet
forms are
\[
 \mathcal E_{N,r}^{\ind}(f,g)
 :=\langle f,\mathcal L_{N,r}^{\ind}g\rangle_{\ind},
 \qquad
 \mathcal E_{N,r}^{\hc}(f,g)
 :=\langle f,\mathcal L_{N,r}^{\hc}g\rangle_{\hc},
\]
with the abbreviations
$\mathcal E_{N,r}^{\ind}(f)=\mathcal E_{N,r}^{\ind}(f,f)$ and
$\mathcal E_{N,r}^{\hc}(f)=\mathcal E_{N,r}^{\hc}(f,f)$.
The associated heat semigroups are
\begin{align}
 \mathcal P_{N,r}^{\ind}(t)=e^{-t\mathcal L_{N,r}^{\ind}},
 \qquad
 \mathcal P_{N,r}^{\hc}(t)=e^{-t\mathcal L_{N,r}^{\hc}}.
 \label{prof:eq:heat-semigroups}
\end{align}

\localheading{Restriction and zero extension.}
The restriction map
$\mathsf R_{N,r}:\mathsf X_{N,r}^{\ind}\to\mathsf X_{N,r}^{\hc}$ is
\begin{align}
 (\mathsf R_{N,r}u)(\boldsymbol x)=u(\boldsymbol x),
 \qquad \boldsymbol x\in\Omega_{N,r}^{\ord}.
\end{align}
Because both ambient spaces carry probability-normalized measures, its
adjoint $\mathsf R_{N,r}^*:\mathsf X_{N,r}^{\hc}\to\mathsf X_{N,r}^{\ind}$ is the normalization-adjusted zero extension
\begin{equation}
 (\mathsf R_{N,r}^*f)(\boldsymbol x)
 =\frac{n_N^r}{(n_N)_{\underline r}}
   \one_{\Omega_{N,r}^{\ord}}(\boldsymbol x)f(\boldsymbol x).
 \label{prof:eq:restriction-adjoint}
\end{equation}
Whenever zero extension is used below, it means the normalization-adjusted
adjoint $\mathsf R_{N,r}^*$ in \eqref{prof:eq:restriction-adjoint}.

\localheading{Symmetric top Hoeffding sectors.}
For $u\in\mathsf X_{N,r}^{\ind}$ and $1\le a\le r$, let $\mathbb E_a u$
denote expectation in the $a$th coordinate with respect to the uniform
probability measure on $V_N$, with all other coordinates held fixed:
\begin{align}
 (\mathbb E_a u)(x_1,\ldots,x_r)
 :=\frac1{n_N}\sum_{y\in V_N}
 u(x_1,\ldots,x_{a-1},y,x_{a+1},\ldots,x_r).
 \label{prof:eq:top-exp}
\end{align}
Thus $\mathbb E_a$ is the conditional expectation operator associated with
the $a$th factor of the product probability space
$\operatorname{Unif}(V_N)^{\otimes r}$.
The product-top projection is
\begin{align}
 \mathsf{Top}_{N,r}^{\ind}:=\prod_{a=1}^r(I-\mathbb E_a),
 \label{prof:eq:product-top-proj}
\end{align}
and the symmetric product-top Hoeffding sector is the explicit subspace
\begin{equation*}
 \mathcal T_{N,r}^{\ind}
 :=\mathsf{Top}_{N,r}^{\ind}\mathsf X_{N,r}^{\ind,\mathrm{sym}}
 =\left\{u\in\mathsf X_{N,r}^{\ind,\mathrm{sym}}:
   \mathbb E_a u=0\ \text{for every }1\le a\le r\right\}.
\end{equation*}
The symmetric hard-core top Hoeffding sector is
\[
 \mathcal T_{N,r}^{\hc}
 :=\mathcal K_{N,r}^{\ord}
 =\Ran\mathsf{Top}_{N,r}^{\ord},
\]
where the orthogonal projector $\mathsf{Top}_{N,r}^{\ord}$ was defined in
\eqref{prof:eq:fd-top-projection-formula}.
Thus $\mathcal K_{N,r}^{\mathrm{set}}$ and
$\mathcal K_{N,r}^{\ord}$ are reserved for harmonic kernels, whereas
$\mathcal T_{N,r}^{\ind}$ and $\mathcal T_{N,r}^{\hc}$ denote the
dynamic top sectors.  The equality
$\mathcal T_{N,r}^{\hc}=\mathcal K_{N,r}^{\ord}$ identifies two roles on
the same ordered space; it is not a notational convention.
The ordered down map intertwines the hard-core generators at adjacent
particle levels:
\begin{equation}
 \mathsf{Down}_{N,r\to r-1}\mathcal L_{N,r}^{\hc}
 =\mathcal L_{N,r-1}^{\hc}\mathsf{Down}_{N,r\to r-1}
 \quad\text{on }\mathsf X_{N,r}^{\ord,\mathrm{sym}}.
 \label{prof:eq:hard-core-down-intertwining}
\end{equation}
For an edge $e=\{u,v\}\in E_N$, let $\tau_e$ be the site transposition
exchanging $u$ and $v$.  On symmetric ordered-tuple functions, the hard-core
generator has the representation
\[
 \mathcal L_{N,r}^{\hc}
 =\sum_{e\in E_N}\bigl(I-\mathsf T_{\tau_e}^{(r)}\bigr).
\]
Indeed, if exactly one endpoint of $e$ is occupied, the corresponding term
is the allowed exclusion jump across $e$; if both endpoints are occupied,
$\mathsf T_{\tau_e}^{(r)}$ merely exchanges two coordinates and therefore
acts trivially by symmetry.  The ordered down map is equivariant under this
site action:
\[
 \mathsf{Down}_{N,r\to r-1}\mathsf T_{\tau_e}^{(r)}
 =\mathsf T_{\tau_e}^{(r-1)}\mathsf{Down}_{N,r\to r-1}.
\]
This follows directly from \eqref{prof:eq:ordered-up-down-formulas}, since
$z\mapsto\tau_e z$ bijects the complement of an ordered $(r-1)$-tuple onto
the complement of its image under $\tau_e$.  Summing over $e\in E_N$ gives
\eqref{prof:eq:hard-core-down-intertwining}.  Expanding the finite-dimensional
semigroups in power series then also gives
\begin{equation}
 \mathsf{Down}_{N,r\to r-1}\mathcal P_{N,r}^{\hc}(t)
 =\mathcal P_{N,r-1}^{\hc}(t)\mathsf{Down}_{N,r\to r-1},
 \qquad t\ge0.
 \label{prof:eq:hard-core-down-semigroup-intertwining}
\end{equation}
Taking adjoints, using self-adjointness of the hard-core semigroups, and
iterating gives the companion up-intertwining
\begin{equation}
 \mathcal P_{N,q}^{\hc}(t)\mathsf{Up}_{N,j\to q}
 =\mathsf{Up}_{N,j\to q}\mathcal P_{N,j}^{\hc}(t),
 \qquad 1\le j\le q,
 \quad t\ge0.
 \label{prof:eq:up-intertwine}
\end{equation}
Since
$\Ran\mathsf{Top}_{N,r}^{\ord}=\ker\mathsf{Down}_{N,r\to r-1}$, if
$f\in\Ran\mathsf{Top}_{N,r}^{\ord}$, then
\[
 \mathsf{Down}_{N,r\to r-1}\mathcal L_{N,r}^{\hc}f
 =\mathcal L_{N,r-1}^{\hc}
   \mathsf{Down}_{N,r\to r-1}f
 =0.
\]
Thus $\Ran\mathsf{Top}_{N,r}^{\ord}$ is invariant under $\mathcal L_{N,r}^{\hc}$.
Because $\mathcal L_{N,r}^{\hc}$ is self-adjoint, its orthogonal complement
inside $\mathsf X_{N,r}^{\ord,\mathrm{sym}}$ is invariant as well.  Therefore
\begin{align}
 \mathsf{Top}_{N,r}^{\ord}\mathcal L_{N,r}^{\hc}
 =\mathcal L_{N,r}^{\hc}\mathsf{Top}_{N,r}^{\ord},
 \qquad
 \mathsf{Top}_{N,r}^{\ord}\mathcal P_{N,r}^{\hc}(t)
 =\mathcal P_{N,r}^{\hc}(t)\mathsf{Top}_{N,r}^{\ord}
 \quad (t\ge0).
 \label{prof:eq:HLcommute}
\end{align}
The permutation equivariance
\eqref{prof:eq:harmonic-lift-permutation-equivariance}, specialized to the
nearest-neighbor transpositions and summed over $e\in E_N$, gives the
corresponding generator intertwining on the top sector:
\[
L_N^{\SEP}U_{N,r}
 =U_{N,r}\mathcal L_{N,r}^{\hc}
 \quad\text{on }\mathcal T_{N,r}^{\hc}.
\]
Here the set/ordered identification $\iota_{N,r}$ identifies the
$r$-subset nearest-neighbor exchange generator with
$\mathcal L_{N,r}^{\hc}$ on symmetric ordered functions.
The fixed-degree spectral comparison in
\cref{prof:sec:fixed-degree} is carried out on these two top sectors.

\localheading{Finite-population Wick coordinate.}
Fix the source $S_N$, and suppress its superscript throughout this subsection:
\[
 F_N=\one_{S_N}-\rho_N,
 \qquad
 m_{N,t}=e^{-tL_N^{\RW}}F_N.
\]
For any mean-zero one-particle field $m\in\mathbb R_0^{V_N}$ and
$0\le r\le\ell_N$, define the
\emph{finite-population Wick coordinate}
\begin{equation}
 \mathscr W_{N,r}(m)
 :=\alpha_{N,r}\mathsf{Top}_{N,r}^{\ord}
 \left(\frac{n_N}{\beta_N}\right)^{r/2}
 \mathsf R_{N,r}m^{\otimes r}.
 \label{prof:eq:canonical-wick-coordinate}
\end{equation}
Thus $\mathscr W_{N,r}(m)$ is the top-Hoeffding coordinate of the
independent product tensor $m^{\otimes r}$ after restriction to ordered
distinct tuples.
The next lemma applies it to the one-particle heat profile $m=m_{N,t}$.

\begin{lemma}[Exact duality formula for the dynamic chaos coordinate]
\label{prof:lem:fd-exact-duality}
For every $N$, every $r\in\{0,\ldots,\ell_N\}$, and every $t\ge0$, we have
\begin{align}
 \Psi_{N,r}(h_{N,t}^{S_N})
 &=\alpha_{N,r}\mathsf{Top}_{N,r}^{\ord}
   \left(\frac{n_N}{\beta_N}\right)^{r/2}
   \mathcal P_{N,r}^{\hc}(t)\mathsf R_{N,r}F_N^{\otimes r},
 \label{prof:eq:fd-exact-hc-coordinate}\\
 \mathscr W_{N,r}(m_{N,t})
 &=\alpha_{N,r}\mathsf{Top}_{N,r}^{\ord}
   \left(\frac{n_N}{\beta_N}\right)^{r/2}
   \mathsf R_{N,r}\mathcal P_{N,r}^{\ind}(t)F_N^{\otimes r}.
 \label{prof:eq:fd-exact-ind-coordinate}
\end{align}
\end{lemma}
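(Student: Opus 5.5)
We start from the density-to-chaos interface, \cref{prof:lem:density-chaos-interface}, applied to $f=h_{N,t}^{S_N}$:
\[
 \Psi_{N,r}(h_{N,t}^{S_N})=\alpha_{N,r}\mathsf{Top}_{N,r}^{\ord}\mathcal M_{N,r}[h_{N,t}^{S_N}].
\]
The task for \eqref{prof:eq:fd-exact-hc-coordinate} is therefore to identify the normalized ordered moment $\mathcal M_{N,r}[h_{N,t}^{S_N}]$, modulo $\ker\mathsf{Top}_{N,r}^{\ord}$, with $(n_N/\beta_N)^{r/2}\mathcal P_{N,r}^{\hc}(t)\mathsf R_{N,r}F_N^{\otimes r}$.

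The first key input is the classical self-duality of exclusion. For distinct $x_1,\ldots,x_r$ we have
\[
 \mathbb E_{\mu_{N,t}}\Bigl[\prod_i\eta_{x_i}\Bigr]=\bigl(\mathcal P_{N,r}^{\hc}(t)\mathsf R_{N,r}\one_{S_N}^{\otimes r}\bigr)(x_1,\ldots,x_r).
\]
I would verify this by the standard argument: the function $(\eta,\boldsymbol x)\mapsto\prod_i\eta_{x_i}$ satisfies $L_N^{\SEP}$ in $\eta$ equal to $\mathcal L_{N,r}^{\hc}$ in $\boldsymbol x$. This holds edge by edge via the representation $\mathcal L_{N,r}^{\hc}=\sum_e(I-\mathsf T_{\tau_e}^{(r)})$ on symmetric functions, together with $\prod_i\eta^{xy}_{x_i}=\prod_i\eta_{\tau_e x_i}$. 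The two semigroup expressions then agree at $t=0$ and satisfy the same linear ODE.

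Next I expand the centered product $\prod_i(\eta_{x_i}-\rho_N)$ as $\sum_{C\subseteq[r]}(-\rho_N)^{r-|C|}\prod_{i\in C}\eta_{x_i}$. Every term with $|C|<r$ is a function of an ordered $|C|$-tuple, lifted to level $r$. After symmetrization it lies in $\Ran\mathsf{Up}_{N,|C|\to r}$, which is contained in $\Ran\mathsf{Up}_{N,r-1\to r}=\ker\mathsf{Top}_{N,r}^{\ord}$. Hence only the top product $\prod_i\eta_{x_i}$ survives $\mathsf{Top}_{N,r}^{\ord}$. The paper's proof of \cref{prof:lem:density-chaos-interface} already performs this cancellation, so one can equivalently just note that $\mathsf{Top}$ annihilates all lower terms.

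The same expansion applies to $F_N^{\otimes r}=(\one_{S_N}-\rho_N)^{\otimes r}$ restricted to distinct tuples. The lower terms are $\mathsf{Up}$-lifts of lower-level functions, and by the up-intertwining \eqref{prof:eq:up-intertwine} the semigroup $\mathcal P_{N,r}^{\hc}(t)$ maps them to $\mathsf{Up}$-lifts again. They are therefore killed by $\mathsf{Top}_{N,r}^{\ord}$, using also the commutation \eqref{prof:eq:HLcommute}. Combining gives
\[
 \mathsf{Top}\,\mathcal M_{N,r}[h]=(n_N/\beta_N)^{r/2}\,\mathsf{Top}\,\mathcal P^{\hc}(t)\mathsf R F_N^{\otimes r},
\]
which is \eqref{prof:eq:fd-exact-hc-coordinate}. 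A small point to check is symmetry: $\mathsf{Top}$ is defined on symmetric ordered functions, and all objects here are symmetric because $F_N^{\otimes r}$ is, and both the restriction and $\mathcal P^{\hc}$ commute with label permutations.

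For \eqref{prof:eq:fd-exact-ind-coordinate} the argument is simpler. The independent semigroup factorizes,
\[
 \mathcal P_{N,r}^{\ind}(t)=(e^{-tL_N^{\RW}})^{\otimes r},
\]
since $\mathcal L^{\ind}_{N,r}$ is a sum of commuting one-coordinate copies of $L_N^{\RW}$. Hence $\mathcal P_{N,r}^{\ind}(t)F_N^{\otimes r}=m_{N,t}^{\otimes r}$, and substituting into the definition \eqref{prof:eq:canonical-wick-coordinate} gives the identity directly.

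The main obstacle I expect is the bookkeeping of the lower-order terms in the hard-core identity: confirming that each lifted lower-degree piece lies in $\Ran\mathsf{Up}_{N,r-1\to r}$ and stays there under $\mathcal P^{\hc}$. The degree $r=0$ case is trivial, with both sides equal to $\alpha_{N,0}=1$.
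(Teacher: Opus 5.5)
Your argument is correct, but it takes a longer route than the paper. The paper notes that the edge-by-edge identity you use for $\prod_i\eta_{x_i}$ works verbatim for the centered product $\prod_a(\eta_{x_a}-\rho_N)$, and indeed for any $\prod_a g(\eta_{x_a})$, since $\eta^{uv}_z=\eta_{\tau_e z}$ for every site $z$.

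So the centered product is itself a duality function. This gives the exact, unprojected identity $\mathcal M_{N,r}[h_{N,t}^{S_N}]=(n_N/\beta_N)^{r/2}\mathcal P_{N,r}^{\hc}(t)\mathsf R_{N,r}F_N^{\otimes r}$. Then \eqref{prof:eq:fd-exact-hc-coordinate} follows by substitution into \cref{prof:lem:density-chaos-interface}, with no cancellation step at all.

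Your route uses the classical uncentered duality and then has to discard the lower-order terms on both sides. Your bookkeeping for this is sound:
\begin{itemize}
\item the $|C|=s$ terms sum to $\binom rs\mathsf{Up}_{N,s\to r}$ of a symmetric level-$s$ function;
\item $\Ran\mathsf{Up}_{N,s\to r}\subseteq\Ran\mathsf{Up}_{N,r-1\to r}=\ker\mathsf{Top}_{N,r}^{\ord}$;
\item either \eqref{prof:eq:up-intertwine} or \eqref{prof:eq:HLcommute} alone shows that the hard-core semigroup does not move the lower terms out of this kernel.
\end{itemize}

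The paper's route also yields the unprojected moment identity \eqref{prof:eq:sep-self-duality}, which is reused later. It appears in \eqref{prof:eq:warm-start-normalized-moment} and \eqref{prof:eq:G2M2}, and in the proof of \cref{prof:lem:pair-layers}, where the full $\mathcal M_{N,2}$ is needed and not only its top projection. Your approach can recover this as well. Instead of annihilating the lower terms with $\mathsf{Top}_{N,r}^{\ord}$, match them level by level, using the uncentered dualities at levels $s<r$ together with \eqref{prof:eq:up-intertwine}.
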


\begin{proof}
Apply \cref{prof:lem:density-chaos-interface} to the
specific density
$
 f=h_{N,t}^{S_N}=\frac{d\mu_{N,t}^{S_N}}{d\pi_N}
 $.
For a distinct ordered tuple $X=(x_1,\ldots,x_r)$, set
\[
 \mathsf{Dual}_r(\eta,X):=\prod_{a=1}^r(\eta_{x_a}-\rho_N).
\]
We now invoke the \emph{self-duality} of SEP as follows.
For an edge
$e=\{u,v\}\in E_N$, let $\tau_e$ be the transposition of $u$ and $v$.
Swapping the occupations at $u$ and $v$ gives
\[
 \mathsf{Dual}_r(\eta^{uv},X)=\mathsf{Dual}_r(\eta,\tau_e X),
\]
where $\tau_e$ acts coordinatewise on the ordered tuple.  Therefore, with
$L_N^{\SEP}$ acting on the configuration variable and
$\mathcal L_{N,r}^{\hc}$ on the ordered-tuple variable,
\[
 (L_N^{\SEP})_\eta \mathsf{Dual}_r(\eta,X)
 =\sum_{e\in E_N}\{\mathsf{Dual}_r(\eta,X)-\mathsf{Dual}_r(\eta,\tau_eX)\}
 =(\mathcal L_{N,r}^{\hc})_X \mathsf{Dual}_r(\eta,X).
\]
The last equality is the site transposition representation of
$\mathcal L_{N,r}^{\hc}$ on symmetric ordered-tuple functions established
above.  Since both state spaces are finite, exponentiating this generator
identity gives the semigroup duality relation.  Let $\mathbf X_t^X$ denote
the ordered $r$-particle exclusion process started from $X$, and write
$\mathbb E_X$ for expectation for this process.  
Under the deterministic
initial configuration $\eta_0=\one_{S_N}$, we have the identity
\begin{align}
 \mathbb E_{\mu_{N,t}^{S_N}}
 \left[\prod_{a=1}^r(\eta_{x_a}-\rho_N)\right]
 =\mathbb E_X\left[\prod_{a=1}^rF_N(\mathbf X_t^X(a))\right]
 =\mathbb E_X\left[\bigl(\mathsf R_{N,r}F_N^{\otimes r}\bigr)(\mathbf X_t^X)\right]
 =\bigl(\mathcal P_{N,r}^{\hc}(t)\mathsf R_{N,r}F_N^{\otimes r}\bigr)(X).
 \label{prof:eq:sep-self-duality}
\end{align}
Hence, by
\eqref{prof:eq:general-normalized-moment},
\[
 \mathcal M_{N,r}[h_{N,t}^{S_N}]
 =\left(\frac{n_N}{\beta_N}\right)^{r/2}
   \mathcal P_{N,r}^{\hc}(t)\mathsf R_{N,r}F_N^{\otimes r}.
\]
Substituting this identity into the second formula of
\eqref{prof:eq:general-density-chaos-interface} gives
\eqref{prof:eq:fd-exact-hc-coordinate}.

For independent walkers, tensorization gives
\[
 \mathcal P_{N,r}^{\ind}(t)F_N^{\otimes r}
 =(e^{-tL_N^{\RW}}F_N)^{\otimes r}
 =m_{N,t}^{\otimes r}.
\]
Substitution into \eqref{prof:eq:canonical-wick-coordinate} proves
\eqref{prof:eq:fd-exact-ind-coordinate}.
\end{proof}

\subsection{Static canonical tilt comparison}

The following lemma identifies the fixed-degree chaos coordinate of a
small canonical tilt.  The natural leading term in the cumulant expansion is
the Wick coordinate generated by the one-site marginal displacement
$\Marg_N(v_N)$.  The canonical response estimate then replaces this
displacement by its linear approximation $\beta_Nv_N$.  The proof uses the
canonical cumulant estimates from \cref{prof:lem:canonical-response}.

\begin{lemma}[Static canonical Wick coordinate at fixed degree]
\label{prof:lem:static-wick-coordinate}
Let $v_N\in\mathbb R_0^{V_N}$ satisfy $\|v_N\|_\infty\to0$ and
$\|v_N\|_{2,\mathrm{cnt}}=O(1)$.  For every fixed $R\in\mathbb N_0$, the
following convergence holds simultaneously over the finitely many degrees
$0\le r\le R$:
\begin{equation}
 \max_{0\le r\le R}
 \|\Psi_{N,r}(g_{N,v_N})-
 \mathscr W_{N,r}(\beta_Nv_N)\|_2\longrightarrow0.
 \label{prof:eq:static-wick-coordinate}
\end{equation}
\end{lemma}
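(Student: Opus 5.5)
We apply \cref{prof:lem:density-chaos-interface} to the probability density $f=g_{N,v_N}$. Fix $r\le R$ and write $u=v_N$ and $\bar m:=\Marg_N(u)$. Then
\[
 \Psi_{N,r}(g_{N,u})=\alpha_{N,r}\mathsf{Top}_{N,r}^{\ord}\mathcal M_{N,r}[g_{N,u}],
\]
where $\mathcal M_{N,r}[g_{N,u}](\boldsymbol x)=(n_N/\beta_N)^{r/2}\mathbb E_{\nu_{N,u}}[\prod_a(\eta_{x_a}-\rho_N)]$ on distinct tuples. Applying the moment--cumulant identity \eqref{prof:eq:moment-cumulant-identity} with $Y_a=\eta_{x_a}-\rho_N$ gives a sum over partitions of $[r]$. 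Singleton blocks contribute $\mathbb E_{\nu_{N,u}}[Y_a]=\bar m(x_a)$. Blocks of size $\ge2$ contribute cumulants of distinct sites, which by \eqref{prof:eq:canonical-distinct-cumulant-general} are bounded by $C n_N^{1-|B|}$. The all-singleton partition gives exactly $(n_N/\beta_N)^{r/2}\mathsf R_{N,r}\bar m^{\otimes r}$, so its contribution is $\mathscr W_{N,r}(\bar m)$. The proof therefore has three steps:
\begin{enumerate}
\item bound the non-singleton partitions after $\mathsf{Top}$;
\item replace $\bar m$ by $\beta_Nu$ in the Wick coordinate;
\item observe that $\alpha_{N,r}$ is bounded for fixed $r$, since \eqref{prof:eq:chaos-normalization} gives $\alpha_{N,r}^2\to 1$ up to $\rho$-factors, which are bounded in the density window.
\end{enumerate}

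\textbf{Step 2} is routine. By \cref{prof:lem:canonical-response} and \eqref{prof:eq:calibration-remainder-origin}, we have $\|\bar m-\beta_Nu\|_{2,\mathrm{cnt}}\le C\|u\|_\infty\|u\|_{2,\mathrm{cnt}}\to0$, while both $\bar m$ and $\beta_N u$ are $O(1)$ in $\ell^2_{\mathrm{cnt}}$. Telescoping $a^{\otimes r}-b^{\otimes r}$ and noting that the probability-normalized ordered norm satisfies $\|(n_N)^{r/2}\mathsf R_{N,r}a_1\otimes\cdots\otimes a_r\|_2\le C_r\prod_i\|a_i\|_{2,\mathrm{cnt}}$ (restriction to distinct tuples costs only a factor $n_N^r/(n_N)_{\underline r}$), the Wick difference tends to zero, because $\mathsf{Top}_{N,r}^{\ord}$ is a contraction.

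\textbf{Step 1} is the main obstacle. Fix a partition $\mathfrak p$ with at least one block of size $\ge2$, and let $s$ denote the set of singleton indices. The term is
\[
 T_{\mathfrak p}(\boldsymbol x)=(n_N/\beta_N)^{r/2}\prod_{a\in s}\bar m(x_a)\prod_{|B|\ge2}\kappa_B(\boldsymbol x_B),
 \qquad |\kappa_B|\le Cn_N^{1-|B|}.
\]
A naive sup bound fails: singletons give $\sqrt{n_N}\,|\bar m(x)|$, which is only $O(\sqrt{n_N}\|u\|_\infty)$ pointwise. We therefore measure in the probability-normalized $L^2$ norm. The singleton factors have $\|\sqrt{n_N}\,\bar m\|_{2,\mathrm{av}}=\|\bar m\|_{2,\mathrm{cnt}}=O(1)$. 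A block of size $b\ge2$ carries the prefactor $n_N^{b/2}$ against a kernel bounded by $n_N^{1-b}$, so its averaged $L^2$ norm is at most $n_N^{1-b/2}$. This is $o(1)$ for $b\ge3$ but only $O(1)$ for $b=2$, so pair blocks need extra input. For a pair block, the cumulant $\operatorname{Cum}_u(\eta_x,\eta_y)$ equals $-\beta_N/(n_N-1)$ plus a correction of order $\|u\|_\infty n_N^{-1}$; the correction follows from the covariance response \eqref{prof:eq:canonical-covariance-response}, applied via its kernel form or via the sitewise cumulant of order three after differentiating in $u$. The constant part $-\beta_N/(n_N-1)$ is a function of $\boldsymbol x$ independent of $x_a$, so it lies in the range of $\mathsf{Up}$ and is annihilated by $\mathsf{Top}_{N,r}^{\ord}$. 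The same mechanism handles any factor not depending on one coordinate: $\mathsf{Top}$ kills every component constant in one variable up to $O(1/n_N)$ corrections that are controlled by \cref{prof:lem:fd-down-up-spectrum}. The correction part contributes $O(\|u\|_\infty)\to0$. Since there are finitely many partitions for $r\le R$, Step 1 follows. I would first verify carefully the claim that, in ordered coordinates, $\mathsf{Top}_{N,r}^{\ord}$ annihilates tensors with a constant factor; this holds exactly because such a tensor is $\mathsf{Up}$ of a level-$(r-1)$ function symmetrized.

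Combining Steps 1--3 yields \eqref{prof:eq:static-wick-coordinate}, and taking the maximum over the finitely many $r\le R$ is trivial.
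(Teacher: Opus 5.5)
Your proposal is correct and follows essentially the paper's proof: the moment--cumulant expansion under $\nu_{N,v_N}$, the all-singleton partition giving $\mathscr W_{N,r}(\Marg_N(v_N))$, untilted nonsingleton cumulants annihilated exactly by $\mathsf{Top}_{N,r}^{\ord}$, tilt corrections made small via the path derivative and three-point cumulant bounds, and finally the Lipschitz replacement of $\Marg_N(v_N)$ by $\beta_Nv_N$. The one deviation is harmless: you dispose of blocks of size $b\ge3$ directly through the $O(n_N^{1-b/2})$ bound, whereas the paper also splits them. Two small slips do not affect the argument. First, the untilted pair covariance is $-\beta_N/n_N$, not $-\beta_N/(n_N-1)$. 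Second, the entrywise $O(n_N^{-1}\|v_N\|_\infty)$ pair correction has to come from your second route (differentiating in the tilt and using the three-point cumulant bounds), because the operator-norm covariance response does not control individual off-diagonal entries.
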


\begin{proof}
The case $r=0$ is immediate.  Fix $r\ge1$, and write $v=v_N$.  The proof
has two steps.  First we show that the tilted canonical chaos coordinate is asymptotic to
$\mathscr W_{N,r}(m_v)$, where
$m_v:=\Marg_N(v)$ is the exact one-site marginal displacement.  We then use
the canonical response estimate to replace $m_v$ by $\beta_Nv$.

\localheading{Exact cumulant representation of the chaos coordinate.}
For $0\le\xi\le1$ and a nonempty block $B\subseteq[r]$, define
\[
 \mathfrak c_{\xi,B}(x_B)
 :=\operatorname{Cum}_{\xi v}
 \bigl((\eta_{x_i}-\rho_N)_{i\in B}\bigr),
\]
where the tilted-canonical cumulants were defined in
\eqref{prof:eq:canonical-cumulant-definition}. 
For a distinct ordered tuple $X=(x_1,\ldots,x_r)$, the definition of the
normalized moment, followed by the moment--cumulant identity
\eqref{prof:eq:moment-cumulant-identity} under $\nu_{N,v}$ with
$Y_i=\eta_{x_i}-\rho_N$, gives
\begin{align}
 \mathcal M_{N,r}[g_{N,v}](X)
 =\left(\frac{n_N}{\beta_N}\right)^{r/2}
   \mathbb E_{\nu_{N,v}}
   \left[\prod_{i=1}^r(\eta_{x_i}-\rho_N)\right]
 =\left(\frac{n_N}{\beta_N}\right)^{r/2}
   \sum_{\mathfrak p\in\operatorname{Part}([r])}
   \prod_{B\in\mathfrak p}\mathfrak c_{1,B}(x_B).
 \label{prof:eq:static-moment-cumulant}
\end{align}
Substituting \eqref{prof:eq:static-moment-cumulant} into the second formula of
\eqref{prof:eq:general-density-chaos-interface} in \cref{prof:lem:density-chaos-interface} gives
\begin{equation}
 \Psi_{N,r}(g_{N,v})
 =\alpha_{N,r}\mathsf{Top}_{N,r}^{\ord}
   \left(\frac{n_N}{\beta_N}\right)^{r/2}
   \sum_{\mathfrak p\in\operatorname{Part}([r])}
   \prod_{B\in\mathfrak p}\mathfrak c_{1,B}.
 \label{prof:eq:static-chaos-cumulant-representation}
\end{equation}
Let
\(
 \mathfrak p_{\mathrm{sing}}=\{\{1\},\ldots,\{r\}\}
\)
be the all-singleton partition.  Since
$\mathfrak c_{1,\{i\}}(x_i)=m_v(x_i)$, its contribution to
\eqref{prof:eq:static-chaos-cumulant-representation} is exactly
\[
 \alpha_{N,r}\mathsf{Top}_{N,r}^{\ord}
 \left(\frac{n_N}{\beta_N}\right)^{r/2}
 \mathsf R_{N,r}m_v^{\otimes r}
 =\mathscr W_{N,r}(m_v).
\]
Therefore
\begin{equation}
 \Psi_{N,r}(g_{N,v})-\mathscr W_{N,r}(m_v)
 =\alpha_{N,r}\mathsf{Top}_{N,r}^{\ord}
   \left(\frac{n_N}{\beta_N}\right)^{r/2}
   \sum_{\mathfrak p\ne\mathfrak p_{\mathrm{sing}}}
   \prod_{B\in\mathfrak p}\mathfrak c_{1,B}.
 \label{prof:eq:static-wick-remainder-exact}
\end{equation}
Thus the first task is to prove that the projected nonsingleton remainder in
\eqref{prof:eq:static-wick-remainder-exact} is $o(1)$.

The cumulant expansion is naturally expressed in terms of the exact tilted
marginal $m_v=\Marg_N(v)$, so we keep $m_v$ unchanged until the
nonsingleton remainder has been removed.  Since $\Marg_N(0)=0$, the
fundamental theorem of calculus and \eqref{prof:eq:canonical-covariance-response}
give, for all sufficiently
large $N$,
\[
 m_v-\beta_Nv
 =\int_0^1\bigl(\mathrm d\Marg_N(\xi v)-\beta_NI\bigr)v\,\dd\xi.
\]
Consequently,
\[
 \|m_v-\beta_Nv\|_{2,\mathrm{cnt}}
 \le C\|v\|_\infty\|v\|_{2,\mathrm{cnt}},
 \qquad
 \|m_v-\beta_Nv\|_\infty\le C\|v\|_\infty^2.
\]
Since \(\beta_N\) is uniformly bounded in the density window, and
\(\|v\|_\infty\to0\), this yields
\begin{equation}
 \|m_v-\beta_Nv\|_{2,\mathrm{cnt}}
 \le C\|v\|_\infty\|v\|_{2,\mathrm{cnt}},
 \qquad
 \|m_v\|_\infty\le C\|v\|_\infty.
 \label{prof:eq:static-one-site-response}
\end{equation}
In particular, $\|m_v\|_{2,\mathrm{cnt}}=O(1)$.

\localheading{Top projection of the nonsingleton remainder.}
For every nonsingleton block $B$, write
\[
 \mathfrak c_{1,B}=\mathfrak c_{0,B}+\Delta_B.
\]
Because $\nu_{N,0}=\pi_N$, the factor $\mathfrak c_{0,B}$ is the joint
cumulant under the stationary canonical measure $\pi_N$, that is, under the
untilted law.  By contrast,
$\Delta_B:=\mathfrak c_{1,B}-\mathfrak c_{0,B}$ records the change caused by
the tilt $v$.  The two terms play different roles.  The untilted canonical
cumulant $\mathfrak c_{0,B}$ is not discarded by a smallness estimate.
Instead, after summing over partitions the contributions obtained by choosing
$\mathfrak c_{0,B}$ from every nonsingleton block, the result lies in lower
Hoeffding degrees and is annihilated exactly by
$\mathsf{Top}_{N,r}^{\ord}$.  The tilt-response correction $\Delta_B$ is
small at the block normalization relevant to
\eqref{prof:eq:static-wick-remainder-exact}.  After this exact cancellation,
every remaining term therefore contains at least one small factor
$\Delta_B$.

We first prove this smallness.  Let $|B|=b\ge2$.  Differentiating the joint
cumulant generating function along the path $\xi v$ and using
\eqref{prof:eq:Frechet-cumulant-identity} give
\begin{align}
 \frac{\dd}{\dd\xi}\mathfrak c_{\xi,B}(x_B)
 &=\operatorname{Cum}_{\xi v}
   \bigl((\eta_{x_i}-\rho_N)_{i\in B},X_{N,v}\bigr)
 \notag\\
 &=\sum_zv(z)\,
   \operatorname{Cum}_{\xi v}
   \bigl((\eta_{x_i}-\rho_N)_{i\in B},\eta_z-\rho_N\bigr).
 \label{prof:eq:cumulant-variation}
\end{align}
For distinct $x_B$, define
\[
 C_{\xi,B}(x_B;z)
 :=\operatorname{Cum}_{\xi v}
   \bigl((\eta_{x_i}-\rho_N)_{i\in B},\eta_z-\rho_N\bigr).
\]
In \eqref{prof:eq:cumulant-variation}, the extra site $z$ is treated
differently according to whether it is new or collides with one of the $b$
sites already in $B$.  When
$z\notin\{x_i:i\in B\}$ there are $b+1$ distinct sites, so the stronger
distinct-site cumulant bound applies; when $z\in\{x_i:i\in B\}$ there are
only $b$ exceptional values of $z$, which compensates for the weaker
cumulant bound.  In the first case,
\eqref{prof:eq:canonical-distinct-cumulant-general} gives
$|C_{\xi,B}(x_B;z)|=O(n_N^{-b})$; Cauchy--Schwarz in $z$ yields
\[
 \left|\sum_{z\notin\{x_i:i\in B\}}v(z)C_{\xi,B}(x_B;z)\right|
 \le C_bn_N^{1/2-b}\|v\|_{2,\mathrm{cnt}}.
\]
In the collision case there are only $b$ possible values of $z$, and
$|C_{\xi,B}(x_B;z)|=O(n_N^{1-b})$, whence
\[
 \left|\sum_{z\in\{x_i:i\in B\}}v(z)C_{\xi,B}(x_B;z)\right|
 \le C_bn_N^{1-b}\|v\|_\infty.
\]
These bounds are uniform in $\xi\in[0,1]$.  Integrating
\eqref{prof:eq:cumulant-variation} and taking the ordered-tuple norm gives
\begin{equation}
 n_N^{b/2}\|\Delta_B\|_{\mathsf X_{N,b}^{\ord}}
 \le C_b\bigl(n_N^{(1-b)/2}\|v\|_{2,\mathrm{cnt}}
       +n_N^{1-b/2}\|v\|_\infty\bigr)=o(1).
 \label{prof:eq:normalized-cumulant-variation}
\end{equation}
The factor $n_N^{b/2}$ is exactly the block normalization induced by the
global factor $n_N^{r/2}$ in
\eqref{prof:eq:static-wick-remainder-exact}.

We next identify the exact cancellation behind the top projection.  Expand
every nonsingleton factor in
\eqref{prof:eq:static-wick-remainder-exact} as
$\mathfrak c_{0,B}+\Delta_B$, and first choose the untilted canonical term
$\mathfrak c_{0,B}$ from every nonsingleton block.  Group the resulting
contributions according to the number $s$ of singleton blocks.
Exchangeability of $\pi_N$ makes each untilted canonical cumulant associated
with a nonsingleton block depend only on that block's size.  After summing over all partitions with
exactly $s$ singleton blocks, the only coordinate dependence comes from the
$s$ singleton factors $m_v(x_i)$, and the contribution has the form
\begin{align*}
 \mathfrak a_{N,r,s}
 \binom rs^{-1}
 \sum_{\substack{I\subseteq[r]\\|I|=s}}
 \prod_{i\in I}m_v(x_i)
 &=\mathfrak a_{N,r,s}
   \mathsf{Up}_{N,s\to r}(m_v^{\otimes s})(x_1,\ldots,x_r),
\end{align*}
where $\mathfrak a_{N,r,s}$ absorbs the finitely many nonsingleton block size
patterns and the normalization of the iterated up map.  Because at least
one block is nonsingleton, $s\le r-2$.  Hence the entire contribution obtained by choosing only the untilted
canonical terms lies in $\operatorname{Ran}\mathsf{Up}_{N,s\to r}$ and is
annihilated exactly by $\mathsf{Top}_{N,r}^{\ord}$.  We have therefore shown
that, after this contribution is removed by the top projection, every
remaining expansion term contains at least one factor $\Delta_B$.

It remains only to check that multiplication by the other block factors
cannot destroy the $o(1)$ gain supplied by that $\Delta_B$.  For a fixed
partition $\mathfrak p\in\operatorname{Part}([r])$, if each block $B$
carries a kernel $K_B$ on $\Omega_{N,|B|}^{\ord}$, then dropping only the
distinctness constraints between different blocks gives
\begin{equation}
 n_N^{r/2}
 \left\|\prod_{B\in\mathfrak p}K_B(x_B)\right\|_{\mathsf X_{N,r}^{\ord}}
 \le C_r\prod_{B\in\mathfrak p}
 \left(n_N^{|B|/2}
 \|K_B\|_{\mathsf X_{N,|B|}^{\ord}}\right).
 \label{prof:eq:fixed-partition-product-bound}
\end{equation}
Indeed, after squaring, the sum over fully distinct $r$-tuples is bounded
by the product of the within-block distinct-tuple sums; the remaining
normalization ratio is
$\prod_{B\in\mathfrak p}(n_N)_{\underline{|B|}}/(n_N)_{\underline r}$,
which is bounded for fixed $r$.

The three possible block factors now have transparent sizes.  A singleton
block contributes
$\|m_v\|_{2,\mathrm{cnt}}=O(1)$ at the scaled level.  An untilted canonical cumulant factor associated with a block of size
$b\ge2$ contributes $O(n_N^{1-b/2})=O(1)$ by the cumulant bound
$O(n_N^{1-b})$.  Finally, every surviving term contains at least one
$\Delta_B$, whose scaled norm is $o(1)$ by
\eqref{prof:eq:normalized-cumulant-variation}.  The product estimate
\eqref{prof:eq:fixed-partition-product-bound} therefore makes every
surviving partition term $o(1)$ in the normalized $r$-point scale.  Since
there are
only finitely many partitions, $\mathsf{Top}_{N,r}^{\ord}$ is contractive,
$\alpha_{N,r}=O_{r,\rho_0}(1)$, and $\beta_N$ is bounded away from $0$ on the
standing density window, \eqref{prof:eq:static-wick-remainder-exact} yields
\begin{equation*}
 \left\|\Psi_{N,r}(g_{N,v})-\mathscr W_{N,r}(m_v)\right\|_2
 =o_{N,r}(1).
\end{equation*}
This completes the cumulant comparison.

\localheading{Replacement of the tilted marginal.}
It remains to replace the exact tilted marginal $m_v$ by its linear response
$\beta_Nv$.  For fixed $r\in\mathbb N_0$ and $a,b\in\mathbb R_0^{V_N}$, the tensor
telescoping identity
\[
 a^{\otimes r}-b^{\otimes r}
 =\sum_{j=1}^r
 a^{\otimes(j-1)}\otimes(a-b)\otimes b^{\otimes(r-j)}
\]
gives, for the $j$th summand,
\[
 \left\|a^{\otimes(j-1)}\otimes(a-b)\otimes b^{\otimes(r-j)}
 \right\|_{\ind}
 =n_N^{-r/2}\|a\|_{2,\mathrm{cnt}}^{j-1}
   \|a-b\|_{2,\mathrm{cnt}}
   \|b\|_{2,\mathrm{cnt}}^{r-j}.
\]
Since
\(
 \|\mathsf R_{N,r}\|^2=n_N^r/(n_N)_{\underline r}=1+O_r(n_N^{-1})
\), $\mathsf{Top}_{N,r}^{\ord}$ is contractive, and $\alpha_{N,r}$ is uniformly
bounded at fixed $r$, we obtain, on every counting norm ball of radius $B$,
\begin{equation}
 \|\mathscr W_{N,r}(a)-\mathscr W_{N,r}(b)\|_2
 \le C_{r,\rho_0,B}\|a-b\|_{2,\mathrm{cnt}}.
 \label{prof:eq:fixed-wick-lipschitz}
\end{equation}
Applying \eqref{prof:eq:fixed-wick-lipschitz} with $a=m_v$ and $b=\beta_Nv$, and using \eqref{prof:eq:static-one-site-response}, gives
\begin{align*}
 \|\Psi_{N,r}(g_{N,v})-\mathscr W_{N,r}(\beta_Nv)\|_2
 \le
 \|\Psi_{N,r}(g_{N,v})-\mathscr W_{N,r}(m_v)\|_2
 +
 \|\mathscr W_{N,r}(m_v)-\mathscr W_{N,r}(\beta_Nv)\|_2
 =o_{N,r}(1),
\end{align*}
where we used \eqref{prof:eq:static-one-site-response}.  Since $R$ is
fixed, taking the maximum over $0\le r\le R$ proves
\eqref{prof:eq:static-wick-coordinate}.
\end{proof}

We next compute the asymptotic norm of \(\mathscr W_{N,r}(a)\).  At fixed degree, the distinct-tuple restriction and the top Hoeffding projection preserve the leading product-tensor norm, yielding the following deterministic estimate.

\begin{lemma}[Norm of the finite-population Wick coordinate]
\label{prof:lem:static-wick-norm}
For each fixed $r$, every $B<\infty$, and every sequence
$\varepsilon_N\downarrow0$, uniformly over $a\in\mathbb R_0^{V_N}$ with
$\|a\|_{2,\mathrm{cnt}}\le B$ and $\|a\|_\infty\le\varepsilon_N$,
\begin{equation}
 \frac1{r!}\|\mathscr W_{N,r}(a)\|_2^2
 =\frac{(\beta_N^{-1}\|a\|_{2,\mathrm{cnt}}^2)^r}{r!}
  +o_{N,r}(1).
 \label{prof:eq:static-wick-norm}
\end{equation}
\end{lemma}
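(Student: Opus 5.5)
We compute $\|\mathscr W_{N,r}(a)\|_2^2$ in three stages: evaluate the norm of the restricted tensor, show that $\mathsf{Top}_{N,r}^{\ord}$ discards only an $o(1)$ part of it, and insert the asymptotics of $\alpha_{N,r}$. Write $T:=(n_N/\beta_N)^{r/2}\mathsf R_{N,r}a^{\otimes r}$, so that $\mathscr W_{N,r}(a)=\alpha_{N,r}\mathsf{Top}_{N,r}^{\ord}T$.

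\emph{Norm of $T$.} In probability normalization,
\[
 \|T\|_{\hc}^2=\frac{n_N^r}{\beta_N^r(n_N)_{\underline r}}\sum_{\boldsymbol x\in\Omega_{N,r}^{\ord}}\prod_i a(x_i)^2 .
\]
The full sum over $V_N^r$ equals $\|a\|_{2,\mathrm{cnt}}^{2r}$. The coincident tuples contribute at most $\binom r2\|a\|_\infty^2\|a\|_{2,\mathrm{cnt}}^{2(r-1)}$, which is $O_{r,B}(\varepsilon_N^2)$. Since $n_N^r/(n_N)_{\underline r}=1+O_r(n_N^{-1})$, we get $\|T\|_{\hc}^2=(\beta_N^{-1}\|a\|_{2,\mathrm{cnt}}^2)^r+o(1)$, uniformly over the admissible $a$.

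\emph{Removing the top projection.} By \eqref{prof:eq:fd-top-projection-formula}, $\|T\|^2-\|\mathsf{Top}^{\ord}_{N,r}T\|^2$ is the squared norm of the projection of $T$ onto $\Ran\mathsf{Up}_{N,r-1\to r}$. It is bounded by $\langle \mathsf{Down} T,(\mathsf{Down}\,\mathsf{Up})^{-1}\mathsf{Down}T\rangle$. By \cref{prof:lem:fd-down-up-spectrum}, the eigenvalues $a_{r,j,N}$ are bounded below by a constant $c_r>0$ uniformly in $N$ (they are at least $\frac1r(1-\frac{r}{n_N-r+1})$). It therefore suffices to prove $\|\mathsf{Down}_{N,r\to r-1}T\|_2=o(1)$. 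Using \eqref{prof:eq:ordered-up-down-formulas} and the mean-zero property $\sum_z a(z)=0$,
\[
 (\mathsf{Down}T)(x_1,\dots,x_{r-1})=-\frac{(n_N/\beta_N)^{r/2}}{n_N-r+1}\prod_{i<r}a(x_i)\sum_{i<r}a(x_i).
\]
Its squared norm is at most $C_r n_N^{r}n_N^{-2}\cdot n_N^{-(r-1)}\|a\|_{2,\mathrm{cnt}}^{2(r-1)}\|a\|_\infty^2=O(n_N^{-1}\varepsilon_N^2)$. Here the factor $n_N^{-(r-1)}$ comes from the probability normalization of the $(r-1)$-tuple average, and $(\sum_{i<r}a(x_i))^2\le (r-1)^2\|a\|_\infty^2$. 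This is $o(1)$, even with room to spare. Hence $\|\mathsf{Top}^{\ord}_{N,r}T\|_2^2=\|T\|_{\hc}^2+o(1)$.

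\emph{Normalization constant.} From \eqref{prof:eq:chaos-normalization}, for fixed $r$,
\[
 \alpha_{N,r}^2=\beta_N^r\,\frac{(1+O(n_N^{-1}))\,n_N^{2r}}{k_N^r(n_N-k_N)^r}=\beta_N^r(\rho_N(1-\rho_N))^{-r}(1+O(n_N^{-1}))=1+O_{r,\rho_0}(n_N^{-1}),
\]
using $\beta_N=\rho_N(1-\rho_N)n_N/(n_N-1)$ and the density window, which keeps $k_N$ and $n_N-k_N$ of order $n_N$.

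\emph{Conclusion.} Combining the three steps and dividing by $r!$ gives \eqref{prof:eq:static-wick-norm}. All bounds are uniform because they depend only on $B$, $\varepsilon_N$, $r$ and $\rho_0$. The one step with real content is the top-projection comparison. The key input there is that the mean-zero condition makes $\mathsf{Down}T$ small, together with the uniform lower bound on the down--up spectrum; I expect this to be the main obstacle mainly in tracking the normalizations correctly. If the inverse bound is awkward, one can alternatively argue by induction, using $\mathsf{Top}$ as the complement of $\Ran \mathsf{Up}$.
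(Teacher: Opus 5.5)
Your proposal is correct and follows the paper's proof essentially step for step. The steps are the same: the collision bound for the restricted tensor, the exact mean-zero formula for $\mathsf{Down}_{N,r\to r-1}\mathsf R_{N,r}a^{\otimes r}$ giving an $O(n_N^{-1}\varepsilon_N^2)$ down defect, the uniform lower bound on the down--up eigenvalues \eqref{prof:eq:fd-down-up-eigenvalue} to remove $\mathsf{Top}_{N,r}^{\ord}$, and $\alpha_{N,r}=1+O_{r,\rho_0}(n_N^{-1})$. One small point in your favour: your expression $\langle \mathsf{Down}\,T,(\mathsf{Down}\,\mathsf{Up})^{-1}\mathsf{Down}\,T\rangle$ for the discarded mass is an exact identity, because $\mathsf{Down}=\mathsf{Up}^*$. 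This makes it slightly cleaner than bounding $\mathsf{Up}(\mathsf{Down}\,\mathsf{Up})^{-1}$ in operator norm, as the paper does.
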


\begin{proof}
The case $r=0$ is immediate.  Assume $r\ge1$.  Fix $B<\infty$ and a sequence
$\varepsilon_N\downarrow0$, and let $a\in\mathbb R_0^{V_N}$ satisfy
$\|a\|_{2,\mathrm{cnt}}\le B$ and $\|a\|_\infty\le\varepsilon_N$.  By
\eqref{prof:eq:canonical-wick-coordinate},
\begin{equation}
 \|\mathscr W_{N,r}(a)\|_2^2
 =\alpha_{N,r}^2
  \left(\frac{n_N}{\beta_N}\right)^r
  \left\|\mathsf{Top}_{N,r}^{\ord}\mathsf R_{N,r}a^{\otimes r}\right\|_2^2.
 \label{prof:eq:wick-norm-start}
\end{equation}
We show successively that the distinct-tuple restriction and the top
projection do not change the leading norm; the remaining multiplier
$\alpha_{N,r}$ tends to $1$.

The unrestricted product sum equals $\|a\|_{2,\mathrm{cnt}}^{2r}$.  A tuple
omitted by the distinct-coordinate restriction contains a colliding pair,
and a union bound gives
\[
 \sum_{\boldsymbol x:\,\text{a collision}}
 \prod_{i=1}^r|a(x_i)|^2
 \le C_r\|a\|_\infty^2
       \|a\|_{2,\mathrm{cnt}}^{2r-2}
 \le C_r\varepsilon_N^2B^{2r-2}=o(1),
\]
Hence
\begin{equation}
 \left(\frac{n_N}{\beta_N}\right)^r
 \|\mathsf R_{N,r}a^{\otimes r}\|_{\mathsf X_{N,r}^{\ord}}^2
 =\beta_N^{-r}\|a\|_{2,\mathrm{cnt}}^{2r}+o_{N,r}(1).
 \label{prof:eq:raw-tensor-norm}
\end{equation}

We next show that the top projection changes this leading norm by only
$o(1)$.  The mean-zero condition gives the exact down map identity
\[
 \mathsf{Down}_{N,r\to r-1}\mathsf R_{N,r}a^{\otimes r}
 (x_1,\ldots,x_{r-1})
 =-\frac{a(x_1)\cdots a(x_{r-1})}{n_N-r+1}
   \sum_{i=1}^{r-1}a(x_i).
\]
Using $|\sum_{i=1}^{r-1}a(x_i)|^2\le
(r-1)^2\|a\|_\infty^2$ and then dropping the distinctness restriction,
we obtain
\begin{align*}
 n_N^r
 \left\|\mathsf{Down}_{N,r\to r-1}\mathsf R_{N,r}a^{\otimes r}\right\|_2^2
 &\le
 C_r\frac{n_N^r}{(n_N-r+1)^2(n_N)_{\underline{r-1}}}
 \|a\|_\infty^2
 \left(\sum_x|a(x)|^2\right)^{r-1}
 \notag\\
 &\le \frac{C_r}{n_N}\|a\|_\infty^2
       \|a\|_{2,\mathrm{cnt}}^{2r-2}
 \le \frac{C_r}{n_N}\varepsilon_N^2B^{2r-2}=o(1).
\end{align*}
Thus
\[
 n_N^{r/2}
 \left\|\mathsf{Down}_{N,r\to r-1}\mathsf R_{N,r}a^{\otimes r}\right\|_2
 =o(1).
\]
By \eqref{prof:eq:fd-down-up-eigenvalue} with $q=r$, for fixed $r$ and all
sufficiently large $N$ the spectrum of
$\mathsf{Down}_{N,r\to r-1}\mathsf{Up}_{N,r-1\to r}$ is bounded below by
$c_r>0$.  Hence its inverse has $L^2$ operator norm at most $C_r$.
Combining this with the exact projection formula
\eqref{prof:eq:fd-top-projection-formula} therefore gives
\[
 n_N^{r/2}
 \left\|(I-\mathsf{Top}_{N,r}^{\ord})\mathsf R_{N,r}a^{\otimes r}\right\|_2=o(1).
\]
Since $\mathsf{Top}_{N,r}^{\ord}$ is an orthogonal projection, combining this with
\eqref{prof:eq:raw-tensor-norm} yields
\begin{equation}
 \left(\frac{n_N}{\beta_N}\right)^r
 \left\|\mathsf{Top}_{N,r}^{\ord}\mathsf R_{N,r}a^{\otimes r}\right\|_2^2
 =\beta_N^{-r}\|a\|_{2,\mathrm{cnt}}^{2r}+o_{N,r}(1).
 \label{prof:eq:top-tensor-norm}
\end{equation}
Finally, $\alpha_{N,r}=1+O_{r,\rho_0}(n_N^{-1})$.  Substituting
\eqref{prof:eq:top-tensor-norm} into \eqref{prof:eq:wick-norm-start} gives
\[
 \|\mathscr W_{N,r}(a)\|_2^2
 =\bigl(\beta_N^{-1}\|a\|_{2,\mathrm{cnt}}^2\bigr)^r+o_{N,r}(1).
\]
All error bounds above depend only on $r$, $\rho_0$, $B$, and
$\varepsilon_N$, and vanish as $N\to\infty$.  Thus the convergence is
uniform over the stated class.  Dividing by $r!$ proves
\eqref{prof:eq:static-wick-norm}.
\end{proof}

\subsection{Profile-time specialization and canonical tail}

The preceding static comparison becomes directly applicable at the cutoff
window after inserting the calibrated profile field.  By
\cref{prof:lem:one-particle-bounds,prof:lem:calibration}, uniformly over
$S_N\subset V_N$ with $|S_N|=k_N$ and $s$ in a fixed compact set,
\[
 \|v_{N,t_N(s)}^{S_N}\|_\infty\longrightarrow0,
 \qquad
 \|v_{N,t_N(s)}^{S_N}\|_{2,\mathrm{cnt}}=O(1).
\]
Consequently, the assertion of
\cref{prof:lem:static-wick-coordinate} is uniform on this calibrated profile
family: if uniformity failed, choosing a violating source and time for each
$N$ along a subsequence would produce a sequence satisfying the lemma's
hypotheses but not its conclusion.

We record the resulting profile time
specialization first, followed by the source-uniform high-chaos tail of the
canonical tilt.

\begin{proposition}[Static comparison with the finite-population Wick coordinate]
\label{prof:prop:static-fixed-degree}
For every fixed $R\in\mathbb N_0$ and nonempty compact $J\subset\mathbb R$,
\begin{equation}
 \sup_{\substack{S_N\subset V_N,\ |S_N|=k_N\\ s\in J}}
 \max_{0\le r\le R}
 \|\Psi_{N,r}(g_{N,t_N(s)}^{S_N})
   -\mathscr W_{N,r}(m_{N,t_N(s)}^{S_N})\|_2
 \longrightarrow0.
 \label{prof:eq:static-wick-comparison}
\end{equation}
\end{proposition}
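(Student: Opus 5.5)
The plan is to combine \cref{prof:lem:static-wick-coordinate} (applied along the calibrated family) with the Lipschitz bound \eqref{prof:eq:fixed-wick-lipschitz} and the calibration estimate \eqref{prof:eq:calibration-estimate}, and then to pass to uniformity by a subsequence argument. Abbreviate $v=v_{N,t_N(s)}^{S_N}$ and $m=m_{N,t_N(s)}^{S_N}$. For large $N$ the calibrated field is defined uniformly over sources and $s\in J$. Indeed, \eqref{prof:eq:one-particle-bounds} gives $\|m\|_\infty\le C_Jn_N^{-1/2}\le\delta_{\rho_0}$, and $m\in\mathbb R_0^{V_N}$ since $F_N$ has mean zero and the heat semigroup preserves mean zero. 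By \cref{prof:lem:calibration}, $\|v\|_\infty\le C_{\rho_0}\|m\|_\infty\le C_{J,\rho_0}n_N^{-1/2}\to0$ and $\|v\|_{2,\mathrm{cnt}}\le C_{J,\rho_0}$, uniformly.

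For each $r\le R$, split by the triangle inequality:
\[
\|\Psi_{N,r}(g_{N,v})-\mathscr W_{N,r}(m)\|_2\le\|\Psi_{N,r}(g_{N,v})-\mathscr W_{N,r}(\beta_Nv)\|_2+\|\mathscr W_{N,r}(\beta_Nv)-\mathscr W_{N,r}(m)\|_2 .
\]
For the second term, both $\beta_Nv$ and $m$ lie in a fixed counting-norm ball of radius depending only on $J,\rho_0$. Hence \eqref{prof:eq:fixed-wick-lipschitz} bounds this term by $C_{r,\rho_0,J}\|\beta_Nv-m\|_{2,\mathrm{cnt}}$. Using $\beta_N\le1$ together with \eqref{prof:eq:calibration-estimate}, this is at most $C\|m\|_\infty\|m\|_{2,\mathrm{cnt}}\le C_{J,\rho_0}n_N^{-1/2}$, uniformly over sources and $s\in J$.

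The first term is where uniformity must be extracted, since \cref{prof:lem:static-wick-coordinate} is stated for a single sequence. Argue by contradiction. If the supremum over $(S_N,s)$ of the first term (maximized over $r\le R$) did not tend to zero, choose $\epsilon>0$, a subsequence $N_j$, and violating pairs $(S_{N_j},s_j)$. Then define $v_{N}$ along the full sequence by using the violating fields on $N_j$ and, for example, $v_N=0$ elsewhere. This sequence satisfies $\|v_N\|_\infty\to0$ and $\|v_N\|_{2,\mathrm{cnt}}=O(1)$ by the uniform bounds above. The lemma then forces the first term to tend to zero along $N_j$, a contradiction. The lemma's constants depend only on $\rho_0$ through \cref{prof:lem:canonical-response}, so nothing else varies with the source.

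The main point to check is this uniformity step: the fields $v_N$ must lie in $\mathbb R_0^{V_N}$ and in the response neighborhood of \cref{prof:lem:canonical-response} for all large $N$, which the bounds above guarantee. Also, taking the maximum over the finitely many $r\le R$ commutes with the contradiction argument. Combining the two terms proves \eqref{prof:eq:static-wick-comparison}.
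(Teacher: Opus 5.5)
Your proposal is correct and follows the paper's argument. The paper likewise makes \cref{prof:lem:static-wick-coordinate} uniform over the calibrated family by the same violating-subsequence argument, and then replaces $\beta_N v_{N,t_N(s)}^{S_N}$ by $m_{N,t_N(s)}^{S_N}$ using the Lipschitz bound \eqref{prof:eq:fixed-wick-lipschitz} together with the calibration estimate \eqref{prof:eq:calibration-estimate}; your explicit $O(n_N^{-1/2})$ rate for that replacement step simply sharpens the paper's $o(1)$.
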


\begin{proof}
Apply the uniform calibrated-profile consequence of
\cref{prof:lem:static-wick-coordinate} just established.  Uniformly over $S_N\subset V_N$ with $|S_N|=k_N$, $s\in J$, and
$0\le r\le R$,
\begin{equation}
 \Psi_{N,r}(g_{N,t_N(s)}^{S_N})
 =\mathscr W_{N,r}(\beta_Nv_{N,t_N(s)}^{S_N})+o_{N,R,J}(1).
 \label{prof:eq:static-wick-intermediate}
\end{equation}
By \cref{prof:lem:one-particle-bounds,prof:lem:calibration},
\begin{equation}
 \sup_{\substack{S_N\subset V_N,\ |S_N|=k_N\\ s\in J}}
 \|\beta_Nv_{N,t_N(s)}^{S_N}-m_{N,t_N(s)}^{S_N}\|_{2,\mathrm{cnt}}
 \longrightarrow0,
 \label{prof:eq:calibrated-field-L2}
\end{equation}
and both one-particle fields have uniformly bounded counting measure norm.

Apply the fixed-degree Lipschitz estimate
\eqref{prof:eq:fixed-wick-lipschitz} to
\(a=\beta_Nv_{N,t_N(s)}^{S_N}\) and \(b=m_{N,t_N(s)}^{S_N}\), and combine it
with \eqref{prof:eq:static-wick-intermediate} and
\eqref{prof:eq:calibrated-field-L2}.  This proves
\eqref{prof:eq:static-wick-comparison} uniformly over the stated sources.
\end{proof}

\begin{lemma}[Tilt high-chaos tail]
\label{prof:lem:tilt-tail}
For every nonempty compact $J \subset \mathbb R$,
\begin{equation*}
 \lim_{R\to\infty}\limsup_{N\to\infty}
 \sup_{\substack{S_N\subset V_N,\ |S_N|=k_N\\ s\in J}}
 \sum_{r>R}\|P_{N,r}g_{N,t_N(s)}^{S_N}\|_2^2=0.
\end{equation*}
Moreover, for each fixed $r$, uniformly over the same sources and
$s\in J$,
\begin{equation}
 \|P_{N,r}g_{N,t_N(s)}^{S_N}\|_2^2
 =\frac{(\mathfrak q_N^{S_N}(s))^r}{r!}+o_{N,r,J}(1).
 \label{prof:eq:tilt-degree-norm}
\end{equation}
\end{lemma}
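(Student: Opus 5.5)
The plan is to get the tail from the exact Parseval identity for $g$. We compare the total mass $\mathbb E_{\pi_N}[g^2]$ with the sum of the fixed-degree contributions. The fixed-degree formula \eqref{prof:eq:tilt-degree-norm} comes first. It follows by combining three results through the exact chaos isometry \eqref{prof:eq:exact-chaos-isometry}: the static comparison \cref{prof:prop:static-fixed-degree}, the Wick-norm asymptotic \cref{prof:lem:static-wick-norm} applied with $a=m_{N,t_N(s)}^{S_N}$, and the profile bounds \eqref{prof:eq:one-particle-bounds}.

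\textbf{Fixed-degree formula.} By \eqref{prof:eq:one-particle-bounds}, $\|m\|_{2,\mathrm{cnt}}\le C_J$ and $\|m\|_\infty\le C_Jn_N^{-1/2}=:\varepsilon_N$, uniformly over sources and $s\in J$. Hence \cref{prof:lem:static-wick-norm} gives $\frac1{r!}\|\mathscr W_{N,r}(m)\|_2^2=q^r/r!+o(1)$ uniformly, since $\beta_N^{-1}\|m\|_{2,\mathrm{cnt}}^2=q$ by definition. \cref{prof:prop:static-fixed-degree} gives $\|\Psi_{N,r}(g)-\mathscr W_{N,r}(m)\|_2\to0$ uniformly. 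Both vectors have norms bounded by a constant depending only on $r$ and $J$, so the squared norms also differ by $o(1)$. Then \eqref{prof:eq:exact-chaos-isometry} yields \eqref{prof:eq:tilt-degree-norm}.

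\textbf{Total mass.} Next we compute $\|g\|_2^2=\mathbb E_{\pi_N}[g_{N,v}^2]=\exp\{\Lambda_N(2v)-2\Lambda_N(v)\}$ with $v=v_{N,t_N(s)}^{S_N}$. The calibrated family satisfies $\sup\|v\|_\infty\to0$ and $\sup\|v\|_{2,\mathrm{cnt}}<\infty$, by \cref{prof:lem:calibration} and \eqref{prof:eq:one-particle-bounds}. So the uniform MGF statement \eqref{prof:eq:uniform-mgf-gaussian} of \cref{prof:lem:gaussianization} applies at $z=2$ and $z=1$, giving $\Lambda_N(2v)-2\Lambda_N(v)=\beta_N\|v\|_{2,\mathrm{cnt}}^2+o(1)$ uniformly. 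Combined with \eqref{prof:eq:calibrated-q}, this yields $\|g\|_2^2=e^{q}+o(1)$ uniformly over sources and $s\in J$, where $q=\mathfrak q_N^{S_N}(s)\le C_J$.

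\textbf{Tail.} Parseval gives
\[
 \sum_{r>R}\|P_{N,r}g\|_2^2=\|g\|_2^2-\sum_{r\le R}\|P_{N,r}g\|_2^2 .
\]
Using the two previous steps, with only finitely many degrees $r\le R$ at fixed $R$, this equals
\[
 e^{q}-\sum_{r\le R}\frac{q^r}{r!}+o_{N,R,J}(1)\le\sum_{r>R}\frac{C_J^r}{r!}+o_{N,R,J}(1).
\]
Taking $\limsup_N$ and then $R\to\infty$ gives the claim.

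The one point requiring care is the uniformity. All the $o(1)$ terms must be uniform over sources $S_N$ and over $s\in J$. For the fixed-degree step this is inherited from \cref{prof:prop:static-fixed-degree} and from the explicit class in \cref{prof:lem:static-wick-norm}. For the MGF step it comes from the family version \eqref{prof:eq:uniform-mgf-gaussian}, applied with index set $\mathcal I_N=\{(S_N,s)\}$. I expect this step to be essentially bookkeeping rather than a genuine obstacle. The only subtlety is that the calibrated $v$ must be defined for all large $N$ uniformly, which holds because $\|m\|_\infty\le C_Jn_N^{-1/2}\le\delta_{\rho_0}$ eventually.
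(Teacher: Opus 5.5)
Your proposal is correct and follows the paper's proof essentially step for step. The fixed-degree formula comes from the static comparison, the Wick-norm asymptotic and the chaos isometry; then $\|g\|_2^2=e^{\mathfrak q_N^{S_N}(s)+o(1)}$ comes from the uniform log-MGF at $z=1,2$, and the tail follows by subtraction using orthogonality. The only cosmetic difference is that you compare with $\mathscr W_{N,r}(m)$ through \cref{prof:prop:static-fixed-degree}, so the Wick norm is directly $(\mathfrak q_N^{S_N}(s))^r$. The paper instead compares with $\mathscr W_{N,r}(\beta_N v)$ through \cref{prof:lem:static-wick-coordinate} and then uses \eqref{prof:eq:calibrated-q} to identify $\beta_N\|v\|_{2,\mathrm{cnt}}^2$ with $\mathfrak q_N^{S_N}(s)$.
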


\begin{proof}
For fixed $r\in\mathbb N_0$, \cref{prof:lem:static-wick-coordinate} gives, uniformly
over $S_N\subset V_N$ with $|S_N|=k_N$ and $s\in J$,
\[
 \Psi_{N,r}(g_{N,t_N(s)}^{S_N})
 =\mathscr W_{N,r}(\beta_Nv_{N,t_N(s)}^{S_N})+o_{N,r,J}(1).
\]
The norm asymptotic \eqref{prof:eq:static-wick-norm} from
\cref{prof:lem:static-wick-norm} gives
\[
 \|\mathscr W_{N,r}(\beta_Nv_{N,t_N(s)}^{S_N})\|_2^2
 =(\mathfrak q_N^{S_N}(s))^r+o_{N,r,J}(1),
\]
where \eqref{prof:eq:calibrated-q}, together with the definition
\eqref{prof:eq:profile-coordinate}, identifies the one-particle squared norm
with $\mathfrak q_N^{S_N}(s)$.  The exact chaos isometry
\eqref{prof:eq:exact-chaos-isometry} then proves
\eqref{prof:eq:tilt-degree-norm}.  On the other hand,
the uniform log MGF estimate \eqref{prof:eq:uniform-mgf-gaussian} from
\cref{prof:lem:gaussianization}, applied at $z=1,2$, yields uniformly over
$S_N$ and $s\in J$
\begin{equation}
 \|g_{N,t_N(s)}^{S_N}\|_2^2
 =\frac{\mathbb E_{\pi_N}[e^{2X_{N,v_{N,t_N(s)}^{S_N}}}]}
        {(\mathbb E_{\pi_N}[e^{X_{N,v_{N,t_N(s)}^{S_N}}}])^2}
 =e^{\mathfrak q_N^{S_N}(s)+o(1)}.
 \label{prof:eq:tilt-total-second-moment}
\end{equation}
Choose $N_J$ so that $t_N(s)\ge0$ for every $N\ge N_J$ and $s\in J$.
By \cref{prof:lem:one-particle-bounds}, there is $C_J<\infty$ such that
\[
 \sup_{N\ge N_J}
 \sup_{\substack{S_N\subset V_N,\ |S_N|=k_N\\ s\in J}}
 \mathfrak q_N^{S_N}(s)\le C_J.
\]
For fixed $R\in\mathbb N_0$, subtract the sum of \eqref{prof:eq:tilt-degree-norm} over
$r\le R$ from \eqref{prof:eq:tilt-total-second-moment}.
Orthogonality gives
\[
 \limsup_{N\to\infty}
 \sup_{\substack{S_N\subset V_N,\ |S_N|=k_N\\ s\in J}}
 \sum_{r>R}\|P_{N,r}g_{N,t_N(s)}^{S_N}\|_2^2
 \le \sup_{0\le q\le C_J}
 \left(e^q-\sum_{r=0}^R\frac{q^r}{r!}\right).
\]
The right side tends to zero as $R\to\infty$.
\end{proof}

\section{Fixed-degree comparison of independent and exclusion dynamics}
\label{prof:sec:fixed-degree}

\Cref{prof:sec:chaos-interface} constructs the common coordinate space
$\mathcal K_{N,r}^{\ord}$ in which the exact exclusion chaos coordinate and
the independent product coordinate are compared.
In this section we execute the quantitative comparison at fixed degree
$r$, and then let $N\to\infty$.  Throughout the section, $r\in\mathbb N$ is fixed.
By the density window \eqref{prof:eq:density-window}, one has
$\ell_N\ge \rho_0n_N$, so for all sufficiently large $N$ the fixed degree
satisfies $r\le\ell_N$; all asymptotic statements below are understood on
this eventual range.  

The comparison has three stages.
First, the geometric defect estimates give the asymptotically sharp spectral bottom
$r\gamma_N-O(\gamma_N/n_N)$ by a Schur argument using a single
$\gamma_N$-shifted resolvent.  Second, the contribution generated by a finite
one-particle spectral truncation is compared directly by Duhamel's formula.
Third, a one-sided estimate on low hard-core eigenvectors controls the
remaining source contribution.  The final matching theorem combines this
dynamic result with the static canonical tilt comparison already proved in
\cref{prof:prop:static-fixed-degree}.

Recall the finite-population Wick coordinate
$\mathscr W_{N,r}(m)$ from \eqref{prof:eq:canonical-wick-coordinate}.  The
dynamic comparison between $\Psi_{N,r}(h_{N,t}^{S_N})$ and
$\mathscr W_{N,r}(m_{N,t}^{S_N})$ is furnished by
\cref{prof:lem:fd-exact-duality}, namely:
\begin{equation*}
 \Psi_{N,r}(h_{N,t}^{S_N})-\mathscr W_{N,r}(m_{N,t}^{S_N})
 =\alpha_{N,r}\beta_N^{-r/2}n_N^{r/2}
 \Bigl[
 \mathsf{Top}_{N,r}^{\ord}\mathcal P_{N,r}^{\hc}(t)\mathsf R_{N,r}F_N^{\otimes r}
 -\mathsf{Top}_{N,r}^{\ord}\mathsf R_{N,r}\mathcal P_{N,r}^{\ind}(t)F_N^{\otimes r}
 \Bigr].
\end{equation*}
The object of interest is the source-normalized hard-core/product
semigroup comparison inside the brackets.

We continue to write $\|\cdot\|_{\ind}$ and $\|\cdot\|_{\hc}$ for the
ambient norms fixed above, and use $\|\cdot\|_2$ when the ambient space is
unambiguous, in particular on the common harmonic coordinate space
$\mathcal K_{N,r}^{\ord}$.

\subsection{Geometric comparison estimates}

The comparison estimates below are proved on a fixed low-energy Fourier
window, where restriction to distinct tuples and the hard-core top projection
admit uniform pointwise and discrete gradient control.  We therefore first
localize each one-particle factor to energies of order $\gamma_N$; at fixed
degree, tensor products of these modes generate the finite-dimensional
comparison space used below.

Fix $K\ge0$ and consider the one-particle spectral interval
$[0,K\gamma_N]$.  Choose a smooth function $\vartheta_K:[0,\infty)\to[0,1]$ which equals $1$ on
$[0,K]$ and $0$ on $[K+1,\infty)$.  Let
\begin{equation}
 \Xi_{N,K}^{(1)}=\vartheta_K(\gamma_N^{-1}L_N^{\RW}),
 \qquad
 \Xi_{N,K}^{(1)}e_p=
 \begin{cases}
   e_p,& \lambda_N(p)\le K\gamma_N,\\
   0,& \lambda_N(p)\ge (K+1)\gamma_N.
 \end{cases}
 \label{prof:eq:fd-one-particle-cutoff}
\end{equation}
Thus $\Xi_{N,K}^{(1)}$ is the smooth low-energy spectral multiplier associated
with $\vartheta_K$: it acts as the identity below $K\gamma_N$, vanishes above
$(K+1)\gamma_N$, and interpolates on the intervening spectral band.
Recall the Fourier convention
\eqref{prof:eq:probability-fourier-basis}.  For $K_+\ge0$, let
$\mathscr V_{N,r}^{\ind}(K_+)\subset\mathcal T_{N,r}^{\ind}$ be the span of the
permutation symmetrizations of exactly $r$-fold character tensors whose total energy does not exceed $K_+ \gamma_N$:
\[
 e_{p_1}\otimes\cdots\otimes e_{p_r},
 \qquad p_a\ne0\ \text{for every }1\le a\le r,
 \qquad \sum_{a=1}^r\lambda_N(p_a)\le K_+\gamma_N.
\]
Above each factor is nonconstant, and the Fourier frequencies are allowed to repeat.
The dimension of $\mathscr V_{N,r}^{\ind}(K_+)$ is uniformly bounded for fixed $D,r,K_+$.  Define
\begin{align}
 \mathsf J_{N,r,K_+}:
 \mathscr V_{N,r}^{\ind}(K_+)\longrightarrow\mathcal T_{N,r}^{\hc},
 \qquad
 \mathsf J_{N,r,K_+}u
 :=\mathsf{Top}_{N,r}^{\ord}\mathsf R_{N,r}u.
 \label{prof:eq:JNrK}
\end{align}
Thus $\mathsf J_{N,r,K_+}$ first restricts a low-energy product mode to
ordered distinct tuples and then removes its lower hard-core Hoeffding
components.

For $q\ge1$, two tuples $\boldsymbol x,\boldsymbol y\in V_N^q$ are
\emph{product neighbors}, written $\boldsymbol x\sim\boldsymbol y$, if they
differ in exactly one coordinate and that coordinate moves across one
nearest-neighbor edge of $V_N$.  Whenever
$\sum_{\boldsymbol x\sim\boldsymbol y}$ is used for the product graph, it
runs over ordered neighboring pairs; the corresponding Dirichlet form
therefore carries the usual factor $1/2$.
For $q\ge1$, define the collision diagonal
\begin{equation*}
 \mathfrak D_{N,q}^{\mathrm{coll}}
 :=V_N^q\setminus\Omega_{N,q}^{\ord}
 = \{\boldsymbol x\in V_N^q: x_a=x_b \text{ for some } 1\le a<b\le q\}.
\end{equation*}
A blocked hard-core move can occur only when two coordinates are nearest
neighbors.  Accordingly, for $q\ge2$, define the corresponding contact set
\begin{equation}
 \mathfrak C_{N,q}^{\mathrm{nn}}
 :=\bigcup_{1\le a<b\le q}
   \{\boldsymbol x\in V_N^q:x_a\sim x_b\}.
 \label{prof:eq:nearest-neighbor-contact-set}
\end{equation}
Thus $\mathfrak D_{N,q}^{\mathrm{coll}}$ records repeated coordinates,
whereas $\mathfrak C_{N,q}^{\mathrm{nn}}$ records tuples on which at least
one selected pair of coordinates is adjacent.  We also write
\begin{equation}
 \theta_{N,q}:=\frac{(n_N)_{\underline q}}{n_N^q},
 \qquad 0\le q\le n_N,
 \label{prof:eq:fd-distinct-tuple-density}
\end{equation}
for the product probability that a uniformly sampled ordered $q$-tuple has
distinct coordinates.  In particular,
$\theta_{N,q}=1-O_q(n_N^{-1})$ at fixed $q$.
We write
$
 \mathcal D_D:=\{\pm\mathbf e_1,\ldots,\pm\mathbf e_D\}
$
for the set of nearest-neighbor displacement vectors.

\begin{lemma}[Top-compatible restriction of smooth product modes]
\label{prof:lem:fd-restriction}
For fixed $r\in\mathbb N$ and $K_+\ge0$, uniformly for
$u,v\in \mathscr V_{N,r}^{\ind}(K_+)$,
\begin{align}
 \left|\langle\mathsf J_{N,r,K_+}u,\mathsf J_{N,r,K_+}v\rangle_{\hc}
       -\langle u,v\rangle_{\ind}\right|
 &\le \frac{C_{D,r,K_+}}{n_N}\|u\|_{\ind}\|v\|_{\ind},
 \label{prof:eq:fd-gram-error}\\
 \left|\mathcal E_{N,r}^{\hc}(\mathsf J_{N,r,K_+}u,\mathsf J_{N,r,K_+}v)
       -\mathcal E_{N,r}^{\ind}(u,v)\right|
 &\le C_{D,r,K_+}\frac{\gamma_N}{n_N}
       \|u\|_{\ind}\|v\|_{\ind}.
 \label{prof:eq:fd-form-error}
\end{align}
\end{lemma}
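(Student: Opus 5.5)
The plan is to split $\mathsf J_{N,r,K_+}u-u$ into a restriction defect and a top-projection defect, and to bound each using the uniform smoothness of low-energy product modes. Since $\mathscr V_{N,r}^{\ind}(K_+)$ has dimension bounded in $N$, it suffices to prove both estimates on a fixed spanning family: the symmetrized character tensors $e_{p_1}\otimes\cdots\otimes e_{p_r}$ with $p_a\ne0$ and $\sum_a\lambda_N(p_a)\le K_+\gamma_N$. By \eqref{prof:eq:dispersion-comparison}, each such $|\widetilde p_a|$ is bounded by a constant depending on $K_+$, so the family is finite with $N$-independent cardinality. The only remaining issue is that the Gram matrix of this family in $\langle\cdot,\cdot\rangle_{\ind}$ stays uniformly well conditioned. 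This holds because distinct symmetrized character tensors are orthogonal, with norms bounded above and below by constants depending on $r$. With that in hand, bilinear bounds on basis elements transfer to the claimed bounds with $\|u\|_{\ind}\|v\|_{\ind}$ on the right.

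\emph{Restriction step.} For $u,v$ bounded (characters satisfy $|e_p|=1$), I would compare the two inner products directly. Writing $\langle\mathsf R u,\mathsf R v\rangle_{\hc}=\theta_{N,r}^{-1}n_N^{-r}\sum_{\boldsymbol x\notin\mathfrak D^{\mathrm{coll}}}u\bar v$, the collision diagonal has product measure $O_r(n_N^{-1})$ and $\theta_{N,r}^{-1}=1+O(n_N^{-1})$, so the difference is $O(n_N^{-1})$.

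For the forms, I would write $\mathcal E^{\hc}_{N,r}(\mathsf R u,\mathsf R v)$ as a sum over ordered neighboring pairs of distinct tuples with no blocked jumps. It differs from $\mathcal E^{\ind}_{N,r}(u,v)$ only by edges touching $\mathfrak D^{\mathrm{coll}}$, together with the normalization $\theta^{-1}$. Each product-graph edge difference satisfies $|u(\boldsymbol x)-u(\boldsymbol y)|\le C\sqrt{\gamma_N}$, since $|e_p(x)-e_p(y)|^2\le\lambda_N(p)\le K_+\gamma_N$. The relevant edges are those with an endpoint on the collision diagonal or in the contact set $\mathfrak C^{\mathrm{nn}}$, and there are $O(n_N^{r-1})$ of them. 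This gives an error of $O(\gamma_N/n_N)$. The term $\theta^{-1}-1$ multiplies $\mathcal E^{\ind}=O(\gamma_N)$, so it too contributes $O(\gamma_N/n_N)$.

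\emph{Top-projection step.} I would reuse the mechanism from \cref{prof:lem:static-wick-norm}. Because each factor is mean zero, $\mathsf{Down}_{N,r\to r-1}\mathsf R_{N,r}u$ is an explicit collision term: for a single product tensor it equals $-(n_N-r+1)^{-1}\prod_{i<r}e_{p_i}(x_i)\cdot\sum_{i}e_{p_r}(x_i)$, with symmetrizations handled similarly. Its $L^2$ norm is therefore $O(n_N^{-1})$. By \eqref{prof:eq:fd-top-projection-formula} and the uniform lower bound on the down–up spectrum from \eqref{prof:eq:fd-down-up-eigenvalue}, the vector $w:=(I-\mathsf{Top}^{\ord})\mathsf R u=\mathsf{Up}(\mathsf{Down}\,\mathsf{Up})^{-1}\mathsf{Down}\,\mathsf R u$ satisfies $\|w\|_{\hc}=O(n_N^{-1})$. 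Since $\mathsf{Top}^{\ord}$ is an orthogonal projection, $\langle\mathsf J u,\mathsf J v\rangle=\langle\mathsf R u,\mathsf R v\rangle-\langle w_u,w_v\rangle$, and the correction is $O(n_N^{-2})$. Combined with the restriction step, this proves \eqref{prof:eq:fd-gram-error}.

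\emph{Energy of the top correction (main obstacle).} For the form, \eqref{prof:eq:HLcommute} shows that $\mathsf{Top}^{\ord}$ commutes with $\mathcal L^{\hc}$. Hence $\mathcal E^{\hc}(\mathsf J u,\mathsf J v)=\mathcal E^{\hc}(\mathsf R u,\mathsf R v)-\mathcal E^{\hc}(w_u,w_v)$, and it remains to show $\mathcal E^{\hc}(w)=O(\gamma_N/n_N^{2})$, or even just $O(\gamma_N/n_N)$. The difficulty is that a norm bound on $w$ alone gives no energy bound. I would obtain one by showing that $w$ is itself smooth. The vector $g:=(\mathsf{Down}\,\mathsf{Up})^{-1}\mathsf{Down}\,\mathsf R u$ lies at level $r-1$ and is explicit: it is a combination of symmetrized tensors of the same characters multiplied by bounded collision-free factors like $\sum_i e_{p}(x_i)$, all at amplitude $O(n_N^{-1})$. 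Using the intertwining \eqref{prof:eq:up-intertwine} at the generator level, and the fact that $\mathsf{Up}$ is an average, I get $\mathcal E^{\hc}_{N,r}(\mathsf{Up}\,g)\le C\,\mathcal E^{\hc}_{N,r-1}(g)$. Each discrete gradient of $g$ is $O(\sqrt{\gamma_N}\,n_N^{-1})$ by the character gradient bound. I expect justifying the explicit inversion of $\mathsf{Down}\,\mathsf{Up}$ on this finite-dimensional span, with gradients controlled, to be the most delicate bookkeeping step. Once it is done, $\mathcal E^{\hc}(w)=O(\gamma_N n_N^{-2})$, which completes \eqref{prof:eq:fd-form-error}.
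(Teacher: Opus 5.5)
Your proposal is correct and follows essentially the paper's route. The restriction error comes from the $O(n_N^{-1})$ collision diagonal and the $O(N^{-1})$ gradients of low modes. The top correction is written as $\mathsf{Up}_{N,r-1\to r}(\mathsf{Down}_{N,r\to r-1}\mathsf{Up}_{N,r-1\to r})^{-1}\mathsf{Down}_{N,r\to r-1}\mathsf R_{N,r}u$, with the explicit mean-zero down defect.

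The step you flag as delicate, explicitly inverting the down--up operator, is unnecessary. By \eqref{prof:eq:hard-core-down-intertwining} and its adjoint, the down--up operator and its inverse commute with $\mathcal L_{N,r-1}^{\hc}$, and the inverse has spectrum in $[1,c_r^{-1}]$. Hence
$\mathcal E_{N,r-1}^{\hc}(g)\le c_r^{-2}\mathcal E_{N,r-1}^{\hc}(\mathsf{Down}_{N,r\to r-1}\mathsf R_{N,r}u)$,
and the right-hand side is $O(\gamma_N n_N^{-2})\|u\|_{\ind}^2$ directly from the diagonal-evaluation formula for the down defect.
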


\begin{proof}
The energy restriction defining $\mathscr V_{N,r}^{\ind}(K_+)$ and the dispersion
bound \eqref{prof:eq:dispersion-comparison} imply
$|\widetilde p_a|\le C_{K_+}$ for every character factor.  Thus the window
contains only $O_{D,r,K_+}(1)$ product Fourier modes.  Since $|e_p|=1$ and,
for every nearest-neighbor displacement $\boldsymbol\delta\in\mathcal D_D$,
\[
 |e_p(x+\boldsymbol\delta)-e_p(x)|\le C\frac{|\widetilde p|}{N},
\]
finite-dimensional Fourier norm equivalence gives
\begin{equation}
 \|u\|_\infty\le C_{D,r,K_+}\|u\|_{\ind},
 \qquad
 \max_{\boldsymbol x\sim\boldsymbol y}|u(\boldsymbol x)-u(\boldsymbol y)|
 \le C_{D,r,K_+}N^{-1}\|u\|_{\ind},
 \qquad
 \dim\mathscr V_{N,r}^{\ind}(K_+)\le C_{D,r,K_+}.
 \label{prof:eq:fd-finite-window-fourier-bounds}
\end{equation}
These are the finite-window $L^2$-to-$L^\infty$ bound, its discrete gradient
counterpart, and the uniform dimension bound used below.

We first compare restriction with the product geometry.  Since both spaces
carry uniform probability measures,
\begin{equation}
 \langle\mathsf R_{N,r}u,\mathsf R_{N,r}v\rangle_{\hc}
 =\theta_{N,r}^{-1}
  \langle\one_{\Omega_{N,r}^{\ord}}u,v\rangle_{\ind}.
 \label{prof:eq:fd-restriction-normalization}
\end{equation}
The collision diagonal has product probability $O_r(n_N^{-1})$.  Rewriting
\eqref{prof:eq:fd-restriction-normalization} as
\[
 \langle\mathsf R_{N,r}u,\mathsf R_{N,r}v\rangle_{\hc}
 -\langle u,v\rangle_{\ind}
 =\bigl(\theta_{N,r}^{-1}-1\bigr)\langle u,v\rangle_{\ind}
  -\theta_{N,r}^{-1}
   \langle\one_{\mathfrak D_{N,r}^{\mathrm{coll}}}u,v\rangle_{\ind},
\]
the finite-window $L^\infty$ bound in
\eqref{prof:eq:fd-finite-window-fourier-bounds} and
$\theta_{N,r}=1-O_r(n_N^{-1})$ give the quantitative restriction estimate
\begin{align}
 \left|
 \langle\mathsf R_{N,r}u,\mathsf R_{N,r}v\rangle_{\hc}
 -\langle u,v\rangle_{\ind}
 \right|
 \le \frac{C_{D,r,K_+}}{n_N}\|u\|_{\ind}\|v\|_{\ind}.
 \label{prof:eq:restriction-est}
\end{align}
Thus, before the top-Hoeffding projection is applied, restriction itself
already changes the inner product by only $O_{D,r,K_+}(n_N^{-1})$.

The same comparison holds for the
Dirichlet form.  Let $\mathcal E_{N,r}^{\mathrm{coll}}(u,v)$ denote the
product form contribution of edges that are not allowed hard-core edges
between two distinct tuples.  Then
\[
 \mathcal E_{N,r}^{\hc}(\mathsf R_{N,r}u,\mathsf R_{N,r}v)
 =\theta_{N,r}^{-1}
  \bigl(\mathcal E_{N,r}^{\ind}(u,v)
        -\mathcal E_{N,r}^{\mathrm{coll}}(u,v)\bigr).
\]
The product spectrum on $\mathscr V_{N,r}^{\ind}(K_+)$ is at most
$K_+\gamma_N$, so Cauchy--Schwarz for the product Dirichlet form gives
\[
 |\mathcal E_{N,r}^{\ind}(u,v)|
 \le K_+\gamma_N\|u\|_{\ind}\|v\|_{\ind}.
\]
Moreover, the product edge mass incident to a collision is
$O_{D,r}(n_N^{-1})$, and the discrete gradient estimate in
\eqref{prof:eq:fd-finite-window-fourier-bounds} gives
\[
 |\mathcal E_{N,r}^{\mathrm{coll}}(u,v)|
 \le C_{D,r,K_+}\frac{\gamma_N}{n_N}\|u\|_{\ind}\|v\|_{\ind}.
\]
Combining these bounds with
$\theta_{N,r}=1+O_r(n_N^{-1})$ yields
\begin{align}
 \left|
 \mathcal E_{N,r}^{\hc}(\mathsf R_{N,r}u,\mathsf R_{N,r}v)
 -\mathcal E_{N,r}^{\ind}(u,v)
 \right|
 \le C_{D,r,K_+}\frac{\gamma_N}{n_N}
       \|u\|_{\ind}\|v\|_{\ind}.
\label{prof:eq:energy-restriction}
\end{align}
Hence restriction alone incurs an $O_{D,r,K_+}(n_N^{-1})$ inner product
error and an $O_{D,r,K_+}(\gamma_Nn_N^{-1})$ form error.  

The remaining task
is only to remove the lower hard-core Hoeffding components created by
restriction without worsening these scales.
Define the down defect
\[
 \mathfrak d_u:=\mathsf{Down}_{N,r\to r-1}\mathsf R_{N,r}u.
\]
Because $u$ has zero average in each product coordinate,
\begin{equation}
 \mathfrak d_u(x_1,\ldots,x_{r-1})
 =\frac1{n_N-r+1}\sum_{z\notin \{x_1,\ldots, x_{r-1}\}}
   u(x_1,\ldots,x_{r-1},z)
   =-\frac1{n_N-r+1}\sum_{j=1}^{r-1}
   u(x_1,\ldots,x_{r-1},x_j).
 \label{prof:eq:fd-down-defect}
\end{equation}
Hence the two estimates in \eqref{prof:eq:fd-finite-window-fourier-bounds}
give
\[
 \|\mathfrak d_u\|_{\hc}
 \le C_{D,r,K_+}n_N^{-1}\|u\|_{\ind},
 \qquad
 \mathcal E_{N,r-1}^{\hc}(\mathfrak d_u)
 \le C_{D,r,K_+}\gamma_Nn_N^{-2}\|u\|_{\ind}^2.
\]
The projection formula \eqref{prof:eq:fd-top-projection-formula} yields
\begin{equation}
 (\mathsf J_{N,r,K_+}-\mathsf R_{N,r})u
 =-\mathsf{Up}_{N,r-1\to r}
 \bigl(\mathsf{Down}_{N,r\to r-1}
       \mathsf{Up}_{N,r-1\to r}\bigr)^{-1}\mathfrak d_u.
 \label{prof:eq:fd-lower-layer-correction}
\end{equation}
By \eqref{prof:eq:fd-down-up-eigenvalue}, the down--up operator here has
spectrum in $[c_r,1]$ for all sufficiently large $N$; by
\eqref{prof:eq:hard-core-down-intertwining} and its adjoint, it and its
inverse commute with $\mathcal L_{N,r-1}^{\hc}$.  Since this inverse is
uniformly bounded, adjointness of $\mathsf{Up}$ and $\mathsf{Down}$ transfers,
respectively, the $L^2$ and Dirichlet form bounds from
\eqref{prof:eq:fd-down-defect} and \eqref{prof:eq:fd-finite-window-fourier-bounds}
to the correction:
\begin{equation}
 \| (\mathsf J_{N,r,K_+}-\mathsf R_{N,r})u\|_{\hc}
 \le C_{D,r,K_+}n_N^{-1}\|u\|_{\ind}.
 \label{prof:eq:fd-top-correction-L2}
\end{equation}
\begin{equation}
 \mathcal E_{N,r}^{\hc}
 \bigl((\mathsf J_{N,r,K_+}-\mathsf R_{N,r})u\bigr)
 \le C_{D,r,K_+}\gamma_Nn_N^{-2}\|u\|_{\ind}^2.
 \label{prof:eq:fd-top-correction-energy}
\end{equation}
To finish, write
$\mathsf J_{N,r,K_+}=\mathsf R_{N,r}+
(\mathsf J_{N,r,K_+}-\mathsf R_{N,r})$.  For the inner-product comparison,
\eqref{prof:eq:restriction-est} and \eqref{prof:eq:fd-top-correction-L2},
followed by Cauchy--Schwarz, make each mixed term
$O_{D,r,K_+}(n_N^{-1})\|u\|_{\ind}\|v\|_{\ind}$ and the
correction--correction term $O_{D,r,K_+}(n_N^{-2})$.  Together with the
restriction error this proves \eqref{prof:eq:fd-gram-error}.  The same
expansion for the Dirichlet form uses \eqref{prof:eq:energy-restriction},
the $O_{D,r,K_+}(\gamma_N)$ restriction energy on the fixed Fourier window,
and \eqref{prof:eq:fd-top-correction-energy}; Cauchy--Schwarz then makes
each mixed term $O_{D,r,K_+}(\gamma_Nn_N^{-1})$ and the pure correction
$O_{D,r,K_+}(\gamma_Nn_N^{-2})$, after factoring out
$\|u\|_{\ind}\|v\|_{\ind}$.  This proves
\eqref{prof:eq:fd-form-error}.
\end{proof}

Our next target is a uniformly bounded extension of functions on the hard-core space to the full product space, stated as \cref{prof:lem:fd-local-fill} below.
To prepare for its proof, we need a bounded displacement routing lemma on the hard-core configuration space.
For this local routing argument, let $\mathrm{pr}_N:\mathbb Z^D\to V_N$ denote the quotient map.
For \(x,y\in V_N = (\mathbb{Z}/N\mathbb{Z})^D\), choose arbitrary integer lifts
\(\widetilde x,\widetilde y\in\mathbb Z^D\) satisfying
\(\mathrm{pr}_N(\widetilde x)=x\) and \(\mathrm{pr}_N(\widetilde y)=y\), and write
\begin{equation}
 d_N^\infty(x,y)
 :=\min_{z\in N\mathbb Z^D}
   \|\widetilde x-\widetilde y+z\|_\infty,
 \qquad
 B_N^\infty(x,R):=\{y\in V_N:d_N^\infty(x,y)\le R\}.
 \label{prof:eq:torus-linfty-metric}
\end{equation}
These definitions do not depend on the chosen lifts.

\begin{lemma}[Bounded-displacement routing]
\label{prof:lem:fd-local-routing}
Fix \(D\ge2\), \(r\in\mathbb N\), and \(R\ge0\).  There are
\(C=C(D,r,R)<\infty\) and \(N_0=N_0(D,r,R)\) with the following property.
If \(N\ge N_0\) and
\(\boldsymbol z,\boldsymbol w\in\Omega_{N,r}^{\ord}\) satisfy
\[
 \max_{i\in[r]}d_N^\infty(z_i,w_i)\le R,
\]
then they can be joined by a path
\(\boldsymbol z=\boldsymbol z^0,\ldots,\boldsymbol z^L=\boldsymbol w\)
in the ordered exclusion configuration graph such that
\begin{equation}
 L\le C,
 \qquad
 \max_{\substack{0\le a\le L\\i\in[r]}}
 d_N^\infty(z_i^a,z_i)\le C.
 \label{prof:eq:fd-local-routing}
\end{equation}
The path may be chosen deterministically from
\((\boldsymbol z,\boldsymbol w)\).
\end{lemma}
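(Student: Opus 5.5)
The plan is to move the particles one at a time, each through a bounded region, after first lifting the whole configuration into a common Euclidean window. Since all pairwise targets are within $\ell^\infty$-distance $R$, the problem is local and involves only $r$ particles.

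Because $N\ge N_0$ is large relative to $r$ and $R$, the ball $B_N^\infty(z_1,(r+2)(R+2)r)$ is isometric to a box in $\mathbb Z^D$ (take $N_0>4(r+2)(R+2)r$). The particles split into clusters: link $i\sim j$ when $d_N^\infty(z_i,z_j)\le 2R+2$, and take connected components. Clusters sit at mutual distance $>2R+2$, so any path in which each particle stays within distance $R+1$ of its start never brings particles of different clusters into contact. Hence we may route each cluster separately and concatenate the routes. Within a cluster the diameter is at most $r(2R+2)$. That region lifts to a box $Q\subset\mathbb Z^D$ of side $O(rR)$, and every $w_i$ for $i$ in the cluster also lies in the lift $Q'$ (one unit larger).

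Within a cluster, I route particles sequentially using a free dimension. For each $i$ in turn, move particle $i$ from its current site to $w_i$ along a lattice path inside $B^\infty(z_i,R+1)$ that avoids the sites currently occupied by the other particles. The only subtlety is that $w_i$ may currently be occupied by some particle $j$ that has not yet been moved, or the path may be blocked. To handle this, first process particles in an order that avoids such conflicts, or use the standard trick. The trick is to pick a direction $\mathbf e_1$ and a large offset $M=O(rR)$. Step one: move each particle, in decreasing order of its $\mathbf e_1$-coordinate, to $z_i+M\mathbf e_1$ by straight steps. Step two: in the shifted copy, move each particle to $w_i+M\mathbf e_1$, particle by particle. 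Step three: shift everything back in increasing order.

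The main obstacle is step two: finding, inside a shifted box, an avoiding path from $z_i+M\mathbf e_1$ to $w_i+M\mathbf e_1$ when up to $r-1$ other occupied sites are present. Here $D\ge2$ is used. In $\mathbb Z^D$ with $D\ge2$, removing $r-1$ points from a box of side $\ge r+2R+3$ leaves the box graph connected, and graph distances stay bounded by $C(D,r,R)$. This holds because a cut set separating two large pieces of a $D\ge2$ box must have size at least the side length. An ordering issue remains: if $w_i+M\mathbf e_1$ is occupied by another particle $j$, I first move $j$ to a free "parking" site within distance $O(r)$, which must be unoccupied by pigeonhole. This pushes the chain of displacements through at most $r$ levels, which keeps displacements bounded.

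Step one needs no separate argument for the straight shifts. Particles with equal $\mathbf e_1$-line could collide, so instead of literal straight moves I apply the same avoiding-path argument. Each move is then a bounded lattice path in a box avoiding $r-1$ points, which again exists by the $D\ge2$ connectivity.

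Finally, all choices can be made deterministic: fix a lexicographic order on lattice paths and take the first valid one. Counting gives $L\le C(D,r,R)$ and a displacement bound $C$, which yields \eqref{prof:eq:fd-local-routing}. Throughout, the isometric lifting uses $N\ge N_0$, so the torus identifications play no role.
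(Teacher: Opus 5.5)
\textbf{The within-cluster routing step fails.} You move one particle at a time while all the others stay frozen. This relies on the claim that deleting the $r-1$ occupied sites from a box in $\mathbb Z^D$, $D\ge2$, leaves the box connected. That claim is false: a single site is cut off by removing its $2D$ neighbors, and your isoperimetric justification only covers cuts between two large pieces.

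Concretely, take $D=2$, $r=5$, $R=2$, with $\boldsymbol z=(0,\mathbf e_1,-\mathbf e_1,\mathbf e_2,-\mathbf e_2)$ and $\boldsymbol w=(2\mathbf e_1+2\mathbf e_2,\mathbf e_1,-\mathbf e_1,\mathbf e_2,-\mathbf e_2)$. Particles $2,\dots,5$ already sit at their targets. The target of particle $1$ is vacant, so your parking rule never fires. Particle $1$ has no legal move, so your scheme stalls in whatever order the labels are processed. The translation by $M\mathbf e_1$ in your steps one and three does not help, since it preserves all relative positions; step two is then literally the original problem.

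What is missing is a genuinely collective move of the cluster. The paper supplies it in three steps. First, it uses that the unlabeled exclusion graph on a finite connected box with fewer particles than sites is connected (a vacancy can be moved along a spanning tree). Second, it routes both the unlabeled source set and the unlabeled target set to a reference strip $P$. Third, it corrects the label order along $P$ by adjacent transpositions, each realized by four moves through the vacant parallel strip $P+\mathbf e_2$. This last step is where $D\ge2$ is really used.

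\textbf{Your decoupling of clusters is inconsistent with your own routing.} It assumes every particle stays within $R+1$ of its start. Your scheme instead displaces particles by $M=O(rR)$, by detours, and by parking moves. A moving cluster can therefore run into a neighboring cluster that is only $2R+2$ away.

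What the argument actually needs is that each cluster be routed inside a region containing none of the other clusters' sources or targets. Those are the only positions other particles occupy when clusters are handled one after another. The paper obtains this by surrounding each label's source and target with a buffered box and merging intersecting boxes, at most $r-1$ times, until the boxes are pairwise disjoint. Each final box has size bounded in terms of $D,r,R$, lifts to $\mathbb Z^D$ without wraparound for $N\ge N_0$, and contains all sources and targets of its cluster. No comparison between displacement and separation is then required.
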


\begin{proof}
Put $b_{\mathrm{buf}}:=2r+4$.  The routing construction involves two stages.  First we
cluster together labels whose bounded source--target neighborhoods overlap;
each resulting cluster will lie in a uniformly bounded torus box whose graph
lifts without wraparound to an ordinary box in $\mathbb Z^D$.  Inside each
lifted box we then route the source and target occupied sets to the same
reference strip, correct the label order using an adjacent vacant strip, and
reverse the target routing.  The bookkeeping below makes these choices
uniform and deterministic.

To be precise now, we call $Q\subset V_N$ a \emph{local torus box} if
$Q=\mathrm{pr}_N(\widetilde Q)$ for an axis-parallel integer box
\[
 \widetilde Q=\prod_{a=1}^D([A_a,B_a]\cap\mathbb Z)
 \subset\mathbb Z^D,
 \quad \text{ where } B_a-A_a<\frac{N}{2} \quad \text{ for every } 1\le a\le D.
\]
For such a box, the restriction $\mathrm{pr}_N|_{\widetilde Q}$ is injective:
two points with the same image differ by an element of $N\mathbb Z^D$, while
each coordinate difference inside $\widetilde Q$ has absolute value $<N/2$.
Moreover, two points of $\widetilde Q$ project to nearest neighbors in $V_N$
only when they are already nearest neighbors in $\mathbb Z^D$.  Thus
$\mathrm{pr}_N|_{\widetilde Q}$ is a graph isomorphism onto $Q$; in
particular, a local torus box has exactly the graph geometry of its integer
representative, with no wraparound ambiguity.  Finally, any two integer box
representatives of the same $Q$ satisfying these side bounds differ by a
common translation in $N\mathbb Z^D$: after one pair of corresponding points
is aligned, injectivity and connectedness force the same translation across
the whole box.

\localheading{Initial boxes and the merger rule.}
Let \(\boldsymbol z,\boldsymbol w\in\Omega_{N,r}^{\ord}\) be given.
For each $i\in[r]$, choose the distance-realizing displacement
$\boldsymbol\delta_i=(\delta_{i,a})_{a=1}^D\in\mathbb Z^D$ with
\[
 w_i=z_i+\boldsymbol\delta_i\pmod{N\mathbb Z^D},
 \qquad
 \|\boldsymbol\delta_i\|_\infty=d_N^\infty(z_i,w_i)\le R.
\]
We consider all $N>2R$, in which case
$\boldsymbol\delta_i$ is unique.  Let
\[
 \widetilde Q_i^0
 :=\prod_{a=1}^D
 \bigl([\min\{0,\delta_{i,a}\}-b_{\mathrm{buf}},
          \max\{0,\delta_{i,a}\}+b_{\mathrm{buf}}]\cap\mathbb Z\bigr),
 \qquad
 Q_i^0:=z_i+\mathrm{pr}_N(\widetilde Q_i^0).
\]
Then $Q_i^0$ contains both $z_i$ and $w_i$; every coordinate side has at
least $2b_{\mathrm{buf}}+1=4r+9$ sites and at most
$
 w_{\mathrm{box}}:=R+2b_{\mathrm{buf}}+1
$
sites.

Next we define the merger rule for the local torus boxes $(Q_i^0)_i$.
Suppose
$Q=\mathrm{pr}_N(\widetilde Q)$ and
$Q'=\mathrm{pr}_N(\widetilde Q')$ intersect, and the coordinate sides of
$\widetilde Q$ and $\widetilde Q'$ contain at most $L$ and $L'$ sites,
respectively, with $L+L'<N/2$.  The intersection produces a translation vector
$\mathbf v\in N\mathbb Z^D$ for which $\widetilde Q$ and $\widetilde Q'+\mathbf v$
intersect.  This $\mathbf v$ is unique: if both $\mathbf v$ and $\mathbf v'$ work, then every
coordinate of $\mathbf v-\mathbf v'\in N\mathbb Z^D$ has absolute value less than
$L+L'<N/2$, hence $\mathbf v=\mathbf v'$.  Define $Q\vee Q'$ as the projection of the
smallest axis-parallel integer box containing
$\widetilde Q\cup(\widetilde Q'+\mathbf v)$.  Its coordinate sides contain at most
$L+L'$ sites.  Replacing either integer representative merely translates the
aligned pair by a common element of $N\mathbb Z^D$, so $Q\vee Q'$ is a
well-defined local torus box.

Observe that the merger rule preserves one simple invariant: a current box formed
from $m$ initial boxes contains the source and target sites belonging to
those $m$ labels, and each of its coordinate sides contains at most $m w_{\mathrm{box}}$
sites. 

Set $C_{\rm box}:=r w_{\mathrm{box}}$ and enlarge $N_0=N_0(D,r,R)$ so that
$N>\max\{2R,2C_{\rm box}\}$ whenever $N\ge N_0$.  Starting from
$Q_1^0,\ldots,Q_r^0$, repeatedly merge the lexicographically first
intersecting pair, with current boxes ordered by their constituent label
sets.  The invariant holds initially.  If boxes with $m$ and $m'$ initial
constituents are merged, then
$(m+m')w_{\mathrm{box}}\le r w_{\mathrm{box}}=C_{\rm box}<N/2$, so the merger rule applies and the
new box satisfies the same invariant with $m+m'$ constituents.  After at
most $r-1$ mergers, the surviving local torus boxes are pairwise disjoint.
Each therefore contains all source and target sites of its label cluster,
and its diameter and cardinality are bounded only in terms of $D,r,R$.

\localheading{Routing inside one final box.}
Fix one final torus box $Q$, and let $I_Q\subseteq[r]$ be its constituent
labels, $r_Q:=|I_Q|$, and $i_Q:=\min I_Q$.  Among the integer representatives
of $Q$, choose the unique one $\widetilde Q$ containing the canonical spatial
representative of $z_{i_Q}$.  By the local box observation above,
$\mathrm{pr}_N|_{\widetilde Q}:\widetilde Q\to Q$ is a graph isomorphism,
so we may route in $\widetilde Q$ and project every move back to $Q$.  Denote
by $\widetilde z_i,\widetilde w_i\in\widetilde Q$ the unique preimages of the
assigned source and target sites.

The initial buffer gives every side of $\widetilde Q$ at least $4r+9$ sites,
and mergers never decrease side lengths.  Write
\[
 \widetilde Q=\prod_{a=1}^D([A_a,B_a]\cap\mathbb Z),
\]
and, since $D\ge2$, set the point
\[
 \mathbf q:=(A_1+2r+4,\ldots,A_D+2r+4)\in\widetilde Q.
\]
Because $r_Q\le r$, both
\[
 P=\{\mathbf q,\mathbf q+\mathbf e_1,\ldots,\mathbf q+(r_Q-1)\mathbf e_1\},
 \qquad P+\mathbf e_2,
\]
lie in $\widetilde Q$.  We use $P$ as the \emph{reference strip} and
$P+\mathbf e_2$ as the adjacent \emph{buffer strip}.

The routing now has three phases: move the source occupied set to $P$,
reorder the labels along $P$, and then reverse a route from the target occupied set to $P$.  For
the first and third phases, forget the labels temporarily.  The unlabeled
$r_Q$-particle exclusion graph on the finite connected graph $\widetilde Q$
is connected because $r_Q<|\widetilde Q|$.  Indeed, a vacancy can be moved
along a spanning tree, and induction over its leaves places the particles at
any prescribed occupied set.  Choose, with a fixed lexicographic tie-breaking
rule, a shortest unlabeled path $\Gamma_{\boldsymbol z}^{\mathrm{route}}$ from the occupied set
of $(\widetilde z_i)_{i\in I_Q}$ to $P$, and a shortest unlabeled path
$\Gamma_{\boldsymbol w}^{\mathrm{route}}$ from the occupied set of
$(\widetilde w_i)_{i\in I_Q}$ to $P$.  Each occupied--vacant move transports
the unique label at its occupied endpoint, so these unlabeled paths determine
labeled paths.  At their endpoints the source and target labels appear in
possibly different orderings $\ell_{\boldsymbol z}$ and
$\ell_{\boldsymbol w}$ along $P$.

At either endpoint the occupied set is exactly $P$, so the buffer strip
$P+\mathbf e_2$ is vacant.  It can therefore be used to interchange adjacent
labels on $P$ by the four legal moves on the corresponding $2\times2$ face:
\[
 \begin{matrix} A&B\\ \circ&\circ\end{matrix}
 \longrightarrow
 \begin{matrix} \circ&B\\ A&\circ\end{matrix}
 \longrightarrow
 \begin{matrix} B&\circ\\ A&\circ\end{matrix}
 \longrightarrow
 \begin{matrix} B&\circ\\ \circ&A\end{matrix}
 \longrightarrow
 \begin{matrix} B&A\\ \circ&\circ\end{matrix}.
\]
A fixed adjacent transposition algorithm, for instance bubble sort, changes
$\ell_{\boldsymbol z}$ into $\ell_{\boldsymbol w}$ using at most
$r_Q(r_Q-1)/2$ such swaps.  Reversing the labeled path determined by
$\Gamma_{\boldsymbol w}^{\mathrm{route}}$ then reaches the target labeled configuration in
$Q$ exactly.

\localheading{Uniformity and determinism.}
Because the final torus boxes are pairwise disjoint, routing one label
cluster leaves every other cluster untouched.  Apply the preceding
construction successively to the final boxes, in lexicographic order of
their constituent label sets.  For fixed $D,r,R$, each final
box has uniformly bounded cardinality, so the selected shortest unlabeled
paths have uniformly bounded length; the total number of adjacent
transpositions is at most $r(r-1)/2$.  Every intermediate coordinate remains
inside the final box containing its source, whose diameter is uniformly
bounded as well.  These two observations give both bounds in
\eqref{prof:eq:fd-local-routing}.  Finally, every choice above was made by a
fixed deterministic rule, so the resulting path is a deterministic function
of $(\boldsymbol z,\boldsymbol w)$.
\end{proof}

With the routing mechanism of \cref{prof:lem:fd-local-routing} available,
we proceed to the extension result.

\begin{lemma}[Local extension]
\label{prof:lem:fd-local-fill}
Fix $D\ge2$ and $r$.  For all sufficiently large \(N\), there is a
particle-label-equivariant linear extension
\(
 \mathsf{Ext}_{N,r}:\mathsf X_{N,r}^{\hc}\to\mathsf X_{N,r}^{\ind}
\)
such that
\(
 \mathsf R_{N,r}\mathsf{Ext}_{N,r}=I,
\)
\begin{align}
 \|\mathsf{Ext}_{N,r}f\|_{\ind}^2
 &\le C_{D,r}\|f\|_{\hc}^2, \quad \text{and} \label{prof:eq:ext-local-fill}\\
 \mathcal E_{N,r}^{\ind}(\mathsf{Ext}_{N,r}f)
 &\le C_{D,r}\mathcal E_{N,r}^{\hc}(f).
 \label{prof:eq:fd-local-fill}
\end{align}
\end{lemma}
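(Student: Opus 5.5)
We will define $\mathsf{Ext}_{N,r}$ by keeping $f$ on distinct tuples and, on the collision diagonal $\mathfrak D_{N,r}^{\mathrm{coll}}$, filling in by an average of nearby values of $f$. For each collision tuple $\boldsymbol x\in\mathfrak D_{N,r}^{\mathrm{coll}}$, choose deterministically (and equivariantly under relabelling of particles, e.g.\ by symmetrizing over label permutations at the end) a finite family of ``resolutions'' $\boldsymbol z(\boldsymbol x,\omega)\in\Omega_{N,r}^{\ord}$, $\omega$ in a fixed finite index set $\mathcal A_{D,r}$, with $\max_i d_N^\infty(z_i(\boldsymbol x,\omega),x_i)\le R_0=R_0(D,r)$. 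A concrete choice: displace each coordinate of a colliding cluster by a fixed distinct vector from $\{0,\ldots,r\}\mathbf e_1$ in an order determined by label, then route. Put $(\mathsf{Ext}f)(\boldsymbol x)=|\mathcal A|^{-1}\sum_\omega f(\boldsymbol z(\boldsymbol x,\omega))$ for $\boldsymbol x\in\mathfrak D^{\mathrm{coll}}$ and $\mathsf{Ext}f=f$ on $\Omega_{N,r}^{\ord}$. Then $\mathsf R_{N,r}\mathsf{Ext}_{N,r}=I$ and linearity are immediate. Equivariance is arranged by replacing $\mathsf{Ext}$ with its average $\frac1{r!}\sum_\varpi \mathsf P_\varpi^{-1}\mathsf{Ext}\,\mathsf P_\varpi$. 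The average is still an extension, since restriction commutes with label permutations, and the bounds below are preserved by convexity.

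For the $L^2$ bound \eqref{prof:eq:ext-local-fill}, we use Cauchy--Schwarz: $|\mathsf{Ext}f(\boldsymbol x)|^2\le|\mathcal A|^{-1}\sum_\omega|f(\boldsymbol z(\boldsymbol x,\omega))|^2$. The key counting fact is that each distinct tuple $\boldsymbol z$ arises as $\boldsymbol z(\boldsymbol x,\omega)$ for at most $C_{D,r}$ pairs $(\boldsymbol x,\omega)$, because $\boldsymbol x$ lies in the bounded box $\prod_i B_N^\infty(z_i,R_0)$. Hence the collision sum is at most $C\sum_{\boldsymbol z}|f(\boldsymbol z)|^2$. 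Converting between the normalizations $n_N^{-r}$ and $(n_N)_{\underline r}^{-1}$ costs only $\theta_{N,r}=1-O(n_N^{-1})$.

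For the energy bound \eqref{prof:eq:fd-local-fill}, we estimate each product edge $\boldsymbol x\sim\boldsymbol y$ with at least one endpoint in $\mathfrak D^{\mathrm{coll}}$; edges between two distinct tuples that are allowed hard-core edges contribute identically to both forms. The one subtle case is a product edge between distinct tuples that is not a hard-core edge, which cannot happen, since moving one coordinate to an unoccupied neighbor is exactly a hard-core move. Thus every difference $\mathsf{Ext}f(\boldsymbol x)-\mathsf{Ext}f(\boldsymbol y)$ is a convex combination of differences $f(\boldsymbol z)-f(\boldsymbol z')$ with $\boldsymbol z,\boldsymbol z'\in\Omega_{N,r}^{\ord}$ and coordinatewise $d_N^\infty$-distance at most $2R_0+1$. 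Each such difference is joined by \cref{prof:lem:fd-local-routing}, which needs $D\ge2$, by a hard-core path of length $\le C$ staying within distance $C$. Telescoping along the path and applying Cauchy--Schwarz gives $|f(\boldsymbol z)-f(\boldsymbol z')|^2\le C\sum_{\text{path edges}}|\nabla f|^2$.

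The main obstacle is the congestion count: each hard-core edge must be used by at most $C_{D,r}$ pairs (product edge, resolution index). This holds because every tuple involved lies within bounded $d^\infty_N$-distance of the hard-core edge, so the number of product edges $\boldsymbol x\sim\boldsymbol y$ routed through it is bounded by a box volume depending only on $D,r,R_0,C$. The determinism of the routing path in \cref{prof:lem:fd-local-routing} is what makes this count well defined. Summing then yields $\mathcal E^{\ind}(\mathsf{Ext}f)\le C\,\mathcal E^{\hc}(f)$, again after absorbing the normalization ratio $\theta_{N,r}^{-1}$. We will also need $N\ge N_0$ large so that the routing lemma applies and the resolution boxes do not wrap around.
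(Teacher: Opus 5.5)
Your proposal is correct and follows essentially the paper's proof: a deterministic bounded-displacement collision resolution that fixes distinct tuples, averaged over particle-label conjugates, with the $L^2$ bound from a bounded fiber count and the energy bound from the routing lemma plus a bounded-congestion count. One point needs tightening: your sample rule of fixed displacements along $\mathbf e_1$ can land on another occupied site, and routing cannot repair this because it only joins tuples that are already distinct, so choose each displacement greedily to avoid all occupied sites and all previously assigned targets, as the paper's reservation rule does (at most $r-1$ sites are forbidden among $r$ candidates, so this always succeeds).
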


\begin{proof}
We first define a deterministic local collision-resolution map.  Given an
arbitrary product tuple, its role is to leave one label at each already
occupied site and move only the remaining duplicate labels to nearby vacant
sites, producing a distinct tuple at uniformly bounded displacement.  The
precise tie-breaking rule below is chosen only to make this resolution
deterministic.

Let
$
 \mathcal A_r=\{a\mathbf e_1+b\mathbf e_2: a,b\in \mathbb{Z},~0\le a,b\le2r\}
 $,
and order $\mathcal A_r$ lexicographically by $(a,b)$.  For
$\boldsymbol x=(x_1,\ldots,x_r) \in V_N^r$, write
$
 \mathsf{Occ}(\boldsymbol x):=\{x_1,\ldots,x_r\}$.
At each site $z\in \mathsf{Occ}(\boldsymbol x)$, designate the smallest index
$i$ with $x_i=z$ as the keeper of $z$.  Set $y_i=x_i$ for every keeper.
Process the remaining, nonkeeper labels in increasing index order.  For a
nonkeeper $i$, let $\mathbf a_i$ be the first element of $\mathcal A_r$ such that
\begin{equation}
 x_i+\mathbf a_i\notin
 \mathsf{Occ}(\boldsymbol x)
 \cup
 \{y_j:j<i\text{ and }j\text{ is a nonkeeper}\},
 \label{prof:eq:fd-resolution-reservation}
\end{equation}
and set $y_i=x_i+\mathbf a_i$.  Define
$\mathsf{Res}_0\boldsymbol x=(y_1,\ldots,y_r)$.
Thus $\mathsf{Res}_0$ is intended to pull apart collisions locally while fixing every
tuple that is already hard-core.

This rule is well defined for all sufficiently large $N$.  Indeed, if
$r_{\mathrm{occ}}:=|\mathsf{Occ}(\boldsymbol x)|$, then there are $r-r_{\mathrm{occ}}$ nonkeepers.  When a given
nonkeeper is processed, the forbidden set in
\eqref{prof:eq:fd-resolution-reservation} has cardinality at most
$
 r_{\mathrm{occ}}+(r-r_{\mathrm{occ}}-1)=r-1
 $,
whereas $|\mathcal A_r|=(2r+1)^2>r-1$.  For $N>4r$, the translate
$x_i+\mathcal A_r$ contains $|\mathcal A_r|$ distinct torus sites, so an
admissible $\mathbf a_i$ exists.  The keeper locations are the distinct sites in
$\mathsf{Occ}(\boldsymbol x)$, every nonkeeper target avoids all of those sites, and
each newly chosen nonkeeper target avoids all earlier nonkeeper targets.  Hence
$\mathsf{Res}_0\boldsymbol x\in\Omega_{N,r}^{\ord}$.  If $\boldsymbol x$ is already
distinct, every label is a keeper and $\mathsf{Res}_0\boldsymbol x=\boldsymbol x$.
Moreover,
\[
 \max_{i\in[r]}d_N^\infty(x_i,y_i)\le R_r:=2r.
\]

For a permutation $\varpi\in\Sym([r])$ set
$\mathsf{Res}_\varpi:=\mathsf P_\varpi^{-1}\mathsf{Res}_0\mathsf P_\varpi$. We define
\begin{align}
 (\mathsf{Ext}_{N,r}f)(\boldsymbol x)
 =\frac1{r!}\sum_{\varpi\in\Sym([r])}f(\mathsf{Res}_\varpi\boldsymbol x).
 \label{prof:eq:ExtNr}
\end{align}
Since every $\mathsf{Res}_\varpi$ fixes every distinct tuple,
$\mathsf R_{N,r}\mathsf{Ext}_{N,r}=I$.  Averaging these conjugates of $\mathsf{Res}_0$
makes $\mathsf{Ext}_{N,r}$ equivariant under particle-label permutations.

Every fiber of $\mathsf{Res}_\varpi$ has size at most
\begin{equation}
 B_{D,r}:=(4r+1)^{Dr}=|B_N^\infty(0,R_r)|^r,
 \label{prof:eq:fd-resolution-fiber}
\end{equation}
where the equality uses $R_r=2r$ and the standing eventual range $N>4r$.
Indeed, $\mathsf{Res}_\varpi\boldsymbol x=\boldsymbol z$ implies
$d_N^\infty(x_i,z_i)\le R_r$ for every $i$.  Jensen's inequality and the
probability normalizations thus give \eqref{prof:eq:ext-local-fill}:
\[
\|\mathsf{Ext}_{N,r}f\|_{\ind}^2
 \le \frac1{r!}\sum_{\varpi\in\Sym([r])}
      \frac1{n_N^r}\sum_{\boldsymbol x}
      |f(\mathsf{Res}_\varpi\boldsymbol x)|^2
 \le B_{D,r}\frac{(n_N)_{\underline r}}{n_N^r}
      \|f\|_{\hc}^2
 \le B_{D,r}\|f\|_{\hc}^2.
\]

Conjugation by $\mathsf P_\varpi$ merely relabels the coordinates, so the
preceding displacement estimate for $\mathsf{Res}_0$ gives
\[
 \max_{i\in[r]}d_N^\infty\bigl(x_i,(\mathsf{Res}_\varpi\boldsymbol x)_i\bigr)
 \le R_r,
 \qquad
 \max_{i\in[r]}d_N^\infty\bigl(y_i,(\mathsf{Res}_\varpi\boldsymbol y)_i\bigr)
 \le R_r.
\]
If $\boldsymbol x\sim\boldsymbol y$ in the product graph, then
$\max_{i\in [r]} d_N^\infty(x_i,y_i)\le1$.  The triangle inequality therefore gives
\[
 \max_{i\in[r]}d_N^\infty
 \bigl((\mathsf{Res}_\varpi\boldsymbol x)_i,(\mathsf{Res}_\varpi\boldsymbol y)_i\bigr)
 \le 2R_r+1.
\]
By \cref{prof:lem:fd-local-routing}, the distinct tuples
$\mathsf{Res}_\varpi\boldsymbol x$ and $\mathsf{Res}_\varpi\boldsymbol y$ are connected by
a deterministically chosen hard-core path
$\Gamma_\varpi(\boldsymbol x,\boldsymbol y)$ of uniformly bounded length.
For an edge $e=\{\boldsymbol z,\boldsymbol z'\}$ of the ordered hard-core
configuration graph, write
$|\nabla_e f|:=|f(\boldsymbol z')-f(\boldsymbol z)|$; this is independent of
which endpoint is listed first.  The path inequality then gives
\begin{equation}
 |f(\mathsf{Res}_\varpi\boldsymbol x)-f(\mathsf{Res}_\varpi\boldsymbol y)|^2
 \le C_{D,r}\sum_{e\in\Gamma_\varpi(\boldsymbol x,\boldsymbol y)}
       |\nabla_e f|^2.
 \label{prof:eq:fd-resolution-path}
\end{equation}
This path family has uniformly bounded congestion.  To see this, fix a
hard-core configuration edge $e$.  If
$e\in\Gamma_\varpi(\boldsymbol x,\boldsymbol y)$, then both tuples $\mathsf{Res}_\varpi\boldsymbol x$ and $\mathsf{Res}_\varpi\boldsymbol y$ lie within a
configuration graph distance bounded by $C_{D,r}$ from an endpoint of $e$.
Since the hard-core configuration graph has degree at most $2Dr$, there are
at most $C_{D,r}$ possible ordered pairs of such tuples.  By
\eqref{prof:eq:fd-resolution-fiber}, each such tuple has at most
$B_{D,r}$ preimages under $\mathsf{Res}_\varpi$; for each $\boldsymbol x$ there are at
most $2Dr$ product neighbors $\boldsymbol y$, and there are only $r!$
particle-label permutations $\varpi$.  Thus every hard-core edge is used by at most
$C_{D,r}B_{D,r}^2r!$ triples
$(\varpi,\boldsymbol x,\boldsymbol y)$.  Applying Jensen's inequality first
to the average over $\varpi$, summing
\eqref{prof:eq:fd-resolution-path} over product edges, and then using this
congestion bound gives
\[
 \frac1{2n_N^r}
 \sum_{\boldsymbol x\sim\boldsymbol y}
 |\mathsf{Ext}_{N,r}f(\boldsymbol x)
  -\mathsf{Ext}_{N,r}f(\boldsymbol y)|^2
 \le
 C_{D,r}\frac{(n_N)_{\underline r}}{n_N^r}
 \mathcal E_{N,r}^{\hc}(f).
\]
Recognizing that the left-hand side is
$\mathcal E_{N,r}^{\ind}(\mathsf{Ext}_{N,r}f)$ and using
$\theta_{N,r}\le1$ from \eqref{prof:eq:fd-distinct-tuple-density}, we obtain
\eqref{prof:eq:fd-local-fill}.
\end{proof}

We now use \cref{prof:lem:fd-local-fill} to prove a Schur form and resolvent comparison.
Recall from \eqref{prof:eq:restriction-adjoint} that $\mathsf R_{N,r}^*$ is the
normalization-adjusted zero extension, with factor
$n_N^r/(n_N)_{\underline r}=\theta_{N,r}^{-1}$.
 In the product space, the range
$\Ran\mathsf R_{N,r}^*$ is the subspace supported on distinct tuples, while
$(\Ran\mathsf R_{N,r}^*)^\perp$ is the repeated-tuple subspace.  
Based on the orthogonal decomposition
\[
 \mathsf X_{N,r}^{\ind}
 =\Ran\mathsf R_{N,r}^*\oplus(\Ran\mathsf R_{N,r}^*)^\perp,
\]
we consider the Schur complement of
$\mathcal L_{N,r}^{\ind}+\gamma_N$ onto the distinct-tuple component $\Ran\mathsf R_{N,r}^*$.
We transport the Schur form to $\mathsf X_{N,r}^{\hc}$ and denote
the resulting positive definite operator by $\mathsf S_{N,r,\gamma_N}$.  
The variational characterization is
\begin{equation}
 \langle f,\mathsf S_{N,r,\gamma_N}f\rangle_{\hc}
 =\inf_{\substack{u\in\mathsf X_{N,r}^{\ind}\\
                   \mathsf R_{N,r}u=f}}
   \left\{\mathcal E_{N,r}^{\ind}(u)+\gamma_N\|u\|_{\ind}^2\right\}.
 \label{prof:eq:fd-schur-variational-form}
\end{equation}
Thus the Schur form is the least shifted independent-particle energy needed to
extend prescribed hard-core data across the collision set.  
For $\star\in\{\ind,\hc\}$ and $h\in\mathsf X_{N,r}^{\star}$, write
\begin{equation}
 \|h\|_{-1,\star,\gamma_N}^2
 :=\langle h,(\mathcal L_{N,r}^{\star}+\gamma_N)^{-1}h\rangle_{\star}.
 \label{prof:eq:fd-shifted-resolvent-norm}
\end{equation}
Since $\gamma_N>0$, $\|\cdot\|_{-1,\star,\gamma_N}$ defines a norm
on each of the product and hard-core spaces.

\begin{lemma}[Schur and resolvent comparison]
\label{prof:lem:fd-schur-resolvent}
Fix $D\ge2$ and $r$.  For all sufficiently large $N$,
\begin{equation}
 \mathcal L_{N,r}^{\hc}+\gamma_N
 \ge c_{D,r}\mathsf S_{N,r,\gamma_N}.
 \label{prof:eq:fd-schur-form-comparison}
\end{equation}
Consequently, for every $b\in\mathsf X_{N,r}^{\hc}$,
\begin{equation}
 \|b\|_{-1,\hc,\gamma_N}^2
 \le C_{D,r}\|\mathsf R_{N,r}^*b\|_{-1,\ind,\gamma_N}^2,
 \label{prof:eq:fd-resolvent-transfer}
\end{equation}
where $\mathsf R_{N,r}^*$ is the normalization-adjusted zero extension
defined by \eqref{prof:eq:restriction-adjoint}.
\end{lemma}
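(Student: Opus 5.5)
The form comparison \eqref{prof:eq:fd-schur-form-comparison} follows by testing the variational characterization \eqref{prof:eq:fd-schur-variational-form} with the local extension from \cref{prof:lem:fd-local-fill}. Given $f\in\mathsf X_{N,r}^{\hc}$, put $u:=\mathsf{Ext}_{N,r}f$. Since $\mathsf R_{N,r}u=f$, $u$ is an admissible competitor. By \eqref{prof:eq:ext-local-fill} and \eqref{prof:eq:fd-local-fill},
\[
 \langle f,\mathsf S_{N,r,\gamma_N}f\rangle_{\hc}
 \le\mathcal E_{N,r}^{\ind}(u)+\gamma_N\|u\|_{\ind}^2
 \le C_{D,r}\bigl(\mathcal E_{N,r}^{\hc}(f)+\gamma_N\|f\|_{\hc}^2\bigr)
 =C_{D,r}\langle f,(\mathcal L_{N,r}^{\hc}+\gamma_N)f\rangle_{\hc}.
\]
This is \eqref{prof:eq:fd-schur-form-comparison} with $c_{D,r}=C_{D,r}^{-1}$. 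Both operators are positive definite on the finite-dimensional space $\mathsf X_{N,r}^{\hc}$: the first because $\gamma_N>0$, the second by the same variational formula.

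For the resolvent transfer \eqref{prof:eq:fd-resolvent-transfer}, I would first invert the form inequality. For positive definite $A\ge cB$, operator monotonicity of inversion gives $A^{-1}\le c^{-1}B^{-1}$. Taking $A=\mathcal L_{N,r}^{\hc}+\gamma_N$ and $B=\mathsf S_{N,r,\gamma_N}$ yields
\[
 \|b\|_{-1,\hc,\gamma_N}^2\le C_{D,r}\langle b,\mathsf S_{N,r,\gamma_N}^{-1}b\rangle_{\hc}.
\]

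It remains to identify the inverse Schur complement: I claim $\mathsf S_{N,r,\gamma_N}^{-1}=\mathsf R_{N,r}(\mathcal L_{N,r}^{\ind}+\gamma_N)^{-1}\mathsf R_{N,r}^*$. This is the standard block-inversion identity, where the compressed inverse on a coordinate subspace is the inverse of the Schur complement. To avoid tracking the normalizations in the block picture, I would argue by duality, which is cleaner with the $\theta_{N,r}$ factor built into $\mathsf R_{N,r}^*$. Write $M:=\mathcal L_{N,r}^{\ind}+\gamma_N$. By \eqref{prof:eq:fd-schur-variational-form}, $\langle f,\mathsf S f\rangle_{\hc}=\inf\{\langle u,Mu\rangle_{\ind}:\mathsf R_{N,r}u=f\}$ with $\mathsf R_{N,r}$ surjective. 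Then
\[
 \langle b,\mathsf S^{-1}b\rangle_{\hc}
 =\sup_f\bigl(2\operatorname{Re}\langle b,f\rangle_{\hc}-\langle f,\mathsf Sf\rangle_{\hc}\bigr)
 =\sup_u\bigl(2\operatorname{Re}\langle\mathsf R_{N,r}^*b,u\rangle_{\ind}-\langle u,Mu\rangle_{\ind}\bigr)
 =\langle\mathsf R_{N,r}^*b,M^{-1}\mathsf R_{N,r}^*b\rangle_{\ind}.
\]
The middle equality uses $\langle b,\mathsf R_{N,r}u\rangle_{\hc}=\langle\mathsf R_{N,r}^*b,u\rangle_{\ind}$ together with merging the inner infimum over the fiber into the outer supremum over $f$. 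The right-hand side is exactly $\|\mathsf R_{N,r}^*b\|_{-1,\ind,\gamma_N}^2$, which completes \eqref{prof:eq:fd-resolvent-transfer}.

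Given \cref{prof:lem:fd-local-fill}, the main point needing care is not analytic: it is verifying that the transported Schur operator really satisfies the variational formula with the stated probability normalizations. The duality computation above sidesteps this, since it only uses \eqref{prof:eq:fd-schur-variational-form} and the adjoint relation \eqref{prof:eq:restriction-adjoint}. Positivity of $\gamma_N$ guarantees that all the infima and suprema are attained.
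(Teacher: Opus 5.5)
Your proposal is correct and follows essentially the same route as the paper. Both test the variational formula with $\mathsf{Ext}_{N,r}f$ to obtain $\mathcal L_{N,r}^{\hc}+\gamma_N\ge c_{D,r}\mathsf S_{N,r,\gamma_N}$, identify $\mathsf S_{N,r,\gamma_N}^{-1}=\mathsf R_{N,r}(\mathcal L_{N,r}^{\ind}+\gamma_N)^{-1}\mathsf R_{N,r}^*$ by the same convex-duality computation, and then invert the form inequality to get the resolvent transfer.
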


\begin{proof}
Observe that $\mathsf{Ext}_{N,r}f$ defined in \eqref{prof:eq:ExtNr} is an admissible
competitor in the variational problem \eqref{prof:eq:fd-schur-variational-form}.
Hence, by the $L^2$ and energy bounds of \cref{prof:lem:fd-local-fill},
\[
 \begin{aligned}
 \langle f,\mathsf S_{N,r,\gamma_N}f\rangle_{\hc}
 &\le \mathcal E_{N,r}^{\ind}(\mathsf{Ext}_{N,r}f)
       +\gamma_N\|\mathsf{Ext}_{N,r}f\|_{\ind}^2\\
 &\le C_{D,r}\bigl(\mathcal E_{N,r}^{\hc}(f)
       +\gamma_N\|f\|_{\hc}^2\bigr).
 \end{aligned}
\]
Equivalently,
$\mathcal L_{N,r}^{\hc}+\gamma_N\ge c_{D,r}\mathsf S_{N,r,\gamma_N}$,
which is \eqref{prof:eq:fd-schur-form-comparison}.

The inverse of the transported Schur form is
determined by convex duality.  For every $b\in\mathsf X_{N,r}^{\hc}$,
\begin{align*}
 \langle b,\mathsf S_{N,r,\gamma_N}^{-1}b\rangle_{\hc}
 &=\sup_f\left\{2\Re\langle b,f\rangle_{\hc}
     -\inf_{\mathsf R_{N,r}u=f}
      \langle u,(\mathcal L_{N,r}^{\ind}+\gamma_N)u\rangle_{\ind}\right\}\\
 &=\sup_u\left\{2\Re\langle\mathsf R_{N,r}^*b,u\rangle_{\ind}
     -\langle u,(\mathcal L_{N,r}^{\ind}+\gamma_N)u\rangle_{\ind}\right\}\\
 &=\left\langle\mathsf R_{N,r}^*b,
   (\mathcal L_{N,r}^{\ind}+\gamma_N)^{-1}\mathsf R_{N,r}^*b\right\rangle_{\ind}.
\end{align*}
Hence, as an operator on the hard-core space,
\begin{equation}
 \mathsf S_{N,r,\gamma_N}^{-1}
 =\mathsf R_{N,r}(\mathcal L_{N,r}^{\ind}+\gamma_N)^{-1}
   \mathsf R_{N,r}^*.
 \label{prof:eq:fd-Schur-inverse-identity}
\end{equation}
Inverting \eqref{prof:eq:fd-schur-form-comparison} and using
\eqref{prof:eq:fd-Schur-inverse-identity} proves
\eqref{prof:eq:fd-resolvent-transfer}.
\end{proof}

The extension estimate \cref{prof:lem:fd-local-fill} and its Schur and resolvent consequences \cref{prof:lem:fd-schur-resolvent} complete the
first of two analytic inputs to \cref{prof:lem:fd-high-block}, which is the main result of this subsection. 
The second analytic input is a product grid trace estimate on collision diagonals, as follows.

\begin{lemma}[Collision-diagonal trace estimate]
\label{prof:lem:fd-trace}
We have
\begin{equation}
 \Theta_N:=\frac1{n_N}\sum_{p\in\T_N^D}
 \frac1{\lambda_N(p)+\gamma_N} 
 =
 \left\{\begin{array}{ll} O(\log N), & D=2,\\ O_D(1), & D\ge 3.\end{array}\right.
 \label{prof:eq:Theta_N}
\end{equation}
Then for $r\ge2$, every $u\in\mathsf X_{N,r}^{\ind}$, and every
$1\le i<j\le r$,
\begin{equation}
 \|\one_{\{x_i=x_j\}}u\|_{\ind}^2
 \le C_D\Theta_N
 \bigl(\mathcal E_{N,r}^{\ind}(u)+\gamma_N\|u\|_{\ind}^2\bigr).
 \label{prof:eq:fd-diagonal-trace}
\end{equation}
In particular, if the shifted energy $\mathcal E_{N,r}^{\ind}(u)+\gamma_N\|u\|_{\ind}^2$ on the right-hand side of \eqref{prof:eq:fd-diagonal-trace} is $O(\gamma_N)$, then the
left-hand side of \eqref{prof:eq:fd-diagonal-trace} is $o(1)$ for every $D\ge2$.
\end{lemma}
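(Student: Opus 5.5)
The plan has three parts: estimate the lattice Green's function sum $\Theta_N$, reduce the diagonal trace to a statement about one relative coordinate, and prove that one-coordinate statement by a Fourier/Cauchy--Schwarz argument.

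\emph{Estimating $\Theta_N$.} By \eqref{prof:eq:dispersion-comparison},
\[
\lambda_N(p)+\gamma_N\ge c\,\gamma_N(|\widetilde p|^2+1)\ge c' N^{-2}(|\widetilde p|^2+1).
\]
Hence
\[
\Theta_N\le C\,N^{2-D}\sum_{\widetilde p\in\mathcal Q_N}(|\widetilde p|^2+1)^{-1}.
\]
Comparing the sum with $\int_{|y|\le CN}(1+|y|^2)^{-1}\dd y$ gives $O(N^{D-2})$ for $D\ge3$ and $O(\log N)$ for $D=2$, which is \eqref{prof:eq:Theta_N}.

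\emph{Reduction to two particles.} By symmetry it suffices to take $(i,j)=(1,2)$. Freeze the remaining coordinates $\boldsymbol x'=(x_3,\dots,x_r)$ and write $u_{\boldsymbol x'}(x_1,x_2)$ for the resulting slice. The left side of \eqref{prof:eq:fd-diagonal-trace} is the $n_N^{-(r-2)}$-average over $\boldsymbol x'$ of $\|\one_{\{x_1=x_2\}}u_{\boldsymbol x'}\|^2$ in $L^2(V_N^2,n_N^{-2})$. The two-particle Dirichlet form of each slice is bounded by the portion of $\mathcal E^{\ind}_{N,r}(u)$ coming from moves of particles $1,2$, and the $L^2$ norms average exactly. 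So it suffices to prove the case $r=2$:
\[
\|\one_{\{x_1=x_2\}}w\|^2\le C_D\Theta_N\bigl(\mathcal E^{\ind}_{N,2}(w)+\gamma_N\|w\|^2\bigr),\qquad w\in L^2(V_N^2).
\]

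\emph{The two-particle bound.} Expand $w(x_1,x_2)=\sum_{p,q}\widehat w(p,q)e_p(x_1)e_q(x_2)$. Then
\[
\mathcal E^{\ind}_{N,2}(w)+\gamma_N\|w\|^2=\sum_{p,q}\bigl(\lambda_N(p)+\lambda_N(q)+\gamma_N\bigr)|\widehat w(p,q)|^2 .
\]
On the diagonal, $w(x,x)=\sum_k e_k(x)\sum_{p+q=k}\widehat w(p,q)$, so
\[
\|\one_{\{x_1=x_2\}}w\|^2=n_N^{-1}\sum_k\Bigl|\sum_{p+q=k}\widehat w(p,q)\Bigr|^2,
\]
where the factor $n_N^{-1}$ is the product measure of the diagonal. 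For each fixed $k$, apply Cauchy--Schwarz with weights $\lambda_N(p)+\lambda_N(q)+\gamma_N\ge\lambda_N(p)+\gamma_N$:
\[
\Bigl|\sum_{p+q=k}\widehat w(p,q)\Bigr|^2\le\sum_{p}\frac1{\lambda_N(p)+\gamma_N}\;\sum_{p+q=k}\bigl(\lambda_N(p)+\lambda_N(q)+\gamma_N\bigr)|\widehat w(p,q)|^2 .
\]
The first factor is exactly $n_N\Theta_N$. The $n_N$ cancels the diagonal measure factor, and summing over $k$ gives the claimed bound with $C_D=1$.

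\emph{The final assertion.} If the shifted energy is $O(\gamma_N)=O(N^{-2})$, the right side is $O(N^{-2}\log N)$ for $D=2$ and $O(N^{-2})$ for $D\ge3$, which is $o(1)$ in every dimension $D\ge2$.

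The main obstacle I expect is keeping the normalizations consistent, in particular the probability measure of the diagonal against the Fourier convention for $e_p$. The reduction step is routine once the Dirichlet form is split coordinatewise.
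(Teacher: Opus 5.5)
Your proof is correct and is essentially the paper's argument. Both bound $\Theta_N$ by the same lattice sum, and both prove the trace bound by the same weighted Cauchy--Schwarz against $(\lambda_N+\gamma_N)^{-1}$ in Fourier variables. The difference is only bookkeeping: the paper freezes $x_i$ and the other coordinates and Fourier transforms in the relative displacement $z=x_j-x_i$ alone, so only the $j$th particle's energy enters; you take the joint two-particle transform and apply Cauchy--Schwarz on each total-momentum fiber after dropping $\lambda_N(q)$ from the weight.
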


\begin{proof}
Fix $1\le i<j\le r$.  We reduce the trace on the pair diagonal
$\{x_i=x_j\}$ to a one-particle Fourier estimate in the relative displacement.
Fix $x_i$ and all coordinates $(x_a)_{a\notin\{i,j\}}$, and let only the
$j$th coordinate vary.  Writing
$
 z:=x_j-x_i\in V_N$,
define the resulting one-variable function by
\[
 g(z):=u(x_1,\ldots,x_i,\ldots,x_i+z,\ldots,x_r),
\]
where $x_i+z$ occupies the $j$th coordinate.  Thus the diagonal
$x_i=x_j$ corresponds exactly to $z=0$.

Using the characters $e_p$ from \eqref{prof:eq:probability-fourier-basis},
define the probability-normalized Fourier transform of $g$ by
\[
 \widehat g(p)
 :=\langle g,e_p\rangle_{\mathrm{av}}
 =\frac1{n_N}\sum_{z\in V_N}
   g(z)e^{-2\pi i p\cdot z/N},
 \qquad p\in\T_N^D.
\]
Since $(e_p)_p$ is orthonormal for the uniform probability measure on $V_N$,
Fourier inversion and Parseval give
\[
 g(0)=\sum_{p\in\T_N^D}\widehat g(p),
 \qquad
 \frac1{n_N}\sum_{z\in V_N}|g(z)|^2
 =\sum_{p\in\T_N^D}|\widehat g(p)|^2.
\]
Weighted Cauchy--Schwarz therefore yields
\begin{equation}
\begin{aligned}
 \frac1{n_N}|g(0)|^2
 &\le
 \left(\frac1{n_N}\sum_{p\in\T_N^D}
       \frac1{\lambda_N(p)+\gamma_N}\right)
 \left(\sum_{p\in\T_N^D}
       (\lambda_N(p)+\gamma_N)|\widehat g(p)|^2\right)\\
 &=\Theta_N
 \left(\sum_p\lambda_N(p)|\widehat g(p)|^2
       +\gamma_N\sum_p|\widehat g(p)|^2\right).
\end{aligned}
\label{prof:eq:weighted-CS}
\end{equation}

We now average this inequality over $x_i$ and all coordinates
$(x_a)_{a\notin\{i,j\}}$.  The averaged left-hand side is exactly
$\|\one_{\{x_i=x_j\}}u\|_{\ind}^2$, while the averaged second term in
parentheses is $\gamma_N\|u\|_{\ind}^2$.  For the first term, Parseval for
the one-particle Laplacian gives
\[
 \sum_p\lambda_N(p)|\widehat g(p)|^2
 =\frac1{2n_N}\sum_{z\in V_N}\sum_{\boldsymbol\delta\in\mathcal D_D}
   |g(z+\boldsymbol\delta)-g(z)|^2.
\]
Because $x_i$ is fixed, replacing $z$ by $z+\boldsymbol\delta$ changes only the
$j$th coordinate, from $x_j$ to $x_j+\boldsymbol\delta$.  Hence, after averaging over
the fixed coordinates,
\begin{align}
 \frac1{n_N^{r-1}}
 \sum_{\substack{x_i\in V_N\\(x_a)_{a\notin\{i,j\}}\in V_N^{r-2}}}
 \sum_{p\in\T_N^D}\lambda_N(p)|\widehat g(p)|^2 
 = \frac1{2n_N^r}\sum_{\boldsymbol x\in V_N^r}
 \sum_{\boldsymbol\delta\in\mathcal D_D}
 \left|u(\boldsymbol x^{j\to x_j+\boldsymbol\delta})-u(\boldsymbol x)\right|^2
 \le \mathcal E_{N,r}^{\ind}(u).
 \label{prof:eq:fd-relative-energy-bound}
\end{align}
Substitution of \eqref{prof:eq:fd-relative-energy-bound} into \eqref{prof:eq:weighted-CS} proves
\eqref{prof:eq:fd-diagonal-trace}.

It remains to estimate $\Theta_N$.  We first make the zero mode explicit.
Since $\gamma_N=4\sin^2(\pi/N)\asymp N^{-2}$,
\[
 \lambda_N(0)+\gamma_N=\gamma_N\ge cN^{-2}.
\]
For $p\ne0$, the lower bound in
\eqref{prof:eq:dispersion-comparison} gives
\[
 \lambda_N(p)+\gamma_N
 \ge \gamma_N\left(1+\frac4{\pi^2}|\widetilde p|^2\right)
 \ge c_DN^{-2}(1+|\widetilde p|^2).
\]
Thus the same lower bound holds for every $p\in\T_N^D$.  Since
$n_N=N^D$ and $p\mapsto\widetilde p$ identifies $\T_N^D$ with
$\mathcal Q_N$,
\[
 \Theta_N
 \le C_DN^{2-D}
 \sum_{\ell\in\mathcal Q_N}\frac1{1+|\ell|^2}.
\]
The representative set $\mathcal Q_N$ lies in a Euclidean ball of radius
$C_DN$.  For every integer $m\ge1$, the number of lattice points in the shell
$\{\ell\in\mathbb Z^D:m\le|\ell|<m+1\}$ is at most $C_Dm^{D-1}$.
Consequently,
\[
 \sum_{\ell\in\mathcal Q_N}\frac1{1+|\ell|^2}
 \le C_D\left(1+
 \sum_{m=1}^{C_DN}\frac{m^{D-1}}{1+m^2}\right).
\]
When $D=2$, the summand is $O(m^{-1})$, so the last quantity is
$O(\log N)$.  When $D\ge3$, it is bounded by
$C_Dm^{D-3}$, and hence
\[
 \sum_{m=1}^{C_DN}\frac{m^{D-1}}{1+m^2}
 \le C_D\sum_{m=1}^{C_DN}m^{D-3}
 =O_D(N^{D-2}).
\]
Substituting these estimates into the displayed lattice sum bound
$\Theta_N\le C_DN^{2-D}\sum_{\ell\in\mathcal Q_N}(1+|\ell|^2)^{-1}$ proves
\eqref{prof:eq:Theta_N}.  Since $\gamma_N\asymp N^{-2}$, it also gives
\[
 \gamma_N\Theta_N
 =\begin{cases}
   O(N^{-2}\log N),&D=2,\\
   O_D(N^{-2}),&D\ge3,
  \end{cases}
 \longrightarrow0.
\]
Therefore, whenever
$\mathcal E_{N,r}^{\ind}(u)+\gamma_N\|u\|_{\ind}^2=O(\gamma_N)$,
\eqref{prof:eq:fd-diagonal-trace} has an $o(1)$ right-hand side, proving the
final assertion.
\end{proof}

\localheading{Normalized comparison geometry.}
Recall the definition  $\mathsf J_{N,r,K_+}=\mathsf{Top}_{N,r}^{\ord}\mathsf R_{N,r}$ from \eqref{prof:eq:JNrK}.
The inner-product estimate \eqref{prof:eq:fd-gram-error} shows that, on the fixed low-energy window, $\mathsf J_{N,r,K_+}$ is asymptotically isometric.
For fixed $K_+\ge0$ and all sufficiently large $N$, we turn it into an exact isometry by defining the
\emph{Gram operator} and Gram-normalized comparison map by
\begin{equation}
 \mathsf G_{N,r,K_+}
 :=\mathsf J_{N,r,K_+}^*\mathsf J_{N,r,K_+},
 \qquad
 \widehat{\mathsf J}_{N,r,K_+}
 :=\mathsf J_{N,r,K_+}\mathsf G_{N,r,K_+}^{-1/2}.
 \label{prof:eq:fd-Gram-normalization}
\end{equation}
Thus
$\langle u,\mathsf G_{N,r,K_+}v\rangle_{\ind}
 =\langle\mathsf J_{N,r,K_+}u,\mathsf J_{N,r,K_+}v\rangle_{\hc}$.
Define the comparison range projections by
\begin{equation}
 \Pi_{N,r,K_+}^{\mathrm{cmp}}
 :=\widehat{\mathsf J}_{N,r,K_+}\widehat{\mathsf J}_{N,r,K_+}^*
 =\mathsf J_{N,r,K_+}\mathsf G_{N,r,K_+}^{-1}\mathsf J_{N,r,K_+}^*,
 \qquad
 \Pi_{N,r,K_+}^{\mathrm{cmp},\perp}
 :=I-\Pi_{N,r,K_+}^{\mathrm{cmp}}.
 \label{prof:eq:fd-comparison-projections}
\end{equation}
Because $\mathsf J_{N,r,K_+}$ takes values in
$\mathcal T_{N,r}^{\hc}$, throughout the fixed-degree spectral comparison
these projections are regarded as operators on the hard-core top sector
$\mathcal T_{N,r}^{\hc}$; in particular, the identity $I$ in
\eqref{prof:eq:fd-comparison-projections} is the identity on that sector.
\cref{prof:lem:fd-restriction} provides the following operator norm bounds.
First, by \eqref{prof:eq:fd-gram-error},
\begin{equation}
 \|\mathsf G_{N,r,K_+}-I\|_{\mathrm{op}}
 \le \frac{C_{D,r,K_+}}{n_N},
 \qquad
 \|\mathsf J_{N,r,K_+}\|_{\mathrm{op}}
 +\|\mathsf G_{N,r,K_+}^{-1}\|_{\mathrm{op}}
 +\|\mathsf G_{N,r,K_+}^{1/2}\|_{\mathrm{op}}
 +\|\mathsf G_{N,r,K_+}^{-1/2}\|_{\mathrm{op}}
 \le C_{D,r,K_+},
 \label{prof:eq:fd-Gram-uniform-bounds}
\end{equation}
for all sufficiently large $N$.  In particular,
\begin{equation}
 \widehat{\mathsf J}_{N,r,K_+}^*\widehat{\mathsf J}_{N,r,K_+}=I
 \quad\text{on }\mathscr V_{N,r}^{\ind}(K_+),
 \qquad
 \Pi_{N,r,K_+}^{\mathrm{cmp}}
 \text{ is the orthogonal projection onto }\Ran\mathsf J_{N,r,K_+}.
 \label{prof:eq:fd-Gram-isometry}
\end{equation}
Finally, combining \eqref{prof:eq:fd-form-error} with
\eqref{prof:eq:fd-Gram-uniform-bounds} and the product energy bound
$\mathcal L_{N,r}^{\ind}\le K_+\gamma_N$ on $\mathscr V_{N,r}^{\ind}(K_+)$ gives
\begin{equation}
 \left\|
 \Pi_{N,r,K_+}^{\mathrm{cmp}}\mathcal L_{N,r}^{\hc}
 \Pi_{N,r,K_+}^{\mathrm{cmp}}
 -\widehat{\mathsf J}_{N,r,K_+}\mathcal L_{N,r}^{\ind}
  \widehat{\mathsf J}_{N,r,K_+}^*
 \right\|_{\mathrm{op}}
 \le C_{D,r,K_+}\frac{\gamma_N}{n_N}.
 \label{prof:eq:fd-normalized-compressed-form}
\end{equation}
Here the independent generator in the second term is restricted to
$\mathscr V_{N,r}^{\ind}(K_+)$.

We have finally reached the main result of this geometric comparison subsection.

\begin{lemma}[Spectral complement above a fixed product window]
\label{prof:lem:fd-high-block}
Fix $r\in\mathbb N$ and an energy target $H_0\ge0$.  There is
$K_+=K_+(D,r,H_0)<\infty$ such that, as an operator inequality on
$\mathcal T_{N,r}^{\hc}$,
\begin{equation}
 \Pi_{N,r,K_+}^{\mathrm{cmp},\perp}\mathcal L_{N,r}^{\hc}\Pi_{N,r,K_+}^{\mathrm{cmp},\perp}
 \ge (H_0+2)\gamma_N\Pi_{N,r,K_+}^{\mathrm{cmp},\perp}
 \label{prof:eq:fd-high-block-gap}
\end{equation}
for all sufficiently large $N$.
\end{lemma}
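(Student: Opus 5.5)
The plan is a compactness-free contradiction argument via the local extension. Since $\Pi_{N,r,K_+}^{\mathrm{cmp},\perp}$ is an orthogonal projection on $\mathcal T_{N,r}^{\hc}$, it suffices to prove the following. For every $f\in\mathcal T_{N,r}^{\hc}$ with $f\perp\Ran\mathsf J_{N,r,K_+}$, we have $\mathcal E_{N,r}^{\hc}(f)\ge(H_0+2)\gamma_N\|f\|_{\hc}^2$. We will choose $K_+$ large depending only on $(D,r,H_0)$. Suppose instead that $\mathcal E_{N,r}^{\hc}(f)<(H_0+2)\gamma_N\|f\|_{\hc}^2$, and normalize $\|f\|_{\hc}=1$. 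We will show that $f$ is within distance $<1$ of $\Ran\mathsf J_{N,r,K_+}$, which contradicts orthogonality: for $g\in\Ran\mathsf J_{N,r,K_+}$,
\[
1=\langle f,f-g\rangle_{\hc}\le\|f-g\|_{\hc}.
\]

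\textbf{Step 1 (extension).} Put $u:=\mathsf{Ext}_{N,r}f$ from \cref{prof:lem:fd-local-fill}. Then $\mathsf R_{N,r}u=f$. The bounds \eqref{prof:eq:ext-local-fill}--\eqref{prof:eq:fd-local-fill} give $\mathcal E_{N,r}^{\ind}(u)+\gamma_N\|u\|_{\ind}^2\le C_{D,r}(H_0+3)\gamma_N$. Because $\mathsf{Ext}_{N,r}$ is label-equivariant and $f$ is symmetric, $u$ is symmetric.

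\textbf{Step 2 (lower product Hoeffding components are negligible).} Write
\[
n_N(\mathbb E_au)(\boldsymbol x)=\sum_{z\notin\{x_b:b\ne a\}}u(\boldsymbol x^{a\to z})+\sum_{b\ne a}u(\boldsymbol x^{a\to x_b}).
\]
On distinct $(r-1)$-tuples the first sum equals $(n_N-r+1)$ times a value of $\mathsf{Down}_{N,r\to r-1}f$, which vanishes because $f\in\ker\mathsf{Down}_{N,r\to r-1}$. On the collision set of the remaining coordinates, which has product mass $O(n_N^{-1})$, this term is bounded by $\|u\|_{\ind}$-type averages. The second sum involves collision-diagonal values only. \Cref{prof:lem:fd-trace} gives $\|\one_{\{x_a=x_b\}}u\|_{\ind}^2\le C\Theta_N\gamma_N=o(1)$. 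Since $\mathbb E_a$ averages over a free coordinate, the term $n_N^{-1}\sum_{b\ne a}u(\boldsymbol x^{a\to x_b})$, whose value does not depend on $x_a$, has $\|\cdot\|_{\ind}$-norm bounded by $C_r$ times these diagonal traces, hence $o(1)$. Thus $\|\mathbb E_au\|_{\ind}=o(1)$ for every $a$. Consequently $\|u-\mathsf{Top}_{N,r}^{\ind}u\|_{\ind}=o(1)$. The product top projection commutes with $\mathcal L_{N,r}^{\ind}$ and preserves symmetry, so $\tilde u:=\mathsf{Top}_{N,r}^{\ind}u\in\mathcal T_{N,r}^{\ind}$ keeps shifted energy $\le C(H_0+3)\gamma_N$.

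\textbf{Step 3 (spectral truncation).} Let $w$ be the spectral projection of $\tilde u$ onto product energies $\le K_+\gamma_N$. In Fourier terms, $\tilde u$ involves only tensors of nonzero characters, so $w\in\mathscr V_{N,r}^{\ind}(K_+)$. The spectral Chebyshev bound gives
\[
\|\tilde u-w\|_{\ind}^2\le\frac{\mathcal E_{N,r}^{\ind}(\tilde u)}{K_+\gamma_N}\le\frac{C_{D,r}(H_0+3)}{K_+}.
\]
Now $\mathsf J_{N,r,K_+}w=\mathsf{Top}_{N,r}^{\ord}\mathsf R_{N,r}w$. Since $f=\mathsf{Top}_{N,r}^{\ord}f=\mathsf{Top}_{N,r}^{\ord}\mathsf R_{N,r}u$, and both $\mathsf{Top}_{N,r}^{\ord}$ and $\mathsf R_{N,r}$ are contractions up to the factor $\theta_{N,r}^{-1/2}=1+o(1)$, we get
\[
\|f-\mathsf J_{N,r,K_+}w\|_{\hc}\le(1+o(1))\bigl(\|u-\tilde u\|_{\ind}+\|\tilde u-w\|_{\ind}\bigr)\le o(1)+\sqrt{C_{D,r}(H_0+3)/K_+}.
\]
Choosing $K_+$ with $C_{D,r}(H_0+3)/K_+\le1/4$ makes the right side $<1$ for large $N$, giving the contradiction.

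The main obstacle is Step 2. One must check carefully that harmonicity of $f$ kills the bulk of $\mathbb E_au$, and that the leftover collision contributions are controlled in $L^2$ by the diagonal trace estimate of \cref{prof:lem:fd-trace}. This is where $D\ge2$ enters, through $\gamma_N\Theta_N\to0$. The one-particle normalization factors must be tracked so the bound is genuinely $o(1)$ rather than $O(1)$. If the direct pointwise bound proves awkward, I would instead bound $\langle\mathbb E_au,\phi\rangle$ against test functions $\phi$ independent of $x_a$, writing it as an inner product of $f$ with $\mathsf{Up}$-type lifts (which vanish by $f\in\mathcal K_{N,r}^{\ord}$) plus collision-diagonal pairings handled by \eqref{prof:eq:fd-diagonal-trace}.
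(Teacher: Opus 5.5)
Your proof is correct, and it takes a genuinely leaner route than the paper's.

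\textbf{The paper's route.} The paper works with the unique minimizer $u$ of the Schur problem \eqref{prof:eq:fd-schur-variational-form}, and uses uniqueness to get its symmetry. It proves $\|u\|_{\ind}^2=1+o(1)$ from the collision leakage. It then shows $|\langle u,\varphi\rangle_{\ind}|=o(1)$ uniformly over unit $\varphi\in\mathscr V_{N,r}^{\ind}(K_+)$, by combining $f\perp\mathsf J_{N,r,K_+}\varphi$ with the finite-window $L^\infty$ bound on $\varphi$. A three-piece orthogonal decomposition then shows the minimizer's shifted energy is at least $(K_++1-o(1))\gamma_N$. It finishes with the Schur comparison \eqref{prof:eq:fd-schur-form-comparison}.

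\textbf{Your route.} You use $u=\mathsf{Ext}_{N,r}f$ itself and truncate $\mathsf{Top}_{N,r}^{\ind}u$ spectrally with Chebyshev. You then use the exact identity $f-\mathsf J_{N,r,K_+}w=\mathsf{Top}_{N,r}^{\ord}\mathsf R_{N,r}(u-w)$ together with $\langle f,\mathsf J_{N,r,K_+}w\rangle_{\hc}=0$. That single orthogonality replaces the paper's pairing estimate. So you need neither the minimizer, nor the normalization $\|u\|_{\ind}\approx1$, nor any $L^\infty$ control of low product modes.

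\textbf{Comparison.} At heart the two arguments prove the same thing: a bounded-energy extension of $f$ carries order-one $L^2$ mass above $K_+\gamma_N$ in the product-top sector. The paper's phrasing, as a lower bound on the Schur form, mainly buys consistency with the Schur machinery it needs anyway for \eqref{prof:eq:fd-resolvent-transfer}. For this lemma alone, your argument is more economical.

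\textbf{One point to fix in Step 2.} Let $G_a^c$ be the set where the retained coordinates $(x_b)_{b\ne a}$ already collide; your splitting of $n_N\mathbb E_au$ into two sums is not exact there anyway, since $\sum_{b\ne a}$ double counts. On $G_a^c$, smallness cannot come from the $O(n_N^{-1})$ product mass, because you have no useful $L^\infty$ bound on $u$. Instead, note that $G_a^c$ does not involve $x_a$, so $\one_{G_a^c}\mathbb E_au=\mathbb E_a(\one_{G_a^c}u)$. Jensen then gives $\|\one_{G_a^c}\mathbb E_au\|_{\ind}^2\le\|\one_{G_a^c}u\|_{\ind}^2\le\sum_{b<c,\ b,c\ne a}\|\one_{\{x_b=x_c\}}u\|_{\ind}^2$, which is $o(1)$ by \cref{prof:lem:fd-trace}. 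This is exactly the paper's treatment, and with it your Step 2 is complete.
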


\begin{proof}
Let $f\in\Ran\Pi_{N,r,K_+}^{\mathrm{cmp},\perp}$ with $\|f\|_{\hc}=1$.
By the ambient space convention following
\eqref{prof:eq:fd-comparison-projections}, one automatically has
$f\in\mathcal T_{N,r}^{\hc}$.  
This top sector restriction is essential to the proof: it is used first to show that the minimizing product extension has
negligible lower product-Hoeffding component, and again to identify
orthogonality to $\mathsf J_{N,r,K_+}\varphi$ with orthogonality to
$\mathsf R_{N,r}\varphi$ for low product modes $\varphi$.

If \(\mathcal E_{N,r}^{\hc}(f)>(H_0+2)\gamma_N\), then
\eqref{prof:eq:fd-high-block-gap} already holds for the vector $f$.  
Hence assume
\begin{align}
\mathcal E_{N,r}^{\hc}(f)\le (H_0+2)\gamma_N.
\label{prof:eq:E-contradict}
\end{align}
The admissible class in \eqref{prof:eq:fd-schur-variational-form} is
nonempty: by \cref{prof:lem:fd-local-fill}, the function
$\mathsf{Ext}_{N,r}f$ defined in \eqref{prof:eq:ExtNr} satisfies
$\mathsf R_{N,r}\mathsf{Ext}_{N,r}f=f$.  Moreover, for every
$v\in\mathsf X_{N,r}^{\ind}$,
\[
 \mathcal E_{N,r}^{\ind}(v)+\gamma_N\|v\|_{\ind}^2
 \ge \gamma_N\|v\|_{\ind}^2,
\]
and $\gamma_N>0$.  Thus the shifted quadratic form is positive definite
and coercive.  Since the admissible class is a closed affine subspace of the
finite-dimensional Hilbert space $\mathsf X_{N,r}^{\ind}$, the constrained
variational problem has a unique minimizer.  Denote it by
$u\in\mathsf X_{N,r}^{\ind}$; thus $\mathsf R_{N,r}u=f$.
Using $\mathsf{Ext}_{N,r}f$ as an admissible competitor, together with
\eqref{prof:eq:ext-local-fill} and \eqref{prof:eq:fd-local-fill}, gives
\begin{equation}
 \mathcal E_{N,r}^{\ind}(u)+\gamma_N\|u\|_{\ind}^2
 \le C_{D,r,H_0}\gamma_N.
 \label{prof:eq:fd-minimizer-energy}
\end{equation}
Since $f\in\mathcal T_{N,r}^{\hc}$ is permutation-symmetric, for
every particle-label permutation $\varpi\in\Sym([r])$, the function
$u\circ\mathsf P_\varpi$ also satisfies
$\mathsf R_{N,r}(u\circ\mathsf P_\varpi)=f$ and has the same shifted energy
as $u$.  Uniqueness therefore implies
$u\circ\mathsf P_\varpi=u$.  By the union bound over pair diagonals and \cref{prof:lem:fd-trace},
\begin{equation}
 \|\one_{\mathfrak D_{N,r}^{\mathrm{coll}}}u\|_{\ind}^2
 \le C_r\Theta_N
 \bigl(\mathcal E_{N,r}^{\ind}(u)+\gamma_N\|u\|_{\ind}^2\bigr)
 =o(1),
 \label{prof:eq:fd-minimizer-collision-leakage}
\end{equation}
where the last equality uses \eqref{prof:eq:fd-minimizer-energy} and the
final assertion of \cref{prof:lem:fd-trace}.  With $\theta_{N,r}$ defined by
\eqref{prof:eq:fd-distinct-tuple-density}, since $u=f$ on the distinct grid,
\begin{equation}
 \|u\|_{\ind}^2
 =\theta_{N,r}\|f\|_{\hc}^2
  +\|\one_{\mathfrak D_{N,r}^{\mathrm{coll}}}u\|_{\ind}^2
 =1+o(1).
 \label{prof:eq:fd-minimizer-norm}
\end{equation}
because $\theta_{N,r}=1-O_r(n_N^{-1})$.

We next show that the minimizing extension $u$ is asymptotically contained
in the product-top sector.  Let $G_a\subset V_N^{r-1}$ be the set on which the coordinates other than
$a$ are distinct, and write
$G_a^c:=V_N^{r-1}\setminus G_a$.  On $G_a$, the retained coordinates are distinct.  Split the defining
average into values $z$ that avoid the retained coordinates and the
$r-1$ collision values $z=x_b$, $b\ne a$.  On the first set, $u=f$, and
the hard-core top condition $f\in\mathcal T_{N,r}^{\hc}$ gives
\[
 \sum_{z\notin\{x_b:b\ne a\}}
 f(x_1,\ldots,x_{a-1},z,x_{a+1},\ldots,x_r)=0.
\]
Therefore using the expectation notation of \eqref{prof:eq:top-exp},
\[
 (\mathbb E_au)(x_1,\ldots,\widehat{x_a},\ldots,x_r)
 =\frac1{n_N}\sum_{b\ne a}
   u(x_1,\ldots,x_{a-1},x_b,x_{a+1},\ldots,x_r).
\]
Jensen's inequality on $G_a$ gives
\[
 \|\one_{G_a}\mathbb E_au\|_2^2
 \le \frac{C_r}{n_N}
 \sum_{b\ne a}\|\one_{\{x_a=x_b\}}u\|_2^2.
\]
On $G_a^c$, Jensen's inequality is applied directly to the defining average:
\[
 \|\one_{G_a^c}\mathbb E_au\|_2^2
 \le \|\one_{G_a^c}u\|_2^2
 \le \sum_{\substack{b<c\\b,c\ne a}}
      \|\one_{\{x_b=x_c\}}u\|_2^2.
\]
Consequently,
\begin{equation*}
 \|\mathbb E_au\|_2^2
 \le \frac{C_r}{n_N}
      \sum_{b\ne a}\|\one_{\{x_a=x_b\}}u\|_2^2
    +\sum_{\substack{b<c\\b,c\ne a}}
      \|\one_{\{x_b=x_c\}}u\|_2^2.
\end{equation*}
Every pair diagonal is contained in
$\mathfrak D_{N,r}^{\mathrm{coll}}$, so every term on the right is $o(1)$ by
\eqref{prof:eq:fd-minimizer-collision-leakage}.  Thus
$\|\mathbb E_au\|_2=o(1)$ for every $a$.
Now recall the product-top projection identity \eqref{prof:eq:product-top-proj}. 
Using the fact that the projections $I-\mathbb E_a$ commute for different $a$, we perform a telescoping argument on the products to find
\begin{equation}
 \|(I-\mathsf{Top}_{N,r}^{\ind})u\|_2
 \le
 \sum_{a=1}^r  \left\|\mathbb{E}_a \prod_{b<a} (I-\mathbb{E}_b)u\right\|_2
 \le\sum_{a=1}^r\|\mathbb E_au\|_2=o(1).
 \label{prof:eq:fd-product-top-leakage}
\end{equation}
This proves the claim that $u$ is asymptotically contained in the product-top sector.

It remains to show that the product-top component of $u$ has negligible
projection onto the low-energy product window $\mathscr V_{N,r}^{\ind}(K_+)$.  Fix
$\varphi\in \mathscr V_{N,r}^{\ind}(K_+)$.  By
\eqref{prof:eq:fd-Gram-isometry}, $\Pi_{N,r,K_+}^{\mathrm{cmp}}$ is the
orthogonal projection onto $\Ran\mathsf J_{N,r,K_+}$.  Since
$f\in\Ran\Pi_{N,r,K_+}^{\mathrm{cmp},\perp}$, it follows that
$
 \langle f,\mathsf J_{N,r,K_+}\varphi\rangle_{\hc}=0
 $.
Because
$f\in\mathcal T_{N,r}^{\hc}=\Ran\mathsf{Top}_{N,r}^{\ord}$ and
$\mathsf{Top}_{N,r}^{\ord}$ is an orthogonal projection,
\[
 \langle f,\mathsf R_{N,r}\varphi\rangle_{\hc}
 =\langle f,\mathsf{Top}_{N,r}^{\ord}\mathsf R_{N,r}\varphi\rangle_{\hc}
 =\langle f,\mathsf J_{N,r,K_+}\varphi\rangle_{\hc}
 =0.
\]
Finally, $\mathsf R_{N,r}u=f$.  Using the probability normalizations on the
product and hard-core spaces and the definition of $\theta_{N,r}$ in
\eqref{prof:eq:fd-distinct-tuple-density}, the contribution from the
distinct-tuple region is
\[
 \left\langle
 \one_{\Omega_{N,r}^{\ord}}u,
 \one_{\Omega_{N,r}^{\ord}}\varphi
 \right\rangle_{\ind}
 =\theta_{N,r}
 \langle \mathsf R_{N,r}u,\mathsf R_{N,r}\varphi\rangle_{\hc}
 =\theta_{N,r}
 \langle f,\mathsf R_{N,r}\varphi\rangle_{\hc}
 =0.
\]
Since
$V_N^r=\Omega_{N,r}^{\ord}\sqcup\mathfrak D_{N,r}^{\mathrm{coll}}$,
we therefore obtain
\[
 \langle u,\varphi\rangle_{\ind}
 =\left\langle
 \one_{\mathfrak D_{N,r}^{\mathrm{coll}}}u,
 \one_{\mathfrak D_{N,r}^{\mathrm{coll}}}\varphi
 \right\rangle_{\ind}.
\]
The collision factor of $u$ is already controlled by
\eqref{prof:eq:fd-minimizer-collision-leakage}.
Recall that $\mathscr V_{N,r}^{\ind}(K_+)$ is the low-energy product subspace
spanned by the permutation-symmetrized $r$-fold character tensors whose
total product energy is at most $K_+\gamma_N$.  The finite-window Fourier
bounds in \eqref{prof:eq:fd-finite-window-fourier-bounds} give
\[
 \|\varphi\|_\infty
 \le C_{D,r,K_+}\|\varphi\|_{\ind},
 \qquad \varphi\in\mathscr V_{N,r}^{\ind}(K_+).
\]
Moreover, by \eqref{prof:eq:fd-distinct-tuple-density}, the collision
diagonal has product probability
\[
 n_N^{-r}|\mathfrak D_{N,r}^{\mathrm{coll}}|
 =1-\theta_{N,r}=O_r(n_N^{-1}).
\]
Consequently, for every $\varphi\in\mathscr V_{N,r}^{\ind}(K_+)$ with
$\|\varphi\|_{\ind}=1$,
\[
 \|\one_{\mathfrak D_{N,r}^{\mathrm{coll}}}\varphi\|_{\ind}^2
 \le
 n_N^{-r}|\mathfrak D_{N,r}^{\mathrm{coll}}|\,\|\varphi\|_\infty^2
 \le C_{D,r,K_+}n_N^{-1}.
\]
Together with the collision-leakage estimate
\eqref{prof:eq:fd-minimizer-collision-leakage},
Cauchy--Schwarz therefore gives
\[
 \sup_{\substack{\varphi\in\mathscr V_{N,r}^{\ind}(K_+)\\
                   \|\varphi\|_{\ind}=1}}
 |\langle u,\varphi\rangle_{\ind}|=o(1).
\]

Let $\Pi_{N,r,K_+}^{\ind}$ be the orthogonal projection onto
$\mathscr V_{N,r}^{\ind}(K_+)$.  Since
$\mathscr V_{N,r}^{\ind}(K_+)\subset\mathcal T_{N,r}^{\ind}$, every
$\varphi\in\mathscr V_{N,r}^{\ind}(K_+)$ satisfies
$\mathsf{Top}_{N,r}^{\ind}\varphi=\varphi$.  Hence
\[
 \|\Pi_{N,r,K_+}^{\ind}\mathsf{Top}_{N,r}^{\ind}u\|_2
 =\sup_{\substack{\varphi\in\mathscr V_{N,r}^{\ind}(K_+)\\
                   \|\varphi\|_{\ind}=1}}
   |\langle u,\varphi\rangle_{\ind}|.
\]
The preceding supremum bound therefore yields
\begin{equation}
 \|\Pi_{N,r,K_+}^{\ind}\mathsf{Top}_{N,r}^{\ind}u\|_2=o(1).
 \label{prof:eq:fd-low-window-projection-leakage}
\end{equation}
The three vectors
\[
 (I-\mathsf{Top}_{N,r}^{\ind})u,\qquad
 \Pi_{N,r,K_+}^{\ind}\mathsf{Top}_{N,r}^{\ind}u,\qquad
 (I-\Pi_{N,r,K_+}^{\ind})\mathsf{Top}_{N,r}^{\ind}u
\]
are mutually orthogonal, and their sum is $u$.  Hence
\begin{align*}
 \|u\|_2^2
 =\|(I-\mathsf{Top}_{N,r}^{\ind})u\|_2^2
 +\|\Pi_{N,r,K_+}^{\ind}\mathsf{Top}_{N,r}^{\ind}u\|_2^2
 +\|(I-\Pi_{N,r,K_+}^{\ind})\mathsf{Top}_{N,r}^{\ind}u\|_2^2.
\end{align*}
The first two terms are $o(1)$ by
\eqref{prof:eq:fd-product-top-leakage} and
\eqref{prof:eq:fd-low-window-projection-leakage}, respectively.
Meanwhile, \eqref{prof:eq:fd-minimizer-norm} gives
$\|u\|_2^2=1+o(1)$, and thus
$
 \|(I-\Pi_{N,r,K_+}^{\ind})\mathsf{Top}_{N,r}^{\ind}u\|_2^2=1-o(1)
 $.
The product generator is at least $K_+\gamma_N$ on this last subspace.
Therefore
\[
 \mathcal E_{N,r}^{\ind}(u)+\gamma_N\|u\|_{\ind}^2
 \ge K_+\gamma_N(1-o(1))+\gamma_N(1+o(1))
 =(K_++1-o(1))\gamma_N.
\]
Combining this with
\eqref{prof:eq:fd-schur-form-comparison} gives
\[
 \mathcal E_{N,r}^{\hc}(f)+\gamma_N
 \ge c_{D,r}(K_++1-o(1))\gamma_N.
\]
Choose $K_+$ so that $c_{D,r}(K_++1)>H_0+4$.  Then, for all sufficiently
large $N$,
$
 \mathcal E_{N,r}^{\hc}(f)+\gamma_N
 >(H_0+3)\gamma_N$,
and hence
$
 \mathcal E_{N,r}^{\hc}(f)>(H_0+2)\gamma_N
 $.
This contradicts the assumed upper bound \eqref{prof:eq:E-contradict}.
Therefore every unit vector in
$\Ran\Pi_{N,r,K_+}^{\mathrm{cmp},\perp}$ has hard-core Rayleigh quotient
at least $(H_0+2)\gamma_N$, which is exactly the operator inequality
\eqref{prof:eq:fd-high-block-gap}.
\end{proof}

\subsection{Generator-intertwining defects and the asymptotically sharp spectral bottom}

For $u\in\mathscr V_{N,r}^{\ind}(K_+)$, define the exact generator intertwining
defect
\begin{equation}
 b_u:=\left(\mathsf J_{N,r,K_+}\mathcal L_{N,r}^{\ind}
      -\mathcal L_{N,r}^{\hc}\mathsf J_{N,r,K_+}\right)u.
 \label{prof:eq:fd-generator-defect-definition}
\end{equation}
Recall that the shifted resolvent norm $\| \cdot \|_{-1,\hc,\gamma_N}$ was defined in \eqref{prof:eq:fd-shifted-resolvent-norm}, and the orthogonal projection onto the comparison range $\Pi^{\mathrm{cmp}}_{N,r,K_+}$ was defined in \eqref{prof:eq:fd-comparison-projections}. 

\begin{lemma}[Generator-defect resolvent estimate with first-order cancellation]
\label{prof:lem:fd-generator-defect-resolvent}
For fixed $D\ge2$, $r\in\mathbb N$, and $K_+\ge0$,
\begin{equation}
 \|b_u\|_{-1,\hc,\gamma_N}^2
 \le C_{D,r,K_+}\frac{\gamma_N}{n_N}\|u\|_{\ind}^2,
 \qquad u\in \mathscr V_{N,r}^{\ind}(K_+).
 \label{prof:eq:fd-generator-defect-resolvent}
\end{equation}
Moreover the analogous estimate holds for the projection of $b_u$ orthogonal to the comparison range:
\begin{align}
 \|b_u-\Pi_{N,r,K_+}^{\mathrm{cmp}}b_u\|_{-1,\hc,\gamma_N}^2
 \le C_{D,r,K_+}\frac{\gamma_N}{n_N}\|u\|_{\ind}^2.
  \label{prof:eq:fd-generator-defect-resolvent-analog}
\end{align}
\end{lemma}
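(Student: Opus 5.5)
The plan is to reduce the generator defect to a function supported on nearest-neighbor contacts, move it to the product space with the resolvent transfer \eqref{prof:eq:fd-resolvent-transfer} of \cref{prof:lem:fd-schur-resolvent}, and estimate the product-space $H^{-1}$ norm by duality. The gain of $\gamma_N$ over the naive bound comes from a discrete summation by parts.

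\emph{Reduction to a contact function.} By \eqref{prof:eq:HLcommute}, $\mathsf{Top}_{N,r}^{\ord}$ commutes with $\mathcal L_{N,r}^{\hc}$, so
\[
 b_u=\mathsf{Top}_{N,r}^{\ord}c_u,
 \qquad
 c_u:=\mathsf R_{N,r}\mathcal L_{N,r}^{\ind}u-\mathcal L_{N,r}^{\hc}\mathsf R_{N,r}u .
\]
Here $c_u$ is computed on symmetric functions; $u$ is symmetric, so $\mathsf R_{N,r}u$ lies in the domain of the transposition representation. Since $\mathsf{Top}_{N,r}^{\ord}$ is an orthogonal projection commuting with $(\mathcal L_{N,r}^{\hc}+\gamma_N)^{-1}$, we get $\|b_u\|_{-1,\hc,\gamma_N}\le\|c_u\|_{-1,\hc,\gamma_N}$. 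At a distinct tuple $\boldsymbol x$, the function $c_u$ is exactly the sum of the blocked moves:
\[
 c_u(\boldsymbol x)=\sum_{a\ne b,\ x_b\sim x_a}\bigl(u(\boldsymbol x)-u(\boldsymbol x^{a\to x_b})\bigr).
\]
It is therefore supported on the contact set $\mathfrak C_{N,r}^{\mathrm{nn}}$, and each term is an increment of $u$ along a product edge $\{\boldsymbol x,\boldsymbol y\}$ whose endpoint $\boldsymbol y=\boldsymbol x^{a\to x_b}$ lies on the collision diagonal. By \eqref{prof:eq:fd-resolvent-transfer} it then suffices to bound $\|\mathsf R_{N,r}^*c_u\|_{-1,\ind,\gamma_N}^2$.

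A naive bound uses $|c_u|\le CN^{-1}\|u\|_{\ind}$ from \eqref{prof:eq:fd-finite-window-fourier-bounds}, together with the fact that the contact set has mass $O(n_N^{-1})$. This gives
\[
 \|\mathsf R_{N,r}^*c_u\|_{-1,\ind,\gamma_N}^2\le\gamma_N^{-1}\|\mathsf R_{N,r}^*c_u\|_{\ind}^2\lesssim n_N^{-1}\|u\|_{\ind}^2,
\]
which misses the required bound by a factor $\gamma_N$. This first-order cancellation is the main obstacle.

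\emph{Duality and summation by parts.} Write
\[
 \|h\|_{-1,\ind,\gamma_N}^2=\sup_w\bigl\{2\Re\langle h,w\rangle_{\ind}-\mathcal E_{N,r}^{\ind}(w)-\gamma_N\|w\|_{\ind}^2\bigr\},
\]
so it suffices to show $|\langle\mathsf R_{N,r}^*c_u,w\rangle_{\ind}|\le C(\gamma_N/n_N)^{1/2}\|u\|_{\ind}\bigl(\mathcal E_{N,r}^{\ind}(w)+\gamma_N\|w\|_{\ind}^2\bigr)^{1/2}$. The pairing is a sum over blocked product edges $\{\boldsymbol x,\boldsymbol y\}$, with $\boldsymbol x$ distinct and $\boldsymbol y$ collided, of $(u(\boldsymbol x)-u(\boldsymbol y))w(\boldsymbol x)$. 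Split $w(\boldsymbol x)=(w(\boldsymbol x)-w(\boldsymbol y))+w(\boldsymbol y)$.

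The first piece is bounded by Cauchy--Schwarz over the edge set. That set has product edge mass $O(n_N^{-1})$ and carries $|\nabla u|\le CN^{-1}\|u\|_{\ind}$, so the piece is at most $CN^{-1}n_N^{-1/2}\|u\|_{\ind}\,\mathcal E_{N,r}^{\ind}(w)^{1/2}$. Since $N^{-2}\asymp\gamma_N$, this is of the right order.

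For the second piece, regroup by the collided endpoint $\boldsymbol y$, where $y_a=y_b$. Summing over all displacements $\boldsymbol\delta\in\mathcal D_D$ of the coordinate $a$ (and likewise of $b$) produces the centered second difference
\[
 \sum_{\boldsymbol\delta\in\mathcal D_D}\bigl(u(\boldsymbol y^{a\to y_a+\boldsymbol\delta})-u(\boldsymbol y)\bigr).
\]
On the finite Fourier window this is $O(\gamma_N)\|u\|_{\ind}$ pointwise. Every displacement is included because the target $\boldsymbol y^{a\to y_a+\boldsymbol\delta}$ is automatically distinct, so each such edge is a blocked edge. The second piece is thus at most $C\gamma_N\|u\|_{\ind}\,n_N^{-1/2}\|\one_{\mathfrak D_{N,r}^{\mathrm{coll}}}w\|_{\ind}$. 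The trace estimate \eqref{prof:eq:fd-diagonal-trace} bounds $\|\one_{\mathfrak D_{N,r}^{\mathrm{coll}}}w\|_{\ind}$ by $C(\Theta_N(\mathcal E_{N,r}^{\ind}(w)+\gamma_N\|w\|_{\ind}^2))^{1/2}$. Because $\gamma_N\Theta_N\to0$ by \cref{prof:lem:fd-trace}, the second piece is even smaller than needed. Combining the two pieces, optimizing in $w$, and using the $\theta_{N,r}$ normalization of $\mathsf R_{N,r}^*$ proves \eqref{prof:eq:fd-generator-defect-resolvent}.

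\emph{The orthogonal-complement estimate.} Write $\Pi:=\Pi_{N,r,K_+}^{\mathrm{cmp}}$; it does not commute with $\mathcal L_{N,r}^{\hc}$, so we bound $\Pi b_u$ directly. For $\varphi\in\mathscr V_{N,r}^{\ind}(K_+)$, duality gives
\[
 |\langle b_u,\mathsf J_{N,r,K_+}\varphi\rangle_{\hc}|\le\|b_u\|_{-1,\hc,\gamma_N}\bigl(\mathcal E_{N,r}^{\hc}(\mathsf J_{N,r,K_+}\varphi)+\gamma_N\|\mathsf J_{N,r,K_+}\varphi\|_{\hc}^2\bigr)^{1/2}.
\]
The last factor is $\le C_{K_+}\gamma_N^{1/2}\|\varphi\|_{\ind}$ by \eqref{prof:eq:fd-form-error} and \eqref{prof:eq:fd-gram-error}. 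With the Gram bounds \eqref{prof:eq:fd-Gram-uniform-bounds}, this yields $\|\Pi b_u\|_{\hc}\le C\gamma_N^{1/2}\|b_u\|_{-1,\hc,\gamma_N}$. Hence
\[
 \|\Pi b_u\|_{-1,\hc,\gamma_N}\le\gamma_N^{-1/2}\|\Pi b_u\|_{\hc}\le C\|b_u\|_{-1,\hc,\gamma_N},
\]
and the triangle inequality gives \eqref{prof:eq:fd-generator-defect-resolvent-analog}.
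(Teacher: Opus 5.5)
Your proof is correct in substance, but it takes a genuinely different route from the paper.

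\emph{Top-projection correction.} The paper writes $b_u=b_{u,1}+b_{u,2}$ and bounds the top-projection correction $b_{u,2}$ separately in $L^2$ through the down defect. You instead note that $b_u=\mathsf{Top}_{N,r}^{\ord}c_u$ with $c_u=b_{u,1}$. Since $\mathsf{Top}_{N,r}^{\ord}$ commutes with the hard-core resolvent, this step becomes unnecessary; indeed $b_{u,2}=-(I-\mathsf{Top}_{N,r}^{\ord})b_{u,1}$.

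\emph{Blocked-edge source.} The paper reduces to one character tensor and one label pair and passes to center-of-mass and relative coordinates. It proves the symbol bound $|\widehat h_{ij}^{\mathrm{rel}}(k)|^2\lesssim n_N^{-2}(\lambda_{\mathrm{pair}}\lambda_N(k)+\lambda_{\mathrm{pair}}^2)$, sums it against $(\lambda_N(k)+\gamma_N)^{-1}$, and then corrects the zero extension on the remaining collision strata. Your physical-space duality and summation by parts is the same mechanism seen from the other side. The gradient--gradient piece corresponds to the $\lambda_{\mathrm{pair}}\lambda_N(k)$ term. The second difference paired against the diagonal trace of $w$ corresponds to the $\lambda_{\mathrm{pair}}^2\Theta_N$ term; the trace lemma is itself the $\Theta_N$-weighted Cauchy--Schwarz. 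Because you pair directly with the exact $\mathsf R_{N,r}^*c_u$, you avoid both the basis reduction and the Fourier bookkeeping. The price is that you must handle exceptional diagonal points by hand.

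\emph{Complement estimate.} The paper proves the stronger bound $\|\Pi_{N,r,K_+}^{\mathrm{cmp}}b_u\|_{\hc}\lesssim(\gamma_N/n_N)\|u\|_{\ind}$ from \eqref{prof:eq:fd-gram-error} and \eqref{prof:eq:fd-form-error}. You use only that the comparison range has hard-core energy $O(\gamma_N)$. That gives $\|\Pi_{N,r,K_+}^{\mathrm{cmp}}b\|_{-1,\hc,\gamma_N}\le C\|b\|_{-1,\hc,\gamma_N}$ for every $b$, which suffices for the lemma as stated.

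\emph{An incorrect claim, easily repaired.} For $r\ge3$ it is false that $\boldsymbol y^{a\to y_a+\boldsymbol\delta}$ is automatically distinct. A third coordinate $y_e$ with $e\notin\{a,b\}$ may sit at $y_a+\boldsymbol\delta$. That edge is then not a blocked hard-core edge, so at such $\boldsymbol y$ your regrouped sum is an incomplete second difference, of size only $O(N^{-1})\|u\|_{\ind}$. This does no harm. The exceptional $\boldsymbol y$ (collision at $\{a,b\}$ with another coordinate adjacent to $y_a$) number $O(n_N^{r-2})$. Cauchy--Schwarz over this set together with \eqref{prof:eq:fd-diagonal-trace} bounds their contribution by $CN^{-1}n_N^{-1}\Theta_N^{1/2}\|u\|_{\ind}\bigl(\mathcal E_{N,r}^{\ind}(w)+\gamma_N\|w\|_{\ind}^2\bigr)^{1/2}$. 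Relative to your target $(\gamma_N/n_N)^{1/2}\asymp N^{-1}n_N^{-1/2}$, this is smaller by the factor $(\Theta_N/n_N)^{1/2}\to0$. With this correction added to the second piece, the argument goes through.
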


\begin{proof}
For $r=1$, the restriction map is the identity, and every
$u\in\mathscr V_{N,1}^{\ind}(K_+)$ already lies in the hard-core top sector, so
$\mathsf J_{N,1,K_+}u=u$.  The one-particle hard-core and independent
generators also coincide; hence $b_u=0$.  We may therefore assume $r\ge2$.

We separate the generator defect \eqref{prof:eq:fd-generator-defect-definition}
into
\begin{align*}
 b_u=b_{u,1}+b_{u,2},\qquad
 b_{u,1}
 &:=\mathsf R_{N,r}\mathcal L_{N,r}^{\ind}u
    -\mathcal L_{N,r}^{\hc}\mathsf R_{N,r}u,\\
 b_{u,2}
 &:=(\mathsf J_{N,r,K_+}-\mathsf R_{N,r})
       \mathcal L_{N,r}^{\ind}u
    -\mathcal L_{N,r}^{\hc}
       (\mathsf J_{N,r,K_+}-\mathsf R_{N,r})u.
\end{align*}
The correction term $b_{u,2}$ is controlled by the down defect and
top projection estimates and is smaller than the target resolvent scale.  The
substantive term is $b_{u,1}$: it is exactly the blocked-edge source, and its
first-order cancellation is exposed below in relative-coordinate Fourier
variables.

\localheading{Top-projection correction.}
Recall from \eqref{prof:eq:fd-down-defect} that
$
 \mathfrak d_u
 :=\mathsf{Down}_{N,r\to r-1}\mathsf R_{N,r}u
$
is given on distinct $(r-1)$-tuples by a sum of diagonal evaluations of
$u$.
Here we extend that formula to the full $(r-1)$-particle product grid by
\begin{align}
 \widetilde{\mathfrak d}_u(x_1,\ldots,x_{r-1})
 :=-\frac1{n_N-r+1}\sum_{j=1}^{r-1}
   u(x_1,\ldots,x_{r-1},x_j),
 \qquad
 (x_1,\ldots,x_{r-1})\in V_N^{r-1}.
 \label{prof:eq:tilde-d-u}
\end{align}
The finite-window Fourier bounds used in the proof of
\cref{prof:lem:fd-restriction}, specifically
\eqref{prof:eq:fd-finite-window-fourier-bounds}, also apply to these
diagonal evaluations.  Indeed, substituting $x_j$ for the last coordinate
only merges two bounded Fourier momenta, so
$\widetilde{\mathfrak d}_u$ belongs to a fixed $(r-1)$-particle Fourier
window depending only on $D,r,K_+$.  The explicit factor
$(n_N-r+1)^{-1}$ therefore gives
\[
 \|\widetilde{\mathfrak d}_u\|_{\ind}
 \le C_{D,r,K_+}n_N^{-1}\|u\|_{\ind}.
\]
Every Fourier mode in this window has product energy at most
$C_{D,r,K_+}\gamma_N$, and hence
\[
 \|\mathcal L_{N,r-1}^{\ind}\widetilde{\mathfrak d}_u\|_{\ind}
 \le C_{D,r,K_+}\frac{\gamma_N}{n_N}\|u\|_{\ind}.
\]
Since
$\|\mathsf R_{N,r-1}w\|_{\hc}
 \le \theta_{N,r-1}^{-1/2}\|w\|_{\ind}$
and $\theta_{N,r-1}^{-1}=1+O_r(n_N^{-1})$,
\begin{equation}
 \|\mathsf R_{N,r-1}\mathcal L_{N,r-1}^{\ind}
       \widetilde{\mathfrak d}_u\|_{\hc}
 \le C_{D,r,K_+}\frac{\gamma_N}{n_N}\|u\|_{\ind}.
 \label{prof:eq:fd-restricted-extended-down-defect-generator}
\end{equation}

We next compare the restricted product and hard-core generators by setting
\begin{align}
 \Delta_u
 :=\mathsf R_{N,r-1}\mathcal L_{N,r-1}^{\ind}
      \widetilde{\mathfrak d}_u
   -\mathcal L_{N,r-1}^{\hc}\mathfrak d_u.
 \label{prof:eq:Delta-u}
\end{align}
At a distinct $(r-1)$-tuple, the two generators have identical increments
for every move that remains distinct; thus $\Delta_u$ is exactly the sum of
the blocked product jumps into occupied neighboring coordinates.  For $r=2$
there are none.  For $r\ge3$, $\Delta_u$ is supported on the nearest-neighbor
contact set
$\mathfrak C_{N,r-1}^{\mathrm{nn}}$
\eqref{prof:eq:nearest-neighbor-contact-set}, whose hard-core probability is
$O_r(n_N^{-1})$; this follows from a union bound applied to
\eqref{prof:eq:nearest-neighbor-contact-set} and
$\theta_{N,r-1}^{-1}=1+O_r(n_N^{-1})$.

A blocked move changes at most two arguments in each diagonal evaluation of
$u$.  The discrete gradient bound in
\eqref{prof:eq:fd-finite-window-fourier-bounds} and the factor
$(n_N-r+1)^{-1}$ in
$\widetilde{\mathfrak d}_u$ \eqref{prof:eq:tilde-d-u} therefore give
\[
 |\Delta_u(\boldsymbol x)|
 \le C_{D,r,K_+}N^{-1}n_N^{-1}\|u\|_{\ind}
       \one_{\mathfrak C_{N,r-1}^{\mathrm{nn}}}(\boldsymbol x),
 \qquad
 r\ge3,\quad
 \boldsymbol x\in\Omega_{N,r-1}^{\ord}.
\]
Consequently,
\[
 \|\Delta_u\|_{\hc}
 \le C_{D,r,K_+}N^{-1}n_N^{-3/2}\|u\|_{\ind}
 \le C_{D,r,K_+}\frac{\gamma_N}{n_N}\|u\|_{\ind},
\]
where the last inequality uses $D\ge2$, $n_N=N^D$, and
$\gamma_N\asymp N^{-2}$.  The definition \eqref{prof:eq:Delta-u}, the
triangle inequality, and
\eqref{prof:eq:fd-restricted-extended-down-defect-generator} yield
\begin{equation}
 \|\mathcal L_{N,r-1}^{\hc}\mathfrak d_u\|_{\hc}
 \le C_{D,r,K_+}\frac{\gamma_N}{n_N}\|u\|_{\ind}.
 \label{prof:eq:fd-down-defect-generator}
\end{equation}

We transfer \eqref{prof:eq:fd-down-defect-generator} to the top projection
correction using the formula \eqref{prof:eq:fd-lower-layer-correction} from the proof of
\cref{prof:lem:fd-restriction}:
\[
 (\mathsf J_{N,r,K_+}-\mathsf R_{N,r})u
 =-\mathsf{Up}_{N,r-1\to r}
   \bigl(\mathsf{Down}_{N,r\to r-1}
         \mathsf{Up}_{N,r-1\to r}\bigr)^{-1}\mathfrak d_u.
\]
By the hard-core down--generator intertwining
\eqref{prof:eq:hard-core-down-intertwining} and its adjoint,
$\mathcal L_{N,r}^{\hc}$ commutes through the up map, while the down--up
operator and its inverse commute with
$\mathcal L_{N,r-1}^{\hc}$.  Moreover,
\eqref{prof:eq:fd-down-up-eigenvalue} gives a fixed-degree lower bound on the
spectrum of the down--up operator, so its inverse is uniformly bounded.
Thus
\[
 \mathcal L_{N,r}^{\hc}(\mathsf J_{N,r,K_+}-\mathsf R_{N,r})u
 =-\mathsf{Up}_{N,r-1\to r}
   \bigl(\mathsf{Down}_{N,r\to r-1}
         \mathsf{Up}_{N,r-1\to r}\bigr)^{-1}
   \mathcal L_{N,r-1}^{\hc}\mathfrak d_u,
\]
and \eqref{prof:eq:fd-down-defect-generator} gives
\begin{equation}
 \|\mathcal L_{N,r}^{\hc}
   (\mathsf J_{N,r,K_+}-\mathsf R_{N,r})u\|_{\hc}
 \le C_{D,r,K_+}\frac{\gamma_N}{n_N}\|u\|_{\ind}.
 \label{prof:eq:fd-projection-source-error}
\end{equation}

\localheading{Reduction to the blocked-edge source.}
We can now complete the estimate for the correction term $b_{u,2}$.  Its two
constituents are controlled separately.
On the one hand, \eqref{prof:eq:fd-projection-source-error} controls
$\mathcal L_{N,r}^{\hc}(\mathsf J_{N,r,K_+}-\mathsf R_{N,r})u$.
On the other hand, the product spectral window is invariant under
$\mathcal L_{N,r}^{\ind}$ and
$\|\mathcal L_{N,r}^{\ind}u\|_{\ind}
 \le K_+\gamma_N\|u\|_{\ind}$, so the
$L^2$ correction bound \eqref{prof:eq:fd-top-correction-L2}, applied to
$\mathcal L_{N,r}^{\ind}u$, controls
$(\mathsf J_{N,r,K_+}-\mathsf R_{N,r})
 \mathcal L_{N,r}^{\ind}u$
at the same scale.  Therefore
\[
 \|b_{u,2}\|_{\hc}
 \le C_{D,r,K_+}\frac{\gamma_N}{n_N}\|u\|_{\ind}.
\]
In conjunction with
$(\mathcal L_{N,r}^{\hc}+\gamma_N)^{-1}\le\gamma_N^{-1}I$, this implies
\[
 \|b_{u,2}\|_{-1,\hc,\gamma_N}^2
 \le C_{D,r,K_+}\frac{\gamma_N}{n_N^2}\|u\|_{\ind}^2,
\]
which is asymptotically negligible compared to the target scale in
\eqref{prof:eq:fd-generator-defect-resolvent}.

The triangle inequality for $\|\cdot\|_{-1,\hc,\gamma_N}$ therefore reduces
\eqref{prof:eq:fd-generator-defect-resolvent} to the estimate for
$b_{u,1}$.  At a distinct tuple, the product and hard-core generators agree
on every allowed move; hence $b_{u,1}$ consists exactly of product jumps into
occupied neighboring coordinates, which the hard-core generator suppresses.
Applying \eqref{prof:eq:fd-resolvent-transfer} with $b=b_{u,1}$ gives
\begin{equation}
 \|b_{u,1}\|_{-1,\hc,\gamma_N}^2
 \le C_{D,r}
      \|\mathsf R_{N,r}^*b_{u,1}\|_{-1,\ind,\gamma_N}^2,
 \qquad
 \mathsf R_{N,r}^*b_{u,1}
 =\theta_{N,r}^{-1}
   \one_{\Omega_{N,r}^{\ord}}b_{u,1}.
 \label{prof:eq:fd-generator-defect-transfer}
\end{equation}

\localheading{Reduction to one character tensor and one label pair.}
The map $u\mapsto\mathsf R_{N,r}^*b_{u,1}$ is linear on the
fixed-dimensional space $\mathscr V_{N,r}^{\ind}(K_+)$.  Choose an orthonormal
symmetrized character tensor basis.  Each basis vector is a normalized sum
of at most $r!$ unsymmetrized character tensors, and there are only
$\binom r2$ label pairs.  Since $D,r,K_+$ are fixed, linearity and
Cauchy--Schwarz reduce the product resolvent estimate in
\eqref{prof:eq:fd-generator-defect-transfer} to one unsymmetrized character
tensor and one pair $\{i,j\}$; the uniform component estimate below then
reconstructs $\mathsf R_{N,r}^*b_{u,1}$.

Use the unit-modulus characters $e_p$ from
\eqref{prof:eq:probability-fourier-basis}.  Let the characters in the
selected coordinates $i,j$ be $e_{p^{(1)}}$ and $e_{p^{(2)}}$,
respectively, and for each remaining coordinate
$a\notin\{i,j\}$ write $e_{q_a}$ for its character.  Set
\[
 \lambda_{\mathrm{pair}}
 :=\lambda_N(p^{(1)})+\lambda_N(p^{(2)})
 \le K_+\gamma_N.
\]
For $\boldsymbol\delta\in\mathcal D_D$, define the oriented contact set
\[
 \mathfrak C_{ij,\boldsymbol\delta}
 :=\{\boldsymbol x\in V_N^r:x_j=x_i+\boldsymbol\delta\}.
\]
For this character tensor, define the full-product comparison source
\begin{align}
 b_{ij}^{\ind}(\boldsymbol x)
 :=\left(\prod_{a\notin\{i,j\}}e_{q_a}(x_a)\right)
 \sum_{\boldsymbol\delta\in\mathcal D_D}
 \one_{\mathfrak C_{ij,\boldsymbol\delta}}(\boldsymbol x)
 e_{p^{(1)}}(x_i)e_{p^{(2)}}(x_i+\boldsymbol\delta)
 \bigl[2-e_{p^{(1)}}(\boldsymbol\delta)-e_{p^{(2)}}(-\boldsymbol\delta)\bigr].
 \label{prof:eq:fd-unrestricted-pair-source}
\end{align}
Its restriction
$
 b_{ij}^{\hc}:=\mathsf R_{N,r}b_{ij}^{\ind}
$
is the $\{i,j\}$-component of $b_{u,1}$ for the chosen character tensor,
and $\mathsf R_{N,r}^*b_{ij}^{\hc}$ is the corresponding exact component of
$\mathsf R_{N,r}^*b_{u,1}$.

\localheading{First-order cancellation and the relative-coordinate resolvent estimate.}
On $\mathfrak C_{ij,\boldsymbol\delta}$, the selected-pair factor of
$b_{ij}^{\ind}$ is
\[
 e_{p^{(1)}}(x_i)e_{p^{(2)}}(x_i+\boldsymbol\delta)
 \bigl[2-e_{p^{(1)}}(\boldsymbol\delta)-e_{p^{(2)}}(-\boldsymbol\delta)\bigr].
\]
After factoring out the common center-of-mass character
$e_{p^{(1)}+p^{(2)}}(x_i)$, the relative coefficient is
\[
 c_{\boldsymbol\delta}
 :=e_{p^{(2)}}(\boldsymbol\delta)
   \bigl(2-e_{p^{(1)}}(\boldsymbol\delta)-e_{p^{(2)}}(-\boldsymbol\delta)\bigr)
 =2e_{p^{(2)}}(\boldsymbol\delta)
   -e_{p^{(1)}+p^{(2)}}(\boldsymbol\delta)-1.
\]
The selected-pair contribution to $b_{ij}^{\ind}$ can therefore be
written explicitly in center-of-mass and relative coordinates; namely,
with $z=x_j-x_i$, we have
\[
 b_{ij}^{\ind}(\boldsymbol x)
 =\left(\prod_{a\notin\{i,j\}}e_{q_a}(x_a)\right)
   e_{p^{(1)}+p^{(2)}}(x_i)
   h_{ij}^{\mathrm{rel}}(z),
\qquad
\text{where }~ 
 h_{ij}^{\mathrm{rel}}(z)
 :=\sum_{\boldsymbol\delta\in\mathcal D_D}c_{\boldsymbol\delta}\,\one_{\{z=\boldsymbol\delta\}}.
\]
Its Fourier coefficient is
\[
 \widehat h_{ij}^{\mathrm{rel}}(k)
 :=\frac{1}{n_N}\sum_{z\in V_N}
      h_{ij}^{\mathrm{rel}}(z)e_{-k}(z)
 =\frac{1}{n_N}\sum_{\boldsymbol\delta\in\mathcal D_D}
      c_{\boldsymbol\delta} e_{-k}(\boldsymbol\delta).
\]
Fourier inversion in the relative coordinate gives
\[
 h_{ij}^{\mathrm{rel}}(z)
 =\sum_{k\in\T_N^D}\widehat h_{ij}^{\mathrm{rel}}(k)e_k(z),
\]
and therefore
\begin{equation}
 b_{ij}^{\ind}(\boldsymbol x)
 =\sum_{k\in\T_N^D}\widehat h_{ij}^{\mathrm{rel}}(k)
   \left(\prod_{a\notin\{i,j\}}e_{q_a}(x_a)\right)
   e_{p^{(1)}+p^{(2)}-k}(x_i)e_k(x_j).
   \label{prof:eq:Fourier-bind}
\end{equation}
Thus, after the complementary momenta and total momentum of the selected
pair are fixed, the full-product Fourier coefficient on the relative-momentum
fiber indexed by $k$ is exactly $\widehat h_{ij}^{\mathrm{rel}}(k)$.

We next estimate this relative Fourier coefficient.
A basic upper bound is
 \begin{align*}
 c_{\boldsymbol\delta}
  =(e_{p^{(2)}}(\boldsymbol\delta)-1)
    +e_{p^{(2)}}(\boldsymbol\delta)(1-e_{p^{(1)}}(\boldsymbol\delta)) 
    \quad \Longrightarrow\quad
 |c_{\boldsymbol\delta}|^2
  \le
    2|e_{p^{(2)}}(\boldsymbol\delta)-1|^2
    +2|1-e_{p^{(1)}}(\boldsymbol\delta)|^2.
 \end{align*}
As $\mathcal D_D=\{\pm\mathbf e_a:1\le a\le D\}$ and
$
 \sum_{\boldsymbol\delta\in\mathcal D_D}|1-e_p(\boldsymbol\delta)|^2
 =2\lambda_N(p)
 $,
it follows that
$
 \sum_{\boldsymbol\delta\in\mathcal D_D}|c_{\boldsymbol\delta}|^2
 \le4\lambda_{\mathrm{pair}}
 $.
Moreover, for each coordinate direction,
\[
 c_{\mathbf e_a}+c_{-\mathbf e_a}
 =2\bigl(e_{p^{(2)}}(\mathbf e_a)+e_{p^{(2)}}(-\mathbf e_a)-2\bigr)
  -\bigl(e_{p^{(1)}+p^{(2)}}(\mathbf e_a)
         +e_{p^{(1)}+p^{(2)}}(-\mathbf e_a)-2\bigr).
\]
The torus dispersion satisfies the elementary subadditivity bound
\begin{equation}
 \lambda_N(p+q)
 \le 2\lambda_N(p)+2\lambda_N(q),
 \qquad p,q\in\T_N^D,
 \label{prof:eq:fd-dispersion-subadditivity}
\end{equation}
because, coordinatewise,
$|1-e_{p+q}(\mathbf e_a)|\le |1-e_p(\mathbf e_a)|+|1-e_q(\mathbf e_a)|$.
Since
$|e_p(\mathbf e_a)+e_p(-\mathbf e_a)-2|=2-2\cos(2\pi p_a/N)$,
\eqref{prof:eq:fd-dispersion-subadditivity} gives
\[
 \sum_{a=1}^D|c_{\mathbf e_a}+c_{-\mathbf e_a}|
 \le 2\lambda_N(p^{(2)})
      +\lambda_N(p^{(1)}+p^{(2)})
 \le 4\lambda_{\mathrm{pair}}.
\]
Thus
$
 \left|\sum_{\boldsymbol\delta} c_{\boldsymbol\delta}\right|
 \le C\lambda_{\mathrm{pair}}
 $.
Write
\[
 \sum_{\boldsymbol\delta\in\mathcal D_D}c_{\boldsymbol\delta} e_{-k}(\boldsymbol\delta)
 =
 \sum_{\boldsymbol\delta\in\mathcal D_D}
 c_{\boldsymbol\delta}\bigl(e_{-k}(\boldsymbol\delta)-1\bigr)
 +\sum_{\boldsymbol\delta\in\mathcal D_D}c_{\boldsymbol\delta}.
\]
Because $\mathcal D_D$ has $2D$ elements,
\[
 \left|
 \sum_{\boldsymbol\delta\in\mathcal D_D}
 c_{\boldsymbol\delta}\bigl(e_{-k}(\boldsymbol\delta)-1\bigr)
 \right|
 \le
 \left(\sum_{\boldsymbol\delta}|c_{\boldsymbol\delta}|^2\right)^{1/2}
 \left(\sum_{\boldsymbol\delta}|e_{-k}(\boldsymbol\delta)-1|^2\right)^{1/2}
 \le C\sqrt{\lambda_{\mathrm{pair}}\lambda_N(k)}.
\]
Since
$|\sum_{\boldsymbol\delta} c_{\boldsymbol\delta}|\le C\lambda_{\mathrm{pair}}$,
division by $n_N$ and squaring give
\begin{equation}
 |\widehat h_{ij}^{\mathrm{rel}}(k)|^2
 \le \frac{C_{K_+}}{n_N^2}
 \bigl(
   \lambda_{\mathrm{pair}}\lambda_N(k)
   +\lambda_{\mathrm{pair}}^2
 \bigr).
 \label{prof:eq:fd-dipole-symbol}
\end{equation}

Fix the momenta $(q_a)_{a\notin\{i,j\}}$ of the remaining coordinates and
the total momentum
$
 p_{\mathrm{tot}}:=p^{(1)}+p^{(2)}
 $
of the selected pair.  Put
$
 \lambda_{\mathrm{rest}}
 :=\sum_{a\notin\{i,j\}}\lambda_N(q_a)$.
After these quantities are fixed, the Fourier variable associated with the
pair-local source in the relative coordinate is the relative momentum $k$ of
the selected pair: its two momenta are $k$ and $p_{\mathrm{tot}}-k$.  The
corresponding product eigenvalue is therefore
\[
 \varepsilon_{p_{\mathrm{tot}},\mathrm{rest}}(k)
 :=\lambda_N(k)
   +\lambda_N(p_{\mathrm{tot}}-k)
   +\lambda_{\mathrm{rest}}.
\]
All three summands are nonnegative, so
\[
 \varepsilon_{p_{\mathrm{tot}},\mathrm{rest}}(k)+\gamma_N
 \ge\lambda_N(k)+\gamma_N.
\]
By the product Fourier expansion \eqref{prof:eq:Fourier-bind}, the dipole bound
\eqref{prof:eq:fd-dipole-symbol}, Parseval, and diagonalization of
the product resolvent, we obtain
\begin{align*}
 \|b_{ij}^{\ind}\|_{-1,\ind,\gamma_N}^2
 &=\sum_k
   \frac{|\widehat h_{ij}^{\mathrm{rel}}(k)|^2}
        {\varepsilon_{p_{\mathrm{tot}},\mathrm{rest}}(k)+\gamma_N}
 \le
 \frac{C_{K_+}}{n_N^2}
 \sum_k
 \frac{\lambda_{\mathrm{pair}}\lambda_N(k)
       +\lambda_{\mathrm{pair}}^2}
      {\lambda_N(k)+\gamma_N}\\
 &\le
 \frac{C_{K_+}}{n_N^2}
 \left(
   \lambda_{\mathrm{pair}}n_N
   +\lambda_{\mathrm{pair}}^2
    \sum_k\frac1{\lambda_N(k)+\gamma_N}
 \right)
 \le
 C_{K_+}\frac{\lambda_{\mathrm{pair}}}{n_N}
 \bigl(1+\lambda_{\mathrm{pair}}\Theta_N\bigr),
\end{align*}
where $\Theta_N$ was defined in \eqref{prof:eq:Theta_N} along with the
estimate.  Since
$\lambda_{\mathrm{pair}}\le K_+\gamma_N$, the last display is bounded by
$C_{D,K_+}\gamma_N/n_N$ for every $D\ge2$.  Consequently,
\[
 \|b_{ij}^{\ind}\|_{-1,\ind,\gamma_N}^2
 \le C_{D,K_+}\frac{\gamma_N}{n_N}.
\]

\localheading{Restoring the exact zero extension.}
It remains to pass from $b_{ij}^{\ind}$ to the exact zero extension
$\mathsf R_{N,r}^*b_{ij}^{\hc}$.  By
\eqref{prof:eq:restriction-adjoint},
\begin{equation}
 \mathsf R_{N,r}^*b_{ij}^{\hc}
 =\theta_{N,r}^{-1}
  \one_{\Omega_{N,r}^{\ord}}b_{ij}^{\ind}
 =b_{ij}^{\ind}+\delta b_{ij},
 \quad\text{where~ }
 \delta b_{ij}
 :=(\theta_{N,r}^{-1}-1)b_{ij}^{\ind}
  -\theta_{N,r}^{-1}
   \one_{\mathfrak D_{N,r}^{\mathrm{coll}}}b_{ij}^{\ind}.
 \label{prof:eq:fd-generator-defect-zero-extension-decomposition}
\end{equation}
The decomposition
\eqref{prof:eq:fd-generator-defect-zero-extension-decomposition}
exhausts every collision stratum, including collisions between two of the
remaining $r-2$ coordinates, collisions between a remaining coordinate and
either coordinate of the selected pair, and all multiple intersections of
such diagonals.

As \eqref{prof:eq:fd-unrestricted-pair-source} shows,
$b_{ij}^{\ind}$ is supported on
$\bigcup_{\boldsymbol\delta\in\mathcal D_D}\mathfrak C_{ij,\boldsymbol\delta}$.
This union has product probability $O_D(n_N^{-1})$, and the squared
character difference is $O(\lambda_{\mathrm{pair}})$.  Therefore
\[
 \|b_{ij}^{\ind}\|_{\ind}^2
 \le C_D\frac{\lambda_{\mathrm{pair}}}{n_N}
 \le C_{D,K_+}\frac{\gamma_N}{n_N}.
\]
Since $|\theta_{N,r}^{-1}-1|\le C_rn_N^{-1}$,
\[
 \|(\theta_{N,r}^{-1}-1)b_{ij}^{\ind}\|_{\ind}^2
 \le C_{D,r,K_+}\gamma_Nn_N^{-3}.
\]
Conditional on the selected-pair contact, any additional coordinate equality
has probability $O_r(n_N^{-1})$.  A union bound over the finitely many extra
pair diagonals gives
\[
 \|\one_{\mathfrak D_{N,r}^{\mathrm{coll}}}
   b_{ij}^{\ind}\|_{\ind}^2
 \le C_{D,r,K_+}\gamma_Nn_N^{-2}.
\]
Together with
\eqref{prof:eq:fd-generator-defect-zero-extension-decomposition},
we deduce that
\[
 \|\delta b_{ij}\|_{\ind}^2
 \le C_{D,r,K_+}\gamma_Nn_N^{-2}.
\]
The crude bound
$(\mathcal L_{N,r}^{\ind}+\gamma_N)^{-1}\le\gamma_N^{-1}I$
then gives
\[
 \|\delta b_{ij}\|_{-1,\ind,\gamma_N}^2
 \le C_{D,r,K_+}n_N^{-2}
 \le C_{D,r,K_+}\frac{\gamma_N}{n_N},
\]
where the last inequality uses $D\ge2$.  The triangle inequality for
$\|\cdot\|_{-1,\ind,\gamma_N}$ and
\eqref{prof:eq:fd-generator-defect-zero-extension-decomposition} now give
\[
 \|\mathsf R_{N,r}^*b_{ij}^{\hc}\|_{-1,\ind,\gamma_N}^2
 \le C_{D,r,K_+}\frac{\gamma_N}{n_N}.
\]
Summing these bounds over the fixed basis and the fixed set of pairs, and
applying Cauchy--Schwarz to the coefficient vector, bounds
$\|\mathsf R_{N,r}^*b_{u,1}\|_{-1,\ind,\gamma_N}^2$.
The transfer estimate
\eqref{prof:eq:fd-generator-defect-transfer} proves
\eqref{prof:eq:fd-generator-defect-resolvent}.

\localheading{Projection off the comparison range.}
Finally we prove the analog
\eqref{prof:eq:fd-generator-defect-resolvent-analog}.
We first control $\Pi_{N,r,K_+}^{\mathrm{cmp}}b_u$.
For a unit vector
$\varphi\in\mathscr V_{N,r}^{\ind}(K_+)$, the definition
\eqref{prof:eq:fd-generator-defect-definition} and self-adjointness of the
two generators give
\begin{align*}
 \langle b_u,\mathsf J_{N,r,K_+}\varphi\rangle_{\hc}
 &=
 \langle
   \mathsf J_{N,r,K_+}\mathcal L_{N,r}^{\ind}u,
   \mathsf J_{N,r,K_+}\varphi
 \rangle_{\hc}
 -
 \mathcal E_{N,r}^{\hc}
 \bigl(
   \mathsf J_{N,r,K_+}u,
   \mathsf J_{N,r,K_+}\varphi
 \bigr)\\
 &=
 \bigl[
 \langle
   \mathsf J_{N,r,K_+}\mathcal L_{N,r}^{\ind}u,
   \mathsf J_{N,r,K_+}\varphi
 \rangle_{\hc}
 -
 \langle\mathcal L_{N,r}^{\ind}u,\varphi\rangle_{\ind}
 \bigr]\\
 &\quad-
 \bigl[
 \mathcal E_{N,r}^{\hc}
 \bigl(
   \mathsf J_{N,r,K_+}u,
   \mathsf J_{N,r,K_+}\varphi
 \bigr)
 -
 \mathcal E_{N,r}^{\ind}(u,\varphi)
 \bigr].
\end{align*}
The difference of inner products in the first bracket is controlled by
\eqref{prof:eq:fd-gram-error}, using
$\|\mathcal L_{N,r}^{\ind}u\|
 \le K_+\gamma_N\|u\|$; the difference of Dirichlet forms in the second
bracket is controlled by \eqref{prof:eq:fd-form-error}.  Hence
\[
 \|\mathsf J_{N,r,K_+}^*b_u\|_{\ind}
 =
 \sup_{\substack{
        \varphi\in\mathscr V_{N,r}^{\ind}(K_+)\\
        \|\varphi\|_{\ind}=1}}
 |\langle b_u,\mathsf J_{N,r,K_+}\varphi\rangle_{\hc}|
 \le
 C_{D,r,K_+}\frac{\gamma_N}{n_N}\|u\|_{\ind}.
\]
Using the definition \eqref{prof:eq:fd-comparison-projections} of $ \Pi_{N,r,K_+}^{\mathrm{cmp}}$
and the uniform bounds \eqref{prof:eq:fd-Gram-uniform-bounds}, we obtain
\begin{equation}
 \|\Pi_{N,r,K_+}^{\mathrm{cmp}}b_u\|_{\hc}
 \le C_{D,r,K_+}\frac{\gamma_N}{n_N}\|u\|_{\ind}.
 \label{prof:eq:fd-projected-defect-norm}
\end{equation}
Finally,
$(\mathcal L_{N,r}^{\hc}+\gamma_N)^{-1}\le\gamma_N^{-1}I$,
so \eqref{prof:eq:fd-projected-defect-norm} gives
\[
 \|\Pi_{N,r,K_+}^{\mathrm{cmp}}b_u\|_{-1,\hc,\gamma_N}^2
 \le
 C\gamma_Nn_N^{-2}\|u\|_{\ind}^2.
\]
Hence
\[
 \|b_u-\Pi_{N,r,K_+}^{\mathrm{cmp}}b_u\|_{-1,\hc,\gamma_N}
 \le
 \|b_u\|_{-1,\hc,\gamma_N}
 +
 \gamma_N^{-1/2}
 \|\Pi_{N,r,K_+}^{\mathrm{cmp}}b_u\|_{\hc},
\]
and the right-hand side is
$O(\sqrt{\gamma_N/n_N})\|u\|_{\ind}$ by \eqref{prof:eq:fd-generator-defect-resolvent} and \eqref{prof:eq:fd-projected-defect-norm}.
This proves
\eqref{prof:eq:fd-generator-defect-resolvent-analog}.
\end{proof}

The high-block estimate \eqref{prof:eq:fd-high-block-gap}, the normalized
comparison estimate \eqref{prof:eq:fd-normalized-compressed-form}, and the two
generator defect resolvent bounds from
\cref{prof:lem:fd-generator-defect-resolvent}, namely
\eqref{prof:eq:fd-generator-defect-resolvent} and \eqref{prof:eq:fd-generator-defect-resolvent-analog},
now feed three direct consequences.  We first apply the Schur estimate at the
single energy relevant to the spectral bottom.  We then split the tensor source
using a finite one-particle spectral cutoff: the truncated contribution is
compared by Duhamel's formula, while the omitted contribution is handled by a
one-sided estimate on low hard-core eigenvectors.
For the Schur step, we will need to bound the inverse of a compressed positive
operator using the full inverse already controlled by the resolvent estimates.
The following elementary inequality supplies exactly that comparison.

\begin{lemma}[Inverse of a compression]
\label{prof:lem:fd-compression-inverse}
Let $T>0$ be positive definite on a finite-dimensional Hilbert space, and
let $Q$ be an orthogonal projection.  On $\Ran Q$ we have the operator inequality
\begin{equation}
 (QTQ|_{\Ran Q})^{-1}\le QT^{-1}Q.
 \label{prof:eq:fd-compression-inverse}
\end{equation}
\end{lemma}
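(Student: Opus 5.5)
The plan is to reduce the inequality to the block (Schur complement) formula for the inverse of a positive definite operator. Decompose the Hilbert space as $\Ran Q\oplus\Ran Q^\perp$ with $Q^\perp:=I-Q$. In this decomposition write
\[
 T=\begin{pmatrix}A&B\\B^*&C\end{pmatrix},
 \qquad A=QTQ|_{\Ran Q},\quad C=Q^\perp TQ^\perp|_{\Ran Q^\perp},\quad B=QTQ^\perp|_{\Ran Q^\perp}.
\]
Since $T>0$, both diagonal blocks $A$ and $C$ are positive definite, hence invertible. The Schur complement $A-BC^{-1}B^*$ is also positive definite. The block inversion formula identifies the upper-left block of $T^{-1}$ as
\[
 QT^{-1}Q|_{\Ran Q}=(A-BC^{-1}B^*)^{-1}.
\]

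Next, $BC^{-1}B^*\ge0$ because $C^{-1}>0$, so $0<A-BC^{-1}B^*\le A$. Operator monotonicity of inversion on positive definite operators, namely $0<X\le Y\Rightarrow Y^{-1}\le X^{-1}$, then gives
\[
 A^{-1}\le(A-BC^{-1}B^*)^{-1}=QT^{-1}Q|_{\Ran Q}.
\]
This is exactly \eqref{prof:eq:fd-compression-inverse}.

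As a variational alternative, one can verify the same bound without the block formula. For $h\in\Ran Q$, use $\langle h,X^{-1}h\rangle=\sup_f\{2\Re\langle h,f\rangle-\langle f,Xf\rangle\}$, which is the same convex duality used in the proof of \cref{prof:lem:fd-schur-resolvent}. With $X=A$, the supremum for $A^{-1}$ runs over $f\in\Ran Q$. With $X=T$, the supremum runs over all $f$. Moreover, for $f\in\Ran Q$ we have $\langle h,f\rangle$ and $\langle f,Tf\rangle=\langle f,Af\rangle$, so the functional restricted to $\Ran Q$ agrees with the one for $A$. Hence $\langle h,A^{-1}h\rangle\le\langle h,T^{-1}h\rangle=\langle h,QT^{-1}Qh\rangle$.

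I expect no real obstacle. The only points to state carefully are the positivity of the Schur complement and the order-reversing property of inversion, both of which are standard in finite dimensions. I would present the variational version, since it is shortest and matches the paper's earlier duality argument.
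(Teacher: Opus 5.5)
Your proposal is correct, and the variational version you plan to present is the paper's proof: write $\langle y,(QTQ|_{\Ran Q})^{-1}y\rangle$ as a supremum over $x\in\Ran Q$, then enlarge the supremum to the whole space to get $\langle y,T^{-1}y\rangle=\langle y,QT^{-1}Qy\rangle$. Your block-inversion route is also valid but longer: the upper-left block of $T^{-1}$ is the inverse of the Schur complement $A-BC^{-1}B^*\le A$, and inversion reverses order.
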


\begin{proof}
For $y\in\Ran Q$, the variational formula for the inverse gives
\[
 \langle y,(QTQ|_{\Ran Q})^{-1}y\rangle
 =\sup_{x\in\Ran Q}
 \{2\operatorname{Re}\langle y,x\rangle-\langle x,Tx\rangle\}.
\]
Enlarging the supremum to the full Hilbert space gives
$\langle y,T^{-1}y\rangle$, which proves
\eqref{prof:eq:fd-compression-inverse}.
\end{proof}

For \cref{prof:prop:fd-top-gap,prof:lem:fd-low-source,prof:lem:fd-one-sided-eigenvector,prof:lem:fd-high-source}
below, abbreviate
the two top sector generators by
\[
 L_{\ind}:=\mathcal L_{N,r}^{\ind}\big|_{\mathcal T_{N,r}^{\ind}},
 \qquad
 L_{\hc}:=\mathcal L_{N,r}^{\hc}\big|_{\mathcal T_{N,r}^{\hc}}.
\]
Because every coordinate in $\mathcal T_{N,r}^{\ind}$ lies in the mean-zero
one-particle sector, tensorization of the one-particle spectral gap gives the
basic product-top bound
\begin{equation}
 L_{\ind}\ge r\gamma_N I
 \quad\text{on }\mathcal T_{N,r}^{\ind}.
 \label{prof:eq:fd-product-top-gap}
\end{equation}
Indeed, $L_{\ind}$ is the sum of the $r$ commuting one-particle Laplacians,
and each contributes at least $\gamma_N$ on its mean-zero factor.

We record the shifted-compression consequence used twice below.  Let $H\ge0$, and let $Q$ be
an orthogonal projection such that
\[
 QL_{\hc}Q\ge(H+2)\gamma_N Q
 \quad\text{on }\Ran Q,
\]
and let $z\le H\gamma_N$.  Spectral calculus for the compressed operator gives
\[
 Q(L_{\hc}-z)Q
 \ge \frac{2}{H+3}Q(L_{\hc}+\gamma_N)Q
 \quad\text{on }\Ran Q.
\]
Inverting this inequality and then applying
\cref{prof:lem:fd-compression-inverse} to $T=L_{\hc}+\gamma_N$ yields
\begin{equation}
 \bigl(Q(L_{\hc}-z)Q\big|_{\Ran Q}\bigr)^{-1}
 \le \frac{H+3}{2}
 Q(L_{\hc}+\gamma_N)^{-1}Q
 \quad\text{on }\Ran Q.
 \label{prof:eq:fd-shifted-compression-resolvent}
\end{equation}

\begin{proposition}[Asymptotically sharp spectral bottom of the ordered chaos sector]
\label{prof:prop:fd-top-gap}
For every fixed $r\in\mathbb N$, there is $C_{D,r}<\infty$ such that
\begin{equation}
 \inf\Spec\left(
 \mathcal L_{N,r}^{\hc}\big|_{\mathcal T_{N,r}^{\hc}}
 \right)
 \ge r\gamma_N-C_{D,r}\frac{\gamma_N}{n_N}
 \label{prof:eq:fd-top-gap}
\end{equation}
for all sufficiently large $N$.  Consequently, for every nonempty compact
$J\subset\mathbb R$,
\begin{equation}
 \sup_{s\in J}
 n_N^{r/2}
 \left\|
 e^{-t_N(s)\mathcal L_{N,r}^{\hc}}
 \big|_{\mathcal T_{N,r}^{\hc}}
 \right\|_{\mathrm{op}}
 \le C_{D,r,J}.
 \label{prof:eq:fd-top-damping}
\end{equation}
\end{proposition}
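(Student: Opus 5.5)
We will prove \eqref{prof:eq:fd-top-gap} by a Schur complement argument relative to the splitting $\mathcal T_{N,r}^{\hc}=\Ran\Pi^{\mathrm{cmp}}\oplus\Ran\Pi^{\mathrm{cmp},\perp}$, with $\Pi^{\mathrm{cmp}}:=\Pi_{N,r,K_+}^{\mathrm{cmp}}$ and $K_+=K_+(D,r,H_0)$ from \cref{prof:lem:fd-high-block}. The energy target is chosen as $H_0:=r$, so that every spectral parameter $z\le r\gamma_N$ satisfies $z\le H_0\gamma_N$. We write $L_{\hc}$ in block form with blocks $A=\Pi^{\mathrm{cmp}}L_{\hc}\Pi^{\mathrm{cmp}}$, $B=\Pi^{\mathrm{cmp}}L_{\hc}\Pi^{\mathrm{cmp},\perp}$, and $E=\Pi^{\mathrm{cmp},\perp}L_{\hc}\Pi^{\mathrm{cmp},\perp}$. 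By \eqref{prof:eq:fd-high-block-gap}, $E-z\ge 2\gamma_N$ for every $z\le r\gamma_N$, so $E-z$ is invertible. It follows that $L_{\hc}-z\ge0$ as soon as the Schur complement $A-z-B(E-z)^{-1}B^*$ is nonnegative on $\Ran\Pi^{\mathrm{cmp}}$. We apply this with $z=r\gamma_N-C\gamma_N/n_N$.

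\textbf{Step 1: the compressed block $A$.} By \eqref{prof:eq:fd-normalized-compressed-form}, $A$ agrees with $\widehat{\mathsf J}L_{\ind}\widehat{\mathsf J}^*$ up to an error $O(\gamma_N/n_N)$ in operator norm. Here $\widehat{\mathsf J}$ is the isometry from $\mathscr V_{N,r}^{\ind}(K_+)\subset\mathcal T_{N,r}^{\ind}$ onto $\Ran\Pi^{\mathrm{cmp}}$, so \eqref{prof:eq:fd-product-top-gap} gives
\[
 A\ge r\gamma_N-C\gamma_N/n_N \quad\text{on }\Ran\Pi^{\mathrm{cmp}}.
\]

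\textbf{Step 2: the off-diagonal block $B$.} This is the crux. For $u$ in the window, write $y=\widehat{\mathsf J}u$ with $u=\mathsf G^{-1/2}w$. Then
\[
 \Pi^{\mathrm{cmp},\perp}L_{\hc}y=-\Pi^{\mathrm{cmp},\perp}b_{w}\,,
\]
because $\mathsf J L_{\ind}w$ lies in the comparison range. This holds exactly, up to the harmless Gram factors in \eqref{prof:eq:fd-Gram-uniform-bounds}. Thus $B^*y$ is the projected generator defect. Next, \eqref{prof:eq:fd-shifted-compression-resolvent} with $Q=\Pi^{\mathrm{cmp},\perp}$ gives
\[
 \langle B^*y,(E-z)^{-1}B^*y\rangle\le C\,\|\Pi^{\mathrm{cmp},\perp}b_w\|_{-1,\hc,\gamma_N}^2.
\]
By \eqref{prof:eq:fd-generator-defect-resolvent-analog} the right-hand side is at most $C\frac{\gamma_N}{n_N}\|y\|^2$. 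Hence the Schur complement is bounded below by
\[
 \bigl(r\gamma_N-z-C'\gamma_N/n_N\bigr)\,I,
\]
which is nonnegative for the chosen $z$ once $C\ge C'$. The bound is uniform for all $z\le r\gamma_N$, so no self-consistency issue arises. This step is where I expect the main obstacle to lie, and two checks are needed there. The first is that $(E-z)^{-1}$ must be compared with the $\gamma_N$-shifted resolvent of the full $L_{\hc}$ and not merely the compressed one; \cref{prof:lem:fd-compression-inverse} supplies exactly this comparison. The second is that the first-order cancellation in $b_u$ must give the gain $\gamma_N/n_N$ rather than only $\gamma_N$. With both checks in place, positivity of $L_{\hc}-z$ yields \eqref{prof:eq:fd-top-gap} for $N$ large.

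\textbf{Step 3: the damping bound \eqref{prof:eq:fd-top-damping}.} By the spectral theorem,
\[
 \bigl\|e^{-t_N(s)L_{\hc}}\bigr\|\le \exp\bigl(-t_N(s)(r\gamma_N-C\gamma_N/n_N)\bigr).
\]
Since $t_N(s)\gamma_N=(\log n_N+s)/2$, this equals
\[
 n_N^{-r/2}e^{-rs/2}\exp\bigl(C(\log n_N+s)/(2n_N)\bigr).
\]
The last factor is bounded uniformly for $s\in J$, which gives the claim with $C_{D,r,J}$.
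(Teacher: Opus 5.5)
Your proposal is correct and follows the paper's proof essentially step for step. It uses the same Schur complement on $\Ran\Pi^{\mathrm{cmp}}\oplus\Ran\Pi^{\mathrm{cmp},\perp}$ with $H_0=r$, the compressed-form bound for the diagonal block, the shifted-compression resolvent bound \eqref{prof:eq:fd-shifted-compression-resolvent} together with the projected defect estimate \eqref{prof:eq:fd-generator-defect-resolvent-analog} for the off-diagonal correction, and the same damping computation. One cosmetic slip: the Gram factor should read $y=\widehat{\mathsf J}u=\mathsf J w$ with $w=\mathsf G^{-1/2}u$ (as written, $u=\mathsf G^{-1/2}w$ gives $y=\mathsf J\mathsf G^{-1}w$), and it is this form that makes $\Pi^{\mathrm{cmp},\perp}L_{\hc}y=-\Pi^{\mathrm{cmp},\perp}b_w$ hold exactly.
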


\begin{proof}
Apply \cref{prof:lem:fd-high-block} with target $r$, and let
$K_*=K_+(D,r,r)$.  We adopt the shorthands
$
 \Pi_*:=\Pi_{N,r,K_*}^{\mathrm{cmp}}
 $ and
 $
 \Pi_*^{\perp}:=\Pi_{N,r,K_*}^{\mathrm{cmp},\perp}
 $.
By \eqref{prof:eq:fd-normalized-compressed-form},
\begin{equation}
 \left\|\Pi_*L_{\hc}\Pi_*
 -\widehat{\mathsf J}_{N,r,K_*}L_{\ind}
  \widehat{\mathsf J}_{N,r,K_*}^*\right\|_{\mathrm{op}}
 \le C\frac{\gamma_N}{n_N}.
 \label{prof:eq:fd-direct-compressed-error}
\end{equation}
By \eqref{prof:eq:fd-product-top-gap} and \eqref{prof:eq:fd-direct-compressed-error}, the triangle inequality gives
\begin{equation}
 \Pi_*L_{\hc}\Pi_*
 \ge\left(r\gamma_N-C_1\frac{\gamma_N}{n_N}\right)
 \Pi_*.
 \label{prof:eq:fd-direct-P-gap}
\end{equation}
The high-block \cref{prof:lem:fd-high-block} gives
\begin{equation}
 \Pi_*^{\perp}L_{\hc}\Pi_*^{\perp}
 \ge(r+2)\gamma_N\Pi_*^{\perp}.
 \label{prof:eq:fd-direct-Q-gap}
\end{equation}

We now present the key Schur complement argument. Set
$
 z=r\gamma_N-C_*\frac{\gamma_N}{n_N}
 $,
where $C_*>C_1$ will be fixed below.  
Put
$
 \mathsf H_z:=L_{\hc}-z
 $,
and decompose the top sector orthogonally as
$
 \mathcal T_{N,r}^{\hc}=\Ran\Pi_*\oplus\Ran\Pi_*^{\perp}.
 $
Relative to this splitting,
\[
 \mathsf H_z=
 \begin{pmatrix}
  \Pi_*\mathsf H_z\Pi_* & \Pi_*L_{\hc}\Pi_*^{\perp}\\
  \Pi_*^{\perp}L_{\hc}\Pi_* & \Pi_*^{\perp}\mathsf H_z\Pi_*^{\perp}
 \end{pmatrix}.
\]
Note that the scalar shift does not appear in the off-diagonal blocks because
$\Pi_*\Pi_*^{\perp}=0$.  
We claim that the complementary block
$\Pi_*^{\perp}\mathsf H_z\Pi_*^{\perp}$ is positive.
Then the Schur complement criterion
reduces the desired inequality $\mathsf H_z\ge0$ to
\begin{equation}
 \Pi_*\mathsf H_z\Pi_*
 -\Pi_*L_{\hc}\Pi_*^{\perp}
  \bigl(\Pi_*^{\perp}\mathsf H_z\Pi_*^{\perp}
        \big|_{\Ran\Pi_*^{\perp}}\bigr)^{-1}
  \Pi_*^{\perp}L_{\hc}\Pi_*
 \ge0
 \quad\text{on }\Ran\Pi_*.
 \label{prof:eq:fd-direct-schur-target}
\end{equation}
We now verify positivity of the complementary block and then bound the
Schur correction in \eqref{prof:eq:fd-direct-schur-target}.

Since $z\le r\gamma_N$, the complementary gap
\eqref{prof:eq:fd-direct-Q-gap} and the shifted-compression estimate
\eqref{prof:eq:fd-shifted-compression-resolvent}, with
$H=r$ and $Q=\Pi_*^{\perp}$, show at once that
$\Pi_*^{\perp}\mathsf H_z\Pi_*^{\perp}$ is strictly positive and that
\begin{equation}
 \bigl(\Pi_*^{\perp}\mathsf H_z\Pi_*^{\perp}\big|_{\Ran\Pi_*^{\perp}}\bigr)^{-1}
 \le \frac{r+3}{2}
 \Pi_*^{\perp}(L_{\hc}+\gamma_N)^{-1}\Pi_*^{\perp}.
 \label{prof:eq:fd-direct-shift-inverse}
\end{equation}

Take $\psi\in\Ran\Pi_*$.  Since
$\Ran\Pi_*=\Ran\widehat{\mathsf J}_{N,r,K_*}$, choose
$u\in\mathscr V_{N,r}^{\ind}(K_*)$ such that
$\psi=\widehat{\mathsf J}_{N,r,K_*}u$.  By
\eqref{prof:eq:fd-Gram-normalization}, if
$v:=\mathsf G_{N,r,K_*}^{-1/2}u$, then
$
 \psi=\mathsf J_{N,r,K_*}v
 $.
The product spectral window $\mathscr V_{N,r}^{\ind}(K_*)$ is invariant under
$L_{\ind}$, so
$\mathsf J_{N,r,K_*}L_{\ind}v\in\Ran\Pi_*$.  Therefore the generator
defect $b_v$ from \eqref{prof:eq:fd-generator-defect-definition} satisfies
\[
 \Pi_*^{\perp}L_{\hc}\psi
 =-\Pi_*^{\perp}b_v.
\]
Applying \eqref{prof:eq:fd-direct-shift-inverse} to this vector and then
using \eqref{prof:eq:fd-generator-defect-resolvent-analog}, gives
\begin{align*}
 &\left\langle \Pi_*^{\perp}L_{\hc}\psi,
   \bigl(\Pi_*^{\perp}\mathsf H_z\Pi_*^{\perp}\big|_{\Ran\Pi_*^{\perp}}\bigr)^{-1}
   \Pi_*^{\perp}L_{\hc}\psi\right\rangle_{\hc}
 =\left\langle \Pi_*^{\perp}b_v,
   \bigl(\Pi_*^{\perp}\mathsf H_z\Pi_*^{\perp}\big|_{\Ran\Pi_*^{\perp}}\bigr)^{-1}
   \Pi_*^{\perp}b_v\right\rangle_{\hc}\\
 &\qquad \le \frac{r+3}{2}
 \left\langle \Pi_*^{\perp}b_v,
   \Pi_*^{\perp}(L_{\hc}+\gamma_N)^{-1}\Pi_*^{\perp}b_v
 \right\rangle_{\hc}
 \le C_2\frac{\gamma_N}{n_N}\|v\|_{\ind}^2.
\end{align*}
By \eqref{prof:eq:fd-Gram-uniform-bounds} and
\eqref{prof:eq:fd-Gram-isometry},
$\|v\|_{\ind}\le C\|\psi\|_{\hc}$.  Hence from the last display we obtain the operator inequality
\begin{equation}
 \Pi_*L_{\hc}\Pi_*^{\perp}
 \bigl(\Pi_*^{\perp}\mathsf H_z\Pi_*^{\perp}
       \big|_{\Ran\Pi_*^{\perp}}\bigr)^{-1}
 \Pi_*^{\perp}L_{\hc}\Pi_*
 \le C_3\frac{\gamma_N}{n_N}\Pi_*.
 \label{prof:eq:fd-direct-schur-error}
\end{equation}
On the other hand, \eqref{prof:eq:fd-direct-P-gap} gives
\[
 \Pi_*\mathsf H_z\Pi_*
 =\Pi_*(L_{\hc}-z)\Pi_*
 \ge(C_*-C_1)\frac{\gamma_N}{n_N}\Pi_*.
\]
Combining this with \eqref{prof:eq:fd-direct-schur-error}, we can bound the left-hand side
of \eqref{prof:eq:fd-direct-schur-target} below by
\[
 (C_*-C_1-C_3)\frac{\gamma_N}{n_N}\Pi_*.
\]
Choose $C_*>C_1+C_3$.  Then the Schur complement is nonnegative, while
the positivity supplied by \eqref{prof:eq:fd-shifted-compression-resolvent} shows that the complementary block
$\Pi_*^{\perp}\mathsf H_z\Pi_*^{\perp}$ is strictly positive.  The
Schur complement criterion therefore yields $\mathsf H_z\ge0$, proving
\eqref{prof:eq:fd-top-gap}.

Finally,
$\gamma_Nt_N(s)=(\log n_N+s)/2$, so \eqref{prof:eq:fd-top-gap} implies 
\begin{align*}
 n_N^{r/2}\|e^{-t_N(s)L_{\hc}}\|_{\mathrm{op}}
 \le n_N^{r/2}
 \exp\left\{-t_N(s)
   \left(r\gamma_N-C\frac{\gamma_N}{n_N}\right)\right\}
 =e^{-rs/2}
 \exp\left\{C\frac{\log n_N+s}{2n_N}\right\},
\end{align*}
which is bounded uniformly for $s\in J$.
\end{proof}

\subsection{One-particle spectral truncation and source-specific completion}

Recall that $F_N=F_N^{S_N}=\one_{S_N}-\rho_N$, and recall the smooth low-energy spectral multiplier $\Xi_{N,K}^{(1)}$ defined in \eqref{prof:eq:fd-one-particle-cutoff}.
For fixed $K$ and deterministic source $S_N$, put
$F_{N,K}=\Xi_{N,K}^{(1)}F_N$ and
\[
 \mathsf{Rem}_{N,r,K}:=F_N^{\otimes r}-F_{N,K}^{\otimes r}.
\]
The telescoping identity applied to the tensor product gives
\begin{equation}
 \mathsf{Rem}_{N,r,K}
 =\sum_{j=1}^r
 F_{N,K}^{\otimes(j-1)}\otimes(F_N-F_{N,K})
 \otimes F_N^{\otimes(r-j)}.
 \label{prof:eq:fd-source-tensor-telescoping}
\end{equation}
By the defining property of \eqref{prof:eq:fd-one-particle-cutoff},
$F_N-F_{N,K}$ has one-particle spectral support in
$(K\gamma_N,\infty)$.  In the $j$th summand of
\eqref{prof:eq:fd-source-tensor-telescoping}, this is precisely the $j$th
tensor factor.  Each of the other $r-1$ factors is a copy of either
$F_{N,K}$ or $F_N$.  Since $F_N$ has zero spatial mean and
$\Xi_{N,K}^{(1)}$ is a spectral multiplier, $F_{N,K}$ also has zero spatial
mean.  Thus each of these other factors has one-particle spectral support in
$[\gamma_N,\infty)$.  Therefore the $j$th summand in \eqref{prof:eq:fd-source-tensor-telescoping} 
 has total independent-walker energy strictly larger
than $((j-1)+K+(r-j))\gamma_N = (K+r-1)\gamma_N$, which implies that
\begin{equation}
 \one_{[0,(K+r-1)\gamma_N]}(\mathcal L_{N,r}^{\ind})
 \mathsf{Rem}_{N,r,K}=0.
 \label{prof:eq:fd-source-tensor-spectral-separation}
\end{equation}
Define the corresponding sources in $\mathcal T_{N,r}^{\hc}$ by
\begin{align}
 \mathscr S_{N,r,K}^{\mathrm{low}}
 :=\mathsf{Top}_{N,r}^{\ord}\mathsf R_{N,r}F_{N,K}^{\otimes r},
 \qquad
 \mathscr S_{N,r,K}^{\mathrm{high}}
 :=\mathsf{Top}_{N,r}^{\ord}\mathsf R_{N,r}\mathsf{Rem}_{N,r,K}.
 \label{prof:eq:low-high-sources}
\end{align}
Here ``low'' and ``high'' refer only to this source decomposition; they do not
assert membership of $\mathscr S_{N,r,K}^{\mathrm{low}}$ or
$\mathscr S_{N,r,K}^{\mathrm{high}}$ in low- or high-energy spectral subspaces
of $L_{\hc}$.

Recall the heat semigroups $\mathcal{P}^{\star}_{N,r}(t)=e^{-t\mathcal L_{N,r}^{\star}}$ for $\star\in \{\hc, \ind\}$. This notation was introduced in  \eqref{prof:eq:heat-semigroups}.

\begin{lemma}[Source-specific fixed-window Duhamel comparison]
\label{prof:lem:fd-low-source}
For fixed $r\in\mathbb N$, $K\ge0$, and nonempty compact $J\subset\mathbb R$,
\begin{align}
 \sup_{\substack{S_N\subset V_N,\ |S_N|=k_N\\ s\in J}}
 n_N^{r/2}
 \left\|
 \mathcal P_{N,r}^{\hc}(t_N(s))\mathscr S_{N,r,K}^{\mathrm{low}}
 -\mathsf{Top}_{N,r}^{\ord}\mathsf R_{N,r}
  \mathcal P_{N,r}^{\ind}(t_N(s))F_{N,K}^{\otimes r}
 \right\|_2
 \le
 C_{D,r,K,J}\frac{\log n_N}{\sqrt{n_N}}.
 \label{prof:eq:fd-low-source}
\end{align}
\end{lemma}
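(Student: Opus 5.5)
The plan is to set this up as a Duhamel comparison inside the fixed product window, so that \cref{prof:lem:fd-generator-defect-resolvent} and \cref{prof:prop:fd-top-gap} do all the work.

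\emph{Step 1: place the source in a fixed product window.} Since $\Xi_{N,K}^{(1)}$ vanishes above $(K+1)\gamma_N$ and $F_{N,K}$ has zero spatial mean, every tensor factor of $u:=F_{N,K}^{\otimes r}$ has one-particle spectral support in $[\gamma_N,(K+1)\gamma_N]$. Hence $u$ is a symmetric element of $\mathscr V_{N,r}^{\ind}(K_+)$ with $K_+:=r(K+1)$. Its norm is uniformly bounded:
\[
 \|u\|_{\ind}=\|F_{N,K}\|_{2,\mathrm{av}}^r\le\|F_N\|_{2,\mathrm{av}}^r\le 2^{-r},
\]
uniformly over sources. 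The window is invariant under $\mathcal P_{N,r}^{\ind}$, so $u(\tau):=\mathcal P_{N,r}^{\ind}(\tau)u$ stays in $\mathscr V_{N,r}^{\ind}(K_+)$. By \eqref{prof:eq:fd-product-top-gap}, $\|u(\tau)\|_{\ind}\le e^{-r\gamma_N\tau}\|u\|_{\ind}$. With the notation \eqref{prof:eq:JNrK}, the two terms in \eqref{prof:eq:fd-low-source} are $\mathcal P_{N,r}^{\hc}(t)\mathsf J_{N,r,K_+}u$ and $\mathsf J_{N,r,K_+}u(t)$.

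\emph{Step 2: Duhamel formula.} Differentiating $\tau\mapsto\mathcal P_{N,r}^{\hc}(t-\tau)\mathsf J_{N,r,K_+}u(\tau)$ and using the defect \eqref{prof:eq:fd-generator-defect-definition} gives
\[
 \mathcal P_{N,r}^{\hc}(t)\mathsf J_{N,r,K_+}u-\mathsf J_{N,r,K_+}u(t)
 =\int_0^t\mathcal P_{N,r}^{\hc}(t-\tau)\,b_{u(\tau)}\dd\tau .
\]
Each $b_{u(\tau)}$ lies in the top sector $\mathcal T_{N,r}^{\hc}$: $\mathsf J_{N,r,K_+}$ maps into it, and it is invariant under $\mathcal L_{N,r}^{\hc}$ by \eqref{prof:eq:HLcommute}. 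So only the spectrum of $L_{\hc}$ on $\mathcal T_{N,r}^{\hc}$ matters.

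\emph{Step 3: convert the resolvent bound into $L^2$ decay.} For $b\in\mathcal T_{N,r}^{\hc}$, spectral calculus gives
\[
 \|e^{-\sigma L_{\hc}}b\|_{\hc}^2\le\sup_{\lambda\ge\lambda_0}(\lambda+\gamma_N)e^{-2\sigma\lambda}\,\|b\|_{-1,\hc,\gamma_N}^2,
 \qquad \lambda_0:=r\gamma_N-C\gamma_N/n_N,
\]
where $\lambda_0$ is the bottom supplied by \eqref{prof:eq:fd-top-gap}. The supremum is at most $C_r(\sigma^{-1}+\gamma_N)e^{-2\sigma\lambda_0}$. Combining this with \eqref{prof:eq:fd-generator-defect-resolvent} and Step 1 yields
\[
 \|\mathcal P_{N,r}^{\hc}(t-\tau)b_{u(\tau)}\|_{\hc}\le C\sqrt{\gamma_N/n_N}\,\bigl((t-\tau)^{-1/2}+\gamma_N^{1/2}\bigr)\,e^{-(t-\tau)\lambda_0}e^{-r\gamma_N\tau}.
\]
Both exponentials combine into at most $e^{-rt\gamma_N}e^{Ct\gamma_N/n_N}$. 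Integrating in $\tau$ bounds the difference by
\[
 C\sqrt{\gamma_N/n_N}\,\bigl(\sqrt t+\gamma_N^{1/2}t\bigr)\,e^{-r\gamma_Nt}e^{Ct\gamma_N/n_N}.
\]
At $t=t_N(s)$ we have $\gamma_Nt\asymp\log n_N$. So $\sqrt{\gamma_N}\bigl(\sqrt t+\gamma_N^{1/2}t\bigr)\le C_J\log n_N$, the factor $e^{Ct\gamma_N/n_N}$ is bounded, and $n_N^{r/2}e^{-r\gamma_Nt_N(s)}=e^{-rs/2}$. This gives exactly $C_{D,r,K,J}\log n_N/\sqrt{n_N}$. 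The bound is uniform in $S_N$ because every constant depends only on $D,r,K_+$ and the bound on $\|u\|_{\ind}$.

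\emph{Main obstacle.} The delicate point is Step 3. A crude estimate $\|b\|_{\hc}$ is of order $\sqrt{\gamma_N/n_N}$ times only $N^{0}$ at best, and it loses the first-order cancellation. The proof therefore has to use the $H^{-1}$ bound of \cref{prof:lem:fd-generator-defect-resolvent}, and convert it through the $(t-\tau)^{-1/2}$ smoothing of the hard-core semigroup, which is integrable. It also has to keep the exponential rate at $r\gamma_N$ up to the $O(\gamma_N/n_N)$ loss from \cref{prof:prop:fd-top-gap}. Any weaker spectral bottom would destroy the $n_N^{r/2}$ normalization. Two further checks are needed: that the constant in \eqref{prof:eq:fd-generator-defect-resolvent} is uniform over the whole window (it is, since the lemma is stated for all $u\in\mathscr V_{N,r}^{\ind}(K_+)$), and that $\mathsf{Top}_{N,r}^{\ord}\mathsf R_{N,r}u(t)=\mathsf J_{N,r,K_+}u(t)$.
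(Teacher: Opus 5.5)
Your proposal is correct and follows the paper's proof essentially step for step. It uses the same interpolation $\tau\mapsto\mathcal P_{N,r}^{\hc}(t-\tau)\mathsf J_{N,r,K_+}u(\tau)$ with $K_+=r(K+1)$ and the same Duhamel identity in terms of the defect $b_{u(\tau)}$, and it combines the same three inputs: the resolvent bound \eqref{prof:eq:fd-generator-defect-resolvent}, the product-top decay $e^{-r\gamma_N\tau}$, and the sharp bottom \eqref{prof:eq:fd-top-gap}. The only difference is cosmetic: you bound $\|e^{-\sigma L_{\hc}}b\|^2$ directly by spectral calculus, whereas the paper bounds the operator norm $\|e^{-\sigma L_{\hc}}(L_{\hc}+\gamma_N)^{1/2}\|_{\mathrm{op}}$, and the two are equivalent.
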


\begin{proof}
Put $u=F_{N,K}^{\otimes r}$.  The two terms in \eqref{prof:eq:fd-low-source} are the endpoints of
one interpolation.  For $0\le a\le t$, set
\[
 \Upsilon(a):=e^{-(t-a)L_{\hc}}
 \mathsf{Top}_{N,r}^{\ord}\mathsf R_{N,r}e^{-aL_{\ind}}u.
\]
By \eqref{prof:eq:low-high-sources},
\[
 \Upsilon(0)=e^{-tL_{\hc}}\mathscr S_{N,r,K}^{\mathrm{low}},
 \qquad
 \Upsilon(t)=\mathsf{Top}_{N,r}^{\ord}\mathsf R_{N,r}e^{-tL_{\ind}}u.
\]
Differentiating gives
\[
 \Upsilon'(a)
 =e^{-(t-a)L_{\hc}}
 \bigl(L_{\hc}\mathsf{Top}_{N,r}^{\ord}\mathsf R_{N,r}
       -\mathsf{Top}_{N,r}^{\ord}\mathsf R_{N,r}L_{\ind}\bigr)
 e^{-aL_{\ind}}u.
\]
Now \eqref{prof:eq:fd-one-particle-cutoff} implies that $u$ has total
independent-walker spectral support below $r(K+1)\gamma_N$, and this support
is preserved by $e^{-aL_{\ind}}$.  Hence
$e^{-aL_{\ind}}u\in\mathscr V_{N,r}^{\ind}(r(K+1))$, so by
\eqref{prof:eq:JNrK} and using the generator intertwining defect notation \eqref{prof:eq:fd-generator-defect-definition}, we can write
\[
 \Upsilon'(a)=-e^{-(t-a)L_{\hc}}b_{e^{-aL_{\ind}}u}.
\]
Integrating from $0$ to $t$ gives the Duhamel identity
\begin{equation}
 e^{-tL_{\hc}}\mathscr S_{N,r,K}^{\mathrm{low}}
 -\mathsf{Top}_{N,r}^{\ord}\mathsf R_{N,r}e^{-tL_{\ind}}u
 =\int_0^t e^{-(t-a)L_{\hc}}b_{e^{-aL_{\ind}}u}\,\dd a.
 \label{prof:eq:fd-direct-duhamel}
\end{equation}

We now estimate the Duhamel integrand in
\eqref{prof:eq:fd-direct-duhamel}.
 By the definition \eqref{prof:eq:fd-shifted-resolvent-norm},
\[
 \|(L_{\hc}+\gamma_N)^{-1/2}b_{e^{-aL_{\ind}}u}\|_{\hc}
 =\|b_{e^{-aL_{\ind}}u}\|_{-1,\hc,\gamma_N}.
\]
Since $e^{-aL_{\ind}}u\in\mathscr V_{N,r}^{\ind}(r(K+1))$,
\eqref{prof:eq:fd-product-top-gap} and \eqref{prof:eq:fd-generator-defect-resolvent} gives
\begin{equation}
\|b_{e^{-aL_{\ind}}u}\|_{-1,\hc,\gamma_N}
 \le C\sqrt{\frac{\gamma_N}{n_N}}
 e^{-r\gamma_Na}\|u\|_{\ind}.
 \label{prof:eq:fd-direct-defect-norm}
\end{equation}
Recall that $L_{\hc} \ge \lambda_0:= r\gamma_N-C\frac{\gamma_N}{n_N}$ from
\cref{prof:prop:fd-top-gap}.  By the spectral theorem, for $\tau>0$,
\[
 \|e^{-\tau L_{\hc}}(L_{\hc}+\gamma_N)^{1/2}\|_{\mathrm{op}}
 =\sup_{\lambda\in\Spec(L_{\hc})}
   e^{-\tau\lambda}\sqrt{\lambda+\gamma_N}
 \le \sup_{\lambda\ge\lambda_0}
   e^{-\tau\lambda}\sqrt{\lambda+\gamma_N}.
\]
Writing $x=\lambda-\lambda_0\ge0$ and using
$\sqrt{x+\lambda_0+\gamma_N}\le\sqrt{x}+\sqrt{\lambda_0+\gamma_N}$,
we obtain
\[
 \sup_{\lambda\ge\lambda_0}e^{-\tau\lambda}\sqrt{\lambda+\gamma_N}
 \le e^{-\lambda_0\tau}
 \left\{
   \sup_{x\ge0}e^{-\tau x}\sqrt{x}
   +\sqrt{\lambda_0+\gamma_N}
 \right\}.
\]
The first supremum is $O(\tau^{-1/2})$, while
$\lambda_0+\gamma_N\le C_r\gamma_N$ for all sufficiently large $N$.
Hence
\begin{equation}
 \|e^{-\tau L_{\hc}}(L_{\hc}+\gamma_N)^{1/2}\|_{\mathrm{op}}
 \le C_r e^{-\lambda_0\tau}
 \left(\tau^{-1/2}+\sqrt{\gamma_N}\right).
 \label{prof:eq:fd-direct-smoothing}
\end{equation}

Put $\tau=t-a$.  Since
$L_{\hc}+\gamma_N$ is strictly positive,
\[
 e^{-\tau L_{\hc}}b_{e^{-aL_{\ind}}u}
 =\left(e^{-\tau L_{\hc}}(L_{\hc}+\gamma_N)^{1/2}\right)
  \left( (L_{\hc}+\gamma_N)^{-1/2}b_{e^{-aL_{\ind}}u}\right).
\]
Therefore
\begin{align*}
 \|e^{-\tau L_{\hc}}b_{e^{-aL_{\ind}}u}\|_{\hc}
 &\le
 \|e^{-\tau L_{\hc}}(L_{\hc}+\gamma_N)^{1/2}\|_{\mathrm{op}}
 \|(L_{\hc}+\gamma_N)^{-1/2}b_{e^{-aL_{\ind}}u}\|_{\hc}\\
 &\le
 C\sqrt{\frac{\gamma_N}{n_N}}
 e^{-\lambda_0\tau}e^{-r\gamma_Na}
 \left(\tau^{-1/2}+\sqrt{\gamma_N}\right)\|u\|_{\ind},
\end{align*}
where we used \eqref{prof:eq:fd-direct-smoothing} and \eqref{prof:eq:fd-direct-defect-norm} in the last inequality.
Because $a=t-\tau$,
\[
 e^{-\lambda_0\tau}e^{-r\gamma_Na}
 =e^{-r\gamma_Nt}e^{(r\gamma_N-\lambda_0)\tau}
 \le e^{-r\gamma_Nt}e^{C\gamma_N\tau/n_N}.
\]
Applying the triangle inequality to the integral in
\eqref{prof:eq:fd-direct-duhamel} now gives
\begin{align*}
 \|e^{-tL_{\hc}}\mathscr S_{N,r,K}^{\mathrm{low}}
 -\mathsf{Top}_{N,r}^{\ord}\mathsf R_{N,r}e^{-tL_{\ind}}u\|_{\hc}
 &\le C\sqrt{\frac{\gamma_N}{n_N}}e^{-r\gamma_Nt}\|u\|_{\ind}
 \int_0^t e^{C\gamma_N\tau/n_N}
 \left(\tau^{-1/2}+\sqrt{\gamma_N}\right)\,\dd\tau.
\end{align*}
At $t=t_N(s)$ with $s\in J$, one has
$\gamma_Nt_N(s)=(\log n_N+s)/2$, so
$e^{C\gamma_N\tau/n_N}$ is uniformly bounded for
$0\le\tau\le t_N(s)$ and $s\in J$.  Moreover,
\[
 \int_0^{t_N(s)}
 \left(\tau^{-1/2}+\sqrt{\gamma_N}\right)\,\dd\tau
 =2\sqrt{t_N(s)}+\sqrt{\gamma_N}\,t_N(s)
 \le C_J\frac{\log n_N}{\sqrt{\gamma_N}}.
\]
Consequently,
\[
 \|e^{-t_N(s)L_{\hc}}\mathscr S_{N,r,K}^{\mathrm{low}}
 -\mathsf{Top}_{N,r}^{\ord}\mathsf R_{N,r}e^{-t_N(s)L_{\ind}}u\|_{\hc}
 \le C_{D,r,K,J}
 \frac{\log n_N}{\sqrt{n_N}}
 e^{-r\gamma_Nt_N(s)}\|u\|_{\ind}.
\]
Finally we multiply both sides of the last display by $n_N^{r/2}$.
On the one hand, $n_N^{r/2}e^{-r\gamma_Nt_N(s)}=e^{-rs/2}$ is uniformly bounded for $s\in J$.  
On the other hand,
$\|u\|_{\ind}=\|F_{N,K}\|_{2,\mathrm{av}}^r$ is uniformly bounded because
$\Xi_{N,K}^{(1)}$ is an $L^2$ contraction and
$\|F_N\|_{2,\mathrm{av}}^2=\rho_N(1-\rho_N)\le1/4$.
These uniform bounds can be absorbed into $C_{D,r,K,J}$, thereby yielding the claimed inequality \eqref{prof:eq:fd-low-source}.
\end{proof}

The high-source estimate for $e^{-tL_{\hc}}$ requires only one-sided control of
low-energy eigenvectors of $L_{\hc}$, which is the content of the next two lemmas,
\cref{prof:lem:fd-one-sided-eigenvector,prof:lem:fd-high-source}.
For $H>r$, we let
$K_H:=K_+(D,r,H)$ be the cutoff furnished by
\cref{prof:lem:fd-high-block}, and use the shorthands
\[
 \Pi_H^{\mathrm{cmp}}:=\Pi_{N,r,K_H}^{\mathrm{cmp}},
 \qquad
 \Pi_H^{\mathrm{cmp},\perp}:=\Pi_{N,r,K_H}^{\mathrm{cmp},\perp}.
\]

\begin{lemma}[One-sided low-eigenvector leakage]
\label{prof:lem:fd-one-sided-eigenvector}
Fix $H>r$, with $K_H$, $\Pi_H^{\mathrm{cmp}}$, and $\Pi_H^{\mathrm{cmp},\perp}$ as just defined.
For all sufficiently large $N$, if $L_{\hc}\psi=\lambda\psi$,
$0\le\lambda\le H\gamma_N$, and
\[
 \psi=\psi_{\mathrm{cmp}}+\psi_{\perp},
 \quad\text{where }~
 \psi_{\mathrm{cmp}}=\Pi_H^{\mathrm{cmp}}\psi
 ~\text{ and }~
 \psi_{\perp}=\Pi_H^{\mathrm{cmp},\perp}\psi,
\]
then
\begin{equation}
 \|\psi_{\perp}\|_{\hc}
 \le C_{D,r,H}n_N^{-1/2}\|\psi_{\mathrm{cmp}}\|_{\hc}.
 \label{prof:eq:fd-one-sided-eigenvector}
\end{equation}
Consequently, $\Pi_H^{\mathrm{cmp}}$ is injective on
$\one_{[0,H\gamma_N]}(L_{\hc})\mathcal T_{N,r}^{\hc}$ and
\begin{equation}
 \operatorname{rank}\one_{[0,H\gamma_N]}(L_{\hc})
 \le\dim\mathscr V_{N,r}^{\ind}(K_H)=O_{D,r,H}(1).
 \label{prof:eq:fd-one-sided-rank}
\end{equation}
\end{lemma}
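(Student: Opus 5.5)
Project the eigenvalue equation onto the complementary range and invert the compressed operator there, which is bounded below by the high-block gap.

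Write $Q:=\Pi_H^{\mathrm{cmp},\perp}$ and $P:=\Pi_H^{\mathrm{cmp}}$. Projecting $(L_{\hc}-\lambda)\psi=0$ with $Q$ gives
\[
 Q(L_{\hc}-\lambda)Q\,\psi_\perp=-QL_{\hc}P\,\psi_{\mathrm{cmp}}.
\]
\cref{prof:lem:fd-high-block} with energy target $H$ gives $QL_{\hc}Q\ge(H+2)\gamma_NQ$. Since $\lambda\le H\gamma_N$, the compressed operator $Q(L_{\hc}-\lambda)Q$ is invertible on $\Ran Q$. Its inverse obeys \eqref{prof:eq:fd-shifted-compression-resolvent} with $z=\lambda$:
\[
 \bigl(Q(L_{\hc}-\lambda)Q|_{\Ran Q}\bigr)^{-1}\le\tfrac{H+3}{2}\,Q(L_{\hc}+\gamma_N)^{-1}Q.
\]
The operator $Q(L_{\hc}-\lambda)Q$ is also at least $2\gamma_N$ on $\Ran Q$. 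Combining the two facts through the square root of the inverse, I expect
\[
 \|\psi_\perp\|_{\hc}^2\le C_H\gamma_N^{-1}\,\|QL_{\hc}P\psi_{\mathrm{cmp}}\|_{-1,\hc,\gamma_N}^2.
\]
Concretely, with $A:=Q(L_{\hc}-\lambda)Q$ and $w:=QL_{\hc}P\psi_{\mathrm{cmp}}$, one has $\|A^{-1}w\|^2\le\|A^{-1}\|\langle w,A^{-1}w\rangle\le(2\gamma_N)^{-1}\tfrac{H+3}{2}\|w\|_{-1,\hc,\gamma_N}^2$.

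Next, rewrite the source as a generator defect, exactly as in the proof of \cref{prof:prop:fd-top-gap}. Write $\psi_{\mathrm{cmp}}=\mathsf J_{N,r,K_H}v$ with $v=\mathsf G^{-1/2}u$ for some $u\in\mathscr V_{N,r}^{\ind}(K_H)$. Then $\|v\|_{\ind}\le C\|\psi_{\mathrm{cmp}}\|_{\hc}$ by \eqref{prof:eq:fd-Gram-uniform-bounds}. The window is invariant under $L_{\ind}$, so $\mathsf J L_{\ind}v\in\Ran P$. This gives
\[
 QL_{\hc}\psi_{\mathrm{cmp}}=-Qb_v=-(b_v-\Pi_H^{\mathrm{cmp}}b_v).
\]
Now \eqref{prof:eq:fd-generator-defect-resolvent-analog} bounds its squared resolvent norm by $C\gamma_N n_N^{-1}\|v\|_{\ind}^2$. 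Inserting this into the previous display, the factors of $\gamma_N$ cancel and
\[
 \|\psi_\perp\|_{\hc}^2\le C_{D,r,H}n_N^{-1}\|\psi_{\mathrm{cmp}}\|_{\hc}^2,
\]
which is \eqref{prof:eq:fd-one-sided-eigenvector}. This step needs care. The resolvent bound only gives $w$ small in the $(-1)$-norm, not in $L^2$. The argument works only because the inverse compressed operator is both dominated by the full resolvent (at the cost of $C_H$) and bounded by $\gamma_N^{-1}$. That is precisely what \cref{prof:lem:fd-compression-inverse} and the high-block gap provide.

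For the consequences, apply the estimate to an arbitrary $\psi$ in the spectral subspace $\one_{[0,H\gamma_N]}(L_{\hc})\mathcal T_{N,r}^{\hc}$. The pointwise estimate is stated only for eigenvectors, so extend it by linearity. Expand $\psi=\sum_\lambda\psi_\lambda$ in eigenvectors, project each equation, and sum. The linear identity $Q(L_{\hc}-\lambda)Q\psi_{\lambda,\perp}=-Qb_{v_\lambda}$ does not add directly because $\lambda$ varies.

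A cleaner route for injectivity is as follows. If $P\psi=0$ with $\psi\ne0$ in the spectral subspace, then $\psi=Q\psi$. Hence $\langle\psi,L_{\hc}\psi\rangle\ge(H+2)\gamma_N\|\psi\|^2$, which contradicts $\langle\psi,L_{\hc}\psi\rangle\le H\gamma_N\|\psi\|^2$. So $P$ is injective on the spectral subspace, and the rank is at most $\dim\Ran P=\dim\mathscr V_{N,r}^{\ind}(K_H)$. This dimension is $O_{D,r,H}(1)$ by \eqref{prof:eq:fd-finite-window-fourier-bounds}.
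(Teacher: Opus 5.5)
Your proposal is correct and follows essentially the same route as the paper. You project the eigenvalue equation with $\Pi_H^{\mathrm{cmp},\perp}$, bound the inverse compressed operator using the high-block gap, \eqref{prof:eq:fd-shifted-compression-resolvent}, and the inequality $\|A^{-1}w\|^2\le(2\gamma_N)^{-1}\langle w,A^{-1}w\rangle$, then rewrite the coupling as $-\Pi_H^{\mathrm{cmp},\perp}b_v$ and apply \eqref{prof:eq:fd-generator-defect-resolvent-analog}; your injectivity and rank argument is the same Rayleigh-quotient contradiction the paper uses.
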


\begin{proof}
Left-multiplying the eigenvalue equation $L_{\hc}\psi=\lambda\psi$ by
$\Pi_H^{\mathrm{cmp},\perp}$ and using
$\psi=\psi_{\mathrm{cmp}}+\psi_{\perp}$ gives the projected eigenvalue equation
\begin{equation}
 \Pi_H^{\mathrm{cmp},\perp}(L_{\hc}-\lambda)\Pi_H^{\mathrm{cmp},\perp}\psi_{\perp}
 =-\Pi_H^{\mathrm{cmp},\perp}L_{\hc}\psi_{\mathrm{cmp}}.
 \label{prof:eq:fd-one-sided-projected-eigenvalue}
\end{equation}
Set
\[
 \mathcal L_{\perp,\lambda}
 :=\Pi_H^{\mathrm{cmp},\perp}(L_{\hc}-\lambda)\Pi_H^{\mathrm{cmp},\perp}
 \quad\text{on }\Ran\Pi_H^{\mathrm{cmp},\perp}.
\]
Applying the high-block estimate \eqref{prof:eq:fd-high-block-gap} with
$H_0=H$ gives
$\Pi_H^{\mathrm{cmp},\perp}L_{\hc}\Pi_H^{\mathrm{cmp},\perp}\ge(H+2)\gamma_N I$ on
$\Ran\Pi_H^{\mathrm{cmp},\perp}$.  Since $\lambda\le H\gamma_N$,
\begin{align}
 \mathcal L_{\perp,\lambda}\ge2\gamma_N I.
 \label{prof:eq:L_perp_lambda-bound}
\end{align}
The same high-block estimate and
\eqref{prof:eq:fd-shifted-compression-resolvent}, now with
$Q=\Pi_H^{\mathrm{cmp},\perp}$ and $z=\lambda\le H\gamma_N$, also give
\begin{equation}
 \mathcal L_{\perp,\lambda}^{-1}
 \le\frac{H+3}{2}
 \Pi_H^{\mathrm{cmp},\perp}(L_{\hc}+\gamma_N)^{-1}\Pi_H^{\mathrm{cmp},\perp}
 \quad\text{on }\Ran\Pi_H^{\mathrm{cmp},\perp}.
 \label{prof:eq:L_perp_lambda-inverse}
\end{equation}

We next express the coupling term on the right-hand side of
\eqref{prof:eq:fd-one-sided-projected-eigenvalue} through the generator defect.
Using the Gram normalization \eqref{prof:eq:fd-Gram-normalization}, write
\[
 \psi_{\mathrm{cmp}}
 =\widehat{\mathsf J}_{N,r,K_H}u
 =\mathsf J_{N,r,K_H}v,
 \qquad
 v=\mathsf G_{N,r,K_H}^{-1/2}u.
\]
The product spectral window $\mathscr V_{N,r}^{\ind}(K_H)$ is invariant under
$L_{\ind}$, so
$\mathsf J_{N,r,K_H}L_{\ind}v\in\Ran\Pi_H^{\mathrm{cmp}}$ by
\eqref{prof:eq:fd-Gram-isometry}.  Projecting the generator defect identity
\eqref{prof:eq:fd-generator-defect-definition} onto $\Ran\Pi_H^{\mathrm{cmp},\perp}$
therefore gives
\begin{align}
 \Pi_H^{\mathrm{cmp},\perp}L_{\hc}\psi_{\mathrm{cmp}}
 =-\Pi_H^{\mathrm{cmp},\perp}b_v.
 \label{prof:eq:proj-defect}
\end{align}

We now estimate
$\psi_{\perp}=\Pi_H^{\mathrm{cmp},\perp}\psi$, the component of $\psi$ orthogonal to the
comparison range $\Ran\Pi_H^{\mathrm{cmp}}$.  The lower bound \eqref{prof:eq:L_perp_lambda-bound} implies, by spectral
calculus on $\Ran\Pi_H^{\mathrm{cmp},\perp}$,
\begin{equation}
 \mathcal L_{\perp,\lambda}^{-2}
 \le(2\gamma_N)^{-1}\mathcal L_{\perp,\lambda}^{-1}.
 \label{prof:eq:inverse-square-bound}
\end{equation}
By \eqref{prof:eq:fd-one-sided-projected-eigenvalue},
\begin{align}
 \psi_{\perp}
 =-\mathcal L_{\perp,\lambda}^{-1}
   \Pi_H^{\mathrm{cmp},\perp}L_{\hc}\psi_{\mathrm{cmp}}.
   \label{prof:eq:proj-gen-defect}
\end{align}
Using \cref{prof:eq:proj-gen-defect,prof:eq:inverse-square-bound,prof:eq:L_perp_lambda-inverse} and then \eqref{prof:eq:proj-defect} gives
\begin{align*}
 \|\psi_{\perp}\|_{\hc}^2
 &=\langle \Pi_H^{\mathrm{cmp},\perp}L_{\hc}\psi_{\mathrm{cmp}},
   \mathcal L_{\perp,\lambda}^{-2}
   \Pi_H^{\mathrm{cmp},\perp}L_{\hc}\psi_{\mathrm{cmp}}\rangle_{\hc}\\
 &\le\frac1{2\gamma_N}
   \langle \Pi_H^{\mathrm{cmp},\perp}L_{\hc}\psi_{\mathrm{cmp}},
   \mathcal L_{\perp,\lambda}^{-1}
   \Pi_H^{\mathrm{cmp},\perp}L_{\hc}\psi_{\mathrm{cmp}}\rangle_{\hc}\\
 &\le\frac{C_H}{\gamma_N}
   \langle \Pi_H^{\mathrm{cmp},\perp}b_v,
   (L_{\hc}+\gamma_N)^{-1}\Pi_H^{\mathrm{cmp},\perp}b_v\rangle_{\hc}
 \le\frac{C_{D,r,H}}{n_N}\|v\|_{\ind}^2,
\end{align*}
where the last inequality is
\eqref{prof:eq:fd-generator-defect-resolvent-analog}.
Finally, \eqref{prof:eq:fd-Gram-isometry} gives
$\|u\|_{\ind}=\|\psi_{\mathrm{cmp}}\|_{\hc}$, while
\eqref{prof:eq:fd-Gram-uniform-bounds} and
$v=\mathsf G_{N,r,K_H}^{-1/2}u$ give
$\|v\|_{\ind}\le C_{D,r,H}\|u\|_{\ind}$.  Hence
\eqref{prof:eq:fd-one-sided-eigenvector} follows.

It remains to prove the rank assertion on the entire low spectral subspace.
Let
\[
 \Pi_H^{\mathrm{spec}}:=\one_{[0,H\gamma_N]}(L_{\hc})
 \quad\text{on }\mathcal T_{N,r}^{\hc}.
\]
Suppose that $f\in\Ran\Pi_H^{\mathrm{spec}}$ and $\Pi_H^{\mathrm{cmp}}f=0$.  Then
$f\in\Ran\Pi_H^{\mathrm{cmp},\perp}$, so the high-block estimate
\eqref{prof:eq:fd-high-block-gap}, with $H_0=H$, gives
\[
 \langle f,L_{\hc}f\rangle_{\hc}
 \ge (H+2)\gamma_N\|f\|_{\hc}^2.
\]
On the other hand, since $f\in\Ran\Pi_H^{\mathrm{spec}}$, spectral calculus gives
\[
 \langle f,L_{\hc}f\rangle_{\hc}
 \le H\gamma_N\|f\|_{\hc}^2.
\]
Hence $f=0$.  Thus $\Pi_H^{\mathrm{cmp}}$ is injective on $\Ran\Pi_H^{\mathrm{spec}}$.
Consequently,
\[
 \operatorname{rank}\one_{[0,H\gamma_N]}(L_{\hc})
 \le \dim\Ran\Pi_H^{\mathrm{cmp}}
 =\dim\mathscr V_{N,r}^{\ind}(K_H),
\]
which proves \eqref{prof:eq:fd-one-sided-rank}.
\end{proof}

\begin{lemma}[Fixed-cutoff high-source leakage]
\label{prof:lem:fd-high-source}
For every fixed $r\in\mathbb N$ and nonempty compact $J\subset\mathbb R$, there is a finite
$K_{\mathrm{src}}=K_{\mathrm{src}}(D,r)$ such that, uniformly over deterministic
sources $S_N\subset V_N$ with $|S_N|=k_N$,
\begin{align}
 \sup_{s\in J}n_N^{r/2}
 \left\|\mathcal P_{N,r}^{\hc}(t_N(s))
 \mathscr S_{N,r,K_{\mathrm{src}}}^{\mathrm{high}}\right\|_2&\longrightarrow0,
 \label{prof:eq:fd-high-source-hc}\\
 \sup_{s\in J}n_N^{r/2}
 \left\|\mathsf{Top}_{N,r}^{\ord}\mathsf R_{N,r}
 \mathcal P_{N,r}^{\ind}(t_N(s))
 \mathsf{Rem}_{N,r,K_{\mathrm{src}}}\right\|_2&\longrightarrow0.
 \label{prof:eq:fd-high-source-ind}
\end{align}
\end{lemma}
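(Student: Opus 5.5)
I would treat the independent estimate \eqref{prof:eq:fd-high-source-ind} first, since it is essentially spectral bookkeeping. By \eqref{prof:eq:fd-source-tensor-spectral-separation}, $\mathsf{Rem}_{N,r,K}$ has product spectral support in $((K+r-1)\gamma_N,\infty)$, and this support is preserved by $\mathcal P_{N,r}^{\ind}(t)$. Hence
\[
\|\mathcal P_{N,r}^{\ind}(t)\mathsf{Rem}_{N,r,K}\|_{\ind}\le e^{-(K+r-1)\gamma_Nt}\|\mathsf{Rem}_{N,r,K}\|_{\ind}.
\]
The telescoping sum \eqref{prof:eq:fd-source-tensor-telescoping} gives $\|\mathsf{Rem}_{N,r,K}\|_{\ind}\le r\,2^{-r}\cdot 2$, uniformly in the source. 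The maps $\mathsf R_{N,r}$ and $\mathsf{Top}_{N,r}^{\ord}$ have norm $1+O(n_N^{-1})$. At $t=t_N(s)$ we then get
\[
n_N^{r/2}e^{-(K+r-1)\gamma_Nt_N(s)}=n_N^{-(K-1)/2}e^{-(K+r-1)s/2},
\]
which tends to zero uniformly on $J$ as soon as $K>1$. Thus any fixed $K_{\mathrm{src}}\ge2$ handles \eqref{prof:eq:fd-high-source-ind}.

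For the hard-core estimate \eqref{prof:eq:fd-high-source-hc} I would fix $H:=r+1$ (any fixed $H>r$ works) and split the top sector with $\Pi_H^{\mathrm{spec}}=\one_{[0,H\gamma_N]}(L_{\hc})$. The high spectral part decays by $e^{-H\gamma_Nt_N(s)}$, so $n_N^{r/2}e^{-H\gamma_Nt}=n_N^{-(H-r)/2}e^{-Hs/2}\to0$ against an $O(1)$ norm of $\mathscr S^{\mathrm{high}}$. The low spectral part is where the choice of $K_{\mathrm{src}}$ matters.

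By \eqref{prof:eq:fd-one-sided-rank} the low subspace has dimension $O_{D,r,H}(1)$ and is spanned by orthonormal eigenvectors $\psi$ with eigenvalues $\lambda\ge r\gamma_N-C\gamma_N/n_N$, by \cref{prof:prop:fd-top-gap}. It therefore suffices to show $|\langle\psi,\mathscr S^{\mathrm{high}}_{N,r,K}\rangle_{\hc}|\le o(n_N^{-1/2})$ for each such unit $\psi$. The factor $e^{-\lambda t}n_N^{r/2}$ stays bounded by the same computation as in \eqref{prof:eq:fd-top-damping}, so this bound together with the finite rank gives the claim.

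The key step is the inner product bound, which I would obtain by splitting $\psi=\psi_{\mathrm{cmp}}+\psi_\perp$ as in \cref{prof:lem:fd-one-sided-eigenvector}.
\begin{itemize}
\item For $\psi_\perp$, Cauchy--Schwarz with $\|\psi_\perp\|\le Cn_N^{-1/2}$ only gives $O(n_N^{-1/2})$, not $o$. To improve it I would estimate $\langle\psi_\perp,\mathscr S^{\mathrm{high}}\rangle$ through \eqref{prof:eq:proj-gen-defect}: it equals a resolvent pairing $\langle \mathcal L_{\perp,\lambda}^{-1}\Pi^{\perp}b_v,\mathscr S^{\mathrm{high}}\rangle$. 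I would then bound $\|\mathscr S^{\mathrm{high}}\|$ in the dual norm $\|\cdot\|_{-1,\hc,\gamma_N}$ via \eqref{prof:eq:fd-resolvent-transfer}. Since $\mathsf R^*\mathsf R\,\mathsf{Rem}$ equals $\mathsf{Rem}$ up to collision terms of size $O(n_N^{-1/2})$, and $\mathsf{Rem}$ lives at energies above $K\gamma_N$, the dual norm squared is $\lesssim (K\gamma_N)^{-1}$ plus small errors. Combined with $\|b_v\|_{-1}^2\lesssim\gamma_N/n_N$, this gives a contribution of order $n_N^{-1/2}K^{-1/2}$.
\item For $\psi_{\mathrm{cmp}}=\mathsf J_{N,r,K_H}v$ with $v\in\mathscr V^{\ind}(K_H)$, I would pull $\mathsf{Top}^{\ord}$ onto $\psi_{\mathrm{cmp}}$ and compare $\langle \mathsf R\, v,\mathsf R\,\mathsf{Rem}\rangle_{\hc}$ with $\langle v,\mathsf{Rem}\rangle_{\ind}$. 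The latter vanishes once $K_{\mathrm{src}}+r-1>K_H$, by spectral separation. The difference is a collision-diagonal term bounded by $\|v\|_\infty$ times the diagonal mass of $\mathsf{Rem}$; I would control this via \cref{prof:lem:fd-trace} or the $L^\infty$ bound on $F_N$, giving $O(n_N^{-1})$. The correction $(\mathsf J-\mathsf R)v$ is $O(n_N^{-1})$ by \eqref{prof:eq:fd-top-correction-L2}.
\end{itemize}

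The main obstacle is the $\psi_\perp$ pairing: it yields $O(K^{-1/2}n_N^{-1/2})$ rather than $o(n_N^{-1/2})$. If the dual-norm argument does not produce a genuinely vanishing factor, I would instead exploit that the leakage $\psi_\perp$ itself comes from a blocked-edge source $b_v$ supported on contact sets. Its pairing with the smooth-plus-rough product $\mathsf{Rem}$ should then gain an extra $\Theta_N\gamma_N$ or $N^{-1}$ factor through the relative-coordinate Fourier estimate used in \cref{prof:lem:fd-generator-defect-resolvent}. As a last resort, I would let the limit be taken as $K_{\mathrm{src}}$ grows, if the statement permits absorbing $K^{-1/2}$.
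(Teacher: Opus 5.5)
Your outline follows the paper's proof step for step. This includes the independent bound by spectral separation with any fixed $K_{\mathrm{src}}>1$, the choice $H=r+1$, and the split by $\Pi_H^{\mathrm{spec}}=\one_{[0,H\gamma_N]}(L_{\hc})$ with the high part killed by $n_N^{-(H-r)/2}$. The low part is handled the same way too: bounded rank from \eqref{prof:eq:fd-one-sided-rank}, the decomposition $\psi=\psi_{\mathrm{cmp}}+\psi_\perp$, the choice $K_{\mathrm{src}}+r-1>K_H$ so that $\langle v,\mathsf{Rem}\rangle_{\ind}=0$, and $O(n_N^{-1})$ top-correction and collision errors.

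What is wrong is your claim that the low-spectral step needs $|\langle\psi,\mathscr S^{\mathrm{high}}\rangle_{\hc}|=o(n_N^{-1/2})$. Because of that claim, the ``main obstacle'' you describe is not actually an obstacle. As you note yourself, the eigenvalues on $\Ran\Pi_H^{\mathrm{spec}}$ are at least $r\gamma_N-C\gamma_N/n_N$, so $n_N^{r/2}e^{-\lambda t_N(s)}\le C_{D,r,J}$; this is \eqref{prof:eq:fd-top-damping}. Hence
\[
 n_N^{r/2}\bigl\|\mathcal P_{N,r}^{\hc}(t_N(s))\Pi_H^{\mathrm{spec}}\mathscr S^{\mathrm{high}}_{N,r,K_{\mathrm{src}}}\bigr\|_{\hc}
 \le C_{D,r,J}\bigl\|\Pi_H^{\mathrm{spec}}\mathscr S^{\mathrm{high}}_{N,r,K_{\mathrm{src}}}\bigr\|_{\hc}.
\]
Since the rank is bounded, it is enough that each coefficient $\langle\psi_j,\mathscr S^{\mathrm{high}}\rangle_{\hc}$ be $o(1)$.

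The estimate you already have does this. Cauchy--Schwarz with $\|\psi_{j,\perp}\|_{\hc}\le Cn_N^{-1/2}$ and $\|\mathscr S^{\mathrm{high}}\|_{\hc}=O(1)$ gives $O(n_N^{-1/2})$. Together with the $O(n_N^{-1})$ comparison-range pairing, this yields $\|\Pi_H^{\mathrm{spec}}\mathscr S^{\mathrm{high}}\|_{\hc}\le Cn_N^{-1/2}$, so the low-spectral contribution is $O(n_N^{-1/2})\to0$. That is exactly how the paper closes the argument.

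So the dual-norm detour is unnecessary; it would also only give the same order $n_N^{-1/2}$. Your ``last resort'' must be dropped: letting $K_{\mathrm{src}}$ grow would not prove the lemma, which asserts convergence for one fixed $K_{\mathrm{src}}(D,r)$.

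A minor point on the collision term in the $\psi_{\mathrm{cmp}}$ pairing. Getting $O(n_N^{-1})$ there uses $\|\mathsf{Rem}\|_\infty\le C$. This requires $\|F_{N,K}\|_\infty\le C$, which follows because only a bounded number of unit-modulus characters lie below $(K+1)\gamma_N$; $|F_N|\le1$ alone is not enough, since $\Xi^{(1)}_{N,K}$ is not positivity preserving. In any case, plain Cauchy--Schwarz with $\|\mathsf{Rem}\|_{\ind}=O(1)$ already gives $O(n_N^{-1/2})$ there, which suffices.
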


\begin{proof}
Set $H:=r+1$ and let
$K_H=K_+(D,r,H)$ be the cutoff fixed above and used in
\cref{prof:lem:fd-one-sided-eigenvector}.
Fix $K_{\mathrm{src}}>1$ so large that
\begin{equation}
 K_{\mathrm{src}}+r-1>K_H.
 \label{prof:eq:fd-fixed-cutoff-separation}
\end{equation}
By the Fourier character convention \eqref{prof:eq:probability-fourier-basis}
and the dispersion comparison \eqref{prof:eq:dispersion-comparison}, the
one-particle spectral space below $(K_{\mathrm{src}}+1)\gamma_N$ has
uniformly bounded dimension and is spanned by unit-modulus characters.
Together with $|F_N|\le1$, the tensor decomposition
\eqref{prof:eq:fd-source-tensor-telescoping}, and the high-source definition
\eqref{prof:eq:low-high-sources}, this gives uniformly
\begin{equation}
 \|F_{N,K_{\mathrm{src}}}\|_\infty+
 \|\mathsf{Rem}_{N,r,K_{\mathrm{src}}}\|_\infty+
 \|\mathscr S_{N,r,K_{\mathrm{src}}}^{\mathrm{high}}\|_{\hc}
 \le C_{D,r,K_{\mathrm{src}}}.
 \label{prof:eq:fd-fixed-source-bounds}
\end{equation}
The spectral separation
\eqref{prof:eq:fd-source-tensor-spectral-separation}, with
$K=K_{\mathrm{src}}$, shows that
$\mathsf{Rem}_{N,r,K_{\mathrm{src}}}$ has no product spectral component at
energy at most $(K_{\mathrm{src}}+r-1)\gamma_N$.

We first show that the hard-core high source has only a small component in
the comparison range.  More precisely, our target is
\begin{equation}
 \left\|\Pi_H^{\mathrm{cmp}}
 \mathscr S_{N,r,K_{\mathrm{src}}}^{\mathrm{high}}\right\|_{\hc}
 =\sup_{\substack{\varphi\in\mathscr V_{N,r}^{\ind}(K_H)\\
                  \|\varphi\|_{\ind}=1}}
 \left|\left\langle\widehat{\mathsf J}_{N,r,K_H}\varphi,
 \mathscr S_{N,r,K_{\mathrm{src}}}^{\mathrm{high}}\right\rangle_{\hc}\right|
 \le Cn_N^{-1}.
 \label{prof:eq:fd-high-source-low-pairing}
\end{equation}
Here the equality follows from the isometry
\eqref{prof:eq:fd-Gram-isometry}, whose range is
$\Ran\Pi_H^{\mathrm{cmp}}$.  To estimate the displayed supremum, fix a unit
$\varphi\in\mathscr V_{N,r}^{\ind}(K_H)$ and put
$w=\mathsf G_{N,r,K_H}^{-1/2}\varphi$, which is uniformly bounded in norm by
\eqref{prof:eq:fd-Gram-uniform-bounds}.  By the definition \eqref{prof:eq:JNrK},
$\mathsf J_{N,r,K_H}w\in\Ran\mathsf{Top}_{N,r}^{\ord}=\mathcal T_{N,r}^{\hc}$, so
\begin{align*}
 &\left\langle\widehat{\mathsf J}_{N,r,K_H}\varphi,
 \mathscr S_{N,r,K_{\mathrm{src}}}^{\mathrm{high}}\right\rangle_{\hc}
 =\left\langle\mathsf J_{N,r,K_H}w,
 \mathsf R_{N,r}\mathsf{Rem}_{N,r,K_{\mathrm{src}}}\right\rangle_{\hc}\\
 &\qquad=\left\langle (\mathsf J_{N,r,K_H}-\mathsf R_{N,r})w,
 \mathsf R_{N,r}\mathsf{Rem}_{N,r,K_{\mathrm{src}}}\right\rangle_{\hc}
 +
 \left\langle\mathsf R_{N,r}w,
 \mathsf R_{N,r}\mathsf{Rem}_{N,r,K_{\mathrm{src}}}\right\rangle_{\hc}.
\end{align*}
The first, top correction, term is $O(n_N^{-1})$ because by \eqref{prof:eq:fd-top-correction-L2},
\[
 \|(\mathsf J_{N,r,K_H}-\mathsf R_{N,r})w\|_{\hc}\le Cn_N^{-1},
\]
and
\eqref{prof:eq:fd-fixed-source-bounds} states a uniform bound for the $\infty$-norm of $\mathsf{Rem}_{N,r,K_{\mathrm{src}}}$.  For the second, restriction, term, the normalization identity
\eqref{prof:eq:fd-restriction-normalization} gives
\begin{align*}
 \left\langle\mathsf R_{N,r}w,
 \mathsf R_{N,r}\mathsf{Rem}_{N,r,K_{\mathrm{src}}}\right\rangle_{\hc}
=\theta_{N,r}^{-1}
 \left(\langle w,\mathsf{Rem}_{N,r,K_{\mathrm{src}}}\rangle_{\ind}
 -\left\langle\one_{\mathfrak D_{N,r}^{\mathrm{coll}}}w,
   \mathsf{Rem}_{N,r,K_{\mathrm{src}}}\right\rangle_{\ind}\right).
\end{align*}
The first inner product vanishes by product spectral orthogonality,
\eqref{prof:eq:fd-source-tensor-spectral-separation}, and
\eqref{prof:eq:fd-fixed-cutoff-separation}.  By \eqref{prof:eq:fd-distinct-tuple-density}, the collision set has product
probability $1-\theta_{N,r}=O_r(n_N^{-1})$, while the finite-window
$L^2$-to-$L^\infty$ bound \eqref{prof:eq:fd-finite-window-fourier-bounds}
and \eqref{prof:eq:fd-fixed-source-bounds} give uniform pointwise bounds.
Combining the two estimates proves
\eqref{prof:eq:fd-high-source-low-pairing}.

Let
$
 d_{N,H}:=\dim\Ran\Pi_H^{\mathrm{spec}}$.
Choose an orthonormal eigenbasis
$(\psi_j)_{j=1}^{d_{N,H}}$ of $\Ran\Pi_H^{\mathrm{spec}}$.
By \eqref{prof:eq:fd-one-sided-rank}, $d_{N,H}\le C_{D,r,H}$.  Decompose
$
 \psi_j=\psi_{j,\mathrm{cmp}}+\psi_{j,\perp}
$
relative to $\Pi_H^{\mathrm{cmp}}\oplus\Pi_H^{\mathrm{cmp},\perp}$.  By \eqref{prof:eq:fd-Gram-isometry},
$\widehat{\mathsf J}_{N,r,K_H}$ is an isometry onto
$\Ran\Pi_H^{\mathrm{cmp}}$.  Hence there is a unique
$\varphi_j\in\mathscr V_{N,r}^{\ind}(K_H)$ such that
\[
 \psi_{j,\mathrm{cmp}}
 =\widehat{\mathsf J}_{N,r,K_H}\varphi_j,
 \qquad
 \|\varphi_j\|_{\ind}
 =\|\psi_{j,\mathrm{cmp}}\|_{\hc}
 \le \|\psi_j\|_{\hc}=1.
\]
Applying \eqref{prof:eq:fd-high-source-low-pairing} to $\varphi_j$ and
using homogeneity therefore gives
\[
 \left|\left\langle
 \psi_{j,\mathrm{cmp}},
 \mathscr S_{N,r,K_{\mathrm{src}}}^{\mathrm{high}}
 \right\rangle_{\hc}\right|
 \le Cn_N^{-1}\|\varphi_j\|_{\ind}
 \le Cn_N^{-1}.
\]
Hence by the triangle inequality and Cauchy--Schwarz,
\begin{align*}
 |\langle\psi_j,
   \mathscr S_{N,r,K_{\mathrm{src}}}^{\mathrm{high}}\rangle_{\hc}|
 &\le
 |\langle\psi_{j,\mathrm{cmp}},
   \mathscr S_{N,r,K_{\mathrm{src}}}^{\mathrm{high}}\rangle_{\hc}|
 +\|\psi_{j,\perp}\|_{\hc}
  \|\mathscr S_{N,r,K_{\mathrm{src}}}^{\mathrm{high}}\|_{\hc}\\
 &\le Cn_N^{-1}+Cn_N^{-1/2}
 \le Cn_N^{-1/2},
\end{align*}
where the second term uses
\eqref{prof:eq:fd-one-sided-eigenvector} and
\eqref{prof:eq:fd-fixed-source-bounds}.  Summing the squared coefficients
over the uniformly bounded number $d_{N,H}$ of eigenvectors gives
\begin{equation}
 \|\Pi_H^{\mathrm{spec}}
   \mathscr S_{N,r,K_{\mathrm{src}}}^{\mathrm{high}}\|_{\hc}
 \le Cn_N^{-1/2}.
 \label{prof:eq:fd-fixed-high-lowprojection}
\end{equation}
For this low spectral component, the sharp damping
\eqref{prof:eq:fd-top-damping} and
\eqref{prof:eq:fd-fixed-high-lowprojection} give directly
\[
 n_N^{r/2}
 \|e^{-t_N(s)L_{\hc}}\Pi_H^{\mathrm{spec}}
   \mathscr S_{N,r,K_{\mathrm{src}}}^{\mathrm{high}}\|_{\hc}
 \le C_{D,r,J}n_N^{-1/2}.
\]
For the high spectral $I-\Pi_H^{\mathrm{spec}}$ component, the spectrum of $L_{\hc}$ is
larger than $H\gamma_N$.  Therefore
\begin{align*}
 n_N^{r/2}\|e^{-t_N(s)L_{\hc}}(I-\Pi_H^{\mathrm{spec}})
 \mathscr S_{N,r,K_{\mathrm{src}}}^{\mathrm{high}}\|_{\hc}
 &\le C n_N^{r/2}e^{-H\gamma_Nt_N(s)}\\
 &=C e^{-Hs/2}n_N^{(r-H)/2}
 \le C_Jn_N^{(r-H)/2}\longrightarrow0,
\end{align*}
because $H=r+1$.  This proves \eqref{prof:eq:fd-high-source-hc}.

Finally, for the remainder term, the spectral support identity
\eqref{prof:eq:fd-source-tensor-spectral-separation} gives
\begin{align*}
 n_N^{r/2}\left\|
 \mathsf{Top}_{N,r}^{\ord}\mathsf R_{N,r}e^{-t_N(s)L_{\ind}}
 \mathsf{Rem}_{N,r,K_{\mathrm{src}}}\right\|_{\hc}
 &\le C_r n_N^{r/2}
 e^{-(K_{\mathrm{src}}+r-1)\gamma_Nt_N(s)}
 \|\mathsf{Rem}_{N,r,K_{\mathrm{src}}}\|_{\ind} \\
 &\le C_{r,J}n_N^{-(K_{\mathrm{src}}-1)/2}\longrightarrow0,
\end{align*}
which proves \eqref{prof:eq:fd-high-source-ind}.
\end{proof}

\subsection{Fixed-degree dynamic comparison and matching}

\begin{proposition}[Fixed-degree dynamic chaos comparison]

For every fixed $R\in\mathbb N_0$ and nonempty compact $J\subset\mathbb R$,
\begin{equation}
 \sup_{\substack{S_N\subset V_N,\ |S_N|=k_N\\ s\in J}}
 \max_{0\le r\le R}
 \|\Psi_{N,r}(h_{N,t_N(s)}^{S_N})-
   \mathscr W_{N,r}(m_{N,t_N(s)}^{S_N})\|_2
 \longrightarrow0.
 \label{prof:eq:dynamic-wick-comparison}
\end{equation}
\end{proposition}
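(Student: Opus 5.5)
The plan is to start from the exact duality formulas of \cref{prof:lem:fd-exact-duality}, which express the difference as
\[
 \Psi_{N,r}(h_{N,t}^{S_N})-\mathscr W_{N,r}(m_{N,t}^{S_N})
 =\alpha_{N,r}\beta_N^{-r/2}n_N^{r/2}
 \Bigl[\mathsf{Top}_{N,r}^{\ord}\mathcal P_{N,r}^{\hc}(t)\mathsf R_{N,r}F_N^{\otimes r}
 -\mathsf{Top}_{N,r}^{\ord}\mathsf R_{N,r}\mathcal P_{N,r}^{\ind}(t)F_N^{\otimes r}\Bigr].
\]
Since $R$ is fixed, it suffices to treat each $r\le R$ separately. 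The prefactor $\alpha_{N,r}\beta_N^{-r/2}$ is uniformly bounded on the density window. The case $r=0$ is trivial, since both coordinates equal $1$. For $r\ge1$, I first use \eqref{prof:eq:HLcommute} to move $\mathsf{Top}_{N,r}^{\ord}$ past the hard-core semigroup, giving $\mathsf{Top}_{N,r}^{\ord}\mathcal P_{N,r}^{\hc}(t)\mathsf R_{N,r}=\mathcal P_{N,r}^{\hc}(t)\mathsf{Top}_{N,r}^{\ord}\mathsf R_{N,r}$. The first term is then $\mathcal P_{N,r}^{\hc}(t)$ applied to $\mathsf{Top}_{N,r}^{\ord}\mathsf R_{N,r}F_N^{\otimes r}$.

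Next I fix the cutoff $K=K_{\mathrm{src}}(D,r)$ supplied by \cref{prof:lem:fd-high-source} and split the source as $F_N^{\otimes r}=F_{N,K}^{\otimes r}+\mathsf{Rem}_{N,r,K}$. This gives $\mathsf{Top}_{N,r}^{\ord}\mathsf R_{N,r}F_N^{\otimes r}=\mathscr S^{\mathrm{low}}_{N,r,K}+\mathscr S^{\mathrm{high}}_{N,r,K}$. Linearity then splits the bracket, at $t=t_N(s)$, into three pieces:
\begin{enumerate}[label=(\roman*)]
\item the low comparison $\mathcal P^{\hc}_{N,r}(t)\mathscr S^{\mathrm{low}}-\mathsf{Top}\,\mathsf R\,\mathcal P^{\ind}_{N,r}(t)F_{N,K}^{\otimes r}$;
\item the hard-core high term $\mathcal P^{\hc}_{N,r}(t)\mathscr S^{\mathrm{high}}$;
\item the independent high term $\mathsf{Top}\,\mathsf R\,\mathcal P^{\ind}_{N,r}(t)\mathsf{Rem}_{N,r,K}$.
\end{enumerate}
After multiplication by $n_N^{r/2}$, \cref{prof:lem:fd-low-source} bounds (i) by $C\log n_N/\sqrt{n_N}\to0$. \Cref{prof:lem:fd-high-source} sends (ii) and (iii) to zero through \eqref{prof:eq:fd-high-source-hc} and \eqref{prof:eq:fd-high-source-ind}. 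All three bounds are uniform over sources $S_N$ and $s\in J$.

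Two bookkeeping points remain. First, the norm $\|\cdot\|_2$ on $\mathcal K_{N,r}^{\ord}$ is the same as $\|\cdot\|_{\hc}$, because both are probability-normalized on ordered distinct tuples. Second, the constants in both lemmas are uniform, so I take the maximum over the finitely many $r\le R$ and the supremum over sources and $s$. For $r>\ell_N$ nothing needs checking, since $\ell_N\ge\rho_0 n_N$ eventually.

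The real work sits in the lemmas already proved, so the main obstacle here is only checking that the pieces fit. Specifically, the high-source lemma's cutoff must be used consistently in the low-source lemma, which holds for any fixed $K$. I also need to confirm that $\mathscr S^{\mathrm{high}}$ was defined with $\mathsf{Top}$ already applied, so that the commutation step lines up exactly with \eqref{prof:eq:low-high-sources}.
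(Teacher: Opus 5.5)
Your proposal is correct and takes the same route as the paper's proof. In both, the exact duality formulas reduce the claim to the source-normalized semigroup comparison. The source is split at the cutoff $K_{\mathrm{src}}$ from \cref{prof:lem:fd-high-source}, and $\mathsf{Top}_{N,r}^{\ord}$ is commuted through the hard-core semigroup via \eqref{prof:eq:HLcommute}. The three resulting terms are then bounded by \cref{prof:lem:fd-low-source} and \cref{prof:lem:fd-high-source}.

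The only difference is cosmetic. The paper treats $r=1$ directly: there $\mathcal L_{N,1}^{\hc}=\mathcal L_{N,1}^{\ind}$ and $\mathsf R_{N,1}=I$, so the two duality identities coincide. You instead route $r=1$ through the lemmas, which is also legitimate because they are stated for every fixed $r\in\mathbb N$.
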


\begin{proof}
For $r=0$, both coordinates in \eqref{prof:eq:dynamic-wick-comparison} equal
$1$.  For $r=1$, one has
$\mathcal L_{N,1}^{\hc}=\mathcal L_{N,1}^{\ind}=L_N^{\RW}$ and
$\mathsf R_{N,1}=I$, so the two identities in
\cref{prof:lem:fd-exact-duality} coincide.  Thus the degrees $r=0,1$ are exact.

Fix $r\ge2$.  By the density window and the exact formula
\eqref{prof:eq:chaos-normalization}, the factors
$\alpha_{N,r}$ and $\beta_N^{\pm1}$ are uniformly bounded at fixed $r$.
Hence \cref{prof:lem:fd-exact-duality} reduces the claim to the corresponding
source-normalized semigroup comparison.  Choose once and for all the finite
cutoff $K_{\mathrm{src}}=K_{\mathrm{src}}(D,r)$ supplied by
\cref{prof:lem:fd-high-source}, and write
\[
 F_N^{\otimes r}
 =F_{N,K_{\mathrm{src}}}^{\otimes r}
  +\mathsf{Rem}_{N,r,K_{\mathrm{src}}}.
\]
By \cref{prof:eq:low-high-sources,prof:eq:HLcommute}, the triangle inequality
gives, with $t=t_N(s)$,
\begin{align*}
&n_N^{r/2}\Bigl\|
 \mathsf{Top}_{N,r}^{\ord}\mathcal P_{N,r}^{\hc}(t)
   \mathsf R_{N,r}F_N^{\otimes r}
 -\mathsf{Top}_{N,r}^{\ord}\mathsf R_{N,r}
   \mathcal P_{N,r}^{\ind}(t)F_N^{\otimes r}
 \Bigr\|_2\\
&\quad\le n_N^{r/2}\Bigl\|
 \mathcal P_{N,r}^{\hc}(t)
   \mathscr S_{N,r,K_{\mathrm{src}}}^{\mathrm{low}}
 -\mathsf{Top}_{N,r}^{\ord}\mathsf R_{N,r}
   \mathcal P_{N,r}^{\ind}(t)F_{N,K_{\mathrm{src}}}^{\otimes r}
 \Bigr\|_2\\
&\qquad+n_N^{r/2}\Bigl\|
 \mathcal P_{N,r}^{\hc}(t)
   \mathscr S_{N,r,K_{\mathrm{src}}}^{\mathrm{high}}
 \Bigr\|_2
 +n_N^{r/2}\Bigl\|
 \mathsf{Top}_{N,r}^{\ord}\mathsf R_{N,r}
   \mathcal P_{N,r}^{\ind}(t)
   \mathsf{Rem}_{N,r,K_{\mathrm{src}}}
 \Bigr\|_2.
\end{align*}
The first term is controlled by \cref{prof:lem:fd-low-source}; the second
and third terms converge to zero uniformly by
\cref{prof:lem:fd-high-source}.  Thus the fixed-$r$ comparison tends to zero
uniformly over deterministic sources $S_N\subset V_N$ with $|S_N|=k_N$ and
$s\in J$.  Since only finitely many degrees $r\le R$ occur, taking the
maximum proves \eqref{prof:eq:dynamic-wick-comparison}.
\end{proof}

\begin{theorem}[Fixed-degree chaos matching]
\label{prof:thm:fixed-degree-matching}
For every fixed $R\in\mathbb N_0$ and nonempty compact $J\subset\mathbb R$,
\begin{equation*}
 \sup_{\substack{S_N\subset V_N,\ |S_N|=k_N\\ s\in J}}
 \sum_{r=0}^R
 \|P_{N,r}h_{N,t_N(s)}^{S_N}-P_{N,r}g_{N,t_N(s)}^{S_N}\|_2^2
 \longrightarrow0.
\end{equation*}
\end{theorem}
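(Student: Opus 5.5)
The plan is to combine the static comparison \cref{prof:prop:static-fixed-degree} with the dynamic comparison \eqref{prof:eq:dynamic-wick-comparison} through the common Wick coordinate $\mathscr W_{N,r}(m_{N,t_N(s)}^{S_N})$, and then to transfer the result to slice norms using the exact chaos isometry. Fix $R$ and $J$, and take $N$ large enough that $R\le\ell_N$, that $t_N(s)\ge0$, and that the calibrated fields $v_{N,t_N(s)}^{S_N}$ exist for all $s\in J$. By \cref{prof:lem:one-particle-bounds} and \cref{prof:lem:calibration}, the latter holds uniformly over sources, since $\|m_{N,t_N(s)}\|_\infty\le C_J n_N^{-1/2}\le\delta_{\rho_0}$ eventually.

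For each $0\le r\le R$, the triangle inequality in $\mathcal K_{N,r}^{\ord}$ gives
\[
 \|\Psi_{N,r}(h)-\Psi_{N,r}(g)\|_2
 \le\|\Psi_{N,r}(h)-\mathscr W_{N,r}(m)\|_2
 +\|\mathscr W_{N,r}(m)-\Psi_{N,r}(g)\|_2,
\]
where $h=h_{N,t_N(s)}^{S_N}$, $g=g_{N,t_N(s)}^{S_N}$, and $m=m_{N,t_N(s)}^{S_N}$. The first term tends to zero uniformly over sources, $s\in J$, and $r\le R$ by \eqref{prof:eq:dynamic-wick-comparison}. The second tends to zero uniformly by \eqref{prof:eq:static-wick-comparison}.

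It remains to pass from coordinates to projections. The map $\Psi_{N,r}$ is linear, being $\sqrt{r!}\,U_{N,r}^*P_{N,r}$, and $h$ and $g$ both lie in $L^2(\pi_N)$. Hence \eqref{prof:eq:exact-chaos-isometry} gives
\[
 \|P_{N,r}h-P_{N,r}g\|_2^2=\frac1{r!}\|\Psi_{N,r}(h)-\Psi_{N,r}(g)\|_2^2
 \le\|\Psi_{N,r}(h)-\Psi_{N,r}(g)\|_2^2.
\]
Summing over the finitely many degrees $r\le R$ then gives the claim.

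There is no real obstacle here, since the analytic work lives in the two cited propositions. The only points to check are the following. First, the uniform suprema in those propositions are taken over the same class of sources and the same window $J$, so the combined supremum still tends to zero. Second, the density-chaos interface used inside \eqref{prof:eq:dynamic-wick-comparison} applies to $h$ as a genuine probability density, and the interface used inside \eqref{prof:eq:static-wick-comparison} applies to $g$ as a genuine probability density. Third, linearity of $\Psi_{N,r}$ allows us to subtract the two coordinates directly.
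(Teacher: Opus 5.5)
Your proposal is correct and matches the paper's proof: you pass through the common Wick coordinate $\mathscr W_{N,r}(m_{N,t_N(s)}^{S_N})$ using \eqref{prof:eq:dynamic-wick-comparison} and \eqref{prof:eq:static-wick-comparison}, then apply the exact chaos isometry \eqref{prof:eq:exact-chaos-isometry} to $h-g$. The only cosmetic difference is in the final summation: you bound $1/r!\le1$ and sum $R+1$ terms, while the paper bounds $\sum_{r\le R}1/r!$ by $e$ times the maximum over $r$.
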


\begin{proof}
Fix $R\in\mathbb N_0$ and a nonempty compact $J\subset\mathbb R$.
For $t=t_N(s)$ and $m=m_{N,t}^{S_N}$, the triangle inequality gives
\[
 \|\Psi_{N,r}(h_{N,t}^{S_N})-\Psi_{N,r}(g_{N,t}^{S_N})\|_2
 \le
 \|\Psi_{N,r}(h_{N,t}^{S_N})-\mathscr W_{N,r}(m)\|_2
 +
 \|\Psi_{N,r}(g_{N,t}^{S_N})-\mathscr W_{N,r}(m)\|_2.
\]
Taking the maximum over $0\le r\le R$ and the supremum over the admissible
sources and $s\in J$, the first term tends to zero by
\eqref{prof:eq:dynamic-wick-comparison}, while the second tends to zero by
\eqref{prof:eq:static-wick-comparison}.  Hence
\begin{equation}
 \sup_{\substack{S_N\subset V_N,\ |S_N|=k_N\\ s\in J}}
 \max_{0\le r\le R}
 \|\Psi_{N,r}(h_{N,t_N(s)}^{S_N})
   -\Psi_{N,r}(g_{N,t_N(s)}^{S_N})\|_2
 \longrightarrow0.
 \label{prof:eq:fixed-degree-uniform-conv}
\end{equation}

By the exact chaos isometry \eqref{prof:eq:exact-chaos-isometry},
\[
 \sum_{r=0}^R\|P_{N,r}(h-g)\|_{L^2(\pi_N)}^2
 =\sum_{r=0}^R\frac1{r!}
   \|\Psi_{N,r}(h)-\Psi_{N,r}(g)\|_2^2
 \le e\max_{0\le r\le R}
   \|\Psi_{N,r}(h)-\Psi_{N,r}(g)\|_2^2.
\]
Apply this with $h=h_{N,t_N(s)}^{S_N}$ and $g=g_{N,t_N(s)}^{S_N}$,
take the same supremum, and use
\eqref{prof:eq:fixed-degree-uniform-conv}.
\end{proof}

The fixed-degree matching theorem \cref{prof:thm:fixed-degree-matching}
closes the bounded-degree comparison.  The canonical tilt tail was already
established in \cref{prof:lem:tilt-tail}.  The remaining input is a
source-uniform high-degree bound for the exclusion density, supplied by the
warm-start module below.

\section{Profile-coordinate warm start: target and conventions}
\label{prof:sec:warm-start}

Throughout the warm-start module, $V_N=\T_N^D$, the density assumption
\eqref{prof:eq:density-window} remains in force, and $t_N(s)$ is the
global time from \eqref{prof:eq:profile-coordinate}.  Since $N$ is large but fixed, we use the shorter notation $n=n_N$, $k=k_N$, $\rho=\rho_N$, $\mathcal L_1=L_N^{\RW}$, and $\gamma=\gamma_N$.
Here a \emph{warm start} means a source-uniform $L^2(\pi_N)$ bound for the exclusion density at an earlier fixed profile coordinate; see \cref{prof:thm:warm-start} below.

\localheading{Abbreviations for the warm-start module.}
Functions on
\(\Omega_{N,k}\) are measured in \(L^2(\pi_N)\), ordered pair functions
are measured in
\(\mathsf X_{N,2}^{\ord}=L^2(\Omega_{N,2}^{\ord},\upsilon_{N,2}^{\ord})\),
and one-particle fields in the profile coordinate estimates are measured
with \(\|\cdot\|_{2,\mathrm{av}}\), unless counting measure is explicitly
indicated.  Recall from \eqref{prof:eq:probability-fourier-basis} that
\(e_p(x)=e^{2\pi i p\cdot x/N}=\sqrt n\,\chi_p(x)\).  Thus
\((e_p)_p\) is orthonormal for the uniform probability measure on \(V_N\),
whereas \((\chi_p)_p\) is the counting measure normalization used in
\cref{prof:cor:first-shell-profile}.
We abbreviate the one-particle quantities from
\eqref{prof:eq:one-particle-energy-oscillation} by
\begin{equation}
\mathcal{E}_1(f)
:=\mathcal E_N^{\RW}(f)
=
 \langle f,\mathcal L_1f\rangle_{\mathrm{av}}
 =\frac1{2n}\sum_{x\sim y}|f(x)-f(y)|^2,
 \label{prof:eq:local-normalization}
\end{equation}
and
\begin{equation}
 \omega_1(f):=\omega_N(f)=\max_{x\sim y}|f(x)-f(y)|.
 \label{prof:eq:omega1}
\end{equation}
The edge sum in \eqref{prof:eq:local-normalization} is over ordered neighboring pairs.

For a deterministic \(k\)-set \(S\subset V_N\), define $F_S=\one_S-\rho$; then $n^{-1}\sum_xF_S(x)=0$ and $\|F_S\|_{2,\mathrm{av}}^2=\rho(1-\rho)$.  We abbreviate \(h_t^S=h_{N,t}^S\).

\begin{theorem}[Profile-coordinate warm start]
\label{prof:thm:warm-start}
For every fixed profile coordinate $s_0<-1$, there is
$C_{D,\rho_0,s_0}<\infty$ such that for all sufficiently large $N$,
\begin{equation}
\sup_{\substack{S\subset V_N\\ |S|=k}}
\norm{h_{t_N(s_0)}^S}_{L^2(\pi_N)}^2
\le C_{D,\rho_0,s_0}.
 \label{prof:eq:warm-start}
\end{equation}
\end{theorem}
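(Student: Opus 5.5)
The plan is to prove \eqref{prof:eq:warm-start} in two stages, working with the exact Hoeffding decomposition $\|h_t^S\|_{L^2(\pi_N)}^2=\sum_{r=0}^{\ell_N}\|P_{N,r}h_t^S\|_2^2$. We first obtain an \emph{exponentially weighted} bound at an earlier profile coordinate $s_1<s_0$. We then propagate it forward to $s_0$ using the degree-$r$ spectral gap of $L_N^{\SEP}$ on $\Hdeg_{N,r}$. The weighted bound has the form
\[
 \sup_{|S|=k}\sum_{r=0}^{\ell_N}w^r\|P_{N,r}h_{t_N(s_1)}^S\|_2^2\le C_{D,\rho_0,s_1},
 \qquad w>1 .
\]
Since $L_N^{\SEP}$ commutes with $P_{N,r}$, the propagation step is
\[
 \|P_{N,r}h_{t_N(s_0)}^S\|_2^2\le e^{-2\lambda_r(t_N(s_0)-t_N(s_1))}\|P_{N,r}h_{t_N(s_1)}^S\|_2^2,
\]
where $\lambda_r:=\inf\Spec(L_N^{\SEP}|_{\Hdeg_{N,r}})$. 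If $\lambda_r\ge c_D\,r\gamma_N$ for \emph{all} $r\le\ell_N$, then $2\lambda_r(t_N(s_0)-t_N(s_1))\ge c_Dr(s_0-s_1)$. Choosing $s_0-s_1$ so that $e^{-c_D(s_0-s_1)}\le w^{-1}$ then gives \eqref{prof:eq:warm-start}.

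\textbf{Linear-in-degree gap.} This is needed uniformly in $r$, not just at fixed degree, so \cref{prof:prop:fd-top-gap} is not enough. I would use the equivariant harmonic lift \eqref{prof:eq:harmonic-lift-permutation-equivariance} to identify the complete-graph exclusion eigenvalue on $\Hdeg_{N,r}$, which is $r(n_N-r+1)$ before normalization. This is the computation announced for \cref{prof:sec:chaos-weight}. I would then compare the torus exclusion Dirichlet form with $c\,\gamma_N n_N^{-1}$ times the complete-graph form. The comparison would use a Diaconis--Saloff-Coste path comparison for transpositions, realizing each transposition as a product of nearest-neighbor swaps along a path. This paper has only the torus diameter $\sim N$ available, so the path argument needs the congestion bound of order $N^{D+2}$, which yields a constant of order $\gamma_N/n_N$ up to a factor depending on $D$. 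Since the comparison holds at the level of Dirichlet forms and both generators preserve every $\Hdeg_{N,r}$, it yields $\lambda_r\ge c_D\gamma_N\,r(n_N-r+1)/n_N\ge c_D'\,r\gamma_N$ for $r\le\ell_N\le n_N/2$.

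\textbf{Weighted bound at $s_1$.} For fixed $r$, \cref{prof:lem:density-chaos-interface} gives $\|P_{N,r}h\|_2^2\le(\alpha_{N,r}^2/r!)\,\|\mathsf{Top}^{\ord}_{N,r}\mathcal M_{N,r}[h]\|_2^2$. Here $\alpha_{N,r}^2\le C^r$ by \eqref{prof:eq:chaos-normalization} on the density window. I would therefore bound the harmonic parts of the normalized centered $r$-point moments of $\mu_{t_N(s_1)}^S$ by $C^r r!$ times a quantity summable against $w^r/r!$. To do this I would pair $\mathcal M_{N,r}$ with a harmonic kernel $\kappa$ and write the pairing as $\mathbb E_\mu\bigl[\sum_{B\subseteq\eta,|B|=r}\overline{\kappa(B)}\bigr]$ by \eqref{prof:eq:chaosidentical}. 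Then I would use \cite[Proposition~5.1]{BBL09}: $\mu_t^S$ is Strong--Rayleigh, being a point mass evolved by exclusion, so every linear count $\sum_{x\in A}\eta_x$ is Poisson--binomial. Its centered moments of order $r$ are controlled by $r!$ times powers of its variance and mean discrepancy. By \eqref{prof:eq:profile-one-particle-explicit}, the mean discrepancy is the one-particle profile $m_{N,t}$, whose counting norm is $O(e^{-s_1/2})$. The variance discrepancy from $\beta_N|A|$ is the genuinely degree-two pair discrepancy. I would control the latter by a pair energy estimate: a Duhamel/energy argument for the two-particle hard-core semigroup, using the blocked-edge source and the trace bound of \cref{prof:lem:fd-trace} from the collision-diagonal estimate, integrated over the window before $t_N(s_1)$. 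The resulting row bound is $\sup_x\sum_y|\text{pair discrepancy}(x,y)|\lesssim n_N^{-1}$. Polarizing over sets $A$ and Fourier-truncating should then reconstruct the harmonic projection and give $\|P_{N,r}h_{t_N(s_1)}\|_2^2\le (Ce^{-s_1})^r/r!$ up to a geometric factor. Taking $s_1$ very negative is the wrong direction, since $e^{-s_1}$ grows, so I would instead keep $s_1<s_0$ fixed and accept a constant $C_{s_1}$; the sum $\sum_r w^rC_{s_1}^r/r!$ is finite for every $w$, and any $w>1$ suffices.

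\textbf{Main obstacle.} I expect the hardest step to be this last one: turning the Strong--Rayleigh scalar information about counts into $L^2$ control of the \emph{harmonic} $r$-point moments uniformly in $r$ up to $\ell_N$. That requires the pair estimate to be sharp at the row level. The restriction $s_0<-1$ is presumably where the available room between $s_1$ and $s_0$ is fixed. I would first try to prove the bound for $\kappa$ of product form $\mathsf{Top}\,f^{\otimes r}$ and then extend it by a duality/Gram argument.
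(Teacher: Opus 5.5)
Your architecture is the paper's: a chaos generating function at an earlier coordinate $s_1$, a degree-linear gap, Strong--Rayleigh preservation \cite[Proposition~5.1]{BBL09}, and a pair-energy/row estimate. Your gap argument (complete-graph eigenvalue $r(n_N-r+1)/n_N$ plus path comparison with congestion $O(N^{D+2})$) matches \cref{prof:prop:complete-graph-comparison,prof:lem:weight-propagation}.

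The gap is the weighted bound at $s_1$, and it has two parts.

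First, your two stages do not fit together. A bound $\sum_r w^r\|P_{N,r}h^S_{t_N(s_1)}\|_2^2\le C$ with $w\ge1$ (a fortiori $\|P_{N,r}h\|_2^2\le C_{s_1}^r/r!$) would already finish the proof at $s_1$, because $t\mapsto\|h^S_t\|_{L^2(\pi_N)}$ is nonincreasing. Propagation multiplies the weight by $e^{c_D(s_0-s_1)}$, so it only does work when $w<1$, and the condition you need is $e^{-c_D(s_0-s_1)}\le w$, not $\le w^{-1}$.

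Second, nothing you describe produces $1/r!$ decay of the harmonic parts uniformly in $r\le\ell_N$. ``Polarizing over sets $A$, Fourier truncation, duality/Gram'' would have to create an exact cancellation under $\mathsf{Top}^{\ord}_{N,r}$, and you give no mechanism for it. The obstruction is already visible at equilibrium. For fixed even $r$, $\mathcal M_{N,r}[1](X)\to(-1)^{r/2}(r-1)!!$, since each pair carries the normalized covariance $-1$. Hence $\frac{\alpha_{N,r}^2}{r!}\|\mathcal M_{N,r}[1]\|_\infty^2\to(r-1)!!/r!!\asymp r^{-1/2}$, which is not even summable, although $P_{N,r}1=0$.

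Your claimed row bound $\lesssim n_N^{-1}$ is also too strong for the full covariance discrepancy. Because $\sum_y\eta_y=k_N$, the row sums over $y\ne x$ of $\operatorname{Cov}_{\mu^S_t}(\eta_x,\eta_y)-\operatorname{Cov}_{\pi_N}(\eta_x,\eta_y)$ equal exactly $-(1-2\rho_N)m_t(x)+m_t(x)^2$. For a slab with $\rho\neq\frac12$ this is of order $e^{-s_1/2}n_N^{-1/2}$.

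The missing idea is to drop the $1/r!$ target and accept a pointwise envelope. Apply Strong--Rayleigh only to the $r$ sites of a distinct tuple $X$. The paper uses the dual $r$-particle exclusion started from $\underline X$; your $\mu^S_t$ restricted to $\underline X$ works the same way. The count is then Poisson--binomial with $r$ parameters $\theta_j$. Since $\prod_i(I_i-\rho)=(-\rho)^rz_\rho^{\mathsf N_S(t)}$ with $z_\rho=-(1-\rho)/\rho$, you get the exact factorization $\mathcal M_{N,r}[h^S_t](X)=(n_N/\beta_N)^{r/2}\prod_j(\theta_j-\rho)$.

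Matching the Poisson--binomial mean and variance with those of the exclusion process (computed by up--intertwining) expresses $\frac{n_N}{\beta_N}\sum_j(\theta_j-\rho)^2$ through $\sum_i\varphi_t(x_i)^2$ and $\sum_{i<j}\overline G_2(t;x_i,x_j)$. The constant layer $c_{s_1}\approx-1$ and the degree-one layer of $\overline G_2$ are computed exactly (\cref{prof:lem:pair-layers}). Only the top layer uses the row bound \eqref{prof:eq:terminal-row}, $\|G_2(t_N(s_1))\|_{\mathrm{row}}\le Ce^{-s_1}n_N^{-1/2}$, which is an $O(n_N^{-1/2})$ row sum at the covariance level.

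AM--GM then gives \eqref{prof:eq:moment-envelope}, $\|\mathcal M_{N,r}\|_\infty^2\le[C(r+e^{-2s_1})]^r$, and the $r^r$ is forced by the constant layer. Combined with $\|\mathsf{Top}^{\ord}_{N,r}\mathcal M\|_2\le\|\mathcal M\|_\infty$, this makes $\|\widehat\Psi_{N,r}(h^S_{t_N(s_1)})\|_2^2$ only geometrically bounded. So you can sum only against $z_*^r$ for a small $z_*=z_*(D,\rho_0)<1$ (\cref{prof:prop:terminal-chaos-weight}). The linear gap over the fixed shift $s_0-s_1=c_D^{-1}\log(1/z_*)$ is exactly what raises the weight from $z_*$ to $1$.

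That is the repair your proposal needs: replace the target ($w>1$, decay $1/r!$) by the $L^\infty$ moment envelope with $w=z_*<1$. This also makes your propagation step essential rather than redundant.
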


The proof is deferred to the end of \cref{prof:sec:strong-rayleigh}.
Sections~7--10 establish the four inputs used there: the pair energy estimate
(\cref{prof:sec:pair-energy}), exponential weight propagation
(\cref{prof:sec:chaos-weight}), the terminal row bound
(\cref{prof:sec:row}), and the Strong--Rayleigh moment and terminal chaos
closure (\cref{prof:sec:strong-rayleigh}).

The only non-elementary external theorem specific to the warm-start argument is
\cite[Proposition~5.1]{BBL09}, invoked in \cref{prof:sec:strong-rayleigh}.

\section{A cutoff window pair energy estimate}
\label{prof:sec:pair-energy}

This section proves the pair energy estimate
\cref{prof:thm:pair-energy}.  We specialize to degree two the ordered tuple
space and operators defined in
\eqref{prof:eq:ordered-up-down-formulas},
\eqref{prof:eq:fd-hard-core-generator}, and
\eqref{prof:eq:restriction-adjoint}, using the following abbreviations:
\[
 \Omega_2:=\Omega_{N,2}^{\ord},
 \qquad
 \upsilon_2^{\ord}:=\upsilon_{N,2}^{\ord},
 \qquad
 \mathsf R_2:=\mathsf R_{N,2},
\]
\[
 \mathcal L_2^{\hc}:=\mathcal L_{N,2}^{\hc},
 \qquad
 \mathcal P_2(t):=\mathcal P_{N,2}^{\hc}(t) = e^{-t\mathcal L^{\hc}_2},
 \qquad
 \mathcal E_2(f):=\mathcal E_{N,2}^{\hc}(f)
 =\langle f,\mathcal L_2^{\hc}f\rangle_{L^2(\upsilon_2^{\ord})}.
\]
Also abbreviate \(\mathsf{Down}_{2\to1}:=\mathsf{Down}_{N,2\to1}\).  Recall from \eqref{prof:eq:ordered-up-down-formulas} that $(\mathsf{Down}_{2\to1}f)(x)=(n-1)^{-1}\sum_{y\ne x}f(x,y)$.
The symmetric ordered top-pair space is the already defined sector
\begin{equation*}
 \mathcal T_{N,2}^{\hc}
 =\left\{f\in L^2(\Omega_2,\upsilon_2^{\ord}):
 f(x,y)=f(y,x),\ \mathsf{Down}_{2\to1}f=0\right\}.
\end{equation*}
By the definition of $\mathcal T_{N,2}^{\hc}$ in
\cref{prof:sec:chaos-interface}, $\mathsf{Top}_{N,2}^{\ord}$ is its orthogonal
projection.  \eqref{prof:eq:HLcommute} with $r=2$ gives the local
specialization
\begin{equation}
 \mathsf{Top}_{N,2}^{\ord}\mathcal L_2^{\hc}=\mathcal L_2^{\hc}\mathsf{Top}_{N,2}^{\ord},
 \qquad
 \mathsf{Top}_{N,2}^{\ord}\mathcal P_2(t)=\mathcal P_2(t)\mathsf{Top}_{N,2}^{\ord}.
 \label{prof:eq:pair-top-commute}
\end{equation}

For a fixed deterministic source $S$, retain the one-particle profile notation
from \eqref{prof:eq:profile} and abbreviate
\begin{align*}
 m_t:=m_{N,t}^{S}=e^{-t\mathcal L_1}F_S,
 \qquad
 \varphi_t:=\sqrt{n/\beta_N}\,m_t
 =\mathcal M_{N,1}[h_t^S].
\end{align*}
Because $F_S$ is mean zero, $m_t$ converges to zero in $L^2$ as $t\to\infty$.
Moreover,
\[
 \frac{\dd}{\dd t}\|m_t\|_{2,\mathrm{av}}^2
 =-2\mathcal E_1(m_t).
\]
Therefore we have the following time-integrated one-particle energy bound
\begin{align}
 \int_0^\infty\mathcal E_1(m_\tau)\,\dd\tau
 =\frac12\|F_S\|_{2,\mathrm{av}}^2
 =\frac12\rho(1-\rho)
 \le\frac18.
 \label{prof:eq:integrated-one-particle-energy}
\end{align}

For a one-particle field $f$, define the contact source
\begin{equation*}
 \mathcal B[f](x,y)
 =\one_{\{x\sim y\}}(f(x)-f(y))^2.
\end{equation*}
Define the unprojected pair discrepancy and its top projection by
\begin{equation}
 \overline G_2(t)
 :=\mathcal P_2(t)\mathsf R_2(\varphi_0\otimes\varphi_0)
 -\mathsf R_2(\varphi_t\otimes\varphi_t),
 \qquad
 G_2(t):=\mathsf{Top}_{N,2}^{\ord}\overline G_2(t).
 \label{prof:eq:G2}
\end{equation}
Thus $\overline G_2$ is the discrepancy between hard-core pair dynamics and the
restriction to $\Omega_2$ of the independent two-walk evolution, both
started from the same initial data $\varphi_0\otimes\varphi_0$.

\begin{lemma}[Exact pair generator defect identity and Duhamel formula]
\label{prof:lem:pair-generator-defect-duhamel}
On ordered distinct pairs,
\begin{equation}
 (\mathcal L_2^{\hc}\mathsf R_2-\mathsf R_2\mathcal L_2^{\ind})(f\otimes f)
 =\mathcal B[f],
 \label{prof:eq:pair-generator-defect}
\end{equation}
where $\mathcal L_2^{\ind}=\mathcal L_1\otimes I+I\otimes \mathcal L_1$.  Consequently,
the function $G_2$ in
\eqref{prof:eq:G2} satisfies the differential equation
\begin{align}
G_2'(t) = -\mathcal L_2^{\hc} G_2(t) - \mathsf{Top}_{N,2}^{\ord} \mathcal B[\varphi_t],
 \label{prof:eq:G2eqn}
\end{align}
from which it follows that
\begin{equation}
 G_2(t)
 =-\mathsf{Top}_{N,2}^{\ord}\int_0^t\, \mathcal P_2(t-\tau)\mathcal B[\varphi_\tau] \,\dd\tau.
 \label{prof:eq:pair-duhamel}
\end{equation}
\end{lemma}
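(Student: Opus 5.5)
The plan is to verify the generator identity \eqref{prof:eq:pair-generator-defect} pointwise on a distinct ordered pair $(x,y)$. Then I will differentiate $\overline G_2$, apply the top projection, and integrate by Duhamel.

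\emph{Step 1: the defect identity.} Fix $x\ne y$. The independent generator gives
\[
\mathsf R_2\mathcal L_2^{\ind}(f\otimes f)(x,y)=\sum_{z\sim x}\bigl(f(x)-f(z)\bigr)f(y)+\sum_{w\sim y}f(x)\bigl(f(y)-f(w)\bigr).
\]
By \eqref{prof:eq:fd-hard-core-generator}, the hard-core generator gives the same sums with the terms $z=y$ (in the first sum) and $w=x$ (in the second) removed. These excluded terms occur only when $x\sim y$. Hence $\mathcal L_2^{\hc}\mathsf R_2-\mathsf R_2\mathcal L_2^{\ind}$ applied to $f\otimes f$ equals minus the sum of the omitted terms:
\[
-\one_{\{x\sim y\}}\Bigl[(f(x)-f(y))f(y)+f(x)(f(y)-f(x))\Bigr]
=\one_{\{x\sim y\}}\bigl(f(x)-f(y)\bigr)^2=\mathcal B[f](x,y).
\]
The sign must be tracked carefully here, since both generators are positive.

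\emph{Step 2: the evolution equation.} Because $\varphi_t=e^{-t\mathcal L_1}\varphi_0$, we have $\frac{\dd}{\dd t}(\varphi_t\otimes\varphi_t)=-\mathcal L_2^{\ind}(\varphi_t\otimes\varphi_t)$. Differentiating the second term of $\overline G_2$ and using Step 1 with $f=\varphi_t$ gives
\[
-\frac{\dd}{\dd t}\mathsf R_2(\varphi_t\otimes\varphi_t)=\mathsf R_2\mathcal L_2^{\ind}(\varphi_t\otimes\varphi_t)=\mathcal L_2^{\hc}\mathsf R_2(\varphi_t\otimes\varphi_t)-\mathcal B[\varphi_t].
\]
The first term of $\overline G_2$ has derivative $-\mathcal L_2^{\hc}\mathcal P_2(t)\mathsf R_2(\varphi_0\otimes\varphi_0)$. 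Adding the two,
\[
\overline G_2'(t)=-\mathcal L_2^{\hc}\overline G_2(t)-\mathcal B[\varphi_t].
\]
Applying $\mathsf{Top}_{N,2}^{\ord}$ and using the commutation \eqref{prof:eq:pair-top-commute} yields \eqref{prof:eq:G2eqn}. Note that $\mathcal B[f]$ is symmetric, so it lies in the symmetric space on which $\mathsf{Top}$ acts; $\overline G_2$ is symmetric as well.

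\emph{Step 3: Duhamel.} We have $\overline G_2(0)=0$, hence $G_2(0)=0$. By variation of constants on the finite-dimensional space, $\frac{\dd}{\dd\tau}\bigl[\mathcal P_2(t-\tau)G_2(\tau)\bigr]=-\mathcal P_2(t-\tau)\mathsf{Top}\,\mathcal B[\varphi_\tau]$. Integrating over $\tau\in[0,t]$ and commuting $\mathsf{Top}$ past $\mathcal P_2$ gives \eqref{prof:eq:pair-duhamel}.

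The only real obstacle is the sign and bookkeeping in Step 1, in particular checking that the two blocked terms combine into exactly a square with coefficient one. The remaining steps are routine.
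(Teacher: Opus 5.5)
Your proposal is correct and follows essentially the same route as the paper. Both arguments compute that the two blocked moves at a contact pair contribute $-\one_{\{x\sim y\}}(f(x)-f(y))^2$ to $\mathcal L_2^{\ind}(f\otimes f)$, then differentiate using $\partial_t\varphi_t=-\mathcal L_1\varphi_t$ and the commutation \eqref{prof:eq:pair-top-commute}, and finally apply variation of constants from $G_2(0)=0$. The only cosmetic difference is that you derive the evolution equation for $\overline G_2$ before projecting, whereas the paper differentiates $G_2$ directly and adds and subtracts $\mathcal L_2^{\hc}\mathsf{Top}_{N,2}^{\ord}\mathsf R_2(\varphi_t\otimes\varphi_t)$.
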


\begin{proof}
Every independent move that preserves distinct occupancies also occurs in the
hard-core generator, and hence contributes nothing to the left-hand side of
\eqref{prof:eq:pair-generator-defect}.
This describes all such moves in the case $x\not\sim y$.
On the other hand, if $x\sim y$, the two suppressed independent moves contribute
\[
 \{f(x)f(y)-f(y)^2\}+\{f(x)f(y)-f(x)^2\}
 =-(f(x)-f(y))^2
\]
to $\mathcal L_2^{\ind}(f\otimes f)(x,y)$, while they are omitted by the hard-core
generator.
This proves \eqref{prof:eq:pair-generator-defect}.  Equivalently, the blocked
independent moves enter $\mathcal L_2^{\ind}(f\otimes f)$ with a negative sign,
so subtracting the independent generator from the hard-core generator produces
the positive contact source $\mathcal B[f]$.

Since $\partial_t\varphi_t=-\mathcal L_1\varphi_t$, tensorization gives
\[
 \partial_t\bigl[\mathsf R_2(\varphi_t\otimes\varphi_t)\bigr]
 =-\mathsf R_2\mathcal L_2^{\ind}(\varphi_t\otimes\varphi_t).
\]
Differentiating \eqref{prof:eq:G2}, using the commutation relation
\eqref{prof:eq:pair-top-commute}, and then adding and subtracting
$\mathcal L_2^{\hc}\mathsf{Top}_{N,2}^{\ord}\mathsf R_2(\varphi_t\otimes\varphi_t)$, gives
\begin{align*}
 G_2'(t)
 &=-\mathcal L_2^{\hc}\mathsf{Top}_{N,2}^{\ord}
   e^{-t\mathcal L_2^{\hc}}\mathsf R_2(\varphi_0\otimes\varphi_0)
   +\mathsf{Top}_{N,2}^{\ord}\mathsf R_2\mathcal L_2^{\ind}
     (\varphi_t\otimes\varphi_t)\\
 &=-\mathcal L_2^{\hc}G_2(t)
   -\mathsf{Top}_{N,2}^{\ord}
    \bigl(\mathcal L_2^{\hc}\mathsf R_2
          -\mathsf R_2\mathcal L_2^{\ind}\bigr)
     (\varphi_t\otimes\varphi_t)\\
 &=-\mathcal L_2^{\hc}G_2(t)
   -\mathsf{Top}_{N,2}^{\ord}\mathcal B[\varphi_t],
\end{align*}
where the last line uses \eqref{prof:eq:pair-generator-defect}.  This is
\eqref{prof:eq:G2eqn}.  Since $G_2(0)=0$, variation of constants gives
\eqref{prof:eq:pair-duhamel}.
\end{proof}

For every fixed profile coordinate $s_1\in\mathbb R$, the centralized one-particle estimate
\eqref{prof:eq:profile-one-particle-explicit} and
$\varphi_t=\sqrt{n/\beta_N}\,m_t$ give, for all sufficiently large $N$,
\begin{equation}
 \|\varphi_{t_N(s_1)}\|_{2,\mathrm{av}}^2+\|\varphi_{t_N(s_1)}\|_\infty^2
 \le C_{D,\rho_0}e^{-s_1}.
 \label{prof:eq:terminal-one-particle}
\end{equation}

\begin{lemma}[Contact-source bounds]
\label{prof:lem:source-estimates}
There is $C_{D,\rho_0}<\infty$ such that
\begin{align}
 \|\mathcal B[f]\|_2^2
 &\le \frac{2}{n-1}\,\omega_1(f)^2\mathcal E_1(f),
 \label{prof:eq:contact-square-exact}\\
 \|\mathcal B[\varphi_t]\|_2^2
 &\le C_{D,\rho_0}n\,\omega_1(m_t)^2\mathcal E_1(m_t),
 \qquad t\ge0.
 \label{prof:eq:source-master}
\end{align}
Consequently,
\begin{align}
 \|\mathcal B[\varphi_t]\|_2^2
 &\le C_{D,\rho_0}n\mathcal E_1(m_t),
 && t\ge0,
 \label{prof:eq:rough-source}\\
 \|\mathcal B[\varphi_t]\|_2^2
 &\le C_{D,\rho_0}n\gamma^2e^{-4\gamma t},
 && t\ge \gamma^{-1}.
 \label{prof:eq:late-source}
\end{align}
The first consequence is global in time, whereas the second uses the long-time smoothing estimate \eqref{prof:eq:long-one-particle}.
\end{lemma}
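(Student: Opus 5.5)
The plan is to prove \eqref{prof:eq:contact-square-exact} by direct counting, and then obtain the remaining bounds by rescaling and substituting the one-particle smoothing estimates.

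\emph{Step 1: the exact bound.} The norm $\|\cdot\|_2$ on $\mathsf X_{N,2}^{\ord}$ is probability-normalized over the $n(n-1)$ ordered distinct pairs. The source $\mathcal B[f]$ vanishes off the nearest-neighbor contact set. Hence
\[
 \|\mathcal B[f]\|_2^2=\frac1{n(n-1)}\sum_{x\sim y}(f(x)-f(y))^4 ,
\]
where the sum runs over ordered neighboring pairs. For each term we bound $(f(x)-f(y))^4\le\omega_1(f)^2(f(x)-f(y))^2$, which removes two powers using the oscillation. By \eqref{prof:eq:local-normalization}, the remaining ordered edge sum equals $2n\,\mathcal E_1(f)$. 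The factors of $n$ cancel, and we obtain exactly $\frac{2}{n-1}\omega_1(f)^2\mathcal E_1(f)$.

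\emph{Step 2: the rescaled bound.} Set $f=\varphi_t=\sqrt{n/\beta_N}\,m_t$. Both $\omega_1$ and $\mathcal E_1^{1/2}$ are homogeneous of degree one, so
\[
 \omega_1(\varphi_t)^2\mathcal E_1(\varphi_t)=\frac{n^2}{\beta_N^2}\,\omega_1(m_t)^2\mathcal E_1(m_t).
\]
The density window \eqref{prof:eq:density-window} gives $\beta_N\ge\beta_*(\rho_0)>0$. Also $n^2/(n-1)\le 2n$. Together these give \eqref{prof:eq:source-master}.

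\emph{Step 3: the rough bound \eqref{prof:eq:rough-source}.} We need a uniform bound on $\omega_1(m_t)$. Since $e^{-t\mathcal L_1}$ is a Markov semigroup, it contracts $L^\infty$. Because $|F_S|\le1$, we have $\|m_t\|_\infty\le1$, hence $\omega_1(m_t)\le2$. Substituting this into \eqref{prof:eq:source-master} gives \eqref{prof:eq:rough-source} for all $t\ge0$.

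\emph{Step 4: the late-time bound \eqref{prof:eq:late-source}.} For $t\ge\gamma^{-1}$, the estimate \eqref{prof:eq:long-one-particle} of \cref{prof:lem:one-particle-bounds} bounds both $\omega_N(m_t)^2$ and $\mathcal E_N^{\RW}(m_t)$ by $C_D\gamma e^{-2\gamma t}$. Their product is at most $C_D\gamma^2e^{-4\gamma t}$. Inserting this into \eqref{prof:eq:source-master} gives \eqref{prof:eq:late-source}.

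No step is a real obstacle. The only point needing care is the normalization bookkeeping: we must use ordered pairs and the ordered edge convention consistently, so that the constant $2/(n-1)$ comes out exactly.
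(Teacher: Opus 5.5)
Your proposal is correct and follows the paper's proof essentially step for step. It uses the exact ordered-pair count with $(f(x)-f(y))^4\le\omega_1(f)^2(f(x)-f(y))^2$, the rescaling $\varphi_t=\sqrt{n/\beta_N}\,m_t$ with $\beta_N\ge\beta_*(\rho_0)$, a uniform bound on $\omega_1(m_t)$, and finally the long-time estimate \eqref{prof:eq:long-one-particle}. The only cosmetic difference is that you bound $\omega_1(m_t)\le 2\|m_t\|_\infty\le 2$ via $L^\infty$ contraction, whereas the paper uses preservation of the interval $[-\rho,1-\rho]$ to get $\omega_1(m_t)\le 1$; the constant is absorbed either way.
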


\begin{proof}
By the probability normalization on $\Omega_2$,
\[
 \|\mathcal B[f]\|_2^2
 =\frac1{n(n-1)}\sum_{x\sim y}|f(x)-f(y)|^4.
\]
Since
$
 |f(x)-f(y)|^4
 \le \omega_1(f)^2|f(x)-f(y)|^2$,
and the edge sum is over ordered neighboring pairs,
\eqref{prof:eq:local-normalization} gives
\[
 \|\mathcal B[f]\|_2^2
 \le \frac{2}{n-1}\,\omega_1(f)^2\mathcal E_1(f),
\]
which is \eqref{prof:eq:contact-square-exact}.

Since $\varphi_t=\sqrt{n/\beta_N}\,m_t$,
we have the scaling identities
$
 \omega_1(\varphi_t)^2
 =\frac{n}{\beta_N}\omega_1(m_t)^2
$
and
$
 \mathcal E_1(\varphi_t)
 =\frac{n}{\beta_N}\mathcal E_1(m_t)
 $.
Therefore \eqref{prof:eq:contact-square-exact} gives
\[
 \|\mathcal B[\varphi_t]\|_2^2
 \le
 \frac{2n^2}{(n-1)\beta_N^2}
 \omega_1(m_t)^2\mathcal E_1(m_t)
 \le C_{D,\rho_0}n\,\omega_1(m_t)^2\mathcal E_1(m_t),
\]
where the last inequality uses $\beta_N\asymp_{\rho_0}1$.
This is \eqref{prof:eq:source-master}.  Because
$-\rho\le F_S\le1-\rho$ and the Markov semigroup preserves this interval,
$\operatorname{osc}(m_t)\le1$, and hence $\omega_1(m_t)\le1$.
Substitution into \eqref{prof:eq:source-master} proves \eqref{prof:eq:rough-source}.

The long-time estimate \eqref{prof:eq:long-one-particle}, proved in
\cref{prof:lem:one-particle-bounds}, is exactly
$\omega_1(m_t)^2+\mathcal E_1(m_t)\le C_D\gamma e^{-2\gamma t}$ in the present abbreviations.  Combining it with
\eqref{prof:eq:source-master} gives \eqref{prof:eq:late-source}.
\end{proof}

\subsection{High pair spectrum}

Let
\begin{equation*}
 \mathcal L_{2,\mathrm{top}}^{\hc}
 :=\mathcal L_2^{\hc}\big|_{\mathcal T_{N,2}^{\hc}}.
\end{equation*}
Fix $H\ge1$ and define, by functional calculus on
$\mathcal T_{N,2}^{\hc}$, the high and low spectral projections
\begin{equation}
 \Pi_{\mathrm{hi},H}:=\one_{[H\gamma,\infty)}
   (\mathcal L_{2,\mathrm{top}}^{\hc}),
 \qquad
 \Pi_{\mathrm{lo},H}:=\one_{[0,H\gamma)}
   (\mathcal L_{2,\mathrm{top}}^{\hc}).
 \label{prof:eq:pair-spectral-projections}
\end{equation}
By the defining equation \eqref{prof:eq:G2}, $G_2(t)\in\mathcal T_{N,2}^{\hc}$, and we put $G_{2,\mathrm{hi}}(t)=\Pi_{\mathrm{hi},H}G_2(t)$ and $G_{2,\mathrm{lo}}(t)=\Pi_{\mathrm{lo},H}G_2(t)$.

\begin{lemma}[High-spectrum semigroup smoothing bound]
\label{prof:lem:high-kernel}
For fixed $D\ge2$ and $H\ge1$, there is $C_{D,H}<\infty$ such that
\begin{equation}
 \| (\mathcal L_2^{\hc})^{1/2}\Pi_{\mathrm{hi},H}e^{-u\mathcal L_2^{\hc}}\|_{2\to2}
 \le \mathsf K_H(u):=C_{D,H}(1+u)^{-1/2}e^{-H\gamma u/4}
 \label{prof:eq:high-kernel}
\end{equation}
for all $u\ge0$.  Moreover,
\begin{equation}
 \int_0^\infty\, \mathsf K_H(u)\,\dd u\le C_{D,H}\gamma^{-1/2}.
 \label{prof:eq:high-kernel-L1}
\end{equation}
\end{lemma}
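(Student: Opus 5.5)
The plan is pure spectral calculus for the self-adjoint operator $\mathcal L_{2,\mathrm{top}}^{\hc}$ on $\mathcal T_{N,2}^{\hc}$. By \eqref{prof:eq:pair-top-commute}, $\mathcal L_2^{\hc}$ and $e^{-u\mathcal L_2^{\hc}}$ preserve the top sector, so on its range $\Pi_{\mathrm{hi},H}$ is a spectral projection commuting with both. (Should \eqref{prof:eq:high-kernel} be read on the full ordered space, we interpret $\Pi_{\mathrm{hi},H}$ as precomposed with $\mathsf{Top}_{N,2}^{\ord}$, which is contractive and commutes with the semigroup, so nothing changes.) The operator norm is then
\[
 \sup_{\lambda\ge H\gamma}\lambda^{1/2}e^{-u\lambda}.
\]
I would bound this supremum by an elementary scalar argument, splitting the exponential in half.

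First, $\lambda^{1/2}e^{-u\lambda/2}\le \min\{(eu)^{-1/2},\,\lambda_{\max}^{1/2}\}$. Here $\lambda_{\max}\le \|\mathcal L_2^{\hc}\|\le 2\cdot 2D$, since each of the two particles has at most $2D$ moves and each Dirichlet term is bounded. Hence $\lambda^{1/2}e^{-u\lambda/2}\le C_D(1+u)^{-1/2}$ for all $u\ge0$, using the bounded top of the spectrum when $u\le1$ and $(eu)^{-1/2}$ when $u\ge1$.

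Second, the remaining factor satisfies $e^{-u\lambda/2}\le e^{-H\gamma u/2}\le e^{-H\gamma u/4}$ for $\lambda\ge H\gamma$. Multiplying the two factors gives \eqref{prof:eq:high-kernel}. The stated rate $H\gamma u/4$ leaves slack, so no care with constants is needed.

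For \eqref{prof:eq:high-kernel-L1}, integrate:
\[
 \int_0^\infty (1+u)^{-1/2}e^{-cu}\,\dd u\le \int_0^\infty u^{-1/2}e^{-cu}\,\dd u=\sqrt{\pi/c},
\]
with $c=H\gamma/4$. This gives $C_{D,H}\gamma^{-1/2}$, using $H\ge1$.

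There is no real obstacle; the only point to check is that $\lambda_{\max}$ is bounded uniformly in $N$, so that the small-$u$ region does not blow up. That bound follows from the generator formula \eqref{prof:eq:fd-hard-core-generator}.
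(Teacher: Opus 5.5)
Your proof is correct and follows the paper's route: spectral calculus on the top pair sector, an $N$-uniform bound on the top of the spectrum to control small $u$, and splitting $e^{-u\lambda}$ in half. Two details differ and both are improvements: splitting the exponential for all $u$ (rather than only for $u\ge1$) gives a constant independent of $H$, and the Gamma integral replaces the paper's split at $u=\gamma^{-1}$. One slip: the uniform bound is $\lambda_{\max}\le 8D$ (vertex degree at most $4D$, so absolute row sum at most $8D$), not $2\cdot 2D$, because a graph Laplacian's norm can approach twice its maximal degree; since only uniform boundedness in $N$ is used, the argument is unaffected.
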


\begin{proof}
The ordered two-particle exclusion graph has degree at most $4D$, so the corresponding graph Laplacian $\mathcal L_2^{\hc}$ has maximum absolute row sum at most $8D$.
This implies the spectral radius bound $\Spec(\mathcal L_2^{\hc})\subset[0,8D]$.
If $\Pi_{\mathrm{hi},H}=0$, then \eqref{prof:eq:high-kernel} is immediate.
Otherwise $H\gamma\le 8D$, and spectral calculus gives
\[
 \left\|
 (\mathcal L_2^{\hc})^{1/2}\Pi_{\mathrm{hi},H}e^{-u\mathcal L_2^{\hc}}
 \right\|_{2\to2}
 \le\sup_{\lambda\in[H\gamma,8D]}\sqrt\lambda\,e^{-u\lambda}.
\]
Note the elementary bound $\sup_{\lambda\ge0}\sqrt\lambda e^{-u\lambda/2}=C u^{-1/2}$. 
As a result, for $u\ge1$ we have
\[
 \sqrt\lambda\,e^{-u\lambda}
 =\bigl(\sqrt\lambda\,e^{-u\lambda/2}\bigr)e^{-u\lambda/2}
 \le C u^{-1/2}e^{-H\gamma u/2}.
\]
For $0\le u\le1$, we apply the crude spectral upper bound to get
$
 \sqrt\lambda e^{-u\lambda}\le\sqrt{8D}$.
On the same interval,
$(1+u)^{-1/2}e^{-H\gamma u/4}\ge
2^{-1/2}e^{-2DH}$, so, after enlarging $C_{D,H}$,
this is bounded by the right-hand side of
\eqref{prof:eq:high-kernel}.
To make the $\gamma^{-1/2}$ scale explicit, split the integral at
$u=\gamma^{-1}$:
\[
 \int_0^\infty (1+u)^{-1/2}e^{-H\gamma u/4}\,\dd u
 \le \int_0^{\gamma^{-1}}(1+u)^{-1/2}\,\dd u
 +\int_{\gamma^{-1}}^\infty u^{-1/2}e^{-H\gamma u/4}\,\dd u
 \le C_H\gamma^{-1/2},
\]
where the second integral is estimated after the change of variables
$v=\gamma u$.  This proves \eqref{prof:eq:high-kernel-L1}.
\end{proof}

\begin{proposition}[High-spectrum cutoff estimate]
\label{prof:prop:high-spectrum}
Fix $A>0$.  One can choose $H=H(D,A)$ so that, uniformly for
\begin{equation}
 t\ge A\gamma^{-1}\log n,
 \label{prof:eq:lower-window}
\end{equation}
all deterministic $S\subset V_N$ with $|S|=k$, and all sufficiently large $N$,
\begin{equation}
 \mathcal E_2(G_{2,\mathrm{hi}}(t))
 \le C_{D,\rho_0,A}\,n\gamma e^{-4\gamma t}.
 \label{prof:eq:high-spectrum}
\end{equation}
\end{proposition}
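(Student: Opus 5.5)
We start from the Duhamel formula \eqref{prof:eq:pair-duhamel}. Since $\Pi_{\mathrm{hi},H}$ commutes with $\mathcal P_2$ and with $\mathsf{Top}_{N,2}^{\ord}$ on the top sector, we have
\[
 (\mathcal L_2^{\hc})^{1/2}G_{2,\mathrm{hi}}(t)
 =-\int_0^t(\mathcal L_2^{\hc})^{1/2}\Pi_{\mathrm{hi},H}e^{-(t-\tau)\mathcal L_2^{\hc}}\mathsf{Top}_{N,2}^{\ord}\mathcal B[\varphi_\tau]\,\dd\tau .
\]
The plan is to apply Minkowski's inequality and \cref{prof:lem:high-kernel}, using that $\mathsf{Top}_{N,2}^{\ord}$ is contractive. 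This gives
\[
 \mathcal E_2(G_{2,\mathrm{hi}}(t))^{1/2}\le\int_0^t\mathsf K_H(t-\tau)\,\|\mathcal B[\varphi_\tau]\|_2\,\dd\tau .
\]
We then split the time integral at $\tau_0:=t/2$, using \cref{prof:lem:source-estimates} on each piece.

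\emph{Late piece} ($\tau\ge t/2$). Since $t/2\ge\gamma^{-1}$ for large $N$ by \eqref{prof:eq:lower-window}, the late-source bound \eqref{prof:eq:late-source} applies and gives $\|\mathcal B[\varphi_\tau]\|_2\le C\sqrt n\,\gamma e^{-2\gamma\tau}$. We bound $e^{-2\gamma\tau}\le e^{-2\gamma t}e^{2\gamma(t-\tau)}$ and choose $H\ge 16$, so that $\mathsf K_H(u)e^{2\gamma u}\le C(1+u)^{-1/2}e^{-H\gamma u/8}$. The $L^1$ bound \eqref{prof:eq:high-kernel-L1}, with $H/2$ in place of $H$, then gives a contribution $C\sqrt n\,\gamma\cdot\gamma^{-1/2}e^{-2\gamma t}=C\sqrt{n\gamma}\,e^{-2\gamma t}$. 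Squaring yields exactly the target $n\gamma e^{-4\gamma t}$.

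\emph{Early piece} ($\tau<t/2$). Here $t-\tau\ge t/2$, so the kernel satisfies $\mathsf K_H(t-\tau)\le C e^{-H\gamma t/8}$. The source is handled by Cauchy--Schwarz in $\tau$ together with the global bound \eqref{prof:eq:rough-source} and the integrated energy \eqref{prof:eq:integrated-one-particle-energy}:
\[
 \int_0^{t/2}\|\mathcal B[\varphi_\tau]\|_2\,\dd\tau\le\sqrt{t/2}\Bigl(Cn\int_0^\infty\mathcal E_1(m_\tau)\,\dd\tau\Bigr)^{1/2}\le C\sqrt{nt}.
\]
Hence the early contribution is at most $C\sqrt{nt}\,e^{-H\gamma t/8}$. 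We need this to be $\le C\sqrt{n\gamma}\,e^{-2\gamma t}$, i.e.
\[
 \sqrt{t/\gamma}\,e^{-(H/8-2)\gamma t}\le C .
\]
This is where \eqref{prof:eq:lower-window} and the choice of $H$ enter. We have $\gamma t\ge A\log n$ and $\gamma^{-1}\asymp n^{2/D}$, so $\sqrt{t/\gamma}\lesssim \gamma^{-1}\sqrt{\gamma t}\lesssim n^{2/D}\sqrt{\gamma t}$. Taking $H$ so large that $(H/8-2)A>2/D+1$ makes $e^{-(H/8-2)\gamma t}$ beat $n^{2/D}$. The remaining factor $\sqrt{\gamma t}\,e^{-c\gamma t}$ is bounded, uniformly for all larger $t$. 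This is how $H=H(D,A)$ is fixed.

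I expect the main obstacle to be this early-time piece. The rough source only carries the integrated energy, with no decay, so all the decay must come from the high spectral gap $H\gamma$ overcoming the polynomial loss $\gamma^{-1}$ and the factor $\sqrt t$. The lower window \eqref{prof:eq:lower-window} is exactly what makes this work. Everything else is uniform in $S$, because only \eqref{prof:eq:rough-source}, \eqref{prof:eq:late-source} and \eqref{prof:eq:integrated-one-particle-energy} are used, and these hold uniformly over $S$. Collecting both pieces proves \eqref{prof:eq:high-spectrum}.
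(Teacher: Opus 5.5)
Your proposal is correct and follows essentially the paper's route. You use the Duhamel formula, Minkowski with the high-spectrum kernel $\mathsf K_H$, and a split at $t/2$, with the rough source and integrated energy on the early half, the late source bound on the late half, and $H=H(D,A)$ chosen large via the lower window. The only difference is that the paper applies a weighted Cauchy--Schwarz to the whole convolution before splitting, whereas you split first and use an unweighted Cauchy--Schwarz only on the early half, trading the paper's $\gamma^{-1/2}$ prefactor for a harmless $\sqrt{t}$ factor.
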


\begin{proof}
We first make the spectral calculus reduction explicit.  Applying
$\Pi_{\mathrm{hi},H}$ to the Duhamel formula in
\cref{prof:lem:pair-generator-defect-duhamel}, namely \eqref{prof:eq:pair-duhamel}, using the commutation relation
\eqref{prof:eq:pair-top-commute}, and using that $\Pi_{\mathrm{hi},H}$ is a spectral
projection of $\mathcal L_{2,\mathrm{top}}^{\hc}$ gives
\begin{equation*}
 G_{2,\mathrm{hi}}(t)
 =-\int_0^t
 \Pi_{\mathrm{hi},H}\mathcal P_2(t-\tau)
 \mathsf{Top}_{N,2}^{\ord}\mathcal B[\varphi_\tau] \,\dd\tau.
\end{equation*}
The projection $\Pi_{\mathrm{hi},H}$ commutes with the semigroup and with
$(\mathcal L_2^{\hc})^{1/2}$ on the top-pair sector.  Therefore, using the definition of $\mathcal E_2$,
Minkowski's integral inequality, the kernel estimate from
\cref{prof:lem:high-kernel}, namely \eqref{prof:eq:high-kernel}, and the contractivity of the orthogonal projection $\mathsf{Top}_{N,2}^{\ord}$, we obtain
\begin{align*}
 \mathcal E_2(G_{2,\mathrm{hi}}(t))^{1/2}
 &=\left\|
 \int_0^t\,
 (\mathcal L_2^{\hc})^{1/2}\Pi_{\mathrm{hi},H}
 e^{-(t-\tau)\mathcal L_2^{\hc}}
 \mathsf{Top}_{N,2}^{\ord}\mathcal B[\varphi_\tau] \,\dd\tau
 \right\|_2 \notag\\
 &\le\int_0^t\,
 \left\|(\mathcal L_2^{\hc})^{1/2}\Pi_{\mathrm{hi},H}
 e^{-(t-\tau)\mathcal L_2^{\hc}}\right\|_{2\to2}
 \left\|\mathsf{Top}_{N,2}^{\ord}\mathcal B[\varphi_\tau]\right\|_2\,\dd\tau
 \le\int_0^t\,\mathsf K_H(t-\tau)\|\mathcal B[\varphi_\tau]\|_2\,\dd\tau.
\end{align*}
Weighted Cauchy--Schwarz, followed by
\eqref{prof:eq:high-kernel-L1}, now yields
\begin{align}
 \mathcal E_2(G_{2,\mathrm{hi}}(t))
 &\le \left(\int_0^t\, \mathsf K_H(t-\tau)\,\dd\tau\right)
 \left(\int_0^t\, \mathsf K_H(t-\tau)
 \|\mathcal B[\varphi_\tau]\|_2^2\,\dd\tau\right) \notag\\
 &\le C_{D,H}\gamma^{-1/2}
 \int_0^t\,\mathsf K_H(t-\tau)\|\mathcal B[\varphi_\tau]\|_2^2\,\dd\tau.
 \label{prof:eq:high-weighted}
\end{align}
Set
\[
 I_{\mathrm{early}}(t)
 :=\int_0^{t/2}\,\mathsf K_H(t-\tau)\|\mathcal B[\varphi_\tau]\|_2^2\,\dd\tau,
 \qquad
 I_{\mathrm{late}}(t)
 :=\int_{t/2}^{t}\,\mathsf K_H(t-\tau)\|\mathcal B[\varphi_\tau]\|_2^2\,\dd\tau.
\]
We estimate these two terms separately.

\emph{Early half.}
For $0\le\tau\le t/2$, one has $t-\tau\ge t/2$.  Hence, using the rough source estimate from
\cref{prof:lem:source-estimates}, namely \eqref{prof:eq:rough-source},
together with \eqref{prof:eq:high-kernel} and the time-integrated energy
bound \eqref{prof:eq:integrated-one-particle-energy}, we obtain
\begin{align*}
 I_{\mathrm{early}}(t)
 \le C_{D,\rho_0,H}n e^{-H\gamma t/8}
 \int_0^{t/2}\mathcal E_1(m_\tau)\,\dd\tau
 \le C_{D,\rho_0,H}n e^{-H\gamma t/8}.
\end{align*}
Its contribution to the right-hand side of
\eqref{prof:eq:high-weighted} is therefore at most
$
 C_{D,\rho_0,H}n\gamma^{-1/2}e^{-H\gamma t/8}
$.
Relative to the target $n\gamma e^{-4\gamma t}$ in \eqref{prof:eq:high-spectrum}, this has ratio
$
 C_{D,\rho_0,H}\gamma^{-3/2}
 e^{-(H/8-4)\gamma t}$.
Since $\gamma\asymp_D n^{-2/D}$ and
$t\ge A\gamma^{-1}\log n$, the said ratio is bounded by
$
 C_{D,\rho_0,H}n^{3/D-A(H/8-4)}
$.
Choose $H=H(D,A)>32$ so that
$A(H/8-4)>3/D$.  Then the early-half contribution is bounded by
$C_{D,\rho_0,A}n\gamma e^{-4\gamma t}$ for all sufficiently large $N$.

\emph{Late half.}
Condition \eqref{prof:eq:lower-window} implies, for all sufficiently large $N$,
that $\tau\ge t/2\ge\gamma^{-1}$ throughout $[t/2,t]$.  We may therefore use
the long-time source estimate \eqref{prof:eq:late-source} from
\cref{prof:lem:source-estimates}.  With $u=t-\tau$,
\begin{align*}
 I_{\mathrm{late}}(t)
 &\le C_{D,\rho_0,H}n\gamma^2e^{-4\gamma t}
 \int_0^{t/2}(1+u)^{-1/2}
 e^{-(H/4-4)\gamma u}\,\dd u \notag\\
 &\le C_{D,\rho_0,H}n\gamma^2e^{-4\gamma t}
 \int_0^\infty(1+u)^{-1/2}
 e^{-(H/4-4)\gamma u}\,\dd u
 \le C_{D,\rho_0,H}n\gamma^{3/2}e^{-4\gamma t},
\end{align*}
where the last inequality uses $H>16$ and the change of variables
$v=(H/4-4)\gamma u$.  Substitution into
\eqref{prof:eq:high-weighted} shows that the late-half contribution is at most
$
 C_{D,\rho_0,H}n\gamma e^{-4\gamma t}.
 $
Adding the completed early- and late-half estimates proves
\eqref{prof:eq:high-spectrum}.
\end{proof}

\subsection{Low pair spectrum}

The low-spectrum argument requires finite-volume smoothing on the ordered
two-particle exclusion graph.  The needed Nash inequality and heat kernel
consequences are stated in \cref{prof:lem:pair-nash,prof:lem:low-geometry}.  In
those two results, $\|\cdot\|_{p,\mathrm{cnt}}$ denotes the counting measure
$\ell^p$ norm on the relevant pair space, and
$\langle\cdot,\cdot\rangle_{\mathrm{cnt}}$ denotes the corresponding
counting inner product.  We write
\[
 \mathcal E_{2,\mathrm{cnt}}^{\hc}(u)
 :=\langle u,\mathcal L_2^{\hc}u\rangle_{\mathrm{cnt}},
 \qquad
 \mathcal E_{2,\mathrm{cnt}}^{\ind}(g)
 :=\langle g,\mathcal L_2^{\ind}g\rangle_{\mathrm{cnt}}
\]
for the corresponding counting measure Dirichlet forms.  Relative to the
probability-normalized forms fixed in \cref{prof:sec:chaos-interface},
\begin{equation*}
 \mathcal E_{2,\mathrm{cnt}}^{\hc}(u)=n(n-1)\mathcal E_2(u),
 \qquad
 \mathcal E_{2,\mathrm{cnt}}^{\ind}(g)=n^2\mathcal E_{N,2}^{\ind}(g).
\end{equation*}
Thus the hard-core and independent counting normalizations differ by the
cardinalities of $\Omega_2$ and $V_N^2$, respectively.

The general extension $\mathsf{Ext}_{N,r}$ from
\cref{prof:lem:fd-local-fill} is designed for fixed-degree $L^2$ and
Dirichlet form comparison.  The Nash argument below additionally needs a
counting measure $L^1$ bound.  At degree two it is therefore useful to employ
the following simpler diagonal extension, specialized to the pair space and
chosen precisely to preserve both $L^1$ and $L^2$ at bounded cost.

\begin{lemma}[Diagonal extension and pair Nash inequality]
\label{prof:lem:pair-nash}
There is a linear diagonal pair extension $\mathsf{Ext}_2^{\mathrm{diag}}$ from functions on $\Omega_2$ to
functions on $V_N^2$, agreeing off the diagonal, such that for $q=1,2$,
\begin{equation}
 \|\mathsf{Ext}_2^{\mathrm{diag}}u\|_{q,\mathrm{cnt}}\le C_D\|u\|_{q,\mathrm{cnt}},
 \qquad
 \mathcal E_{2,\mathrm{cnt}}^{\ind}(\mathsf{Ext}_2^{\mathrm{diag}}u)
 \le C_D\mathcal E_{2,\mathrm{cnt}}^{\hc}(u).
 \label{prof:eq:diagonal-extension}
\end{equation}
Consequently,
\begin{equation}
 \|u\|_{2,\mathrm{cnt}}^2
 \le C_D [\mathcal E_{2,\mathrm{cnt}}^{\hc}(u)]^{D/(D+1)}
       \|u\|_{1,\mathrm{cnt}}^{2/(D+1)}
 +\frac{C_D}{|\Omega_2|}\|u\|_{1,\mathrm{cnt}}^2.
 \label{prof:eq:pair-nash}
\end{equation}
\end{lemma}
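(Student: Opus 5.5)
I will construct $\mathsf{Ext}_2^{\mathrm{diag}}$ explicitly and then derive the Nash inequality by transferring the standard Nash inequality for the product torus $V_N^2$, which is a $2D$-dimensional torus of side $N$.

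\textbf{Construction of the extension.} Off the diagonal, $\mathsf{Ext}_2^{\mathrm{diag}}u(x,y)=u(x,y)$. On the diagonal, I fill each point from one neighboring off-diagonal pair:
\[
 \mathsf{Ext}_2^{\mathrm{diag}}u(x,x):=u(x,x+\mathbf e_1).
\]
Each off-diagonal pair $(x,x+\mathbf e_1)$ is used exactly once. So the $\ell^q$ counting norm satisfies $\|\mathsf{Ext}u\|_q^q\le 2\|u\|_q^q$ for $q=1,2$.

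\textbf{Energy bound for the extension.} Product edges fall into two kinds.
\begin{itemize}
\item Edges between two distinct pairs are either allowed hard-core edges, or blocked edges between a pair and a nondistinct pair. Allowed hard-core edges are counted directly in $\mathcal E^{\hc}_{2,\mathrm{cnt}}$.
\item Edges incident to the diagonal connect $(x,x)$ to $(x+\boldsymbol\delta,x)$ or to $(x,x+\boldsymbol\delta)$. The increment is $u(x,x+\mathbf e_1)-u(x+\boldsymbol\delta,x)$, or the analogous expression.
\end{itemize}
Each such increment is a difference between two distinct pairs within bounded $\ell^\infty$ distance. By a bounded-length hard-core path (\cref{prof:lem:fd-local-routing} with $r=2$, which only needs $D\ge2$ for the swap move), it is dominated by $C\sum_{e\in\Gamma}|\nabla_e u|^2$. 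Each hard-core edge is used by boundedly many such diagonal edges, which gives bounded congestion. This yields $\mathcal E^{\ind}_{2,\mathrm{cnt}}(\mathsf{Ext}u)\le C_D\mathcal E^{\hc}_{2,\mathrm{cnt}}(u)$.

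\textbf{Product-torus Nash inequality.} On $V_N^2\cong\T_N^{2D}$ with counting measure, the standard Nash inequality (proved via Fourier splitting at a frequency threshold) reads
\[
 \|g\|_2^2\le C_D\,\mathcal E^{\ind}_{\mathrm{cnt}}(g)^{d/(d+2)}\|g\|_1^{4/(d+2)}+\frac{C}{N^{2D}}\|g\|_1^2,
 \qquad d=2D.
\]
The exponents are $d/(d+2)=D/(D+1)$ and $4/(d+2)=2/(D+1)$. Concretely, I split $g$ into its mean and its nonzero modes. The mean part contributes $\le \|g\|_1^2/n^2$. For the rest, with a threshold $\lambda$ I estimate
\[
 \sum_{|\widetilde p|\le R}|\widehat g|^2\le R^{d}\|g\|_1^2/n^2
 \qquad\text{and}\qquad
 \sum_{\text{high}}|\widehat g|^2\le \mathcal E/\lambda,
\]
then optimize $R$.

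\textbf{Transfer to the hard-core space.} Apply this inequality to $g=\mathsf{Ext}u$. I use $\|u\|_{2,\mathrm{cnt}}\le\|g\|_2$, together with the $\ell^1$ and energy bounds from the previous steps. Finally, $N^{2D}=n^2\asymp|\Omega_2|$, which gives \eqref{prof:eq:pair-nash}.

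\textbf{Main obstacle.} The delicate part is the congestion bookkeeping for diagonal edges, in particular when the filling neighbor is itself adjacent to a blocked configuration. I expect the routing lemma to handle this uniformly once $N$ is large.
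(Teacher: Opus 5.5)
Your proposal is correct and follows essentially the same route as the paper. You extend across the diagonal, bound the diagonal-incident product edges by bounded-length hard-core paths from \cref{prof:lem:fd-local-routing} with bounded congestion, and then transfer the Fourier-split Nash inequality on the $2D$-dimensional product torus, where the zero mode produces the $|\Omega_2|^{-1}\|u\|_{1,\mathrm{cnt}}^2$ term. The one difference is the diagonal value: the paper averages $u$ over all $4D$ contact pairs $(x+\boldsymbol\delta,x),(x,x+\boldsymbol\delta)$ and uses Jensen, whereas your single-neighbor value $u(x,x+\mathbf e_1)$ gives the norm bounds by direct counting and works equally well (for the finitely many small $N$ not covered by the routing lemma, enlarge $C_D$, as the paper does).
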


\begin{proof}
For $x\in V_N$, let
\[
 \mathsf{Nbr}_x^{\mathrm{pair}}
 =\{(x+\boldsymbol\delta,x),(x,x+\boldsymbol\delta):\boldsymbol\delta\in\mathcal D_D\}.
\]
Set $\mathsf{Ext}_2^{\mathrm{diag}}u=u$ off the diagonal and
\[
 (\mathsf{Ext}_2^{\mathrm{diag}}u)(x,x)
 =\frac1{4D}\sum_{\boldsymbol z\in\mathsf{Nbr}_x^{\mathrm{pair}}}u(\boldsymbol z).
\]
We first verify the norm bounds.  Every ordered contact pair belongs to
exactly two of the sets $\mathsf{Nbr}_x^{\mathrm{pair}}$.  Hence
\begin{align*}
 \sum_x|\mathsf{Ext}_2^{\mathrm{diag}}u(x,x)|
 &\le \frac1{4D}\sum_x\sum_{\boldsymbol z\in\mathsf{Nbr}_x^{\mathrm{pair}}}|u(\boldsymbol z)|
 \le C_D\|u\|_{1,\mathrm{cnt}},\\
 \sum_x|\mathsf{Ext}_2^{\mathrm{diag}}u(x,x)|^2
 &\le \frac1{4D}\sum_x\sum_{\boldsymbol z\in\mathsf{Nbr}_x^{\mathrm{pair}}}|u(\boldsymbol z)|^2
 \le C_D\|u\|_{2,\mathrm{cnt}}^2,
\end{align*}
where the second line uses Jensen's inequality.  Adding the unchanged
off-diagonal contribution proves the two norm bounds in
\eqref{prof:eq:diagonal-extension}.

We next compare the Dirichlet energies.
The gradient terms which require attention are $\mathsf{Ext}_2^{\mathrm{diag}}u(x,x)-u(\boldsymbol w)$ for $\boldsymbol w\in \mathsf{Nbr}_x^{\mathrm{pair}}$.
For an independent pair edge joining $(x,x)$ to $\boldsymbol w$, Jensen's inequality
gives
\begin{equation}
 |\mathsf{Ext}_2^{\mathrm{diag}}u(x,x)-u(\boldsymbol w)|^2
 \le \frac1{4D}
 \sum_{\boldsymbol z\in\mathsf{Nbr}_x^{\mathrm{pair}}}|u(\boldsymbol z)-u(\boldsymbol w)|^2.
 \label{prof:eq:diagonal-jensen}
\end{equation}
For $\boldsymbol z,\boldsymbol w\in\mathsf{Nbr}_x^{\mathrm{pair}}$, each particle moves by at most two lattice steps in the torus $\ell^\infty$ metric $d_N^\infty$ defined in \eqref{prof:eq:torus-linfty-metric}, so
\[
 \max_{i=1,2}d_N^\infty(z_i,w_i)\le2.
\]
For all sufficiently large $N$, \cref{prof:lem:fd-local-routing}, applied
with $r=2$ and $R=2$, therefore provides a deterministic hard-core path
$\Gamma^{\mathrm{pair}}_{x;\boldsymbol z,\boldsymbol w}=(\zeta_0,\ldots,\zeta_L)$ from
$\boldsymbol z$ to $\boldsymbol w$ such that $L\le C_D$ and every vertex of
the path remains within $C_D$ of $x$.  Writing
$a_j=\{\zeta_{j-1},\zeta_j\}$ for its consecutive hard-core edges,
Cauchy--Schwarz along the path gives
\[
 |u(\boldsymbol z)-u(\boldsymbol w)|^2
 \le C_D\sum_{a\in\Gamma^{\mathrm{pair}}_{x;\boldsymbol z,\boldsymbol w}}
      |\nabla_a u|^2.
\]
The same locality gives the required bounded congestion.  Indeed, if a
hard-core edge $a$ occurs in a path indexed by $x$, then $x$ lies within a
fixed $D$-dependent distance of an endpoint of $a$, so there are only
$O_D(1)$ possible choices of $x$.  For each such $x$, the set
$\mathsf{Nbr}_x^{\mathrm{pair}}$ has cardinality $4D$, and hence only
$O_D(1)$ pairs $(\boldsymbol z,\boldsymbol w)$ can index a path through $a$.
Thus every hard-core edge occurs in at most $O_D(1)$ paths after summing
\eqref{prof:eq:diagonal-jensen} over $x$ and $\boldsymbol w$, and the
bounded-congestion sum proves the energy bound in
\eqref{prof:eq:diagonal-extension}.  The routing lemma was stated for all
sufficiently large $N$; enlarging $C_D$ handles the finitely many smaller
tori, so the displayed estimate holds uniformly in $N$ as stated.

It remains to prove the Nash inequality on the product torus.  Index the
counting measure orthonormal characters on $V_N^2$ by
$(p^{(1)},p^{(2)})\in(\T_N^D)^2$; each character has pointwise modulus $n^{-1}$
and is an eigenfunction of $\mathcal L_2^{\ind}$ with eigenvalue
$
 \lambda_N^{(2)}(p^{(1)},p^{(2)}):=\lambda_N(p^{(1)})+\lambda_N(p^{(2)})$.
For $N^{-1}\le R\le1$, split the Fourier expansion of $g$ into the zero
mode $(p^{(1)},p^{(2)})=(0,0)$, the nonzero low modes with
$0<\lambda_N^{(2)}\le R^2$, and the high modes with
$\lambda_N^{(2)}>R^2$.  Parseval gives
\[
\begin{aligned}
 \|g\|_{2,\mathrm{cnt}}^2
 &=|\widehat g(0,0)|^2
 +\sum_{0<\lambda_N^{(2)}\le R^2}|\widehat g|^2
 +\sum_{\lambda_N^{(2)}>R^2}|\widehat g|^2.
\end{aligned}
\]
Every Fourier coefficient satisfies
$|\widehat g|\le n^{-1}\|g\|_{1,\mathrm{cnt}}$.  Moreover,
\eqref{prof:eq:dispersion-comparison} and $\gamma\asymp_DN^{-2}$ imply that, for a
low pair, each component momentum $p^{(1)}$ and $p^{(2)}$ lies in a
$D$-dimensional ball of radius $O_D(NR)$.  Hence the $2D$-dimensional product
count contains at most $C_Dn^2R^{2D}$ nonzero low pairs.  On the high spectrum,
$\lambda_N^{(2)}>R^2$.  Hence the three Parseval contributions satisfy, respectively,
\[
\begin{aligned}
 |\widehat g(0,0)|^2
 &\le \frac1{n^2}\|g\|_{1,\mathrm{cnt}}^2,
 &&\text{(zero mode)},\\
 \sum_{0<\lambda_N^{(2)}\le R^2}|\widehat g|^2
 &\le C_DR^{2D}\|g\|_{1,\mathrm{cnt}}^2,
 &&\text{(nonzero low spectrum)},\\
 \sum_{\lambda_N^{(2)}>R^2}|\widehat g|^2
 &\le R^{-2}\sum_{\lambda_N^{(2)}>R^2}\lambda_N^{(2)}|\widehat g|^2
 \le R^{-2}\mathcal E_{2,\mathrm{cnt}}^{\ind}(g),
 &&\text{(high spectrum)}.
\end{aligned}
\]
Adding these three estimates gives, for $N^{-1}\le R\le1$,
\begin{equation}
 \|g\|_{2,\mathrm{cnt}}^2
 \le \frac1{n^2}\|g\|_{1,\mathrm{cnt}}^2
      +C_DR^{2D}\|g\|_{1,\mathrm{cnt}}^2
      +C_DR^{-2}\mathcal E_{2,\mathrm{cnt}}^{\ind}(g).
 \label{prof:eq:product-nash-three-term}
\end{equation}
The claim is trivial for $g=0$, so assume $g\ne0$.
To balance the two $R$-dependent terms on the right-hand side, set
\[
 R_*=\left(
 \frac{\mathcal E_{2,\mathrm{cnt}}^{\ind}(g)}{\|g\|_{1,\mathrm{cnt}}^2}
 \right)^{1/(2D+2)}.
\]
If $R_*$ lies in $[N^{-1},1]$, choosing $R=R_*$ yields
\begin{equation}
 \|g\|_{2,\mathrm{cnt}}^2
 \le C_D[\mathcal E_{2,\mathrm{cnt}}^{\ind}(g)]^{D/(D+1)}
       \|g\|_{1,\mathrm{cnt}}^{2/(D+1)}
 +\frac{C_D}{n^2}\|g\|_{1,\mathrm{cnt}}^2.
 \label{prof:eq:product-nash}
\end{equation}
If $R_*<N^{-1}$, choose
$R=N^{-1}$; then
$R^{-2}\mathcal E_{2,\mathrm{cnt}}^{\ind}(g)\le n^{-2}\|g\|_{1,\mathrm{cnt}}^2$.
If $R_*>1$, choose $R=1$.  Then \eqref{prof:eq:product-nash-three-term} gives
\[
 \|g\|_{2,\mathrm{cnt}}^2
 \le C_D\bigl(\|g\|_{1,\mathrm{cnt}}^2
                  +\mathcal E_{2,\mathrm{cnt}}^{\ind}(g)\bigr).
\]
Since $R_*>1$ is exactly
$\mathcal E_{2,\mathrm{cnt}}^{\ind}(g)>\|g\|_{1,\mathrm{cnt}}^2$,
the right-hand side is at most
$C_D\mathcal E_{2,\mathrm{cnt}}^{\ind}(g)$.  To absorb this remaining
energy term into the Nash interpolation term, use the bounded product
spectrum: $\lambda_N(p)\le4D$ implies
$\lambda_N^{(2)}\le8D$, and therefore
\[
 \mathcal E_{2,\mathrm{cnt}}^{\ind}(g)
 \le8D\|g\|_{2,\mathrm{cnt}}^2
 \le8D\|g\|_{1,\mathrm{cnt}}^2.
\]
Consequently,
\[
 \mathcal E_{2,\mathrm{cnt}}^{\ind}(g)
 \le (8D)^{1/(D+1)}
 [\mathcal E_{2,\mathrm{cnt}}^{\ind}(g)]^{D/(D+1)}
 \|g\|_{1,\mathrm{cnt}}^{2/(D+1)}.
\]
Combining the last two displays proves \eqref{prof:eq:product-nash} also in
the endpoint regime $R_*>1$.  Thus \eqref{prof:eq:product-nash} holds in every regime.
Applying it to $\mathsf{Ext}_2^{\mathrm{diag}}u$, using
\eqref{prof:eq:diagonal-extension}, and recalling that
$|\Omega_2|=n(n-1)\asymp n^2$ proves \eqref{prof:eq:pair-nash}.
\end{proof}

\begin{remark}
The exponents in \eqref{prof:eq:pair-nash} are the exponents in the Nash inequality
at dimension $2D$.  Indeed, the classical $d$-dimensional form
$\|f\|_2^2\lesssim \mathcal E(f)^{d/(d+2)}\|f\|_1^{4/(d+2)}$
gives the powers $D/(D+1)$ and $2/(D+1)$ when $d=2D$.  The additional
$|\Omega_2|^{-1}\|u\|_{1,\mathrm{cnt}}^2$ term is the finite-volume correction from
the constant Fourier mode.
\end{remark}

Let $p_t^{\hc}(\boldsymbol z,\boldsymbol w)$ be the transition kernel of
$\mathcal P_2(t)=e^{-t\mathcal L_2^{\hc}}$ with respect to counting measure.

\begin{lemma}[Pair heat kernel, low-spectrum dimension, and $L^\infty$ control]
\label{prof:lem:low-geometry}
Uniformly in $N,t$, and $\boldsymbol z\in\Omega_2$,
\begin{equation}
 p_t^{\hc}(\boldsymbol z,\boldsymbol z)
 \le C_D\{(1+t)^{-D}+|\Omega_2|^{-1}\}.
 \label{prof:eq:pair-heat-kernel}
\end{equation}
For fixed $H\ge1$, let the low hard-core pair spectral space be
\[
 \mathscr V_{N,2}^{\hc}(H)
 :=\operatorname{Ran}\Pi_{\mathrm{lo},H}\subset\mathcal T_{N,2}^{\hc}.
\]
Then
\begin{equation}
 \dim\mathscr V_{N,2}^{\hc}(H)\le C_{D,H},
 \quad \text{ and }
 \|u\|_\infty\le C_{D,H}\|u\|_2
 \quad \text{for all } u\in\mathscr V_{N,2}^{\hc}(H).
 \label{prof:eq:low-rank-linfty}
\end{equation}
\end{lemma}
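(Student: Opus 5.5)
The plan is to derive all three assertions from the pair Nash inequality \eqref{prof:eq:pair-nash} of \cref{prof:lem:pair-nash} by the standard Nash-to-ultracontractivity argument, followed by a trace bound.

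\emph{Heat kernel.} Fix $\boldsymbol z\in\Omega_2$ and set $u_t:=p_t^{\hc}(\boldsymbol z,\cdot)$. The hard-core pair walk is symmetric with respect to counting measure, so $\|u_t\|_{1,\mathrm{cnt}}=1$ for all $t$, $p_{2t}^{\hc}(\boldsymbol z,\boldsymbol z)=\|u_t\|_{2,\mathrm{cnt}}^2=:\Phi(t)$, and $\Phi'(t)=-2\mathcal E_{2,\mathrm{cnt}}^{\hc}(u_t)$. Write $\Phi_\infty:=C_D|\Omega_2|^{-1}$ for the constant term in \eqref{prof:eq:pair-nash}; with unit $L^1$ mass that inequality becomes $\Phi\le C_D(-\Phi'/2)^{D/(D+1)}+\Phi_\infty$. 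I will argue in two cases. Whenever $\Phi(t)\ge2\Phi_\infty$, the inequality gives $-\Phi'\ge c_D\Phi^{1+1/D}$, and integrating this differential inequality yields $\Phi(t)\le C_Dt^{-D}$. Since $\Phi$ is nonincreasing, once $\Phi$ drops below $2\Phi_\infty$ it stays there. Combining the two regimes gives $\Phi(t)\le C_D(t^{-D}+|\Omega_2|^{-1})$. For $t\le1$ I use the trivial bound $\Phi\le1$ instead, which lets me replace $t^{-D}$ by $(1+t)^{-D}$. Finally I pass from time $2t$ to time $t$ by adjusting constants, which gives \eqref{prof:eq:pair-heat-kernel}.

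\emph{Dimension of the low spectral space.} Let $\Pi:=\Pi_{\mathrm{lo},H}$. For every $t\ge0$,
\[
\dim\mathscr V_{N,2}^{\hc}(H)\le e^{2tH\gamma}\operatorname{tr}\bigl(\Pi e^{-2t\mathcal L_2^{\hc}}\Pi\bigr)\le e^{2tH\gamma}\sum_{\boldsymbol z}p_{2t}^{\hc}(\boldsymbol z,\boldsymbol z),
\]
and by \eqref{prof:eq:pair-heat-kernel} the right side is at most $C_De^{2tH\gamma}\bigl(|\Omega_2|(1+t)^{-D}+1\bigr)$. Choosing $t=\gamma^{-1}$ gives $e^{2tH\gamma}=e^{2H}$ and $|\Omega_2|(1+t)^{-D}\lesssim n^2\gamma^{D}\asymp n^2N^{-2D}=1$, so $\dim\mathscr V_{N,2}^{\hc}(H)\le C_{D,H}$. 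A subtlety is that the trace should be taken on the top sector rather than the full pair space; but restricting to a subspace only lowers the trace, so the bound is unaffected.

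\emph{$L^\infty$ control.} For $u\in\mathscr V_{N,2}^{\hc}(H)$ write $u=e^{-t\mathcal L_2^{\hc}}w$ with $w:=e^{t\mathcal L_2^{\hc}}u$, so that $\|w\|_2\le e^{tH\gamma}\|u\|_2$. Cauchy--Schwarz against the kernel gives
\[
|u(\boldsymbol z)|^2\le p_{2t}^{\hc}(\boldsymbol z,\boldsymbol z)\,\|w\|_{2,\mathrm{cnt}}^2=p_{2t}^{\hc}(\boldsymbol z,\boldsymbol z)\,|\Omega_2|\,\|w\|_2^2 .
\]
At $t=\gamma^{-1}$, \eqref{prof:eq:pair-heat-kernel} gives $p_{2t}^{\hc}(\boldsymbol z,\boldsymbol z)|\Omega_2|\le C_D$, hence $|u(\boldsymbol z)|^2\le C_De^{2H}\|u\|_2^2$, which is the second part of \eqref{prof:eq:low-rank-linfty}.

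I expect the main obstacle to be the first step: carrying the additive finite-volume term $\Phi_\infty$ through the integration of the Nash differential inequality cleanly. The two-regime split above should handle it.
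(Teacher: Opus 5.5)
Your proposal is correct and follows the paper's proof step for step: the Nash differential inequality for $p_{2t}^{\hc}(\boldsymbol z,\boldsymbol z)$ split at the threshold $2C_D|\Omega_2|^{-1}$ via monotonicity, the heat-trace bound at time of order $\gamma^{-1}$ for the rank, and the representation $u=e^{-t\mathcal L_2^{\hc}}w$ with Cauchy--Schwarz against the kernel for the $L^\infty$ bound. The only differences are cosmetic: the paper integrates from $w(0)=1$ to get $(1+t)^{-D}$ directly instead of using $\Phi\le1$ for $t\le1$, and it takes the trace at time $\gamma^{-1}$ rather than $2\gamma^{-1}$.
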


\begin{proof}
For $f_t(\boldsymbol w)=p_t^{\hc}(\boldsymbol z,\boldsymbol w)$, symmetry gives
\[
 w(t):=\|f_t\|_{2,\mathrm{cnt}}^2=p_{2t}^{\hc}(\boldsymbol z,\boldsymbol z),
 \qquad
 w'(t)=-2\mathcal E_{2,\mathrm{cnt}}^{\hc}(f_t) \le 0,
 \qquad
 \|f_t\|_{1,\mathrm{cnt}}=1.
\]
Fix, for the remainder of this argument, a constant $C_D$ for which the
numbered Nash estimate \eqref{prof:eq:pair-nash} holds.  Since
$\|f_t\|_{1,\mathrm{cnt}}=1$, that estimate gives
\[
 w(t)\le C_D\mathcal E_{2,\mathrm{cnt}}^{\hc}(f_t)^{D/(D+1)}+\frac{C_D}{|\Omega_2|}.
\]
Suppose that $w(t)\ge2C_D/|\Omega_2|$.  Then
\[
 \frac12w(t)
 \le w(t)-\frac{C_D}{|\Omega_2|}
 \le C_D\mathcal E_{2,\mathrm{cnt}}^{\hc}(f_t)^{D/(D+1)}.
\]
Raising both sides to the power $(D+1)/D$ therefore yields, for some
$c_D^{\mathrm{Nash}}>0$ depending only on $D$,
$
 \mathcal E_{2,\mathrm{cnt}}^{\hc}(f_t)\ge c_D^{\mathrm{Nash}}w(t)^{1+1/D}
 $.
Plugging this inequality into $w'(t)=-2\mathcal E_{2,\mathrm{cnt}}^{\hc}(f_t)$ yields
\begin{align}
 w'(t)\le-2c_D^{\mathrm{Nash}}w(t)^{1+1/D}.
 \label{prof:eq:pair-kernel-ode}
\end{align}
Set
$
 t_{\mathrm{Nash}}=\inf\left\{t\ge0:
 w(t)\le\frac{2C_D}{|\Omega_2|}\right\}$,
with the convention $t_{\mathrm{Nash}}=\infty$ if this set is empty.
For $0\le t<t_{\mathrm{Nash}}$,
\eqref{prof:eq:pair-kernel-ode} holds, and hence
\[
 \frac{\dd}{\dd t}w(t)^{-1/D}
 =-\frac1D w(t)^{-1-1/D}w'(t)
 \ge\frac{2c_D^{\mathrm{Nash}}}{D}.
\]
Integrating from $0$ to $t$ and using $w(0)=1$ gives
\[
 w(t)^{-1/D}\ge 1+\frac{2c_D^{\mathrm{Nash}}}{D}t
 \quad \Longrightarrow \quad
 w(t)\le C_D(1+t)^{-D}.
\]
If $t_{\mathrm{Nash}}<\infty$, continuity of $w$ gives
$w(t_{\mathrm{Nash}})\le2C_D/|\Omega_2|$, and monotonicity of $w$ then gives
\[
 w(t)\le w(t_{\mathrm{Nash}})\le\frac{2C_D}{|\Omega_2|},
 \qquad t\ge t_{\mathrm{Nash}}.
\]
(The same conclusion holds from time $0$ if $t_{\mathrm{Nash}}=0$.)  Combining the two
regimes yields
\[
 w(t)\le C_D\bigl\{(1+t)^{-D}+|\Omega_2|^{-1}\bigr\},
 \qquad t\ge0.
\]
Finally, since $w(s)=p_{2s}^{\hc}(\boldsymbol z,\boldsymbol z)$,
\[
 p_t^{\hc}(\boldsymbol z,\boldsymbol z)
 =w(t/2)
 \le C_D\bigl\{(1+t/2)^{-D}+|\Omega_2|^{-1}\bigr\}
 \le C_D\bigl\{(1+t)^{-D}+|\Omega_2|^{-1}\bigr\},
\]
after enlarging $C_D$ by a dimension-dependent factor, whence the on-diagonal heat kernel upper bound
\eqref{prof:eq:pair-heat-kernel}.

We now derive separately the trace, $L^2$-to-$L^\infty$, and rank
consequences for $\operatorname{Ran}\Pi_{\mathrm{lo},H}$ in  \eqref{prof:eq:low-rank-linfty}.
Set
$t_*=\gamma^{-1}$.  Since $\gamma\asymp_DN^{-2}$ and $|\Omega_2|\asymp n^2$,
\eqref{prof:eq:pair-heat-kernel} gives
\begin{equation}
 \operatorname{Tr}(e^{-t_*\mathcal L_2^{\hc}})
 =\sum_{\boldsymbol z\in\Omega_2}p_{t_*}^{\hc}(\boldsymbol z,\boldsymbol z)
 \le C_D.
 \label{prof:eq:low-heat-trace}
\end{equation}
Because the kernel relative to the uniform probability measure is
$|\Omega_2|p_t^{\hc}$, the semigroup identity gives
\begin{equation}
 \|e^{-t_*\mathcal L_2^{\hc}}\|_{L^2(\upsilon_2^{\ord})\to L^\infty}^2
 =\sup_{\boldsymbol z\in\Omega_2}|\Omega_2|p_{2t_*}^{\hc}(\boldsymbol z,\boldsymbol z)
 \le C_D.
 \label{prof:eq:low-semigroup-linfty}
\end{equation}
If $u\in\mathscr V_{N,2}^{\hc}(H)$, define
$v=e^{t_*\mathcal L_2^{\hc}}u$ by spectral calculus on that subspace.
Then $u=e^{-t_*\mathcal L_2^{\hc}}v$ and
$\|v\|_2\le e^H\|u\|_2$, so
\eqref{prof:eq:low-semigroup-linfty} yields
$\|u\|_\infty\le C_{D,H}\|u\|_2$.
Finally, for every eigenvalue $\lambda<H\gamma$,
$1\le e^He^{-t_*\lambda}$.  Therefore
\[
 \dim\mathscr V_{N,2}^{\hc}(H)
 =\operatorname{Tr}\Pi_{\mathrm{lo},H}
 \le e^H\operatorname{Tr}(e^{-t_*\mathcal L_2^{\hc}})
 \le C_{D,H}
\]
by \eqref{prof:eq:low-heat-trace}.  This proves
\eqref{prof:eq:low-rank-linfty}.
\end{proof}

\begin{lemma}[Compression of the ordered nearest-neighbor contact source]
\label{prof:lem:low-shell}
For every $H\ge1$ and every one-particle field $f$,
\begin{equation}
 \|\Pi_{\mathrm{lo},H}\mathsf{Top}_{N,2}^{\ord}\mathcal B[f]\|_2
 \le \frac{C_{D,H}}n\mathcal E_1(f).
 \label{prof:eq:low-shell}
\end{equation}
\end{lemma}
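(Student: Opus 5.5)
The plan is to test the source against the finite-dimensional low space by duality. By \cref{prof:lem:low-geometry}, $\mathscr V_{N,2}^{\hc}(H)=\Ran\Pi_{\mathrm{lo},H}$ has dimension at most $C_{D,H}$, and every $u$ in it satisfies $\|u\|_\infty\le C_{D,H}\|u\|_2$. Since $\Pi_{\mathrm{lo},H}$ is an orthogonal projection inside $\mathcal T_{N,2}^{\hc}$ and $\mathsf{Top}_{N,2}^{\ord}$ is self-adjoint, we have
\[
 \|\Pi_{\mathrm{lo},H}\mathsf{Top}_{N,2}^{\ord}\mathcal B[f]\|_2
 =\sup_{u\in\mathscr V_{N,2}^{\hc}(H),\ \|u\|_2=1}
 |\langle u,\mathcal B[f]\rangle_2|.
\]
This uses $\mathsf{Top}_{N,2}^{\ord}u=u$. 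So it suffices to bound a single pairing $\langle u,\mathcal B[f]\rangle_2$ uniformly over unit vectors $u$ in the low space. (Using an orthonormal basis instead costs only a factor $\sqrt{C_{D,H}}$.)

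For such a $u$, write the pairing out with the probability normalization on $\Omega_2$:
\[
 \langle u,\mathcal B[f]\rangle_2=\frac1{n(n-1)}\sum_{x\sim y}\overline{u(x,y)}\,(f(x)-f(y))^2 ,
\]
where the sum runs over ordered neighboring pairs. Applying the $L^\infty$ bound gives
\[
 |\langle u,\mathcal B[f]\rangle_2|\le \frac{C_{D,H}}{n(n-1)}\sum_{x\sim y}|f(x)-f(y)|^2 .
\]
By \eqref{prof:eq:local-normalization}, the edge sum equals $2n\,\mathcal E_1(f)$. Hence the pairing is at most $2C_{D,H}\mathcal E_1(f)/(n-1)\le C_{D,H}\mathcal E_1(f)/n$. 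Taking the supremum over $u$ proves \eqref{prof:eq:low-shell}.

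The only genuine input is the uniform $L^2\to L^\infty$ bound on the low hard-core pair spectral space. That bound comes from the Nash heat kernel estimate at time $\gamma^{-1}$, which is already established in \cref{prof:lem:low-geometry}. The rest is bookkeeping, and the points to check are these:
\begin{itemize}
\item the low space sits inside the top sector, which is how $\Pi_{\mathrm{lo},H}$ was defined;
\item $\Pi_{\mathrm{lo},H}$ is viewed as an operator on $\mathcal T_{N,2}^{\hc}$, so composing it with $\mathsf{Top}_{N,2}^{\ord}$ is legitimate;
\item the normalization between counting measure and probability measure is handled correctly.
\end{itemize}
The one conceptual step I would double-check is that the bound is linear in $\mathcal E_1(f)$, not in $\sqrt{\mathcal E_1(f)}$. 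This works because $\mathcal B[f]$ is supported on the $O(n)$ contact pairs and the test functions are uniformly bounded there. As a result the contact mass is paid in $\ell^1$ rather than $\ell^2$, and that is exactly what yields the extra factor $1/n$ compared with \eqref{prof:eq:contact-square-exact}.
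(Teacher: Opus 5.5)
Your proposal is correct and matches the paper's proof essentially step for step. Both arguments test $\mathcal B[f]$ by duality against unit vectors of $\mathscr V_{N,2}^{\hc}(H)$, invoke the $L^2\to L^\infty$ bound of \cref{prof:lem:low-geometry}, and use the ordered-edge normalization $\sum_{x\sim y}|f(x)-f(y)|^2=2n\,\mathcal E_1(f)$ to obtain the factor $1/n$.
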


\begin{proof}
Let $u\in\mathscr V_{N,2}^{\hc}(H)$ satisfy $\|u\|_2=1$.
Since $\mathcal B[f]$ is supported on
$\mathfrak C_{N,2}^{\mathrm{nn}}$ from
\eqref{prof:eq:nearest-neighbor-contact-set},
\eqref{prof:eq:low-rank-linfty} and the ordered edge normalization give
\[
 |\langle u,\mathcal B[f]\rangle|
 \le\frac{\|u\|_\infty}{n(n-1)}
      \sum_{x\sim y}|f(x)-f(y)|^2
 \le\frac{C_{D,H}}n\mathcal E_1(f).
\]
By orthogonal projection duality,
\begin{align*}
 \|\Pi_{\mathrm{lo},H}\mathsf{Top}_{N,2}^{\ord}\mathcal B[f]\|_2
 =\sup_{\substack{u\in\mathscr V_{N,2}^{\hc}(H)\\\|u\|_2=1}}
   |\langle u,\mathcal B[f]\rangle|
\le\frac{C_{D,H}}n\mathcal E_1(f),
\end{align*}
which is \eqref{prof:eq:low-shell}.
\end{proof}

\begin{proposition}[Low-spectrum cutoff estimate]
\label{prof:prop:low-spectrum}
For fixed $H\ge1$ and $M\in\mathbb R$, uniformly for
\begin{equation}
 0\le t\le\frac{\log n+M}{2\gamma},
 \label{prof:eq:upper-window}
\end{equation}
all deterministic $S\subset V_N$ with $|S|=k$, and all sufficiently large $N$,
\begin{equation}
 \mathcal E_2(G_{2,\mathrm{lo}}(t))
 \le C_{D,\rho_0,H,M}\gamma n e^{-4\gamma t}.
 \label{prof:eq:low-spectrum}
\end{equation}
\end{proposition}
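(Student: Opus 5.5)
We start from the Duhamel formula \eqref{prof:eq:pair-duhamel}, project it with $\Pi_{\mathrm{lo},H}$, and use the commutation relation \eqref{prof:eq:pair-top-commute}:
\[
 G_{2,\mathrm{lo}}(t)=-\int_0^t e^{-(t-\tau)\mathcal L_2^{\hc}}\Pi_{\mathrm{lo},H}\mathsf{Top}_{N,2}^{\ord}\mathcal B[\varphi_\tau]\,\dd\tau .
\]
On $\operatorname{Ran}\Pi_{\mathrm{lo},H}$ the generator is bounded by $H\gamma$, so $\mathcal E_2(G_{2,\mathrm{lo}}(t))\le H\gamma\|G_{2,\mathrm{lo}}(t)\|_2^2$. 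It therefore suffices to show $\|G_{2,\mathrm{lo}}(t)\|_2\le C\sqrt n\,e^{-2\gamma t}$. On the top-pair sector we also have the spectral bottom from \cref{prof:prop:fd-top-gap} with $r=2$: $\mathcal L_2^{\hc}\ge 2\gamma-C\gamma/n$ on $\mathcal T_{N,2}^{\hc}$. Hence $\|e^{-u\mathcal L_2^{\hc}}\Pi_{\mathrm{lo},H}\|\le e^{-2\gamma u}e^{C\gamma u/n}$. Within the window \eqref{prof:eq:upper-window}, $\gamma u/n\le(\log n+M)/(2n)$, so the correction factor is bounded uniformly.

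Next we bound the source with \cref{prof:lem:low-shell} and the scaling $\mathcal E_1(\varphi_\tau)=(n/\beta_N)\mathcal E_1(m_\tau)$. This gives $\|\Pi_{\mathrm{lo},H}\mathsf{Top}\,\mathcal B[\varphi_\tau]\|_2\le C_{D,H,\rho_0}\mathcal E_1(m_\tau)$, and so
\[
 \|G_{2,\mathrm{lo}}(t)\|_2\le C\,e^{-2\gamma t}\int_0^t e^{2\gamma\tau}\mathcal E_1(m_\tau)\,\dd\tau .
\]
The task is now to show $\int_0^t e^{2\gamma\tau}\mathcal E_1(m_\tau)\,\dd\tau\le C\sqrt n$. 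We split the integral at $\tau=\gamma^{-1}$. On $[0,\gamma^{-1}]$ the weight $e^{2\gamma\tau}\le e^2$, and \eqref{prof:eq:integrated-one-particle-energy} bounds this piece by $O(1)$. On $[\gamma^{-1},t]$ we use \eqref{prof:eq:long-one-particle}, $\mathcal E_1(m_\tau)\le C_D\gamma e^{-2\gamma\tau}$. The integrand is then at most $C\gamma$, so this piece is at most $C\gamma t\le C(\log n+M)$, which gives a total bound of $C\log n$. That already beats $\sqrt n$ by a wide margin, so the estimate closes and yields $\mathcal E_2(G_{2,\mathrm{lo}})\le C\gamma(\log n)^2e^{-4\gamma t}\le C\gamma n e^{-4\gamma t}$.

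The main point to check is the dependence on the sharp spectral bottom $2\gamma-O(\gamma/n)$ from \cref{prof:prop:fd-top-gap}. A crude bound $e^{-\gamma u}$ would lose a factor $e^{\gamma t}\sim\sqrt n$ on the window and would not suffice. The upper limit \eqref{prof:eq:upper-window} on $t$ is exactly what makes the $e^{C\gamma t/n}$ loss harmless. Uniformity in $S$ is automatic, since every input bound is source-uniform. The proposition is stated for fixed $r=2$ with $N$ large, so \cref{prof:prop:fd-top-gap} applies directly. If one prefers to avoid that proposition, a fallback is to bound $\|G_{2,\mathrm{lo}}\|_2$ using only positivity of $\mathcal L_2^{\hc}$ together with the $\log n$ slack. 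Without a gap of about $2\gamma$, however, the weight $e^{2\gamma t}$ cannot be recovered, so I would rely on \cref{prof:prop:fd-top-gap}.
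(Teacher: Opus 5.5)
Your proof is correct and is essentially the paper's argument. Both use the projected Duhamel formula, the sharp degree-two bottom $2\gamma-O(\gamma/n)$ from \cref{prof:prop:fd-top-gap}, the contact-shell compression of \cref{prof:lem:low-shell}, and $\mathcal E_2\le H\gamma\|\cdot\|_2^2$. The only difference is that you bound $\int_0^t e^{2\gamma\tau}\mathcal E_1(m_\tau)\,\dd\tau$ by splitting at $\gamma^{-1}$ and citing \eqref{prof:eq:integrated-one-particle-energy} and \eqref{prof:eq:long-one-particle}. The paper instead integrates mode by mode through $J_\lambda(t)$. Both routes yield $\|G_{2,\mathrm{lo}}(t)\|_2\le C(1+\gamma t)e^{-2\gamma t}$.

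Your closing remark is inaccurate, however. The same splitting with a gap of only $\gamma$ gives $\|G_{2,\mathrm{lo}}(t)\|_2\le Ce^{-\gamma t}\le Ce^{M/2}\sqrt n\,e^{-2\gamma t}$ on the window, which is exactly the $\sqrt n$ slack you isolated. So the sharp bottom is convenient rather than indispensable here: in that version it is the upper window limit that does the work. Within the paper, though, a gap of $\gamma$ on $\mathcal T_{N,2}^{\hc}$ is itself supplied by \cref{prof:prop:fd-top-gap}.
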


\begin{proof}
Recall the lower bound for the fixed-degree-$r$ spectral bottom \eqref{prof:eq:fd-top-gap}.
Set $r=2$, and choose $C_D^{\mathrm{edge}}>0$ such that, on
$\mathcal T_{N,2}^{\hc}=\Ran\mathsf{Top}_{N,2}^{\ord}$,
\begin{align}
 \mathcal L_{2,\mathrm{top}}^{\hc}
 \ge 2\gamma-C_D^{\mathrm{edge}}\frac{\gamma}{n}.
 \label{prof:eq:specbot2}
\end{align}
Set
\[
 \varepsilon=C_D^{\mathrm{edge}}\frac{\gamma}{n},
 \qquad
 \underline\lambda_2=2\gamma-\varepsilon.
\]
Then $\underline\lambda_2>0$ for all sufficiently large $N$.  Moreover,
on the time interval \eqref{prof:eq:upper-window},
\begin{align}
 e^{2\varepsilon t}
 \le
 \exp\!\left\{C_{D,M}\frac{\log n}{n}\right\}
 =O_{D,M}(1),
 \label{prof:eq:low-edge-correction}
\end{align}
and in fact this factor tends to one as $N\to\infty$.

We first show the spectral calculus reduction.  Applying $\Pi_{\mathrm{lo},H}$ to
\eqref{prof:eq:pair-duhamel} and using
\eqref{prof:eq:pair-top-commute} gives
\[
 G_{2,\mathrm{lo}}(t)
 =-\int_0^t
 \mathcal P_2(t-\tau)
 \Pi_{\mathrm{lo},H}\mathsf{Top}_{N,2}^{\ord}\mathcal B[\varphi_\tau] \,\dd\tau.
\]
The lower spectral bound \eqref{prof:eq:specbot2} implies
\[
 \|e^{-u\mathcal L_2^{\hc}}\Pi_{\mathrm{lo},H}\|_{2\to2}
 \le e^{-\underline\lambda_2u},
 \qquad u\ge0,
\]
while \cref{prof:lem:low-shell}, through the contact shell estimate
\eqref{prof:eq:low-shell}, gives
\[
 \|\Pi_{\mathrm{lo},H}\mathsf{Top}_{N,2}^{\ord}\mathcal B[\varphi_\tau]\|_2
 \le\frac{C_{D,H}}n\mathcal E_1(\varphi_\tau).
\]
Minkowski's integral inequality therefore yields
\begin{equation}
 \|G_{2,\mathrm{lo}}(t)\|_2
 \le\frac{C_{D,H}}n\int_0^t
 e^{-\underline\lambda_2(t-\tau)}\mathcal E_1(\varphi_\tau)\,\dd\tau.
 \label{prof:eq:low-duhamel}
\end{equation}

Expand
$
 \varphi_0=\sum_{p\ne0}b_pe_p
$
in the basis \eqref{prof:eq:probability-fourier-basis}.  Then
$\mathcal L_1e_p=\lambda_N(p)e_p$ and, by the exact normalization of
$\beta_N$,
\[
 \sum_{p\ne0}|b_p|^2
 =\|\varphi_0\|_{2,\mathrm{av}}^2
 =n-1.
\]
For $\lambda\ge\gamma$, define
\[
 J_\lambda(t)
 :=\lambda\int_0^t\,
 e^{-\underline\lambda_2(t-\tau)}e^{-2\lambda\tau}\,\dd\tau.
\]
Since $\lambda\ge\gamma$,
$
 2\lambda-\underline\lambda_2
 =2(\lambda-\gamma)+\varepsilon
 \ge\varepsilon>0$.
Hence direct integration gives
\begin{equation*}
 J_\lambda(t)
 =\lambda e^{-\underline\lambda_2t}
 \frac{1-e^{-(2\lambda-\underline\lambda_2)t}}
 {2\lambda-\underline\lambda_2}.
\end{equation*}
We now treat the two spectral regimes.  If $\gamma\le\lambda\le2\gamma$, then
$2\lambda-\underline\lambda_2\ge0$, and hence
$
 J_\lambda(t)
 \le\lambda t e^{-\underline\lambda_2t}
 \le2\gamma t e^{-\underline\lambda_2t}
 $.
If $\lambda>2\gamma$, then
$2\lambda-\underline\lambda_2\ge\lambda$, so
$
 J_\lambda(t)\le e^{-\underline\lambda_2t}
 $.
Thus, uniformly for every $\lambda\ge\gamma$,
\begin{equation*}
 J_\lambda(t)
 \le C(1+\gamma t)e^{-\underline\lambda_2t}.
\end{equation*}
Since
\[
 \mathcal E_1(\varphi_\tau)
 =\sum_{p\ne0}\lambda_N(p)|b_p|^2e^{-2\tau\lambda_N(p)},
\]
substitution into \eqref{prof:eq:low-duhamel} gives
\begin{align*}
 \|G_{2,\mathrm{lo}}(t)\|_2
 \le\frac{C_{D,H}}n
 \sum_{p\ne0}|b_p|^2J_{\lambda_N(p)}(t)
 \le C_{D,H}\frac{n-1}{n}
 (1+\gamma t)e^{-\underline\lambda_2t}
 \le C_{D,H}(1+\gamma t)e^{-\underline\lambda_2t}.
\end{align*}
Because the spectrum of $G_{2,\mathrm{lo}}(t)$ lies below $H\gamma$,
\begin{align}
 \mathcal E_2(G_{2,\mathrm{lo}}(t))
 \le H\gamma\|G_{2,\mathrm{lo}}(t)\|_2^2
 \le C_{D,H}\gamma(1+\gamma t)^2e^{-2\underline\lambda_2t}
 =C_{D,H}\gamma(1+\gamma t)^2
   e^{-4\gamma t}e^{2\varepsilon t}.
   \label{prof:eq:E2-G2-bound}
\end{align}
On the interval \eqref{prof:eq:upper-window}, the correction factor
$e^{2\varepsilon t}$ is uniformly bounded thanks to
\eqref{prof:eq:low-edge-correction}.  Moreover,
$
 (1+\gamma t)^2\le C_M(1+\log n)^2\le C_Mn
$
for all sufficiently large $n$.  
Using these two bounds on \eqref{prof:eq:E2-G2-bound} proves \eqref{prof:eq:low-spectrum}.
\end{proof}

\begin{theorem}[Cutoff-window pair energy]
\label{prof:thm:pair-energy}
Fix $D\ge2$, $\rho_0\in(0,1/2]$, $A\in(0,1/2)$, and $M\in\mathbb R$.  Uniformly
over all deterministic $S\subset V_N$ with $|S|=k$
(hence $\rho_0n\le|S|\le(1-\rho_0)n$ by the standing density window),
all sufficiently large $N$, and all
\begin{equation}
 A\gamma^{-1}\log n
 \le t\le\frac{\log n+M}{2\gamma},
 \label{prof:eq:pair-window}
\end{equation}
one has
\begin{equation}
 \mathcal E_2(G_2(t))
 \le C_{D,\rho_0,A,M}\gamma n e^{-4\gamma t}.
 \label{prof:eq:pair-energy-window}
\end{equation}
\end{theorem}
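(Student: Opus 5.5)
The plan is to split $G_2(t)$ along the orthogonal spectral decomposition of $\mathcal T_{N,2}^{\hc}$ defined in \eqref{prof:eq:pair-spectral-projections}:
\[
 G_2(t)=G_{2,\mathrm{hi}}(t)+G_{2,\mathrm{lo}}(t),
\]
with one fixed threshold $H$. Because $\Pi_{\mathrm{hi},H}$ and $\Pi_{\mathrm{lo},H}$ are complementary spectral projections of $\mathcal L_{2,\mathrm{top}}^{\hc}$, the Dirichlet form splits exactly, with no cross term:
\[
 \mathcal E_2(G_2(t))=\mathcal E_2(G_{2,\mathrm{hi}}(t))+\mathcal E_2(G_{2,\mathrm{lo}}(t)).
\]
This holds because $\mathcal L_2^{\hc}$ commutes with both projections and they project onto orthogonal invariant subspaces, so $\langle G_{2,\mathrm{hi}},\mathcal L_2^{\hc}G_{2,\mathrm{lo}}\rangle=0$. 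It therefore suffices to bound each piece by $C\gamma ne^{-4\gamma t}$ on the window \eqref{prof:eq:pair-window}.

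First choose $H=H(D,A)$ as supplied by \cref{prof:prop:high-spectrum} for the given $A$. The lower endpoint $t\ge A\gamma^{-1}\log n$ of \eqref{prof:eq:pair-window} is exactly hypothesis \eqref{prof:eq:lower-window}. Hence \eqref{prof:eq:high-spectrum} gives
\[
 \mathcal E_2(G_{2,\mathrm{hi}}(t))\le C_{D,\rho_0,A}\,n\gamma e^{-4\gamma t},
\]
uniformly over sources and for all large $N$. No upper time bound is needed for this piece.

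With this same $H$, now fixed and depending only on $D,A$, apply \cref{prof:prop:low-spectrum} with the given $M$. The upper endpoint of \eqref{prof:eq:pair-window} is exactly \eqref{prof:eq:upper-window}, and $t\ge0$ holds automatically on the window. So \eqref{prof:eq:low-spectrum} gives
\[
 \mathcal E_2(G_{2,\mathrm{lo}}(t))\le C_{D,\rho_0,H,M}\,\gamma ne^{-4\gamma t}.
\]
Since $H=H(D,A)$, this constant is $C_{D,\rho_0,A,M}$. Summing the two bounds proves \eqref{prof:eq:pair-energy-window}.

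The restriction $A<1/2$ only guarantees that the window is nonempty for large $N$; it plays no other role. There is no real obstacle here. The only point to check is the order of choices: $H$ must be fixed first from $A$ through the high-spectrum proposition, and then used in the low-spectrum proposition, whose constant is allowed to depend on $H$. Uniformity in $S$ and in $t$ is inherited from both propositions, and "sufficiently large $N$" is taken as the maximum of the two thresholds.
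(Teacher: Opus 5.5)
Your proposal is correct and is the paper's proof: fix $H=H(D,A)$ from \cref{prof:prop:high-spectrum}, use that the spectral projections split $\mathcal E_2(G_2(t))$ exactly into its high and low parts, and then bound those parts by \cref{prof:prop:high-spectrum} and \cref{prof:prop:low-spectrum}. Your remark on the order of choices (first $H$ from $A$, then the low-spectrum constant allowed to depend on $H$) is the only bookkeeping the paper's argument needs.
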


\begin{proof}
Choose $H=H(D,A)$ as in \cref{prof:prop:high-spectrum}.  The projections
in \eqref{prof:eq:pair-spectral-projections} commute with $\mathcal L_2^{\hc}$ and
are orthogonal for the Dirichlet form.  Hence
$
 \mathcal E_2(G_2(t))
 =\mathcal E_2(G_{2,\mathrm{lo}}(t))+\mathcal E_2(G_{2,\mathrm{hi}}(t))
 $.
The first term is bounded by \cref{prof:prop:low-spectrum}, and the second by
\cref{prof:prop:high-spectrum}.  Together they prove \eqref{prof:eq:pair-energy-window}.
\end{proof}

\section{Chaos generating functions and exponential weight propagation}\label{prof:sec:chaos-weight}

This section uses the complete graph exclusion process for one genuinely
all-degree purpose: to obtain a degree-linear spectral gap and propagate an
exponential chaos weight.  We continue to use the warm-start abbreviations
$n=n_N$, $k=k_N$, $\rho=\rho_N$, and $\gamma=\gamma_N$.

\localheading{Complete-graph spectrum.}
The site permutation action $\mathsf T_{\sigma}^{(q)}$ was fixed in
\cref{prof:sec:chaos-interface}.  The normalized complete graph exchange
generator at subset level $q$ is
\[
 \mathcal L_{N,q}^{\mathrm{cg}}
 :=\frac1{n_N}\sum_{\{x,y\}\subset V_N}
   (I-\mathsf T_{\tau_{xy}}^{(q)}).
\]
For $1\le q\le n_N$ and $A\in\Omega_{N,q}^{\mathrm{set}}$, direct
substitution into the normalized up--down formulas gives
\[
 (\mathsf{Up}_{N,q-1\to q}\mathsf{Down}_{N,q\to q-1}f)(A)
 =\frac{qf(A)+\displaystyle\sum_{x\in A}\sum_{y\notin A}
          f((A\setminus\{x\})\cup\{y\})}
        {q(n_N-q+1)}.
\]
Only transpositions with exactly one endpoint in $A$ change the set, so
rearranging yields
\begin{equation}
 \mathcal L_{N,q}^{\mathrm{cg}}
 =\frac{q(n_N-q+1)}{n_N}
  \left(I-\mathsf{Up}_{N,q-1\to q}
             \mathsf{Down}_{N,q\to q-1}\right).
 \label{prof:eq:complete-graph-up-down}
\end{equation}
Let $0\le j\le q\le\ell_N$.  If $j<q$, every vector in the degree-$j$
summand at level $q$ is $\mathsf{Up}_{N,q-1\to q}h$ for a degree-$j$
vector $h$ at level $q-1$.  Hence \cref{prof:lem:fd-down-up-spectrum}
shows that $\mathsf{Up}_{N,q-1\to q}\mathsf{Down}_{N,q\to q-1}$ acts there
by $a_{q,j,N}$; for $j=q$ it vanishes because the top degree is harmonic.
Substitution into \eqref{prof:eq:complete-graph-up-down} gives
\begin{equation}
 \mathcal L_{N,q}^{\mathrm{cg}}
 \big|_{\mathsf{Up}_{N,j\to q}\mathcal K_{N,j}^{\mathrm{set}}}
 =\frac{j(n_N-j+1)}{n_N}\,I,
 \qquad 0\le j\le q\le\ell_N.
 \label{prof:eq:complete-graph-degree-spectrum}
\end{equation}
Thus the complete graph eigenvalue depends on the Hoeffding degree $j$ but
not on the ambient subset level $q$.

The harmonic lift is site-permutation-equivariant by
\eqref{prof:eq:harmonic-lift-permutation-equivariance}.  Summing that
identity over all site transpositions transports
\eqref{prof:eq:complete-graph-degree-spectrum} to the physical slice:
\begin{equation}
 \mathcal L_{N,k_N}^{\mathrm{cg}}\big|_{\Hdeg_{N,r}}
 =\frac{r(n_N-r+1)}{n_N}\,I,
 \qquad 0\le r\le\ell_N.
 \label{prof:eq:complete-graph-exclusion-eigenvalue}
\end{equation}
This is the complete graph input used below to obtain a degree-linear lower
bound for nearest-neighbor exclusion.

\begin{proposition}[Complete-graph to torus form comparison]
\label{prof:prop:complete-graph-comparison}
For every function $f$ on the standing $k$-particle slice,
\begin{equation}
 \langle f,\mathcal L_{N,k}^{\mathrm{cg}}f\rangle
 \le C_DN^2\langle f,L_N^{\SEP}f\rangle.
 \label{prof:eq:complete-graph-form-comparison}
\end{equation}
\end{proposition}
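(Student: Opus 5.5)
This is a canonical-path (Diaconis--Saloff-Coste) comparison. The complete-graph form is
\[
 \langle f,\mathcal L_{N,k}^{\mathrm{cg}}f\rangle
 =\frac1{2n}\sum_{\{x,y\}}\mathbb E_{\pi_N}|f(\eta^{xy})-f(\eta)|^2 ,
\]
up to the factor convention: each transposition $\tau_{xy}$ appears once, and $L_N^{\SEP}$ is the corresponding sum over nearest-neighbor edges with weight one. The plan is to write each long-range swap as a product of nearest-neighbor swaps, then count how often each nearest-neighbor edge is used. The only input needed is that transpositions act as measure-preserving bijections of $\Omega_{N,k}$.

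\textbf{Step 1: paths.} For $x,y\in V_N$, fix a deterministic coordinate-by-coordinate geodesic $x=z_0,z_1,\ldots,z_\ell=y$ in $\T_N^D$, taking the shortest direction around each cycle. Its length satisfies $\ell\le DN/2$. The standard identity
\[
 \tau_{xy}=\tau_{z_0z_1}\tau_{z_1z_2}\cdots\tau_{z_{\ell-1}z_\ell}\cdots\tau_{z_1z_2}\tau_{z_0z_1}
\]
expresses $\tau_{xy}$ as a word of $2\ell-1$ adjacent transpositions $\tau_{e_1},\ldots,\tau_{e_{2\ell-1}}$. Set $\eta^{(0)}=\eta$ and $\eta^{(i)}=\tau_{e_i}\eta^{(i-1)}$. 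Telescoping and Cauchy--Schwarz give
\[
 |f(\eta^{xy})-f(\eta)|^2\le(2\ell-1)\sum_{i}|f(\tau_{e_i}\eta^{(i-1)})-f(\eta^{(i-1)})|^2 .
\]

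\textbf{Step 2: change of variables.} Each map $\eta\mapsto\eta^{(i-1)}$ is a composition of transpositions, hence a bijection of $\Omega_{N,k}$ preserving $\pi_N$. Therefore
\[
 \mathbb E_{\pi_N}|f(\tau_{e_i}\eta^{(i-1)})-f(\eta^{(i-1)})|^2=\mathbb E_{\pi_N}|\nabla_{e_i}f|^2 .
\]
Summing over pairs $\{x,y\}$ gives
\[
 \langle f,\mathcal L^{\mathrm{cg}}f\rangle\le\frac{C_DN}{n}\sum_{e\in E_N}\kappa(e)\,\mathbb E_{\pi_N}|\nabla_ef|^2 ,
\]
where $\kappa(e)$ is the number of pairs whose word uses $e$, counted with multiplicity at most $2$.

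\textbf{Step 3: congestion bound.} This is the main point. For coordinate-by-coordinate geodesics on the torus, an edge in direction $j$ lies on the path from $x$ to $y$ only if three conditions hold: the coordinates of $x$ after $j$ agree with the edge's, the coordinates of $y$ before $j$ agree with the edge's, and the $j$-th coordinates of $x$ and $y$ straddle the edge within distance $N/2$. That leaves at most $N^{D-j}\cdot N^{j-1}\cdot O(N^2)=O(N^{D+1})$ pairs. Hence $\kappa(e)\le C_DN^{D+1}=C_DNn$.

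Combining the steps,
\[
 \langle f,\mathcal L^{\mathrm{cg}}f\rangle\le C_DN^2\sum_e\mathbb E_{\pi_N}|\nabla_ef|^2=C_DN^2\langle f,L_N^{\SEP}f\rangle ,
\]
where the constant absorbs the factor $1/2$ conventions. This proves \eqref{prof:eq:complete-graph-form-comparison}. The only step requiring care is the congestion count, and in particular that antipodal tie-breaking does not double the bound beyond a constant factor.
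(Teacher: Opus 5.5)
Your proposal is correct and follows essentially the same argument as the paper. Both use coordinate-ordered shortest torus paths, the palindromic word of $2\ell-1\le DN$ adjacent transpositions conjugating $\tau_{e_\ell}$ into $\tau_{xy}$, Cauchy--Schwarz along the unreduced sequence with transposition invariance of the uniform slice measure, and the congestion count $N^{D-1}\cdot N^2=nN$ (multiplicity at most two per word) to get the factor $C_DN^2$; as in the paper, antipodal ties are harmless because the longitudinal pair $(x_j,y_j)$ is bounded trivially by $N^2$, so tie-breaking only needs to make the path deterministic.
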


\begin{proof}
\localheading{Path comparison.}
Let $f$ be a function on the $k$-particle slice.  For each unordered endpoint pair $\{x,y\}$, choose once and for all an
orientation $(x,y)$.  In each coordinate choose the signed shortest
displacement in the half-open representative interval
$\{-\lfloor(N-1)/2\rfloor,\ldots,\lfloor N/2\rfloor\}$; when $N$ is even,
this convention resolves the antipodal tie by choosing $+N/2$.  Moving the
coordinates in the order $1,\ldots,D$ defines a deterministic
coordinate-ordered shortest torus path
\[
 \Gamma_{x,y}^{\mathrm{site}}=(z_0,\ldots,z_L),
 \qquad z_0=x,\quad z_L=y,\quad L\le DN/2.
\]
Write $\epsilon_i=\{z_{i-1},z_i\}$ and form the adjacent transposition word
\[
 \mathsf W_{x,y}=(a_1,\ldots,a_{s_{x,y}})
 :=(\epsilon_1,\ldots,\epsilon_L,\epsilon_{L-1},\ldots,\epsilon_1),
 \qquad s_{x,y}=2L-1\le DN.
\]
Then $\tau_{a_{s_{x,y}}}\cdots\tau_{a_1}=\tau_{xy}$.  For a slice configuration
$A$, define
\[
 A_0=A,
 \qquad
 A_j=\tau_{a_j}A_{j-1},
 \quad 1\le j\le s_{x,y}.
\]
Thus $A_{s_{x,y}}=\tau_{xy}A$; after deleting any consecutive repetitions
from $(A_0,\ldots,A_{s_{x,y}})$, the resulting sequence is a nearest-neighbor
exclusion path.  Cauchy--Schwarz along the unreduced sequence gives
\[
 |f(A)-f(\tau_{xy}A)|^2
 =\left|\sum_{j=1}^{s_{x,y}}\bigl(f(A_{j-1})-f(A_j)\bigr)\right|^2
 \le s_{x,y}\sum_{j=1}^{s_{x,y}}|f(A_{j-1})-f(A_j)|^2.
\]
Because the uniform $k$-subset measure is invariant under every site
transposition, averaging over $A$ yields
\[
 \mathbb E_{\upsilon_{N,k}^{\mathrm{set}}}\bigl|f-\mathsf T_{\tau_{xy}}^{(k)}f\bigr|^2
 \le s_{x,y}\sum_{j=1}^{s_{x,y}}
 \mathbb E_{\upsilon_{N,k}^{\mathrm{set}}}\bigl|f-\mathsf T_{\tau_{a_j}}^{(k)}f\bigr|^2.
\]

It remains to control congestion.  For a nearest-neighbor edge
$e\in E_N$, let $N_e(\mathsf W_{x,y}):=\#\{j:a_j=e\}$.
The forward-and-return construction gives $N_e(\mathsf W_{x,y})\le2$.
Fix $e$ in coordinate direction $a$.  If the coordinate-ordered path
$\Gamma_{x,y}^{\mathrm{site}}$ uses $e$ during its $a$th coordinate segment, then the first
$a-1$ target coordinates and the last $D-a$ source coordinates are fixed by
$e$.  The remaining transverse coordinates are the $a-1$ unused source
coordinates and the $D-a$ unused target coordinates, hence $D-1$ coordinates
in total and $N^{D-1}$ choices.  For the longitudinal coordinate, the ordered pair $(x_a,y_a)$ has at most
$N^2$ possible values in total, so in particular at most $N^2$ choices can
have their selected shortest arc contain the fixed edge $e$; the half-open
shortest-displacement convention above removes the only antipodal ambiguity.
Thus
\[
 \#\bigl\{\{x,y\}:N_e(\mathsf W_{x,y})>0\bigr\}
 \le C_DN^{D-1}N^2=C_DnN,
\]
and using $s_{x,y}\le DN$ we find
\[
 \sup_{e\in E_N}
 \sum_{\{x,y\}\subset V_N}s_{x,y}\,N_e(\mathsf W_{x,y})
 \le C_DnN^2.
\]
Consequently,
\begin{align*}
 \langle f,\mathcal L_{N,k}^{\mathrm{cg}}f\rangle
 &=\frac1{2n}\sum_{\{x,y\}\subset V_N}
   \mathbb E_{\upsilon_{N,k}^{\mathrm{set}}}\bigl|f-\mathsf T_{\tau_{xy}}^{(k)}f\bigr|^2\\
 &\le\frac1{2n}\sum_{e\in E_N}
   \left(\sum_{\{x,y\}\subset V_N}s_{x,y}\,N_e(\mathsf W_{x,y})\right)
   \mathbb E_{\upsilon_{N,k}^{\mathrm{set}}}\bigl|f-\mathsf T_{\tau_e}^{(k)}f\bigr|^2
 \le C_DN^2\langle f,L_N^{\SEP}f\rangle.
\qedhere
\end{align*} 
\end{proof}

Decompose $h_t^S$ orthogonally into its chaos degrees, and use
the isometric chaos coordinates $\widehat\Psi_{N,r}$ from
\cref{prof:def:chaos-coordinate}.  The degree-$r$ gap proved below damps the
$r$th chaos component by a factor exponential in $r(t-\tau)$.  Weighting the
$r$th squared norm by $z^r$ packages all degrees so that this damping can be
absorbed by increasing the weight $z$.  This is the mechanism behind the
exponential weight propagation estimate.  For $z\ge0$ and $t\ge0$, define the
weighted chaos generating function
\begin{equation*}
 Z_S(z,t):=\sum_{r=0}^{\ell_N}z^r
 \norm{\widehat\Psi_{N,r}(h_t^S)}_2^2,
\end{equation*}
and observe that $
 Z_S(1,t)=\norm{h_t^S}_{L^2(\pi_N)}^2$.
Because $Z_S(\cdot,t)$ is a finite polynomial for each $N$, the word ``weight'' below refers to the exponential factor placed on the chaos degree, not to a radius of convergence.
\begin{lemma}[Linear degree gap and exponential weight propagation]
\label{prof:lem:weight-propagation}
There is $c_D>0$ such that the SEP generator on degree $r$ satisfies
\begin{equation}
L_N^{\SEP}\big|_{\Hdeg_{N,r}}\ge c_Dr\gamma I,
\qquad 1\le r\le \ell_N.
 \label{prof:eq:degree-gap}
\end{equation}
Consequently, for $z\ge0$ and $0\le\tau\le t$,
\begin{equation}
Z_S\bigl(ze^{2c_D\gamma(t-\tau)},t\bigr)\le Z_S(z,\tau).
 \label{prof:eq:weight-propagation}
\end{equation}
\end{lemma}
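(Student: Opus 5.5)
The plan is to combine the complete-graph spectrum \eqref{prof:eq:complete-graph-exclusion-eigenvalue} with the form comparison of \cref{prof:prop:complete-graph-comparison}, restricted to a single Hoeffding layer. Both $L_N^{\SEP}$ and $\mathcal L_{N,k}^{\mathrm{cg}}$ are sums of site transpositions, so both commute with every $P_{N,r}$. In particular each layer $\Hdeg_{N,r}$ is invariant under both generators.

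For $f\in\Hdeg_{N,r}$ with $1\le r\le\ell_N$, \eqref{prof:eq:complete-graph-exclusion-eigenvalue} gives
\[
 \langle f,\mathcal L_{N,k}^{\mathrm{cg}}f\rangle=\frac{r(n-r+1)}{n}\|f\|_2^2\ge \frac{r}{2}\|f\|_2^2,
\]
because $r\le\ell_N\le n/2$ forces $(n-r+1)/n\ge1/2$. Combining this with \eqref{prof:eq:complete-graph-form-comparison} yields
\[
 \langle f,L_N^{\SEP}f\rangle\ge \frac{r}{2C_DN^2}\|f\|_2^2 .
\]
Since $\gamma\le 4\pi^2/N^2$, the right-hand side is at least $c_Dr\gamma\|f\|_2^2$, which proves \eqref{prof:eq:degree-gap}.

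For the propagation estimate \eqref{prof:eq:weight-propagation}, note that $h_t^S=e^{-(t-\tau)L_N^{\SEP}}h_\tau^S$, because $\pi_N$ is reversible and $L_N^{\SEP}$ is self-adjoint on $L^2(\pi_N)$. The projection $P_{N,r}$ commutes with the semigroup, so
\[
 P_{N,r}h_t^S=e^{-(t-\tau)L_N^{\SEP}}P_{N,r}h_\tau^S .
\]
Applying \eqref{prof:eq:degree-gap} and the spectral theorem on the invariant layer gives
\[
 \|P_{N,r}h_t^S\|_2^2\le e^{-2c_Dr\gamma(t-\tau)}\|P_{N,r}h_\tau^S\|_2^2 .
\]
For $r=0$ this holds with equality, since the degree-zero layer is constant. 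By unitarity of $U_{N,r}$, $\|\widehat\Psi_{N,r}(h)\|_2=\|P_{N,r}h\|_2$. Multiplying by $(ze^{2c_D\gamma(t-\tau)})^r$ and summing over $r$ then gives \eqref{prof:eq:weight-propagation} term by term.

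The only step needing care is the identification of $\mathcal L_{N,k}^{\mathrm{cg}}$ on $\Hdeg_{N,r}$, and this is already supplied by \eqref{prof:eq:complete-graph-exclusion-eigenvalue}. The remaining points are to check that the inner-product normalizations in \cref{prof:prop:complete-graph-comparison} match the $L^2(\pi_N)$ form used here, which holds because both use the uniform probability measure on the slice, and to absorb the constant relating $N^{-2}$ to $\gamma$ into $c_D$.
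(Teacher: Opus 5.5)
Your proof is correct and follows the paper's argument essentially verbatim: on $\Hdeg_{N,r}$ you combine $\frac r2\|F\|_2^2\le\langle F,\mathcal L_{N,k}^{\mathrm{cg}}F\rangle\le C_DN^2\langle F,L_N^{\SEP}F\rangle$ with $\gamma\lesssim N^{-2}$, and then apply degreewise spectral damping on the invariant chaos layers before summing against the weights $(ze^{2c_D\gamma(t-\tau)})^r$. Your additional remarks make explicit three points the paper uses implicitly: both generators preserve each layer, the density evolves by the same semigroup by reversibility, and $\|\widehat\Psi_{N,r}(h)\|_2=\|P_{N,r}h\|_2$.
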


\begin{proof}
By the exact complete graph eigenvalue
\eqref{prof:eq:complete-graph-exclusion-eigenvalue}, the form comparison from
\cref{prof:prop:complete-graph-comparison}, namely
\eqref{prof:eq:complete-graph-form-comparison}, and
$\frac{r(n-r+1)}n\ge\frac r2$ for $r\le\ell_N\le n/2$, every
$F\in\Hdeg_{N,r}$ satisfies
\[
 \frac r2\norm{F}_2^2
 \le \langle F,\mathcal L_{N,k}^{\mathrm{cg}}F\rangle
 \le C_DN^2\langle F,L_N^{\SEP}F\rangle.
\]
Hence
$
 \langle F,L_N^{\SEP}F\rangle
 \ge c_DrN^{-2}\norm{F}_2^2
 \ge c_Dr\gamma\norm{F}_2^2,
$
after adjusting $c_D$ using $\gamma\asymp_DN^{-2}$. This yields
\eqref{prof:eq:degree-gap}.

\localheading{Propagation of the exponential chaos weight.}
The exclusion semigroup preserves each chaos degree.  For $r\ge1$, spectral
calculus and \eqref{prof:eq:degree-gap} give
\[
 \norm{\widehat\Psi_{N,r}(h_t^S)}_2^2
 \le e^{-2c_Dr\gamma(t-\tau)}
      \norm{\widehat\Psi_{N,r}(h_\tau^S)}_2^2,
\]
while the degree-zero component is unchanged.
Multiplying by the terminal weight
$\bigl(ze^{2c_D\gamma(t-\tau)}\bigr)^r$ cancels the degree-dependent decay:
\[
 \bigl(ze^{2c_D\gamma(t-\tau)}\bigr)^r
 \norm{\widehat\Psi_{N,r}(h_t^S)}_2^2
 \le z^r\norm{\widehat\Psi_{N,r}(h_\tau^S)}_2^2.
\]
Summing over $0\le r\le\ell_N$ proves
\eqref{prof:eq:weight-propagation}.
\qedhere
\end{proof}

\begin{remark}
Both \cref{prof:prop:fd-top-gap,prof:lem:weight-propagation} give lower bounds
on the spectral bottom of exclusion in the degree-$r$ chaos sector.
\Cref{prof:prop:fd-top-gap} is asymptotically sharp for each fixed $r$, whereas
\cref{prof:lem:weight-propagation} is uniform for all
$1\le r\le\ell_N$.  The complete graph comparison used in
\eqref{prof:eq:degree-gap} yields a dimension-dependent constant $c_D$ that
is not sharp; its sole role here is the uniform linear growth in $r$ needed
for \eqref{prof:eq:weight-propagation}.
\end{remark}

\section{From pair energy to a terminal row bound}\label{prof:sec:row}

The local notation $n=n_N$, $k=k_N$, $\rho=\rho_N$, and $\gamma=\gamma_N$
remains in force in this section.  The pair-energy argument gives global
$L^2$ control of the degree-two discrepancy $G_2$.  Here we strengthen this
to uniform control in one coordinate.  The key input is a short-time
smoothing estimate that converts the available pair $L^2$ bound into the row
bound defined below.

We also use the sharper fixed-degree spectral bottom at degree two.  Setting
$r=2$ in \eqref{prof:eq:fd-top-gap}, for all sufficiently large $N$,
\[
 \mathcal L_{N,2}^{\hc}\big|_{\mathcal T_{N,2}^{\hc}}
 \ge \left(2-C_{D,2}n^{-1}\right)\gamma I
 \ge \gamma I.
\]
Thus every $f\in\mathcal T_{N,2}^{\hc}$ satisfies the pair Poincar\'e
estimate
\begin{equation}
 \norm{f}_2^2\le\gamma^{-1}\mathcal E_2(f),
 \label{prof:eq:pair-Poincare}
\end{equation}
for all sufficiently large $N$.  This is the only pair Poincar\'e estimate
used below.

For $f\in\mathsf X_{N,2}^{\ord,\mathrm{sym}}$, set
\[
\norm{f}_{\mathrm{row}}^2
:=\max_x\frac1{n-1}\sum_{y\ne x}|f(x,y)|^2.
\]
Throughout this section, $\|f\|_2$ for a pair function means
$\|f\|_{L^2(\upsilon_2^{\ord})}$.
\begin{lemma}[Row smoothing]
\label{prof:lem:row-smoothing}
There is $a_D>0$ such that, for $t_{\mathrm{row}}=a_D\gamma^{-1}$ and every
$f\in\mathsf X_{N,2}^{\ord,\mathrm{sym}}$,
\begin{align}
\norm{\mathcal P_2(t)f}_{\mathrm{row}}&\le\norm{f}_{\mathrm{row}},
\qquad t\ge0,
 \label{prof:eq:row-contract}\\
\norm{\mathcal P_2(t_{\mathrm{row}})f}_{\mathrm{row}}&\le C_D\norm{f}_2.
 \label{prof:eq:row-smooth}
\end{align}
We also have
\begin{align}
\norm{\mathsf{Top}_{N,2}^{\ord}f}_{\mathrm{row}}\le C\norm{f}_{\mathrm{row}}.
\label{prof:eq:top-row-bound}
\end{align}
\end{lemma}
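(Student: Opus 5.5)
The plan is to work with the row quantity $\norm{f}_{\mathrm{row}}^2=\max_x\mathbb E_{y\sim\mathrm{Unif}(V_N\setminus\{x\})}|f(x,y)|^2$ and to treat the three claims separately.

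\emph{Contraction \eqref{prof:eq:row-contract}.} Write $\mathcal P_2(t)$ through its symmetric Markov kernel $p_t^{\hc}$ on $\Omega_2$. For a fixed row $x$, Jensen gives
\[
 |\mathcal P_2(t)f(x,y)|^2\le\sum_{(x',y')}p_t^{\hc}((x,y),(x',y'))|f(x',y')|^2 .
\]
Summing over $y\ne x$ yields $\sum_{(x',y')}Q_t(x;x',y')|f(x',y')|^2$ with $Q_t(x;\cdot)=\sum_{y\ne x}p_t^{\hc}((x,y),\cdot)$. By symmetry of $f$ we may fold each pair onto its unordered version. The needed fact is then that, for each fixed $x'$, $\sum_{y'}[Q_t(x;x',y')+Q_t(x;y',x')]$ is a total mass at most $1\cdot$(the appropriate count), so that the sum is bounded by $(n-1)\max_{x'}\frac1{n-1}\sum_{y'}|f(x',y')|^2$ times at most a unit factor. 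I would establish this by viewing it as a statement about the marginal law of an unordered pair evolving from $\{x,\mathrm{Unif}\}$. Under exclusion, the occupation of any fixed site $x'$ by either labelled particle has probability $\le$ the corresponding probability at time zero averaged over exchangeable starts. By stationarity of the uniform law on pairs and the one-particle marginal being a random walk, this gives mass exactly $2/(n)$-type bounds. The cleaner route is duality: $\sum_y p_t^{\hc}((x,y),\{x',\cdot\})$ counts the expected number of particles at $x'$ when starting from all pairs containing $x$, which by the one-particle (tagged) marginal equals $p_t^{\RW}(x,x')\cdot(n-1)+\sum_{y}p_t^{\RW}(y,x')-p_t^{\RW}(x,x')$. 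This is $\le 1+(n-1)p_t^{\RW}(x,x')$. The resulting bound is a convex combination over $x'$ of row quantities, hence at most $\norm f_{\mathrm{row}}^2$; a possible factor $2$ from symmetrization is acceptable if the stated constant allows it, otherwise the counts are tracked exactly.

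\emph{Smoothing \eqref{prof:eq:row-smooth}.} The plan is Cauchy--Schwarz in the kernel:
\[
 \frac1{n-1}\sum_{y}|\mathcal P_2(t)f(x,y)|^2\le\frac1{n-1}\sum_y\sum_{\boldsymbol w}p_t^{\hc}((x,y),\boldsymbol w)|f(\boldsymbol w)|^2 .
\]
It therefore suffices to show $\sup_{\boldsymbol w}\sum_y p_t^{\hc}((x,y),\boldsymbol w)\le C_D/n$ at $t=a_D\gamma^{-1}$, since then the right side is at most $C_D\frac{1}{n(n-1)}\sum_{\boldsymbol w}|f|^2=C_D\norm f_2^2$. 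By symmetry $\sum_yp_t^{\hc}((x,y),\boldsymbol w)=\sum_y p_t^{\hc}(\boldsymbol w,(x,y))$. This is the probability that the first labelled particle started at $\boldsymbol w$ sits at $x$, which is exactly the tagged-particle marginal. For two-particle exclusion the tagged marginal is not Markov, so I would bound it by comparison: the projection of labelled pairs to the first coordinate is dominated via the heat-kernel bound of \cref{prof:lem:low-geometry}. Summing $p_t^{\hc}(\boldsymbol w,(x,y))\le\sqrt{p_t(\boldsymbol w,\boldsymbol w)p_t((x,y),(x,y))}$ is too lossy, though. Instead I would use that the unlabelled pair is a two-particle exclusion whose one-site occupation $\Pr(x\in\{X_t,Y_t\})$ equals, by self-duality at degree one, $\sum_{i}p_t^{\RW}(w_i,x)\le 2\sup_{z}p_t^{\RW}(z,x)$. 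Then $\sup p^{\RW}_{t}\le C_D/n$ at $t\asymp\gamma^{-1}$ by the one-particle local CLT or a Nash bound, with $a_D$ chosen large. The step I expect to be the obstacle is exactly this replacement of labelled by unlabelled occupation; it works because $f$ is symmetric, so one may symmetrize the kernel over the two labels before summing.

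\emph{Top projection \eqref{prof:eq:top-row-bound}.} Use \eqref{prof:eq:fd-top-projection-formula} with $r=2$. Here $\mathsf{Down}\,\mathsf{Up}$ on level one acts by explicit scalars bounded below (constants versus mean-zero), so
\[
 (I-\mathsf{Top})f(x,y)=c_1(g(x)+g(y))+c_0\bar f,
\]
with $g=\mathsf{Down}_{2\to1}f$, $|c_i|\le C$. Then $|g(x)|^2\le\frac1{n-1}\sum_y|f(x,y)|^2\le\norm f_{\mathrm{row}}^2$ and $|\bar f|\le\norm f_{\mathrm{row}}$, so each row of the correction is bounded by $C\norm f_{\mathrm{row}}$, which gives the claim.
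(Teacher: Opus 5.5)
Your arguments for \eqref{prof:eq:row-smooth} and \eqref{prof:eq:top-row-bound} are fine.

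\emph{Top projection.} This is the paper's argument. The explicit inverse of $\mathsf{Down}_{N,2\to1}\mathsf{Up}_{N,1\to2}$ gives $(I-\mathsf{Top}_{N,2}^{\ord})f(x,y)=\frac{n-1}{n-2}(g(x)+g(y))-\frac{n}{n-2}\bar f$, and the row bounds on $g$ and $\bar f$ finish it.

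\emph{Smoothing.} Here you take a slightly different route from the paper, which applies the intertwining $\mathsf{Down}_{2\to1}\mathcal P_2(t)=e^{-t\mathcal L_1}\mathsf{Down}_{2\to1}$ to $|f|^2$. You instead bound the column sum $\sum_{y\ne x}p_t^{\hc}((x,y),\boldsymbol w)=\mathbb P_{\boldsymbol w}(\mathbf X_t(1)=x)$ by the occupation probability $p_t(w_1,x)+p_t(w_2,x)\le 2\sup p_{t_{\mathrm{row}}}$. That is valid: it is just the inclusion $\{\mathbf X_t(1)=x\}\subseteq\{x\text{ occupied}\}$, so symmetry of $f$ is not even needed. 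It costs only a factor $2$.

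\emph{Contraction: this step has a gap.} The count you compute, $\sum_{y\ne x}\mathbb P_{(x,y)}(x'\text{ occupied at }t)=1+(n-2)p_t(x,x')$, is only the total mass that the folded kernel $S(x',y'):=Q_t(x;x',y')+Q_t(x;y',x')$ places on pairs containing $x'$. It says nothing about how this mass is spread over the partner $y'$. So it bounds $\sum_{y'}S(x',y')|f(x',y')|^2$ only by $\bigl(\sum_{y'}S(x',y')\bigr)\max_{y'}|f(x',y')|^2$, and that maximum can be $n-1$ times the row average. Hence the claimed convex combination of row quantities does not follow. Your hedge about a factor $2$ is also incompatible with the constant $1$ in the statement.

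What you need is an identity pointwise in the endpoint $\boldsymbol w=(w_1,w_2)\in\Omega_2$. It follows from symmetry of $p_t^{\hc}$ in counting measure, label exchangeability, and degree-one duality:
\[
 \sum_{y\ne x}\bigl[p_t^{\hc}((x,y),\boldsymbol w)+p_t^{\hc}((x,y),(w_2,w_1))\bigr]
 =\mathbb P_{\boldsymbol w}\bigl(x\in\{\mathbf X_t(1),\mathbf X_t(2)\}\bigr)
 =p_t(x,w_1)+p_t(x,w_2).
\]
Combined with Jensen and the symmetry of $|f|^2$, this gives
\[
 \frac1{n-1}\sum_{y\ne x}|\mathcal P_2(t)f(x,y)|^2
 \le\frac1{n-1}\sum_{\boldsymbol w}p_t(x,w_1)|f(\boldsymbol w)|^2
 =\sum_{x'}p_t(x,x')\,\frac1{n-1}\sum_{y'\ne x'}|f(x',y')|^2 .
\]
The right side is a genuine convex combination of row quantities, with constant $1$. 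This is exactly what the paper does via $|\mathcal P_2(t)f|^2\le\mathcal P_2(t)|f|^2$ and \eqref{prof:eq:hard-core-down-semigroup-intertwining}. It is also the label-symmetrization you invoke in your smoothing paragraph, but for the contraction it must be used pointwise in $\boldsymbol w$, not after summing over the partner site.
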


\begin{proof}
For a symmetric pair function $f$, set
\[
 R_f:=\mathsf{Down}_{2\to1}(|f|^2),
 \qquad
 R_f(x)=\frac1{n-1}\sum_{y\ne x}|f(x,y)|^2.
\]
Because $\mathcal P_2(t)$ is a Markov semigroup, Jensen's inequality gives the
pointwise bound
$
 |\mathcal P_2(t)f|^2\le \mathcal P_2(t)|f|^2
 $.
Applying $\mathsf{Down}_{2\to1}$ and using the semigroup intertwining
\eqref{prof:eq:hard-core-down-semigroup-intertwining} yields
\begin{equation}
 R_{\mathcal P_2(t)f}
 \le e^{-t\mathcal L_1}R_f.
 \label{prof:eq:row-down-semigroup-bound}
\end{equation}
If $p_t(x,z)$ denotes the one-particle heat kernel, then
\eqref{prof:eq:row-down-semigroup-bound} gives the row kernel bound
\begin{equation}
 \frac1{n-1}\sum_{y\ne x}|\mathcal P_2(t)f(x,y)|^2
 \le\sum_zp_t(x,z)R_f(z).
 \label{prof:eq:row-kernel-bound}
\end{equation}
Taking the maximum over $x$ proves \eqref{prof:eq:row-contract}.

Choose \(a_D\ge2\), and put \(t_{\mathrm{row}}=a_D\gamma^{-1}\).  In the
unit-modulus character convention,
\[
 p_{t_{\mathrm{row}}}(x,z)
 =\frac1n\sum_{p}e^{-t_{\mathrm{row}}\lambda_N(p)}
   e_p(x)\overline{e_p(z)}.
\]
Therefore
\[
 \sup_{x,z}p_{t_{\mathrm{row}}}(x,z)
 \le\frac1n\left(1+\sum_{p\ne0}e^{-t_{\mathrm{row}}\lambda_N(p)}\right)
 =\frac1n\left(1+\sum_{p\ne0}
   e^{-2(t_{\mathrm{row}}/2)\lambda_N(p)}\right).
\]
Since $t_{\mathrm{row}}/2\ge\gamma^{-1}$, the long-time heat trace bound
\eqref{prof:eq:long-heat-trace} from
\cref{prof:lem:one-particle-spectral-sums} gives
$
 \sup_{x,z}p_{t_{\mathrm{row}}}(x,z)\le\frac{C_D}{n}
 $.
Substitution into \eqref{prof:eq:row-kernel-bound} yields \eqref{prof:eq:row-smooth}:
\[
 \frac1{n-1}\sum_{y\ne x}|\mathcal P_2(t_{\mathrm{row}})f(x,y)|^2
 \le\frac{C_D}{n}\sum_zR_f(z)=C_D\|f\|_2^2.
\]

Next we prove \eqref{prof:eq:top-row-bound}.  Specializing the general up--down projection
\eqref{prof:eq:fd-top-projection-formula} to $r=2$ gives
\begin{equation}
\begin{aligned}
 \mathsf{Top}_{N,2}^{\ord}
 &=I-\mathsf{Up}_{N,1\to2}
 \bigl(\mathsf{Down}_{N,2\to1}\mathsf{Up}_{N,1\to2}\bigr)^{-1}
 \mathsf{Down}_{N,2\to1}.
\end{aligned}
 \label{prof:eq:pair-top-projection-formula}
\end{equation}
By \eqref{prof:eq:fd-down-up-eigenvalue},
$\mathsf{Down}_{N,2\to1}\mathsf{Up}_{N,1\to2}$ acts by $1$ on constants and
by $(n-2)/(2(n-1))$ on mean-zero one-particle fields.
Hence for every one-particle field $g$,
\[
 \bigl(\mathsf{Down}_{N,2\to1}\mathsf{Up}_{N,1\to2}\bigr)^{-1}g
 =\frac{2(n-1)}{n-2}g
 -\frac n{n-2}\left(\frac1n\sum_z g(z)\right).
\]
By our standing assumption $D\ge 2$ and $N\ge 3$, $n=N^D \ge 3^D >3$, and therefore
\[
 \left\|\bigl(\mathsf{Down}_{N,2\to1}\mathsf{Up}_{N,1\to2}\bigr)^{-1}g\right\|_\infty
 \le C\|g\|_\infty.
\]
The ordered formula for $\mathsf{Down}_{N,2\to1}$ and Cauchy--Schwarz give
\[
 \|\mathsf{Down}_{N,2\to1}f\|_\infty
 =\max_x\left|\frac1{n-1}\sum_{y\ne x}f(x,y)\right|
 \le\|f\|_{\mathrm{row}},
\]
while $(\mathsf{Up}_{N,1\to2}g)(x,y)=\frac12(g(x)+g(y))$ implies
$\|\mathsf{Up}_{N,1\to2}g\|_{\mathrm{row}}\le\|g\|_\infty$.  Applying these bounds in
\eqref{prof:eq:pair-top-projection-formula} yields \eqref{prof:eq:top-row-bound}:
\[
 \|\mathsf{Top}_{N,2}^{\ord}f\|_{\mathrm{row}}
 \le \|f\|_{\mathrm{row}}
 +\left\|\mathsf{Up}_{N,1\to2}
 \bigl(\mathsf{Down}_{N,2\to1}\mathsf{Up}_{N,1\to2}\bigr)^{-1}
 \mathsf{Down}_{N,2\to1}f\right\|_{\mathrm{row}}
 \le C\|f\|_{\mathrm{row}}. \qedhere
\]
\end{proof}

The preceding row-smoothing estimates now convert the pair-energy control into a
terminal row-norm estimate for the degree-two top pair discrepancy $G_2$ defined
in \eqref{prof:eq:G2}.

\begin{proposition}[Terminal pair row bound]
For every fixed profile coordinate $s_1<-1$,
\begin{equation}
\norm{G_2(t_N(s_1))}_{\mathrm{row}}
\le C_{D,\rho_0}\frac{e^{-s_1}}{\sqrt n}
 \label{prof:eq:terminal-row}
\end{equation}
uniformly over deterministic $S\subset V_N$ with $|S|=k$ and all sufficiently large $N$.
\end{proposition}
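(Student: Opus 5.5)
We plan to obtain the row bound at $t_1:=t_N(s_1)$ by applying one short-time row smoothing step to the $L^2$ bound that the pair energy estimate gives at an earlier time. Set $t_0:=t_1-t_{\mathrm{row}}$, where $t_{\mathrm{row}}=a_D\gamma^{-1}$ is the time from \cref{prof:lem:row-smoothing}. Since $\gamma t_1=(\log n+s_1)/2$, the time $t_0$ lies in the window \eqref{prof:eq:pair-window} for, say, $A=1/4$, $M=s_1$, and all large $N$.

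The first step is to integrate the differential equation \eqref{prof:eq:G2eqn} from $t_0$ to $t_1$. Writing $u=t_1-t_0=t_{\mathrm{row}}$, this gives
\[
 G_2(t_1)=\mathcal P_2(t_{\mathrm{row}})G_2(t_0)
 -\int_{t_0}^{t_1}\mathcal P_2(t_1-\tau)\mathsf{Top}_{N,2}^{\ord}\mathcal B[\varphi_\tau]\,\dd\tau.
\]
For the first term we use \eqref{prof:eq:row-smooth}, then the pair Poincar\'e inequality \eqref{prof:eq:pair-Poincare}, and then \cref{prof:thm:pair-energy}. Together these yield
\[
 \|\mathcal P_2(t_{\mathrm{row}})G_2(t_0)\|_{\mathrm{row}}^2
 \le C_D\gamma^{-1}\mathcal E_2(G_2(t_0))
 \le C\,n e^{-4\gamma t_0}.
\]
Because $e^{-4\gamma t_0}=e^{4a_D}n^{-2}e^{-2s_1}$, this term is at most $C e^{-s_1}n^{-1/2}$, which is the target.

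The second step is the source integral. By \eqref{prof:eq:row-contract} together with \eqref{prof:eq:top-row-bound}, each integrand has row norm at most $C\|\mathcal B[\varphi_\tau]\|_{\mathrm{row}}$. From the definition,
\[
 \|\mathcal B[\varphi_\tau]\|_{\mathrm{row}}^2
 =\max_x\frac1{n-1}\sum_{y\sim x}|\varphi_\tau(x)-\varphi_\tau(y)|^4
 \le \frac{2D}{n-1}\,\omega_1(\varphi_\tau)^4 .
\]
Since $\omega_1(\varphi_\tau)^2=(n/\beta_N)\,\omega_1(m_\tau)^2$ and \eqref{prof:eq:long-one-particle} applies because $\tau\ge t_0\ge\gamma^{-1}$, we get $\omega_1(\varphi_\tau)^2\le Cn\gamma e^{-2\gamma\tau}$. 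Hence $\|\mathcal B[\varphi_\tau]\|_{\mathrm{row}}\le C\sqrt n\,\gamma e^{-2\gamma\tau}$. The integral runs over an interval of length $t_{\mathrm{row}}=a_D\gamma^{-1}$, so it contributes at most
\[
 C\sqrt n\, e^{-2\gamma t_0}\le C\sqrt n\, n^{-1}e^{-s_1}=Ce^{-s_1}n^{-1/2}.
\]
Summing the two contributions proves \eqref{prof:eq:terminal-row}.

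The main point to check is that the first step really needs only the $L^2$ pair energy bound and not a row bound at time $t_0$. This is handled by the smoothing estimate \eqref{prof:eq:row-smooth}, applied to $G_2(t_0)$, which is symmetric and hence lies in $\mathsf X_{N,2}^{\ord,\mathrm{sym}}$. We must also confirm that both $t_0$ and $t_1$ satisfy the lower window condition of \cref{prof:thm:pair-energy} and the hypothesis $t\ge\gamma^{-1}$ of \eqref{prof:eq:long-one-particle}; this holds for large $N$ since $\gamma t_0\sim\frac12\log n$. The hypothesis $s_1<-1$ is not needed for this argument; it only ensures the stated form of the bound is not vacuous.
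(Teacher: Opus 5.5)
Your proposal is correct and is essentially the paper's proof: the same intermediate time $t_N(s_1)-t_{\mathrm{row}}$, the same Duhamel split, row smoothing \eqref{prof:eq:row-smooth} combined with the pair Poincar\'e inequality and \cref{prof:thm:pair-energy} for the propagated term, and row contraction with \eqref{prof:eq:top-row-bound} and the long-time oscillation bound for the source integral. One adjustment is needed: apply \cref{prof:thm:pair-energy} with a fixed $M$ rather than $M=s_1$ (the paper takes $M=1$, which covers $t_0$ because $s_1<-1$ and the windows increase with $M$), so that the constant is visibly independent of $s_1$; this uniformity is used later when $z_*$ is chosen before $s_1$, so the upper bound on $s_1$ is not idle as your final remark claims.
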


The factor $e^{-s_1}$ worsens as $s_1$ decreases because $t_N(s_1)$ is then an
earlier physical time.  In the later warm-start argument, $s_1$ is fixed
independently of $N$, so this factor remains a fixed constant.

\begin{proof}
Let $t_-=t_N(s_1)-t_{\mathrm{row}}$, where $t_{\mathrm{row}} = a_D \gamma^{-1}$ as in \cref{prof:lem:row-smoothing}.
Since
$2\gamma t_-=\log n+s_1-2a_D$, for fixed $s_1$ and all sufficiently large
$N$ one has $t_-\ge\frac14\gamma^{-1}\log n$.  Moreover, $2\gamma t_-=\log n+s_1-2a_D\le\log n+1$, so $t_-\le(\log n+1)/(2\gamma)$.  Hence $t_-$ lies in the window
\eqref{prof:eq:pair-window} with $A=\frac 14$ and $M=1$.  By
\cref{prof:thm:pair-energy} and
\eqref{prof:eq:pair-Poincare},
\begin{align}
\norm{G_2(t_-)}_2^2
\le C_D\gamma^{-1}\mathcal E_2(G_2(t_-))
\le C_{D,\rho_0}ne^{-4\gamma t_-}
\le C_{D,\rho_0}\frac{e^{-2s_1}}n.
 \label{prof:eq:L2-bound-G2t-}
\end{align}
Set $T:=t_N(s_1)$.  Using the commutation
\eqref{prof:eq:HLcommute}, the Duhamel formula
\eqref{prof:eq:pair-duhamel} may be written with the top projection inside the
integral.  Splitting at $t_-$ then gives
\begin{align*}
G_2(T)
&=-\int_0^{t_-}\mathcal P_2(T-\tau)
   \mathsf{Top}_{N,2}^{\ord}\mathcal B[\varphi_\tau] \,\dd\tau
  -\int_{t_-}^{T}\mathcal P_2(T-\tau)
   \mathsf{Top}_{N,2}^{\ord}\mathcal B[\varphi_\tau] \,\dd\tau\\
&=-\mathcal P_2(T-t_-)
  \int_0^{t_-}\mathcal P_2(t_--\tau)
   \mathsf{Top}_{N,2}^{\ord}\mathcal B[\varphi_\tau] \,\dd\tau
  -\int_{t_-}^{T}\mathcal P_2(T-\tau)
   \mathsf{Top}_{N,2}^{\ord}\mathcal B[\varphi_\tau] \,\dd\tau\\
&=\mathcal P_2(T-t_-)G_2(t_-)
  -\int_{t_-}^{T}\mathcal P_2(T-\tau)
   \mathsf{Top}_{N,2}^{\ord}\mathcal B[\varphi_\tau] \,\dd\tau.
\end{align*}
Above, the second equality uses
$T-\tau=(T-t_-)+(t_--\tau)$ for $0\le\tau\le t_-$ and the semigroup
property $\mathcal P_2(t+t')= \mathcal P_2(t) \mathcal P_2(t')$, while the third equality is exactly
\eqref{prof:eq:pair-duhamel} evaluated at $t_-$.  Since
$T-t_-=t_{\mathrm{row}}$, we obtain
\begin{align}
G_2(t_N(s_1))
&=\mathcal P_2(t_{\mathrm{row}})G_2(t_-)
-\int_{t_-}^{t_N(s_1)}\mathcal P_2(t_N(s_1)-\tau)
 \mathsf{Top}_{N,2}^{\ord}\mathcal B[\varphi_\tau]\,\dd\tau.
 \label{prof:eq:G2tNs1}
\end{align}
The smoothing estimate \eqref{prof:eq:row-smooth}, together with the
$L^2$ bound \eqref{prof:eq:L2-bound-G2t-}, shows that the first term on the right-hand side of \eqref{prof:eq:G2tNs1} has row norm at most
$C_{D,\rho_0}e^{-s_1}n^{-1/2}$.
As for the integral term in \eqref{prof:eq:G2tNs1}, the contact source has at most $2D$ nonzero entries in
each row, and therefore
\[
 \|\mathcal B[\varphi_\tau]\|_{\mathrm{row}}^2
 \le \frac{2D}{n-1}\,\omega_1(\varphi_\tau)^4,
\]
with $\omega_1$ defined at \eqref{prof:eq:omega1}.
Using the row boundedness of $\mathsf{Top}_{N,2}^{\ord}$ \eqref{prof:eq:top-row-bound}, we obtain
\[
\norm{\mathsf{Top}_{N,2}^{\ord}\mathcal B[\varphi_\tau]}_{\mathrm{row}}
\le C_Dn^{-1/2}\omega_1(\varphi_\tau)^2.
\]
Moreover, for all sufficiently large $N$ one has $t_-\ge\gamma^{-1}$.
Since $\varphi_\tau=\sqrt{n/\beta_N}\,m_\tau$, the long-time one-particle
estimate \eqref{prof:eq:long-one-particle} gives, for
$\tau\in[t_-,t_N(s_1)]$,
\begin{equation}
 \omega_1(\varphi_\tau)^2
 =\frac n{\beta_N}\omega_1(m_\tau)^2
 \le C_{\rho_0}n\gamma e^{-2\gamma\tau}
 \le C_{\rho_0}n\gamma e^{-2\gamma t_-}
 =C_{\rho_0}e^{2a_D}\gamma e^{-s_1}
 \le C_{D,\rho_0}\gamma e^{-s_1}.
 \label{prof:eq:terminal-window-edge-oscillation}
\end{equation}
Combining \eqref{prof:eq:terminal-window-edge-oscillation} with the row contraction
\eqref{prof:eq:row-contract} and the identity
$t_N(s_1)-t_-=a_D\gamma^{-1}$ gives
\begin{align*}
 &\left\|
 \int_{t_-}^{t_N(s_1)}\mathcal P_2(t_N(s_1)-\tau)
 \mathsf{Top}_{N,2}^{\ord}\mathcal B[\varphi_\tau]\,\dd\tau
 \right\|_{\mathrm{row}}
 \le
 C_D n^{-1/2}\int_{t_-}^{t_N(s_1)}\omega_1(\varphi_\tau)^2\,\dd\tau
 \le C_{D,\rho_0}\frac{e^{-s_1}}{\sqrt n}.
\end{align*}
This completes the estimates on the two right-hand side terms in \eqref{prof:eq:G2tNs1}, and thus proves
\eqref{prof:eq:terminal-row}.
\end{proof}

\section{Strong--Rayleigh closure}\label{prof:sec:strong-rayleigh}

We now convert the terminal row bound \eqref{prof:eq:terminal-row} for the
degree-two discrepancy $G_2$ into a uniform terminal exponential-chaos bound
using Strong--Rayleigh structure.  Throughout this section, the warm-start
abbreviations $n=n_N$, $k=k_N$, $\rho=\rho_N$, and $\gamma=\gamma_N$ remain in force.

Strong--Rayleigh estimates apply to the full normalized moments.  For distinct
$X=(x_1,\ldots,x_r)$, the self-dual representation of
\eqref{prof:eq:general-normalized-moment} is
\begin{equation}
\mathcal M_{N,r}[h_t^S](X)
=\left(\frac n{\beta_N}\right)^{r/2}
\mathbb E_X\left[\prod_{i=1}^rF_S(\mathbf X_t^X(i))\right].
 \label{prof:eq:warm-start-normalized-moment}
\end{equation}
At degree two,
\begin{equation}
\overline G_2(t;x,y)
=\mathcal M_{N,2}[h_t^S](x,y)-\varphi_t(x)\varphi_t(y),
\label{prof:eq:G2M2}
\end{equation}
by \eqref{prof:eq:sep-self-duality}, and
$G_2=\mathsf{Top}_{N,2}^{\ord}\overline G_2$.  Thus the row estimate controls
only the top pair component, whereas the Strong--Rayleigh argument requires the
full $\overline G_2$.  The first step is therefore to recover its constant and
degree-one Hoeffding components.

\begin{lemma}[Exact lower pair layers]
\label{prof:lem:pair-layers}
For every fixed profile coordinate $s_1<-1$ and all sufficiently large $N$,
at time $t=t_N(s_1)$ we have
\begin{equation}
\overline G_2(t_N(s_1); x,y)=c_{s_1}+u_{s_1}^{(1)}(x)+u_{s_1}^{(1)}(y)+G_2(t_N(s_1);x,y),
 \label{prof:eq:pair-decomp}
\end{equation}
where $u_{s_1}^{(1)}:V_N\to\mathbb R$ is mean zero,
\begin{align}
c_{s_1}&=-1+\frac{\norm{\varphi_{t_N(s_1)}}_{2,\mathrm{av}}^2}{n-1},
 \label{prof:eq:pair-constant-layer}\\
\norm{u_{s_1}^{(1)}}_\infty
&\le C_{\rho_0}\left(
\frac{e^{-s_1/2}}{\sqrt n}+\frac{e^{-s_1}}n\right).
 \label{prof:eq:pair-linear-layer-bound}
\end{align}
\end{lemma}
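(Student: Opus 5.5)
We use the finite-population Hoeffding decomposition at level $q=2$ from \cref{prof:lem:fd-down-up-spectrum}. Every symmetric ordered pair function splits orthogonally as a constant, plus $\mathsf{Up}_{N,1\to2}$ of a mean-zero field, plus its top part. By \eqref{prof:eq:pair-top-projection-formula}, the non-top part of $\overline G_2$ equals $\mathsf{Up}_{N,1\to2}(\mathsf{Down}_{N,2\to1}\mathsf{Up}_{N,1\to2})^{-1}\mathsf{Down}_{N,2\to1}\overline G_2$. Since $\mathsf{Up}_{N,1\to2}g(x,y)=\tfrac12(g(x)+g(y))$, this has the form \eqref{prof:eq:pair-decomp}. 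Both $c_{s_1}$ and $u^{(1)}_{s_1}$ are explicit functions of $d:=\mathsf{Down}_{2\to1}\overline G_2(t)$. Writing $d=\bar d+d_0$, with $\bar d$ the mean and $d_0$ mean zero, the inverse formula in the proof of \cref{prof:lem:row-smoothing} gives
\[
c_{s_1}=\bar d,\qquad u^{(1)}_{s_1}=\frac{n-1}{n-2}\,d_0 .
\]
So the whole task is to compute $\mathsf{Down}_{2\to1}\overline G_2$ exactly, splitting $\overline G_2=\mathcal M_{N,2}[h_t^S]-\mathsf R_2(\varphi_t\otimes\varphi_t)$ via \eqref{prof:eq:G2M2}.

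For the moment term, we use the semigroup intertwining \eqref{prof:eq:hard-core-down-semigroup-intertwining}:
\[
\mathsf{Down}_{2\to1}\mathcal P_2(t)\mathsf R_2(\varphi_0\otimes\varphi_0)=e^{-t\mathcal L_1}\mathsf{Down}_{2\to1}\mathsf R_2(\varphi_0\otimes\varphi_0).
\]
Because $\varphi_0$ has mean zero,
\[
\mathsf{Down}_{2\to1}\mathsf R_2(\varphi_0\otimes\varphi_0)(x)=-\frac{\varphi_0(x)^2}{n-1}.
\]
Now $\varphi_0^2=(n/\beta_N)F_S^2$, and $F_S^2=\rho(1-\rho)+(1-2\rho)F_S$ because $\one_S$ is an indicator. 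Hence
\[
\varphi_0^2=(n-1)+\frac{n(1-2\rho)}{\beta_N}F_S .
\]
Applying $e^{-t\mathcal L_1}$ then gives exactly
\[
-1-\frac{n(1-2\rho)}{(n-1)\beta_N}\,m_t=-1-\frac{(1-2\rho)\sqrt n}{(n-1)\sqrt{\beta_N}}\,\varphi_t .
\]
This computation is the crux, and the binary identity $F_S^2=\rho(1-\rho)+(1-2\rho)F_S$ is what makes it exact. For the product term, the same mean-zero computation gives
\[
\mathsf{Down}_{2\to1}\mathsf R_2(\varphi_t\otimes\varphi_t)=-\frac{\varphi_t^2}{n-1}.
\]
Subtracting,
\[
d=-1-\frac{(1-2\rho)\sqrt n}{(n-1)\sqrt{\beta_N}}\,\varphi_t+\frac{\varphi_t^2}{n-1}.
\]

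Next we read off the layers. Since $\varphi_t$ has mean zero, the average of $d$ is
\[
\bar d=-1+\frac{\|\varphi_t\|_{2,\mathrm{av}}^2}{n-1},
\]
which is \eqref{prof:eq:pair-constant-layer}. The mean-zero part is
\[
d_0=-\frac{(1-2\rho)\sqrt n}{(n-1)\sqrt{\beta_N}}\,\varphi_t+\frac{\varphi_t^2-\|\varphi_t\|_{2,\mathrm{av}}^2}{n-1},
\]
and $u^{(1)}_{s_1}$ is $\tfrac{n-1}{n-2}$ times this.

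Finally, we bound $u^{(1)}_{s_1}$ at $t=t_N(s_1)$ using \eqref{prof:eq:terminal-one-particle}, which gives $\|\varphi_t\|_\infty^2\le C_{D,\rho_0}e^{-s_1}$; this holds for large $N$ because $s_1$ is fixed. The linear term is then $O(n^{-1/2}e^{-s_1/2})$. The quadratic term is $O(e^{-s_1}/n)$ in sup norm, since both $\varphi_t^2$ and its average are bounded by $\|\varphi_t\|_\infty^2$. Together these give \eqref{prof:eq:pair-linear-layer-bound}, with $\beta_N\asymp_{\rho_0}1$ absorbed into the constant. The only place needing care is the bookkeeping of the degree-one inverse factor, which is routine. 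The main conceptual step is the exact identity for $\mathsf{Down}_{2\to1}\mathcal M_{N,2}$, which relies on the intertwining and on the indicator structure of $F_S$.
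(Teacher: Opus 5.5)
Your proposal is correct and takes essentially the same route as the paper. You compute $\mathsf{Down}_{2\to1}\overline G_2(t)$ exactly via the down--semigroup intertwining and the indicator identity $F_S^2=\rho(1-\rho)+(1-2\rho)F_S$, read off the constant and degree-one layers through the down--up eigenvalues $1$ and $(n-2)/(2(n-1))$, and bound them with \eqref{prof:eq:terminal-one-particle}. The only cosmetic difference is that the paper first obtains $c_{s_1}$ from conservation of the pair-space average under $\mathcal P_2(t)$ and then notes that it equals the mean of $\mathsf{Down}_{2\to1}\overline G_2$, whereas you read $c_{s_1}$ off directly as that mean. Also note that the constant produced by \eqref{prof:eq:terminal-one-particle} depends on $D$ as well as $\rho_0$. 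This is harmless, and it is equally true of the paper's own argument.
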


\begin{proof}
Set $t=t_N(s_1)$ throughout this proof.  We identify the constant and degree-one Hoeffding
components of $\overline G_2(t)$; the remaining degree-two component is
$G_2(t)=\mathsf{Top}_{N,2}^{\ord}\overline G_2(t)$.

By the $r=2$ self-duality identity \eqref{prof:eq:sep-self-duality},
$
 \mathcal M_{N,2}[h_t^S]
 =\mathcal P_2(t)\mathsf R_2(\varphi_0\otimes\varphi_0)
 $.
The two-particle generator is self-adjoint in
$L^2(\upsilon_2^{\ord})$, and $\mathcal P_2(t)\mathbf 1=\mathbf 1$.
Therefore the uniform pair-space average is preserved in time:
\[
\begin{aligned}
 \mathbb E_{(x,y)\sim\upsilon_2^{\ord}}
 \!\left[\mathcal M_{N,2}[h_t^S](x,y)\right]
 &=\left\langle \mathbf 1,
   \mathcal P_2(t)\mathsf R_2(\varphi_0\otimes\varphi_0)
   \right\rangle_{L^2(\upsilon_2^{\ord})}\\
 &=\left\langle \mathcal P_2(t)\mathbf 1,
   \mathsf R_2(\varphi_0\otimes\varphi_0)
   \right\rangle_{L^2(\upsilon_2^{\ord})}
 =\mathbb E_{(x,y)\sim\upsilon_2^{\ord}}
 \!\left[\mathcal M_{N,2}[h_0^S](x,y)\right].
\end{aligned}
\]
At time zero,
\[
 \mathbb E_{(x,y)\sim\upsilon_2^{\ord}}\!\left[\mathcal M_{N,2}[h_0^S](x,y)\right]
 =\frac1{n(n-1)}\sum_{x\ne y}\varphi_0(x)\varphi_0(y)
 =\frac{(\sum_x\varphi_0(x))^2-\sum_x\varphi_0(x)^2}{n(n-1)}.
\]
Since $\sum_x\varphi_0(x)=0$ and
\[
 \sum_x\varphi_0(x)^2
 =\frac n{\beta_N}\sum_xF_S(x)^2
 =\frac n{\beta_N}\,n\rho(1-\rho)
 =n(n-1),
\]
we obtain
\[
 \mathbb E_{(x,y)\sim\upsilon_2^{\ord}}\!\left[\mathcal M_{N,2}[h_t^S](x,y)\right]
 =-1.
\]
Meanwhile, $\sum_x\varphi_t(x)=0$, so
\[
 \mathbb E_{(x,y)\sim\upsilon_2^{\ord}}
 [\varphi_t(x)\varphi_t(y)]
 =-\frac{\|\varphi_t\|_{2,\mathrm{av}}^2}{n-1}.
\]
Using \eqref{prof:eq:G2M2}, the constant chaos component is therefore
\[
 c_{s_1}
 =\mathbb E_{(x,y)\sim\upsilon_2^{\ord}}
 \!\left[\overline G_2(t;x,y)\right]
 =-1+\frac{\|\varphi_t\|_{2,\mathrm{av}}^2}{n-1},
\]
which is \eqref{prof:eq:pair-constant-layer}.

For the degree-one component, use the down map from
\eqref{prof:eq:ordered-up-down-formulas} and set
\[
 d_t:=\mathsf{Down}_{2\to1}\overline G_2(t),
 \qquad
 d_t(x)=\frac1{n-1}\sum_{y\ne x}\overline G_2(t;x,y).
\]
The elementary one-particle identity used here and again in the moment envelope proof is
\begin{align}
 F_S^2=\rho(1-\rho)+(1-2\rho)F_S.
 \label{prof:eq:F_S^2}
\end{align}
Since $e^{-t\mathcal L_1}\mathbf 1=\mathbf 1$ and
$e^{-t\mathcal L_1}F_S=m_t$, the last display \eqref{prof:eq:F_S^2} propagates to
\begin{equation}
 e^{-t\mathcal L_1}F_S^2
 =\rho(1-\rho)+(1-2\rho)m_t.
 \label{prof:eq:F_S_squared_heat}
\end{equation}
Equivalently, because
$\varphi_0=\sqrt{n/\beta_N}\,F_S$ and
$\beta_N=\rho(1-\rho)n/(n-1)$,
\begin{equation}
 e^{-t\mathcal L_1}[\varphi_0^2]
 =(n-1)+\frac n{\beta_N}(1-2\rho)m_t.
 \label{prof:eq:phi0-square-heat}
\end{equation}

We now compute the two terms in
$d_t=\mathsf{Down}_{2\to1}\mathcal M_{N,2}[h_t^S]
-\mathsf{Down}_{2\to1}\mathsf R_2(\varphi_t\otimes\varphi_t)$.
Since $\sum_y\varphi_0(y)=0$,
\[
 \bigl(\mathsf{Down}_{2\to1}
       \mathsf R_2(\varphi_0\otimes\varphi_0)\bigr)(x)
 =\frac{\varphi_0(x)}{n-1}\sum_{y\ne x}\varphi_0(y)
 =-\frac{\varphi_0(x)^2}{n-1}.
\]
The down--semigroup intertwining
\eqref{prof:eq:hard-core-down-semigroup-intertwining} and \eqref{prof:eq:phi0-square-heat} therefore give
\[
 \bigl(\mathsf{Down}_{2\to1}\mathcal M_{N,2}[h_t^S]\bigr)(x)
 =-\frac1{n-1}e^{-t\mathcal L_1}[\varphi_0^2](x)
 =-1-\frac{1-2\rho}{\rho(1-\rho)}m_t(x).
\]
Likewise, $\sum_y\varphi_t(y)=0$ gives
\[
 \bigl(\mathsf{Down}_{2\to1}
       \mathsf R_2(\varphi_t\otimes\varphi_t)\bigr)(x)
 =-\frac{\varphi_t(x)^2}{n-1}
 =-\frac{m_t(x)^2}{\rho(1-\rho)}.
\]
Subtracting the two down map identities yields
\begin{equation}
 d_t(x)=-1-\frac{1-2\rho}{\rho(1-\rho)}m_t(x)
       +\frac1{\rho(1-\rho)}m_t(x)^2.
 \label{prof:eq:pair-down-layer}
\end{equation}

Define
\begin{align}
 u_{s_1}^{(1)}:=\frac{n-1}{n-2}(d_t-c_{s_1}).
 \label{prof:eq:pair-linear-field}
\end{align}
Averaging the definition of $d_t$ over $x$ gives
$n^{-1}\sum_x d_t(x)=c_{s_1}$, so $u_{s_1}^{(1)}$ is mean zero.  By
\eqref{prof:eq:fd-down-up-eigenvalue} with $(q,j)=(2,1)$,
\[
 2\mathsf{Down}_{2\to1}\mathsf{Up}_{N,1\to2}u_{s_1}^{(1)}
 =\frac{n-2}{n-1}u_{s_1}^{(1)}
 =d_t-c_{s_1}.
\]
Consequently
\[
 \mathsf{Down}_{2\to1}
 \left(\overline G_2(t)-c_{s_1}-2\mathsf{Up}_{N,1\to2}u_{s_1}^{(1)}\right)=0.
\]
The bracketed function is symmetric, hence lies in
$\ker\mathsf{Down}_{2\to1}=\Ran\mathsf{Top}_{N,2}^{\ord}$.  Moreover,
$c_{s_1}+2\mathsf{Up}_{N,1\to2}u_{s_1}^{(1)}$ lies in
$\Ran\mathsf{Up}_{N,1\to2}$.  The orthogonal decomposition encoded by
\eqref{prof:eq:fd-top-projection-formula} therefore identifies the bracketed
function with $G_2(t)$.  Pointwise, this gives \eqref{prof:eq:pair-decomp}.

Substituting \eqref{prof:eq:pair-down-layer} and
\eqref{prof:eq:pair-constant-layer} into the definition of $u_{s_1}^{(1)}$ \eqref{prof:eq:pair-linear-field} gives
\begin{equation}
 u_{s_1}^{(1)}(x)=\frac{n-1}{n-2}
 \left(
 -\frac{1-2\rho}{\rho(1-\rho)}m_t(x)
 +\frac{m_t(x)^2}{\rho(1-\rho)}
 -\frac{\|\varphi_t\|_{2,\mathrm{av}}^2}{n-1}
 \right).
 \label{prof:eq:pair-degree-one-field-exact}
\end{equation}
By \eqref{prof:eq:terminal-one-particle}, the relation
$m_t=\sqrt{\beta_N/n}\,\varphi_t$, and the uniform upper and lower bounds on
$\rho(1-\rho)$ and $\beta_N$ in the density window, we obtain the upper bounds
\[
 \|m_t\|_\infty\le C_{\rho_0}e^{-s_1/2}n^{-1/2},
 \qquad
 \|m_t^2\|_\infty\le C_{\rho_0}e^{-s_1}n^{-1},
\qquad
 \frac{\|\varphi_t\|_{2,\mathrm{av}}^2}{n-1}
 \le C_De^{-s_1}n^{-1}.
\]
Substituting these bounds into \eqref{prof:eq:pair-degree-one-field-exact}
yields \eqref{prof:eq:pair-linear-layer-bound}.
\end{proof}

\begin{theorem}[All-degree moment envelope]
\label{prof:thm:moment-envelope}
There is $C=C(D,\rho_0)$ such that, for every fixed profile coordinate
$s_1<-1$, every $1\le r\le \ell_N$, every deterministic
$S\subset V_N$ with $|S|=k$, and all sufficiently large $N$,
\begin{equation}
\norm{\mathcal M_{N,r}[h_{t_N(s_1)}^S]}_\infty^2
\le\bigl[C(r+e^{-2s_1})\bigr]^r.
 \label{prof:eq:moment-envelope}
\end{equation}
\end{theorem}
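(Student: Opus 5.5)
The plan is to fix $t=t_N(s_1)$ and a distinct tuple $X=(x_1,\ldots,x_r)$, and to bound the unnormalized moment
\[
 \mathbb E_{\mu_t}\Bigl[\prod_{i=1}^r(\eta_{x_i}-\rho)\Bigr]
\]
in absolute value by $\bigl[C(r+e^{-2s_1})/n\bigr]^{r/2}$. Multiplying by the normalization $(n/\beta_N)^{r/2}$ from \eqref{prof:eq:general-normalized-moment}, and absorbing $\beta_N^{-1}\le C_{\rho_0}$ into $C$, then gives \eqref{prof:eq:moment-envelope}. The structural input is \cite[Proposition~5.1]{BBL09}. The initial law $\delta_{\one_S}$ is trivially Strong--Rayleigh, so $\mu_t$ is Strong--Rayleigh. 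Consequently every conditional law of $\mu_t$ obtained by pinning the occupations of finitely many sites is again Strong--Rayleigh, hence negatively associated. I would peel off one factor at a time and expand the product through conditional one-site means.

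The first step is to record the one- and two-point input at time $t$. One-site means are exact: $\mathbb E_{\mu_t}\eta_x-\rho=m_t(x)$, with $\|m_t\|_\infty\le C e^{-s_1/2}n^{-1/2}$ by \eqref{prof:eq:terminal-one-particle}. For pairs, \eqref{prof:eq:G2M2} together with \cref{prof:lem:pair-layers} gives
\[
 \operatorname{Cov}_{\mu_t}(\eta_x,\eta_y)=\frac{\beta_N}{n}\bigl(c_{s_1}+u^{(1)}_{s_1}(x)+u^{(1)}_{s_1}(y)+G_2(t;x,y)\bigr),
\]
where $c_{s_1}=-1+O(e^{-s_1}/n)$, $\|u^{(1)}_{s_1}\|_\infty=O(e^{-s_1/2}n^{-1/2})$, and the row norm of $G_2$ is $O(e^{-s_1}n^{-1/2})$ by \eqref{prof:eq:terminal-row}. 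Hence every row of the covariance matrix satisfies
\[
 \frac1{n}\sum_{y\ne x}\bigl|n\operatorname{Cov}(\eta_x,\eta_y)+\beta_N\bigr|^2\le C\,e^{-2s_1}/n .
\]

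The second step is an induction on the number of pinned sites. For $B\subset X$ and a pinning $\xi\in\{0,1\}^B$, let $p_x(\xi)=\mu_t(\eta_x=1\mid\eta_B=\xi)$. The claim is
\begin{equation*}
 |p_x(\xi)-\rho|\le C\sqrt{(|B|+1+e^{-2s_1})/n}
\end{equation*}
for $x\notin B$, for all pinnings of positive probability. Granting this, write
\[
 \mathbb E\prod_{i\le r}(\eta_{x_i}-\rho)=\mathbb E\Bigl[\prod_{i<r}(\eta_{x_i}-\rho)\,\bigl(p_{x_r}(\eta_{x_{<r}})-\rho\bigr)\Bigr].
\]
This is not directly multiplicative, because $p_{x_r}(\cdot)-\rho$ depends on the pinned values. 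I would therefore expand $p_{x_r}(\xi)$ multilinearly in the pinned coordinates $\xi_i-\rho$. Its coefficients are conditional covariance ratios, and by negative association and the stochastic covering property of Strong--Rayleigh measures, pinning one extra site moves each other site's conditional mean by a signed amount of size $O(1/n)$ plus the row-controlled pair discrepancy. Iterating this expansion converts the $r$-point moment into a sum over matchings and singletons. Each singleton contributes a factor $\|m_t\|_\infty\lesssim e^{-s_1/2}n^{-1/2}$ (or the conditional analogue), and each matched pair contributes a factor $O\bigl((1+e^{-s_1})/n\bigr)$. The number of matchings produces the $r^{r/2}$, i.e.\ the $(Cr)^{r}$ after squaring, which is the form of \eqref{prof:eq:moment-envelope}.

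The main obstacle is the conditional-shift bound of the second step uniformly in $|B|\le r\le\ell_N$, that is, controlling how pinning many sites moves the remaining marginals without losing more than $\sqrt{r/n}$ per factor. What I would try first is the Strong--Rayleigh closure highlighted in the sketch: the number of particles in $S$ (and, after pinning, the particle count in any fixed set) has a Poisson--binomial law. This forces the total influence of pinning $|B|$ sites on the conditional means to be at most $|B|$ in $\ell^1$. Combining that with negative association, the pair-covariance row estimate of the first step should upgrade the $\ell^1$ control to a pointwise $O(\sqrt{(|B|+e^{-2s_1})/n})$ shift, by Cauchy--Schwarz against the row norm. If this pointwise upgrade fails, the fallback is to bound the moment only on average over $X$, and to check whether the later warm-start argument needs the sup norm or only the $\mathsf X^{\ord}_{N,r}$ norm.
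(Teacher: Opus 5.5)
Your proposal has a genuine gap, located exactly where you flag the main obstacle.

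\textbf{Why the conditioning route fails.} The only quantitative inputs at time $t_N(s_1)$ are the one-site profile $m_t$, the pair layers of \cref{prof:lem:pair-layers}, and the row bound \eqref{prof:eq:terminal-row}. All of them describe the one- and two-point functions of the \emph{unconditioned} law $\mu_t$.

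\begin{itemize}
\item The conditional means $p_x(\xi)$ after pinning $|B|$ sites are new objects, and so are the conditional covariances your Cauchy--Schwarz step would need. None of the available estimates says anything about them.
\item Negative association and stochastic covering give only signs and $\ell^1$-type control. Upgrading that to a pointwise bound would require a row estimate for the \emph{conditional} covariance matrix, which you do not have.
\item Even granting the shift bound, the multilinear expansion of $p_{x_r}(\xi)$ has coefficients of every order $\ge2$ in the pinned variables. These are genuine three-point and higher conditional correlations. Collapsing the expansion to matchings and singletons is Wick's rule, which has no justification for a Strong--Rayleigh measure. Nothing controls the number and size of the discarded terms once $r$ is comparable to $\ell_N\asymp n$.
\item As written, the pointwise per-pair factor $O((1+e^{-s_1})/n)$ yields $[Cr(1+e^{-s_1})]^r$. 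That is not $[C(r+e^{-2s_1})]^r$ with $C$ independent of $s_1$. The latter form is what lets $z_*$ in \cref{prof:prop:terminal-chaos-weight} be chosen independently of $s_1$, avoiding circularity in the warm start.
\end{itemize}

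\textbf{The missing idea.} No conditioning is needed, because the centered product is a function of a single count. You already invoke the Poisson--binomial law, but only for $\ell^1$ control; it actually gives an exact factorization.

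In your non-dual setting, put $N:=\sum_i\eta_{x_i}$; the paper does the same on the dual side with $\mathsf N_S(t)$. With $z_\rho:=-(1-\rho)/\rho$, one has pointwise
\[
\prod_{i=1}^r(\eta_{x_i}-\rho)=(-\rho)^rz_\rho^{N}.
\]
Since $\mu_t$ is Strong--Rayleigh, specializing its generating polynomial shows that $N$ is Poisson--binomial with some parameters $\theta_1,\dots,\theta_r$. Hence, exactly,
\[
\mathbb E_{\mu_t}\prod_i(\eta_{x_i}-\rho)=\prod_j(\theta_j-\rho).
\]
AM--GM then reduces \eqref{prof:eq:moment-envelope} to bounding $r^{-1}\sum_j(\theta_j-\rho)^2$. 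Matching the mean and variance of $N$ in the Poisson--binomial and occupation representations gives the exact identity
\[
\frac n{\beta_N}\sum_{j=1}^r(\theta_j-\rho)^2=\sum_{i=1}^r\varphi_t(x_i)^2-2\sum_{1\le i<j\le r}\overline G_2(t;x_i,x_j).
\]
This involves only the one- and two-point data you already listed.

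To finish, insert the decomposition of \cref{prof:lem:pair-layers}. Bound the $G_2$ part collectively through the row norm rather than pair by pair:
\[
\frac2r\sum_{i<j}|G_2(t;x_i,x_j)|\le\sqrt r\sqrt n\,\|G_2(t)\|_{\mathrm{row}}\le C\sqrt r\,e^{-s_1}\le C(r+e^{-2s_1}).
\]
This gives $r^{-1}\frac n{\beta_N}\sum_j(\theta_j-\rho)^2\le C(r+e^{-2s_1})$ with $C=C(D,\rho_0)$, and the envelope follows.
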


\begin{proof}
Fix a distinct ordered configuration $X=(x_1,\ldots,x_r)\in\Omega_{N,r}^{\ord}$,
and write $\mathbf X_t^X=(\mathbf X_t^X(1),\ldots,\mathbf X_t^X(r))$ for the
coordinates of the ordered hard-core process introduced in
\eqref{prof:eq:sep-self-duality}.  Write
$\underline X=\{x_1,\ldots,x_r\}$ and
$\underline{\mathbf X}_t^X:=\{\mathbf X_t^X(1),\ldots,\mathbf X_t^X(r)\}$ for the
corresponding unlabeled initial set and occupied set at time $t$.  Forgetting the
labels turns $\mathbf X_t^X$ into the ordinary $r$-particle exclusion process started
from $\underline X$; in particular, the law of $\underline{\mathbf X}_t^X$ depends only
on $\underline X$, not on the ordering chosen in $X$.  Set
$\mathsf N_S(t):=|\underline{\mathbf X}_t^X\cap S|$.  Our purpose in invoking
Strong--Rayleigh theory is to show that this scalar count has a
Poisson--binomial law.  That factorization will turn the centered moments of
$\mathsf N_S(t)$ into quantities controlled by its first two moments.  To obtain it, we
encode the exclusion law by its multivariate occupation polynomial and then
specialize that polynomial to the one-variable probability generating
function of $\mathsf N_S(t)$.

For a probability measure $\mu$ on subsets
of $V_N$, its multiaffine occupation generating polynomial is
\[
 \mathscr P_\mu(\boldsymbol z):=\sum_{A\subseteq V_N}\mu(A)\prod_{x\in A}z_x.
\]
The measure $\mu$ is called \emph{Strong--Rayleigh} when $\mathscr P_\mu$ is real stable,
that is, nonzero whenever every variable has positive imaginary part.  For
the point mass at $\underline X$ one has
$\mathscr P_\mu(\boldsymbol z)=\prod_{x\in\underline X}z_x$, which is real stable.
\localheading{Poisson--binomial factorization.}
The preservation theorem for finite symmetric exclusion with symmetric
nonnegative exchange rates \cite[Proposition~5.1]{BBL09} therefore applies
to the nearest-neighbor torus process started from this deterministic
configuration.  Let $\mu_t^{\underline X}$ denote the law of the unlabeled occupied set
$\underline{\mathbf X}_t^X$.  In the occupation polynomial $\mathscr P_{\mu_t^{\underline X}}$, set
$z_x=z$ for $x\in S$ and $z_x=1$ for $x\notin S$.  For every occupied set
$A\subseteq V_N$, this specialization sends its monomial to
$
 \prod_{x\in A}z_x=z^{|A\cap S|}
 $.
This gives exactly the probability generating function of $\mathsf N_S(t)$:
\[
 \mathcal G_{\underline X,t}(z)
 :=\mathscr P_{\mu_t^{\underline X}}\bigl((z)_{x\in S},(1)_{x\notin S}\bigr)
 =\sum_{A\subseteq V_N}\mu_t^{\underline X}(A)z^{|A\cap S|}
 =\mathbb E_X[z^{\mathsf N_S(t)}].
\]

To transfer stability to this one-variable count polynomial, note that the
two operations in the definition of $\mathcal G_{\underline X,t}$ preserve real
stability: specialize the
variables outside $S$ to the real value $1$, and then identify all variables
in $S$ with the common variable $z$.  For the first operation, approximate
$1$ by $1+i\varepsilon$ and let $\varepsilon\downarrow0$; Hurwitz's theorem gives a
stable limit, with the zero polynomial alternative excluded by
$\mathscr P_{\mu_t^{\underline X}}(\boldsymbol 1)=1$.  The second operation is immediate,
since $\operatorname{Im}z>0$ places every remaining variable in the upper
half-plane.  Thus $\mathcal G_{\underline X,t}$ is a real stable univariate polynomial,
hence is real-rooted.  Its coefficients are nonnegative, so it has no positive
zero.  Consequently every zero of $\mathcal G_{\underline X,t}$ is nonpositive.  Finally,
$\mathcal G_{\underline X,t}(1)=1$.  If its polynomial degree is $d\le r$, these facts
give
\[
 \mathcal G_{\underline X,t}(z)=c_{\underline X,t}\prod_{j=1}^d(z+a_j^{\mathrm{PB}}),
 \qquad a_j^{\mathrm{PB}}\ge0.
\]
Normalization at $z=1$ implies
$c_{\underline X,t}\prod_{j=1}^d(1+a_j^{\mathrm{PB}})=1$.  Thus, with $\theta_j^{\mathrm{PB}}=(1+a_j^{\mathrm{PB}})^{-1}\in[0,1]$, one has $(z+a_j^{\mathrm{PB}})/(1+a_j^{\mathrm{PB}})=1-\theta_j^{\mathrm{PB}}+\theta_j^{\mathrm{PB}}z$.
Appending $\theta_j^{\mathrm{PB}}=0$ for $d<j\le r$ gives
\[
\mathcal G_{\underline X,t}(z)
 =\prod_{j=1}^r(1-\theta_j^{\mathrm{PB}}+\theta_j^{\mathrm{PB}}z),
 \qquad \theta_j^{\mathrm{PB}}\in[0,1].
\]
To identify this law explicitly, let $B_1,\ldots,B_r$ be independent
Bernoulli random variables with $\mathbb P(B_j=1)=\theta_j^{\mathrm{PB}}$.  Their sum has
probability generating function
\[
 \mathbb E\!\left[z^{B_1+\cdots+B_r}\right]
 =\prod_{j=1}^r\mathbb E[z^{B_j}]
 =\prod_{j=1}^r(1-\theta_j^{\mathrm{PB}}+\theta_j^{\mathrm{PB}}z)
 =\mathcal G_{\underline X,t}(z).
\]
Since both random variables take values in $\{0,\ldots,r\}$, equality of
their probability generating functions is equality of the coefficients and
therefore equality in law.  Hence $\mathsf N_S(t)$ has the \emph{Poisson--binomial law}, that
is, the law of a sum of independent Bernoulli variables with possibly
different success probabilities.  The parameters $\theta_1^{\mathrm{PB}},\ldots,\theta_r^{\mathrm{PB}}$
represent only this scalar count law; they are auxiliary Bernoulli parameters
and are not attached to the labeled particles of $\mathbf X_t^X$.

\localheading{Centered product identity.}
Let $I_i=\one_{\{\mathbf X_t^X(i)\in S\}}$ and
$z_\rho=-(1-\rho)/\rho$.  For every realization,
$
 \prod_{i=1}^r(I_i-\rho)
 =(-\rho)^rz_\rho^{\mathsf N_S(t)}$.
Consequently, if $\varepsilon_j=\theta_j^{\mathrm{PB}}-\rho$, then
\begin{align}
 \mathbb E_X\left[\prod_{i=1}^r(I_i-\rho)\right]
 =(-\rho)^r\mathcal G_{\underline X,t}(z_\rho)
 =\prod_{j=1}^r\bigl[-\rho(1-\theta_j^{\mathrm{PB}}+\theta_j^{\mathrm{PB}}z_\rho)\bigr]
  =\prod_{j=1}^r(\theta_j^{\mathrm{PB}}-\rho)
 =\prod_{j=1}^r\varepsilon_j.
 \label{prof:eq:centered-product-factorization}
\end{align}
Substituting \eqref{prof:eq:centered-product-factorization} into the
definition \eqref{prof:eq:warm-start-normalized-moment} gives
\begin{equation}
 \mathcal M_{N,r}[h_t^S](X)
 =\left(\frac n{\beta_N}\right)^{r/2}
 \prod_{j=1}^r\varepsilon_j.
 \label{prof:eq:centered-parameter-product-identity}
\end{equation}

\localheading{The centered count: first and second moments.}
We now return to the ordered process $\mathbf X_t^X$ and, recalling
$F_S=\one_S-\rho$, set
\[
 Y_t:=\mathsf N_S(t)-r\rho=\sum_{i=1}^rF_S(\mathbf X_t^X(i)),
 \qquad m_i:=m_t(x_i).
\]
This symmetric centered count is the bridge between the Poisson--binomial and
ordered exclusion descriptions.  We compute its mean and variance in both
representations; all pair sums below are empty when $r=1$.

On the Poisson--binomial side, the independent Bernoulli variables introduced
above give $Y_t\stackrel{\mathrm d}=\sum_{j=1}^r(B_j-\rho)$.  Hence
\begin{align}
 \mathbb E_X[Y_t]
 =\sum_{j=1}^r\varepsilon_j, \qquad
 \operatorname{Var}_X(Y_t)
 =\sum_{j=1}^r\theta_j^{\mathrm{PB}}(1-\theta_j^{\mathrm{PB}})
 =r\rho(1-\rho)
   +(1-2\rho)\sum_{j=1}^r\varepsilon_j
   -\sum_{j=1}^r\varepsilon_j^2.
   \label{prof:eq:PBEV}
\end{align}

On the ordered-exclusion side, the iterated up--semigroup
intertwining \eqref{prof:eq:up-intertwine} reduces symmetric one- and
two-particle observables of the $r$-particle process to the corresponding
lower-particle semigroups.  The first moment is therefore
\[
 \mathbb E_X[Y_t]
 =r\,\mathsf{Up}_{N,1\to r}
   \bigl(e^{-t\mathcal L_1}F_S\bigr)(X)
 =\sum_{i=1}^rm_i,
\]
where we used \eqref{prof:eq:iterated-up-average}.  Comparing with the
Poisson--binomial mean in \eqref{prof:eq:PBEV} gives
\begin{equation}
 \sum_{j=1}^r\varepsilon_j=\sum_{i=1}^rm_i.
 \label{prof:eq:first-parameter-identity}
\end{equation}

For the second moment, write
$Y_i(t):=F_S(\mathbf X_t^X(i))$, so that $Y_t=\sum_iY_i(t)$.  Applying
\eqref{prof:eq:up-intertwine} at level one and then
\eqref{prof:eq:iterated-up-average} gives the diagonal contribution
explicitly as
\[
 \mathbb E_X\!\left[\sum_{i=1}^rY_i(t)^2\right]
 =r\,\mathsf{Up}_{N,1\to r}
   \bigl(e^{-t\mathcal L_1}F_S^2\bigr)(X).
\]
Using \eqref{prof:eq:F_S_squared_heat} in the last display yields
\[
 \mathbb E_X\!\left[\sum_{i=1}^rY_i(t)^2\right]
 =r\rho(1-\rho)+(1-2\rho)\sum_{i=1}^rm_i.
\]
For the off-diagonal part, if $r\ge2$, then
\eqref{prof:eq:iterated-up-average}, \eqref{prof:eq:up-intertwine}, and the
pair identity \eqref{prof:eq:G2M2} give
\begin{align*}
 \mathbb E_X\!\left[
 \sum_{1\le i<j\le r}Y_i(t)Y_j(t)\right]
 &=\sum_{1\le i<j\le r}
 \bigl(\mathcal P_2(t)\mathsf R_2(F_S\otimes F_S)\bigr)(x_i,x_j)\\
 &=\sum_{1\le i<j\le r}
 \left(
 \frac{\beta_N}{n}\overline G_2(t;x_i,x_j)+m_i m_j
 \right).
\end{align*}
For $r=1$ the pair sum is empty.  Combining the diagonal and off-diagonal
parts yields, for every $r\ge1$,
\begin{align*}
 \mathbb E_X[Y_t^2]
 =r\rho(1-\rho)+(1-2\rho)\sum_{i=1}^rm_i
 +2\sum_{1\le i<j\le r}
 \left(
 \frac{\beta_N}{n}\overline G_2(t;x_i,x_j)+m_i m_j
 \right).
\end{align*}
Subtracting
$\bigl(\mathbb E_X[Y_t]\bigr)^2=(\sum_i m_i)^2$ from the last display yields
\begin{equation*}
 \operatorname{Var}_X(Y_t)
 =r\rho(1-\rho)+(1-2\rho)\sum_{i=1}^rm_i
 -\sum_{i=1}^rm_i^2
 +\frac{2\beta_N}{n}
   \sum_{1\le i<j\le r}\overline G_2(t;x_i,x_j).
\end{equation*}
Comparing this ordered-process variance with the Poisson--binomial variance
in \eqref{prof:eq:PBEV}, and using
\eqref{prof:eq:first-parameter-identity}, the common constant and linear
terms cancel explicitly, leaving
\[
 \sum_{j=1}^r\varepsilon_j^2
 =\sum_{i=1}^rm_i^2
  -\frac{2\beta_N}{n}
    \sum_{1\le i<j\le r}\overline G_2(t;x_i,x_j).
\]
Since $\varphi_t(x_i)^2=(n/\beta_N)m_i^2$, multiplication by
$n/\beta_N$ gives
\begin{equation}
 \frac n{\beta_N}\sum_{j=1}^r\varepsilon_j^2
 =\sum_{i=1}^r\varphi_t(x_i)^2
 -2\sum_{1\le i<j\le r}\overline G_2(t;x_i,x_j).
 \label{prof:eq:centered-parameter-variance-identity}
\end{equation}

\localheading{From the variance identity to the moment envelope.}
Set $t=t_N(s_1)$.  Summing the decomposition in
\cref{prof:lem:pair-layers} over the pairs of coordinates of $X$ gives
\[
 \sum_{1\le i<j\le r}\overline G_2(t;x_i,x_j)
 =\binom r2 c_{s_1}
 +(r-1)\sum_{i=1}^r u_{s_1}^{(1)}(x_i)
 +\sum_{1\le i<j\le r}G_2(t;x_i,x_j).
\]
Substituting this identity into
\eqref{prof:eq:centered-parameter-variance-identity} and dividing by $r$ yields
\begin{align}
 \frac1r\frac n{\beta_N}\sum_{j=1}^r\varepsilon_j^2
 ={}&\frac1r\sum_{i=1}^r\varphi_t(x_i)^2
 -(r-1)c_{s_1}
 -\frac{2(r-1)}r\sum_{i=1}^r u_{s_1}^{(1)}(x_i) 
 -\frac2r\sum_{1\le i<j\le r}G_2(t;x_i,x_j).
 \label{prof:eq:variance-four-contributions}
\end{align}
We bound the four terms on the right-hand side of \eqref{prof:eq:variance-four-contributions} in order: the one-particle square term, the constant pair layer, the degree-one pair layer, and the degree-two pair layer.

For the one-particle term,
\[
 \frac1r\sum_{i=1}^r\varphi_t(x_i)^2
 \le\|\varphi_t\|_\infty^2
 \le C_De^{-s_1}
\]
by \eqref{prof:eq:terminal-one-particle}.  For the constant pair component,
\[
 (r-1)|c_{s_1}|=O(r+e^{-s_1}),
\]
because $c_{s_1}=-1+O(e^{-s_1}/n)$.  For the degree-one pair component,
\eqref{prof:eq:pair-linear-layer-bound}, $r\le n$, and the quadratic form of
Young's inequality give
\begin{equation}
 \frac{2(r-1)}r\left|\sum_{i=1}^r u_{s_1}^{(1)}(x_i)\right|
 \le2(r-1)\|u_{s_1}^{(1)}\|_\infty
 \le C_{\rho_0}\left(\frac{r}{\sqrt n}e^{-s_1/2}
      +\frac rn e^{-s_1}\right).
 \label{prof:eq:degree-one-pair-contribution-preyoung}
\end{equation}
To simplify the last upper bound, apply Young's inequality
$uv\le\frac12(u^2+v^2)$ with
$u=r/\sqrt n$ and $v=e^{-s_1/2}$.  Since $r\le n$,
\[
 \frac{r}{\sqrt n}e^{-s_1/2}
 \le \frac12\left(\frac{r^2}{n}+e^{-s_1}\right)
 \le \frac12\left(r+e^{-s_1}\right),
 \qquad
 \frac rn e^{-s_1}\le e^{-s_1}.
\]
Substituting these two bounds into \eqref{prof:eq:degree-one-pair-contribution-preyoung} gives
\begin{equation*}
 \frac{2(r-1)}r\left|\sum_{i=1}^r u_{s_1}^{(1)}(x_i)\right|
 \le C_{\rho_0}(r+e^{-s_1}).
\end{equation*}
For the degree-two pair component, fix $i$.  Because the points
$x_1,\ldots,x_r$ are distinct, the points $x_j$ with $j\ne i$ are
distinct elements of $V_N\setminus\{x_i\}$. 
Then using the symmetry of $G_2(t)$, Cauchy--Schwarz and
the definition of the row norm, we find
\begin{align*}
 2\sum_{1\le i<j\le r} |G_2(t;x_i,x_j)|
 &=\sum_{i=1}^r \sum_{j\ne i}^r |G_2(t;x_i,x_j)|
 \le \sum_{i=1}^r \sqrt{r-1}
   \left(\sum_{j\ne i}|G_2(t;x_i,x_j)|^2\right)^{1/2}\\
 &\le \sum_{i=1}^r \sqrt{r-1}
   \left(\sum_{y\ne x_i}|G_2(t;x_i,y)|^2\right)^{1/2}
 \le r \sqrt{r-1}\sqrt{n-1}\,\|G_2(t)\|_{\mathrm{row}}.
\end{align*}
The last term in \eqref{prof:eq:variance-four-contributions} carries an additional factor $1/r$, so using \eqref{prof:eq:terminal-row},
$\sqrt{r-1}\le\sqrt r$, and $\sqrt{(n-1)/n}\le1$,
\[
 \left|\frac2r\sum_{1\le i<j\le r}G_2(t;x_i,x_j)\right|
 \le \sqrt{r-1}\sqrt{n-1}\,\|G_2(t)\|_{\mathrm{row}}
 \le C_{D,\rho_0} \sqrt{r} e^{-s_1}.
\]
Applying $uv\le\frac12(u^2+v^2)$ once more, now with
$u=\sqrt r$ and $v=e^{-s_1}$, yields
$
 \sqrt r\,e^{-s_1}
 \le \frac12\left(r+e^{-2s_1}\right)$.
Hence the absolute value of the last term in
\eqref{prof:eq:variance-four-contributions} is at most
$C_{D,\rho_0}(r+e^{-2s_1})$.

Combining these four estimates gives an upper bound for \eqref{prof:eq:variance-four-contributions}:
\begin{equation}
 \frac1r\frac n{\beta_N}
 \sum_{j=1}^r\varepsilon_j^2
 \le C(r+e^{-2s_1}).
 \label{prof:eq:variance-bound}
\end{equation}
Finally, the arithmetic--geometric mean (AM-GM) inequality gives
\[
 \prod_{j=1}^r\varepsilon_j^2
 \le\left(\frac1r\sum_{j=1}^r\varepsilon_j^2\right)^r.
\]
Therefore \eqref{prof:eq:centered-parameter-product-identity} and \eqref{prof:eq:variance-bound} imply
\[
 |\mathcal M_{N,r}[h_t^S](X)|^2
 \le\left(\frac1r\frac n{\beta_N}
              \sum_{j=1}^r\varepsilon_j^2\right)^r
 \le\bigl[C(r+e^{-2s_1})\bigr]^r.
\]
Taking the supremum over $X$ proves \eqref{prof:eq:moment-envelope}.
\end{proof}

\localheading{All-degree chaos normalization.}
The fixed-degree asymptotic $\alpha_{N,r}=1+O_r(n^{-1})$ used earlier is
not uniform in $r$.  For the all-degree sum below we instead record the
geometric bound obtained directly from \eqref{prof:eq:chaos-normalization}.
For every $0\le r\le\ell_N$, using
$(a)_{\underline r}\ge(a/e)^r$ for $a\in\{k,n-k\}$,
$(n)_{\underline r}\le n^r$, $(n)_{\underline{2r}}\le n^{2r}$,
$\beta_N\le2\rho(1-\rho)$, and $k=\rho n$, one has
\begin{equation}
 \alpha_{N,r}^2\le (2e^2)^r=:C_0^r.
 \label{prof:eq:all-degree-alpha-bound}
\end{equation}
This estimate is uniform simultaneously in $N$, the density window, and all
admissible degrees $r$.

\begin{proposition}[Uniform terminal chaos weight]
\label{prof:prop:terminal-chaos-weight}
There exists $z_*=z_*(D,\rho_0)\in(0,1)$ such that for every
fixed profile coordinate $s_1<-1$, there exist
$N_0=N_0(D,\rho_0,s_1)\in\mathbb N$ and $K_{s_1}<\infty$ such that
\begin{equation}
\sup_{N\ge N_0}\sup_{\substack{S\subset V_N\\|S|=k}}
Z_S(z_*,t_N(s_1))\le K_{s_1}.
 \label{prof:eq:terminal-chaos-weight}
\end{equation}
\end{proposition}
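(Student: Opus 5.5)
Write $t=t_N(s_1)$ and $h=h_t^S$. The plan is to bound each chaos coordinate by its moment coordinate and then sum a geometric series. By \cref{prof:lem:density-chaos-interface},
\[
\widehat\Psi_{N,r}(h)=\frac{\alpha_{N,r}}{\sqrt{r!}}\,\mathsf{Top}_{N,r}^{\ord}\mathcal M_{N,r}[h].
\]
Since $\mathsf{Top}_{N,r}^{\ord}$ is an orthogonal projection on the probability-normalized space $\mathsf X_{N,r}^{\ord}$, its norm is at most one. Also, the $L^2$ norm on that space is bounded by the sup norm. Together with \eqref{prof:eq:all-degree-alpha-bound} this gives, for $1\le r\le\ell_N$,
\[
\|\widehat\Psi_{N,r}(h)\|_2^2\le\frac{C_0^r}{r!}\,\|\mathcal M_{N,r}[h]\|_\infty^2 .
\]
The degree-zero term equals $1$, since $h$ is a probability density.

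Next insert the all-degree moment envelope of \cref{prof:thm:moment-envelope}. It bounds $\|\mathcal M_{N,r}[h]\|_\infty^2$ by $[C(r+e^{-2s_1})]^r$, uniformly in $S$, in $r\le\ell_N$, and in all large $N\ge N_0(D,\rho_0,s_1)$. This yields
\[
Z_S(z,t)\le 1+\sum_{r\ge1}\frac{\bigl(zC_0C(r+e^{-2s_1})\bigr)^r}{r!}.
\]
Write $a:=e^{-2s_1}$ and $B:=C_0C$. Using $(r+a)^r\le 2^{r}(r^r+a^r)$, the right-hand side splits into two series. The first is $\sum_r (2zB)^r r^r/r!$. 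The second is $\sum_r(2zBa)^r/r!\le e^{2zBa}$.

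By Stirling's bound $r^r/r!\le e^r$, the first series is dominated by $\sum_r(2eBz)^r$. This is finite once $z<(2eB)^{-1}$. So I choose
\[
z_*:=\min\Bigl\{\tfrac12,\ \tfrac{1}{4eB}\Bigr\}.
\]
This depends only on $D$ and $\rho_0$, because $C=C(D,\rho_0)$ and $C_0=2e^2$. With this choice the first series is at most $\sum_r 2^{-r}=1$. The bound obtained is then $K_{s_1}:=2+e^{2z_*Be^{-2s_1}}$, which is independent of $N$ and $S$. That gives \eqref{prof:eq:terminal-chaos-weight}.

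The only delicate point is keeping $z_*$ independent of $s_1$. The $r^r$ growth of the envelope is the term that forces $z_*$ to be small, and its constant does not involve $s_1$. The $s_1$-dependence sits entirely in the $a^r/r!$ part, which sums to a finite exponential for any $z$, so it is absorbed into $K_{s_1}$. Uniformity over $r$ up to $\ell_N$ is legitimate because both \eqref{prof:eq:all-degree-alpha-bound} and \eqref{prof:eq:moment-envelope} hold for all admissible $r$, not only for fixed degree.
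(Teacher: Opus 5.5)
Your proposal is correct and matches the paper's proof: you use the density--chaos interface, the all-degree bound $\alpha_{N,r}^2\le C_0^r$, contractivity of $\mathsf{Top}_{N,r}^{\ord}$, the moment envelope, and $r!\ge(r/e)^r$ to choose $z_*$ independently of $s_1$. The only difference is cosmetic: you split $(r+e^{-2s_1})^r\le 2^r\bigl(r^r+e^{-2rs_1}\bigr)$ and sum the second part to an exponential, whereas the paper splits the sum at $r=e^{-2s_1}$ and bounds the finitely many small-$r$ terms directly.
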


\begin{proof}
At $t=t_N(s_1)$, apply \cref{prof:lem:density-chaos-interface} to
$f=h_t^S$.  The self-dual representation \eqref{prof:eq:warm-start-normalized-moment} gives
\[
 \widehat\Psi_{N,r}(h_t^S)
 =\frac{\alpha_{N,r}}{\sqrt{r!}}
   \mathsf{Top}_{N,r}^{\ord}\mathcal M_{N,r}[h_t^S].
\]
By the all-degree normalization estimate
\eqref{prof:eq:all-degree-alpha-bound} and the fact that
$\mathsf{Top}_{N,r}^{\ord}$ is an orthogonal projection in
the uniform distinct-tuple probability norm,
\[
 Z_S(z,t)
 \le\sum_{r=0}^{\ell_N}
 \frac{(C_0z)^r}{r!}
 \|\mathcal M_{N,r}[h_t^S]\|_\infty^2.
\]
Combining this inequality at $t=t_N(s_1)$ with
\cref{prof:thm:moment-envelope} yields
\[
 Z_S(z,t_N(s_1))
 \le\sum_{r=0}^{\ell_N}\frac{(C_0z)^r}{r!}
 [C(r+e^{-2s_1})]^r.
\]
Write $A_{s_1}=e^{-2s_1}$, so the right-hand side is
\begin{align}
 \sum_{r=0}^{\ell_N}\frac{(C_0z)^r}{r!}[C(r+A_{s_1})]^r.
 \label{prof:eq:final-sum}
\end{align}
Choose $z_*\in(0,1)$ so that $2eC_0Cz_*<1/2$.  For $r\ge A_{s_1}$,
$r+A_{s_1}\le2r$ and $r!\ge(r/e)^r$, so the summand in \eqref{prof:eq:final-sum} is at most $2^{-r}$.
For the finitely many integers $0\le r<A_{s_1}$, the summands are bounded by
\[
 \frac{(C_0z_*)^r}{r!}[C(r+A_{s_1})]^r,
\]
which depends on $s_1$ but not on $N$ or $S$.  
This shows that \eqref{prof:eq:final-sum} is summable, and the resulting sum $K_{s_1}$ holds uniformly over deterministic $S\subset V_N$ with $|S|=k$ and all sufficiently large $N$.
\end{proof}

We can now prove the profile coordinate warm-start bound.

\begin{proof}[Proof of \cref{prof:thm:warm-start}]
Let $\delta_*:=c_D^{-1}\log(z_*^{-1})$,
where $c_D$ is the degree-gap constant in
\cref{prof:lem:weight-propagation}.
Set $s_1:=s_0-\delta_*$.
Then $s_1<s_0<-1$, and
$
 t_N(s_0)-t_N(s_1)=\delta_*/(2\gamma)
 $.
Apply the exponential weight propagation inequality \eqref{prof:eq:weight-propagation}
with initial time $t_N(s_1)$, terminal time $t_N(s_0)$, and
weight $z_*$.  Since $z_*e^{c_D\delta_*}=1$, the terminal weight estimate from
\cref{prof:prop:terminal-chaos-weight}, namely
\eqref{prof:eq:terminal-chaos-weight}, yields directly
\[
 Z_S(1,t_N(s_0))
 \le Z_S(z_*,t_N(s_1))
 \le K_{s_1}.
\]
Because $Z_S(1,t)=\norm{h_t^S}_{L^2(\pi_N)}^2$, this proves
\eqref{prof:eq:warm-start}, uniformly over deterministic $S\subset V_N$ with
$|S|=k$ and all sufficiently large
$N$.
\end{proof}

\section{Completion of the main proofs}
\label{prof:sec:completion-proof}

We return to the global notation
$n_N,k_N,\rho_N,$ and $\gamma_N$, and to the source-explicit calibrated
objects $v_{N,t}^{S_N}$ and $g_{N,t}^{S_N}$ introduced in
\cref{prof:def:tilt}.

The profile coordinate warm-start theorem \cref{prof:thm:warm-start}, combined with
the linear chaos gap \eqref{prof:eq:degree-gap}, gives exponential damping of
the high-chaos tail throughout the cutoff window.

\begin{proposition}[High-degree damping after the warm start]
\label{prof:prop:degree-damping}
Let $J\subset\mathbb R$ be nonempty and compact.  Choose a fixed profile coordinate
$s_0<-1$ with $s_0<\inf J$, and put
$\delta_J:=\inf_{s\in J}(s-s_0)>0$.  Then there exists $N_0=N_0(D,\rho_0,s_0)$ such that, for every $R$
and every $N\ge N_0$,
\begin{equation*}
 \sup_{\substack{S_N\subset V_N,\ |S_N|=k_N\\ s\in J}}
 \sum_{r>R}\|P_{N,r}h_{N,t_N(s)}^{S_N}\|_2^2
 \le C_{D,s_0,\rho_0}e^{-c_DR\delta_J}.
\end{equation*}
\end{proposition}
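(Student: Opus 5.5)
The plan is to combine the warm start \cref{prof:thm:warm-start} at the fixed coordinate $s_0$ with the degree-linear gap \eqref{prof:eq:degree-gap}, applied degree by degree. Take $N_0$ from \cref{prof:thm:warm-start} and enlarge it so that $t_N(s_0)\ge0$ and the gap estimate holds. Fix $s\in J$ and a source $S_N$. Put $\tau=t_N(s_0)$ and $t=t_N(s)$, so that $t-\tau=(s-s_0)/(2\gamma_N)\ge\delta_J/(2\gamma_N)$.

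The first step uses invariance of the chaos layers. The exclusion semigroup commutes with each $P_{N,r}$; this was recorded in \cref{prof:sec:chaos-interface}, where $L_N^{\SEP}$ was shown to preserve the Hoeffding layers. Hence $P_{N,r}h_{N,t}^{S_N}=e^{-(t-\tau)L_N^{\SEP}}P_{N,r}h_{N,\tau}^{S_N}$. Spectral calculus on $\Hdeg_{N,r}$, together with \eqref{prof:eq:degree-gap}, then gives for every $r\ge1$
\[
 \|P_{N,r}h_{N,t}^{S_N}\|_2^2\le e^{-2c_Dr\gamma_N(t-\tau)}\|P_{N,r}h_{N,\tau}^{S_N}\|_2^2\le e^{-c_Dr\delta_J}\|P_{N,r}h_{N,\tau}^{S_N}\|_2^2 .
\]
This is the same estimate used inside the proof of \cref{prof:lem:weight-propagation}.

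The second step sums over the high degrees. For $r>R$ we have $e^{-c_Dr\delta_J}\le e^{-c_DR\delta_J}$. Orthogonality of the layers then gives
\[
 \sum_{r>R}\|P_{N,r}h_{N,t}^{S_N}\|_2^2\le e^{-c_DR\delta_J}\sum_{r>R}\|P_{N,r}h_{N,\tau}^{S_N}\|_2^2\le e^{-c_DR\delta_J}\,\|h_{N,t_N(s_0)}^{S_N}\|_{L^2(\pi_N)}^2 .
\]
We then bound the last factor by $C_{D,\rho_0,s_0}$ using \eqref{prof:eq:warm-start}, uniformly over sources and for $N\ge N_0$. Neither the bound nor $N_0$ depends on $s\in J$ or on $R$, so the supremum over sources and over $s\in J$ is immediate.

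There is no real obstacle in this argument. The only points to check are that the semigroup preserves each $\Hdeg_{N,r}$, which was already established, and that the exponent comes out with the right constant. In that check, $2c_Dr\gamma_N\cdot(s-s_0)/(2\gamma_N)=c_Dr(s-s_0)\ge c_Dr\delta_J$, which matches the claimed rate $e^{-c_DR\delta_J}$ (it in fact gives the slightly stronger $e^{-c_D(R+1)\delta_J}$).
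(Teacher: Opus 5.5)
Your proposal is correct and follows the paper's proof: it applies the degree-linear gap \eqref{prof:eq:degree-gap} layer by layer between $t_N(s_0)$ and $t_N(s)$ and bounds the exponential factor by $e^{-c_DR\delta_J}$ for $r>R$. It then sums over $r>R$ using orthogonality of the chaos layers and closes with the warm-start bound \eqref{prof:eq:warm-start}.
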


\begin{proof}
By \eqref{prof:eq:degree-gap}, the semigroup restricted to degree $r$
satisfies
\[
 \|P_{N,r}h_{N,t_N(s)}^{S_N}\|_2^2
 \le e^{-2c_Dr\gamma_N(t_N(s)-t_N(s_0))}
      \|P_{N,r}h_{N,t_N(s_0)}^{S_N}\|_2^2.
\]
For $s\in J$ and $r>R$, the exponential factor is at most
$e^{-c_DR\delta_J}$.  Summing over $r>R$, using orthogonality of the chaos layers, and then
applying the warm-start estimate \eqref{prof:eq:warm-start}, for $N\ge N_0$, proves \cref{prof:prop:degree-damping}.
\end{proof}

We combine bounded-degree matching with the two high-degree tail estimates.

\begin{proof}[Proof of \cref{prof:thm:main}]
Fix a nonempty compact set $J\subset\mathbb R$.  By
\eqref{prof:eq:one-particle-bounds},
\[
 \sup_{\substack{S_N\subset V_N,\ |S_N|=k_N\\ s\in J}}
 \|m_{N,t_N(s)}^{S_N}\|_\infty
 \le C_J n_N^{-1/2}\longrightarrow0.
\]
Hence \cref{prof:lem:calibration} implies that, for all sufficiently large
$N$, the calibrated fields $v_{N,t_N(s)}^{S_N}$ and densities
$g_{N,t_N(s)}^{S_N}$ are defined uniformly over the same sources and
$s\in J$.  Let $s_0$ and $\delta_J$ be as in
\cref{prof:prop:degree-damping}.  For an integer $R\ge0$, define
\begin{align*}
 A_{N,R}(J)
 &:=\sup_{\substack{S_N\subset V_N,\ |S_N|=k_N\\ s\in J}}
   \sum_{r=0}^R
   \|P_{N,r}(h_{N,t_N(s)}^{S_N}-g_{N,t_N(s)}^{S_N})\|_2^2,\\
 B_{N,R}(J)
 &:=\sup_{\substack{S_N\subset V_N,\ |S_N|=k_N\\ s\in J}}
   \sum_{r>R}\|P_{N,r}h_{N,t_N(s)}^{S_N}\|_2^2,\\
 C_{N,R}(J)
 &:=\sup_{\substack{S_N\subset V_N,\ |S_N|=k_N\\ s\in J}}
   \sum_{r>R}\|P_{N,r}g_{N,t_N(s)}^{S_N}\|_2^2.
\end{align*}
Orthogonality of the chaos projections and
\(\|a-b\|_2^2\le2\|a\|_2^2+2\|b\|_2^2\) give the explicit bound
\begin{equation}
 \sup_{\substack{S_N\subset V_N,\ |S_N|=k_N\\ s\in J}}
 \|h_{N,t_N(s)}^{S_N}-g_{N,t_N(s)}^{S_N}\|_{L^2(\pi_N)}^2
 \le A_{N,R}(J)+2B_{N,R}(J)+2C_{N,R}(J).
 \label{prof:eq:main-three-term-bound}
\end{equation}
For every fixed $R\in\mathbb N_0$, the fixed-degree matching theorem
\cref{prof:thm:fixed-degree-matching} gives
\begin{equation}
 A_{N,R}(J)\longrightarrow0.
 \label{prof:eq:main-low-degree-limit}
\end{equation}
The high-degree damping estimate
\cref{prof:prop:degree-damping} gives, uniformly for all sufficiently large $N$,
\begin{equation*}
 B_{N,R}(J)\le C_{D,s_0,\rho_0}e^{-c_DR\delta_J},
\end{equation*}
and the tilt-tail estimate \cref{prof:lem:tilt-tail} gives
\begin{equation}
 \lim_{R\to\infty}\limsup_{N\to\infty}C_{N,R}(J)=0.
 \label{prof:eq:main-tilt-tail}
\end{equation}
Taking the upper limit as $N\to\infty$ in
\eqref{prof:eq:main-three-term-bound}, using
\eqref{prof:eq:main-low-degree-limit}--\eqref{prof:eq:main-tilt-tail}, and
then letting $R\to\infty$ proves
\eqref{prof:eq:L2-local-equilibrium}.

For every $S_N$ and $s$, the reverse triangle inequality for total
variation gives
\begin{align*}
 \left|
 \|\mu_{N,t_N(s)}^{S_N}-\pi_N\|_{\TV}
 -\frac12\mathbb E_{\pi_N}|g_{N,t_N(s)}^{S_N}-1|
 \right|
 \le
 \|\mu_{N,t_N(s)}^{S_N}-g_{N,t_N(s)}^{S_N}\pi_N\|_{\TV}\\
 =
 \frac12\|h_{N,t_N(s)}^{S_N}-g_{N,t_N(s)}^{S_N}\|_{L^1(\pi_N)}
 \le
 \frac12\|h_{N,t_N(s)}^{S_N}-g_{N,t_N(s)}^{S_N}\|_{L^2(\pi_N)}.
\end{align*}
Taking the same uniform supremum and applying
\eqref{prof:eq:L2-local-equilibrium} proves
\eqref{prof:eq:TV-tilt-reduction}.

Finally, fix a deterministic source sequence $(S_N)$ and a fixed
$s\in\mathbb R$, and suppose that $\mathfrak q_N^{S_N}(s)\to\mathfrak q$ for some scalar $\mathfrak q\ge0$.
By \eqref{prof:eq:calibrated-q},
\[
 \beta_N\|v_{N,t_N(s)}^{S_N}\|_{2,\mathrm{cnt}}^2
 =\mathfrak q_N^{S_N}(s)+o(1)\longrightarrow\mathfrak q.
\]
Since the corresponding Gaussianization parameter satisfies
\(\varsigma_N^2=\beta_N\|v_{N,t_N(s)}^{S_N}\|_{2,\mathrm{cnt}}^2\to\mathfrak q\) and \(\varsigma_N\ge0\), we have
\(\varsigma_N\to\sqrt{\mathfrak q}\).  The Gaussianization lemma
\cref{prof:lem:gaussianization}, applied to
\(v_N=v_{N,t_N(s)}^{S_N}\), therefore yields
\[
 \frac12\mathbb E_{\pi_N}|g_{N,t_N(s)}^{S_N}-1|
 \longrightarrow
 2\PhiGauss\!\left(\frac{\sqrt{\mathfrak q}}{2}\right)-1.
\]
Combining this limit with \eqref{prof:eq:TV-tilt-reduction} proves
\eqref{prof:eq:explicit-profile}.
\end{proof}

\subsection{Profile corollaries and source examples}
\label{prof:sec:profile-corollary-proofs}

We derive the two profile refinements stated after the main theorem.  Their
proofs use the established $L^2$ local equilibrium theorem and reduce to the
canonical tilt and the one-particle Fourier evolution.

\begin{proof}[Proof of \cref{prof:cor:uniform-profile}]
By \cref{prof:thm:main}, it remains to evaluate the total variation distance
of the calibrated tilt uniformly for $s\in J$.  The one-particle bounds and
calibration estimates in
\cref{prof:lem:one-particle-bounds,prof:lem:calibration} give
\[
 \sup_{s\in J}\|v_{N,t_N(s)}^{S_N}\|_\infty\longrightarrow0,
 \qquad
 \sup_{s\in J}\|v_{N,t_N(s)}^{S_N}\|_{2,\mathrm{cnt}}<\infty,
\]
while \eqref{prof:eq:calibrated-q} gives the corresponding
calibrated-variance comparison uniformly on $J$.
Apply \eqref{prof:eq:uniform-tilt-TV} with
$\mathcal I_N=J$ and $v_{N,s}=v_{N,t_N(s)}^{S_N}$.  It gives, uniformly for $s\in J$,
\[
 \frac12\mathbb E_{\pi_N}|g_{N,t_N(s)}^{S_N}-1|
 =2\PhiGauss\!\left(
 \frac{\sqrt{\beta_N\|v_{N,t_N(s)}^{S_N}\|_{2,\mathrm{cnt}}^2}}2
 \right)-1+o(1).
\]
Combine \eqref{prof:eq:calibrated-q} and
\eqref{prof:eq:uniform-q-convergence} with this identity and the uniform continuity of $a\mapsto2\PhiGauss(\sqrt a/2)-1$ on bounded
intervals.  The total variation reduction
\eqref{prof:eq:TV-tilt-reduction} then proves
\eqref{prof:eq:uniform-explicit-profile}.
\end{proof}

\begin{proof}[Proof of \cref{prof:cor:first-shell-profile}]
Parseval's identity and \eqref{prof:eq:profile} give
\begin{equation*}
 \mathfrak q_N(s)=\beta_N^{-1}\sum_{p\ne0}
 e^{-2t_N(s)\lambda_N(p)}|\widehat F_N(p)|^2.
\end{equation*}
The contribution of $p\in\mathcal S_N^{(1)}$ is exactly
$e^{-s}a_N(S_N)$, because
$e^{-2\gamma_Nt_N(s)}=n_N^{-1}e^{-s}$.  For all sufficiently large $N$,
every nonzero momentum outside $\mathcal S_N^{(1)}$ has eigenvalue at least
$2\gamma_N$.  Indeed, if two coordinates are nonzero, each contributes at
least $\gamma_N$.  If exactly one coordinate is nonzero with absolute torus
representative at least two, then
$
 2-2\cos(4\pi/N)=4\cos^2(\pi/N)\gamma_N\ge2\gamma_N
$
for all sufficiently large $N$.  Hence, uniformly for $s\in J$,
\[
 \sum_{p\notin\{0\}\cup\mathcal S_N^{(1)}}
 e^{-2t_N(s)\lambda_N(p)}|\widehat F_N(p)|^2
 \le C_Jn_N^{-2}\|F_N\|_{2,\mathrm{cnt}}^2
 \le C_Jn_N^{-1}.
\]
The density window bounds $\beta_N^{-1}$, proving
\eqref{prof:eq:first-shell-reduction}.  The profile conclusion follows from
\cref{prof:cor:uniform-profile}.
\end{proof}

\localheading{Slab source.}
Consider the slab sequence described after
\cref{prof:cor:first-shell-profile}; in particular, it satisfies the standing
density window when the profile theorem is invoked.  Thus
$w_N/N\to\rho\in(0,1)$ and
$
 S_N=\{x\in\T_N^D:0\le x_1<w_N\}
 $.
Then $\rho_N=w_N/N$, and the coefficients $\widehat F_N(\pm\mathbf e_j)$ vanish
for $j\ge2$, while
\[
 \frac1{\sqrt{n_N}}\widehat F_N(\mathbf e_1)
 =\frac1N\sum_{a=0}^{w_N-1}e^{-2\pi ia/N}
 \longrightarrow e^{-\pi i\rho}\frac{\sin(\pi\rho)}{\pi}.
\]
The coefficient at $-\mathbf e_1$ is its complex conjugate.  Since
$\beta_N\to\rho(1-\rho)$, substitution in
\eqref{prof:eq:first-shell-amplitude} proves
\eqref{prof:eq:slab-amplitude}.  In particular, the half-torus amplitude is
$8/\pi^2$.

\localheading{Sources orthogonal to the first nonzero eigenspace.}
If $\widehat F_N(p)=0$ for every $p\in\mathcal S_N^{(1)}$, then
\eqref{prof:eq:first-shell-reduction} gives
$\mathfrak q_N(s)=O_J(n_N^{-1})$.  \Cref{prof:cor:uniform-profile} therefore
shows that the total variation distance tends to zero uniformly for $s$ in compact subsets of the cutoff window.  For even $N$, the checkerboard source
$
 S_N=\{x\in\T_N^D:x_1+\cdots+x_D\ \text{is even}\}
$
has density $1/2$.  Its centered indicator is a nonzero scalar multiple of the character at
momentum $(N/2,\ldots,N/2)$, which is orthogonal to every $\chi_p$ with $p\in\mathcal S_N^{(1)}$.
Thus the checkerboard discrepancy is orthogonal to $\mathscr E_N^{(1)}$.

\appendix

\section{Canonical cumulants and response estimates}

The purpose of this appendix is to prove the finite-population response
package stated in \cref{prof:lem:canonical-response}.  The only genuinely
nonproduct feature that must be controlled is the dependence created by
conditioning independent Bernoulli variables on their total population.
We first isolate that dependence on a fixed set of marked sites, and then
transfer the resulting conditional cumulant estimate to the canonical tilt.

For a fixed marked set, the population constraint enters its marginal only
through the deleted-sum masses $q_A(h)$.  We control their finite differences
by a centered exponential tilt and a differentiated local limit estimate,
then convert those bounds into a connected-interaction estimate for marked
cumulants.  The final two subsections transfer that estimate to the canonical
tilt and to the response bounds used in
\cref{prof:sec:canonical-response-calibration}.

\subsection{Support-sensitive conditional Bernoulli cumulants}
\label{prof:app:conditional-bernoulli-cumulants}

Fix $\underline p\in(0,1/2)$ and an integer $M\ge1$.  Let $\mathcal I$ be a finite set of
cardinality $n$, let $(\mathsf B_x)_{x\in\mathcal I}$ be independent Bernoulli variables,
and assume
\begin{equation}
 \underline p\le p_x:=\Pp(\mathsf B_x=1)\le 1-\underline p,
 \qquad
 \sum_{x\in\mathcal I}p_x=k\in\mathbb Z.
 \label{prof:canred:eq:standing}
\end{equation}
For every $U\subseteq\mathcal I$, write
$
 \mathsf N_U:=\sum_{x\in U}\mathsf B_x$.
Thus $\mathsf N_{\mathcal I}$ is the total Bernoulli count.  

We denote conditional
cumulants under $\Pp(\,\cdot\mid \mathsf N_{\mathcal I}=k)$ by $\Cum_k$.
To prove \eqref{prof:eq:canonical-distinct-cumulant-general}, it suffices
in the present notation to show
\[
 \abs{\Cum_k(\mathsf B_{x_1},\ldots,\mathsf B_{x_j})}
 \le C_{M,\underline p}n^{1-d},
 \qquad
 1\le j\le M,
\]
whenever $x_1,\ldots,x_j$ involve exactly $d$ distinct sites.
To this end, we shall fix the distinct marked set
\[
 A:=\{x_1,\ldots,x_j\} \subset \mathcal I,\qquad d:=|A|\le M,
\]
and write $A^c:=\mathcal I\setminus A$ with the complement taken relative to $\mathcal I$.
Define
\[
 q_A(h):=\Pp(\mathsf N_{A^c}=k-h),\qquad h\in\{0,1,\ldots,d\}.
\]
This scalar is the key reduction: once the marked configuration is fixed,
its Hamming weight $h$ is the only information seen by the population
constraint, so all dependence among the marked variables is carried by the
variation of $q_A(h)$ with $h$.  Equivalently, $q_A(h)$ is the
Poisson--binomial point mass of the unmarked coordinates, and it has the
coefficient--product representation
\[
 q_A(h)
 =\sum_{\substack{\Omega\subseteq A^c\\ |\Omega|=k-h}}
   \prod_{x\in \Omega}p_x
   \prod_{y\in A^c\setminus \Omega}(1-p_y)
 =[z^{\,k-h}]\prod_{x\in A^c}(1-p_x+p_xz).
\]

Write $\mathsf B_A:=(\mathsf B_x)_{x\in A}$.  For a marked configuration
$\boldsymbol\xi=(\xi_x)_{x\in A}\in\{0,1\}^A$, write
$|\boldsymbol\xi|:=\sum_{x\in A}\xi_x$ for its Hamming weight.  
Independence of the $\mathsf B_x$ under $\Pp$ then gives the exact marked marginal:
\begin{equation}
\begin{aligned}
 \Pp(\mathsf B_A=\boldsymbol\xi\mid \mathsf N_{\mathcal I}=k)
 &=
 \frac{\Pp(\mathsf B_A=\boldsymbol\xi, ~\mathsf N_{\mathcal I}=k)}{\Pp(\mathsf N_{\mathcal I}=k)}
 =
 \frac{\Pp(\mathsf B_A=\boldsymbol\xi,~\mathsf N_{A^c} = k-|\boldsymbol\xi|)}{\Pp(\mathsf N_{\mathcal I}=k)}\\
 &=
 \frac{\Pp(\mathsf B_A=\boldsymbol\xi) \Pp(\mathsf N_{A^c}=k-|\boldsymbol\xi|)}{\Pp(\mathsf N_{\mathcal I}=k)}
 =\frac{1}{\Pp(\mathsf N_{\mathcal I}=k)}
 \left(\prod_{x\in A}p_x^{\xi_x}(1-p_x)^{1-\xi_x}\right)
 q_A(|\boldsymbol\xi|).
 \end{aligned}
 \label{prof:canred:eq:marked-marginal}
\end{equation}
In the final expression in \eqref{prof:canred:eq:marked-marginal}, the product
$
 \prod_{x\in A}p_x^{\xi_x}(1-p_x)^{1-\xi_x}
$
is the original product--Bernoulli probability of the marked configuration
$\boldsymbol\xi$, while the normalizing factor $\Pp(\mathsf N_{\mathcal I}=k)^{-1}$ is independent
of $\boldsymbol\xi$.  Hence the only factor that is not coordinatewise multiplicative
in the marked variables is $q_A(|\boldsymbol\xi|)$.  Because $q_A(|\boldsymbol\xi|)$ depends on the
marked configuration only through its total occupation $|\boldsymbol\xi|$, it records
exactly how conditioning on $\mathsf N_{\mathcal I}=k$ couples the marked coordinates.
The remainder of the subsection quantifies this coupling by
controlling finite differences of $h\mapsto \log q_A(h)$.

Throughout the proofs below, constants may depend on $M$ and $\underline p$, but not
on $n$, the marked set $A$, or the individual parameters $p_x$.
The saddle-point, local-mass, finite-difference, and Gibbs arguments are used
only for $n\ge n_0(M,\underline p)$, where the relevant deleted point masses are
positive.  The finitely many smaller values of $n$ are handled only at the
level of the final conditional cumulant estimate in
\cref{prof:canred:lem:hypergraph}, where enlarging the constant is legitimate.

For $t\in\mathbb R$ and $x\in A^c$, define the common exponential tilt
\begin{align}
p_x(t):=\frac{p_xe^t}{1-p_x+p_xe^t}.
\label{prof:eq:exp-tilt}
\end{align}
Let $\Pp_t$ be the product law under which the variables
$(\mathsf B_x)_{x\in A^c}$ are independent Bernoulli random variables with
parameters $(p_x(t))_{x\in A^c}$, and write $\E_t$ and $\operatorname{Var}_t$, respectively, for expectation and variance under
$\Pp_t$.  Set
\[
\mathfrak k(t):=\log \E[e^{t \mathsf N_{A^c}}]=\sum_{x\in A^c}\log(1-p_x+p_xe^t),
 \qquad
 \mathfrak m(t):=\mathfrak k'(t),
 \qquad
 \mathfrak v(t):=\mathfrak k''(t).
\]
Then $\mathfrak m(t)$ and $\mathfrak v(t)$ are respectively the mean and
variance of the unmarked count $\mathsf N_{A^c}$ under $\Pp_t$:
\[
\mathfrak m(t)=\E_t[\mathsf N_{A^c}]=\sum_{x\in A^c}p_x(t),
\qquad
\mathfrak v(t)=\operatorname{Var}_t(\mathsf N_{A^c})
=\sum_{x\in A^c}p_x(t)(1-p_x(t)).
\]

To estimate $q_A(h)$ uniformly for the finitely many values
$0\le h\le d$, we reweight the unmarked Bernoulli variables so that the
requested count $k-h$ is centered at the tilted mean.
For real $h\in[0,d]$, define the \emph{saddle action}
\[
 \mathfrak A_h(t):=\mathfrak k(t)-t(k-h) = \sum_{x\in A^c}\log(1-p_x+p_xe^t) - t(k-h).
\]
For integer $h$, the coefficient--product representation of $q_A(h)$ makes
the origin of this exponent explicit.  Put
\[
 G_A(z):=\prod_{x\in A^c}(1-p_x+p_xz),
 \qquad
 q_A(h)=[z^{\,k-h}]G_A(z).
\]
Cauchy's coefficient formula on the circle $|z|=e^t$, parametrized by
$z=e^{t+i\theta}$, gives
\begin{align*}
 q_A(h)
 &=\frac1{2\pi}\int_{-\pi}^{\pi}
   G_A(e^{t+i\theta})e^{-(k-h)(t+i\theta)}\,\dd\theta\\
 &=e^{\mathfrak A_h(t)}
   \frac1{2\pi}\int_{-\pi}^{\pi}
   e^{-i(k-h)\theta}
   \prod_{x\in A^c}
   \frac{1-p_x+p_xe^{t+i\theta}}
        {1-p_x+p_xe^t}\,\dd\theta.
\end{align*}
The normalized one-site factor satisfies
\[
 \frac{1-p_x+p_xe^{t+i\theta}}{1-p_x+p_xe^t}
 =1-p_x(t)+p_x(t)e^{i\theta}=\E_t[e^{i\theta \mathsf B_x}],
\]
so the product in the angular integral is the characteristic function of
$\mathsf N_{A^c}$ under $\Pp_t$.  Fourier inversion therefore gives, for
every integer $h\in\{0,\ldots,d\}$,
\begin{equation}
 q_A(h)=
 e^{\mathfrak A_h(t)} \frac{1}{2\pi} \int_{-\pi}^{\pi}\, \mathbb{E}_t[e^{i\theta(\mathsf N_{A^c}-(k-h))}]\,\dd\theta
 =e^{\mathfrak A_h(t)}
 \Pp_t(\mathsf N_{A^c}=k-h).
 \label{prof:canred:eq:coefficient-tilt-identity}
\end{equation}
We choose the \emph{centering parameter} $t_{\mathrm{cen}}(h)$ (equivalently, the
saddle-point parameter in the coefficient integral) by the stationary
condition
\[
 \mathfrak A_h'(t_{\mathrm{cen}}(h))=0
 \qquad\Longleftrightarrow\qquad
 \mathfrak m(t_{\mathrm{cen}}(h))=k-h.
\]
Thus, for integer $h$, choosing $t=t_{\mathrm{cen}}(h)$ makes the target count in
\eqref{prof:canred:eq:coefficient-tilt-identity} equal to its mean:
\[
 \E_{t_{\mathrm{cen}}(h)}[\mathsf N_{A^c}]=k-h.
\]
The same equation for real $h$ gives the smooth interpolation needed to
turn estimates on $q_A(0),\ldots,q_A(d)$ into derivative estimates in $h$.
To differentiate the deleted-sum ratios with respect to $h$, we first
control the centering parameter $t_{\mathrm{cen}}(h)$ and its derivatives:
it exists uniformly, stays of order $n^{-1}$, and each additional
$h$-derivative gains another factor $n^{-1}$.

\begin{lemma}[Existence and regularity of the centering parameter]
\label{prof:canred:lem:saddle}
There are $n_0=n_0(M,\underline p)$, $t_0=t_0(\underline p)>0$, and constants
$C_r=C_{M,\underline p,r}$ such that the following holds for $n\ge n_0$.
For every marked set $A$ with $|A|\le M$ and every real
$h\in[0,|A|]$, there is a unique centering parameter
$t_{\mathrm{cen}}(h)\in[-t_0,t_0]$ satisfying
\begin{equation}
 \mathfrak m(t_{\mathrm{cen}}(h))=k-h.
 \label{prof:canred:eq:saddle-eq}
\end{equation}
Moreover,
\begin{equation}
 \mathfrak v(t)\asymp_{\underline p}n
 \qquad (|t|\le t_0),
 \label{prof:canred:eq:deleted-var-comp}
\end{equation}
and, for every fixed $1\le r\le M+1$,
\begin{equation}
 \sup_{0\le h\le|A|}\abs{t_{\mathrm{cen}}^{(r)}(h)}
 \le C_r n^{-r}.
 \label{prof:canred:eq:centering-derivatives}
\end{equation}
In particular, $\sup_{0\le h\le |A|}|t_{\mathrm{cen}}(h)|\le C_{M,\underline p}n^{-1}$.
\end{lemma}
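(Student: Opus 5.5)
The plan is to treat \eqref{prof:canred:eq:saddle-eq} as a scalar monotone equation and apply the inverse function theorem. We have $\mathfrak m'(t)=\mathfrak v(t)=\sum_{x\in A^c}p_x(t)(1-p_x(t))>0$, so $\mathfrak m$ is strictly increasing on $\mathbb R$. This gives uniqueness at once. Existence and the localization in $[-t_0,t_0]$ then follow from a two-sided growth bound together with the intermediate value theorem.

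The first step is the variance comparison \eqref{prof:canred:eq:deleted-var-comp}. Since $p_x(t)$ is monotone in $t$ and $\underline p\le p_x\le1-\underline p$, for $|t|\le t_0$ we have
\[
 \frac{\underline p e^{-t_0}}{1-\underline p+\underline p e^{-t_0}}\le p_x(t)\le \frac{(1-\underline p)e^{t_0}}{\underline p+(1-\underline p)e^{t_0}}.
\]
Hence $p_x(t)(1-p_x(t))\asymp_{\underline p}1$, with constants depending on $\underline p$ once $t_0$ is fixed; for instance, take $t_0=1$. Summing over $|A^c|\ge n-M\ge n/2$ gives $\mathfrak v(t)\asymp n$.

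The second step is existence. At $t=0$ we have $\mathfrak m(0)=\sum_{x\in A^c}p_x=k-\sum_{x\in A}p_x$, which lies in $[k-|A|,k]$ up to an error bounded by $M$. Thus $|\mathfrak m(0)-(k-h)|\le M$ for all $h\in[0,|A|]$. By the mean value theorem and the lower bound $\mathfrak v\ge cn$, we get $\mathfrak m(\pm t_0)-\mathfrak m(0)\gtrless\pm c n t_0$, which exceeds $M$ once $n\ge n_0$. The intermediate value theorem then yields a unique $t_{\mathrm{cen}}(h)\in[-t_0,t_0]$. Moreover,
\[
 |t_{\mathrm{cen}}(h)|\le |\mathfrak m(t_{\mathrm{cen}})-\mathfrak m(0)|/\inf\mathfrak v\le M/(cn),
\]
which is the final assertion of the statement.

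The third step handles the derivatives. Smoothness of $h\mapsto t_{\mathrm{cen}}(h)$ follows from the implicit function theorem, since $\mathfrak v>0$. Differentiating $\mathfrak m(t_{\mathrm{cen}}(h))=k-h$ gives $t_{\mathrm{cen}}'=-1/\mathfrak v(t_{\mathrm{cen}})$, so $|t'_{\mathrm{cen}}|\le C/n$.

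For higher orders, I will argue by induction using the Faà di Bruno identity \eqref{prof:eq:partition-chain-rule} applied to $\mathfrak m\circ t_{\mathrm{cen}}$. For $r\ge2$ it reads
\[
 0=\mathfrak v(t_{\mathrm{cen}})\,t_{\mathrm{cen}}^{(r)}+\sum_{\mathfrak p,\ |\mathfrak p|\ge2}\mathfrak m^{(|\mathfrak p|)}(t_{\mathrm{cen}})\prod_{B\in\mathfrak p}t_{\mathrm{cen}}^{(|B|)}.
\]
The key input is that every derivative $\mathfrak k^{(j)}(t)=\sum_{x\in A^c}\kappa_j(p_x(t))$ is a sum of Bernoulli cumulants, each bounded by a constant depending only on $j$. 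Hence $|\mathfrak m^{(j)}(t)|\le C_jn$ for every $j$, uniformly in $t$.

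Now consider a partition with $b=|\mathfrak p|\ge2$ blocks. By the inductive hypothesis each factor satisfies $|t_{\mathrm{cen}}^{(|B|)}|\le C n^{-|B|}$, so the term is bounded by $C n\cdot n^{-r}$. Dividing by $\mathfrak v\asymp n$ then gives $|t^{(r)}_{\mathrm{cen}}|\le C_rn^{-r}$. Since only $r\le M+1$ is needed, all constants depend on $M$ and $\underline p$ alone.

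The main obstacle I expect is merely bookkeeping: verifying that the Bernoulli cumulant bounds are uniform in $t\in\mathbb R$, or at least on $[-t_0,t_0]$. This holds because each $\kappa_j$ is a polynomial in $p\in[0,1]$.
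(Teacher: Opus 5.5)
Your proposal is correct and follows essentially the same route as the paper. Both arguments bound the tilted Bernoulli parameters to get $\mathfrak v\asymp n$, then combine $|\mathfrak m(0)-(k-h)|\le M$ with monotonicity and the mean value theorem to obtain existence, uniqueness and $|t_{\mathrm{cen}}|\le M/(cn)$, and finally use implicit differentiation with strong induction on $\mathfrak k^{(q)}=O(n)$ for the derivative bounds. Your differences (fixing $t_0=1$, global monotonicity on $\mathbb R$, an explicit intermediate-value step) are cosmetic.
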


\begin{proof}
Choose $t_0>0$, depending only on $\underline p$, so small that the exponential tilt \eqref{prof:eq:exp-tilt} satisfies
\[
 \underline p_{\mathrm{tilt}}\le p_x(t)\le1-\underline p_{\mathrm{tilt}}
 \qquad (x\in A^c,\ |t|\le t_0)
\]
for some $\underline p_{\mathrm{tilt}}=\underline p_{\mathrm{tilt}}(\underline p)\in(0,1/2)$.  Hence
$
 \underline p_{\mathrm{tilt}}(1-\underline p_{\mathrm{tilt}})|A^c|\le \mathfrak v(t)\le |A^c|/4
 $.
For $n\ge2M$, $|A^c|=n-|A|\asymp_Mn$, which proves
\eqref{prof:canred:eq:deleted-var-comp}.

At $t=0$,
$
 \mathfrak m(0)=\sum_{x\in A^c}p_x
 =k-\sum_{x\in A}p_x$,
so
$
 |(k-h)-\mathfrak m(0)|
 =\abs{\sum_{x\in A}p_x-h}\le  M$.
Since $\mathfrak m'(t)=\mathfrak v(t)\ge c n$ on $[-t_0,t_0]$, the image
$\mathfrak m([-t_0,t_0])$ contains the interval
$[\mathfrak m(0)-c\,n\,t_0,\mathfrak m(0)+c\,n\,t_0]$.  For $n$ large enough this contains every
$k-h$, $0\le h\le|A|$.  
Strict monotonicity of $\mathfrak m$ proves uniqueness of $t_{\mathrm{cen}}(h)$.  

Since $\mathfrak m$ is
real-analytic, and $\mathfrak m'=\mathfrak v$ is bounded away from zero on
$[-t_0,t_0]$, the inverse function theorem implies that
$t_{\mathrm{cen}}=\mathfrak m^{-1}(k-\cdot)$ is real-analytic, hence $C^\infty$, on
$[0,|A|]$.
This justifies the repeated differentiations of $t_{\mathrm{cen}}(h)$ carried out below, after we estimate the magnitude of $t_{\mathrm{cen}}(h)$.

Indeed, for some $\xi_h$ between $0$ and $t_{\mathrm{cen}}(h)$, the mean value theorem gives
\[
 \abs{(k-h)-\mathfrak m(0)}
 =\abs{\mathfrak m(t_{\mathrm{cen}}(h))-\mathfrak m(0)}
 =\mathfrak v(\xi_h)|t_{\mathrm{cen}}(h)|.
\]
The left-hand side is at most $M$, while
$\mathfrak v(\xi_h)\ge c n$ by
\eqref{prof:canred:eq:deleted-var-comp}.  Hence
\[
 |t_{\mathrm{cen}}(h)|\le \frac{M}{cn}\le C_{M,\underline p}n^{-1}.
\]

We next differentiate the implicit equation
\eqref{prof:canred:eq:saddle-eq} repeatedly. 
 At first order,
\begin{align}
\mathfrak v(t_{\mathrm{cen}}(h))t_{\mathrm{cen}}'(h)=-1
 \quad \Longrightarrow \quad
 t_{\mathrm{cen}}'(h)= -\mathfrak v(t_{\mathrm{cen}}(h))^{-1},
\end{align}
so $|t_{\mathrm{cen}}'|\le Cn^{-1}$ again by
\eqref{prof:canred:eq:deleted-var-comp}.  
For every fixed $q\ge2$, $\mathfrak k^{(q)}(t)$ is the order-$q$ cumulant of
$\mathsf N_{A^c}$ under the product law $\Pp_t$, hence the sum of
$|A^c|=O(n)$ Bernoulli cumulants of order $q$.  Since a fixed-order
Bernoulli cumulant is a bounded polynomial of its parameter, uniformly for
$|t|\le t_0$,
\begin{equation}
 \sup_{|t|\le t_0}|\mathfrak k^{(q)}(t)|\le C_q n
 \qquad(q\ge2\text{ fixed}).
 \label{prof:canred:eq:deleted-logpart-derivatives}
\end{equation}

We now prove \eqref{prof:canred:eq:centering-derivatives} by strong induction on the
order of differentiation.  At each order, the highest derivative
$t_{\mathrm{cen}}^{(r)}$ appears linearly with coefficient
$\mathfrak k''(t_{\mathrm{cen}})=\mathfrak v(t_{\mathrm{cen}})$; all remaining terms involve only
lower derivatives of $t_{\mathrm{cen}}$.
Let $r\ge2$ and differentiate the equation
$\mathfrak k'(t_{\mathrm{cen}}(h))=k-h$ w.r.t.\@ $h$ a total of $r$ times.  The unique term containing
$t_{\mathrm{cen}}^{(r)}$ is
\[
 \mathfrak k''(t_{\mathrm{cen}})t_{\mathrm{cen}}^{(r)}
 =\mathfrak v(t_{\mathrm{cen}})t_{\mathrm{cen}}^{(r)}.
\]
Every other term contains one factor $\mathfrak k^{(q)}(t_{\mathrm{cen}})$ with
$q\ge3$ and a product
$t_{\mathrm{cen}}^{(a_1)}\cdots t_{\mathrm{cen}}^{(a_\ell)}$, where
$a_1+\cdots+a_\ell=r$ and every $a_i<r$.  Assume inductively that
$|t_{\mathrm{cen}}^{(a)}|\le C_an^{-a}$ for all $a<r$.  The product of the lower-order
$t_{\mathrm{cen}}$-derivatives is then $O(n^{-r})$, while
\eqref{prof:canred:eq:deleted-logpart-derivatives} contributes at most
$O(n)$.  Hence the sum of all terms not containing $t_{\mathrm{cen}}^{(r)}$ is
$O_r(n^{1-r})$.  Solving for $t_{\mathrm{cen}}^{(r)}$ and using
$\mathfrak v(t_{\mathrm{cen}})\asymp n$ from
\eqref{prof:canred:eq:deleted-var-comp} gives
\[
 |t_{\mathrm{cen}}^{(r)}|
 \le \frac{C_r n^{1-r}}{c n}
 \le C_r n^{-r}.
\]
This closes the induction and proves
\eqref{prof:canred:eq:centering-derivatives}.
\end{proof}

By \cref{prof:canred:lem:saddle}, the centering map
$h\mapsto t_{\mathrm{cen}}(h)$ satisfies \eqref{prof:canred:eq:saddle-eq}, with its
fixed-order derivatives controlled by \eqref{prof:canred:eq:centering-derivatives}.
For integer $h$, set $t=t_{\mathrm{cen}}(h)$ in
\eqref{prof:canred:eq:coefficient-tilt-identity}.  By the centering equation
\eqref{prof:canred:eq:saddle-eq}, $k-h=\mathfrak m(t_{\mathrm{cen}}(h))$, so the
probability factor becomes
$
 \Pp_{t_{\mathrm{cen}}(h)}\bigl(\mathsf N_{A^c}=\mathfrak m(t_{\mathrm{cen}}(h))\bigr)
 $.
Thus we need to estimate the probability that, under the tilted law
$\Pp_{t_{\mathrm{cen}}(h)}$, the Poisson--binomial count $\mathsf N_{A^c}$ takes its
mean.  Because the tilt parameter will later vary with $h$ through
$t=t_{\mathrm{cen}}(h)$, we need this local estimate uniformly in $t$, together with
control of a fixed number of $t$-derivatives.  

Under the tilted product law $\Pp_t$, set
\begin{equation}
 Z_t:=\frac{\mathsf N_{A^c}-\mathfrak m(t)}{\sqrt{\mathfrak v(t)}},
 \qquad
 \psi_t(y):=\E_t[e^{iyZ_t}],\quad y\in\mathbb R.
\end{equation}
Thus $\psi_t$ is the characteristic function of the centered, variance-one
count $Z_t$.  Define
\begin{equation}
 \mathfrak h(t):=\frac1{\sqrt{2\pi}}
 \int_{-\pi\sqrt{\mathfrak v(t)}}^{\pi\sqrt{\mathfrak v(t)}}
 \psi_t(y)\,\dd y.
 \label{prof:canred:eq:central-factor-def}
\end{equation}
If $\mathfrak m(t)$ is an integer, ordinary Fourier inversion followed by
$y=\sqrt{\mathfrak v(t)}\,\theta$ gives
\begin{equation}
 \Pp_t(\mathsf N_{A^c}=\mathfrak m(t))
 =\frac{\mathfrak h(t)}{\sqrt{2\pi \mathfrak v(t)}}.
 \label{prof:canred:eq:central-factor-prob}
\end{equation}
When $\mathfrak m(t)$ is noninteger, $\mathfrak h(t)$ no longer represents
a lattice point probability, but \eqref{prof:canred:eq:central-factor-def}
still supplies a smooth real-valued interpolation in $t$.  At integer $h$,
\eqref{prof:canred:eq:central-factor-prob} evaluated at $t=t_{\mathrm{cen}}(h)$ gives
\[
 \Pp_{t_{\mathrm{cen}}(h)}\bigl(\mathsf N_{A^c}=k-h\bigr)
 =\frac{\mathfrak h(t_{\mathrm{cen}}(h))}
 {\sqrt{2\pi\mathfrak v(t_{\mathrm{cen}}(h))}}.
\]
This is the form used in \cref{prof:canred:lem:finite-differences} to build a
smooth interpolation of $\log q_A(h)$.  In particular,
$\log\mathfrak h(t_{\mathrm{cen}}(h))$ must be differentiated with respect to $h$ there;
Fa\`a di Bruno then combines the $t$-derivative bounds in
\cref{prof:canred:lem:central} with
\eqref{prof:canred:eq:centering-derivatives}.

The normalization in \eqref{prof:canred:eq:central-factor-def} is chosen so
that the standard normal characteristic function $e^{-y^2/2}$ would give
$\mathfrak h(t)=1$ if integrated over the whole real line.  Thus the local
Gaussian limit takes the form $\mathfrak h(t)\approx1$.  The next lemma proves
this comparison together with the required derivative bounds.

\begin{lemma}[Differentiated central local estimate]
\label{prof:canred:lem:central}
Under the assumptions of \cref{prof:canred:lem:saddle}, after decreasing $t_0$ if
necessary,
\begin{equation}
 \sup_{|t|\le t_0}|\mathfrak h(t)-1|\le C_{M,\underline p}n^{-1/2},
 \label{prof:canred:eq:central-factor-close}
\end{equation}
and, for every fixed $1\le r\le M+1$,
\begin{equation}
 \sup_{|t|\le t_0}|\mathfrak h^{(r)}(t)|\le C_{M,\underline p,r}.
 \label{prof:canred:eq:central-factor-derivative}
\end{equation}
Consequently, for all $n$ sufficiently large, we have $\mathfrak h(t)\in[\frac12,\frac32]$ for $|t|\le t_0$, and
\begin{equation}
 \sup_{|t|\le t_0}
 \left|\frac{\dd^r}{\dd t^r}\log \mathfrak h(t)\right|
 \le C_{M,\underline p,r}
 \qquad(0\le r\le M+1).
 \label{prof:canred:eq:log-central-factor-derivative}
\end{equation}
\end{lemma}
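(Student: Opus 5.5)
The plan is to write $\mathfrak h(t)$ as an integral of a product characteristic function and run the standard Lindeberg--Esseen local CLT argument, keeping track of $t$-derivatives. Write $\sigma=\sqrt{\mathfrak v(t)}$. The characteristic function factorizes as
\[
\psi_t(y)=\prod_{x\in A^c}\phi_x(t,y/\sigma),
\qquad
\phi_x(t,\theta)=e^{-ip_x(t)\theta}\bigl(1-p_x(t)+p_x(t)e^{i\theta}\bigr).
\]
Each factor is real-analytic in $(t,\theta)$. Moreover, $|\phi_x(t,\theta)|^2=1-2p_x(t)(1-p_x(t))(1-\cos\theta)$. Since $p_x(t)\in[\underline p_{\mathrm{tilt}},1-\underline p_{\mathrm{tilt}}]$ by \cref{prof:canred:lem:saddle}, this gives the uniform bound
\[
|\phi_x(t,\theta)|\le \exp\{-c\,\theta^2\}
\qquad\text{for } |\theta|\le\pi,
\]
with $c=c(\underline p)>0$. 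Consequently $|\psi_t(y)|\le e^{-c' y^2}$ on $|y|\le\pi\sigma$, using $\sigma^2\asymp n$ from \eqref{prof:canred:eq:deleted-var-comp}.

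\emph{Estimate \eqref{prof:canred:eq:central-factor-close}.} I would split the range at $|y|\le \epsilon\sigma$ for a small fixed $\epsilon$.
\begin{itemize}
\item On the inner region, $\log\psi_t(y)=-y^2/2+R(t,y)$ with $|R|\le C|y|^3\sigma^{-1}$. This follows from the cumulant expansion: the third cumulant of $\mathsf N_{A^c}$ is $O(n)$ by \eqref{prof:canred:eq:deleted-logpart-derivatives}, and $\sigma^3\asymp n^{3/2}$. Hence $|\psi_t(y)-e^{-y^2/2}|\le C|y|^3\sigma^{-1}e^{-y^2/4}$, which integrates to $O(n^{-1/2})$.
\item The outer region $\epsilon\sigma\le|y|\le\pi\sigma$ contributes $O(e^{-cn})$ by the Gaussian domination above.
\item The Gaussian tail $\int_{|y|>\epsilon\sigma}e^{-y^2/2}\,\dd y$ is exponentially small.
\end{itemize}
Together these give \eqref{prof:canred:eq:central-factor-close}.

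\emph{Derivatives \eqref{prof:canred:eq:central-factor-derivative}.} This is the main obstacle. The integration limits $\pm\pi\sigma(t)$ move with $t$, and $\psi_t$ depends on $t$ both directly and through $\sigma(t)$. I would first change variables back to $\theta=y/\sigma$, giving
\[
\mathfrak h(t)=\frac{\sigma(t)}{\sqrt{2\pi}}\int_{-\pi}^{\pi}\prod_{x\in A^c} \phi_x(t,\theta)\,\dd\theta,
\]
so the domain is fixed. Then differentiate under the integral $r$ times via Leibniz and Fa\`a di Bruno.

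\begin{itemize}
\item Derivatives of $\sigma(t)=\mathfrak v(t)^{1/2}$ satisfy $|\sigma^{(j)}|\le C\sigma$, since $\mathfrak v^{(j)}=\mathfrak k^{(j+2)}=O(n)$ by \eqref{prof:canred:eq:deleted-logpart-derivatives}.
\item For the product, write it as $\exp\{\sum_x\log\phi_x(t,\theta)\}$, which is legitimate for $|\theta|\le\epsilon$ since each $\phi_x$ is then near $1$.
\item The key cancellation: $\phi_x$ is centered, so $\partial_\theta\log\phi_x(t,0)=0$, and also $\phi_x(t,0)=1$ for all $t$. Hence $\partial_t^j\log\phi_x(t,\theta)=O(\theta^2)$ uniformly in $x$. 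Summing over $x$ gives $\partial_t^j\sum_x\log\phi_x=O(n\theta^2)=O(y^2)$.
\item Each $t$-derivative of the exponential therefore produces a polynomial factor in $y$ of bounded degree, which is absorbed by $e^{-c y^2}$. The inner integral's derivatives are then $O(1)$ after the factor $\sigma$ is undone by $\dd\theta=\dd y/\sigma$.
\item On $\epsilon\le|\theta|\le\pi$, I would use the Leibniz rule on the product directly. Each $\partial_t^j$ hits at most $r$ factors, producing at most $n^r$ terms bounded by $C$, times the remaining product of moduli $\le e^{-c(n-r)\epsilon^2}$. This is exponentially small.
\end{itemize}
Multiplying by $\sigma\asymp\sqrt n$ still leaves the outer region negligible, giving \eqref{prof:canred:eq:central-factor-derivative}.

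\emph{Consequences.} For $n$ large, \eqref{prof:canred:eq:central-factor-close} gives $\mathfrak h(t)\in[\tfrac12,\tfrac32]$, so $\log\mathfrak h$ is well defined on $|t|\le t_0$. Then \eqref{prof:canred:eq:log-central-factor-derivative} follows from Fa\`a di Bruno for $\log\circ\,\mathfrak h$, since $\mathfrak h$ is bounded below and its derivatives up to order $M+1$ are bounded. Shrinking $t_0$ is needed only to keep the tilted parameters in a compact subinterval of $(0,1)$, so that all the constants above are uniform.
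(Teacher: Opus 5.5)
Your proposal is correct. On the derivative bound it takes a genuinely different and simpler route than the paper.

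The paper stays in the standardized variable $y$ and splits at $b_n=C_{\mathrm{tail}}\sqrt{\log n}$. It Taylor-expands $\log\psi_t$ to order $M+4$ using the analytic continuation of $\mathfrak k$. It then shows that the non-Gaussian correction $\mathcal C_t(y)$ and all its $t$-derivatives are $O(n^{-1/2}|y|^3)$. Because $-y^2/2$ does not depend on $t$, this gives the stronger bound $\mathfrak h^{(r)}=O(n^{-1/2})$. The price is two extra steps. First, the complementary region carries polynomial factors $n^{K_r}(1+|y|)^{K_r}$, which must be beaten by choosing $C_{\mathrm{tail}}$ large. Second, the endpoints $\pm\pi\sqrt{\mathfrak v(t)}$ move with $t$, which the paper handles by Leibniz's rule and the bound $|1-2p_x(t)|<1$ at $\theta=\pm\pi$.

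Your substitution $\theta=y/\sigma(t)$ removes the moving-endpoint issue entirely, because the integrand $\prod_x\phi_x(t,\theta)$ depends on $t$ only through $p_x(t)$. Splitting at a fixed $\epsilon$ makes the outer region exponentially small, even after the $n^r$ Leibniz terms. The centering identities $\log\phi_x(t,0)=\partial_\theta\log\phi_x(t,0)=0$ hold for every $t$, so $\partial_t^j\log\phi_x=O(\theta^2)$, and each $t$-derivative costs only a polynomial in $n\theta^2\asymp y^2$. Combining this with $|\sigma^{(j)}|\le C\sigma$ and $\int_{-\pi}^{\pi}\bigl|\partial_t^j\prod_x\phi_x(t,\theta)\bigr|\,\dd\theta=O(n^{-1/2})$ gives exactly $\mathfrak h^{(r)}=O(1)$. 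That is all the statement asserts and all that \cref{prof:canred:lem:finite-differences} uses. You do not get the paper's extra factor $n^{-1/2}$; if it were wanted, you could recover it from $\sigma(2\pi)^{-1/2}\int_{\mathbb R}e^{-\mathfrak v\theta^2/2}\,\dd\theta\equiv1$.

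Two small points of precision:
\begin{itemize}
\item The inner-region remainder is controlled by a uniform bound on $\partial_\theta^3\log\phi_x(t,\theta)$ for $|\theta|\le\epsilon$, not literally by the third cumulant. This bound is immediate since $|\phi_x-1|\le2|\theta|$ there.
\item $\epsilon$ must be small enough that $|R(t,y)|\le y^2/4$ on the inner region, so that $e^{|R|}e^{-y^2/2}\le e^{-y^2/4}$.
\end{itemize}
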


\begin{proof}
\localheading{Estimates of standardized cumulants.}
For $j\ge2$, set
\[
 a_j(t):=\mathfrak k^{(j)}(t)\,\mathfrak v(t)^{-j/2}.
\]
Since $\psi_t$ is the characteristic function of $Z_t$, $a_j(t)$ is exactly
the order-$j$ cumulant of $Z_t$; in particular $a_2\equiv1$.  Equivalently,
because $\psi_t(0)=1$, there is an analytic branch of $\log\psi_t$ near the
origin with $\log\psi_t(0)=0$, and
\[
 \left.\frac{\dd^j}{\dd y^j}\log\psi_t(y)\right|_{y=0}
 =i^j a_j(t),\qquad j\ge2.
\]
Thus the variance normalization makes the quadratic coefficient equal to
$-\frac12$, while the $a_j$, $j\ge3$, are the higher-order coefficients measuring
the departure from the standard normal characteristic function
$e^{-y^2/2}$.

Let us estimate the sizes of $a_j(t)$ and their $t$-derivatives.  By
\eqref{prof:canred:eq:deleted-logpart-derivatives}, for every fixed $\ell\ge0$,
\[
 \sup_{|t|\le t_0}|\partial_t^\ell\mathfrak k^{(j)}(t)|\le C_{j,\ell}n,
 \qquad
 \sup_{|t|\le t_0}|\partial_t^\ell\mathfrak v(t)|\le C_\ell n,
\]
while \eqref{prof:canred:eq:deleted-var-comp} gives $\mathfrak v(t)\asymp n$.
The one-variable Fa\`a di Bruno formula applied to
$x\mapsto x^{-\alpha}$ therefore gives, for every fixed $\ell$ and $\alpha>0$,
\begin{equation}
 \sup_{|t|\le t_0}
 \left|\partial_t^\ell\bigl(\mathfrak v(t)^{-\alpha}\bigr)\right|
 \le C_{\ell,\alpha}n^{-\alpha}.
 \label{prof:canred:eq:inverse-variance-derivatives}
\end{equation}
Indeed, in every Fa\`a di Bruno term, the negative power of $\mathfrak v$
contributes exactly enough factors of $n^{-1}$ to compensate for the
$O(n)$ derivatives of $\mathfrak v$.  
Leibniz' rule applied to
$a_j=\mathfrak k^{(j)}\mathfrak v^{-j/2}$ then yields
\begin{equation}
 \sup_{|t|\le t_0}|\partial_t^r a_j(t)|
 \le C_{j,r}n^{1-j/2}
 \qquad(j\ge3,\ r\le M+1).
 \label{prof:canred:eq:standardized-cumulants}
\end{equation}
Thus the cubic standardized cumulant $a_3(t)$ is $O(n^{-1/2})$, and every higher
standardized cumulant has a smaller order in $n$.

\localheading{Central/tail Fourier split.}
Set
\begin{equation}
 y_{\max}(t):=\pi\sqrt{\mathfrak v(t)},
 \qquad
 b_n:=C_{\mathrm{tail}}\sqrt{\log n},
 \label{prof:canred:eq:fourier-scales}
\end{equation}
where $C_{\mathrm{tail}}=C_{\mathrm{tail}}(M,\underline p)$ will be chosen below.  We
call $|y|\le b_n$ the \emph{central interval}.
Since \eqref{prof:canred:eq:deleted-var-comp} gives $\mathfrak v(t)\asymp n$,
\eqref{prof:canred:eq:fourier-scales} implies $b_n<y_{\max}(t)$ uniformly for all sufficiently large $n$.
Using \eqref{prof:canred:eq:central-factor-def} and the standard
Gaussian integral identity gives the exact decomposition
\begin{equation}
\begin{aligned}
 \mathfrak h(t)-1
 =\frac1{\sqrt{2\pi}}
 \int_{|y|\le b_n}\bigl(\psi_t(y)-e^{-y^2/2}\bigr)\,\dd y
 +\frac1{\sqrt{2\pi}}
 \int_{b_n<|y|\le y_{\max}(t)}\psi_t(y)\,\dd y
 -\frac1{\sqrt{2\pi}}\int_{|y|>b_n}e^{-y^2/2}\,\dd y.
\end{aligned}
 \label{prof:canred:eq:central-tail-decomposition}
\end{equation}
Thus \eqref{prof:canred:eq:central-tail-decomposition} reduces the proof of the lemma to
three tasks: compare $\psi_t$ with the standard Gaussian
characteristic function on the central interval; control the complementary
Fourier integral; and control the ordinary Gaussian tail.  
For the derivatives of $\mathfrak h(t)$, the only additional issue is that
the endpoints $\pm y_{\max}(t)$ of the second integral in
\eqref{prof:canred:eq:central-tail-decomposition} also depend on $t$; this is
handled after the complementary-region estimate.

\localheading{Central Gaussian approximation.}
The product structure of the tilted law gives, with
$\theta=y/\sqrt{\mathfrak v(t)}$,
\begin{align}
 \psi_t(y)
 =e^{-iy\mathfrak m(t)/\sqrt{\mathfrak v(t)}}
  \prod_{x\in A^c}
  \left(1-p_x(t)+p_x(t)e^{iy/\sqrt{\mathfrak v(t)}}\right),
  \label{prof:eq:product-psi_t}
\end{align}
where the factors indexed by $x$ are the individual Bernoulli characteristic
functions under $\Pp_t$.  On the central interval,
$|\theta|\le b_n/\sqrt{\mathfrak v(t)}=O(\sqrt{\log n/n})$, and
$p_x(t)\in[\underline p_{\mathrm{tilt}},1-\underline p_{\mathrm{tilt}}]$.  Since
$|1-p_x(t)+p_x(t)e^{i\theta}-1|\le|e^{i\theta}-1|=o(1)$ uniformly on the
central interval, every site factor lies in the disk $\{z\in\mathbb C:|z-1|<\frac12\}$ for all
sufficiently large $n$.  The principal logarithm is analytic on this disk and
vanishes at $1$.  Using this branch for each site factor, the principal logarithm of the
standardized characteristic function is
\[
\begin{aligned}
 \log\psi_t(y)
 &:=-i\frac{y\mathfrak m(t)}{\sqrt{\mathfrak v(t)}}
   +\sum_{x\in A^c}
    \log\!\left(1-p_x(t)+p_x(t)e^{iy/\sqrt{\mathfrak v(t)}}\right)\\
 &=\mathfrak k\!\left(t+i\frac{y}{\sqrt{\mathfrak v(t)}}\right)-\mathfrak k(t)
   -i\frac{y}{\sqrt{\mathfrak v(t)}}\mathfrak m(t),
\end{aligned}
\]
where the second line uses the corresponding analytic extension of
$\mathfrak k$.  In particular, $\log\psi_t(0)=0$ on the central interval.

Let $J_{\mathrm{Tay}}:=M+4$ and put
$z_t(y):=iy/\sqrt{\mathfrak v(t)}$.  Taylor's formula with integral remainder
gives, uniformly on the central interval,
\begin{equation}
 \log\psi_t(y)
 =-\frac{y^2}{2}
  +\sum_{j=3}^{J_{\mathrm{Tay}}}\frac{i^j}{j!}a_j(t)y^j
  +\mathcal R_{J_{\mathrm{Tay}},t}(y).
 \label{prof:canred:eq:standardized-log-expansion}
\end{equation}
Here the $j=0$ and $j=1$ Taylor terms cancel the subtracted
$\mathfrak k(t)$ and $z_t(y)\mathfrak m(t)$ terms in $\log\psi_t(y)$, while the
$j=2$ term equals $-y^2/2$ because $\mathfrak k''(t)=\mathfrak v(t)$.
The remainder $\mathcal R_{J_{\mathrm{Tay}},t}$ in \eqref{prof:canred:eq:standardized-log-expansion} is the
integral remainder in Taylor's formula for
$u\mapsto\mathfrak k(t+u)$ at $u=0$, evaluated at $u=z_t(y)$.  More
generally, for every integer $J\ge2$, define
\begin{equation}
\begin{aligned}
 \mathcal R_{J,t}(y)
 &:=\frac{z_t(y)^{J+1}}{J!}
   \int_0^1(1-\lambda)^J
   \mathfrak k^{(J+1)}\!\bigl(t+\lambda z_t(y)\bigr)\,\dd\lambda.
\end{aligned}
 \label{prof:canred:eq:taylor-remainder-def}
\end{equation}

We claim that
\begin{equation}
\begin{aligned}
 |\partial_t^r\mathcal R_{J_{\mathrm{Tay}},t}(y)|
 &\le C_r n^{-(J_{\mathrm{Tay}}-1)/2}|y|^{J_{\mathrm{Tay}}+1},
 \qquad 0\le r\le M+1,
 \quad |y|\le b_n.
\end{aligned}
 \label{prof:canred:eq:taylor-remainder-bound}
\end{equation}
To see this we separately estimate the factor $z_t(y)^{J+1}$ and the integral in
\eqref{prof:canred:eq:taylor-remainder-def}.  First, since
$z_t(y)=iy\,\mathfrak v(t)^{-1/2}$,
\eqref{prof:canred:eq:inverse-variance-derivatives} gives, for every fixed
$\ell\ge1$,
\[
 |\partial_t^\ell z_t(y)|\le C_\ell|z_t(y)|.
\]
Repeated Leibniz differentiation of the product of $J+1$ copies of $z_t(y)$
then yields
\[
 \bigl|\partial_t^\ell z_t(y)^{J+1}\bigr|
 \le C_{J,\ell}|z_t(y)|^{J+1}.
\]
For the integral factor, we first note that the single-site function
$u\mapsto\log(1-p_x+p_xe^u)$ admits analytic continuation with uniformly
bounded fixed derivatives in a fixed complex neighborhood of
$[-t_0,t_0]$.  Since $|z_t(y)|=O(b_n/\sqrt n)=o(1)$ on the central interval,
the points $t+\lambda z_t(y)$ remain in this neighborhood for all $\lambda\in[0,1]$ and
all sufficiently large $n$.  Summing the single-site bounds then gives
\[
 \bigl|\mathfrak k^{(m)}(t+\lambda z_t(y))\bigr|\le C_m n
\]
uniformly in $\lambda\in[0,1]$ and $|y|\le b_n$, for every fixed $m$.  
Now the map
$t\mapsto t+\lambda z_t(y)$ has first derivative $O(1)$ and every higher fixed
derivative $O_\ell(|z_t(y)|)$.  Fa\`a di Bruno's formula therefore shows
that, for every fixed $\ell$,
\[
 \left|
 \partial_t^\ell\mathfrak k^{(J+1)}\!\bigl(t+\lambda z_t(y)\bigr)
 \right|
 \le C_{J,\ell}n.
\]
Applying Leibniz' rule to \eqref{prof:canred:eq:taylor-remainder-def}, we can bound each
term in $\partial_t^r\mathcal R_{J,t}(y)$ by
$C_{J,r}n|z_t(y)|^{J+1}$.
Now
\[
 n|z_t(y)|^{J+1}
 =n\frac{|y|^{J+1}}{\mathfrak v(t)^{(J+1)/2}}
 \lesssim n^{-(J-1)/2}|y|^{J+1},
\]
where \eqref{prof:canred:eq:deleted-var-comp} was invoked in the last inequality.  Setting $J=J_{\mathrm{Tay}}$
proves \eqref{prof:canred:eq:taylor-remainder-bound}.

Let us isolate the cubic and higher-order terms by writing
\[
 \mathcal C_t(y):=
 \sum_{j=3}^{J_{\mathrm{Tay}}}\frac{i^j}{j!}a_j(t)y^j
 +\mathcal R_{J_{\mathrm{Tay}},t}(y),
\]
so that $\log\psi_t(y)=-y^2/2+\mathcal C_t(y)$.  Since
$b_n=o(\sqrt n)$, \eqref{prof:canred:eq:standardized-cumulants} and
\eqref{prof:canred:eq:taylor-remainder-bound} imply, uniformly for
$0\le r\le M+1$,
\begin{equation}
 |\partial_t^r\mathcal C_t(y)|
 \le C_r n^{-1/2}|y|^3,
 \qquad |y|\le b_n.
 \label{prof:canred:eq:central-exponent-correction}
\end{equation}
Indeed, every term of degree $j\ge4$ is bounded by the cubic scale because
$(|y|/\sqrt n)^{j-3}\le1$ on the central interval, and the same comparison
applies to the Taylor remainder. 
As consequences of \eqref{prof:canred:eq:central-exponent-correction} we have
\begin{align}
 \sup_{\substack{|t|\le t_0\\0<|y|\le b_n}}
 \frac{|\mathcal C_t(y)|}{y^2}
 &\le C\frac{b_n}{\sqrt n}\longrightarrow0,
 \label{prof:canred:eq:central-exponent-relative}
 \end{align}
 and
 \begin{align}
 \Re\log\psi_t(y)&\le-cy^2,
 \qquad |t|\le t_0,\quad |y|\le b_n.
 \label{prof:canred:eq:central-log-realpart}
\end{align}
for all sufficiently large $n$.

We compare $\psi_t(y)$ with the standard Gaussian
characteristic function.  Since
\[
 \psi_t(y)-e^{-y^2/2}
 =e^{-y^2/2}\left(e^{\mathcal C_t(y)}-1\right),
\]
the elementary inequality $|e^z-1|\le |z|e^{|z|}$ reduces the problem to
controlling $\mathcal C_t(y)$.  By
\eqref{prof:canred:eq:central-exponent-relative}, for all sufficiently large
$n$ we have $|\mathcal C_t(y)|\le y^2/4$ uniformly on $|y|\le b_n$.
Together with the case $r=0$ of
\eqref{prof:canred:eq:central-exponent-correction}, this yields
\[
 |\psi_t(y)-e^{-y^2/2}|
 \le e^{-y^2/2}|\mathcal C_t(y)|e^{|\mathcal C_t(y)|}
 \le Cn^{-1/2}|y|^3e^{-y^2/4}.
\]
In particular,
\begin{align}
 |\psi_t(y)-e^{-y^2/2}|
 &\le Cn^{-1/2}(1+|y|^3)e^{-c y^2},
 \label{prof:canred:eq:central-zero}
\end{align}
uniformly on $|y|\le b_n$.

We next estimate the positive $t$-derivatives.  Because the Gaussian term
$-y^2/2$ is independent of $t$, the one-variable Fa\`a di Bruno formula gives
\[
 \partial_t^r \psi_t(y)
 =\psi_t(y)
  \sum_{\mathfrak p\in\operatorname{Part}([r])}
  \prod_{B\in\mathfrak p}\partial_t^{|B|}\mathcal C_t(y),
 \qquad r\ge1.
\]
Fix a partition $\mathfrak p$ with $m=|\mathfrak p|$ blocks.  Applying
\eqref{prof:canred:eq:central-exponent-correction} to each block gives
\[
 \prod_{B\in\mathfrak p}
 \left|\partial_t^{|B|}\mathcal C_t(y)\right|
 \le C_r n^{-m/2}|y|^{3m}
 \le C_r n^{-1/2}(1+|y|^{3r}),
\]
since $1\le m\le r$.  On the other hand,
\eqref{prof:canred:eq:central-log-realpart} gives
$|\psi_t(y)|\le e^{-cy^2}$.  Summing over the finitely many partitions of
$[r]$ then yields
\begin{align}
 |\partial_t^r \psi_t(y)|
 &\le C_rn^{-1/2}(1+|y|^{3r})e^{-c y^2},
 \qquad 1\le r\le M+1.
 \label{prof:canred:eq:central-derivatives}
\end{align}

Since a polynomial times $e^{-cy^2}$ is integrable on $\mathbb R$,
\eqref{prof:canred:eq:central-zero} gives
\[
 \int_{|y|\le b_n}
 \bigl|\psi_t(y)-e^{-y^2/2}\bigr|\,\dd y
 \le Cn^{-1/2}.
\]
Likewise, for each fixed $1\le r\le M+1$,
\eqref{prof:canred:eq:central-derivatives} gives
\[
 \int_{|y|\le b_n}
 \bigl|\partial_t^r\psi_t(y)\bigr|\,\dd y
 \le C_r n^{-1/2}.
\]
These bounds control, respectively, the zeroth-order error in the first
integral of \eqref{prof:canred:eq:central-tail-decomposition} and its positive
$t$-derivatives.

\localheading{Complementary Fourier region: Gaussian-type decay.}
We next control the second and third integrals in
\eqref{prof:canred:eq:central-tail-decomposition}.  For
$p\in[\underline p_{\mathrm{tilt}},1-\underline p_{\mathrm{tilt}}]$ and $|\theta|\le\pi$,
\begin{align}
 |1-p+pe^{i\theta}|^2
 &=1-4p(1-p)\sin^2(\theta/2)
 \le e^{-c\theta^2},
 \label{prof:canred:eq:one-site-fourier-decay}
\end{align}
Applying \eqref{prof:canred:eq:one-site-fourier-decay} to the factors in
\eqref{prof:eq:product-psi_t}, with
$\theta=y/\sqrt{\mathfrak v(t)}$, and using
\eqref{prof:canred:eq:deleted-var-comp}, gives
\begin{equation}
 |\psi_t(y)|\le e^{-cy^2},\qquad |y|\le y_{\max}(t).
 \label{prof:canred:eq:product-fourier-decay}
\end{equation}

For the derivative estimates, abbreviate
$f_x(t,\theta):=1-p_x(t)+p_x(t)e^{i\theta}$ and differentiate the product in
\eqref{prof:eq:product-psi_t}.  For fixed $1\le r\le M+1$, each
Leibniz/Fa\`a di Bruno term in $\partial_t^r\psi_t(y)$ differentiates at most
$r$ site factors.  The remaining $n-O_r(1)$ factors are untouched, so
\eqref{prof:canred:eq:one-site-fourier-decay}, together with
\eqref{prof:canred:eq:deleted-var-comp}, still contributes an
$e^{-cy^2}$ factor.  The choices of differentiated sites, derivatives of the centering phase
$e^{-iy\mathfrak m(t)/\sqrt{\mathfrak v(t)}}$, derivatives of the argument
$y/\sqrt{\mathfrak v(t)}$, and derivatives of the finitely many affected
$f_x$ contribute only a fixed polynomial in $n$ and $1+|y|$.  Thus, for each
fixed $1\le r\le M+1$, there is an integer $K_r\ge0$ such that
\[
 |\partial_t^r\psi_t(y)|
 \le C_r n^{K_r}(1+|y|)^{K_r}e^{-cy^2},
 \qquad
 b_n\le|y|\le y_{\max}(t).
\]
For $y\ge b_n$, the polynomial factor can be absorbed into a weaker
Gaussian: for each fixed $K_r$,
$(1+y)^{K_r}e^{-cy^2}\le C_r e^{-(c/2)y^2}$.  Hence the complementary integral
is at most $C_r n^{K_r}e^{-(c/2)b_n^2}$.  Because only the finite range
$0\le r\le M+1$ is needed, we may choose
$C_{\mathrm{tail}}=C_{\mathrm{tail}}(M,\underline p)$ once so that this bound is at
most $n^{-2}$ for every such $r$.  Together with
\eqref{prof:canred:eq:product-fourier-decay}, this gives
\begin{align}
 \sup_{|t|\le t_0}
 \int_{b_n\le|y|\le y_{\max}(t)}
 |\partial_t^r\psi_t(y)|\,\dd y
 &\le n^{-2}
 \qquad(0\le r\le M+1),  \label{prof:canred:eq:complementary-tail-bounds}\\
 \int_{|y|\ge b_n}e^{-y^2/2}\,\dd y
 &\le n^{-2}. \label{prof:eq:tailpiece}
\end{align}

\localheading{Moving endpoints.}
It remains to justify differentiating the second integral in
\eqref{prof:canred:eq:central-tail-decomposition}, whose endpoints are
$\pm y_{\max}(t)$.  Since \eqref{prof:canred:eq:deleted-var-comp} gives
$\mathfrak v(t)\asymp n$ and the fixed derivatives satisfy
$\mathfrak v^{(\ell)}(t)=O_\ell(n)$, Fa\`a di Bruno applied to
$x\mapsto x^{1/2}$ yields
$y_{\max}^{(\ell)}(t)=O_\ell(n^{1/2})$ for every fixed $\ell\ge1$.
For the first derivative, Leibniz' rule gives explicitly
\[
\begin{aligned}
 \frac{\dd}{\dd t}
 \left(
  \int_{b_n}^{y_{\max}(t)}\psi_t(y)\,\dd y
  +\int_{-y_{\max}(t)}^{-b_n}\psi_t(y)\,\dd y
 \right)
 &=
 \int_{b_n<|y|\le y_{\max}(t)}\partial_t\psi_t(y)\,\dd y \\
 &\quad
 +y_{\max}'(t)\bigl[\psi_t(y_{\max}(t))+\psi_t(-y_{\max}(t))\bigr].
\end{aligned}
\]
Repeating this differentiation shows that, for fixed $r\ge1$, one obtains
the integral term
\[
 \int_{b_n<|y|\le y_{\max}(t)}\partial_t^r\psi_t(y)\,\dd y
\]
plus a finite sum of endpoint terms.  Each consists of a product of derivatives
of $y_{\max}(t)$ multiplying
one mixed derivative $\partial_t^a\partial_y^b\psi_t(\pm y_{\max}(t))$ with
$a+b\le r-1$.  
On the one hand, \eqref{prof:canred:eq:complementary-tail-bounds} states that the integral
term is $O_r(n^{-2})$.
On the other hand, for the endpoint terms, \eqref{prof:canred:eq:fourier-scales} gives
$ \frac{\pm y_{\max}(t)}{\sqrt{\mathfrak v(t)}}=\pm\pi$.
Hence by the product formula \eqref{prof:eq:product-psi_t}, every
undifferentiated site factor at $y=\pm y_{\max}(t)$ satisfies
\[
 |f_x(t,\pm\pi)|=|1-2p_x(t)|\le1-2\underline p_{\mathrm{tilt}}<1.
\]
In any fixed mixed derivative only $O_r(1)$ site factors are differentiated,
so the remaining $n-O_r(1)$ factors contribute $e^{-cn}$.  Derivatives of
$y_{\max}(t)$, of the exponential phase
$e^{-iy\mathfrak m(t)/\sqrt{\mathfrak v(t)}}$, and of the finitely many
differentiated site factors contribute at most polynomial factors in $n$.
Therefore every endpoint term is
\begin{equation}
 O(n^Ce^{-cn})=O(n^{-2}).
 \label{prof:canred:eq:moving-endpoint-bound}
\end{equation}

\localheading{Conclusion.}
For $r=0$, \eqref{prof:canred:eq:central-tail-decomposition},
\eqref{prof:canred:eq:central-zero},
\eqref{prof:canred:eq:complementary-tail-bounds}, and
\eqref{prof:eq:tailpiece} together prove
\eqref{prof:canred:eq:central-factor-close}.  For $1\le r\le M+1$, the
central derivative bound \eqref{prof:canred:eq:central-derivatives}, the
complementary estimate \eqref{prof:canred:eq:complementary-tail-bounds}, and
the endpoint bound \eqref{prof:canred:eq:moving-endpoint-bound} give the
stronger estimate $\mathfrak h^{(r)}(t)=O_r(n^{-1/2})$, and hence
\eqref{prof:canred:eq:central-factor-derivative}.  Finally,
\eqref{prof:canred:eq:central-factor-close} gives
$\mathfrak h(t)\in[\frac12,\frac32]$ for all sufficiently large $n$; applying Fa\`a di Bruno to
$\log\mathfrak h(t)$ and using
\eqref{prof:canred:eq:central-factor-derivative} proves
\eqref{prof:canred:eq:log-central-factor-derivative}.
\end{proof}

The next lemma derives finite-difference bounds for $\log q_A$; its $a=0$
case supplies the coefficients $\Delta^m\log q_A(0)$ used in the Gibbs
representation of \cref{prof:canred:lem:gibbs}.

For a function $f$ on consecutive integers, write
\begin{equation*}
 (\Delta f)(h):=f(h+1)-f(h),
 \qquad
 \Delta^0f:=f,
 \qquad
 \Delta^m:=\Delta\circ\Delta^{m-1}\quad(m\ge1).
\end{equation*}

\begin{lemma}[Sharp finite differences of the deleted local mass]
\label{prof:canred:lem:finite-differences}
There is $n_0=n_0(M,\underline p)$ such that, for $n\ge n_0$, uniformly over marked
sets $A$ with $1\le d=|A|\le M$ and integers $a,m$ with
$1\le m\le d$ and $0\le a\le d-m$,
\begin{equation}
 \abs{\Delta^m\log q_A(a)}
 \le C_{M,\underline p}
 \begin{cases}
 n^{-1},&m=1,\\
 n^{1-m},&2\le m\le d.
 \end{cases}
 \label{prof:canred:eq:fd-target}
\end{equation}
\end{lemma}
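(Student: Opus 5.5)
The plan is to build a smooth interpolation of $h\mapsto\log q_A(h)$ on the real interval $[0,d]$ and bound its $h$-derivatives. The finite differences then follow from the standard representation $\Delta^m f(a)=\int_{[0,1]^m} f^{(m)}(a+u_1+\cdots+u_m)\,\dd u$, which gives $|\Delta^m f(a)|\le\sup_{[a,a+m]}|f^{(m)}|$. By \eqref{prof:canred:eq:coefficient-tilt-identity} at $t=t_{\mathrm{cen}}(h)$, together with \eqref{prof:canred:eq:central-factor-prob}, for integer $h$ we have
\[
 \log q_A(h)=\Phi(h):=\mathfrak A_h(t_{\mathrm{cen}}(h))-\tfrac12\log\bigl(2\pi\mathfrak v(t_{\mathrm{cen}}(h))\bigr)+\log\mathfrak h(t_{\mathrm{cen}}(h)).
\]
The right-hand side is defined and $C^{M+1}$ for all real $h\in[0,d]$ by \cref{prof:canred:lem:saddle} and \cref{prof:canred:lem:central}, so it suffices to show $|\Phi'(h)|\le Cn^{-1}$ and $|\Phi^{(m)}(h)|\le Cn^{1-m}$ for $2\le m\le d$.

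\emph{Saddle action term.} Set $S(h):=\mathfrak A_h(t_{\mathrm{cen}}(h))=\mathfrak k(t_{\mathrm{cen}})-t_{\mathrm{cen}}(k-h)$. The envelope identity, using the stationarity $\mathfrak m(t_{\mathrm{cen}})=k-h$, gives $S'(h)=t_{\mathrm{cen}}(h)$. Hence $S^{(m)}=t_{\mathrm{cen}}^{(m-1)}$, and \cref{prof:canred:lem:saddle} yields $|S'|\le Cn^{-1}$ and $|S^{(m)}|\le Cn^{1-m}$ for $m\ge2$. This term saturates the claimed bound, so the remaining two terms only need to be no larger.

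\emph{Variance and local-factor terms.} For $g(h)=\log\mathfrak v(t_{\mathrm{cen}}(h))$, apply Fa\`a di Bruno: $\frac{\dd^j}{\dd t^j}\log\mathfrak v(t)=O_j(1)$ on $|t|\le t_0$, since $\mathfrak v\asymp n$ and $\mathfrak v^{(\ell)}=O(n)$ by \eqref{prof:canred:eq:deleted-logpart-derivatives} and \eqref{prof:canred:eq:deleted-var-comp}. Every Fa\`a di Bruno term for $g^{(m)}$ then carries a product $t_{\mathrm{cen}}^{(a_1)}\cdots t_{\mathrm{cen}}^{(a_\ell)}$ with $\sum a_i=m$, which is $O(n^{-m})$ by \eqref{prof:canred:eq:centering-derivatives}. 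The identical argument applies to $\log\mathfrak h(t_{\mathrm{cen}}(h))$, using \eqref{prof:canred:eq:log-central-factor-derivative}. Both contributions are $O(n^{-m})$, which is within the target for every $m\ge1$.

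\emph{Assembly and main obstacle.} Summing the three bounds and using the integral representation of $\Delta^m$, with $a+m\le d$ keeping the argument inside $[0,d]$, proves \eqref{prof:canred:eq:fd-target}, with $n_0$ taken from the two preceding lemmas. The one delicate point is that $\Phi$ coincides with $\log q_A$ only at integers. That suffices, because $\Delta^m\log q_A(a)$ samples only the integers $a,\dots,a+m$, and the finite-difference identity holds for any smooth interpolant through those values. The real analytic work was done earlier, in the derivative bounds of \cref{prof:canred:lem:central}. What must be checked here is that differentiating the action exactly cancels the $O(n)$ size of $\mathfrak k$, through the envelope identity, rather than estimating $S$ term by term.
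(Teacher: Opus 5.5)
Your proposal is correct and follows the paper's proof essentially step for step. It uses the same smooth interpolant $\mathfrak F(h)$ of $\log q_A$ through the integers, and the same envelope identity $\mathfrak A_{\mathrm{cen}}'(h)=t_{\mathrm{cen}}(h)$ for the dominant saddle-action term. The variance and central-factor corrections are handled by the same Fa\`a di Bruno composition rule, giving $O(n^{-m})$, and the passage from derivatives to $\Delta^m$ uses the same iterated integral representation.
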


\begin{proof}
For real $h\in[0,d]$, let $t_{\mathrm{cen}}(h)$ be the centering parameter from
\cref{prof:canred:lem:saddle}.  At integer $h$, the tilting identity
\eqref{prof:canred:eq:coefficient-tilt-identity}, the centering equation
$\mathfrak m(t_{\mathrm{cen}}(h))=k-h$, and
\eqref{prof:canred:eq:central-factor-prob} give
\begin{equation}
 q_A(h)
 =\exp\{\mathfrak k(t_{\mathrm{cen}}(h))-t_{\mathrm{cen}}(h)(k-h)\}
 \frac{\mathfrak h(t_{\mathrm{cen}}(h))}{\sqrt{2\pi\mathfrak v(t_{\mathrm{cen}}(h))}}.
\end{equation}
Define the centered saddle action by
\[
 \mathfrak A_{\mathrm{cen}}(h):=\mathfrak A_h(t_{\mathrm{cen}}(h))
 =\mathfrak k(t_{\mathrm{cen}}(h))-t_{\mathrm{cen}}(h)(k-h).
\]
Use this to define a smooth interpolation of $\log q_A$ on $[0,d]$:
\begin{align}
 \mathfrak F(h)&:=\mathfrak A_{\mathrm{cen}}(h)-\frac12\log(2\pi\mathfrak v(t_{\mathrm{cen}}(h)))
       +\log\mathfrak h(t_{\mathrm{cen}}(h)).
 \label{prof:canred:eq:F-interpolation}
\end{align}
Then $\mathfrak F(h)=\log q_A(h)$ at every integer $0\le h\le d$.

The derivative bookkeeping is governed by one simple composition rule.  If
$G$ has uniformly bounded fixed $t$-derivatives on $[-t_0,t_0]$, then
\eqref{prof:canred:eq:centering-derivatives} and the fixed-order Fa\`a di Bruno
formula give
\begin{equation}
 \sup_{h\in[0,d]}
 \left|\frac{\dd^m}{\dd h^m}G(t_{\mathrm{cen}}(h))\right|
 \le C_{G,m}n^{-m}.
 \label{prof:canred:eq:centering-composition-rule}
\end{equation}
Indeed, every term contains a product
$t_{\mathrm{cen}}^{(r_1)}\cdots t_{\mathrm{cen}}^{(r_\ell)}$ with
$r_1+\cdots+r_\ell=m$, and hence has size $O(n^{-m})$.

We now apply this rule to the three terms of $\mathfrak F(h)$ in
\eqref{prof:canred:eq:F-interpolation}.  
For the first saddle-action term, differentiation and the centering equation $\mathfrak m(t_{\mathrm{cen}}(h))=k-h$ to produce a clean identity
\[
 \mathfrak A_{\mathrm{cen}}'(h)= \mathfrak k'(t_{\mathrm{cen}}(h)) t_{\mathrm{cen}}'(h) - t_{\mathrm{cen}}'(h)(k-h) + t_{\mathrm{cen}}(h) =t_{\mathrm{cen}}(h).
\]
Therefore
\begin{equation}
 \sup_{h\in[0,d]}|\mathfrak A_{\mathrm{cen}}'(h)|\le Cn^{-1},
 \qquad
 \sup_{h\in[0,d]}|\mathfrak A_{\mathrm{cen}}^{(m)}(h)|
 =\sup_{h\in[0,d]}|t_{\mathrm{cen}}^{(m-1)}(h)|
 \le C_mn^{1-m}
 \quad(m\ge2).
 \label{prof:canred:eq:saddle-action-derivatives}
\end{equation}
For $m\ge2$, this is the scale that will dominate the two local-limit
corrections; for $m=1$, it already has the target size $O(n^{-1})$.

For the second, variance correction, term in \eqref{prof:canred:eq:F-interpolation}, note that
$\mathfrak v=\mathfrak k^{(2)}$.  Thus
\eqref{prof:canred:eq:deleted-logpart-derivatives} gives
$\mathfrak v^{(j)}(t)=O_j(n)$ for every fixed $j\ge0$, while
\eqref{prof:canred:eq:deleted-var-comp} gives $\mathfrak v(t)\asymp n$.
Fa\`a di Bruno applied to $\log\mathfrak v(t)$ therefore shows that every
fixed $t$-derivative of $\log\mathfrak v(t)$ is $O(1)$.  Applying
\eqref{prof:canred:eq:centering-composition-rule} gives
\begin{equation}
 \sup_{h\in[0,d]}
 \left|\frac{\dd^m}{\dd h^m}\log\mathfrak v(t_{\mathrm{cen}}(h))\right|
 \le C_mn^{-m}.
\end{equation}
Likewise, \cref{prof:canred:lem:central} gives uniformly bounded fixed
$t$-derivatives of $\log\mathfrak h(t)$, the last term in \eqref{prof:canred:eq:F-interpolation}, so the same composition rule yields
\begin{equation}
 \sup_{h\in[0,d]}
 \left|\frac{\dd^m}{\dd h^m}\log\mathfrak h(t_{\mathrm{cen}}(h))\right|
 \le C_mn^{-m}.
 \label{prof:canred:eq:log-central-factor-h}
\end{equation}
For $m\ge2$, these two local-limit corrections are one power of $n^{-1}$
smaller than the principal contribution
\eqref{prof:canred:eq:saddle-action-derivatives}; for $m=1$, all three
contributions are $O(n^{-1})$.  Combining
\eqref{prof:canred:eq:F-interpolation} with
\eqref{prof:canred:eq:saddle-action-derivatives} through
\eqref{prof:canred:eq:log-central-factor-h} gives
\begin{equation}
 \sup_{h\in[0,d]}|\mathfrak F'(h)|\le Cn^{-1},
 \qquad
 \sup_{h\in[0,d]}|\mathfrak F^{(m)}(h)|\le C_mn^{1-m}
 \quad(m\ge2).
 \label{prof:canred:eq:F-derivatives}
\end{equation}

It remains to pass from the derivative bounds for the smooth interpolation
$\mathfrak F$ to the required finite-difference bounds.  The fundamental
theorem of calculus gives
\[
 \Delta\mathfrak F(a)
 =\mathfrak F(a+1)-\mathfrak F(a)
 =\int_0^1 \mathfrak F'(a+s_1)\,\dd s_1.
\]
Applying this identity successively yields, after $m$ iterations,
\[
 \Delta^m\mathfrak F(a)
 =\int_{[0,1]^m}
 \mathfrak F^{(m)}(a+s_1+\cdots+s_m)
 \,\dd s_1\cdots\dd s_m.
\]
If $0\le a\le d-m$, then
$a+s_1+\cdots+s_m\in[0,d]$ throughout the integration region.  Hence
\eqref{prof:canred:eq:F-derivatives} gives
\[
 |\Delta^m\mathfrak F(a)|
 \le \sup_{h\in[0,d]}|\mathfrak F^{(m)}(h)|
 \le C_{M,\underline p}
 \begin{cases}
  n^{-1},&m=1,\\
  n^{1-m},&m\ge2.
 \end{cases}
\]
By construction $\mathfrak F(j)=\log q_A(j)$ at every integer
$0\le j\le d$.  Hence, for integer $a$,
$\Delta^m\mathfrak F(a)=\Delta^m\log q_A(a)$, and the displayed estimate
proves \eqref{prof:canred:eq:fd-target}.
\end{proof}

The estimate \eqref{prof:canred:eq:fd-target} is the bridge from the scalar
local limit calculation to dependence among the marked sites.  We now use
these finite-difference bounds to obtain an exact Gibbs representation of the
marked conditional law.

\begin{lemma}[Exact marked Gibbs representation]
\label{prof:canred:lem:gibbs}
Let $A\subseteq\mathcal I$ be a marked set of size $d\le M$, and write
$\nu_A:=\Pp(\mathsf B_A\in\cdot\mid \mathsf N_{\mathcal I}=k)$ for its conditional marked law, \emph{cf.\@} \eqref{prof:canred:eq:marked-marginal}.  
For $1\le m\le d$, set
\[
c_m:=\Delta^m\log q_A(0).
\]
For $B\subseteq A$ and $\boldsymbol\xi=(\xi_x)_{x\in A}\in\{0,1\}^A$, write
$\xi_B:=\prod_{x\in B}\xi_x$; since $\xi_x\in\{0,1\}$, this is the indicator
that every site of $B$ is occupied in the marked configuration $\boldsymbol\xi$.
Then for $n\ge n_0(M,\underline p)$,
\begin{equation}
 \nu_A(\dd\boldsymbol\xi)
 =\frac{
 \exp\bigg\{
 \sum_{B\subseteq A: |B|\ge2}
 c_{|B|}\,\xi_B
 \bigg\}}{
 \mathbb E_{\mu_A}\!\left[\exp\bigg\{
 \sum_{B\subseteq A: |B|\ge2}c_{|B|}\,\xi_B
 \bigg\}\right]}\,\mu_A(\dd\boldsymbol\xi),
 \label{prof:canred:eq:gibbs-rep}
\end{equation}
where, after increasing $n_0$ if necessary, $\mu_A$ is a product Bernoulli
measure on $\{0,1\}^A$ with parameters in
$[\underline p/2,1-\underline p/2]$.  
Moreover,
\begin{equation}
 |c_{|B|}|
 \le C_{M,\underline p}n^{-(|B|-1)}
 \qquad(|B|\ge2).
 \label{prof:canred:eq:interaction-bound}
\end{equation}
\end{lemma}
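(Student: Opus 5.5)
The proof is a Möbius (Newton forward-difference) expansion of the function $h\mapsto\log q_A(h)$ on $\{0,\dots,d\}$, substituted into the exact marked marginal \eqref{prof:canred:eq:marked-marginal}. For every integer $0\le h\le d$ we have the binomial identity
\[
 \log q_A(h)=\log q_A(0)+\sum_{m=1}^{h}\binom hm\Delta^m\log q_A(0).
\]
Take $h=|\boldsymbol\xi|$. The number of $m$-subsets $B\subseteq A$ with $\xi_B=1$ is exactly $\binom{|\boldsymbol\xi|}{m}$. Hence
\[
 \log q_A(|\boldsymbol\xi|)=\log q_A(0)+\sum_{\emptyset\ne B\subseteq A}c_{|B|}\,\xi_B .
\]
This is an exact identity, with no approximation.

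Next we separate the $|B|=1$ terms. They contribute $c_1\sum_{x\in A}\xi_x$, and each $\xi_x$ factors coordinatewise. We absorb them into the product factor $\prod_{x}p_x^{\xi_x}(1-p_x)^{1-\xi_x}$ by setting
\[
 \mu_A:=\bigotimes_{x\in A}\mathrm{Bernoulli}(p_x')\quad\text{with}\quad p_x':=\frac{p_xe^{c_1}}{1-p_x+p_xe^{c_1}} .
\]
Then $p_x^{\xi_x}(1-p_x)^{1-\xi_x}e^{c_1\xi_x}=(1-p_x+p_xe^{c_1})\,\mu_A$-weight. All factors independent of $\boldsymbol\xi$, namely $\Pp(\mathsf N_{\mathcal I}=k)^{-1}$, $q_A(0)$ and $\prod_x(1-p_x+p_xe^{c_1})$, are constants. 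Since $\nu_A$ is a probability measure, they are fixed by normalization, and this yields exactly \eqref{prof:canred:eq:gibbs-rep}.

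The remaining claims follow from \cref{prof:canred:lem:finite-differences} with $a=0$. The case $m=1$ gives $|c_1|\le Cn^{-1}$. Since $p_x\in[\underline p,1-\underline p]$ and the map $p\mapsto pe^{c}/(1-p+pe^{c})$ moves $p$ by $O(|c|)$, we get $p_x'\in[\underline p/2,1-\underline p/2]$ once $n\ge n_0$ is enlarged. The case $2\le m\le d$ gives \eqref{prof:canred:eq:interaction-bound} directly.

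Two points require care. First, all the logarithms must be finite, that is, $q_A(h)>0$ for $0\le h\le d$. This holds for $n\ge n_0$ by \eqref{prof:canred:eq:coefficient-tilt-identity} together with \cref{prof:canred:lem:central}, which gives $\mathfrak h\ge\frac12$. Second, the binomial/Möbius bookkeeping $\#\{B:|B|=m,\ \xi_B=1\}=\binom{|\boldsymbol\xi|}{m}$ must be checked. Neither step is a real obstacle; the substantive analytic input was already supplied by \cref{prof:canred:lem:finite-differences}.
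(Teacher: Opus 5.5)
Your proposal is correct and follows essentially the same route as the paper: the Newton forward-difference reconstruction $\log q_A(h)=\log q_A(0)+\sum_{m=1}^{h}c_m\binom hm$, the counting identity $\binom{|\boldsymbol\xi|}{m}=\sum_{|B|=m}\xi_B$, substitution into the exact marked marginal, absorption of the one-body coefficient $c_1$ into the Bernoulli odds, and \cref{prof:canred:lem:finite-differences} at $a=0$ for both $|c_1|=O(n^{-1})$ and \eqref{prof:canred:eq:interaction-bound}. Your positivity check for $q_A(h)$ is fine, though it is also elementary: $0\le k-h\le|A^c|$ holds once $n\ge M/\underline p$, because $\underline p\,n\le k\le(1-\underline p)n$ and every $p_x\in(0,1)$.
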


\begin{proof}
The finite differences $c_m=\Delta^m\log q_A(0)$, $m\ge1$, together with
the constant value $\log q_A(0)$ reconstruct $\log q_A$ exactly on
$\{0,1,\ldots,d\}$.  To see this, repeatedly use
$g(r+1)=g(r)+(\Delta g)(r)$, first for $g=\log q_A$ and then for its
successive finite differences.  After $h$ steps, Pascal's relation gives
\begin{equation}
 \log q_A(h)
 =\log q_A(0)+\sum_{m=1}^{h}c_m\binom hm,
 \qquad 0\le h\le d.
 \label{prof:canred:eq:fd-reconstruction}
\end{equation}

Let $\boldsymbol\xi=(\xi_x)_{x\in A}\in \{0,1\}^A$. 
If $|\boldsymbol\xi|=h$, then exactly $h$ sites of $A$ are occupied.  By the
definition of $\xi_B$,
$\sum_{B\subseteq A:|B|=m}\xi_B$ counts the $m$-element subsets of the
$h$ occupied sites, and hence
\begin{equation}
 \binom{|\boldsymbol\xi|}{m}
 =\sum_{B\subseteq A: |B|=m}\xi_B.
 \label{prof:canred:eq:binomial-monomials}
\end{equation}
Substituting \eqref{prof:canred:eq:fd-reconstruction} and
\eqref{prof:canred:eq:binomial-monomials} into the exact marked marginal
\eqref{prof:canred:eq:marked-marginal} therefore gives
\[
 \nu_A(\boldsymbol\xi)\propto
 \left(\prod_{x\in A}p_x^{\xi_x}(1-p_x)^{1-\xi_x}\right)
 \exp\left\{
 c_1\sum_{x\in A}\xi_x
 +\sum_{|B|\ge2}c_{|B|}\xi_B
 \right\}.
\]
The one-body coefficient $c_1$ only changes the product reference measure, so
we absorb it into the Bernoulli odds by setting
\[
 \frac{p_x^*}{1-p_x^*}:=e^{c_1}\frac{p_x}{1-p_x}.
\]
By \cref{prof:canred:lem:finite-differences}, $|c_1|\le Cn^{-1}$; after
increasing $n_0$, the shifted parameters $p_x^*$ remain in
$[\frac{\underline p}{2},1-\frac{\underline p}{2}]$.  Taking $\mu_A$ to be this product Bernoulli law yields
\eqref{prof:canred:eq:gibbs-rep}.  For $|B|\ge2$, the coefficient of the
$|B|$-body interaction is exactly $c_{|B|}$, and
\eqref{prof:canred:eq:fd-target} with $a=0$ gives
\eqref{prof:canred:eq:interaction-bound}.
\end{proof}

The Gibbs representation \eqref{prof:canred:eq:gibbs-rep} reduces the
estimate of the conditional cumulant $\Cum_k$ under $\Pp(\,\cdot\mid \mathsf N_{\mathcal I}=k)$ to a connectivity statement.  
Relative to the
product measure $\mu_A$, all dependence among the marked coordinates is
carried by the interaction terms
$
 c_{|B|}\,\xi_B$, $|B|\ge2$,
where the coefficients $c_{|B|}$ satisfy \eqref{prof:canred:eq:interaction-bound}.  
Under $\mu_A$, random variables
supported on disjoint coordinate sets are independent, so a joint cumulant
vanishes whenever its variables split into two nonempty groups supported on
disjoint sets of sites.  Thus, when the cumulant is expanded in the
interaction coefficients, only collections of interacting subsets $B$ that
connect all $d$ marked sites can contribute.  Viewing each such $B$ as a
hyperedge, connectivity forces
\[
 \sum_B (|B|-1)\ge d-1,
\]
where the inserted hyperedges are counted with multiplicity.  Their product
of interaction coefficients is therefore $O(n^{1-d})$.  The next lemma makes
this connectivity argument quantitative.

\begin{lemma}[Connectivity bound for conditional cumulants]
\label{prof:canred:lem:hypergraph}
Fix integers $M\ge1$ and $1\le j\le M$.  Under
\eqref{prof:canred:eq:standing}, let $x_1,\ldots,x_j$ involve exactly $d$ distinct
sites.  Then
\[
 \abs{\Cum_k(\mathsf B_{x_1},\ldots,\mathsf B_{x_j})}
 \le C_{M,\underline p}n^{1-d}.
\]
The bound is uniform in the multiplicity pattern of the repeated sites.
\end{lemma}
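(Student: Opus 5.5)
We will prove the bound by expanding the Gibbs density of \cref{prof:canred:lem:gibbs} in its interaction coefficients and using that joint cumulants under the product measure $\mu_A$ vanish unless the interactions connect all the marked sites.

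\textbf{Small $n$ and reduction.} For $n<n_0(M,\underline p)$, the cumulant is a fixed polynomial in conditional moments bounded by $1$. Its absolute value is therefore bounded by a constant depending only on $M$, and enlarging $C_{M,\underline p}$ absorbs this finite range. Assume then $n\ge n_0$ and set $A=\{x_1,\ldots,x_j\}$ with $|A|=d$. Since $\Cum_k(\mathsf B_{x_1},\ldots,\mathsf B_{x_j})$ depends only on the marked law $\nu_A$, we work under $\nu_A$ as given by \eqref{prof:canred:eq:gibbs-rep}. Write $V(\boldsymbol\xi)=\sum_{B\subseteq A,|B|\ge2}c_{|B|}\xi_B$, and introduce auxiliary parameters $\lambda=(\lambda_B)_{B}$ through $V_\lambda=\sum_B\lambda_Bc_{|B|}\xi_B$, so that $\nu_A$ corresponds to $\lambda\equiv1$. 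Put
\[
\Phi(\lambda,s)=\log\mathbb E_{\mu_A}\exp\Bigl\{V_\lambda+\sum_{i=1}^js_i\xi_{x_i}\Bigr\}.
\]
The target cumulant is $\partial_{s_1}\cdots\partial_{s_j}\Phi(1,0)$. Since $\Phi$ is real-analytic in $\lambda$ and each $|c_{|B|}|\le Cn^{-1}$ is small, we expand in Taylor series about $\lambda=0$. The coefficient of $\prod_B\lambda_B^{k_B}$ is, up to combinatorial factors, $\prod_B c_{|B|}^{k_B}$ times a joint $\mu_A$-cumulant of the variables $\xi_{x_1},\ldots,\xi_{x_j}$ together with $k_B$ copies of $\xi_B$. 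This follows from the identity \eqref{prof:eq:Frechet-cumulant-identity} applied with the product base measure.

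\textbf{Connectivity.} Under the product law $\mu_A$, a joint cumulant vanishes whenever its arguments split into two nonempty families with disjoint site supports. Hence a term survives only if the hypergraph on $A$ with hyperedges $\{B:k_B>0\}$ is connected; the singleton variables $\xi_{x_i}$ contribute no connections. A connected hypergraph on $d$ vertices satisfies $\sum_B k_B(|B|-1)\ge d-1$. Combined with \eqref{prof:canred:eq:interaction-bound}, every surviving monomial is therefore $O(n^{1-d})$, with the total order $\sum_B k_B(|B|-1)$ bounding the power of $n^{-1}$.

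\textbf{Convergence and summation.} This is the main obstacle: summing infinitely many orders while keeping the bound uniform. We control the remainder using analyticity. Fix radii $R_B=n^{|B|-1}/(C'c)$, chosen so that the complex polydisc $|\lambda_B|\le R_B$ keeps $|c_{|B|}\lambda_B|\le 1/C'$ small. On this polydisc, $\mathbb E_{\mu_A}e^{V_\lambda+s\cdot\xi}$ stays bounded away from $0$ for $|s|\le1$, because there are only $2^d$ configurations and the exponent has modulus at most a small constant times $2^d$. Hence $\Phi$ is analytic there, with $|\Phi|\le C_M$. Cauchy estimates in $s$ bound the mixed $s$-derivative by $C_M$ uniformly on the polydisc, and Cauchy estimates in $\lambda$ then give, for the coefficient of $\prod\lambda_B^{k_B}$,
\[
\bigl|\text{coef}\bigr|\le C_M\prod_B R_B^{-k_B}\le C_M\prod_B\bigl(C''n^{-(|B|-1)}\bigr)^{k_B}.
\]
Evaluating at $\lambda\equiv1$, which lies well inside the polydisc, we sum over connected multi-indices. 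These terms are bounded by a geometric series dominated by its minimal connected order, giving $O(n^{1-d})$ uniformly. The disconnected multi-indices contribute exactly zero by the vanishing argument above.

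Repeated sites do not affect the estimate: $d$ counts only distinct sites, and the multiplicity pattern enters only through the fixed number of $s$-derivatives, at most $M$. This gives uniformity in the multiplicity pattern.
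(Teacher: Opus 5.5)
Your argument is essentially the paper's proof. You use the same Gibbs representation and the same log-partition function in source and interaction variables; your $\lambda_B c_{|B|}$ is the paper's $z_B$. You then Taylor-expand from the product point, use vanishing of disconnected product-measure cumulants, and count hyperedges to get $W=\sum_B k_B(|B|-1)\ge d-1$.

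The one real difference is how you bound the Taylor coefficients. You use Cauchy estimates on a polydisc of radii $R_B\asymp n^{|B|-1}$, which is equivalent to a fixed polydisc in $z_B$. This gives each coefficient a bound of the form $C_M C_1^{R}n^{-W}$, with $R=\sum_B k_B$, directly from analyticity. The paper instead identifies each coefficient as a product-measure cumulant of $j+R$ variables bounded by one. It bounds that cumulant combinatorially by $|\Cum(Y_1,\ldots,Y_L)|\le C^L L!$ and counts multi-indices with $\sum_{|\boldsymbol r|=R}1/\boldsymbol r!=h_d^R/R!$. Both routes finish the same way, summing the connected terms with $W\ge\max(R,d-1)$. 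Yours is shorter; the paper's makes the coefficient growth explicit.

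One step needs a small repair: the source polydisc cannot be $|s_i|\le 1$. If a site $x$ has multiplicity $\alpha_x\ge 4$, then $\sigma_x=\sum_{i:x_i=x}s_i$ can equal $\log((1-p)/p)+i\pi$ with every $|s_i|\le 1$. For example, take $p=\tfrac12$ and $s_i=i\pi/4$. In that case the factor $1-p+pe^{\sigma_x}$ of $\mathbb E_{\mu_A}\exp\{\sum_i s_i\xi_{x_i}\}$ vanishes, so $\Phi$ is not analytic on that polydisc. Your justification that the exponent is small only covers $V_\lambda$, not the source term.

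The fix is to take $|s_i|\le\delta_M$, with $M\delta_M+2^M/C'$ small enough that $\bigl|\mathbb E_{\mu_A}\exp\{V_\lambda+\sum_i s_i\xi_{x_i}\}-1\bigr|\le\tfrac12$ on the whole polydisc. The Cauchy estimate in $s$ then costs a factor $\delta_M^{-j}$, which is still a constant depending only on $M$. The rest of your argument goes through unchanged.
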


\begin{proof}
For $d=1$, the claim follows because a fixed-order cumulant is a finite
linear combination of moments, and all moments of variables in $[0,1]$ are
bounded by one.  Assume $d\ge2$, and let $A$ be the set of
distinct sites among $x_1,\ldots,x_j$.  Set
\[
 \mathscr H_A:=\{B\subseteq A:|B|\ge2\},
 \qquad h_d:=|\mathscr H_A|.
\]
Here $B\in\mathscr H_A$ denotes a specific interaction subset of the marked
set $A$.  By \cref{prof:canred:lem:gibbs}, for $n\ge n_0$
the conditional marked law is \eqref{prof:canred:eq:gibbs-rep}.  Its
coefficient for the interaction subset $B$ depends only on the cardinality
$|B|$, so for $B\in\mathscr H_A$ we write
$
 c_B:=c_{|B|}$.
Then \eqref{prof:canred:eq:interaction-bound} gives
\[
 |c_B|\le C_{M,\underline p}n^{-(|B|-1)},\qquad B\in\mathscr H_A.
\]

\localheading{Generating the desired cumulant.}
Let $\boldsymbol\alpha=(\alpha_x)_{x\in A}$ record the multiplicities of the marked sites among
$x_1,\ldots,x_j$, so $\alpha_x\ge1$ and $\sum_{x\in A}\alpha_x=j$.  We use two sets
of auxiliary variables for two different purposes.  The source variables
$\boldsymbol s=(s_x)_{x\in A}$ generate the marked-site cumulant we want to estimate;
the interaction variables $\boldsymbol z=(z_B)_{B\in\mathscr H_A}$ interpolate between
the product reference law ($\boldsymbol z=\mathbf 0$) and the Gibbs law
($\boldsymbol z=\boldsymbol c$, where $\boldsymbol c:=(c_B)_{B\in\mathscr H_A}$).  Define
\begin{equation}
 \mathscr L_A(\boldsymbol s,\boldsymbol z)
 :=\log\E_{\mu_A}
 \exp\left\{
 \sum_{x\in A}s_x\xi_x+
 \sum_{B\in\mathscr H_A}z_B\xi_B
 \right\}.
 \label{prof:canred:eq:HAsz}
\end{equation}
Writing
$\partial_{\boldsymbol s}^{\boldsymbol\alpha}:=\prod_{x\in A}\partial_{s_x}^{\alpha_x}$,
the Gibbs representation gives
\[
 \partial_{\boldsymbol s}^{\boldsymbol\alpha}\mathscr L_A(\mathbf 0,\boldsymbol c)
 =\Cum_k(\mathsf B_{x_1},\ldots,\mathsf B_{x_j}).
\]

Choose $\delta_d>0$ so small that
$e^{(d+h_d)\delta_d}-1<\frac{1}{2}$.  If $|s_x|,|z_B|\le\delta_d$, then the
random exponent in \eqref{prof:canred:eq:HAsz} has absolute value at most
$(d+h_d)\delta_d$, and therefore the expectation of its exponential lies
within distance $\frac12$ of $1$.  The principal logarithm in
\eqref{prof:canred:eq:HAsz} is thus analytic throughout this polydisk,
uniformly in the product probabilities.  For a
hyperedge multi-index
$\boldsymbol r=(r_B)_{B\in\mathscr H_A}\in\mathbb N_0^{\mathscr H_A}$,
write
\[
 |\boldsymbol r|:=\sum_{B\in\mathscr H_A}r_B,\qquad
 \boldsymbol r!:=\prod_{B\in\mathscr H_A}r_B!,\qquad
 \boldsymbol c^{\boldsymbol r}:=\prod_{B\in\mathscr H_A}c_B^{r_B},\qquad
 \partial_{\boldsymbol z}^{\boldsymbol r}:=\prod_{B\in\mathscr H_A}\partial_{z_B}^{r_B}.
\]
For $n$ large enough, $|c_B|<\frac{\delta_d}{2}$, so Taylor expansion from the
product point $\boldsymbol z=\mathbf 0$ to the Gibbs point $\boldsymbol z=\boldsymbol c$ is absolutely convergent:
\begin{equation}
 \partial_{\boldsymbol s}^{\boldsymbol\alpha}\mathscr L_A(\mathbf 0,\boldsymbol c)
 =\sum_{\boldsymbol r\in\mathbb N_0^{\mathscr H_A}}
 \frac{\boldsymbol c^{\boldsymbol r}}{\boldsymbol r!}
 \partial_{\boldsymbol z}^{\boldsymbol r}\partial_{\boldsymbol s}^{\boldsymbol\alpha}
 \mathscr L_A(\mathbf 0,\mathbf 0).
 \label{prof:canred:eq:z-taylor}
\end{equation}

\localheading{Connectivity and the interaction cost.}
For such a multi-index $\boldsymbol r$, define
\[
 R:=|\boldsymbol r|,
 \qquad
 W(\boldsymbol r):=\sum_{B\in\mathscr H_A}r_B(|B|-1).
\]
These two quantities have separate meanings.  The integer $R$ is the number
of interaction insertions, counted with multiplicity.  The quantity $W$ is
the total power of $n^{-1}$ supplied by those interactions, because
\eqref{prof:canred:eq:interaction-bound} gives
\begin{equation}
 |\boldsymbol c^{\boldsymbol r}|
 \le C_{M,\underline p}^R n^{-W(\boldsymbol r)}.
 \label{prof:canred:eq:c-r-weight}
\end{equation}

At $\boldsymbol z=\mathbf 0$, the derivative in \eqref{prof:canred:eq:z-taylor} is a joint
cumulant under the product law $\mu_A$ of
$\alpha_x$ copies of $\xi_x$ for each $x\in A$ and
$r_B$ copies of $\xi_B$ for each interaction hyperedge $B$.  If the
multihypergraph formed by the inserted hyperedges is disconnected, its
connected components use disjoint coordinate sets.  Because every marked
site appears among the source variables, the listed random variables split
into two nonempty independent families under $\mu_A$.  Their joint cumulant
is hence zero.  
Thus only connected multihypergraphs contribute to
\eqref{prof:canred:eq:z-taylor}.

Connectivity forces exactly the power needed for the theorem:
\begin{equation}
 W(\boldsymbol r)\ge d-1.
 \label{prof:canred:eq:connected-weight}
\end{equation}
Indeed, begin with $d$ singleton components.  Adding a hyperedge $B$ can
reduce the number of components by at most $|B|-1$, so reaching one component
requires total reduction at least $d-1$.  Since every hyperedge has size at
least two, we also have
\begin{equation}
 W(\boldsymbol r)\ge R.
 \label{prof:canred:eq:weight-R}
\end{equation}

\localheading{Summing the connected expansion.}
It remains to verify absolute summability using the connectivity power
counts \eqref{prof:canred:eq:connected-weight} and \eqref{prof:canred:eq:weight-R}.
We first record the crude fixed-order cumulant bound used for this purpose.
The moment--cumulant identity \eqref{prof:eq:moment-cumulant-identity}, derived
above from the partition form of the multivariate Fa\`a di Bruno formula, is
triangular in the number of variables:
\[
 \mathbb E\!\left[\prod_{i=1}^L Y_i\right]
 =\Cum(Y_1,\ldots,Y_L)
  +\sum_{\substack{\mathfrak p\in\operatorname{Part}([L])\\|\mathfrak p|\ge2}}
    \prod_{\mathfrak b\in\mathfrak p}
    \Cum\bigl((Y_i)_{i\in\mathfrak b}\bigr).
\]
Every block \(\mathfrak b\) in the second sum has \(|\mathfrak b|<L\), so the
order-$L$ cumulant is determined recursively by the $L$th moment and
lower-order cumulants.  Equivalently, M\"obius inversion on the lattice of
set partitions solves this triangular recursion in closed form:
\[
 \Cum(Y_1,\ldots,Y_L)
 =\sum_{\mathfrak p\in\operatorname{Part}([L])}
   (-1)^{|\mathfrak p|-1}(|\mathfrak p|-1)!
   \prod_{\mathfrak b\in\mathfrak p}
   \mathbb E\!\left[\prod_{i\in\mathfrak b}Y_i\right].
\]
If $|Y_i|\le1$, every block moment in this sum has absolute value at most
one.  Writing $S(L,k)$ for the number of partitions of $[L]$ into $k$ blocks,
we therefore obtain
\[
 |\Cum(Y_1,\ldots,Y_L)|
 \le \sum_{k=1}^L S(L,k)(k-1)!.
\]
Moreover, $k!S(L,k)$ is the number of surjections from $[L]$ onto $[k]$,
so $k!S(L,k)\le k^L$.  Hence
\[
 \sum_{k=1}^L S(L,k)(k-1)!
 \le \sum_{k=1}^L k^{L-1}
 \le L^L
 \le e^L L!,
\]
where the last inequality follows from $L!\ge(L/e)^L$.  Thus, for a universal
constant $C$,
\[
 |\Cum(Y_1,\ldots,Y_L)|\le C^L L!.
\]
In \eqref{prof:canred:eq:z-taylor}, the derivative at $\boldsymbol z=\mathbf 0$ is a cumulant of
$L=j+R$ variables, each equal to some $\xi_x$ or $\xi_B$ and hence bounded by
one.  Therefore
\begin{equation}
 \left|
 \partial_{\boldsymbol z}^{\boldsymbol r}\partial_{\boldsymbol s}^{\boldsymbol\alpha}
 \mathscr L_A(\mathbf 0,\mathbf 0)
 \right|
 \le C^{j+R}(j+R)!.
 \label{prof:canred:eq:derivative-cumulant-bound}
\end{equation}
For fixed $R$, the multinomial theorem applied to
$(1+\cdots+1)^R$ with $h_d$ summands gives
\begin{equation}
 \sum_{\boldsymbol r\in\mathbb N_0^{\mathscr H_A}:|\boldsymbol r|=R}
 \frac1{\boldsymbol r!}
 =\frac{h_d^R}{R!}.
 \label{prof:canred:eq:multiindex-count}
\end{equation}

For $R\le d-1$, there are only finitely many possible values of $R$, and
every connected term already carries $n^{-W}\le n^{1-d}$ by
\eqref{prof:canred:eq:connected-weight}.  Equations
\eqref{prof:canred:eq:derivative-cumulant-bound} and
\eqref{prof:canred:eq:multiindex-count} therefore show that all such terms
contribute $O_{M,\underline p}(n^{1-d})$.

For $R\ge d$, use the stronger bound $W\ge R$ from
\eqref{prof:canred:eq:weight-R}.  Combining
\eqref{prof:canred:eq:c-r-weight} through
\eqref{prof:canred:eq:multiindex-count}, and using
$h_d\le2^d\le2^M$ together with
$(j+R)!/R!\le C_MR^M$ for $j\le M$, yields
\[
 \sum_{\substack{|\boldsymbol r|=R\\\text{connected}}}
 \frac{|\boldsymbol c^{\boldsymbol r}|}{\boldsymbol r!}
 \left|
 \partial_{\boldsymbol z}^{\boldsymbol r}\partial_{\boldsymbol s}^{\boldsymbol\alpha}
 \mathscr L_A(\mathbf 0,\mathbf 0)
 \right|
 \le
 C_M R^M\left(\frac{C_M}{n}\right)^R.
\]
Since $R\ge d$,
$n^{-R}=n^{1-d}n^{-(R-d+1)}$; for $n\ge2C_M$, summing over $R\ge d$
again gives $O_{M,\underline p}(n^{1-d})$.  Together with the finite-$R$ part, this
proves
\[
 \abs{\Cum_k(\mathsf B_{x_1},\ldots,\mathsf B_{x_j})}
 \le C_{M,\underline p}n^{1-d}
\]
for all sufficiently large $n$.

For the finitely many smaller $n$, a fixed-order cumulant of variables in
$[0,1]$ is bounded by a constant depending only on $M$.  Enlarging
$C_{M,\underline p}$ therefore extends the same estimate to every $n$.  Repeated
occurrences of a marked coordinate only change the fixed multiplicities
$\alpha_x$ and do not affect the connectivity argument.
\end{proof}

\subsection{From conditional Bernoullis to canonical tilts}
\label{prof:app:canonical-cumulants}

In this subsection we identify the canonical tilt with a product Bernoulli
law conditioned on total population $k_N$.  Once this identification is made,
\cref{prof:canred:lem:hypergraph} gives the required support-sensitive
canonical cumulant bound.

To make this identification, extend the notation $\nu_{N,u}$ from mean-zero fields
to arbitrary $u\in\mathbb R^{V_N}$ by setting
\[
 \nu_{N,u}(\eta)
 :=\frac{\exp\{\sum_xu_x\eta_x\}}
 {\sum_{\eta'\in\Omega_{N,k_N}}\exp\{\sum_xu_x\eta'_x\}},
 \qquad \eta\in\Omega_{N,k_N}.
\]
Adding a constant to $u$ multiplies numerator and denominator by the same
factor $e^{ck_N}$, so this law depends only on $u$ modulo constants, and agrees
with the earlier definition in \eqref{prof:eq:tilt} after replacing $u$ by
its mean-zero representative.  For such a field $u$, the scalar map
$
 \zeta\mapsto\sum_{x\in V_N}\frac{e^{\zeta+u_x}}{1+e^{\zeta+u_x}}
$
is continuous and strictly increasing from $0$ to $n_N$.  Since $0<k_N<n_N$,
there is therefore a unique population centering parameter $\zeta(u)$ satisfying
\[
 \sum_{x\in V_N}\frac{e^{\zeta(u)+u_x}}{1+e^{\zeta(u)+u_x}}=k_N.
\]  Under the density window \eqref{prof:eq:density-window}, there exist
$c=c(\rho_0)>0$ and $\underline p=\underline p(\rho_0)\in(0,\frac12)$ such that
\begin{equation}
 \begin{aligned}
 p_x(u)&=\frac{e^{\zeta(u)+u_x}}{1+e^{\zeta(u)+u_x}},
 &\qquad \sum_xp_x(u)&=k_N,\\
 \|u\|_\infty\le c&\quad\Longrightarrow\quad
 \underline p\le p_x(u)\le1-\underline p,
 & &x\in V_N,
 \end{aligned}
 \label{prof:eq:standing-saddle-probabilities}
\end{equation}
uniformly in $N$.  To verify the last line, fix any constant $c>0$ (for
instance $c=1$), write
$\operatorname{logistic}(r):=e^r/(1+e^r)$, recall that
$\rho_N=k_N/n_N$, and suppose $\|u\|_\infty\le c$.  Monotonicity of the
logistic function gives
\[
 \operatorname{logistic}(\zeta(u)-c)
 \le \rho_N\le
 \operatorname{logistic}(\zeta(u)+c),
\]
because the average of the occupation numbers
$\frac{1}{n_N}\sum_{x\in V_N} \operatorname{logistic}(\zeta(u)+u_x)=\rho_N$.
Applying the inverse function
$\operatorname{logistic}^{-1}(r)=\log(r/(1-r))$ yields
\[
 \operatorname{logistic}^{-1}(\rho_N)-c
 \le \zeta(u)\le
 \operatorname{logistic}^{-1}(\rho_N)+c.
\]
By \eqref{prof:eq:density-window},
$\operatorname{logistic}^{-1}(\rho_N)$ ranges over a compact interval
depending only on $\rho_0$; hence $\zeta(u)+u_x$ remains in a fixed compact
interval.  Applying the logistic function to that interval gives
$\underline p=\underline p(\rho_0)\in(0,\frac12)$ as claimed.

\begin{lemma}[Canonical support-sensitive cumulants]
\label{prof:lem:canonical-support-cumulants}
Fix $m<\infty$.  Suppose the Bernoulli parameters $p_x(u)$ in
\eqref{prof:eq:standing-saddle-probabilities} lie in $[\underline p,1-\underline p]$.  If
$\boldsymbol\alpha=(\alpha_x)_{x\in V_N}$ is a coordinate multi-index of total order at
most $m$, and $d:=|\{x:\alpha_x>0\}|\ge1$, write
\[
 \partial^{\boldsymbol\alpha}:=\prod_{x\in V_N}\partial_{u_x}^{\alpha_x}.
\]
Then
\begin{equation}
 \left|
 \partial^{\boldsymbol\alpha}
 \log [z^{k_N}]\prod_{x\in V_N}(1+ze^{u_x})
 \right|
 \le C_{m,\underline p}n_N^{1-d}.
 \label{prof:eq:canonical-support-derivative}
\end{equation}
Equivalently, if independent Bernoulli variables with parameters $p_x(u)$
are conditioned on their sum being $k_N$, every conditional joint cumulant
of fixed total order whose arguments involve $d$ distinct sites is
$O_{m,\underline p}(n_N^{1-d})$.
\end{lemma}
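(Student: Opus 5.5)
The plan is to identify the left-hand side of \eqref{prof:eq:canonical-support-derivative} as a conditional joint cumulant and then invoke \cref{prof:canred:lem:hypergraph}.

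\emph{Step 1: cumulant identity.} Write $Z(u):=[z^{k_N}]\prod_x(1+ze^{u_x})=\sum_{\eta\in\Omega_{N,k_N}}e^{\sum_xu_x\eta_x}$. Then $\log Z(u+\boldsymbol\sigma)-\log Z(u)$ is the log-moment generating function of $(\eta_x)_x$ under $\nu_{N,u}$, evaluated at $\boldsymbol\sigma$. Hence $\partial^{\boldsymbol\alpha}\log Z(u)$ equals the joint cumulant under $\nu_{N,u}$ of $\alpha_x$ copies of $\eta_x$ for each $x$. This is the partition chain rule \eqref{prof:eq:partition-chain-rule} applied exactly as in the derivation of \eqref{prof:eq:Frechet-cumulant-identity}.

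\emph{Step 2: conditional Bernoulli representation.} Let $\zeta=\zeta(u)$ and $p_x=p_x(u)$ be as in \eqref{prof:eq:standing-saddle-probabilities}. Let $\mathsf B_x$ be independent Bernoulli variables with these parameters. For each $\eta$ with $|\eta|=k_N$,
\[
\prod_x p_x^{\eta_x}(1-p_x)^{1-\eta_x}=e^{\zeta k_N}e^{\sum_xu_x\eta_x}\prod_x(1+e^{\zeta+u_x})^{-1}.
\]
The factor $e^{\zeta k_N}\prod_x(1+e^{\zeta+u_x})^{-1}$ does not depend on $\eta$, so the law of $(\mathsf B_x)$ conditioned on $\mathsf N_{V_N}=k_N$ is exactly $\nu_{N,u}$. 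The same computation establishes the claimed equivalence between the two formulations of the lemma.

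\emph{Step 3: apply the hypergraph lemma.} The hypotheses \eqref{prof:canred:eq:standing} hold with $\mathcal I=V_N$, $n=n_N$, and $k=k_N=\sum_xp_x$, the parameters lying in $[\underline p,1-\underline p]$ by assumption. List the arguments of the cumulant as $x_1,\dots,x_j$ with $j=|\boldsymbol\alpha|\le m$; these involve exactly $d$ distinct sites. \Cref{prof:canred:lem:hypergraph} with $M=m$ then gives the bound $C_{m,\underline p}n_N^{1-d}$, uniformly in the multiplicity pattern. The case $d=1$ is covered inside that lemma by the crude moment bound.

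The only delicate point is the exact cancellation of the $\eta$-independent normalization in Step 2. Once the total count is fixed at $k_N$, the factor $e^{\zeta k_N}$ is constant, so conditioning removes the saddle parameter entirely. The proof is therefore a bookkeeping reduction rather than a new estimate.
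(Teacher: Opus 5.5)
Your proposal is correct and is essentially the paper's own proof. You identify $\nu_{N,u}$ as the product Bernoulli law with parameters $p_x(u)$ conditioned on total count $k_N$, read $\partial^{\boldsymbol\alpha}$ of the log canonical partition function as the corresponding joint cumulant, and apply \cref{prof:canred:lem:hypergraph} with $\mathcal I=V_N$, $n=n_N$, $k=k_N$, $M=m$; the only difference from the paper is the order in which the first two steps are presented.
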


\begin{proof}
Let $(\mathsf B_x)_{x\in V_N}$ be independent Bernoulli variables with parameters
$p_x(u)$ from \eqref{prof:eq:standing-saddle-probabilities}.  For
$b\in\{0,1\}^{V_N}$, the product probability is proportional to
\[
 \prod_{x\in V_N}
 \left(\frac{p_x(u)}{1-p_x(u)}\right)^{b_x}
 =\exp\left\{\zeta(u)\sum_x b_x+\sum_xu_xb_x\right\},
\]
because $p_x(u)/(1-p_x(u))=e^{\zeta(u)+u_x}$.  On the event
$\sum_xb_x=k_N$, the first term in the exponent is the constant
$\zeta(u)k_N$.  Hence
\begin{align}
 \mathrm{Law}\left((\mathsf B_x)_{x\in V_N}\,\middle|\,\sum_x\mathsf B_x=k_N\right)
 =\nu_{N,u}.
 \label{prof:eq:law-conditional}
\end{align}

Let
\[
 \mathcal Z_N^{\mathrm{can}}(u):=[z^{k_N}]\prod_{x\in V_N}(1+ze^{u_x})
 =\sum_{\substack{\eta\in\{0,1\}^{V_N}\\ \sum_x\eta_x=k_N}}
   e^{\sum_xu_x\eta_x}.
\]
This is the partition function of $\nu_{N,u}$.  Derivatives of its logarithm
are the corresponding canonical joint cumulants, so
\begin{equation}
 \partial^{\boldsymbol\alpha}\log \mathcal Z_N^{\mathrm{can}}(u)
 =\operatorname{Cum}_{\nu_{N,u}}
   (\underbrace{\eta_x,\ldots,\eta_x}_{\alpha_x\text{ copies}},
    \ x\in\operatorname{supp}\boldsymbol\alpha).
\end{equation}
The cumulant on the right has total order at most $m$ and involves exactly
$d=|\operatorname{supp}\boldsymbol\alpha|$ distinct sites, with repeated
coordinates encoded by the multiplicities $\alpha_x$.
Applying \cref{prof:canred:lem:hypergraph} to the conditional law \eqref{prof:eq:law-conditional}, with
$n=n_N$, $\mathcal I=V_N$, and $k=k_N$, proves
\eqref{prof:eq:canonical-support-derivative}.
\end{proof}

\subsection{From sitewise cumulants to response estimates}
\label{prof:app:canonical-response-consequences}

\begin{proof}[Proof of \cref{prof:lem:canonical-response}]
Fix $c,\underline p$ as in \eqref{prof:eq:standing-saddle-probabilities}.  The
equivalent cumulant statement in
\cref{prof:lem:canonical-support-cumulants} gives
\eqref{prof:eq:canonical-distinct-cumulant-general} directly for any
$x_1,\ldots,x_j$ involving $d$ distinct sites.  In particular,
\begin{equation}
\begin{aligned}
 |\operatorname{Cum}_u(\eta_x,\eta_y,\eta_z)|&\le Cn_N^{-2},
 &&x,y,z\ \text{distinct},\\
 |\operatorname{Cum}_u(\eta_x,\eta_x,\eta_y)|&\le Cn_N^{-1},
 &&x\ne y, \\
 |\operatorname{Cum}_u(\eta_x,\eta_x,\eta_x)|&\le C.
\end{aligned}
\label{prof:eq:cumulant-est}
\end{equation}

To prove \eqref{prof:eq:canonical-multilinear-response}, we first expand the Fr\'echet--cumulant identity
\eqref{prof:eq:Frechet-cumulant-identity} in coordinates:
\begin{equation*}
 \mathrm d^j\Lambda_N(u)[a_1,\ldots,a_j]
 =\sum_{x_1,\ldots,x_j}
  \operatorname{Cum}_u(\eta_{x_1},\ldots,\eta_{x_j})
  \prod_{r=1}^j a_r(x_r).
\end{equation*}
Let $\mathfrak p(x_1,\ldots,x_j)\in\operatorname{Part}([j])$ be the coincidence partition:
indices $r,r'$ lie in the same block precisely when $x_r=x_{r'}$.
For a fixed partition $\mathfrak p$, write $y_{\mathfrak b}$ for the common
site attached to each block $\mathfrak b\in\mathfrak p$.  Taking absolute
values, applying \eqref{prof:eq:canonical-distinct-cumulant-general} with
$d=|\mathfrak p|$, and then dropping the requirement that different blocks
use different sites gives
\begin{equation}
\begin{aligned}
 \bigl|\mathrm d^j\Lambda_N(u)[a_1,\ldots,a_j]\bigr|
 &\le
 \sum_{\mathfrak p\in\operatorname{Part}([j])}
 \sum_{\substack{x_1,\ldots,x_j:\\
          \mathfrak p(x_1,\ldots,x_j)=\mathfrak p}}
 \bigl|\operatorname{Cum}_u(\eta_{x_1},\ldots,\eta_{x_j})\bigr|
 \prod_{r=1}^j |a_r(x_r)| \\
 &\le C_{j,\rho_0}
 \sum_{\mathfrak p\in\operatorname{Part}([j])} n_N^{1-|\mathfrak p|}
 \sum_{\substack{(y_{\mathfrak b})_{\mathfrak b\in\mathfrak p}:\\
          y_{\mathfrak b}\ne y_{\mathfrak b'}
          \text{ for }\mathfrak b\ne\mathfrak b'}}
 \prod_{\mathfrak b\in\mathfrak p}
 \prod_{r\in\mathfrak b}|a_r(y_{\mathfrak b})| \\
 &\le C_{j,\rho_0}
 \sum_{\mathfrak p\in\operatorname{Part}([j])} n_N^{1-|\mathfrak p|}
 \prod_{\mathfrak b\in\mathfrak p}
 \left(\sum_{x\in V_N}\prod_{r\in\mathfrak b}|a_r(x)|\right).
\end{aligned}
\label{prof:eq:coincidence-partition-factorized-bound}
\end{equation}
Thus the problem is reduced to estimating the factor $\sum_{x\in V_N}\prod_{r\in\mathfrak b}|a_r(x)|$ for each $\mathfrak b\in \mathfrak p$.  

Now fix $i\in \{1,\ldots, j\}$.
Given a coincidence partition $\mathfrak p$, let $\mathfrak b_i$ denote the block
containing $i$.  We first treat the blocks other than $\mathfrak b_i$.
For a singleton block
$\mathfrak b=\{r\}$, Cauchy--Schwarz gives
\begin{align}
 \sum_x|a_r(x)|\le n_N^{1/2}\|a_r\|_{2,\mathrm{cnt}}.
  \label{prof:eq:singleton-block-l2-bound}
\end{align}
For a block $\mathfrak b$ with $|\mathfrak b|\ge2$, generalized H\"older and
monotonicity of counting measure $\ell^p$ norms give
\begin{equation}
 \sum_x\prod_{r\in\mathfrak b}|a_r(x)|
 \le\prod_{r\in\mathfrak b}\|a_r\|_{\ell^{|\mathfrak b|}(V_N)}
 \le\prod_{r\in\mathfrak b}\|a_r\|_{2,\mathrm{cnt}}.
 \label{prof:eq:coincidence-block-l2-bound}
\end{equation}
For the distinguished block $\mathfrak b_i$, bound the factor $a_i$ by
$\|a_i\|_\infty$, and use counting measure $\ell^2$ bounds for the remaining
factors.  This gives
\begin{equation}
 \sum_x\prod_{r\in\mathfrak b_i}|a_r(x)|
 \le \|a_i\|_\infty
 \begin{cases}
  n_N, & |\mathfrak b_i|=1,\\
  n_N^{1/2}\|a_r\|_{2,\mathrm{cnt}},
    & \mathfrak b_i=\{i,r\},\\
  \displaystyle\prod_{r\in\mathfrak b_i\setminus\{i\}}
     \|a_r\|_{2,\mathrm{cnt}}, & |\mathfrak b_i|\ge3.
 \end{cases}
 \label{prof:eq:distinguished-coincidence-block-bound}
\end{equation}
Indeed, the second case is \eqref{prof:eq:singleton-block-l2-bound} applied to
the single remaining factor.  In the third case, generalized H\"older applied
directly to the $|\mathfrak b_i|-1\ge2$ remaining factors gives the
product of counting measure $\ell^2$ norms.

For each given partition $\mathfrak p$, let $s$ be the number of singleton blocks
other than $\mathfrak b_i$, and let $q$ be the number of nonsingleton blocks
other than $\mathfrak b_i$.  Thus
$
 |\mathfrak p|=1+s+q
 $.
In \eqref{prof:eq:coincidence-partition-factorized-bound}, the cumulant prefactor
therefore contributes $n_N^{-s-q}$.  The $s$ singleton blocks other than $\mathfrak b_i$ contribute
$n_N^{s/2}$ by \eqref{prof:eq:singleton-block-l2-bound}, whereas the
nonsingleton blocks other than $\mathfrak b_i$ contribute no positive power of $n_N$ by
\eqref{prof:eq:coincidence-block-l2-bound}.  Hence, before the distinguished
block $\mathfrak b_i$ is included, the net power of $n_N$ is
$
 n_N^{-s-q}n_N^{s/2}=n_N^{-q-s/2}$.
It remains only to include the factor supplied by
\eqref{prof:eq:distinguished-coincidence-block-bound}.  Because $j\ge3$, if $|\mathfrak b_i|=1$ then the remaining $j-1\ge2$
indices occupy blocks other than $\mathfrak b_i$, so either some such block is nonsingleton
($q\ge1$) or at least two of them are singletons ($s\ge2$).  If
$|\mathfrak b_i|=2$, the remaining $j-2\ge1$ indices occupy at least one block other than $\mathfrak b_i$, so $q\ge1$ or
$s\ge1$.  For $|\mathfrak b_i|\ge3$, no
additional restriction is needed.  Accordingly,
\[
\begin{array}{c|c|c}
 |\mathfrak b_i| & \text{net exponent of }n_N & \text{why the exponent is }\le0 \\
 \hline
 1 & 1-q-s/2
   & q\ge1\ \text{or}\ s\ge2, \\
 2 & \tfrac12-q-s/2
   & q\ge1\ \text{or}\ s\ge1, \\
 \ge3 & -q-s/2
   & q,s\ge0.
\end{array}
\]
Therefore the contribution of each fixed partition $\mathfrak p$ to the
right-hand side of \eqref{prof:eq:coincidence-partition-factorized-bound} is bounded by
the right-hand side of the fixed-$i$ estimate below.  Since $\operatorname{Part}([j])$ is finite for fixed $j$,
summing over $\mathfrak p\in\operatorname{Part}([j])$ and enlarging $C_{j,\rho_0}$ if necessary
gives
\begin{equation*}
 \bigl|\mathrm d^j\Lambda_N(u)[a_1,\ldots,a_j]\bigr|
 \le C_{j,\rho_0}\|a_i\|_\infty
      \prod_{r\ne i}\|a_r\|_{2,\mathrm{cnt}}.
\end{equation*}
Minimizing the last display over $i$ gives
\eqref{prof:eq:canonical-multilinear-response}.

Finally we prove \eqref{prof:eq:canonical-covariance-response}. At $u=0$, the exact first chaos covariance identity
\eqref{prof:eq:first-chaos-isometry} gives
$\mathrm d^2\Lambda_N(0)=\beta_NI$ on $\mathbb R_0^{V_N}$. 
By the fundamental theorem of calculus,
\begin{equation}
 \mathrm d^2\Lambda_N(u)-\mathrm d^2\Lambda_N(0)
 =\int_0^1\mathrm d^3\Lambda_N(\lambda u)[\,\cdot\,,\,\cdot\,,u] \,\dd\lambda.
 \label{prof:eq:canonical-Hessian-integral}
\end{equation}
The \(2\to2\) estimate follows directly from
\eqref{prof:eq:canonical-multilinear-response}.  For the
\(\infty\to\infty\) estimate, the coordinate entries of the integrand in
\eqref{prof:eq:canonical-Hessian-integral} are
\[
 \bigl[\mathrm d^3\Lambda_N(\lambda u)[\,\cdot\,,\,\cdot\,,u]\bigr](x,y)
 =\sum_z u(z)\operatorname{Cum}_{\lambda u}(\eta_x,\eta_y,\eta_z).
\]
Hence its \(\infty\to\infty\) norm is bounded by the maximal absolute row sum:
\begin{align*}
 \bigl\|\mathrm d^3\Lambda_N(\lambda u)[\,\cdot\,,\,\cdot\,,u]\bigr\|_{\infty\to\infty}
 &\le \sup_x\sum_y
 \left|\sum_z u(z)\operatorname{Cum}_{\lambda u}(\eta_x,\eta_y,\eta_z)\right| \\
 &\le \|u\|_\infty\sup_x
   \sum_{y,z}|\operatorname{Cum}_{\lambda u}(\eta_x,\eta_y,\eta_z)|.
\end{align*}
For fixed $x$, split the pairs $(y,z)$ according as $x,y,z$ involve three,
two, or one distinct sites.  Using the three estimates in 
\eqref{prof:eq:cumulant-est} we find
\[
 \sup_x\sum_{y,z}|\operatorname{Cum}_{\lambda u}(\eta_x,\eta_y,\eta_z)|
 \le C\left[n_N^2n_N^{-2}+n_Nn_N^{-1}+1\right]
 \le C.
\]
Therefore
\[
 \bigl\|\mathrm d^3\Lambda_N(\lambda u)[\,\cdot\,,\,\cdot\,,u]\bigr\|_{\infty\to\infty}
 \le C\|u\|_\infty
\]
uniformly in $\lambda\in[0,1]$.  Integrating this bound in
\eqref{prof:eq:canonical-Hessian-integral} proves the
\(\infty\to\infty\) estimate and hence
\eqref{prof:eq:canonical-covariance-response}.
\end{proof}

\subsection*{Acknowledgement of AI use}
The author acknowledges the use of ChatGPT--5.6 Sol for identifying several technical ingredients used in the proofs, and for assistance with refining the exposition.
The author thoroughly verified all mathematical derivations, proofs, and the final text, and takes full responsibility for the contents of this work.

\end{document}